\newtheoremstyle{slthm}
  {9pt}
  {5pt}
  {\slshape}
  {}
  {\bfseries}
  {.}
  {.5em}
  {\thmname{#1} \thmnumber{#2}{\rm \thmnote{ (#3)}}}
\newtheoremstyle{prcl}
  {9pt}
  {5pt}
  {\slshape}
  {}
  {\bfseries}
  {.}
  {.5em}
  {\thmname{#3} \thmnumber{ #2}}
\theoremstyle{slthm}
\newtheorem{theorem}{Theorem}[section]
\newtheorem{lemma}[theorem]{Lemma}
\newtheorem{lemmas}[theorem]{Lemmas}
\newtheorem{proposition}[theorem]{Proposition}
\newtheorem{corollary}[theorem]{Corollary}
\theoremstyle{definition}
\newtheorem{definition}[theorem]{Definition}
\newtheorem{defrmk}[theorem]{Definitions and Remarks}
\newtheorem{examples}[theorem]{Examples}
\theoremstyle{remark}
\newtheorem{notation}[theorem]{Notation}
\newtheorem{remark}[theorem]{Remark}
\newtheorem{remarks}[theorem]{Remarks}
\numberwithin{equation}{section}
\newcommand{\A}{\mathcal{A}}
\newcommand{\B}{\mathcal{B}}
\newcommand{\C}{\mathcal{C}}
\newcommand{\D}{\mathcal{D}}
\newcommand{\F}{\mathcal{F}}
\newcommand{\G}{\mathcal{G}}
\newcommand{\I}{\mathcal{I}}
\renewcommand{\L}{\mathcal{L}}   %
\renewcommand{\O}{\mathcal{O}}   %
\renewcommand{\P}{\mathcal{P}} %
\newcommand{\R}{\mathcal{R}}
\renewcommand{\S}{\mathcal{S}}   %
\newcommand{\T}{\mathcal{T}}
\newcommand{\NN}{\mathbb{N}}
\newcommand{\QQ}{\mathbb{Q}}
\newcommand{\RR}{\mathbb{R}}
\newcommand{\PP}{\mathbb{P}}
\newcommand{\VV}{\mathbb{V}}
\renewcommand{\bar}[1]{\overline{#1}} %
\newcommand{\tld}[1]{\widetilde{#1}}
\newcommand{\lb}{\langle} 
\newcommand{\rb}{\rangle} 
\newcommand{\ps}[1]{[\hspace{-0.3ex}[#1]\hspace{-0.3ex}]}
\DeclareMathOperator{\rank}{rank}
\DeclareMathOperator{\adj}{adj}
\DeclareMathOperator{\lcm}{lcm}
\DeclareMathOperator{\dom}{dom}
\DeclareMathOperator{\im}{im}
\DeclareMathOperator{\Graph}{graph}
\newcommand{\Restr}[1]{\big|_{#1}}
\DeclareMathOperator{\isom}{isom}
\newcommand{\PD}[3]{\frac{\partial^{#1}#2}{\partial {#3}^{#1}}}
\newcommand{\PDn}[3]{\frac{\partial^{|#1|}#2}{\partial {#3}^{#1}}}
\newcommand{\PDmix}[6]{\frac{\partial^{#1}#2}
   {\partial{#3}^{#4}\partial{#5}^{#6}}}
\DeclareMathOperator{\sign}{sign}
\DeclareMathOperator{\ord}{ord}
\DeclareMathOperator{\Int}{int}
\DeclareMathOperator{\cl}{cl}
\DeclareMathOperator{\bd}{bd}
\DeclareMathOperator{\fr}{fr}
\DeclareMathOperator{\Th}{Th}
\DeclareMathOperator{\Cen}{Cen}
\DeclareMathOperator{\dist}{dist}
\DeclareMathOperator{\IF}{IF}
\DeclareMathOperator{\Div}{div}
\DeclareMathOperator{\name}{name}
\DeclareMathOperator{\Strat}{Strat}
\DeclareMathOperator{\row}{row}
\DeclareMathOperator{\col}{col}
\DeclareMathOperator{\supp}{supp}
\DeclareMathOperator{\id}{id}
\DeclareMathOperator{\Stratify}{Strat}
\DeclareMathOperator{\Loc}{Loc}
\DeclareMathOperator{\Index}{index}
\DeclareMathOperator{\ext}{ext}
\DeclareMathOperator{\BOX}{Box}
\DeclareMathOperator{\Bus}{Bus}
\title{Characterizing Decidability in a Quasianalytic Setting}
\author[D. J. Miller]{Daniel J. Miller}
\address{Emporia State University, Department of Mathematics, Computer Science and Economics, 1200 Commercial Street, Campus Box 4027, Emporia, KS 66801, U.S.A.}
\email{dmille10@emporia.edu}
\subjclass[2000]{Primary 03C64, 03C57, 32S45; Secondary 32B20, 03F60, 26E10}
\begin{document}

\begin{abstract}
Let $\RR_{\S}$ denote the expansion of the real ordered field by a family of real-valued functions $\S$, where each function in $\S$ is defined on a compact box and is a member of some quasianalytic class which is closed under the operations of function composition, division by variables, and implicitly defined functions.  It is shown that the first order theory of $\RR_{\S}$ is decidable if and only if two oracles, called the approximation and precision oracles for $\S$, are decidable.  Loosely stated, the approximation oracle for $\S$ allows one to approximate any partial derivative of any function in $\S$ to within any given error, and the precision oracle for $\S$ allows one to decide when a manifold $M\subseteq\RR^n$ is contained in a coordinate hyperplane $\{x\in\RR^n : x_i = 0\}$ when one is given $i\in\{1,\ldots,n\}$ and a system of equations which defines $M$ nonsingularly, where the functions occurring in the equations are rational polynomials of the coordinate variables $x = (x_1,\ldots,x_n)$ and the partial derivatives of the functions in $\S$.  A key component of the proof is the development of a local resolution of singularities procedure which is effective in the approximation and precision oracles for $\S$, and in the course of proving our main theorem, numerous theorems about the model theory of such structures $\RR_{\S}$ are also proven.
\end{abstract}

\maketitle

\setcounter{tocdepth}{1}
{\small \tableofcontents}

\section*{Introduction}\label{s:intro}

Analytic geometry --- in the most classical sense of the word, with origins in the work of Descartes --- studies subsets of Euclidean space defined using equations and inequalities.
For example, $x=y^2$ defines a certain parabola in $\RR^2$.  If, in addition to equations and inequalities, one allows the use of boolean logical connectives and quantification over the reals, one can say meaningful statements about such sets.  For example, $\forall x\exists y(x = y^2)$ says that every real number has a real square root, and $\forall x(x\geq 0 \rightarrow \exists y(x = y^2))$ says that every nonnegative real number has a real square root.  The first statement is clearly false and the second is true.  It is natural to ask whether there is an algorithmic way of determining which statements are true and which are false.  The answer to this question must surely depend on what information we know about the types of functions occurring in the equations and inequalities used to construct the statements.

To make this question more precise, we use the framework of first-order logic.  We fix a family of functions $\C$ which we call a ``quasianalytic IF-system'' (``IF'' stands for ``implicit function''): roughly speaking, this means that $\C$ is a family of rings of real-valued $C^\infty$ functions defined on the sets $[-r,r] = [-r_1,r_1]\times\cdots\times[-r_n,r_n]$ (for each $n\in\NN$ and $r = (r_1,\ldots,r_n)\in\QQ^{n}_{+}$, where $\QQ_{+}=\QQ\cap(0,+\infty)$), that $\C$ has injective Taylor maps, and that $\C$ is closed under the operations of function composition, division by variables, and implicitly defined functions. Such quasianalytic classes have been used by numerous authors when studying resolution of singularities and its applications --- such as by Bierstone and Milman \cite{BM04}, and by Rolin, Speissegger and Wilkie \cite{RSW} --- but unlike these authors, we do not assume that $\RR\subseteq\C$.  The prototypical example of a quasianalytic IF-system is the system of all real analytic functions, restricted to such compact boxes $[-r,r]$ in the interiors of their domains.  Now, let $\S$ denote a computably indexed family of functions $f:[-r,r]\to\RR$ in $\C$, where the choice of $n\in\NN$ and $r\in\QQ_{+}^{n}$ depend in a computable way on the choice of $f$ in $\S$.  Let $\RR_{\S}$ denote the expansion of the real ordered field by the family of functions $\{\widehat{f}\,\,\}_{f\in\S}$, where $\widehat{f}:\RR^n\to\RR$ is defined from $f:[-r,r]\to\RR$ by
\[
\widehat{f}(x) = \begin{cases}
f(x),   & \text{$x\in[-r,r]$,} \\
0,      & \text{$x\in\RR^n\setminus[-r,r]$,}
\end{cases}
\]
and let $\L_{\S}$ denote that language of the structure $\RR_{\S}$.  The {\bf theory} of $\RR_{\S}$, denoted by $\Th(\RR_{\S})$, is the set of first-order $\L_{\S}$-sentences which are true for the structure $\RR_{\S}$.  To say that the theory of $\RR_{\S}$ is {\bf decidable} means that there is an algorithm which allows one to determine which $\L_{\S}$-sentences are in $\Th(\RR_{\S})$ and which are not.

When studying questions of decidability, it is useful to work with a relative notion of computability by allowing the algorithms to call upon oracles.  An oracle is a theoretical construct which acts as an ``algorithmic black box'': an oracle functions like an algorithm, except that we do not require the oracle to be computable by an actual algorithm.  By allowing algorithms to call upon an oracle $\O$, just as an algorithm can call upon an actual subroutine, we obtain a broader notion of computability, what is called ``computable relative to $\O$''.  The purpose of using oracles when discussing decidability of theories is to isolate exactly what needs to be computed to decide the theory.

This paper gives an algorithm which decides the theory of $\RR_{\S}$ relative to two oracles, called the approximation and precision oracles for $\S$.  Loosely stated, the approximation oracle for $\S$ allows one to approximate any partial derivative of any function in $\S$ to within any given error, and the precision oracle for $\S$ allows one to decide when a manifold $M\subseteq\RR^n$ is contained in a coordinate hyperplane $\{x\in\RR^n : x_i = 0\}$ when one is given $i\in\{1,\ldots,n\}$ and a system of equations which defines $M$ nonsingularly, where the functions occurring in the equations are rational polynomials of the coordinate variables $x = (x_1,\ldots,x_n)$ and the partial derivatives of the functions in $\S$.  Not only do these two oracles decide the theory of $\RR_{\S}$, but it is easy to see that the theory of $\RR_{\S}$ decides these two oracles (this will be more apparent once the precise definitions of the oracles are given in Section \ref{s:oracles}), so this gives the following result, which is the main theorem of the paper.

\begin{theorem}[Characterization of Decidability]\label{introThm:main}
The theory of $\RR_{\S}$ is decidable if and only if the approximation and precision oracles for $\S$ are decidable.
\end{theorem}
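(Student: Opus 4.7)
The statement is an equivalence, and its two directions have very different character. For the easy direction (theory decidable implies oracles decidable) the plan is to express each oracle query as a first-order $\L_\S$-sentence and then appeal to decidability of $\Th(\RR_\S)$. For the approximation oracle, given the index of a function $f\in\S$, a multi-index $\alpha$, a rational point $a$ in its domain, a rational $q$ and a rational $\epsilon>0$, the query ``$|\partial^\alpha\widehat{f}(a)-q|<\epsilon$'' is manifestly an $\L_\S$-formula with parameters in $\QQ$, once one notes that under the hypotheses on $\C$ every iterated partial derivative of a function in $\S$ is again representable by an $\L_\S$-term (on a subbox, after division by variables). For the precision oracle, the hypothesis that a system of rational polynomials in $x$ and in the partial derivatives of functions in $\S$ defines a manifold $M$ nonsingularly is captured by the defining equations together with nonvanishing of an appropriate Jacobian minor, and the containment $M\subseteq\{x_i=0\}$ is the universal sentence asserting that every solution has its $i$-th coordinate zero; the whole query is thus first-order in $\L_\S$.

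For the hard direction (oracles decidable implies theory decidable) the plan is to build, using the two oracles as subroutines, a terminating procedure that takes an $\L_\S$-sentence $\varphi$ and returns its truth value. I would first prenex, and then, exploiting that each $\widehat{f}$ vanishes off a compact rational box, replace quantification over $\RR$ by quantification over a bounded cube together with a finite case split on the behaviour at infinity. Next, by induction on the number of variables, I would produce an effective cylindrical-type decomposition of that cube into finitely many cells on which each term occurring in $\varphi$ has a definite sign. The cells are built one dimension at a time by projecting zero-sets and critical loci of the terms, so the inductive step requires a resolution of singularities for systems of quasianalytic terms; this is where the precision oracle enters, telling us at each blow-up chart which coordinate hyperplane contains the strict transform and hence how to choose the next blow-up centre. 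The approximation oracle is then used at rational sample points of the cells to extract the sign information that, by the o-minimality forced by the cell structure, determines the truth value of $\varphi$.

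The principal obstacle, and the technical heart of the paper, is making local resolution of singularities effective in these two oracles. In the real-analytic setting one can read vanishing orders and leading terms directly from Taylor coefficients, but in the quasianalytic setting a function is only determined by, not equal to, its Taylor series, so coefficient computations alone cannot yield geometric conclusions. The precision oracle is precisely the right substitute, supplying the qualitative information (``this manifold lies in this coordinate hyperplane'') that is unreadable from numerics, while the approximation oracle supplies the quantitative information (``this rational value has this sign'') needed to evaluate the decomposition at sample points. The remaining difficulty is to organise the resolution so that a suitable invariant, in the spirit of Bierstone--Milman (tracking weight, order, and stratification index), strictly decreases at each blow-up; this guarantees termination of the recursion on dimension and hence of the whole decision procedure, and in particular bounds the number of oracle calls needed to decide any given sentence.
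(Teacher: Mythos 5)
Your easy direction matches the paper's: the paper observes, immediately after defining the two oracles, that both oracle queries are expressible as $\L_{\S}$-sentences that can be effectively constructed from the input data, so decidability of $\Th(\RR_{\S})$ decides the oracles. Your reading of the precision oracle query (manifold inclusion in a coordinate hyperplane, nonsingularity via a Jacobian minor) is exactly what the paper's formulation captures.

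For the hard direction, however, you have proposed a genuinely different route from the paper, and your route has a gap you have not addressed. You propose a Tarski-style effective cylindrical cell decomposition: prenex, restrict to a compact cube, build cells dimension by dimension by projecting zero-sets and critical loci, read off signs at rational sample points. The paper does not build cells at all. Its decision procedure is: effective local resolution of singularities $\Rightarrow$ effective parameterization of existentially $0$-definable sets as finite unions $\bigcup_j F^{(j)}(B^{(j)})$ of immersed images of boxes (Proposition \ref{prop:SparamFC}) $\Rightarrow$ effective theorem of the complement in the style of Van den Dries--Speissegger (Theorem \ref{thm:compl}) $\Rightarrow$ every $\L_{\S}$-formula effectively reduces to an existential $\L_{\Delta(\S)}$-formula (Theorem \ref{thm:SparamDefinable}) $\Rightarrow$ a sentence defines $\{0\}$ or $\emptyset$ in $\RR^0$, and one checks whether the resulting parameterization has empty index set. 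The termination of the resolution recursion comes from a well-ordered rank on presentations, not from a bound on oracle calls.

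The gap in your route is the cell-decomposition step itself. In the quasianalytic setting without additional hypotheses, cylindrical decomposition compatible with a given family of terms requires a preparation theorem (Weierstrass preparation, or the Lion--Rolin preparation theorem), because one must fiber the ambient box over a distinguished variable so that each term has, on each cell, a controlled leading behavior in that variable. Quasianalytic IF-systems as defined here are \emph{not} assumed closed under Weierstrass division; the paper explicitly works with classes for which no such preparation is available, and this is precisely why it replaces cells by immersed parameterizations and a theorem of the complement. You acknowledge that the inductive step ``requires a resolution of singularities,'' and your description of the roles of the two oracles in that resolution (precision oracle to detect when a strict transform lies in a coordinate hyperplane, approximation oracle for sign information at rational points) is accurate and matches the paper. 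But resolution of singularities by itself produces local normalizations in blow-up charts; it does not hand you a cylindrical stratification of the original space, and converting one into the other is exactly the nontrivial content that preparation theorems provide. Unless you supply such a conversion --- or restrict to IF-systems with preparation, which would not prove the theorem in the stated generality --- the cell-decomposition step is unjustified. A secondary issue: your ``finite case split on the behaviour at infinity'' is too quick; the paper handles unboundedness via the explicit reciprocal maps $\psi^{(j)}$ built into the definition of an $\S$-parameterization, which lets it treat bounded and unbounded coordinates uniformly inside the same immersive representation.
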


Very often, it is easy to decide the approximation oracle for $\S$, but the precision oracle for $\S$ can be very hard to decide, so the later oracle gets at the heart of the difficulty of deciding the theory of such a structure $\RR_{\S}$.

In the course of proving Theorem \ref{introThm:main}, we give a local resolution of singularities procedure which is effective in the approximation and precision oracles for $\S$, and we give numerous theorems pertaining to the model theory of such structures $\RR_{\S}$, of which we now name a few.  These theorems do not deal with any computability issues, so we now drop the assumption that $\S$ is computably indexed.

To state the theorems, we need some additional terminology.  We use the word {\bf definable} to mean first-order definable with parameters, and we use the word {\bf $0$-definable} to mean first-order definable without parameters.  The {\bf natural stratification} of a compact box $\prod_{i=1}^{n}[a_i,b_i]$ is the set of all Cartesian products of the form $C = \prod_{i=1}^{n} C_i$, where $C_i \in\{\{a_i\}, (a_i,b_i),\{b_i\}\}$ for each $i\in\{1,\ldots,n\}$.  A {\bf rational box} is a Cartesian product of intervals with endpoints in $\QQ\cup\{\pm\infty\}$.  An {\bf $\S$-polynomial function} is a function $f = (f_1,\ldots,f_m):A\to\RR^m$ defined on a rational box $A\subseteq\RR^n$ such that $f_1,\ldots,f_m$ are rational polynomials of the coordinate variables $x_1,\ldots,x_n$ and the partial derivatives of functions in $\S$.  An {\bf $\S$-manifold} is a set $M = \{x\in A : f(x) = 0\}$, where $A\subseteq\RR^n$ is a rational box and $f:A\to\RR^{n-d}$ is an $\S$-polynomial function such that the Jacobian matrix $\frac{\partial f}{\partial(x_{\lambda(1)},\ldots,x_{\lambda(n-d)})}$ is nonsingular on $M$, for some increasing function $\lambda:\{1,\ldots,n-d\}\to\{1,\ldots,n\}$. (These last two definitions are somewhat loosely stated versions of the more precise Definitions \ref{def:Spoly} and \ref{def:Smanifold}.)

Theorem \ref{introThm:main} will be deduced from an effective version of the following parameterization theorem (see Theorem \ref{thm:SparamDefinable}).

\begin{theorem}[Parameterization]\label{introThm:param}
Let $A\subseteq\RR^m$ be $0$-definable in $\RR_{\S}$.  Then there exist finite families of $\C$-analytic immersions $\{F^{(j)} : U^{(j)}\to\RR^m\}_{j\in J}$, bounded open rational boxes $\{B^{(j)}\}_{j\in J}$, and maps $\{\psi^{(j)}\}_{j\in J}$ such that
\[
A = \bigcup_{j\in J} \psi^{(j)}\circ F^{(j)}(B^{(j)}),
\]
where for each $j\in J$, $\psi^{(j)}(x_1,\ldots,x_m) = (x_{1}^{\sigma_{1}(j)},\ldots,x_{m}^{\sigma_{m}(j)})$ for some $\sigma_{1}(j),\ldots,\sigma_{m}(j)\in\{-1,1\}$, $U^{(j)}$ is open in $\RR^{d(j)}$ for some $d(j)\leq m$, the closure of $B^{(j)}$ is contained in $U^{(j)}$, and $F^{(j)}(B^{(j)})$ is contained in the natural domain of $\psi^{(j)}$.  Furthermore, if $I\subseteq\{1,\ldots,m\}$ and $\{x_i : x\in A\}$ is bounded for each $i\in I$, then we can take $\sigma_{i}(j) = 1$ for each $j\in J$ and $i\in I$.  Finally, for each $j\in J$ and each set $C$ in the natural stratification of the closure of $B^{(j)}$, there exist an integer $p\geq d(j)$, an $\S$-manifold $M\subseteq\RR^{p}$, and a coordinate projection $\Pi:\RR^{p}\to\RR^{d(j)+m}$ such that $\Pi$ defines a $\C$-analytic isomorphism from $M$ onto the graph of $F^{(j)}\Restr{C}$.
\end{theorem}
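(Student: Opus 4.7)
The plan is to deduce the theorem from a model-theoretic reduction followed by the local resolution of singularities procedure developed earlier in the paper. First I would use a quantifier elimination / cell decomposition result for $\RR_{\S}$ (in the style of Rolin--Speissegger--Wilkie, adapted to the present quasianalytic setting in which one does not assume $\RR\subseteq\C$) to reduce the problem to parameterizing basic $0$-definable sets of the form
\[
A' = \{x\in E : f(x)=0,\ g_1(x)>0,\ldots,g_k(x)>0\},
\]
where $E\subseteq\RR^m$ is a rational box and $f,g_1,\ldots,g_k$ are $\S$-polynomial functions. Since the conclusion of the theorem is closed under finite unions, it suffices to handle each such basic set.

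Next, I would reduce to the bounded case by a standard compactification trick. Cover $\RR^m$ by the finitely many closed sets obtained by choosing, for each coordinate $i$, either $|x_i|\leq 1$ or $|x_i|\geq 1$; on those coordinates satisfying $|x_i|\geq 1$, perform the substitution $x_i\mapsto 1/x_i$. This substitution is realized precisely by a map $\psi^{(j)}(x)=(x_1^{\sigma_1(j)},\ldots,x_m^{\sigma_m(j)})$ with $\sigma_i(j)\in\{-1,1\}$, and after clearing denominators by appropriate powers of $x_i$ (permitted because $\C$ is closed under division by variables) the pullback of $A'$ is again definable by $\S$-polynomial equations and inequalities on a bounded rational box. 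The ``furthermore'' clause is immediate: for coordinates $i\in I$ on which $A$ is already bounded, one never needs the inversion chart, so $\sigma_i(j)=1$ suffices there.

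The core step is then to parameterize each bounded basic set $A''\subseteq[-1,1]^m$ by $\C$-analytic immersions from open rational boxes. Here I invoke the local resolution of singularities procedure developed earlier in the paper, applied to the $\S$-polynomial functions defining $A''$: at each point of $\bar{A''}$, a finite composition of admissible blow-ups and coordinate changes produces a chart, parameterized by an open rational box, in which the defining functions are monomialized and $A''$ is locally the image of a $\C$-analytic immersion. Compactness of $\bar{A''}$, together with rationality of the blow-up centers, yields a finite family of such charts, providing the immersions $F^{(j)}:U^{(j)}\to\RR^m$ defined on open sets containing the closures of the rational boxes $B^{(j)}$.

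The main obstacle, and the final step, is to verify the $\S$-manifold condition on every stratum $C$ of the natural stratification of $\bar{B^{(j)}}$. On the open stratum $B^{(j)}$ this is automatic: the graph of $F^{(j)}\Restr{B^{(j)}}$ is cut out in $U^{(j)}\times\RR^m$ by the $\S$-polynomial graphing equations $y_i - F^{(j)}_i(u) = 0$, which are nonsingular because $F^{(j)}$ is an immersion, so the graph is itself an $\S$-manifold projecting identically onto its image. For a lower-dimensional stratum $C$ obtained by fixing some coordinates $u_i$ of $B^{(j)}$ at endpoints, one must show that the system defining $\Graph(F^{(j)}\Restr{C})$ remains nonsingular after substituting the endpoint values. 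This is precisely the transversality-to-exceptional-divisors property that the resolution procedure is designed to deliver: the admissible blow-ups are chosen so that the resolved map, together with the coordinate hyperplanes of $\bar{B^{(j)}}$, form a normal-crossings configuration. Restricting to any face $C$ therefore amounts to intersecting a nonsingular $\S$-polynomial subvariety transversally with a coordinate subspace, which is again an $\S$-manifold $M\subseteq\RR^p$; the appropriate coordinate projection $\Pi:\RR^p\to\RR^{d(j)+m}$ then identifies $M$ with $\Graph(F^{(j)}\Restr{C})$, as required.
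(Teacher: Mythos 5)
Your step~2 (the compactification via $x_i\mapsto x_i^{\pm 1}$) and the observation about the ``furthermore'' clause are sound and correspond to what the paper does in Lemma~\ref{lemma:SparamQF}. The two genuine gaps are in steps~1 and~4.

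The central gap is the omission of \emph{fiber cutting}. Your step~1 invokes a ``quantifier elimination / cell decomposition result for $\RR_{\S}$'' to reduce to sets of the form $\{x\in E : f(x)=0,\ g_1(x)>0,\ldots,g_k(x)>0\}$ with $E\subseteq\RR^m$. The paper establishes only model completeness of $\RR_{\Delta(\S)}$ (Theorem~\ref{introThm:MC}), not quantifier elimination, and model completeness is itself proven \emph{by} the combination of resolution, fiber cutting, and the theorem of the complement that you are attempting to shortcut --- so there is a circularity in addition to a mis-citation. What model completeness actually provides is an existential $\L_{\Delta(\S)}$-formula for $A$, i.e.\ $A = \Pi_m(B)$ for a quantifier-free-definable $B\subseteq\RR^{m+n}$. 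Resolving $B$ produces isomorphisms from open boxes onto pieces of $B$ in $\RR^{m+n}$, but composing those maps with $\Pi_m$ does not yield immersions in general, and the boxes may have dimension larger than~$m$. The paper's Proposition~\ref{prop:SparamFC} repairs this by a genuine fiber-cutting argument (cutting to the critical locus of a determinantal function along the fibers of $\Pi_m\circ F^{(j)}$), and Theorem~\ref{thm:SparamDefinable} then iterates resolution, fiber cutting, and the theorem of the complement. Your proposal, which asserts that resolution alone delivers immersions for $A$, contains no analogue of this step and fails as soon as a projection is present.

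The second gap concerns the $\S$-manifold clause. You argue that the graph of $F^{(j)}\Restr{B^{(j)}}$ is cut out ``by the $\S$-polynomial graphing equations $y_i - F^{(j)}_i(u) = 0$.'' The maps $F^{(j)}$ produced by resolution are $\C$-analytic --- compositions of blowings-up, inclusions of centers, implicitly defined translations, and linear changes of coordinates defined on open blowup sets --- and they are \emph{not} $\S$-polynomial, so those graphing equations are not an $\S$-polynomial system and cannot by themselves witness the $\S$-manifold condition. The entire purpose of the lifting machinery of Section~\ref{s:lifting} (basic $\S$-liftings, $(\S,E)$-liftings, and the Lifting Lemmas) is to record, step by step through the resolution procedure, a nonsingular $\S$-polynomial system in a higher-dimensional ambient space whose coordinate projection recovers the graph, and to verify that this data survives every admissible transformation and restricts correctly to every stratum $C$ of $\cl(B^{(j)})$. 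Your appeal to ``the transversality-to-exceptional-divisors property'' captures the geometric intuition, but it does not supply this bookkeeping, which is the bulk of the technical content behind the final clause of the theorem.
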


Even though this theorem is stated for the $0$-definable sets of $\RR_{\S}$, by applying it to $\RR_{\S\cup\RR}$ instead of $\RR_{\S}$ one sees that the definable sets of $\RR_{\S}$ also have a parameterization theorem.  (There is a technical point to note here.  In Theorem \ref{introThm:param} we must assume that $\S\subseteq\C$ for some quasianalytic IF-system $\C$, but we do not assume that $\RR\subseteq\C$, so it may be that $\S\cup\RR \not \subseteq \C$.  However, in Section \ref{s:IFconstants} we show that there exists a smallest quasianalytic IF-system $\C(\RR)$ containing $\C$ and $\RR$, so Theorem \ref{introThm:param} can be safely applied to $\RR_{\S\cup\RR}$ since $\S\cup\RR\subseteq\C(\RR)$.)  Note that this parameterization theorem implies that each definable set $A$ has dimension, with $\dim(A) = \max\{d(j) : j\in J\}$, where $J$ and each $d(j)$ are as in the theorem.  Also, the following is an immediate corollary of the last sentence of the parameterization theorem.

\begin{theorem}\label{introThm:MC}
The structure $\RR_{\Delta(\S)}$ is model complete, where $\Delta(\S)$ is the family of all partial derivatives of functions in $\S$.
\end{theorem}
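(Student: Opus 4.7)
The plan is to establish model completeness of $\RR_{\Delta(\S)}$ by showing that every $0$-definable set in $\RR_{\Delta(\S)}$ is existentially $0$-definable in $\RR_{\Delta(\S)}$. This suffices for model completeness with parameters, since any formula $\phi(\bar x,\bar y)$ defines a $0$-definable set in the joint tuple $(\bar x,\bar y)$, and substituting parameters for $\bar y$ in its existential $0$-definition yields an existential formula in $\bar x$ with those parameters. Because the partial derivatives of functions in $\S$ are $\L_{\S}$-definable by standard $\varepsilon$-$\delta$ limit characterizations, the $0$-definable sets of $\RR_{\S}$ and $\RR_{\Delta(\S)}$ coincide; it therefore suffices to show that every set $A\subseteq\RR^m$ that is $0$-definable in $\RR_{\S}$ admits an existential $\L_{\Delta(\S)}$-definition.

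Fix such an $A$ and apply Theorem \ref{introThm:param}, writing $A = \bigcup_{j\in J}\psi^{(j)}\circ F^{(j)}(B^{(j)})$. For each $j\in J$, the open box $B^{(j)}$ is itself the top stratum in the natural stratification of its closure, so the final sentence of the parameterization theorem supplies an $\S$-manifold $M_j\subseteq\RR^{p_j}$ and a coordinate projection $\Pi_j:\RR^{p_j}\to\RR^{d(j)+m}$ which maps $M_j$ bijectively onto $\Graph(F^{(j)}\Restr{B^{(j)}})$. Composing $\Pi_j$ with the coordinate projection onto the last $m$ coordinates presents $F^{(j)}(B^{(j)})$ as a coordinate projection of the $\S$-manifold $M_j$.

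The key observation is that every $\S$-manifold is quantifier-free $0$-definable in $\RR_{\Delta(\S)}$: by definition it is the zero-set of an $\S$-polynomial function inside a rational box, and $\S$-polynomial functions are polynomial expressions in the coordinate variables and the partial derivatives of functions in $\S$, with rational coefficients that are themselves $0$-definable in the real ordered field. Consequently each $F^{(j)}(B^{(j)})$ is existentially $0$-definable as the image of a quantifier-free $0$-definable set under a coordinate projection; post-composition with the quantifier-free semi-algebraic map $\psi^{(j)}$ (whose coordinates are identities or reciprocals) preserves this; and a finite union of existentially $0$-definable sets is existentially $0$-definable, giving the desired existential $0$-definition of $A$. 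The only slightly delicate point is the initial reduction from model completeness to existential $0$-definability of $0$-definable sets, a routine bookkeeping matter handling parameters as additional free variables; everything else is direct inspection of the output of the parameterization theorem, which is why the paper describes the result as an immediate corollary.
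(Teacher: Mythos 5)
Your proof is correct and matches the paper's own (parenthetical) route: the paper derives the theorem from Theorem~\ref{thm:compl}, but also notes that it follows immediately from the last sentence of Theorem~\ref{introThm:param}, and your argument is precisely an unpacking of that remark — expressing each $0$-definable $A$ as a finite union of images of quantifier-free $\L_{\Delta(\S)}$-definable $\S$-manifolds under coordinate projections and semialgebraic maps $\psi^{(j)}$, hence existentially $0$-definable. The initial reduction (that existential $0$-definability of all $0$-definable sets suffices, handling parameters as extra free variables) and the identification of the $0$-definable sets of $\RR_{\S}$ and $\RR_{\Delta(\S)}$ are both standard and correctly invoked.
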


In o-minimality, it is very common to prove model completeness theorems by showing that definable sets are finite unions of projections of quantifier-free definable manifolds, with each projection being an immersion.  Theorem \ref{introThm:param} gives such a representation of the set $A$:  to see this, for each $j\in J$ write $p(j)$, $M^{(j)}$ and $\Pi^{(j)}:\RR^{p(j)}\to\RR^{d(j)+m}$ for the data $p$, $M$ and $\Pi:\RR^p\to\RR^{d(j)+m}$ given by the theorem for $C = B^{(j)}$, write $\Pi^{(j)} = \Pi^{(j)}_{\dom}\times\Pi^{(j)}_{\im}$ with $\Pi^{(j)}_{\dom}(M^{(j)}) = B^{(j)}$ and $\Pi^{(j)}_{\im}(M^{(j)}) = F^{(j)}(B^{(j)})$, and write $N^{(j)} = \{(x,y)\in M^{(j)}\times\RR^m : y = \psi^{(j)}\circ\Pi^{(j)}_{\im}(x)\}$; then $N^{(j)}$ projects onto $\psi^{(j)}\circ F^{(j)}(B^{(j)})$ via $(x,y)\mapsto y$, and this projection is an immersion.  However, Theorem \ref{introThm:param} says more than this since each function $F^{(j)}$ is defined in a neighborhood of the closure of $B^{(j)}$, not just on $B^{(j)}$ itself, and because of this it is easy to show that Hironaka's uniformization theorem holds for the $0$-definable sets of $\RR_{\S}$.  We explain this point further after some additional definitions.

Consider an expansion $\R$ of the real field.  We say that $\R$ is {\bf polynomially bounded} if for every function $f:\RR\to\RR$ definable in $\R$, there exist $M,N > 0$ such that $|f(t)| \leq t^N$ for all $t > M$.  We say that $\R$ has {\bf $C^\infty$-uniformization} if for every compact $0$-definable set $K\subseteq\RR^m$ there exists a $0$-definable surjective $C^\infty$-map $f:M\to K$, where $M\subseteq\RR^n$ is a compact $C^\infty$ manifold of the same dimension as $K$.  Two first-order structures with the same universe, but possibly different signatures, are said to be {\bf definably equivalent} if they have the same $0$-definable sets.

By taking $\S=\C$ in Theorem \ref{introThm:OminPolyUnif} below, the following two theorems can be seen to be converses of one another.  Collectively, these two theorems specify the model-theoretic properties which characterize, up to definable equivalence, the types of structures studied in this paper.

\begin{theorem}\label{introThm:OminPolyUnif}
The structure $\RR_{\S}$ is a polynomially bounded o-minimal expansion of the real field with $C^\infty$-uniformization.
\end{theorem}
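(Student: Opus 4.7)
The plan is to extract all three properties from Theorem~\ref{introThm:param}, applied to $\RR_{\S\cup\RR}$ via $\C(\RR)$ whenever parameter-definable sets are involved. I expect the polynomial boundedness clause to be the main technical obstacle, since it requires a \L ojasiewicz-type comparison whose underlying input is the injective Taylor map hypothesis together with the immersion condition; the other two properties will follow fairly directly from the structure of the parameterization.

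For \emph{o-minimality}, I would take a definable $A\subseteq\RR$ and apply the Parameterization Theorem with $m=1$. Each $d(j)$ is $0$ or $1$: when $d(j)=0$ the piece is a single point, and when $d(j)=1$ the map $F^{(j)}$ is a $\C$-analytic immersion of an open interval into $\RR$, so $(F^{(j)})'$ has constant sign and $F^{(j)}(B^{(j)})$ is an interval. Since $F^{(j)}(B^{(j)})$ lies in the natural domain of $\psi^{(j)}(x)=x^{\pm 1}$ (so it avoids $0$ when $\sigma_1(j)=-1$), the map $\psi^{(j)}$ restricts to a homeomorphism of intervals, and $A$ emerges as a finite union of points and intervals.

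For \emph{polynomial boundedness}, I would let $f:\RR\to\RR$ be definable and parameterize its graph in $\RR^2$. End behavior at $t\to\infty$ comes only from pieces with $\sigma_1(j)=-1$, and on such a piece $t=1/F^{(j)}_1(u)$ for $u$ in a bounded open interval $B^{(j)}$, so $u$ must tend to an endpoint $u^*$ of $\overline{B^{(j)}}$ where $F^{(j)}_1(u^*)=0$. If $\sigma_2(j)=1$ then $f(t)=F^{(j)}_2(u)$ is bounded; if $\sigma_2(j)=-1$, I would write Taylor expansions $F^{(j)}_i(u)=\sum_k a_{i,k}(u-u^*)^k$: by the immersion hypothesis at least one of $(F^{(j)}_1)'(u^*)$ and $(F^{(j)}_2)'(u^*)$ is nonzero, and by injectivity of the Taylor map the vanishing orders $p$ of $F^{(j)}_1$ and $q$ of $F^{(j)}_2$ at $u^*$ are finite (note $F^{(j)}_2\not\equiv 0$ near $u^*$, since it is nonzero on $B^{(j)}$). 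Comparing leading terms then gives $|F^{(j)}_2(u)|\geq C\,|F^{(j)}_1(u)|^{q/p}$ near $u^*$, whence $|f(t)|\leq C'\,t^{q/p}$ on the piece; taking the maximum over the finite family of pieces produces a global polynomial bound.

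For \emph{$C^\infty$-uniformization}, I would take a compact $0$-definable $K\subseteq\RR^m$ of dimension $d$ and apply the Parameterization Theorem: since $K$ is bounded, every $\sigma_i(j)=1$, so $\psi^{(j)}=\id$ and $K=\bigcup_j F^{(j)}(B^{(j)})$; continuity and compactness give $F^{(j)}(\overline{B^{(j)}})=\overline{F^{(j)}(B^{(j)})}\subseteq K$, hence $K=\bigcup_j F^{(j)}(\overline{B^{(j)}})$. I would then choose a semialgebraic $C^\infty$ surjection $\pi^{(j)}:(S^1)^{d(j)}\to\overline{B^{(j)}}$ built componentwise from $(x,y)\mapsto(1+x)/2$ together with affine rescalings, pad with a factor $(S^1)^{d-d(j)}$ to obtain $M^{(j)}=(S^1)^d$, and set the map $M^{(j)}\to K$ to be $F^{(j)}\circ\pi^{(j)}$ precomposed with projection onto the first $d(j)$ circles. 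Placing the finitely many copies $\{M^{(j)}\}_{j\in J}$ disjointly in a large $\RR^n$ yields a compact $C^\infty$-manifold $M$ of dimension $d$, and the assembled map $M\to K$ is $C^\infty$, surjective, and $0$-definable because each $F^{(j)}|_{\overline{B^{(j)}}}$ is $0$-definable via the $\S$-manifold clause of Theorem~\ref{introThm:param}.
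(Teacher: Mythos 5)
Your proposal is correct and follows the same overall plan as the paper: extract all three properties from Theorem~\ref{introThm:param}, applied to $\RR_{\S\cup\RR}$ via $\C(\RR)$ for sets defined with parameters. The o-minimality and $C^\infty$-uniformization arguments coincide in substance with the paper's (the paper also builds $M^{(j)}$ as products of circles projecting onto $\cl(B^{(j)})$; your padding of each piece up to dimension $d$ is a mild refinement that guarantees $M$ is pure-dimensional, whereas the paper's $\coprod_j M^{(j)}$ has components of varying dimension).

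The genuinely different step is polynomial boundedness. The paper reduces to studying $\lim_{t\to 0^+}f(t)/t^r$ for a definable $f:(0,\epsilon)\to\RR$, then invokes the curve selection lemma (Lemma~\ref{lemma:curveSelection}) to obtain a $\C$-analytic germ $g=(g_1,g_2):[0,1]\to\RR^2$ with $g(0)=0$ and $g((0,1])\subseteq\Graph(f)$, and finishes by a substitution argument (writing $g_1(t)=(tu(t))^d$, $g_2(t)=(tv(t))^e$ and inverting $s=tu(t)$ via the implicit function theorem). You instead parameterize $\Graph(f)\subseteq\RR^2$ directly, isolate the pieces producing the end behavior via the sign vector $\sigma^{(j)}$, and compare Taylor leading terms of $F^{(j)}_1$ and $F^{(j)}_2$ at the boundary point $u^*$. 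Both versions ultimately rest on the same quasianalytic input --- finite vanishing order at $u^*$ because the relevant $\C$-analytic function is nonzero on $B^{(j)}$ and hence cannot have a flat Taylor series --- but your route avoids passing through curve selection, which is a reasonable economy. One small inaccuracy: you appeal to the immersion hypothesis to claim at least one of $(F^{(j)}_1)'(u^*),(F^{(j)}_2)'(u^*)$ is nonzero, but Theorem~\ref{introThm:param} only guarantees the immersion on $B^{(j)}$, not at the boundary point $u^*$; the derivative there could vanish. Fortunately this claim is superfluous --- the finite vanishing orders $p$ and $q$ that your leading-term comparison actually uses follow from quasianalyticity alone --- so the argument survives once that parenthetical remark is dropped.
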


\begin{theorem}\label{introThm:OminPolyUnifChar}
If $\R$ is a polynomially bounded o-minimal expansion of the real field with $C^\infty$-uniformization, then $\R$ is definably equivalent to $\RR_{\C}$ for some quasianalytic IF-system $\C$.
\end{theorem}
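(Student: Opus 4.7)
The plan is to construct the desired quasianalytic IF-system $\C$ directly from $\R$, by declaring $\C$ to consist of all $C^\infty$ functions $f:[-r,r]\to\RR$ (with $r\in\QQ_{+}^{n}$, $n\in\NN$) that are $0$-definable in $\R$ and admit a $C^\infty$ extension to some open neighborhood of $[-r,r]$.  Once $\C$ is shown to be a quasianalytic IF-system, the inclusion of $0$-definable sets of $\RR_{\C}$ into those of $\R$ is immediate from the construction, so the substantive work lies in verifying the IF-system axioms and establishing the reverse inclusion using $C^\infty$-uniformization.

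I would first check that $\C$ is an IF-system.  The ring structure and closure under function composition are automatic from the corresponding preservation properties of $0$-definability.  Closure under division by variables follows from Hadamard's lemma: if $f\in\C$ vanishes on $\{x_i=0\}$, then $f/x_i$ extends to a $C^\infty$ function on a neighborhood of $[-r,r]$, and this extension is uniquely determined and hence $0$-definable.  Closure under implicit functions follows similarly, since the $C^\infty$ solution supplied by the implicit function theorem is unique and thus inherits $0$-definability.

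Next, I would verify that $\C$ has injective Taylor maps.  Here I would invoke the polynomial boundedness of $\R$: by C.\ Miller's asymptotic theorem for polynomially bounded o-minimal expansions of the real field, every nonzero unary $0$-definable germ at a point has finite order of vanishing, so no such germ is flat.  To pass to several variables, I would argue that a $C^\infty$ function $f$ flat at a point $a$ is flat along each ray $t\mapsto a+tv$ by the chain rule, hence identically zero along that ray by the one-variable case, and therefore identically zero on some neighborhood of $a$ after uniformizing the vanishing radius over the compact unit sphere of directions using o-minimality.

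Finally, I would show that every $0$-definable set $A\subseteq\RR^{m}$ of $\R$ is $0$-definable in $\RR_{\C}$.  Using a $0$-definable coordinatewise semialgebraic diffeomorphism $\RR\to(-1,1)$, I reduce to the case that $A$ is bounded; taking closures and inducting on dimension using the boundary strata, I reduce further to the case that $A$ is compact.  The $C^\infty$-uniformization hypothesis then yields a surjective $0$-definable $C^\infty$ map $F:M\to A$ from a compact $0$-definable $C^\infty$ manifold $M$; embedding $M$ into some $\RR^{N}$ by a $0$-definable $C^\infty$ embedding and covering its image by finitely many coordinate graph charts over compact rational sub-boxes, the relevant restrictions of $F$ and of the embedding functions become members of $\C$, so $A$ is a finite union of projections of sets quantifier-free definable in $\RR_{\C}$.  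The main obstacle I anticipate is the bookkeeping in this last step, particularly ensuring that the reduction from an arbitrary $0$-definable set to compact, $C^\infty$-uniformizable pieces preserves $0$-definability and that all auxiliary charts, embeddings, and coordinate pieces can be arranged to have domains which are genuine compact rational boxes as required by the definition of $\C$.
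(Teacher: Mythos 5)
Your proposal follows the paper's proof (Theorem \ref{thm:strucChar}) step for step: the same system $\C$ built from $0$-definable $C^\infty$ restrictions to compact rational boxes, the same reduction from arbitrary $0$-definable sets first to bounded sets (by a semialgebraic compression) and then to compact sets (via $A=\cl(A)\setminus\fr(A)$), and the same use of $C^\infty$-uniformization followed by pulling back through finitely many charts over compact rational boxes.  The one place the paper simply cites Example \ref{ex:IFsystems}.6 and C.~Miller \cite{CM95}, you attempt to verify directly; the IF-system verifications (ring operations, composition, Hadamard division, implicit functions plus uniqueness to get $0$-definability of the solution) are all sound.

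Your re-derivation of multi-variable quasianalyticity from the one-variable case, however, has a genuine gap.  First, the one-variable statement you need is for germs definable \emph{with parameters}, not $0$-definable germs: the direction $v$ ranging over the sphere is a parameter, so $t\mapsto f(a+tv)$ is only definable, not $0$-definable (this is a cosmetic fix, since Miller's result holds with parameters).  Second and more seriously, the ``uniformization'' step is unjustified.  Vanishing of $f$ along each ray from $a$ gives a definable function $\rho:S^{n-1}\to(0,\infty]$ recording the vanishing radius in each direction, and compactness of $S^{n-1}$ together with o-minimality does \emph{not} force $\inf_v\rho(v)>0$; a definable positive function on a compact definable set can have infimum zero.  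Indeed, outside the polynomially bounded o-minimal setting the implication ``flat along every ray through $a$ $\Rightarrow$ identically zero near $a$'' is simply false for $C^\infty$ functions (consider $e^{-1/|x|^2}$ multiplied by an angular bump), which is precisely why Miller's multi-variable quasianalyticity theorem is a nontrivial result rather than a corollary of the one-variable case.  The repair is to do what the paper does: cite Miller \cite{CM95}, whose theorem is already stated in several variables and gives exactly the quasianalyticity of the class $\C$.
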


\subsection{Background and Motivation}
The motivation for this work is linked closely to the historical development of o-minimality, which is a subfield of model theory generalizing semialgebraic and subanalytic geometry.   The starting point is Tarski's theorem \cite{Tarski} stating that the theory of the real field is decidable.  (This theorem fits into the framework of this paper by taking $\S$ to be empty.)  He proved this by showing, in an effective manner, that the real ordered field has quantifier elimination (namely, that images of semialgebraic sets under coordinate projections are semialgebraic).  In the same paper, he also asked a very influential question: is the theory of the real exponential field decidable?  (The real exponential field is the expansion of the real field by the exponential function $\RR\to\RR:x\mapsto e^x$.)  In the early days of o-minimality, Van den Dries \cite{vdD84} made a very convincing argument that, from a practical point of view, this is the wrong question to ask about the real exponential field.  His argument was that in order to decide just the quantifier-free theory of this structure, one would have to answer many difficult open questions of transcendental number theory concerning the real exponential function, and for this reason, we should concentrate on studying the geometric and model-theoretic properties of the real exponential field, not the decidability of its theory.  And since that time, o-minimality has prospered into a thriving field of study which, for the most part, has concentrated on geometric and model-theoretic questions, but not on decidability.

However, in contrast to this stands the work of Macintyre and Wilkie \cite{MW}, who proved that the theory of the real exponential field is decidable if Schanuel's conjecture is true (Schanuel's conjecture is an open problem of transcendental number theory).  Simply put, this theorem means that the number-theoretic questions of concern to Van den Dries are really the \emph{only} stumbling block one must overcome to decide the theory of the real exponential field.  But since a proof of Schanuel's conjecture (or a suitable replacement if the conjecture is false) appears to be no easy feat, this is still a significant stumbling block, and the following question is left open by Macintyre and Wilkie's work:
\begin{enumerate}
\item[($*$)]
Does there exist any  o-minimal expansion of the real field with a decidable theory which defines a transcendental function?
\end{enumerate}
It is not hard to use Tarski's theorem to show that an expansion of the real field by a transcendental constant has a decidable theory, so in Question ($*$), it should be understood that the transcendental function is not $0$-ary.  This question is much less ambitious than the question addressed by Macintyre and Wilkie.  The question is not whether there is an interesting, natural, or useful o-minimal expansion of the real field with a decidable theory which defines a transcendental function.  The word ``any'' is literally interpreted, so this includes expansions of the real field by functions which do naturally appear in ``ordinary'' mathematics.

In \cite{DJMconstrDec}, which is a sequel to this paper, the author uses the characterization of decidability given here to answer the question ($*$) in the affirmative.  In fact, it is shown that in a certain topological sense, there are many o-minimal expansions of the real field with decidable theories which define transcendental functions.  The strategy of proof employed here and its sequel began with the following thought process: Schanuel's conjecture is a statement claiming that the exponential function is very generic with respect to rational polynomial equations, and in the isolated place in \cite{MW} where Macintyre and Wilkie actually use Schanuel's conjecture, this genericity property is used, but no other special properties of the exponential function are used in a critical way.  Therefore, if one could specially construct suitably generic functions, one should be able to adapt Macintyre and Wilkie's use of Schanuel's conjecture to obtain decidability results.  However, Macintyre and Wilkie's proof, overall, relies on Wilkie's proof that the real exponential field is model complete \cite{W96}, and this proof \emph{does} heavily rely on special properties of the exponential function (particularly, that it is Pfaffian).  Thus, in order to have complete freedom to construct sufficiently generic functions to obtain decidability results, Wilkie's model completeness proof should be replaced by a much more general model completeness construction which does not require that the functions in $\S$ satisfy any special algebraic or differential equations.  This general model completeness construction is the content of this paper, and the construction of generic families of functions $\S$ for which $\Th(\RR_{\S})$ is decidable is the main content of its sequel \cite{DJMconstrDec}.  The model completeness construction given here is a significantly revised adaptation of the construction of Rolin, Speissegger, and Wilkie \cite{RSW} and has been turned into a decision procedure for the theory of $\RR_{\S}$, relative to our two oracles for $\S$.

While this paper was being written, Jones and Servi \cite{JonesServi} also answered the question ($*$) in the affirmative by proving that the expansion of the real field by a power function $(0,+\infty)\to\RR:x\mapsto x^\alpha$ has a decidable theory if $\alpha$ is a computable real which is not $0$-definable in the real exponential field.  Both groups of authors answer ($*$) in a similar way, by expanding the real field by function(s) constructed in an artificial way so as to satisfy certain computability properties and also certain genericity properties.  However, neither work subsumes the other since they are about different types of o-minimal structures, and the techniques used in their proofs are quite different.  Jones and Servi's proof uses model-theoretic techniques, in that it relies on giving computable axiomatizations of theories.  And, in addition to adapting Macintyre and Wilkie's use of Schanuel's conjecture, Jones and Servi's proof also adapts many parts of Wilkie's model completeness construction \cite{W96} and relies on a recently proven Schanuel-like property of certain power functions due to Bays, Kirby, and Wilkie \cite{BKW08}.  The proof given here and its sequel relies on no such number-theoretic theorems, and it is based on a characterization of decidability, rather than just a sufficient condition for decidability.  This characterization of decidability is of interest in and of itself, probably more so than the answer to ($*$) given in the sequel.  The characterization is proven by a  geometric decision procedure, without recourse to enumeration of theories, and it could seemingly just as well be used to understand undecidability or relative decidability issues as it could be used to understand decidability.  Also, in the quasianalytic setting considered here, it gives some perspective on the relationship between computable analysis and effective model theory:  to conduct computable analysis one uses the approximation oracle for $\S$, but to decide the theory of $\RR_{\S}$ one must additionally use the precision oracle for $\S$.

\subsection{Notation}
We now fix some notation to be used throughout the paper.
\begin{enumerate}{\setlength{\itemsep}{5pt}
\item\emph{Miscellaneous}:

A {\bf family} is an indexed collection $C = \{c_i\}_{i\in I}$, where $I$ is an index set.  The map $I\to\{c_i : i\in I\} : i\mapsto c_i$ need not be injective.  We use standard set notation with families as if they were sets, such as $c\in C$ and $C\subseteq D$.

For any map $f:A\to B$,
\[
\dom(f) = A
\quad\text{and}\quad
\im(f) = f(A).
\]
For any $x\in\RR$, define the {\bf sign of $x$} by
\[
\sign(x) = \begin{cases}
1,  & \text{if $x > 0$,} \\
0,  & \text{if $x = 0$,} \\
-1, & \text{if $x < 0$.}
\end{cases}
\]
Write
\[
\QQ_+ = \QQ\cap(0,+\infty).
\]
Write $\NN = \{0,1,2,3,\ldots\}$ for the set of natural numbers. Define the {\bf support of $\alpha = (\alpha_1,\ldots,\alpha_n)\in\NN^n$} by
\[
\supp(\alpha) = \{i\in\{1,\ldots,n\} : \alpha_i \neq 0\}
\]
and the {\bf support of $A\subseteq\NN^n$} by
\[
\supp(A) = \bigcup_{\alpha\in A}\supp(\alpha).
\]
Empty sums equal $0$, and empty products equal $1$.
Define
\[
\RR^0 = \{0\},
\]
and for any $n\in\NN$ and $A\subseteq\RR^n$, define $\RR^0\times A = A\times \RR^0 = A$.

\item\emph{Topology}:

For any $A\subseteq\RR^n$, write $\cl(A)$, $\Int(A)$, $\bd(A) = \cl(A)\setminus \Int(A)$, and $\fr(A) = \cl(A)\setminus A$ for the topological closure, interior, boundary, and frontier of $A$ in $\RR^n$, respectively.  If $A\subseteq B\subseteq\RR^n$, the relativization of these concepts to the subspace $B$ are written as $\cl_B(A)$, $\Int_B(A)$, $\bd_B(A)$, and $\fr_B(A)$.

\item\emph{Coordinate Projections}:

Consider an injective map $\lambda:\{1,\ldots,d\}\to\{1,\ldots,n\}$ for integers $0 \leq d \leq n$.  If $x = (x_1,\ldots,x_n)$ are coordinates on $\RR^n$, write
\[
x_\lambda = (x_{\lambda(1)},\ldots,x_{\lambda(d)}),
\]
and define $\Pi_\lambda:\RR^n\to\RR^d$ by
\[
\Pi_\lambda(x) = x_\lambda,
\]
with the understanding that when $d=0$, $\lambda$ is the empty map and  $\Pi_{\emptyset}(x_1,\ldots,x_n) = 0$.  We call any such map $\Pi_\lambda$ a {\bf coordinate projection}.  We call the map $\lambda$ the {\bf name} for the coordinate projection $\Pi_\lambda$.

If $\lambda':\{1,\ldots,n-d\}\to\{1,\ldots,n\}$ is an injective map such that $\im(\lambda)\cup\im(\lambda') = \{1,\ldots,n\}$, we say that $\lambda'$ is {\bf complementary to $\lambda$}, and say that the coordinate projection $\Pi_{\lambda'}:\RR^n\to\RR^{n-d}$ is {\bf complementary to $\Pi_\lambda$}.

An important special case is when $\lambda(i) = i$ for all $i\in\{1,\ldots,d\}$.  In this cases we will also write $\Pi_d$ for $\Pi_\lambda$.  Namely,
\[
\Pi_d(x_1,\ldots,x_n) = (x_1,\ldots,x_d).
\]

If we do not wish to explicitly mention the map $\lambda$, we will use more generic notation to denote coordinate projections, such as $\Pi$, $\Pi'$, $\Pi^{(j)}$, etc., and will simply say in words that the map is a coordinate projection.

\item\emph{Multi-index Notation}:

For any tuples $x=(x_1,\ldots,x_n)$ and $y = (y_1,\ldots,y_n)$ in $\RR^n$, write
\begin{eqnarray*}
x \leq y
    & \text{iff} &
    \text{$x_i\leq y_i$ for all $i\in \{1,\ldots,n\}$,} \\
x < y
    & \text{iff} &
    \text{$x_i < y_i$ for all $i\in \{1,\ldots,n\}$.}
\end{eqnarray*}
For any $\alpha = (\alpha_1,\ldots,\alpha_n)\in\NN^n$, write $\alpha! = \alpha_1!\cdots\alpha_n!$ , $|\alpha| = \alpha_1+\cdots+\alpha_n$, and $x^\alpha = x_{1}^{\alpha_1}\cdots x_{n}^{\alpha_n}$.
For any $m\in\NN$, write
\begin{eqnarray*}
\NN^{n}_{<m} & = & \{\alpha\in\NN^n : |\alpha| < m\}, \\
\NN^{n}_{m} & = & \{\alpha\in\NN^n : |\alpha| = m\}.
\end{eqnarray*}

Consider a map $f=(f_1,\ldots,f_m):U\to\RR^m$, where $U$ is open in $\RR^n$.  If $f$ is $p$-times differentiable, then for any $\alpha=(\alpha_1,\ldots,\alpha_n)\in\NN^n$ with $|\alpha|\leq p$, write
\[
\PDn{\alpha}{f}{x}(x)
=
\left(
\frac{\partial^{|\alpha|} f_1}{\partial x_{1}^{\alpha_1} \cdots \partial x_{n}^{\alpha_n}}(x),
\ldots,
\frac{\partial^{|\alpha|} f_m}{\partial x_{1}^{\alpha_1} \cdots \partial x_{n}^{\alpha_n}}(x)
\right).
\]
If $f$ is differentiable, then for any injective map $\lambda:\{1,\ldots,d\}\to\{1,\ldots,n\}$, write
\[
\PD{}{f}{x_\lambda}(x)
=
\left(
\PD{}{f_i}{x_{\lambda(j)}}(x)
\right)_{(i,j)\in\{1,\ldots,m\}\times\{1,\ldots,d\}},
\]
which is an $m\times d$ matrix.

So, for example, if $g:\RR^m\to\RR^n$ and $f:\RR^n\to\RR^p$ are differentiable maps, and if $x = (x_1,\ldots,x_m)$ and $y = (y_1,\ldots,y_n)$ are coordinates on $\RR^m$ and $\RR^n$, respectively, then the chain rule can be written as
\[
\PD{}{(f\circ g)}{x}(x) = \PD{}{f}{y}(g(x))\PD{}{g}{x}(x).
\]

\item\emph{Other Index Sets}:

For any sets $A$ and $B$, we write $A^B$ to denote either the set of all functions from $B$ into $A$, or the set of all tuples $(a_b)_{b\in B}$ with entries $a_b$ in $A$.  These two meanings for $A^B$ are clearly identifiable, the difference between $B\to A:b\mapsto a_b$ and $(a_b)_{b\in B}$ being purely notational.

Consider $n\in\NN$ and $E\subseteq\{1,\ldots,n\}$, and write $x = (x_1,\ldots,x_n)$ for coordinates on $\RR^n$.  We view $\RR^E$ as the set of tuples of real numbers indexed by $E$, rather than the set of functions from $E$ into $\RR$, and by convention $\RR^\emptyset = \RR^0$ and $\RR^{\{1,\ldots,n\}} = \RR^n$.  Also, when working with index sets contained in $\{1,\ldots,n\}$ and $n$ is clear from context, we will use the superscript $c$ to denote complementation in $\{1,\ldots,n\}$, writing $D^c = \{1,\ldots,n\}\setminus D$ for any $D\subseteq\{1,\ldots,n\}$.

Define
\[
x_E = (x_i)_{i\in E},
\]
and define the projection $\Pi_E:\RR^n\to\RR^E$ by
\[
\Pi_E(x) = x_E.
\]

Generalizing our previous definitions for $\NN^n$, for any $\alpha = (\alpha_i)_{i\in E}\in\NN^E$ write $\alpha! = \prod_{i\in E} \alpha_i$, $|\alpha| = \sum_{i\in E}\alpha_i$, and $x_{E}^{\alpha} = \prod_{i\in E} x_{i}^{\alpha_i}$, and for any $m\in\NN$ write $\NN^{E}_{<m} =  \{\alpha\in\NN^E : |\alpha| < m\}$ and $\NN^{E}_{m} = \{\alpha\in\NN^E : |\alpha| = m\}$.

If $D_1,\ldots,D_k\subseteq\{1,\ldots,n\}$ are disjoint, write
\begin{equation}\label{eq:concat}
(x_{D_1},\ldots,x_{D_k}) = (x_i)_{i\in D_1\cup \cdots\cup D_k},
\end{equation}
and for any sets $A_1\subseteq\RR^{D_1},\ldots,A_k\subseteq\RR^{D_k}$,
define $A_1\times\cdots\times A_k \subseteq\RR^{D_1\cup\cdots\cup D_k}$ by
\[
A_1\times\cdots\times A_k = \{(a_1,\ldots,a_k) : a_1\in A, \ldots, a_k\in A_k\}.
\]
If $D,D'\subseteq\{1,\ldots,n\}$ are disjoint, $U\subseteq\RR^D$, and $g:U\to\RR^{D'}$, then the graph of $g$ is the subset of $\RR^{D\cup D'}$ defined by $\{(x_D,g(x_D)) : x_D\in U\}$.

We comment on this notation.  If $\lambda:\{1,\ldots,d\}\to\{1,\ldots,n\}$ is an injective map such that $\im(\lambda) = E$, then the difference between the projections $\Pi_\lambda:\RR^n\to\RR^d$ and $\Pi_E:\RR^n\to\RR^E$ is purely notational, the difference being whether the tuples in the images of these maps are indexed by $\{1,\ldots,d\}$ or by $E$.  The advantage of $\Pi_E$ over $\Pi_\lambda$ is due to the way that tuples indexed by disjoint subsets of $\{1,\ldots,n\}$ can be easily combined using the notation \eqref{eq:concat} without having any misconceptions about the ordering of the components of the tuples.  For example, $x = (x_E, x_{E^c}) = (x_{E^c},x_E)$.  For another example, if $i\in\{1,\ldots,n\}$ and $f:\RR^n\to\RR$ is a $C^1$-map such that $f(0) = 0$ and $\PD{}{f}{x_i}(0)\neq 0$, then by the implicit function theorem, there exists a $C^1$-map $g:U\to\RR^{\{i\}}$ defined in a neighborhood $U$ of the origin in $\RR^{\{i\}^c}$ such that $g(0) = 0$ and $f(x_{\{i\}^c}, g(x_{\{i\}^c})) = 0$ on $U$.  The point of this second example is that, even though the objects $\RR^{\{i\}^c}$ and $\RR^{\{i\}}$ are clearly identifiable with the objects $\RR^{n-1}$ and $\RR$, respectively, having $g$ map from a subset of $\RR^{\{i\}^c}$ into $\RR^{\{i\}}$ allows us to write $(x_{\{i\}^c}, g(x_{\{i\}^c}))$, which is an object in $\RR^n$, and the graph of $g$ equals the zero set of $f$ in a suitable neighborhood of $0$.
}\end{enumerate}

\section{Development of the Proof}\label{s:developProof}

In this section we discuss how the proof of Theorem \ref{introThm:main} was developed.  The purpose of the discussion is to highlight some key ideas of the proof in a heuristic fashion while avoiding many technical details, to discuss the relationship between this work and the previous work of others, and to explain why certain choices were made in the proof where one technique is favored over another.

The proof of Theorem \ref{introThm:main} is an effective version of the construction of Rolin, Speissegger, and Wilkie \cite{RSW} --- henceforth to be called the {\bf RSW-construction} --- which is a model completeness proof for $\RR_{\Delta(\S)}$ based on a local resolution of singularities procedure, where $\Delta(\S)$ is the family of all partial derivatives of functions in $\S$.  The majority of the paper (much of Sections \ref{s:compTop}-\ref{s:pres}) consists of a methodical development of all the tools needed to prove an effective local resolution of singularities theorem, which is given at the very beginning of Section \ref{s:desingThms}. The remaining of the paper (Sections \ref{s:desingThms}-\ref{s:MT}) then comes together rapidly in comparison.

Gabrielov \cite{Gab96} showed that $\RR_{\Delta(\S)}$ is model complete if the functions in $\S$ are analytic, and it is natural to wonder if his proof could be used instead of the RSW-construction in order to avoid all the work needed to prove an effective resolution of singularities theorem.  However, his proof relies on the following two facts:
\begin{enumerate}{\setlength{\itemsep}{3pt}
\item
If $A\subseteq(0,1]^2$ is globally subanalytic, and if $y > 0$ for all $(x,y)\in\cl(A)$ such that $x>0$, then there exist $c,\kappa > 0$ such that $y > c x^\kappa$ for all $(x,y)\in A$.\\
(\emph{Note}: The number $\kappa$ given by this property is used to determine how far to expand certain Taylor expansions in the proof of \cite[Lemma 1]{Gab96}.)

\item
If $\F$ is a family of real-valued analytic functions defined in a neighborhood of a compact set $K\subseteq\RR^n$, then there exists a finite $\G\subseteq\F$ such that
\[
\{x\in K : \text{$f(x) = 0$ for all $f\in\F$}\}
=
\{x\in K : \text{$f(x) = 0$ for all $f\in\G$}\}.
\]
(\emph{Note}:  This property, called topological Noetherianity, is used in the proof of \cite[Lemma 2]{Gab96} to know that the desingularization procedure it gives must eventually stop.)
}\end{enumerate}
Gabrielov's proof actually goes through when $\S\subseteq\C$ for an arbitrary quasianalytic IF-system $\C$, because property 1 can be proven by applying the curve selection lemma (see Lemma \ref{lemma:curveSelection}) to the component of $(0,1]^2\setminus\cl(A)$ whose closure in $\RR^2$ contains $(0,1]\times\{0\}$, and property 2 is a consequence of local resolution of singularities (see Lemma \ref{lemma:topNoeth}, which is also proven in \cite{DJMthesis} and \cite{BM04}).  Since Lemmas  \ref{lemma:curveSelection} and \ref{lemma:topNoeth} are proven using the RSW-construction and its main tool, local resolution of singularities, it appears that to make Gabrielov's proof effective in our oracles for $\S$, one could not avoid proving an effective local resolution of singularities procedure.  And once this is done, it makes more sense to use the RSW-construction since the additional work required is not that substantial.

To get an understanding of why resolution of singularities is so useful, it is helpful to compare this technique with Gabrielov's desingularization lemma, \cite[Lemma 2]{Gab96}.  For simplicity, consider $A = \{x\in[-1,1]^n : f(x) = 0\}$ for an analytic function $f:[-1,1]^n\to\RR$.  The proof of \cite[Lemma 2]{Gab96}  repeatedly applies the implicit function theorem to express $A$ as a finite, disjoint union of semianalytic manifolds which are implicitly defined from functions in the $\QQ$-algebra generated by the partial derivatives of $f$.  These manifolds are generally not compact, and as a result, it is difficult to tell in mid-process when the collection of manifolds constructed thus far actually cover all of $A$, at which point the procedure stops.  Topological Noetherianity is used to see that the process must stop eventually, whenever that may be.  Another way to partition $A$ into such manifolds is to consider the level sets of Bierstone and Milman's local invariant for $f$, which we shall call strata (see \cite{BM91} or \cite{BM97}).  This is similar to Gabrielov's proof because the local invariant is defined through repeated application of the implicit function theorem.  But it has an advantage: the local invariant organizes the strata, and in such a way so that the stratum with the maximum value of the local invariant is compact.  So to construct this maximum stratum, one only needs to apply the implicit function theorem on compact sets, which can be done effectively if the function $f$ satisfies certain computability assumptions.  And fortunately, this maximum stratum is the only stratum that one needs to compute, since it is used as the center of blowing-up.  After applying this blowing-up, the new maximum value of the local invariant is lowered, and one repeats the process of only applying the implicit function theorem on compact sets.

\subsection{The RSW-Construction}
We now give an exposition of the RSW-construction.  The goal is to show that $\RR_{\Delta(\S)}$ is model complete.  By using a number of rather standard reductions,\footnote{
    There was a comment added to the end of \cite{RSW} in response to an only half-thought-out observation I made shortly after the paper was published electronically: the comment states that the proof in \cite{RSW} only shows that $\RR_{\Delta(\S)}$ is model complete if $\S$ has the extension property, meaning that for each $f:[-r,r]\to\RR$ in $\S$ there exists $g:[-s,s]\to\RR$ in $\S$ such that $r < s$ and $f(x) = g(x)$ on $[-r,r]$.  This extra assumption that $\S$ has the extension property is not actually needed, and I apologize for my critique on such a trivial detail.  I am mentioning this here because one of the ``standard'' reductions alluded to above, which I failed to see when this comment was added to \cite{RSW}, involves first replacing $\S$ with another family $\S_{\ext}\subseteq\C$ such that $\RR_{\Delta(\S)}$ and $\RR_{\Delta(\S_{\ext})}$ have the same existentially $0$-definable sets and $\S_{\ext}$ has the extension property.  The proof in \cite{RSW} shows that $\RR_{\Delta(\S_{\ext})}$ is model complete, so $\RR_{\Delta(\S)}$ is also model complete.  There are many ways to construct such a family $\S_{\ext}$, but one way is to compose the functions in $\S$ with simple quadratic functions near the boundaries of their domains.  For example, suppose $\S = \{f\}$ for a single function $f:[-1,1]\to\RR$.  Then one could let $\S_{\ext} = \{f_{(i,r)}\}_{(i,r)\in\{-1,0,1\}\times(\QQ\cap(0,1))}$, where for each $r\in\QQ\cap(0,1)$ the functions $f_{(-1,r)},f_{(0,r)},f_{(1,r)}:[-r,r]\to\RR$ are defined by $f_{(-1,r)}(x) = f(-1+x^2)$, $f_{(0,r)}(x) = f(x)$, and $f_{(1,r)}(x) = f(1-x^2)$.
    }
it suffices to fix integers $0\leq m\leq n$ and a set $A\subseteq[-1,1]^n$ of the form
\begin{equation}\label{eq:Adefined}
A = \left\{x\in[-1,1]^n : \bigwedge_{s=1}^{k} \sign(f_s(x)) = \sigma_j\right\}
\end{equation}
for some $\sigma_1,\ldots,\sigma_k\in\{-1,0,1\}$ and some $\C$-analytic functions
$f_1,\ldots,f_k:U\to\RR$ which are existentially $0$-definable in $\RR_{\Delta(\S)}$, where $U$ is a neighborhood of $[-1,1]^n$, and then prove that $[-1,1]^m\setminus\Pi_m(A)$ is existentially $0$-definable in $\RR_{\Delta(\S)}$.
(Saying that $f_1,\ldots,f_k$ are ``$\C$-analytic'' means that they are locally represented by functions in $\C$ --- see Definition \ref{def:Canalytic} for a precise definition.)  To accomplish this, the RSW-construction uses three techniques: a local resolution of singularities procedure based on Bierstone and Milman \cite[Theorem 4.4]{BM88}, fiber cutting, and the theorem of the complement of Van den Dries and Speissegger \cite[Theorem 2.7]{vdDS98}.

The resolution of singularities procedure in \cite{RSW} uses power substitutions and local blowings-up with centers of codimension $2$, but here we shall speak as if the RSW-construction uses only local blowings-up with centers of arbitrary codimension because that is what is done in this paper (and in \cite[Theorem 4.4]{BM88}), and whether or not power substitutions are used is rather irrelevant.  Also, the resolution procedure in \cite{RSW} relies on the assumption that $\RR\subseteq\S$, so it only proves that $\RR_{\Delta(\S)}$ is model complete when $\RR\subseteq\S$.  However, here we will not assume that $\RR\subseteq\S$.  The author first showed in his Ph.D. thesis \cite{DJMthesis} how to modify the resolution procedure of \cite{RSW} so as to remove the assumption that $\RR\subseteq\S$, and of course, the more sophisticated resolution procedure used here also does not assume that $\RR\subseteq\S$ since we require that $\S$ is recursively indexed in all of our effectivity results.  All of the noneffective model-theoretic results in this paper can actually be proven by the simpler construction given in the thesis \cite{DJMthesis}.
\vspace*{5pt}

\noindent\emph{The Resolution Procedure}:

The RSW-construction applies its local resolution procedure to the function $g:U\to\RR$ defined by
\[
g(x) = \left(\prod_{s=1}^{k} f_s(x)\right)\left(\prod_{i=1}^{n}(1-x_i)(1+x_i)\right),
\]
for $f_1,\ldots,f_k$ as in \eqref{eq:Adefined}; the functions $1-x_i$ and $1+x_i$ are included in this product because the box $[-1,1]^n$ is defined by the system of inequalities $1-x_i \geq 0$ and $1+x_i \geq 0$ for $i=1,\ldots,n$.  This resolution procedure constructs a finite family of $\C$-analytic maps $\{F^{(j)}:V^{(j)}\to U\}_{j\in J}$ and bounded open rational boxes $\{B^{(j)}\}_{j\in J}$ such that
\[
[-1,1]^n \subseteq \bigcup_{j\in J} F^{(j)}(B^{(j)}),
\]
and such that for each $j\in J$, $V^{(j)}$ is open in $\RR^{d(j)}$ for some $d(j)\in\{0,\ldots,n\}$, $\cl(B^{(j)})\subseteq V^{(j)}$, and $F^{(j)}$ is a composition of a finite sequence of maps of two types:
\begin{enumerate}{\setlength{\itemsep}{3pt}
\item[]Type 1: a local blowing-up $\tld{\pi}_i:W_i\to W$ with some smooth center $C\subseteq W$;

\item[]Type 2: an inclusion map $C\hookrightarrow W$ for such a center $C$.
}\end{enumerate}
The subscript $i$ in the notation $\tld{\pi}_i:W_i\to W$ is used to indicate that $\tld{\pi}_{i}^{-1}(C) = \{y\in W_i : y_i = 0\}$.  The set $\tld{\pi}_{i}^{-1}(C)$, and the function $y_i$ which defines this set, will both be called the exceptional divisor of $\tld{\pi}_i$.  The inclusion $C\hookrightarrow W$ does not have an exceptional divisor.  Each of these two types of maps have an associated way of transforming any given $\C$-analytic map $h:W\to \RR$:
\begin{enumerate}{\setlength{\itemsep}{3pt}
\item[]Type 1: The transform of $h$ by $\tld{\pi}_i$ is defined to be $y_i(h\circ\tld{\pi}_i)$, namely, the product of the exceptional divisor and the pullback of $h$ by $\tld{\pi}_i$.

\item[]Type 2: The transform of $h$ by $C\hookrightarrow W$ is defined to be $h\Restr{C}$, namely, the restriction of $h$ to $C$.
}\end{enumerate}
(Note: The above definitions are stated a little loosely.  More literally, $\tld{\pi}_i = \pi_i\circ G$, where $G$ is a $\C$-analytic coordinate transformation and $\pi_i$ is a ``standard chart'' of a blowing-up whose center $G^{-1}(C)$ is defined by $x_I = 0$ for some $I\subseteq\{1,\ldots,n\}$, as in Definition \ref{def:blowup}.  The inclusion $C\hookrightarrow W$ really refers to the map $x_{I^c}\mapsto G(x_{I^c},0)$, where $0\in\RR^I$, and $h\Restr{C}$ really refers to $x_{I^c}\mapsto h\circ G(x_{I^c},0)$.)

Now, fix $j\in J$, and for simplicity write $F:V\to U$, $d$, and $B$ in place of $F^{(j)}:V^{(j)}\to U$, $d(j)$, and $B^{(j)}$.  The function $F$ can be expressed as a composition $F = F_1\circ\cdots\circ F_l$ of maps $F_i : U_i \to U_{i-1}$ diagrammed as follows,
\begin{equation}\label{eq:resSeq}
\xymatrix{
    & \RR
    & \RR
    &
    & \RR
    & \RR
    &
    & \RR
    & \RR
\\
V = \hspace*{-30pt}
    & U_l \ar[r]^-{F_l} \ar[u]^-{g_l}
    & U_{l-1} \ar[r] \ar[u]^-{g_{l-1}}
    & \cdots \ar[r]
    & U_i \ar[r]^-{F_i} \ar[u]^-{g_i}
    & U_{i-1} \ar[r] \ar[u]^{g_{i-1}}
    & \cdots \ar[r]
    & U_1 \ar[r]^-{F_1} \ar[u]^-{g_1}
    & U_0 \ar[u]^-{g_0 = g}
    & \hspace*{-30pt} = U ,
\\
    &
    & C_l \ar@{^{(}->}[u]
    &
    & C_{i+1} \ar@{^{(}->}[u]
    & C_i \ar@{^{(}->}[u]
    &
    & C_2 \ar@{^{(}->}[u]
    & C_1 \ar@{^{(}->}[u]
}
\end{equation}
where for each $i\in\{1,\ldots,l\}$, the map $F_i:U_i\to U_{i-1}$ is either a local blowing-up with center $C_i\subseteq U_{i-1}$ or is an inclusion $C_i\hookrightarrow U_{i-1}$, and where $g_i$ is the transform of $g_{i-1}$ by $F_i$, with $g_0 = g$.  For each $i < l$, the function $g_i$ is used by the resolution procedure to choose the center $C_{i+1}$ which determines the next map $F_{i+1}$ in the sequence, and the resolution process stops when $g_l$ is normal crossings on $U_l$, meaning that $g_l$ is a product of a monomial in the coordinate variables and an analytic unit on $U_l$.  The use of transforms of Type 1 forces $g\circ F$ and the accumulated exceptional divisors of $F$ to be simultaneously normal crossings.  More specifically, we can write $g\circ F(y) = y_{E}^{\alpha} u(y)$ for some $\alpha = (\alpha_k)_{k\in E}\in\NN^E$ and $\C$-analytic unit $u:V\to\RR$ (meaning that $u$ has constant positive or negative sign on $V$), where $E\subseteq\{1,\ldots,d\}$ is such that the sets $\{y\in V : y_k = 0\}$, for each $k\in E$, are the accumulated exceptional divisors of $F$.  The use of Type 2 transforms means that the resolution procedure is being applied hereditarily in ambient spaces of all possible dimensions $0,\ldots,n$, not just of dimension $n$.

The reason for doing this is as follows.  Because $A$ is a subset of $U$ defined by sign conditions on factors of $g$, and $g\circ F(y) = y_{E}^{\alpha} u(y)$, it follows that $F^{-1}(A)$ is a finite union of sets of the form
\begin{equation}\label{eq:Vbox}
V_\xi = \left\{x\in V : \bigcup_{j\in E} \sign(x_j) = \xi_j\right\}
\end{equation}
for some choice(s) of $\xi = (\xi_j)_{j\in E} \in\{-1,0,1\}^E$.  Note that for each $k\in E$, the set $V_\xi$ is either contained in, or is disjoint from, the exceptional divisor $\{y\in V : y_k = 0\}$.  If $\xi_k\neq 0$ for each $k\in E$, then $F$ restricts to an isomorphism on $V_\xi$; note that in this case, $B\cap V_\xi$ is a bounded open rational box whose closure is contained in $V$.  If $\xi_k = 0$ for some $k\in E$, then there exists a least $i\in\{1,\ldots,l\}$ such that $F_i$ is a local blowing-up with $F_i\circ\cdots\circ F_l(V_\xi)\subseteq C_i$.  By the minimality of $i$, we have that $F_i\circ\cdots\circ F_l(V_\xi)$ is disjoint from all the accumulated exceptional divisors of $F_1\circ\cdots\circ F_{i-1}:U_{i-1}\to U$.  We may therefore ignore $V_\xi$ in this case, because by the hereditary application of the resolution procedure (beginning with the inclusion $C_i\hookrightarrow U_{i-1}$), $F_i\circ\cdots\circ F_l(V_\xi)$ is a union of isomorphic images of sets of the same form as \eqref{eq:Vbox} but in a lower dimensional space, and these images are pushed forward isomorphically into $U$ via the map $F_1\circ\cdots\circ F_{i-1}$.

Observe the following:
\begin{enumerate}
\item[L1.]
If $G = G_1\circ G_2$ for $\C$-analytic maps $G_1$ and $G_2$, and if all the partial derivatives of $G_1$ and of $G_2$ are existentially $0$-definable in $\RR_{\Delta(\S)}$, then all the partial derivatives of $G$ are existentially $0$-definable in $\RR_{\Delta(\S)}$.

\begin{proof}
Use the chain rule.
\end{proof}

\item[L2.]
If $G(x) = x_{k}^{p} H(x)$ for some $p\in\NN$ and $\C$-analytic maps $G$ and $H$, where $x = (x_1,\ldots,x_n)$ and $k\in\{1,\ldots,n\}$, and if all the partial derivatives of $G$ are existentially $0$-definable in $\RR_{\Delta(\S)}$, then all the partial derivatives of $H$ are existentially $0$-definable in $\RR_{\Delta(\S)}$.

\begin{proof}
For each $\alpha\in\NN^n$, repeated differentiation of $H(x) = G(x)/x_{k}^{p}$ shows that $\PDn{\alpha}{H}{x}\Restr{x_k \neq 0}$ is existentially $0$-definable in $\RR_{\Delta(\S)}$, and the formula
\[
\left.\frac{1}{\alpha!}\PDn{\alpha}{H}{x}\right|_{x_k = 0}
=
\left.\frac{1}{(\alpha+p e_k)!} \frac{\partial^{|\alpha|+p} G}{\partial x^{\alpha+p e_k}}\right|_{x_k=0}
\]
shows that $\PDn{\alpha}{H}{x}\Restr{x_k=0}$ is existentially $0$-definable in $\RR_{\Delta(\S)}$, where $e_k$ is the $k$th standard unit vector in $\NN^n$ (see Lemma \ref{lemma:derivRestr}).
\end{proof}

\item[L3.]
If $H$ is a multi-variate, real-valued function defined implicitly by a nonsingular equation $G(x,H(x)) = 0$ for some $\C$-analytic function $G$, and if all the partial derivatives of $G$ are existentially $0$-definable in $\RR_{\Delta(\S)}$, then all the partial derivatives if $H$ are existentially $0$-definable in $\RR_{\Delta(\S)}$.

\begin{proof}
Use implicit differentiation.
\end{proof}
\end{enumerate}
By using observations L1-L3, the fact that all the partial derivatives of each of the factors of $g_0$ are existentially $0$-definable in $\RR_{\Delta(\S)}$, and the way in which each center $C_i$ is constructed from $g_{i-1}$ (the details of which we will not delve into here), one can show by induction on $i$ that $F_1\circ\cdots\circ F_i$ is existentially $0$-definable in $\RR_{\Delta(\S)}$ for each $i\in\{0,\ldots,l\}$.  Combining this with the results of the previous paragraph gives the following representation of $A$:
\begin{equation}\label{eq:Arep}
\text{\parbox{5.5in}{
There exist finite families of $\C$-analytic maps $\{F^{(j)}:V^{(j)}\to U\}_{j\in J}$ and bounded open rational boxes $\{B^{(j)}\}_{j\in J}$ such that $A = \bigcup_{j\in J} F^{(j)}(B^{(j)})$ and such that for each $j\in J$, $V^{(j)}$ is an open rational box in $\RR^{d(j)}$ for some $d(j)\in\{0,\ldots,n\}$, $\cl(B^{(j)})\subseteq V^{(j)}$, $F^{(j)}$ restricts to a $\C$-analytic isomorphism on $B^{(j)}$, and the function $F^{(j)}$ and all of its partial derivatives are existentially $0$-definable in $\RR_{\Delta(\S)}$.   In particular, $A$ has dimension, with $\dim A = \max\{d(j) : j\in J\}$.
}}
\end{equation}
\hfill\vspace*{1pt}

\noindent\emph{Fiber Cutting}:

At this point, the RSW-construction applies fiber cutting.  Using the notation of \eqref{eq:Arep}, let $J'$ be the set of all $j\in J$ for which there exists a coordinate projection $\Pi^{(j)}:\RR^m\to\RR^{d(j)}$ such that $\Pi^{(j)}\circ\Pi_m\circ F^{(j)}\Restr{B^{(j)}}$ is an immersion.  We have
\begin{equation}\label{eq:beforeFC}
\Pi_m(A) = \left(\bigcup_{j\in J'}\Pi_m\circ F^{(j)}(B^{(j)})\right) \cup \left(\bigcup_{j\in J\setminus J'}\Pi_m\circ F^{(j)}(B^{(j)})\right).
\end{equation}
Fiber cutting is a procedure which expresses $\Pi_m(A)$ in the form
\[
\Pi_m(A) = \left(\bigcup_{j\in J'}\Pi_m\circ F^{(j)}(B^{(j)})\right) \cup \left(\bigcup_{j\in J\setminus J'}\Pi_m\circ F^{(j)}(C^{(j)})\right),
\]
where for each $j\in J\setminus J'$, $C^{(j)} = \bigcup_{s\in S^{(j)}} C^{(j,s)}$ for finitely many sets $C^{(j,s)} \subseteq B^{(j)}$ which either have the desired immersion property (as in the definition of $J'$) or have dimension less than $d(j)$, and where each set $C^{(j,s)}$ is defined as a subset of the box $B^{(j)}$ by sign conditions on a finite list of $\C$-analytic functions which are existentially $0$-definable in $\RR_{\Delta(\S)}$.  By applying the resolution procedure to each of the sets $C^{(j,s)}$, in the same way that the resolution procedure was applied to the set $A$, one constructs a new representation of $\Pi_m(A)$ which we write in the same way as \eqref{eq:beforeFC} but using tildes:
\[
\Pi_m(A) = \left(\bigcup_{j\in \tld{J}'}\Pi_m\circ \tld{F}^{(j)}(\tld{B}^{(j)})\right) \cup \left(\bigcup_{j\in \tld{J}\setminus \tld{J}'}\Pi_m\circ \tld{F}^{(j)}(\tld{B}^{(j)})\right),
\]
where either $\tld{J}' = \tld{J}$, or  $\tld{J}' \neq \tld{J}$ and $\max\{\tld{d}(j) : j\in \tld{J}\setminus\tld{J}'\} < \max\{d(j) : j\in J\setminus J'\}$.  By repeatedly applying the resolution procedure and fiber cutting in this manner, in successive alteration, we eventually arrive at the following representation of $\Pi_m(A)$:

\begin{equation}\label{eq:projArep}
\text{\parbox{5.5in}{
There exist finite families of $\C$-analytic maps $\{F^{(j)}:V^{(j)}\to \Pi_m(U)\}_{j\in J}$ and bounded open rational boxes $\{B^{(j)}\}_{j\in J}$ such that $\Pi_m(A) = \bigcup_{j\in J} F^{(j)}(B^{(j)})$ and such that for each $j\in J$, $V^{(j)}$ is an open rational box in $\RR^{d(j)}$ for some $d(j)\in\{0,\ldots,n\}$, $\cl(B^{(j)})\subseteq V^{(j)}$, there exists a coordinate projection $\Pi^{(j)}:\RR^m\to\RR^{d(j)}$ such that $\Pi^{(j)}\circ F^{(j)}\Restr{B^{(j)}}$ is an immersion, and the function $F^{(j)}$ and all of its partial derivatives are existentially $0$-definable in $\RR_{\Delta(\S)}$.  In particular, $\Pi_m(A)$ has dimension, with $\dim \Pi_m(A) = \max\{d(j) : j\in J\}$.
}}
\end{equation}
\hfill\vspace*{1pt}

\noindent\emph{Theorem of the Complement}:

At this point the RSW-construction applies the theorem of the complement given in \cite[Theorem 2.7]{vdDS98} to conclude that $[-1,1]^m\setminus\Pi_m(A)$ is existentially $0$-definable in $\RR_{\Delta(\S)}$, which completes the proof.  The hypothesis of \cite[Theorem 2.7]{vdDS98} assumes that $\Pi_m(A)$ is represented as a union of projections of certain manifolds, rather than images of open sets under $\C$-analytic maps as given in \eqref{eq:projArep}, but this difference is purely cosmetic.  The representation in \eqref{eq:projArep} can be seen to fit into the framework of \cite[Theorem 2.7]{vdDS98} by noting that the $\C$-analytic manifold $M^{(j)} = \Graph(F^{(j)}\Restr{B^{(j)}})$ projects onto $F^{(j)}(B^{(j)})$, and by composing this projection with $\Pi^{(j)}$ one obtains an immersion onto an open subset of $\RR^{d(j)}$, and by also noting that $\fr(M^{(j)}) = \Graph(F^{(j)}\Restr{\bd(B^{(j)})})$ is existentially $0$-definable in $\RR_{\Delta(\S)}$ and has dimension less than $d(j)$, which is the dimension of $M^{(j)}$.
\vspace*{5pt}

\noindent\emph{Summary of the RSW-Construction}:

After some preliminary reductions, one consider integers $0\leq m\leq n$ and a set $A\subseteq[-1,1]^n$ defined by sign conditions on a finite list of $\C$-analytic functions which are existentially $0$-definable in $\RR_{\Delta(\S)}$.  Starting with the product of the functions used to define $A$, repeatedly apply resolution of singularities (applied hereditarily) and fiber cutting, in successive alternation, until the set $\Pi_m(A)$ is represented as in \eqref{eq:projArep}.  Then apply the theorem of the complement \cite[Theorem 2.7]{vdDS98} to conclude that $[-1,1]^m\setminus\Pi_m(A)$ is existentially $0$-definable in $\RR_{\Delta(\S)}$.

\subsection{Development of the Effective RSW-Construction}

We now discuss in very general terms how the effective version of the RSW-construction given in this paper was developed.  This effective construction can be viewed abstractly as a sequence of steps, where at any given step, in order to know how to proceed one must determine the answer to a set of questions of the following form:
\begin{quote}
Given some kind of discrete representations for two geometric objects $O_1$ and $O_2$ (such as two sets or two functions), is $O_1 = O_2$ or is $O_1\neq O_2$?
\end{quote}
For each $i\in\{1,2\}$, the discrete representation for $O_i$ typically consists of two types of data structures:
\begin{enumerate}{\setlength{\itemsep}{3pt}
\item
Approximation Algorithms:

These algorithms rely on the approximation oracle for $\S$ and can be used to approximate the object $O_i$ to within any given error.

\item
A Lifting:

The lifting of $O_i$ is some discrete data which determines a very special, nonsingular, existential $\L_{\Delta(\S)}$-formula that defines the object $O_i$.
}\end{enumerate}
If $O_1\neq O_2$, then this fact can be discovered using the approximation algorithms for $O_1$ and $O_2$, which rely on the approximation oracle for $\S$.  If $O_1 = O_2$, then this fact can be discovered by using the liftings for $O_1$ and $O_2$ in conjunction with the precision oracle for $\S$.  Thus to answer our question, one runs both of these verification procedures simultaneously, using time sharing, until one procedure stops.

The key results pertaining to the approximation algorithms come straight out of computable analysis and are presented in Part I of the paper.  After presenting in Part II some basic tools needed for our resolution procedure, the key results pertaining to the liftings are presented in Part III of the paper.  Part IV then applies all of these concepts when presenting effective versions of the resolution procedure, fiber cutting, and the theorem of the compliment.  Here we will focus solely on the development of the effective resolution procedure, since it is the most nontrivial of these three components of the proof.

If the resolution procedure of \cite{RSW} was used, it would be possible to construct discrete representations of our objects $O_i$ consisting of a set of approximation algorithms for $O_i$ and an existential $\L_{\Delta(\S)}$-formula defining $O_i$, however this existential formula would not be of the special nonsingular form required of a ``lifting'', so the precision oracle for $\S$ (which only deals with nonsingular objects) could not be used to discover that $O_1 = O_2$.  So in order to construct the liftings, we use a different resolution procedure for which the centers of blowings-up and the accumulated exceptional divisors are always chosen to be simultaneously normal crossings, which is not the case in the resolution procedure in \cite{RSW}.   For this reason we base our resolution procedure on a variant of Bierstone and Milman's construction given in \cite{BM91} and \cite{BM97}.  This is their well-known global resolution procedure, but we are only interested in a variant of their local algorithm, solely because of this normal crossings property of the centers and exceptional divisors.

However, there is a certain troublesome quirk in the way that Bierstone and Milman present their algorithm in local coordinates, due to their liberal use of linear coordinate transformations.  In their construction, there are local coordinates $x = (x_1,\ldots,x_n)$ on $U\subseteq\RR^n$ which witness the fact that the center of blowing-up and accumulated exceptional divisors are simultaneously normal crossings, meaning that each exceptional divisor is given as $\{x\in U: x_k = 0\}$ for some $k\in\{1,\ldots,n\}$, and the center is given by $C = \{x\in U : x_I = 0\}$ for some $I\subseteq\{1,\ldots,n\}$.  There are also local coordinates $\tld{x} = (\tld{x}_1,\ldots,\tld{x}_n)$ on $\tld{U}\subseteq\RR^n$ for which the center is given by $\tld{C} = \{\tld{x}\in\tld{U} : \tld{x}_{\tld{I}} = 0\}$ for some $\tld{I} \subseteq\{1,\ldots,n\}$, and these coordinates $\tld{x}$ are used to define the local invariant on which their procedure is based.  The quirk is that the coordinates $x$ and $\tld{x}$ are not the same.  There is a fairly obvious isomorphism $F:U\to\tld{U}$ relating the coordinates $x$ and $\tld{x}$, with $F(C) = \tld{C}$, but this is still troublesome for our purposes.  The reason is that, to construct the liftings, the coordinates $x$ should be used, but to actually ``run'' the algorithm, the coordinates $\tld{x}$ should be used, for these are the coordinates which are used to actually find the center of blowing-up and are also the coordinates which are used to define the blowing-up in local coordinates, in the natural way, that witness the drop in the local invariant after blowing-up is performed.

If $\Pi:U'\to U$ and $\tld{\Pi}:\tld{U}'\to \tld{U}$ are the (globally defined) blowings-up of $U$ and $\tld{U}$ with centers $C$ and $\tld{C}$, respectively, the isomorphism $F:U\to\tld{U}$ does lift to an isomorphism $F':U'\to \tld{U}'$, so one could seemingly construct an effective resolution procedure using Bierstone and Milman's procedure where one continually keeps track of two coordinate systems, one for the liftings and the other to run the algorithm, and continually keeps track of the isomorphisms between the two, which would be described by a patchwork of local charts which are also defined by liftings.  However, this seems like it would be an absolute mess.  For this reason, this paper develops its own way of presenting the resolution procedure in local coordinates so that we always have $x = \tld{x}$.  This is achieved by using much more restrictive coordinate transformations when constructing the centers of blowings-up, so that exceptional divisors are always coordinate hyperplanes, and are never ``tilted'' as done by Bierstone and Milman through their use of rather general linear coordinate transformations.  It is most likely that the procedure given here is equivalent, up to isomorphism, to Bierstone and Milman's procedure, but I have not verified this.

The data structure used at each stage in the resolution procedure is called an ``$\S$-presentation'', which consists of a ``basic $\S$-presentation'' (to be discussed immediately) along with some additional discrete data (to be discussed below).  Roughly speaking, a basic presentation is a tuple $(\F,E;K)$, where $\F$ is a finite family of $\C$-analytic functions on an open set $U\subseteq\RR^n$, $E\subseteq\{1,\ldots,n\}$, and $K$ is a compact subset of $U$.  Saying that $(\F,E;K)$ is a ``basic $\S$-presentation'' means that it has a certain discrete representation consisting of approximation algorithms and liftings.  At each step of the resolution procedure, an $\S$-presentation is used to find the next center of blowing-up (analogous to how each function $g_i$ in \eqref{eq:resSeq} is used to find the center $C_{i+1}$), where the set $K$ is to be covered by the images of the various coordinate transformations or local blowings-up mapping into $U$ which are to be constructed next, and the sets $\{x\in U : x_k = 0\}$, for each $k\in E$, are the accumulated exceptional divisors from the previously applied blowings-up.

Let $\Div(\F,E) = d\in\NN^E$ be such that $x_{E}^{d}$ is the greatest common divisor of the functions in $\F$ which is a monomial in $x_E$, and write $f(x) = x_{E}^{d} f_E(x)$ for each $f\in\F$. For each $x\in U$, let $\ord(\F,E;x) = \min\{\ord(f_E;x) : f\in\F\}$, where $\ord(f_E;x)$ is the order of $f_E$ at $x$, and let $\ord(\F,E) = \sup\{\ord(\F,E;x) : x\in U\}$.  The goal of the resolution procedure is to apply local blowings-up so as to transform the basic $\S$-presentation $(\F,E;K)$ into new basic $\S$-presentations $(\F',E';K')$ such that $\ord(\F',E') = 0$.

Because Bierstone and Milman's resolution procedure is based on a local invariant which is defined using the orders of various functions, it may appear that one must first be able to compute $\ord(\F,E)$ in an effective manner when one is given a basic $\S$-presentation $(\F,E;K)$, and that this ability to compute $\ord(\F,E)$ would then enable one to effectively construct the centers of blowing-up.  A key insight is to realize that this is not the case, and that this idea should be turned on its head.  One should, in fact, use a search procedure to find the (locally defined) centers of blowing-up determined by $(\F,E;K)$, and once these centers have been found, one would have incidentally computed $\ord(\F,E)$.

To find the centers, one starts approximating the various partial derivatives of the functions $f_E$ for each $f\in\F$ on small rational boxes which collectively cover $K$.  In this way, nonvanishing partial derivatives can be found on families of sets covering $K$, so after pulling back $(\F,E;K)$ by a family of inclusions, we may assume that an upper bound on $\ord(\F,E)$ has been be established.  One then makes the guess that this upper bound on $\ord(\F,E)$ actually equals $\ord(\F,E)$.  After performing certain coordinate transformations, one may assume that the coordinates of $(\F,E;K)$ are suitably chosen so that we can define a ``refinement'' of $(\F,E;K)$, which is another basic $\S$-presentation $(\F_1,E_1;K_1)$ obtained by restricting a certain set of powers of accumulated exceptional divisors and partial derivatives of the functions in $\{f_E\}_{f\in\F}$ to a certain coordinate subspace of $U$.  One then repeats the procedure with $(\F_1,E_1;K_1)$.  By continuing in this manner, after pulling back by a sequence of suitable coordinate transformations, one constructs a sequence of refinements,
\[
(\F,E,K) = (\F_0,E_0;K_0), (\F_1,E_1;K_1), \ldots, (\F_k,E_k;K_k).
\]
This sequence of refinements is determined by what we call an ``$\S$-presentation'', which consists of the basic $\S$-presentation $(\F,E;K)$ along with some additional discrete data related to the various guesses made along the way.  The process of constructing these refinements will always stop, where the stopping condition is that either $\ord(\F_k,E_k) = 0$ or $\ord(\F_k;E_k) = \infty$ (where $\ord(\F_k,E_k) = \infty$ means that $\F_k$ is a family of zero functions).  An important point is that our oracles for $\S$ can be used to recognize this stopping condition.

Now, let $d_k = \Div(\F_k,E_k)$, and for each $i\in\{0,\ldots,k-1\}$ let $N_i \subseteq\{1,\ldots,n\}\setminus\bigcup_{j=0}^{i-1}N_j$ be such that the domain of $\F_{i+1}$ is obtained from the domain of $\F_i$ by setting $x_{N_i} = 0$.  There is a certain close relationship between the orders of a basic presentation and its refinement (see Lemma \ref{lemma:refine}), and this relationship implies that if $\ord(\F_k,E_k) = \infty$, or if $\ord(\F_k,E_k) = 0$ and $|d_k| \geq p_k$ for a certain special value of $p_k\in\NN$, then our guesses for the values of $\ord(F_i,E_i)$, for each $i\in\{0,\ldots,k\}$, were in fact all correct.  In this case, if $\ord(\F_k,E_k) = \infty$ then define $N_k = \emptyset$, and if $\ord(\F_k,E_k) = 0$ then choose  $N_k\subseteq E_k$ minimal such that $|d_{k,N_k}| \geq p_k$.  The desired center of blowing-up is $C = \{x\in U : x_I = 0\}$, where $I = \bigcup_{i=0}^{k} N_k$.  Now, if on the other hand we have that $\ord(\F_k,E_k) = 0$ and $|d_k| < p_k$, this implies that for some $i\in\{0,\ldots,k-1\}$, our guessed value of $\ord(\F_i,E_i)$ was too large.  By further approximating the various partial derivatives, we can reduce our upper bound for $\ord(\F_i,E_i)$ for some $i$, and we then repeat the process, starting with $(\F_i,E_i;K_i)$.  Since the upper bounds on the orders of the refinements are always lowered in a lexicographical fashion, this process must eventually stop, at which point we have found the center of blowing-up.

\section*{\bf Part I: Effective Approximation}

Part I develops the tools we shall need from computable analysis.  Section \ref{s:compTop} deals with point-set topological concepts, and Section \ref{s:compClos} proves various closure properties of $C^p$ functions which can be effectively approximated. 
\section{Topological concepts from computable analysis}\label{s:compTop}

\begin{definition}\label{def:box}
An {\bf interval} is a connected subset of $\RR$.  An interval is {\bf rational} if its infimum and supremum are in $\QQ\cup\{-\infty,+\infty\}$.  A {\bf (rational) box} is a finite Cartesian product of (rational) intervals.  A box in $\RR^n$ is {\bf degenerate} if its interior in $\RR^n$ is empty, and is {\bf nondegenerate} otherwise.  (For example, $\emptyset$ and the singletons $\{a\}$, for each $a\in\QQ$, are the degenerate rational intervals.)

The {\bf name} for a nonempty rational box consists of the unique string of symbols which is used to denote the rational box in the natural manner.  For example, $(-2,1]\times(\frac{1}{2},+\infty)$ and $[1,1]\times(-\infty,+\infty)$ are names for certain nonempty rational boxes in $\RR^2$.  If $B$ is a nonempty rational box, we shall write $\name(B)$ for its name.  Note that the set of names for nonempty rational boxes is a computable subset of the set of all strings from the alphabet containing every rational number, the infinity symbols $-\infty$ and $+\infty$, the parenthesis $($ and $)$, the square brackets $[$ and $]$, the product symbol $\times$, and the comma.  If $\{B_i\}_{i\in I}$ is a family of rational boxes, we call $\{\name(B_i)\}_{i\in I}$ a {\bf name} for $\{B_i\}_{i\in I}$ provided that the index set $I$, and the map $i\mapsto\name(B_i)$ on $I$, are computable.

If $a = (a_1,\ldots,a_n)$ and $b=(b_1,\ldots,b_n)$ are tuples in $\RR^n$, we write
\begin{eqnarray*}
(a,b)
    & = &
    (a_1,b_1)\times\cdots\times(a_n,b_n), \\
{[a,b]}
    & = &
    [a_1,b_1]\times\cdots\times[a_n,b_n].
\end{eqnarray*}
\end{definition}

\begin{definition}\label{def:computableDomain}
A set $D\subseteq\RR^n$ is called a {\bf computable domain} if it is the union of nondegenerate compact rational boxes, and if the function from the set of names for nondegenerate compact rational boxes in $\RR^n$ into $\{0,1\}$ defined by
\begin{equation}\label{eq:compDomRep}
\name(B) \mapsto \begin{cases}
1, & \text{if $B\subseteq D$,}\\
0, & \text{if $B\not\subseteq D$,}
\end{cases}
\end{equation}
is computable.  We call an algorithm which computes the map \eqref{eq:compDomRep} a {\bf representation algorithm} for $D$.
\end{definition}

Some simple examples of computable domains are the empty set, $\RR^n$, $(0,1)^n$, $[0,1]^n$, and more generally, any finite union of nondegenerate rational boxes in $\RR^n$.

\begin{definition}\label{def:ceopen}
Let $D$ be a computable domain in $\RR^n$.  A set $U\subseteq D$ is {\bf c.e.\ open in $D$} (i.e., computably enumerably open in $D$) if $U$ is open in $D$ and if there is an algorithm which acts as follows:
\begin{quote}
Given the name for a nondegenerate compact rational box $B$ contained in $D$, the algorithm stops if and only if $B\subseteq U$.
\end{quote}
Any such algorithm is called a {\bf c.e.\ open representation algorithm for $U$ in $D$}.
\end{definition}

Note that any c.e.\ open representation algorithm for $U$ in $D$ can be viewed as implicitly relying on a representation algorithm for $D$ in order to check whether it is, in fact, given a name for a nondegenerate compact rational box contained in $D$.

\begin{remarks}\label{rmk:ceopen}
Let $D$ be a computable domain in $\RR^n$.
\begin{enumerate}{\setlength{\itemsep}{5pt}
\item
Let $U$ be open in $D$.  The set $U$ is c.e.\ open in $D$ if and only if the set of all names of nondegenerate compact rational boxes contained in $U$ is computably enumerable.

\item
The requirements in Definitions \ref{def:computableDomain} and \ref{def:ceopen} that $B$ be nondegenerate may be dropped without altering these definitions.  (This follows from the fact that if $B$ is a compact rational box contained in $D$, then $B\subseteq A\subseteq D$ for some nondegenerate compact rational box $A$, and likewise with $U$ in place of $D$.)  Thus if $U$ is open in $D$, the set $U$ is c.e.\ open in $D$ if and only if the set of all names of compact rational boxes contained in $U$ is computably enumerable.

\item
Let $\T$ be the set of all c.e.\ open subsets of $D$.  Then $\T$ forms a computable topology, in the following sense:
\begin{enumerate}
\item
The sets $\emptyset$ and $D$ are in $\T$.

\item
For any computable family $\{U_i\}_{i\in I}$ of members of $\T$, $\bigcup_{i\in I} U_i$ is in $\T$.

\item
The intersection of any finitely many members of $\T$ is in $\T$.
\end{enumerate}
}
\end{enumerate}
\end{remarks}

The proofs of Remarks \ref{rmk:ceopen}.1-3 are straightforward, so they could all be left to the reader.  However, we shall give a proof of Remark \ref{rmk:ceopen}.1 in order to raise a certain point.

\begin{proof}[Proof of Remark \ref{rmk:ceopen}.1]
Suppose that $U$ is c.e.\ open in $D$.  Using a representation algorithm for $D$, we can construct a computable enumeration $\{B_i\}_{i\in\NN}$ of the set of all nondegenerate compact rational boxes contained in $D$.  By using time sharing and a c.e.\ open representation algorithm for $U$, we can computably enumerate the boxes $B_i$ which are contained in $U$.

Conversely, suppose we are given a computable enumeration $\{B_i\}_{i\in\NN}$ of the set of all nondegenerate compact rational boxes contained in $U$.  Then given any nondegenerate compact rational box $B$ contained in $D$, for each $i\in\NN$ check if $B = B_i$.  Stop once such an $i$ has been found, and do not stop otherwise.  This is a c.e.\ open representation algorithm for $U$.
\end{proof}

Notice that Remark \ref{rmk:ceopen}.1 is a statement about the existence of one type of algorithm implying the existence of another type of algorithm, and conversely.  Namely, it states that if a c.e.\ open representation algorithm for $U$ exists, then an algorithm which enumerates the set of all names for compact rational boxes contained in $U$ exists, and conversely.  As literally stated, Remark \ref{rmk:ceopen}.1 does not claim that there is an effective procedure for constructing one type of algorithm from the other type of algorithm.  But upon reading the proof of Remark \ref{rmk:ceopen}.1, it becomes apparent that there  \emph{is} such an effective procedure, and in fact, the proof is just an informal description of this effective procedure.  Namely, the proof shows the following:
\begin{equation}\label{eq:ceopen}
\text{\parbox{5in}{
There is a partial computable function which, when given a representation algorithm for a computable domain $D$ in $\RR^n$ and a representation algorithm for a c.e.\ open set $U$ in $D$, outputs a computable enumeration of the set of names of all compact rational boxes contained in $U$.  And, there is another partial computable function which, when given a representation algorithm for a computable domain $D$ in $\RR^n$ and a computable enumeration of the set of names of all compact rational boxes contained in a set $U$ which is open in $D$, outputs a c.e.\ open representation algorithm for $U$.
}}
\end{equation}

The statement \eqref{eq:ceopen} is more precise and informative than Remark \ref{rmk:ceopen}.1, but it is much more cumbersome to state, almost unbearably so.  Therefore to simplify our language of discourse, we introduce the following concept and convention.

\begin{definition}\label{def:effectiveTruth}
Let us call any statement of the following form an {\bf algorithmic statement}:
\begin{quote}
If there exist algorithms $A_1,\ldots,A_k$ (of certain types), then there exists an algorithm $B$ (of a certain type).
\end{quote}
We shall say that this algorithmic statement is {\bf effectively true} if there is a partial computable map which, when given algorithms $A_1,\ldots,A_k$ (of certain types), it outputs an algorithm $B$ (of a certain type).
\begin{description}
\item[Convention]
Henceforth, whenever we claim that an  algorithmic statement is true (such as in a lemma, or a theorem, etc.), we are tacitly claiming that the statement is, in fact, effectively true.
\end{description}
Even with this convention, when there are nested algorithmic statements, we will sometimes use the phrase ``it is effectively true that \ldots'' for clarification.

There are two other phrases pertaining to effectivity that we shall use, one used to describe computability and the other used to describe computable enumerability.  Consider a set $\P$ of statements (with assigned truth values) in some computable language.  Suppose there is an algorithm which, when given any $P\in\P$, will stop and state whether or not $P$ is true; in this case, if we are given some $P\in\P$, we say that we can {\bf effectively determine} if $P$ is true.   Now suppose there is an algorithm which, when given any $P\in\P$, will stop if and only if $P$ is true; in this case, if we are given some $p\in \P$ which is true, we say that we can {\bf effectively verify} that $P$ is true.
\end{definition}

\begin{definition}\label{def:coceClosed}
Let $D$ be a computable domain in $\RR^n$.  A set $C\subseteq D$ is {\bf co-c.e.\ closed in $D$} if $D\setminus C$ is c.e.\ open in $D$.  A {\bf co-c.e.\ closed representation algorithm for $C$ in $D$} is a c.e.\ open representation algorithm for $D\setminus C$ in $D$.
\end{definition}

\begin{remark}\label{rmk:coceClosed}
Let $\C$ be the set of all co-c.e.\ closed subsets of a computable domain $D$.  Remark \ref{rmk:ceopen}.3 and DeMorgan's law imply the following:
\begin{enumerate}
\item
The sets $\emptyset$ and $D$ are in $\C$.

\item
For any computable family $\{C_i\}_{i\in I}$ of members of $\C$, $\bigcap_{i\in I} C_i$ is in $\C$.

\item
The union of any finitely many members of $\C$ is in $\C$.
\end{enumerate}
\end{remark}

\begin{definition}\label{def:coceCompact}
A set $K\subseteq\RR^n$ is {\bf co-c.e.\ compact} if $K$ is compact and there exists an algorithm acting as follows:
\begin{quote}
Given the name for a finite family $\{B_i\}_{i\in I}$ of bounded, open, rational boxes in $\RR^n$, the algorithm stops if and only if $K\subseteq\bigcup_{i\in I} B_i$.
\end{quote}
We call such an algorithm a {\bf co-c.e.\ compact representation algorithm} for $K$. \end{definition}

Note that a set $K\subseteq\RR^n$ is co-c.e.\ compact if and only if the set of all names for finite families $\{B_i\}_{i\in I}$ of bounded, open, rational boxes in $\RR^n$ which cover $K$ is computably enumerable.

\begin{lemma}\label{lemma:coceCompactInCeOpen}
If $U$ is c.e.\ open in $\RR^n$, and $K$ is a co-c.e.\ compact subset of $U$, then we can effectively verify that $K\subseteq U$.
\end{lemma}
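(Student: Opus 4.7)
The plan is to search in parallel (using time sharing) through all finite families of bounded open rational boxes, looking for one that both covers $K$ and has all its closures sitting inside $U$. Once such a family is found, the containment $K\subseteq U$ follows immediately, and a compactness argument guarantees that the search terminates whenever $K\subseteq U$ holds.

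More concretely, I would computably enumerate all finite families $\F = \{B_i\}_{i\in I}$ of bounded open rational boxes in $\RR^n$. For each such $\F$, I would run the following two subroutines concurrently via time sharing:
\begin{renumerate}
\item For each $i\in I$, feed $\name(\cl(B_i))$ into a c.e.\ open representation algorithm for $U$ (such an algorithm exists by hypothesis, and $\cl(B_i)$ is a nondegenerate compact rational box). This subroutine halts for the index $i$ iff $\cl(B_i)\subseteq U$.
\item Feed the name of $\F$ into a co-c.e.\ compact representation algorithm for $K$. This subroutine halts iff $K\subseteq\bigcup_{i\in I} B_i$.
\end{renumerate}
Declare success for $\F$ once subroutine (ii) halts and subroutine (i) halts for every $i\in I$. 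When this happens, $K\subseteq\bigcup_{i\in I} B_i\subseteq\bigcup_{i\in I}\cl(B_i)\subseteq U$, as desired. Using standard dovetailing over both the enumeration of $\F$'s and the two subroutines for each $\F$, halt the whole procedure as soon as any family $\F$ achieves success.

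The main thing to verify is that this search is guaranteed to terminate when $K\subseteq U$. For each $x\in K$, openness of $U$ in $\RR^n$ yields a bounded open rational box $B_x$ with $x\in B_x\subseteq\cl(B_x)\subseteq U$. The sets $\{B_x : x\in K\}$ form an open cover of the compact set $K$, so finitely many of them, say $B_{x_1},\ldots,B_{x_k}$, already cover $K$. The family $\F=\{B_{x_1},\ldots,B_{x_k}\}$ will eventually be enumerated, and for this $\F$ both subroutines (i) and (ii) are guaranteed to halt, producing success. The only subtlety is making sure the dovetailing over the (infinite) enumeration of candidate families and the (possibly non-halting) subroutines is arranged so that no computation blocks another indefinitely; this is handled by the usual time-sharing bookkeeping, which is routine. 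Per the convention introduced in Definition \ref{def:effectiveTruth}, all of this is effectively uniform in the input representation algorithms for $U$ and $K$.
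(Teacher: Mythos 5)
Your proposal is correct and matches the paper's proof, which is simply a terser statement of the same idea: by compactness there exists a finite family $\{B_i\}_{i\in I}$ of bounded open rational boxes covering $K$ with $\cl(B_i)\subseteq U$ for all $i$, and both conditions can be effectively verified (via the co-c.e.\ compact and c.e.\ open representation algorithms, respectively). Your expansion with the explicit dovetailing bookkeeping fills in the details the paper leaves implicit.
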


\begin{proof}
There exists a finite family $\{B_i\}_{i\in I}$ of bounded, open, rational boxes such that
\begin{equation}\label{eq:Kcovered}
K\subseteq \bigcup_{i\in I}B_i
\end{equation}
and
\begin{equation}\label{eq:clBinU}
\text{$\cl(B_i)\subseteq U$ for all $i\in I$,}
\end{equation}
and both \eqref{eq:Kcovered} and \eqref{eq:clBinU} can be effectively verified.
\end{proof}

\begin{lemma}\label{lemma:cocecompact}
A set $K\subseteq\RR^n$ is co-c.e.\ compact if and only if there exists a computable sequence of sets $\{U_i\}_{i\in\NN}$ such that $K = \bigcap_{i\in\NN} U_i$, and such that for each $i\in\NN$, $U_i$ is a finite union of bounded open rational boxes and $\cl(U_{i+1}) \subseteq U_i$.
\end{lemma}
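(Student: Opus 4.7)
The backward direction is the quicker of the two. Given the sequence $\{U_i\}$, the set $K=\bigcap_i U_i$ is automatically compact, being a nested intersection of the compact sets $\cl(U_{i+1})$, each a bounded finite union of closed rational boxes. For the co-c.e.\ compact representation algorithm, I will take as input a finite family $\{B_j\}_{j\in I}$ of bounded open rational boxes and enumerate $i=0,1,2,\ldots$, halting once $U_i\subseteq \bigcup_j B_j$. This containment between two finite unions of rational boxes is a decidable semialgebraic question over $\QQ$. If $K\subseteq \bigcup_j B_j$, a standard compactness argument using $\bigcap_i\cl(U_{i+1})=K$, nesting of the $\cl(U_{i+1})$, and openness of $\bigcup_j B_j$ produces some $i$ with $U_i\subseteq \bigcup_j B_j$; conversely, if $K\not\subseteq \bigcup_j B_j$, then $K\subseteq U_i$ blocks the containment for every $i$, so the algorithm does not halt.

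For the forward direction, the plan is to construct $\{U_i\}$ by pairing greedy covers of $K$ with an exhaustive enumeration of all rational covers, which forces tightness in the limit. Using the co-c.e.\ compact representation algorithm, I will first computably enumerate the c.e.\ set of all finite families $\F$ of bounded open rational boxes satisfying $K\subseteq \bigcup\F$; call the corresponding union sets $V_1,V_2,\ldots$. I will find $U_0$ by searching for a bounded open rational box containing $K$. Inductively, having produced $U_{i-1}$, I will search for a finite family $\F$ of bounded open rational boxes with $K\subseteq \bigcup\F$ and $\cl(\bigcup\F)\subseteq U_{i-1}\cap V_i$, and set $U_i=\bigcup\F$. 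The first condition is verifiable by the co-c.e.\ compact algorithm; the second is a decidable containment problem for finite unions of rational boxes. Such an $\F$ exists because $K$ is a compact subset of the open set $U_{i-1}\cap V_i$, so a cover of $K$ by bounded open rational boxes of sufficiently small diameter (approximating small balls around points of $K$) has all closures sitting inside $U_{i-1}\cap V_i$.

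By construction $\cl(U_i)\subseteq U_{i-1}$ and $U_i\subseteq V_i$, hence $K\subseteq \bigcap_i U_i\subseteq \bigcap_i V_i$. The key remaining claim is $\bigcap_i V_i=K$: for $x\notin K$ with $\delta=\dist(x,K)>0$, one can cover $K$ by bounded open rational boxes each of diameter less than $\delta/2$ and each meeting $K$ (build such a cover by approximating radius-$\delta/4$ balls around points of $K$ by rational boxes and extracting a finite subcover by compactness); every box in this cover lies at distance greater than $\delta/2$ from $x$, so the union misses $x$, and this cover appears somewhere in the enumeration as some $V_i$, giving $x\notin V_i$.

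The main obstacle is exactly this convergence: an individual small-diameter box in a cover of $K$ need not lie close to $K$ at all, since it need not even meet $K$, so greedy small-box covers alone cannot force $\bigcap_i U_i=K$. Pairing the construction with the exhaustive enumeration of \emph{all} finite rational covers of $K$ is the device that overcomes the non-verifiability of ``$B\cap K\neq \emptyset$'' for an individual box $B$ and drives the intersection down to $K$.
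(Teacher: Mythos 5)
Your proof is correct and takes essentially the same route as the paper. For the forward direction, both you and the paper drive $\bigcap_i U_i$ down to $K$ by interleaving the shrinking covers with the exhaustive enumeration $\{V_i\}$ of all finite rational covers of $K$ (requiring $\cl(U_i)\subseteq U_{i-1}\cap V_i$ at each step); the only cosmetic difference is that the paper takes each $U_i$ to be one of the $V_m$'s from the enumeration, whereas you search for an arbitrary admissible family $\F$, and your proof spells out $\bigcap_i V_i = K$ a bit more explicitly than the paper does.
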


\begin{proof}
Suppose that $K$ is co-c.e. compact.  Fix a computable enumeration $\{V_i\}_{i\in\NN}$ of the set of all subsets of $\RR^n$ which contain $K$ and which are a finite union of bounded, open rational boxes.  Let $U_0 = V_0$.  Now let $j\geq 0$, and inductively assume that we have constructed $U_0,\ldots,U_{j-1}$ such that for all $k\in\{1,\ldots,j-1\}$, we have $U_k\in\{V_i\}_{i\in\NN}$ and $\cl(U_k) \subseteq U_{k-1}\cap V_k$.  The set $U_{j-1}\cap V_j$ is a finite union of bounded, open rational boxes and is a neighborhood of the co-c.e.\ compact set $K$, so we can effectively find some $m\in\NN$ such that $\cl(V_m) \subseteq U_{j-1}\cap V_j$.  Let $U_j = V_m$.  This completes the inductive construction of the sequence $\{U_i\}_{i\in\NN}$.  By construction, $\cl(U_{i+1})\subseteq U_i$ for each $i\in\NN$, and $K = \bigcap_{i\in\NN} U_i$ because $K \subseteq \bigcap_{i\in\NN} U_i \subseteq \bigcap_{i\in\NN} V_i = K$.

Conversely, assume that there exists a computable sequence of sets $\{U_i\}_{i\in\NN}$ such that $K = \bigcap_{i\in\NN} U_i$, and such that for each $i\in\NN$, $U_i$ is a finite union of bounded open rational boxes and $\cl(U_{i+1}) \subseteq U_i$.  We claim that the following is a co-c.e.\ compact representation algorithm for $K$:
\begin{quote}
Given the name for a finite family $\{B_i\}_{i\in I}$ of bounded open rational boxes in $\RR^n$, successively check for each $j\in\NN$ whether $U_j \subseteq \bigcup_{i\in I} B_i$.  Stop once a $j$ has been found, and do not stop otherwise.
\end{quote}
Note that we can effectively check whether $U_j \subseteq \bigcup_{i\in I} B_i$ since both sets are simply finite unions of rational boxes.  If $K\not\subseteq\bigcup_{i\in I} B_i$, this algorithm will not stop.  If $K\subseteq\bigcup_{i\in I} B_i$, then $\bigcup_{i\in I} B_i$ is a c.e.\ open neighborhood of $K$, so the following remark implies that there exists $j\in\NN$ such that $U_j \subseteq \bigcup_{i\in I} B_i$, so the algorithm will stop.
\end{proof}

\begin{remark}\label{rmk:cocecompact}
Let $K$ be co-c.e.\ compact, and let $\{U_i\}_{i\in\NN}$ be as in Lemma \ref{lemma:cocecompact}.  Since $K = \bigcap_{i\in\NN} U_i$ and $\cl(U_{i+1})\subseteq U_i$ for each $i$, it follows that $K = \bigcap_{i\in\NN}\cl(U_i)$, which represents $K$ as a decreasing union of compact sets.  Therefore if $V$ is an open set containing $K$, there exists $i\in\NN$ such that $\cl(U_i) \subseteq V$.  If $V$ is c.e.\ open, then we can effectively find such an $i$.
\end{remark}

\begin{proposition}\label{prop:compHeineBorel}
A subset of $\RR^n$ is co-c.e. compact if and only if it is co-c.e.\ closed in $\RR^n$ and there exists a rational $M > 0$ such that $K\subseteq(-M,M)^n$.
\end{proposition}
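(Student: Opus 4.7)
The plan is to prove the two directions separately, where in each direction the strategy is to recast the set-theoretic condition one wants to verify in terms of checks on finitely many compact rational boxes (for which the given representation algorithm applies) or finitely many bounded open rational boxes (ditto).

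\textbf{($\Rightarrow$) Forward direction.} Assume $K$ is co-c.e.\ compact. First I would produce the bound $M$: run the co-c.e.\ compact representation algorithm on each single-box family $\{(-M,M)^n\}$ for $M = 1,2,3,\ldots$ in parallel using time sharing. Since $K$ is bounded, some $M$ eventually causes the algorithm to halt, yielding a rational $M > 0$ with $K \subseteq (-M,M)^n$. For the co-c.e.\ closed representation, given a compact rational box $P \subseteq \RR^n$ I need to halt iff $P \cap K = \emptyset$. Because $K \subseteq (-M,M)^n$, this is equivalent to $K \subseteq (-M,M)^n \setminus P$. The set $(-M,M)^n \setminus P$ is a finite union of bounded open rational boxes $\{U_\ell\}_\ell$, which one constructs by induction on $n$ (in one dimension, removing a closed interval from an open interval gives at most two open intervals; in general coordinate-by-coordinate). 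Feeding the name for $\{U_\ell\}_\ell$ to the co-c.e.\ compact representation algorithm halts precisely when $K \subseteq \bigcup_\ell U_\ell$, i.e.\ when $P \cap K = \emptyset$.

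\textbf{($\Leftarrow$) Backward direction.} Assume we are given a co-c.e.\ closed representation algorithm for $K$ together with a rational $M > 0$ such that $K \subseteq (-M,M)^n$. Given a finite family $\{B_i\}_{i \in I}$ of bounded open rational boxes, I must halt iff $K \subseteq \bigcup_{i \in I} B_i$. Since $K \subseteq [-M,M]^n$, this containment is equivalent to $[-M,M]^n \setminus \bigcup_{i \in I} B_i \subseteq \RR^n \setminus K$. The key geometric step is to decompose
\[
[-M,M]^n \setminus \bigcup_{i \in I} B_i = \bigcup_{j=1}^{m} P_j
\]
as a finite union of compact rational boxes $P_j$, which I compute as follows. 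In each coordinate $j$, take the sorted list of rational numbers consisting of $\pm M$ together with the endpoints in that coordinate of the $B_i$'s. The resulting ``grid faces'' (products, one coordinate at a time, of either a closed interval between consecutive grid values or a singleton grid value) are finitely many compact rational boxes, possibly degenerate. A face's relative interior is either contained in some $B_i$ or disjoint from every $B_i$ because the grid includes every $B_i$-endpoint; taking the union of those faces of the second type yields exactly $[-M,M]^n \setminus \bigcup_i B_i$ (including boundary points such as $\cl(B_i) \cap \cl(B_{i'}) \setminus (B_i \cup B_{i'})$, which is why degenerate faces must be allowed). Now run the co-c.e.\ closed representation algorithm in parallel on each $P_j$ (invoking Remark \ref{rmk:ceopen}.2 for the degenerate ones); it halts simultaneously on all the $P_j$'s precisely when every $P_j \subseteq \RR^n \setminus K$, i.e.\ when $K \subseteq \bigcup_i B_i$.

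\textbf{Main obstacle.} The only nontrivial point is recognizing that the difference $[-M,M]^n \setminus \bigcup_i B_i$ need not be a finite union of \emph{nondegenerate} compact rational boxes (because of stray shared boundary points between adjacent $B_i$'s), so one must allow degenerate rational boxes in the grid decomposition and invoke Remark \ref{rmk:ceopen}.2 to feed them into the co-c.e.\ closed representation algorithm. Everything else reduces to the kind of bookkeeping already standard in this section, and both halves of the proof produce the required algorithm explicitly from the given one in accordance with the effectivity convention of Definition \ref{def:effectiveTruth}.
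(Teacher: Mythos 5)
Your proof is correct, and your backward direction is essentially the paper's: decompose $[-M,M]^n\setminus\bigcup_i B_i$ as a finite union of compact rational boxes and feed each one to the co-c.e.\ closed representation algorithm. Your extra care about degenerate boxes (and the appeal to Remark~\ref{rmk:ceopen}.2) is a detail the paper leaves implicit and is worth spelling out. Your forward direction, however, takes a genuinely different route. The paper first invokes Lemma~\ref{lemma:cocecompact} to produce a nested sequence $\{U_i\}$ of finite unions of bounded open rational boxes with $K=\bigcap_i U_i = \bigcap_i\cl(U_i)$; $U_0$ yields the bound $M$, and co-c.e.\ closedness then follows because each $\cl(U_i)$ is trivially co-c.e.\ closed and Remark~\ref{rmk:coceClosed}.2 closes that family under countable intersections. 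You instead construct the co-c.e.\ closed representation algorithm directly: given a compact rational box $P$, you compute $(-M,M)^n\setminus P$ as an explicit finite union of bounded open rational boxes and query the co-c.e.\ compact representation algorithm for that cover. Both arguments are valid and equally effective in the sense of Definition~\ref{def:effectiveTruth}. Yours is more self-contained and avoids the structural Lemma~\ref{lemma:cocecompact}; the paper's reuses machinery (Lemma~\ref{lemma:cocecompact} is proved immediately beforehand and is needed again in Remark~\ref{rmk:cocecompact}) and so keeps the exposition tighter in context, at the cost of not being a one-step derivation.
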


\begin{proof}
Suppose that $K\subseteq\RR^n$ is co-c.e.\ compact.  Fix a sequence $\{U_i\}_{i\in\NN}$ as in Lemma \ref{lemma:cocecompact}.  The set $U_0$ gives us a bound for $K$.  Since $K = \bigcap_{i\in\NN}\cl(U_i)$ and each set  $\cl(U_i)$ is co-c.e.\ closed in $\RR^n$,  Remark \ref{rmk:coceClosed} implies that $K$ is co-c.e.\ closed in $\RR^n$.

Conversely, suppose that $K$ is co-c.e.\ closed in $\RR^n$ and that $K \subseteq (-M,M)^n$ for a rational $M > 0$.  We claim that the following is a co-c.e.\ compact representation algorithm for $K$:
\begin{quote}
Given a finite family $\{A_i\}_{i\in I}$ of bounded open rational boxes in $\RR^n$, first express $[-M,M]\setminus\bigcup_{i\in I} A_i$ as a union of a finite family of compact rational boxes $\{B_j\}_{j\in J}$.  Then use a c.e.\ open representation algorithm for $\RR^n\setminus K$ to try to verify that $B_j\subseteq \RR^n\setminus K$ for each $j\in J$.
\end{quote}
This algorithm will stop if and only if $B_j\subseteq\RR^n\setminus K$ for all $j\in J$, which occurs if and only if $K\subseteq \bigcup_{i\in I}A_i$.
\end{proof}

We stress that Proposition \ref{prop:compHeineBorel} is effectively true, meaning that there are effective procedures which enable the co-c.e.\ compact representation algorithm for $K$ on the one hand, and the rational number $M > 0$ and co-c.e.\ closed representation algorithm for $K$ on the other hand, to be effectively constructed from one another.

\begin{definition}\label{def:compCp}
Consider a function $f:U\to\RR^m$, where $U$ is a c.e.\ open subset of some computable domain $D$ in $\RR^n$, and let $p\in\NN\cup\{\infty\}$.  We say that $f$ is {\bf computably $C^p$} if there is an algorithm which acts as follows:
\begin{equation}\label{eq:compCpRep}
\text{\parbox{5.3in}{Given $\alpha\in\NN^n$ such that $|\alpha|\leq p$, a name for an open rational box $I$ in $\RR^m$, and a name for a compact rational box $B$ in $D$, the algorithm stops if and only if $B\subseteq U$ and $\PDn{\alpha}{f}{x}(B)\subseteq I$.
}}
\end{equation}
Any such algorithm \eqref{eq:compCpRep} is called a {\bf $C^p$ approximation algorithm} for $f$. If $p=0$, we also say that $f$ is {\bf computably continuous}.  If $p=n=0$, we call $f(0)$ a {\bf computable point} in $\RR^m$.  If $p=n=0$ and $m=1$, we call $f(0)$ a {\bf computable real}. (Note that a point $a\in\RR^n$ is computable if and only if the set $\{a\}$ is co-c.e.\ compact.)  We may sometimes just say ``approximation algorithm'', rather than ``$C^0$ approximation algorithm'', when we are working with a computably continuous function, a computable point, or a computable real.

More generally, a family of functions $\S = \{S_\sigma\}_{\sigma\in\Sigma}$ is {\bf computably $C^p$} if the index set $\Sigma$ is computable, and if there is an algorithm which acts as a $C^p$ approximation algorithm for each function in $\S$, as indexed by $\Sigma$.  Such an algorithm is called a {\bf $C^p$ approximation algorithm} for the family $\S$.
\end{definition}

\begin{remarks}\label{rmk:compCpSimple}
Consider a function $f=(f_1,\ldots,f_m):U\to\RR^m$, where $U$ is an open subset of a computable domain $D$ in $\RR^n$.
\begin{enumerate}{\setlength{\itemsep}{5pt}
\item
Since $U$ is open in $D$, and $D$ is a union of nondegenerate compact rational boxes, it follows that for each $x\in U$ there exists a nondegenerate compact rational box $B$ such that $x\in B\subseteq U$.  Therefore all partial derivative of $f$ at $x$ can be defined using only points from $B$, either in a one-sided or a two-sided sense.  Thus saying $f$ is $C^p$ on $U$ makes sense.

\item
The function $f$ is computably $C^p$ if and only each of its component functions $f_1,\ldots,f_m$ are computably $C^p$.

\item
Every rational number is a computable real, and every constant function which takes the value of some computable real is computably $C^\infty$.

\item
Our definition of computably continuous is equivalent to what is known in the literature as ``Type 2 computable'' (see Weihrauch \cite{Weihrauch}), except we add in the additional requirement that the function be defined on a c.e.\ open subset of some computable domain $D$.  (No assumption is made about the domain in the standard definition of a Type 2 computable function.)
}
\end{enumerate}
\end{remarks}

\begin{proposition}\label{prop:compCont}
The following are equivalent for any function $f:U\to\RR^m$, where $U$ is an open subset of a computable domain $D$ in $\RR^n$:
\begin{enumerate}{\setlength{\itemsep}{3pt}
\item
The function $f$ is computably continuous.

\item
It is effectively true that for each c.e.\ open set $V$ in $\RR^m$, the set $f^{-1}(V)$ is c.e.\ open in $D$.  (In other words, there exists a partial computable function which, when given a representation algorithm for a c.e.\ open set $V$ in $\RR^m$, it outputs a representation algorithm for the c.e.\ open set $f^{-1}(V)$ in $D$.)

\item
It is effectively true that for each co-c.e.\ closed set $C$ in $\RR^m$, the set $f^{-1}(C)$ is co-c.e.\ closed in $D$.
}
\end{enumerate}
\end{proposition}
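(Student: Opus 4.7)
The plan is to prove $(2) \Leftrightarrow (3)$ by direct complementation and then establish the nontrivial equivalence $(1) \Leftrightarrow (2)$ by two separate reductions, of which one direction is the main obstacle.

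The equivalence $(2) \Leftrightarrow (3)$ follows immediately from Definition \ref{def:coceClosed}: for any $V \subseteq \RR^m$ and $C = \RR^m \setminus V$, a c.e.\ open representation algorithm for $f^{-1}(V)$ is by definition a co-c.e.\ closed representation algorithm for $D \setminus f^{-1}(V)$, and on $U$ one has $U \setminus f^{-1}(V) = f^{-1}(C)$, reducing the conversion to bookkeeping using the closure properties in Remark \ref{rmk:ceopen}.3. For $(2) \Rightarrow (1)$, I would build a $C^0$ approximation algorithm for $f$ as follows. Given an open rational box $I \subseteq \RR^m$ and a compact rational box $B \subseteq D$, the set $I$ is c.e.\ open with a trivially constructible representation algorithm, so the hypothesis of (2) yields a c.e.\ open representation algorithm for $f^{-1}(I)$ in $D$; run it on $B$. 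Because $f^{-1}(I) \subseteq U$, certifying $B \subseteq f^{-1}(I)$ simultaneously certifies $B \subseteq U$ and $f(B) \subseteq I$, which is precisely the halting condition of \eqref{eq:compCpRep}.

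The main work is $(1) \Rightarrow (2)$. Given a $C^0$ approximation algorithm for $f$ and a c.e.\ open representation algorithm for $V \subseteq \RR^m$, I would produce a c.e.\ open representation algorithm for $f^{-1}(V)$ in $D$ by a dovetailed search. On input a compact rational box $B \subseteq D$, enumerate all finite data of the form $(B_1, \ldots, B_N;\, I_1, \ldots, I_N)$ where $B = B_1 \cup \cdots \cup B_N$ is a decomposition into compact rational sub-boxes and each $I_j$ is a bounded open rational box in $\RR^m$. For each such datum, in parallel run the $C^0$ approximation algorithm to try to verify $B_j \subseteq U$ and $f(B_j) \subseteq I_j$ for every $j$, and the c.e.\ open representation algorithm for $V$ to try to verify $\cl(I_j) \subseteq V$ for every $j$; halt as soon as some datum passes both checks. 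If a datum passes, then $B \subseteq U$ and $f(B) = \bigcup_j f(B_j) \subseteq \bigcup_j \cl(I_j) \subseteq V$, so $B \subseteq f^{-1}(V)$. Conversely, if $B \subseteq f^{-1}(V)$, then $f(B)$ is a compact subset of the open set $V$, so there is $\epsilon > 0$ with the closed $\epsilon$-neighborhood of $f(B)$ contained in $V$, and uniform continuity of $f$ on $B$ supplies a $\delta > 0$ controlling sub-box diameters; a sufficiently fine rational partition of $B$ together with sufficiently small rational boxes enclosing each $f(B_j)$ will eventually appear in the enumeration and both subverifications will succeed.

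The main obstacle will be the termination half of $(1) \Rightarrow (2)$: showing that the dovetailed search halts whenever $B \subseteq f^{-1}(V)$. This requires two classical compactness ingredients in effective form: uniform continuity of $f$ on the compact box $B$ (to force each small sub-box's image into a small target box once the partition is fine enough), and the fact that a compact subset of a c.e.\ open set in $\RR^m$ is covered by finitely many rational boxes whose closures also lie in that c.e.\ open set, which is essentially Lemma \ref{lemma:coceCompactInCeOpen} applied in $\RR^m$. Once these two approximation scales mesh, the simultaneous certification succeeds. The remainder is routine bookkeeping with names of rational boxes and composition of the partial computable functions that Definition \ref{def:effectiveTruth} abbreviates.
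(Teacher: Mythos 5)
The paper offers no proof for this proposition, stating only that it is straightforward and left to the reader, so there is nothing to compare against directly; I will assess the proposal on its own.

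Your treatment of $(1)\Leftrightarrow(2)$ is correct and at the right level of detail. For $(2)\Rightarrow(1)$, reading off the halting condition of \eqref{eq:compCpRep} with $\alpha=0$ from the representation algorithm for $f^{-1}(I)$ is exactly the content of the definitions. For $(1)\Rightarrow(2)$, the dovetailed search over partitions $B=B_1\cup\cdots\cup B_N$ and candidate targets $I_j$, certifying $f(B_j)\subseteq I_j$ with the $C^0$ approximation algorithm and $\cl(I_j)\subseteq V$ with the representation algorithm for $V$, is sound, and the termination argument via uniform continuity on $B$ and compactness of $f(B)\subseteq V$ is the standard one. One small correction: termination needs only the \emph{classical} Lebesgue-number and uniform-continuity facts (to know that a valid certificate exists; exhaustive enumeration then finds it), so the phrase ``in effective form'' is misleading, and Lemma \ref{lemma:coceCompactInCeOpen} is not really what is invoked — that lemma concerns co-c.e.\ compact sets, while you need only the elementary fact that a compact subset of an open set is covered by finitely many open rational boxes whose closures remain inside, which is what Lemma \ref{lemma:coceCompactInCeOpen}'s own proof starts from.

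Your $(2)\Leftrightarrow(3)$ glosses over a genuine subtlety. Since $f:U\to\RR^m$ is a partial map on $D$, the literal preimage $f^{-1}(C)$ is a subset of $U$, so its complement in $D$ is $D\setminus f^{-1}(C)=(D\setminus U)\cup f^{-1}(V)$, which is \emph{not} $f^{-1}(V)$ when $U\neq D$. Read literally, taking $C=\RR^m$ in (3) would force $U=f^{-1}(\RR^m)$ to be co-c.e.\ closed in $D$, hence closed in $D$; but $U$ is only assumed open in $D$, and if $D$ is connected and $\emptyset\neq U\subsetneq D$ this is impossible. Thus (3) must be read with the convention $f^{-1}(C):=D\setminus f^{-1}(\RR^m\setminus C)$ for a partial map, under which $(2)\Leftrightarrow(3)$ is an immediate unwinding of Definition \ref{def:coceClosed} and no appeal to Remark \ref{rmk:ceopen}.3 is needed. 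Under the literal reading your ``bookkeeping'' step does not go through, because $D\setminus U$ is not c.e.\ open and cannot be absorbed. This imprecision is inherited from the statement of the proposition, not a defect of your approach, but it deserves to be made explicit rather than delegated to Remark \ref{rmk:ceopen}.3, where the needed closure property is absent.
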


\begin{proof}
This is straightforward and is left to the reader.
\end{proof}

\begin{proposition}\label{prop:compContCompact}
If $U$ is c.e.\ open in $\RR^n$, $K\subseteq U$ is co-c.e.\ compact, and $f:U\to\RR^m$ is computably continuous, then $f(K)$ is co-c.e.\ compact.
\end{proposition}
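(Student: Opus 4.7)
The plan is to build a co-c.e.\ compact representation algorithm for $f(K)$ directly from the definition, reducing the required covering check for $f(K)$ to a covering check for $K$ via the preimage.

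First I would note that $f(K)$ is compact: since computable continuity entails ordinary continuity (Remark \ref{rmk:compCpSimple}) and $K$ is compact by hypothesis, the image $f(K)$ is a compact subset of $\RR^m$. So what remains is the computable enumerability of its covers by bounded open rational boxes.

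The core construction is as follows. Given the name for a finite family $\{B_j\}_{j\in J}$ of bounded open rational boxes in $\RR^m$, form $V = \bigcup_{j\in J} B_j$, which is c.e.\ open in $\RR^m$ (its representation algorithm is immediate from the definition, since membership $B \subseteq V$ for a compact rational box can be checked directly). Because $f:U\to\RR^m$ is computably continuous and $U$ is c.e.\ open in $\RR^n$, Proposition \ref{prop:compCont} produces (effectively) a c.e.\ open representation algorithm for $f^{-1}(V)$ as a c.e.\ open subset of $\RR^n$. Now apply Lemma \ref{lemma:coceCompactInCeOpen} to the co-c.e.\ compact set $K$ and the c.e.\ open set $f^{-1}(V)$: this yields an algorithm that halts if and only if $K \subseteq f^{-1}(V)$, equivalently, if and only if $f(K) \subseteq V = \bigcup_{j\in J} B_j$. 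This is exactly the co-c.e.\ compact representation algorithm required by Definition \ref{def:coceCompact}.

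Effectivity of the whole procedure follows because each step is effective in the input data: passing from $\{B_j\}_{j\in J}$ to a representation algorithm for $V$ is trivial, the passage from a representation algorithm for $V$ to one for $f^{-1}(V)$ is effective by Proposition \ref{prop:compCont}, and Lemma \ref{lemma:coceCompactInCeOpen} is effective in the c.e.\ open representation of the target and the co-c.e.\ compact representation of $K$. There is no substantive obstacle here; the proof is essentially a bookkeeping exercise in chaining the three ingredients (continuity of preimages, the cover-verification lemma, and the definition of co-c.e.\ compactness). The only point deserving mild care is ensuring that all intermediate algorithms are uniformly produced from the given data, so that the resulting procedure genuinely fits Definition \ref{def:coceCompact} rather than merely existing non-uniformly.
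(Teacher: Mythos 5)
Your proof is correct, and it takes a genuinely different and more direct route than the paper's. The paper constructs an explicit enumeration of finite families $\{(A_i,B_i)\}_{i\in I}$ in which the $B_i$ cover $K$ with $\cl(B_i)\subseteq U$ and $f(\cl(B_i))\subseteq A_i$, extracts a bound for $f(K)$ from the first family in the enumeration, builds a co-c.e.\ \emph{closed} representation algorithm for $f(K)$ by checking whether a given compact rational box eventually misses all the $A_i$, and then appeals to Proposition \ref{prop:compHeineBorel} to upgrade co-c.e.\ closed plus bounded to co-c.e.\ compact. You instead produce a co-c.e.\ \emph{compact} representation algorithm directly, as required by Definition \ref{def:coceCompact}: given a finite cover $\{B_j\}_{j\in J}$, pull it back via Proposition \ref{prop:compCont} to a c.e.\ open representation of $f^{-1}(\bigcup_j B_j)$ in $\RR^n$, then run the verification procedure of Lemma \ref{lemma:coceCompactInCeOpen}, which halts iff $K\subseteq f^{-1}(\bigcup_j B_j)$, i.e.\ iff $f(K)\subseteq\bigcup_j B_j$. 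This avoids the detour through co-c.e.\ closedness and Heine--Borel by using two earlier tools as black boxes, and is the more modular argument; the paper's route is more self-contained and makes the enumeration explicit (which also manifestly exhibits the bound), while yours delegates the search to the lemma. Both are sound and uniformly effective.
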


\begin{proof}
Let $\A$ be the set of all finite families $\{(A_i,B_i)\}_{i\in I}$ such that $K\subseteq\bigcup_{i\in I} B_i$, where for each $i\in I$, the sets $A_i\subseteq\RR^m$ and $B_i\subseteq\RR^n$ are bounded, open, rational boxes such that $\cl(B_i)\subseteq U$ and $f(\cl(B_i)) \subseteq A_i$.  Note that for any open set $V\subseteq\RR^m$ such that $f(K)\subseteq V$, there exists $\{(A_i,B_i)\}_{i\in I} \in \A$ such that $\bigcup_{i\in I} A_i \subseteq V$.

Using a co-c.e.\ compact representation algorithm for $K$ and a $C^0$ approximation algorithm for $f$, we can construct a computable enumeration $\{\{(A_i,B_i)\}_{i\in I_j}\}_{j\in\NN}$ of $\A$.  Now, the set $\bigcup_{i\in I_0}A_i$ gives us a bound for $f(K)$, and we claim that the following is a co-c.e.\ closed representation algorithm for $f(K)$, so $f(K)$ is co-c.e. compact by Proposition \ref{prop:compHeineBorel}:
\begin{quote}
Given a name for a compact rational box $C\subseteq\RR^m$, successively check for each $j\in\NN$ whether $A_i\cap C = \emptyset$ for all $i\in I_j$.  Stop once a $j$ has been found, and do not stop otherwise.
\end{quote}
If $f(K)\cap C \neq \emptyset$, this algorithm will not stop.  If $f(K)\cap C = \emptyset$, then $\RR^m\setminus C$ is a neighborhood of $f(K)$, so there exists $j\in\NN$ such that $\bigcup_{i\in I_j} A_i \subseteq\RR^m\setminus C$, so the algorithm will stop.
\end{proof}

\section{Closure properties of computably $C^p$ functions}\label{s:compClos}

\begin{proposition}\label{prop:compArith}
The operations of addition and multiplication are computably $C^\infty$ maps from $\RR^2$ into $\RR$.
\end{proposition}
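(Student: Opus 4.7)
The plan is to verify Definition \ref{def:compCp} directly by exhibiting a $C^\infty$ approximation algorithm for each of the two functions. Since $U = \RR^2$ is the whole computable domain, the containment $B \subseteq U$ is automatic, and only the condition $\PDn{\alpha}{f}{x}(B) \subseteq I$ must be effectively verified.

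First I would observe that the partial derivatives of $f(x,y) = x+y$ and $g(x,y) = xy$ form a finite list of elementary functions: for $f$, only the constant function $f$ itself (the value $x+y$) and the two first-order partials (both constantly equal to $1$) are nonzero, and all higher-order partials vanish identically. For $g$, the nonzero partials are $g(x,y) = xy$, $\PD{}{g}{x} = y$, $\PD{}{g}{y} = x$, and $\PDmix{2}{g}{x}{1}{y}{1} = 1$; all others vanish. So the approximation algorithm's task reduces, in each of finitely many cases (and trivially in the zero case), to checking whether the image of a rational rectangle $B = [c_1,d_1]\times[c_2,d_2]$ under a function of the form $(x,y)\mapsto x+y$, $(x,y)\mapsto x$, $(x,y)\mapsto y$, $(x,y)\mapsto xy$, or a rational constant is contained in a given open rational interval $I = (a,b)$.

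For each such case the image is a compact rational interval whose endpoints can be computed exactly by rational arithmetic on the endpoints of $B$: the image of $x+y$ over $B$ is $[c_1+c_2,\, d_1+d_2]$; the image of $x$ is $[c_1,d_1]$ and of $y$ is $[c_2,d_2]$; and the image of $xy$ is $[\min S,\max S]$ where $S = \{c_1 c_2, c_1 d_2, d_1 c_2, d_1 d_2\}$, using the fact that $xy$ is linear in each variable separately, so its extrema on $B$ are attained at the four corners. Once these endpoints are in hand, checking $[p,q] \subseteq (a,b)$ amounts to the rational comparisons $a < p$ and $q < b$, which are effectively decidable.

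The algorithm therefore proceeds as follows: given $\alpha$, a name for $I$, and a name for $B$, consult the finite list above to identify $\PDn{\alpha}{(\cdot)}{(x,y)}$; compute the exact rational endpoints $p,q$ of its image over $B$; if $a < p$ and $q < b$, halt, otherwise loop forever. There is no substantial obstacle here — the content is purely bookkeeping with finitely many explicit polynomial formulas — so the proof will be quite short.
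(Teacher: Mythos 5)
Your proof is correct and takes essentially the same approach as the paper's: reduce the computably $C^\infty$ condition to a finite check on the explicit partial derivatives and compute exact rational endpoints of each image over a rational box. Your observation that the extrema of $xy$ over $B$ lie at the four corners (by separate linearity in each variable) is a slight streamlining of the paper's version, which instead lists candidate extrema in $\{a_1,0,b_1\}\times\{a_2,0,b_2\}$ with a sign-based case analysis.
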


\begin{proof}
It suffices to show that the functions $S(x,y) = x+y$ and $P(x,y) = xy$ are computably continuous, since their partial derivatives are so trivial.
So consider rational boxes $B = [a_1,b_1]\times[a_2,b_2]$ and $I = (c,d)$.

Since $\PD{}{S}{x} = \PD{}{S}{y} = 1 > 0$, the function $S$ on $B$ is maximized at $(b_1,b_2)$ and minimized at $(a_1,a_2)$.  Thus $S(B)\subseteq I$ if and only if $c < a_1+a_2$ and $b_1+b_2<d$, so $S$ is computably continuous.

Similarly, since $\PD{}{P}{x} = y$ and $\PD{}{P}{y} = x$, the function $P$ on $B$ is maximized and minimized at points in $\{a_1,0,b_1\}\times\{a_2,0,b_2\}$ (which are easily determined according to whether $a_i < b_i \leq 0$, $a_i < 0 < b_i$, or $0\leq a_i < b_i$, for each $i\in\{1,2\}$), so $P$ is also computably continuous.
\end{proof}

\begin{proposition}\label{prop:compRecip}
The function $f:\RR\setminus\{0\}\to\RR$ defined by $f(x) = 1/x$ is computably $C^\infty$.
\end{proposition}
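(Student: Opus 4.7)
The plan is first to verify that $\RR\setminus\{0\}$ sits inside the framework of Definition \ref{def:compCp}. The set $\RR$ itself is trivially a computable domain (every compact rational box in $\RR^1$ is contained in $\RR$), and $\RR\setminus\{0\}$ is c.e.\ open in $\RR$ because a compact rational box $B = [a,b]$ is contained in $\RR\setminus\{0\}$ exactly when $b<0$ or $a>0$, a condition on rationals that can be checked by inspection. So the question reduces to exhibiting a $C^\infty$ approximation algorithm for $f$ on this c.e.\ open domain.

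Next I would use the explicit formula
\[
f^{(k)}(x) = (-1)^k\, k!\, x^{-(k+1)},
\]
a rational expression in $x$ with rational coefficients. The key geometric observation is that on a compact rational box $B = [a,b]\subseteq\RR\setminus\{0\}$, the next derivative $f^{(k+1)}(x) = (-1)^{k+1}(k+1)!\,x^{-(k+2)}$ has constant sign, so $f^{(k)}$ is strictly monotone on $B$. Hence
\[
f^{(k)}(B) = \bigl[\min\{f^{(k)}(a),f^{(k)}(b)\},\ \max\{f^{(k)}(a),f^{(k)}(b)\}\bigr],
\]
a closed interval whose two endpoints are rational numbers computed from $a$, $b$ by elementary rational arithmetic.

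The approximation algorithm is then: given $k\in\NN$, the name of an open rational interval $I=(c,d)$, and the name of a compact rational box $B=[a,b]$, compute the two rationals $f^{(k)}(a)$, $f^{(k)}(b)$, and stop if and only if $(b<0 \text{ or } a>0)$ and both of these rationals lie strictly in $(c,d)$. Since $I$ is open and $f^{(k)}(B)$ is the closed interval between $f^{(k)}(a)$ and $f^{(k)}(b)$, the condition $f^{(k)}(B)\subseteq I$ is equivalent to $c < f^{(k)}(a),f^{(k)}(b) < d$, and this is a comparison of rationals, so effective. The degenerate case $B=\{a\}$ with $a\ne 0$ is handled by the same formula (the two endpoints coincide).

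There is no substantive obstacle here — the proposition is essentially a bookkeeping exercise once one writes down the explicit form of $f^{(k)}$ and notices monotonicity. The only mild subtlety is purely definitional: confirming that $\RR\setminus\{0\}$ qualifies as a c.e.\ open subset of a computable domain, which is immediate. Everything else is rational arithmetic, paralleling the treatment of $S$ and $P$ in Proposition \ref{prop:compArith}, just now on each component of the domain.
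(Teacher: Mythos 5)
Your proof is correct and follows essentially the same route as the paper: write the $k$th derivative explicitly, observe monotonicity on a rational box in $\RR\setminus\{0\}$, and decide $f^{(k)}(B)\subseteq I$ by comparing the rational endpoint values $f^{(k)}(a)$, $f^{(k)}(b)$ with $c$, $d$. The brief check that $\RR\setminus\{0\}$ is c.e.\ open in $\RR$ is a small addition not spelled out in the paper's proof, but it is an observation, not a different argument.
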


\begin{proof}
Consider rational intervals $B = [a,b]\subseteq\RR\setminus\{0\}$ and $I = (c,d)$, and let $n\in\NN$.  The $n$th derivative $f^{(n)}(x) = \frac{(-1)^n n!}{x^{n+1}}$ is either increasing or decreasing on $B$, according to whether $n$ is even or odd and whether $0 < a < b$ or $a < b < 0$.  So whether or not $f^{(n)}(B)\subseteq I$ is easily determined by comparing $f^{(n)}(a)$ and $f^{(n)}(b)$ with $c$ and $d$.
\end{proof}

\begin{proposition}\label{prop:compComp}
Let $p\in\NN\cup\{\infty\}$.  If $g:V\to U$ and $f:U\to\RR^k$ are computably $C^p$, then $f\circ g:V\to\RR^k$ is computably $C^p$.
\end{proposition}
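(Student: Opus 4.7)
The plan is to dispatch the case $p=0$ first from the approximation algorithms $\A_g$ for $g$ and $\A_f$ for $f$, and then bootstrap to arbitrary $p$ via the multivariate chain rule and Proposition~\ref{prop:compArith}.

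For $p=0$, I would build a $C^0$ approximation algorithm for $f\circ g$ that, given a compact rational box $B$ in the computable domain containing $V$ and an open rational box $I\subseteq\RR^k$, runs two threads in parallel. The first thread calls $\A_g$ to verify $B\subseteq V$. The second thread enumerates finite decompositions $B=\bigcup_{j\in J} B_j$ into compact rational boxes $B_j$, together with compact rational boxes $A_j\subseteq\RR^m$, for which $\A_g$ verifies that each $g(B_j)$ lies in some bounded open rational box whose closure is contained in $\Int(A_j)$, and for which $\A_f$ verifies both $A_j\subseteq U$ and $f(A_j)\subseteq I$; it stops as soon as such data are produced. On a successful halt, $g(B)\subseteq\bigcup_j A_j\subseteq U$ and $(f\circ g)(B)\subseteq\bigcup_j f(A_j)\subseteq I$, giving soundness. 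For completeness, if $B\subseteq V$ and $(f\circ g)(B)\subseteq I$, then for each $x\in B$ continuity of $f$ at $g(x)$ yields a compact rational box $A_x\subseteq U$ with $g(x)\in\Int(A_x)$ and $f(A_x)\subseteq I$, and continuity of $g$ at $x$ then yields a compact rational box neighborhood $B_x$ of $x$ in $V$ with $g(B_x)\subseteq\Int(A_x)$; compactness of $B$ extracts a finite subcover, which after routine rational-box bookkeeping matches the enumerated form.

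For $p\geq 1$, I would invoke the multivariate chain rule (Fa\`a di Bruno): for each $\alpha\in\NN^n$ with $|\alpha|\leq p$ the partial derivative $\partial^{|\alpha|}(f\circ g)/\partial x^\alpha$ is a fixed integer-coefficient polynomial in the functions $(\partial^{|\beta|} f/\partial y^\beta)\circ g$ for $|\beta|\leq|\alpha|$ and $\partial^{|\gamma|} g_l/\partial x^\gamma$ for $1\leq l\leq m$, $|\gamma|\leq|\alpha|$. Each $\partial^{|\beta|} f/\partial y^\beta$ is computably continuous on $U$ via $\A_f$, each $\partial^{|\gamma|} g_l/\partial x^\gamma$ is computably continuous on $V$ via $\A_g$, and the $p=0$ case just proved upgrades each $(\partial^{|\beta|} f/\partial y^\beta)\circ g$ to a computably continuous function on $V$. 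Proposition~\ref{prop:compArith} then assembles all of these into a single $C^0$ approximation algorithm for $\partial^{|\alpha|}(f\circ g)/\partial x^\alpha$, uniformly in $\alpha$, which is exactly the data required of a $C^p$ approximation algorithm for $f\circ g$; the $C^\infty$ case is the conjunction of the finite $p$ cases.

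The main obstacle is the $p=0$ step, not because the ideas are deep but because $g(B)$, though compact and contained in $U$, need not lie inside any single compact rational box contained in $U$, so $\A_f$ cannot simply be called on one box. The subdivide-and-match procedure above is the technical heart of the proof; the chain-rule step that follows is essentially bookkeeping once the $p=0$ case and Proposition~\ref{prop:compArith} are in hand.
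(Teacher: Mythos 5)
Your approach for $p \geq 1$ genuinely differs from the paper's: you invoke the full multivariate Fa\`a di Bruno expansion to express each $\partial^{|\alpha|}(f\circ g)/\partial y^\alpha$ directly as a fixed integer polynomial in the functions $(\partial^{|\beta|}f/\partial x^\beta)\circ g$ and $\partial^{|\gamma|}g/\partial y^\gamma$, and then rely only on the $p=0$ case plus Proposition~\ref{prop:compArith} to assemble. The paper instead runs a simple induction on $p$ using only the first-order chain rule, letting the inductive hypothesis absorb the combinatorics at each level. Both are sound outlines, and both ultimately reduce the problem to $p=0$.

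That $p=0$ step, however, has a genuine gap. Your algorithm requires, for each piece $B_j$ of the decomposition, a compact rational box $A_j$ with $g(B_j)\subseteq I_j'$ and $\cl(I_j')\subseteq\Int(A_j)$, where $\Int$ is interior in $\RR^m$. But by Definitions~\ref{def:computableDomain} and~\ref{def:compCp}, the domain $U$ of $f$ is only c.e.\ open in a computable domain $D\subseteq\RR^m$ that need not be all of $\RR^m$ --- for instance $D=[-r,r]$, which is exactly the situation for the functions in $\S$. If some $g(x)$ lies on $\bd(D)$, then no compact rational box $A\subseteq U\subseteq D$ can have $g(x)\in\Int(A)$, so the required $A_j$ never exists. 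A concrete failure: take $V=U=D_V=D_U=[0,1]$, $g=f=\id$, $B=[0,1]$, and $I=(-1,2)$. The algorithm should halt, since $B\subseteq V$ and $f\circ g(B)=[0,1]\subseteq I$; but any $B_j$ containing $0$ would force $0\in\Int(A_j)$ and hence $A_j\not\subseteq[0,1]$, so no valid witness is ever enumerated. Your completeness argument --- that continuity of $f$ at $g(x)$ yields a compact rational box $A_x\subseteq U$ with $g(x)\in\Int(A_x)$ --- is precisely the step that fails. To repair it you would have to work with interiors relative to $D$ (and then argue that $D$ being a union of nondegenerate boxes makes suitable boxes available), or, as the paper does, cite Proposition~\ref{prop:compCont}, which characterizes computable continuity via preimages of c.e.\ open and co-c.e.\ closed sets rather than via explicit box coverings and thereby sidesteps the need to single out one box whose $\RR^m$-interior captures a given image point.
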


\begin{proof}
When $p=0$, this follows easily from Proposition \ref{prop:compCont}.  So let $p > 0$, and inductively assume that the proposition holds for all computable $C^{p-1}$ functions.  Note that the induction hypothesis and Proposition \ref{prop:compArith} together imply that sums and products of computably $C^{p-1}$ functions are computably $C^{p-1}$.

Suppose that $U\subseteq\RR^m$ and $V\subseteq\RR^n$, write $x = (x_1,\ldots,x_m)$ and $y = (y_1,\ldots,y_n)$ for coordinates on $U$ and $V$, respectively, and write $f = (f_1,\ldots,f_k)$ and $g = (g_1,\ldots,g_m)$.  Then for each $l\in\{1,\ldots,k\}$ and $j\in\{1,\ldots,n\}$,
\[
\PD{}{(f_l\circ g)}{y_j}(x) = \sum_{i=1}^{m} \PD{}{f_l}{x_i}(g(y)) \PD{}{g_i}{y_j}(y).
\]
The functions $\PD{}{f_l}{x_i}$ and $\PD{}{g_i}{y_j}$ are computable $C^{p-1}$, so $\PD{}{f}{x_i}\circ g$ is computably $C^{p-1}$ by the induction hypothesis, and hence $\PD{}{(f_l\circ g)}{y_j}$ is computably $C^{p-1}$.  Thus $f\circ g$ is computably $C^p$.
\end{proof}

When speaking about Riemann integrals, a {\bf partition} of a compact rational interval $[a,b]$ is a finite collection of compact rational intervals $\P = \{[y_0,y_1],\ldots,[y_{k-1},y_k]\}$ with $a = y_0 < \cdots < y_k = b$.  If $P  \in\P$ and we are integrating with respect to a variable $y$, we write $\Delta y_P$ for the length of the interval $P$.  More generally, a {\bf partition} of a compact rational box $B = \prod_{i=1}^{n}[a_i,b_i]$ is a set $\P = \{P_1\times\cdots\times P_n : P_1\in\P_1, \ldots, P_n\in\P_n\}$, where $\P_i$ is a partition of $[a_i,b_i]$ for each $i\in\{1,\ldots,n\}$.

\begin{lemma}\label{lemma:compInteg}
Let $f:U\times[a,b]\to\RR$ be computably $C^p$, where $U\subseteq\RR^n$, and define
$F:U\to\RR$ by
\[
F(x) = \int_{a}^{b} f(x,y) dy.
\]
Then $F$ is computably $C^p$.
\end{lemma}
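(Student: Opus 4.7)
My plan is to reduce the general case to $p=0$ via differentiation under the integral sign, and then to handle $p=0$ using computable Riemann sums.

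For the $p=0$ case, I want an algorithm that, given a compact rational box $B$ and an open rational interval $I=(c,d)$, halts if and only if $B\subseteq U$ and $F(B)\subseteq I$. The algorithm iterates over $N=1,2,\ldots$, each time partitioning $[a,b]$ into $N$ equal rational subintervals $[y_{k-1},y_k]$. For each $k$, it searches (by enumerating pairs of rationals and running the $C^0$ approximation algorithm for $f$ under time-sharing) for $L_k<U_k$ witnessing $f(B\times[y_{k-1},y_k])\subseteq(L_k,U_k)$; this simultaneously verifies $B\subseteq U$. Once witnesses are obtained for all $k$, the algorithm forms $S_L=\sum_k L_k(y_k-y_{k-1})$ and $S_U=\sum_k U_k(y_k-y_{k-1})$ and halts if $c<S_L$ and $S_U<d$; otherwise it increments $N$ (with a growing budget shared across all $N$'s).

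Correctness splits into two directions. If the algorithm halts then $S_L\leq F(x)\leq S_U$ for every $x\in B$, so $F(B)\subseteq(c,d)$. Conversely, if $B\subseteq U$ and $F(B)\subseteq I$, then $F$ is continuous and $B$ compact, so $F(B)\subseteq[c+\epsilon,d-\epsilon]$ for some $\epsilon>0$; uniform continuity of $f$ on the compact set $B\times[a,b]$ (automatic from $f$ being continuous on a compact neighborhood that the approximation algorithm can witness) lets us choose, for $N$ large, rational $L_k<U_k$ with $U_k-L_k<\epsilon/(2(b-a))$, giving $S_U-S_L<\epsilon/2$ and hence $c<S_L$ and $S_U<d$. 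For $p\geq 1$, the Leibniz rule gives $\PDn{\alpha}{F}{x}(x)=\int_a^b\PDn{\alpha}{f}{x}(x,y)\,dy$ for each $\alpha\in\NN^n$ with $|\alpha|\leq p$. A $C^p$ approximation algorithm for $f$ yields, by specializing the multi-index to $(\alpha,0)$, a $C^0$ approximation algorithm for $\PDn{\alpha}{f}{x}$ uniformly in $\alpha$, so the $p=0$ case applied uniformly in $\alpha$ assembles into a single $C^p$ approximation algorithm for $F$.

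The main obstacle is the convergence step in the $p=0$ case: one must know that the search for tighter and tighter rational enclosures $L_k<U_k$ will succeed as $N$ grows, with widths shrinking to zero. This is where uniform continuity of $f$ on $B\times[a,b]$ is essential; the key observation is that any open rational interval strictly containing $f(B\times[y_{k-1},y_k])$ is eventually certified by the $C^0$ approximation algorithm for $f$, so uniform continuity guarantees that on fine enough partitions such intervals can be chosen with arbitrarily small width, which is precisely what drives $S_U-S_L$ to zero and forces the rational comparisons $c<S_L$ and $S_U<d$ to be decidable once they are true.
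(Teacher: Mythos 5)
Your proposal is correct and follows essentially the same route as the paper: reduce to $p=0$ via differentiation under the integral sign, and then for $p=0$ compute rational lower and upper Riemann sums by running the $C^0$ approximation algorithm for $f$ on the cells of finer and finer partitions of $[a,b]$, halting when the sums pin the integral inside the target interval. The paper phrases the $p=0$ step as an enumeration over all partitions and rational enclosures rather than iterating over $N$ equal subdivisions, but the mechanism and the correctness argument (uniform continuity of $f$ on a compact set forcing the upper and lower sums together) are the same.
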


The hypothesis is loosely stated: it suffices that the functions $\PDn{\alpha}{f}{x}$, for each $\alpha\in\NN^{n}$ with $|\alpha|\leq p$, be continuous, and that there exist an approximation algorithm for only these partial derivatives in $x$.

\begin{proof}
We will prove the lemma for $p = 0$.  The lemma for a general $p$ follows by applying our proof to the formula
\[
\PDn{\alpha}{F}{x}(x) = \int_{a}^{b} \PDn{\alpha}{f}{x}(x,y) dy
\]
for each $\alpha\in\NN^n$ with $|\alpha|\leq p$.

Fix a compact rational box $B\subseteq U$ and a rational open interval $I$.  For each $\epsilon > 0$ there exists a partition $\P$ of $B\times[a,b]$ and a family of pairs of rational numbers $\{(m_P,M_P)\}_{P\in\P}$ such that
\begin{equation}\label{eq:ULcond}
\text{$m_P < f(x,y) < M_P$ for all $P\in\P$ and all $(x,y)\in P$,}
\end{equation}
and such that $\sum_{P\in\P} (M_P-m_P)\Delta y_P < \epsilon$.  Since $f$ is computably continuous, the condition \eqref{eq:ULcond} can be effectively verified, so we may construct a computable enumeration of all pairs $(\P, \{(m_P,M_P)\}_{P\in\P})$ which satisfy \eqref{eq:ULcond}.  For each such pair in this enumeration, check if
\begin{equation}\label{eq:integStop}
\text{$\sum_{P\in\P} m_P \Delta y_P$ and $\sum_{P\in\P} M_P \Delta y_P$ are in $I$,}
\end{equation}
and stop if such a pair satisfying \eqref{eq:integStop} is found.  Note that this algorithm stops if and only if $F(B)\subseteq I$.
\end{proof}

\begin{proposition}\label{prop:compDivVar}
Let $U\subseteq\RR^{n+1}$, and write $(x,y) = (x_1,\ldots,x_n,y)$ for coordinates on $\RR^{n+1}$.  Let $p\in\NN\cup\{\infty\}$, and suppose that $f:U\to\RR$ is a computably $C^{p+1}$ function such that $f(x,0) = 0$ on $\{x : (x,0)\in U\}$ (where $\infty + 1 = \infty$).  Then there exists a unique computably $C^p$ function $g:U\to\RR$ such that
\[
f(x,y) = y g(x,y)
\]
on $U$.
\end{proposition}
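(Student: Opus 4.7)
The plan is as follows. Uniqueness is immediate: on the subset $\{(x,y)\in U : y\neq 0\}$, which is dense in $U$ because $U$ is open in a computable domain and every nondegenerate compact rational box contains points with $y\neq 0$, the equation $f(x,y)=yg(x,y)$ forces $g(x,y)=f(x,y)/y$, so continuity determines $g$ on all of $U$. For existence, I would define $g(x,y)=f(x,y)/y$ when $y\neq 0$ and $g(x,0)=\frac{\partial f}{\partial y}(x,0)$. Whenever the segment from $(x,0)$ to $(x,y)$ lies in $U$, substituting $u=ty$ in $\int_0^y\frac{\partial f}{\partial y}(x,u)\,du=f(x,y)-f(x,0)=f(x,y)$ yields the alternative representation
\[
g(x,y)=\int_0^1 \frac{\partial f}{\partial y}(x,ty)\,dt,
\]
which evaluates to $\frac{\partial f}{\partial y}(x,0)$ at $y=0$ and to $f(x,y)/y$ for $y\neq 0$. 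Thus the two formulas agree on their common domain and together describe a single function $g:U\to\RR$ satisfying $f(x,y)=yg(x,y)$.

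To produce a $C^p$ approximation algorithm for $g$, I would, on input a compact rational box $B=B'\times[a,b]\subseteq U$, dispatch by inspection of $[a,b]$. If $a>0$ or $b<0$, then $B$ lies entirely in $\{y\neq 0\}$, so on $B$ we have $g=f\cdot(1/y)$, which is computably $C^{p+1}$ by Propositions \ref{prop:compArith}, \ref{prop:compRecip}, and \ref{prop:compComp}, giving in particular a $C^p$ approximation algorithm. If instead $a\leq 0\leq b$, then because $B$ is a product box, $(x,ty)\in B\subseteq U$ for every $(x,y)\in B$ and every $t\in[0,1]$, so the integral representation is valid throughout $B$. The integrand $h(x,y,t)=\frac{\partial f}{\partial y}(x,ty)$ is the composition of the computably $C^p$ function $\frac{\partial f}{\partial y}$ with the polynomial (hence computably $C^\infty$) map $(x,y,t)\mapsto(x,ty)$, and is therefore computably $C^p$ by Proposition \ref{prop:compComp}. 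Lemma \ref{lemma:compInteg} then shows that $g|_B$ is computably $C^p$. Combining the two cases produces the desired approximation algorithm for $g$.

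The main subtle point is that the integral representation cannot be applied on $U$ uniformly, since for a given $(x,y)\in U$ the segment $\{(x,ty):t\in[0,1]\}$ need not lie in $U$. This is precisely why the case split above is needed; however, the obstacle dissolves as soon as one works on a compact rational box, which is automatically a product set and therefore, whenever it meets $\{y=0\}$, contains every segment required by the integral formula. All remaining work is a matter of assembling Propositions \ref{prop:compArith}, \ref{prop:compRecip}, \ref{prop:compComp}, and Lemma \ref{lemma:compInteg}.
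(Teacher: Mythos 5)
Your proposal is correct and takes essentially the same route as the paper's proof: both reduce to a compact rational box, split into the cases where the box misses or meets $\{y=0\}$, and in the latter case use the integral representation $g(x,y)=\int_0^1\frac{\partial f}{\partial y}(x,ty)\,dt$ together with Lemma \ref{lemma:compInteg}. Your explicit remark that the integral formula is only valid because a rational box is a product set (so the segment stays inside) is a point the paper leaves implicit, but the underlying argument is the same.
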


\begin{proof}
The uniqueness of $g$ is apparent, since there is at most one continuous function $g:U\to\RR$ satisfying $g(x,y) = f(x,y)/y$ for all $(x,y)\in U$ with $y\neq 0$.  To construct $g$, it suffices to fix a compact rational box $B\subseteq U$ and prove that there is a computably $C^p$ function  $g:B\to\RR$ such that $f(x,y) = y g(x,y)$ on $B$.

If $B\cap(\RR^n\times\{0\}) = \emptyset$, define $g:B\to\RR$ by $g(x,y) = f(x,y)/y$, and note that $g$ is computably $C^p$ by Propositions \ref{prop:compArith}-\ref{prop:compComp}. So suppose that $B\cap(\RR^n\times\{0\}) \neq \emptyset$, and define $g:B\to\RR$ by $g(x,y) = \int_{0}^{1}\PD{}{f}{y}(x,ty)dt$.  Then
\[
y g(x,y) = \int_{0}^{1} y \PD{}{f}{y}(x,ty) dt = \int_{0}^{y} \PD{}{f}{y}(x,s) ds = f(x,y) - f(x,0) = f(x,y).
\]
The function $g$ is computably $C^p$ by Lemma \ref{lemma:compInteg}.
\end{proof}

The final goal of the section is to prove an effective version of the implicit function theorem.  Consider a $C^1$ function $f=(f_1,\ldots,f_n):U\to\RR^n$, where $U$ is an open neighborhood of the origin in $\RR^m\times\RR^n$, and write $(x,y) = (x_1,\ldots,x_m,y_1,\ldots,y_n)$ for coordinates on $\RR^m\times\RR^n$.  For all $r\in(0,+\infty)^m$ and $s\in(0,+\infty)^n$ such that $[-r,r]\times[-s,s]\subseteq U$, we shall define a statement $\IF(f;r,s)$ which satisfies the following lemma.

\begin{lemma}\label{lemma:IF}
The statement $\IF(f;r,s)$ has the following properties.
\begin{enumerate}{\setlength{\itemsep}{5pt}
\item
If $\IF(f;r,s)$ holds, then the set
\begin{equation}\label{eq:IFgraph}
\{(x,y)\in[-r,r]\times[-s,s] : f(x,y) = 0\}
\end{equation}
is the graph of a $C^1$ function from $[-r,r]$ into $(-s,s)$, and $\det\PD{}{f}{y}\neq 0$ on \eqref{eq:IFgraph}.

\item
Suppose $\IF(f;r,s)$ holds.  Then there exists an open box $A\subseteq(0,+\infty)^{m+n}$ containing $(r,s)$ such that $\IF(f;u,v)$ holds for all $(u,v)\in A$.  Moreover, for any open box $V\subseteq U$ containing the origin and any open box $B\subseteq(0,+\infty)^{m+n}$ containing $(r,s)$ such that $V+B\subseteq A$,  the statement $\IF(f_{(a,b)}; u,v)$ holds for all $(a,b)\in V$ and all $(u,v)\in B$, where $f_{(a,b)}(x,y) = f(x+a,y+b)$.\vspace*{3pt}

(Note that $V+B\subseteq A$ will hold for all sufficiently small $V$ and $B$.)

\item
If $f(0) = 0$ and $\det\PD{}{f}{y}(0)\neq 0$, then there exists $(r,s)\in\QQ_{+}^{m}\times\QQ_{+}^{n}$ such that $\IF(f;r,s)$ holds.

\item
If $f$ is computably $C^1$, $r\in\QQ_{+}^{m}$, $s\in\QQ_{+}^{n}$, and $\IF(f;r,s)$ holds, then we can effectively verify that $\IF(f;r,s)$ holds.  In other words, there is an algorithm which acts as follows:
\begin{quote}
Given a $C^1$ approximation algorithm for $f$ and $(r,s)\in\QQ_{+}^{m}\times\QQ_{+}^{n}$, the algorithm stops if and only if $[-r,r]\times[-s,s]\subseteq U$ and $\IF(f;r,s)$ holds.
\end{quote}
}
\end{enumerate}
\end{lemma}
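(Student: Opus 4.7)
The plan is to take $\IF(f;r,s)$ to be a strict quantitative hypothesis of Newton/Picard type, expressible purely through sup-norm bounds of $f$ and $\PD{}{f}{y}$ on $[-r,r]\times[-s,s]$. Concretely, I will say that $\IF(f;r,s)$ holds iff $[-r,r]\times[-s,s]\subseteq U$ and there exist an invertible matrix $L\in\QQ^{n\times n}$ and a rational $\kappa\in(0,1)$ such that
\[
(\mathrm{a})\ \sup_{(x,y)\in[-r,r]\times[-s,s]}\bigl\|I - L\,\PD{}{f}{y}(x,y)\bigr\| < \kappa, \qquad (\mathrm{b})\ \sup_{x\in[-r,r]}\|L\,f(x,0)\| < (1-\kappa)\min_j s_j,
\]
with all inequalities strict so the condition is open in the data.

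For part 1, I will verify that for each fixed $x\in[-r,r]$, the map $T_x:[-s,s]\to\RR^n$ defined by $T_x(y)=y-L\,f(x,y)$ is a $\kappa$-contraction in $y$ by (a), and by (b) sends $[-s,s]$ into itself (indeed into $(-s,s)$, by combining $\|T_x(0)\|<(1-\kappa)\min s_j$ with a contraction estimate). The Banach fixed-point theorem then produces a unique $\phi(x)\in(-s,s)$ with $f(x,\phi(x))=0$, depending continuously on $x$; condition (a) also forces $L\,\PD{}{f}{y}$, and hence $\PD{}{f}{y}$, to be invertible on the graph, and the $C^1$-regularity of $\phi$ plus the graph characterization in \eqref{eq:IFgraph} follow from the classical implicit function theorem applied locally, glued by uniqueness.

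For part 2, since (a) and (b) are strict open inequalities in the continuous data $(r,s)$, they persist on an open box $A$ about $(r,s)$. Replacing $f$ by $f_{(a,b)}$ merely translates the box of evaluation; the same $L,\kappa$ continue to work whenever $(a,b)\in V$ and $(u,v)\in B$ with $V+B\subseteq A$, by a routine continuity estimate. For part 3, assume $f(0)=0$ and $\det\PD{}{f}{y}(0)\neq 0$. Take any rational $L$ close enough to $\PD{}{f}{y}(0)^{-1}$ that $\|I-L\,\PD{}{f}{y}(0)\|<\kappa/2$ for a chosen rational $\kappa\in(0,1)$; then shrink to a rational box $[-r,r]\times[-s,s]$ where (a) holds by continuity of $\PD{}{f}{y}$, and shrink $r$ further so that, by $f(0)=0$ and continuity, (b) holds. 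For part 4, given a $C^1$ approximation algorithm for $f$ and rationals $(r,s)$, I dovetail over all rational $(L,\kappa)\in\QQ^{n\times n}\times(\QQ\cap(0,1))$ and over shrinking rational open intervals that approximate the two sup-norms; the approximation algorithm for $\PD{}{f}{y}$ and $f$ can semidecide that these sup-norms fall strictly below $\kappa$ and $(1-\kappa)\min s_j$, which halts precisely when $\IF(f;r,s)$ is true.

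The main technical obstacle is balancing the four clauses simultaneously with a \emph{single} formulation of $\IF$: it must be strong enough for parts 1 and 3 (actual existence/smoothness, and achievable whenever the classical IFT hypothesis holds at the origin), yet it must be an open, rational, strict-inequality condition so that parts 2 and 4 are effective. The Newton--Kantorovich style formulation above packages invertibility, contraction, and a priori confinement into a single pair of sup-norm inequalities with rational witnesses $(L,\kappa)$, which is stable under small continuous perturbations and is semidecidable from a $C^1$ approximation algorithm; this is why it satisfies all four requirements at once.
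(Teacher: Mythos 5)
Your proposal takes a genuinely different route from the paper. The paper defines $\IF(f;r,s)$ inductively on $n$: for $n=1$ it is a sign/monotonicity condition on $f$ and $\PD{}{f}{y}$, and for $n>1$ it is reduced to $\IF(f_i;(r,s'),s_j)$ and $\IF(f'\circ H;r,s')$ for an implicitly defined map $H$; definition and lemma are then proved simultaneously by induction. You instead give a single, noninductive Newton--Kantorovich definition witnessed by rational data $(L,\kappa)$. The paper itself notes this contraction-mapping alternative (citing McNicholl) and explains that it deliberately avoids it because the inductive definition interacts cleanly with the closure axiom for IF-systems (Definition \ref{def:IFsystem}.3(c) only requires closure under implicit functions for $n=1$, and Remark \ref{rmk:IFsystem}.3 then derives the general-$n$ case precisely by unwinding the inductive definition of $\IF$). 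So your route buys a more self-contained, ``one-shot'' predicate, but at the cost of needing a separate argument for that remark later; the paper's route buys that reuse at the cost of carrying the induction.

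There is, however, a genuine gap in your part 1. Your condition (b) reads $\sup_x\|L\,f(x,0)\| < (1-\kappa)\min_j s_j$, and you claim $T_x(y)=y-Lf(x,y)$ then maps $[-s,s]$ into $(-s,s)$. But $[-s,s]=\prod_j[-s_j,s_j]$ is a box, not a ball, and the contraction estimate gives only
\[
|T_x(y)_j|\ \le\ \|T_x(0)\| + \kappa\,\|y\|\ <\ (1-\kappa)\min_j s_j + \kappa\,\max_j s_j ,
\]
which exceeds $s_j$ for $j$ achieving the minimum whenever the $s_j$ are not all equal. So self-mapping fails for rectangular $s$. The fix is standard but must be incorporated into the \emph{definition} of $\IF$: work in the diagonally weighted norm $\|y\|_s = \max_j|y_j|/s_j$ (so $[-s,s]$ is its unit ball), require in (a) that the operator norm of $I - L\,\PD{}{f}{y}$ with respect to $\|\cdot\|_s$ be $<\kappa$, and require in (b) that $\|L\,f(x,0)\|_s < 1-\kappa$. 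With that change, $\|T_x(y)\|_s < (1-\kappa)+\kappa\cdot 1 = 1$, and your parts 1--4 go through (all the norms involved are rational linear combinations of the approximated data, so part 4's dovetailing is unaffected). As written, though, the stated condition (b) does not support the claimed invariance of the box.
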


Our definition for $\IF(f;r,s)$ is based on the inductive proof of the implicit function theorem for $f:U\to\RR^n$ for a general value of $n$ from the case of $n=1$.  Thus we shall define $\IF(f;r,s)$ and prove Lemma \ref{lemma:IF} together by induction on $n$.  If one wanted to take a noninductive approach, one could instead base the definition of $\IF(f;r,s)$ on the proof of the implicit function which uses the contraction mapping principle (for example, see Rudin \cite{Rudin_baby}), such as done in McNicholl \cite{McNicholl} to prove an effective version of the implicit function theorem.  We use an inductive approach because it is rather intuitive in nature and relates more naturally with our definition of an implicit function system given in Section \ref{s:IFsystem}.

\begin{definition}[The Base Case]\label{def:IF}
If $n=1$, then $\IF(f;r,s)$ means that there exists $\sigma\in\{-1,1\}$ such that
$\sigma\cdot\PD{}{f}{y}(x,y)> 0$ for all $(x,y)\in[-r,r]\times[-s,s]$, and such that $\sigma\cdot f(x,-s) < 0 < \sigma\cdot f(x,s)$ for all $x\in[-r,r]$.
\end{definition}

\begin{proof}[Proof of Lemma \ref{lemma:IF}, the Base Case]
Assume $\IF(f;r,s)$ holds.  The intermediate value and increasing function theorems show that \eqref{eq:IFgraph} is the graph of a function from $[-r,r]$ into $(-s,s)$.  The implicit function theorem shows that this function is $C^1$, which proves 1.  Clauses 2 follows easily from the continuity of $f$ and $\PD{}{f}{y}$.  To prove 3, let $\sigma = \sign \PD{}{f}{y}(0)$, and note that since $\PD{}{f}{y}$ is continuous, there exists $s\in\QQ_+$ such that $\sigma \PD{}{f}{y}(0,y) > 0$ on $[-s,s]$. Hence, $\sigma f(0,-s) < 0 < \sigma f(0,s)$ because $f(0) = 0$.  Since $f$ and $\PD{}{f}{y}$ are continuous, there exists $r\in\QQ_{+}^{m}$ such that $\sigma \PD{}{f}{y}(x,y) > 0$ on $[-r,r]\times[-s,s]$ and such that $\sigma f(x,-s) < 0 < \sigma f(x,s)$ on $[-r,r]$, which proves 3.  Clause 4 is apparent.
\end{proof}

\noindent{\bf Definition \ref{def:IF}}\, (The Inductive Step){\bf .}
Now suppose that $n > 1$.  Then $\IF(f;r,s)$ means that there exist  $i,j\in\{1,\ldots,n\}$ such that, if we write
\begin{eqnarray*}
y' & = & (y_1, \ldots, y_{j-1}, y_{j+1}, \ldots, y_n), \\
s' & = & (s_1, \ldots, s_{j-1}, s_{j+1}, \ldots, s_n), \\
f' & = & (f_1, \ldots, f_{i-1}, f_{i+1}, \ldots, f_n),
\end{eqnarray*}
then $\IF(f_i;(r,s'), s_j)$ and $\IF(f'\circ H;r,s')$ both hold, where $H$ is defined as follows:
\begin{quote}
Since $\IF(f_i;(r,s'), s_j)$ holds, the base case of the induction shows that there exist tuples $R > r$ and $S' > s'$ such that $\IF(f_i;(R,S'),s_j)$ holds.  Let $U' = (-R,R)\times(-S',S')$, and let $h:U'\to(-s_j,s_j)$ be the $C^1$ function whose graph is the set
\[
\{(x,y',y_j)\in U'\times[-s_j,s_j] :f_i(x,y) = 0\}.
\]
Define $H:U'\to\RR^m\times\RR^n$ by
\[
H(x,y') = (x,y_1,\ldots,y_{j-1},h(x,y'),y_{j+1},\ldots,y_n).
\]
\end{quote}
We will also have use for the function $H':U'\to\RR^n$ defined by
\[
H'(x,y') = (y_1,\ldots,y_{j-1},h(x,y'),y_{j+1},\ldots,y_n).
\]

\begin{proof}[Proof of Lemma \ref{lemma:IF}, the inductive step]
Let $n > 1$, and assume the lemma holds for any $k\in\{1,\ldots,n-1\}$ in place of $n$.  We use the notation from the inductive step of Definition \ref{def:IF}.

We now prove 1.  Assume $\IF(f;r,s)$ holds.   The base case shows that $h$ is $C^1$, so $f'\circ H$ is $C^1$.  Since $\IF(f'\circ H;r,s')$ holds, the induction hypothesis shows that $\{(x,y') \in [-r,r]\times[-s',s'] : f'\circ H(x,y') = 0\}$ is the graph of a $C^1$ function $g':[-r,r]\to(-s',s')$.  The function $g:[-r,r]\to(-s,s)$ defined by $g(x) = H'\circ g'(x)$ is $C^1$, and \eqref{eq:IFgraph} is the graph of $g$.  This proves the first part of 1.

We need to prove that $\det\PD{}{f}{y}\neq 0$ on \eqref{eq:IFgraph}.  We have $\PD{}{f_i}{y_j}\neq 0$ on $[-r,r]\times[-s,s]$ by the definition of the base case, and $\det\PD{}{(f'\circ H)}{y'}(x,g'(x))\neq 0$ on $[-r,r]$ by the induction hypothesis.  Therefore
\[
0 \neq \det\PD{}{(f'\circ H)}{y'}(x,g'(x))
=
\left(\det\PD{}{f'}{y'}\circ H(x,g'(x))\right)
\left(\det\PD{}{H}{y'}(x,g'(x))\right),
\]
which shows that $\det\PD{}{H}{y'}(x,g'(x)) \neq 0$ on $[-r,r]$.  Now, differentiating the equation
\[
f_i(x,y_{<j},h(x,y'),y_{>j}) = 0
\]
shows that
\[
\PD{}{h}{y'}(x,y') = \left.
- \frac{\PD{}{f_i}{y'}(x,y)}{\PD{}{f_i}{y_j}(x,y)}\right|_{y_j = h(x,y')},
\]
so
\begin{eqnarray*}
\PD{}{H}{y'}(x,y')
    & = &
    \left.
    \PD{}{f'}{y'}(x,y) + \PD{}{f'}{y_j}(x,y) \PD{}{h}{y'}(x,y')
    \right|_{y_j = h(x,y')} \\
    & = &
    \frac{1}{\PD{}{f_i}{y_j}(x,y)}
    \left.\left(
    \PD{}{f'}{y'}(x,y) \PD{}{f_i}{y_j}(x,y)
    - \PD{}{f'}{y_j}(x,y) \PD{}{f_i}{y'}(x,y)
    \right)\right|_{y_j = h(x,y')}.
\end{eqnarray*}
Thus on \eqref{eq:IFgraph} we have
\begin{eqnarray}\label{eq:rowOps}
\det\PD{}{f}{y}
    & = &
    \det
    \left(\begin{matrix}
    \PD{}{f'}{y'}  & \PD{}{f'}{y_j} \\
    \PD{}{f_i}{y'} & \PD{}{f_i}{y_j}
    \end{matrix}\right)
    \\
    & = &
    \det
    \left(\begin{matrix}
    \PD{}{f'}{y'} \PD{}{f_i}{y_j}  & \PD{}{f'}{y_j} \PD{}{f_i}{y_j}\\
    \PD{}{f_i}{y'}                  & \PD{}{f_i}{y_j}
    \end{matrix}\right)
    \nonumber\\
    & = &
    \det
    \left(\begin{matrix}
    \PD{}{f'}{y'} \PD{}{f_i}{y_j} - \PD{}{f'}{y_j}\PD{}{f_i}{y'}  & 0 \\
    \PD{}{f_i}{y'}                                                &\PD{}{f_i}{y_j}
    \end{matrix}\right)
    \nonumber\\
    & = &
    \left(\det\PD{}{H}{y'}\right) \left(\det\PD{}{f_i}{y_j}\right)^2
    \nonumber\\
    & \neq &
    0,
    \nonumber
\end{eqnarray}
which proves 1.

We now prove 2.  Assume $\IF(f;r,s)$ holds.  By the base case of the induction, there exists an open box $A\subseteq (0,+\infty)^{m+n}$ containing $(r,s)$ such that $\IF(f_i;(u,v'),v_j)$ holds for all $(u,v)\in A$.  Let $A' = \{(x,y') : (x,y)\in A\}$. By applying the induction hypothesis and by shrinking $A'$, the statement $\IF(f'\circ H; u,v')$ holds for all $(u,v')\in A'$.  Therefore $\IF(f;u,v)$ holds for all $(u,v)\in A$.  Now, fix an open box $V\subseteq U$ containing the origin and an open box  $B\subseteq(0,+\infty)^{m+n}$ containing $(r,s)$ such that $V+B\subseteq A$.  Fix $(a,b)\in V$ and $(u,v)\in B$.  By the induction hypothesis, $\IF((f_i)_{(a,b)};(u,v'),v_j)$ and $\IF((f'\circ H)_{(a,b')};u,v')$ both hold.  If we define $h_{(a,b)}$ and $H_{(a,b')}$ from $(f_i)_{(a,b)}$ in the same way that $h$ and $H$ are defined from $f_i$, then for all $(x,y)\in [-u,u]\times[-v,v]$,
\begin{eqnarray*}
y_j = h_{(a,b)}(x,y')
    & \text{iff} &
    (f_i)_{(a,b)}(x,y) = 0, \\
    & \text{iff} &
    f_i(x+a,y+b) = 0, \\
    & \text{iff} &
    y_j + b_j = h(x+a,y'+b'),
\end{eqnarray*}
so $h_{(a,b)}(x,y') + b_j = h(x+a,y'+b')$, and hence
\begin{eqnarray*}
(f'\circ H)_{(a,b')}(x,y')
    & = &
    f'(x+a,y_{<j}+b_{<j}, h(x+a,y'+b'), y_{>j} + b_{>j}) \\
    & = &
    f'(x+a,y_{<j}+b_{<j}, h_{(a,b)}(x,y') + b_j, y_{>j} + b_{>j}) \\
    & = &
    f'_{(a,b)}\circ H_{(a,b)}(x,y').
\end{eqnarray*}
This shows that $(f'\circ H)_{(a,b')} = f'_{(a,b)}\circ H_{(a,b')}$.  So $\IF((f_i)_{(a,b)};u,v)$ and $\IF(f'_{(a,b)}\circ H_{(a,b')};u,v')$ both hold, and hence $\IF(f_{(a,b)};u,v)$ holds.  This proves 2.

We now prove 3.  Suppose that $f(0) = 0$ and $\PD{}{f}{y}(0)$ is nonsingular.  Then there exist $i,j\in\{1,\ldots,n\}$ such that $\PD{}{f_i}{y_j}(0)$ and $\PD{}{f'}{y'}(0)$ are nonsingular.  We can use $f_i$ to define $H$ near $0$ by the base case, and clearly $H(0) = 0$.  The computation \eqref{eq:rowOps} shows that $(\det\PD{}{H}{y'}(0))(\det\PD{}{f_i}{y_j}(0))^2 = \det\PD{}{f}{y}(0) \neq 0$, so $\PD{}{H}{y'}(0)$ is nonsingular.  Therefore $\PD{}{(f'\circ H)}{y'}(0)$ is nonsingular since $\PD{}{(f'\circ H)}{y'}(0) = \PD{}{f'}{y'}(0) \PD{}{H}{y'}(0)$.  The base case shows that $\IF(f_i;(r,s'),s_j)$ holds for some $(r,s)\in\QQ^{m}_{+}\times\QQ^{n}_{+}$, and the induction hypothesis shows that by shrinking $(r,s')$, $\IF(f'\circ H;r,s')$ also holds.  Thus $\IF(f;r,s)$ holds, which proves 3.

To prove 4, simply note that $f_i$ is computably $C^1$, so $h$ is computably $C^1$ by the base case, and hence so is $f'\circ H$ by Proposition \ref{prop:compComp}.  So the induction hypothesis shows that we can effectively verify that $\IF(f_i;(r,s'),s_j)$ and $\IF(f'\circ H;r,s')$ are true, which verifies that $\IF(f;r,s)$ is true.
\end{proof}

\begin{proposition}\label{prop:compIFT}
Let $f:U\to\RR^n$ be computably $C^p$, where $p\in\{1,2,3,\ldots,\}\cup\{\infty\}$ and $U$ is a neighborhood of the origin in $\RR^m\times\RR^n$.  If $\IF(f;r,s)$ holds, then the function $g:[-r,r]\to(-s,s)$ whose graph is
\[
\{(x,y)\in[-r,r]\times[-s,s] : f(x,y) = 0\}
\]
is computably $C^p$.
\end{proposition}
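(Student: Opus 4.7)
The plan is to induct on $n$, following the recursive structure of Definition \ref{def:IF}. The inductive step reduces quickly to the closure results of Section \ref{s:compClos}, so essentially all the substantive work lies in the base case $n = 1$.

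For the base case ($n = 1$), I first verify that $g$ is computably continuous. Let $\sigma \in \{-1,1\}$ be the sign witnessing $\IF(f;r,s)$, so that $\sigma f(x,\cdot)$ is strictly increasing on $[-s,s]$ for every $x \in [-r,r]$. Given a compact rational box $B \subseteq [-r,r]$ and a rational open interval $I = (c,d)$, set $c' = \max(c,-s)$ and $d' = \min(d,s)$. Since $g$ takes values in $(-s,s)$, we have $g(B) \subseteq I$ iff $c' < d'$ and $g(B) \subseteq (c',d')$, and by monotonicity the latter is equivalent to
\[
\sigma f(B \times \{c'\}) \subseteq (-\infty,0) \quad \text{and} \quad \sigma f(B \times \{d'\}) \subseteq (0,+\infty).
\]
Both inclusions are effectively verifiable via the $C^0$ approximation algorithm for $f$ on the compact rational boxes $B \times \{c'\}$ and $B \times \{d'\}$, and conversely if $g(B) \not\subseteq (c',d')$ then continuity and monotonicity force $0$ into one of these two images, blocking verification. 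This yields a $C^0$ approximation algorithm for $g$.

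I then bootstrap to computably $C^p$ by implicit differentiation. Repeated differentiation of $f(x,g(x)) = 0$ expresses each partial derivative $\PDn{\alpha}{g}{x}(x)$, for $|\alpha| \leq p$, as a universal polynomial in lower-order partial derivatives of $g$ and in partial derivatives of $f$ of orders $\leq |\alpha|$ evaluated at $(x,g(x))$, divided by a positive power of $\PD{}{f}{y}(x,g(x))$. By Lemma \ref{lemma:IF}.1 the denominator is bounded away from zero on the compact set $[-r,r]$. Arguing by strong induction on $|\alpha|$: once all $\PDn{\beta}{g}{x}$ with $|\beta| < |\alpha|$ have been shown computably continuous, the map $x \mapsto (x,g(x))$ is computably continuous, so Proposition \ref{prop:compComp} applied to the computably $C^{p-|\alpha|}$ partial derivatives of $f$ shows that each factor of the universal expression is computably continuous; Propositions \ref{prop:compArith} and \ref{prop:compRecip} then yield $\PDn{\alpha}{g}{x}$ computably continuous. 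Hence $g$ is computably $C^p$.

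For the inductive step ($n > 1$), use the decomposition in Definition \ref{def:IF}: $\IF(f_i;(r,s'),s_j)$ holds and $f_i$ is computably $C^p$ as a component of $f$, so the base case gives $h$ (on a compact sub-box of $(-R,R) \times (-S',S')$ containing $[-r,r] \times [-s',s']$ in its interior) and therefore $H$ computably $C^p$. Proposition \ref{prop:compComp} then makes $f' \circ H$ computably $C^p$. Since $\IF(f' \circ H; r, s')$ holds and involves only $n-1$ dependent variables, the induction hypothesis produces the implicit function $g' : [-r,r] \to (-s',s')$ for $f' \circ H$ as a computably $C^p$ map. Finally, $g(x) = H'(x,g'(x))$ is computably $C^p$ by one more application of Proposition \ref{prop:compComp}.

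The main obstacle is verifying computable continuity of $g$ in the base case: the monotonicity guaranteed by $\IF(f;r,s)$ must be translated with some care into a sign-checking procedure at endpoints clipped to $[-s,s]$, and one must argue that the verification procedure fails exactly when $g(B) \not\subseteq I$. The bootstrap to $C^p$ and the entire inductive step on $n$ are then essentially mechanical applications of the closure properties of computably $C^p$ functions developed in Section \ref{s:compClos}.
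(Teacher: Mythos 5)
Your proof is correct and follows essentially the same route as the paper: induction on $n$ via the recursive structure of Definition \ref{def:IF}, with the $n=1$ base case handled by a sign-check at interval endpoints and the $C^p$ bootstrap by implicit differentiation, followed by the mechanical inductive step through $h$, $H$, $f'\circ H$, $g'$, and $g = H'\circ g'$. The only (welcome) extra care is your clipping of the target interval to $[-s,s]$ via $c'$ and $d'$, which makes explicit how the algorithm handles arbitrary rational intervals $I$, whereas the paper implicitly restricts attention to $(a,b)\subseteq(-s,s)$.
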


\begin{proof}
The proof is by induction on $n$.  Suppose $n=1$.  We may assume without loss of generality that $\PD{}{f}{y}(x,y) > 0$ on $[-r,r]\times[-s,s]$, and that $f(x,-s) < 0$ and $f(x,s) > 0$ on $[-r,r]$.  For any compact rational box $B \subseteq [-r,r]$ and rational interval $(a,b) \subseteq (-s,s)$, $g(B) \subseteq (a,b)$ if and only if
\begin{equation}\label{eq:fab}
\text{$f(x,a) < 0$ and $f(x,b) > 0$ for all $x\in B$.}
\end{equation}
Since $f$ is computably continuous and $a$ and $b$ are rational numbers, Proposition \ref{prop:compComp} implies that $x\mapsto f(x,a)$ and $x\mapsto f(x,b)$ are computably continuous.  So, if $g(B)\subseteq (a,b)$, then a $C^0$ approximation algorithm for $f$ can verify \eqref{eq:fab}.  Thus $g$ is computably continuous. It now follows from Propositions \ref{prop:compArith}-\ref{prop:compComp} and repeated differentiation of the equation $f(x,g(x)) = 0$ that $g$ is computably $C^p$.

Now suppose that $n>1$.  We use the notation of the inductive step of Definition \ref{def:IF}.  The base case of the induction shows that function $h$ is computably $C^p$, so $f'\circ H$ is as well by Proposition \ref{prop:compComp}.  Therefore the induction hypothesis shows that the function $g':[-r,r]\to(-s',s')$ implicitly defined by the equation $f'\circ H(x,y') = 0$ is computably $C^p$, and so by Proposition \ref{prop:compComp} again, the function $g = H'\circ g':[-r,r]\to(-s,s)$ implicitly defined by the equation $f(x,y) = 0$ is computably $C^p$.
\end{proof}

In Definition \ref{def:IF} we considered boxes $[-r,r]\times[-s,s]$ centered about the origin, but this was only done for convenience of notation.  In Definitions \ref{def:IFbox} and \ref{def:IFsection} below, we now define two modifications of the statement $\IF(f;r,s)$.

\begin{definition}\label{def:IFbox}
Let $f:U\to\RR^n$ be a $C^1$ function on an open set $U\subseteq \RR^{m+n}$, and  let $A\subseteq\RR^m$ and $B\subseteq\RR^n$ be a nondegenerate compact boxes such that $A\times B \subseteq U$.  The statement $\IF(f;A,B)$ is defined as follows:
\begin{quote}
Fix $a\in\RR^m$, $b\in\RR^n$, $r\in(0,+\infty)^m$, and $b\in(0,+\infty)^n$ such that $A = [a-r,a+r]$ and $B = [b-s,b+s]$, and define $T_{(a,b)}:\RR^{m+n}\to\RR^{m+n}$ by $T_{(a,b)}(x,y) = (x+a,y+b)$.  The statement $\IF(f;A,B)$ means $\IF(f\circ T_{(a,b)};r,s)$.
\end{quote}
\end{definition}
The obvious modifications of Lemma \ref{lemma:IF} and Proposition \ref{prop:compIFT} hold for the statement $\IF(f;A,B)$.

\begin{definition}\label{def:ratBoxManifold}
A set $M\subseteq\RR^m$ is a called a {\bf rational box manifold} if it is a rational box and a submanifold of $\RR^m$.
\end{definition}

Thus $M\subseteq\RR^m$ is a rational box manifold if and only if there exist $E\subseteq\{1,\ldots,m\}$, an open rational box  $U\subseteq\RR^E$, and a point $a\in\QQ^{E^c}$ such that $M = U\times\{a\}$.  Note that $\dim(M) = |E|$.

The purpose of the next modification of the statement $\IF(f;r,s)$ is to obtain a form of the implicit function theorem which is suitable for use by the precision oracle for $\S$, which will be defined in Section \ref{s:oracles}.

\begin{definition}\label{def:IFsection}
Consider a rational box manifold $M\subseteq\RR^m$, $d\in\{0,\ldots,\dim(M)\}$, a $C^1$ map $f:M\to\RR^{\dim(M) - d}$, an injection $\lambda:\{1,\ldots,d\}\to\{1,\ldots,m\}$ such that $\Pi_\lambda(M)$ is open in $\RR^d$, and a bounded rational box manifold $B$ which is open in $M$ with $\cl(B)\subseteq M$.  Define the statement $\IF_\lambda(f;B)$ as follows:
\begin{quote}
Write $M = U\times\{a\}$ for a set $E\subseteq\{1,\ldots,m\}$, open rational box  $U\subseteq\RR^E$, and $a\in\QQ^{E^c}$.  Write $B = (b-R,b+R)\times\{a\}$ for some $b\in\QQ^E$ and $R\in\QQ_{+}^{E}$ such that $[b-R,b+R] \subseteq U$.  Define $T_b:\RR^E \to \RR^m$ by $T_b(x_E) = (x_E+b,a)$.  Note that $\im(\lambda)\subseteq E$.  Extend $\lambda$ to a bijection $\sigma:\{1,\ldots,\dim(M)\}\to E$, and write $\Pi_\sigma(R) = (r,s)$, where $r\in\QQ_{+}^{d}$ and $s\in\QQ_{+}^{\dim(M)-d}$.
The statement $\IF_\lambda(f;B)$ means $\IF(f\circ T_b\circ \Pi_{\sigma}^{-1}; r,s)$.
\end{quote}
\end{definition}

The following remarks follow directly from Lemma \ref{lemma:IF}, Proposition \ref{prop:compIFT}, and Definition \ref{def:IFsection}.

\begin{remarks}\label{rmk:IFsection}
Consider $E\subseteq\{1,\ldots,m\}$, $M = U\times\{a\}\subseteq\RR^m$ for an open rational box $U\subseteq\RR^E$ and $a\in\QQ^{E^c}$, a $C^1$ function $f:M\to\RR^{\dim(M)-d}$, an injection $\lambda:\{1,\ldots,d\}\to\{1,\ldots,m\}$ with $\im(\lambda)\subseteq E$, and a bijection $\sigma:\{1,\ldots,\dim(M)\}\to E$ extending $\lambda$.   Define $\lambda':\{1,\ldots,\dim(M)-d\}\to\{1,\ldots,m\}$ by $\lambda'(i) = \sigma(i+d)$ for all $i$.
\begin{enumerate}{\setlength{\itemsep}{3pt}
\item
Write $B = V\times\{a\}$ for an open rational box $V\subseteq \RR^E$ with $\cl(V)\subseteq U$, and suppose that $\IF_\lambda(f;B)$ holds.
\begin{enumerate}{\setlength{\itemsep}{3pt}
\item
There exists a $C^1$ section $\varphi:\Pi_\lambda(B)\to B$ of the projection $\Pi_\lambda:\RR^m\to\RR^d$ such that
\[
\im(\varphi) = \{x\in M : f(x) = 0\},
\]
and $\det\PD{}{f}{x_{\lambda'}}(x)\neq 0$ for all $x\in\im(\varphi)$.

\item
There exists $\epsilon > 0$ such that for all $B' = V'\times\{a\}$, where $V'\subseteq\RR^E$ is an open rational box, if $\bd(V') \subseteq \{x\in\RR^E : \dist(x,\bd(V)) < \epsilon\}$, then $\cl(V') \subseteq U$ and $\IF_\lambda(f;B')$ holds.

\item
If $f$ is computably $C^1$, then we can effectively verify that $\IF_\lambda(f;B)$ holds.
}\end{enumerate}

\item
If $b\in U$ is such that $f(a,b) = 0$ and $\det\PD{}{f}{x_{\lambda'}}(a,b)\neq 0$, then there exists $R\in(0,+\infty)^E$ such that $[b-R,b+R]\subseteq U$ and $\IF_\lambda(f; \{a\}\times(b-R,b+R))$ holds.
}\end{enumerate}
\end{remarks}

Simply put, Remark 1(a) says that the equation $f(x) = 0$ defines the image of $\varphi$ nonsingularly as a subset of $\RR^E\times\{a\}$.  If $f$ happened to extend to a function on $U\times W$ for some open set $W\subseteq\RR^{E^c}$ containing $a$, then we could implicitly define $\im(\varphi)$ as a subset of $\RR^m$ by writing
\[
\im(\varphi) = \{x\in U\times W : f(x) = 0, x_{E^c} - a = 0\},
\]
and noting that $\det\PD{}{(f(x), x_{E^c} - a)}{(x_{\lambda'},x_{E^c})}(x) \neq 0$ for all $x\in\im(\varphi)$.  However, this cannot be done if we do not have access to such an extension, which is a situation that will occur when studying the structure $\RR_{\S}$, since the functions in $\S$ are defined on compact rational boxes.  Even though the functions in $\S$ extend in neighborhoods of their domains to functions in $\C$, we do not have access to these extensions in $\S$ when studying $\RR_{\S}$.  So instead we note that if $f:[-r,r]\to\RR$ is in $\S$, where $r\in\QQ_{+}^{n}$, then each set $M$ in the natural stratification of $[-r,r]$ is a rational box manifold.  By using this observation, we will be able to work with the form of the implicit function theorem given in Definition \ref{def:IFsection}.

\section*{\bf Part II: Tools for Desingularization}

Part II develops the basic tools we need for our local resolution of singularities procedure.  Section \ref{s:IFsystem} discusses quasianalytic implicit function systems, which are the ambient quasianalytic classes in which we work.  Section  \ref{s:order} gives some basic facts about the order of a function at a point and along a coordinate submanifold.  Section \ref{s:linTrans} contains a computational result about certain linear transformations that we will use when constructing our centers of blowing-up.  And finally, Section \ref{s:blowup} gives numerous results we shall need about blowings-up.

\section{Implicit Function Systems}\label{s:IFsystem}

\begin{definition}\label{def:IFsystem}
Let $\C = \bigcup_{n\in\NN, r\in\QQ_{+}^{n}} \C_r$, where each $\C_r$ is a collection of $C^\infty$ functions from $[-r,r]$ into $\RR$. We are using the convention that $\QQ_{+}^{0} = \{0\}$ and that functions from $[-0,0] = \{0\}$ into $\RR$ are identified with real numbers.  So $\C_0\subseteq\RR$, and $\C = \C_0 \cup \bigcup_{n>0,r\in\QQ_{+}^{n}}\C_r$.  We say that $\C$ is an {\bf implicit function system} (or an {\bf IF-system}, for short) if the following hold:
\begin{enumerate}{\setlength{\itemsep}{3pt}
\item
$\C_0$ is a field.

\item
The following hold for all $n > 0$ and $r\in\QQ_{+}^{n}$:
\begin{enumerate}{\setlength{\itemsep}{3pt}
\item
$\C_r$ is a ring containing the coordinate projection functions $x\mapsto x_i$, for each $i\in\{1,\ldots,n\}$.

\item
\emph{Extension Property}:

For each $f\in\C_r$ there exist $s>r$ and $g\in\C_s$ such that $f(x) = g(x)$ for all $x\in[-r,r]$.
}
\end{enumerate}

\item
The following hold for all $m,n\in\NN$, $r\in\QQ_{+}^{m}$, and $s\in\QQ_{+}^{n}$:
\begin{enumerate}{\setlength{\itemsep}{3pt}
\item
\emph{Closure under Composition}:

If $g\in \C_{r}^{n}$ is such that $g([-r,r])\subseteq[-s,s]$ and $f\in\C_s$, then $f\circ g \in \C_r$.

\item
\emph{Closure under Division by Variables}:

If $f\in\C_r$ is such that $f(x_1,\ldots,x_{m-1},0) = 0$ on $[-r_1,r_1]\times \cdots \times[-r_{m-1},r_{m-1}]$, then there exists $g\in\C_r$ such that $f(x) = x_m g(x)$ on $[-r,r]$.

\item
\emph{Closure under Implicit Functions}:

If $n=1$, and if $f\in\C_{(r,s)}$ is such that $\IF(f;r,s)$ holds, then the function $g:[-r,r]\to(-s,s)$ implicitly defined by $f(x,g(x)) = 0$ on $[-r,r]$ is in $\C_r$.
}
\end{enumerate}
}
\end{enumerate}
\end{definition}

\begin{definition}\label{def:quasianal}
An IF-system $\C$ is {\bf quasianalytic} if for all $n > 0$, $r\in\QQ_{+}^{n}$, and $a\in[-r,r]$, the Taylor map from $\C_r$ into $\RR\ps{x}$ given by
\[
f \mapsto \sum_{\alpha\in\NN^n} \frac{1}{\alpha!} \PDn{\alpha}{f}{x}(a) x^\alpha
\]
is injective.
\end{definition}

\begin{definition}\label{def:Canalytic}
Consider an IF-system $\C$ and a function $f:U\to\RR^m$ defined on an open set $U \subseteq \RR^n$.  We say that $f$ is {\bf $\C$-analytic at $a\in U$} if there exist $b\in\QQ^n$ and $r\in\QQ_{+}^{n}$ such that $a\in (b-r,b+r)$, $[b-r,b+r]\subseteq U$, and the function $[-r,r]\to\RR^m : x\mapsto f(x+b)$ is in $\C_{r}^{m}$.  We say that $f$ is {\bf $\C$-analytic} if $f$ is $\C$-analytic at every point of $U$.  More generally, for any set $A\subseteq\RR^n$, we say that a function $g:A\to\RR^m$ is $\C$-analytic if there exists a $\C$-analytic function $h:V\to\RR^m$ defined on an open set $V$ such that $A\subseteq V\subseteq \RR^n$ and $g(x) = h(x)$ for all $x\in A$.  We write $\C[A]$ for the ring of all real-valued $\C$-analytic functions on $A$.
\end{definition}

Note that since an IF-system $\C$ has the extension property, every member of $\C$ is $\C$-analytic.

\begin{examples}\label{ex:IFsystems}
\hfill
\begin{enumerate}{\setlength{\itemsep}{5pt}
\item
If for each $r$ we let $\C_r$ be the set of all real-valued functions on $[-r,r]$ which extend to a $C^\infty$ function in a neighborhood of $[-r,r]$, then $\C = \bigcup_{r}\C_r$ is the largest IF-system.  This IF-system is not quasianalytic.

\item
If for each $r$ we let $\C_r$ be the set of all real-valued functions on $[-r,r]$ which extend to a computably $C^\infty$ function on a neighborhood of $[-r,r]$, then the results of Section \ref{s:compClos} show that $\C = \bigcup_{r}\C_r$ is an IF-system.  This IF-system is also not quasianalytic.  Note that $\C_0\neq \RR$ since $\C_0$ is the set of computable reals.

\item
If for each $r$ we let $\C_r$ be the set of all real-valued functions on $[-r,r]$ which extend to an analytic function in neighborhood of $[-r,r]$, then $\C = \bigcup_{r}\C_r$ is a quasianalytic IF-system.  This is, in fact, the prototypical example of a quasianalytic IF-system, and is why we use the word ``$\C$-analytic''.

\item
More generally, fix a sequence of real numbers $\{M_n\}_{n\in\NN}$ such that $1 \leq M_0 \leq M_1\leq M_2 \leq \cdots$, $M_{n}^{2} \leq M_{n-1} M_{n+1}$ for all $n > 0$, and $\sum_{n\in\NN}M_n = +\infty$.  Let $\C = \bigcup_{r}\C_r$, where $\C_r$ denotes the set of all real-valued functions on $[-r,r]$ which extend to a $C^\infty$ function $f:U\to\RR$ on a neighborhood $U$ of $[-r,r]$ for which there exist positive constants $A$ and $B$ such that
\[
\left|\PDn{\alpha}{f}{x}(x)\right| \leq A B^{|\alpha|} M_{|\alpha|}
\]
for all $\alpha\in\NN^n$ and all $x\in U$.  Then $\C$ is a quasianalytic IF-system, called the Denjoy-Carleman class determined by the sequence $\{M_n\}_{n\in\NN}$.  When $M_n = n!$ for each $n\in\NN$, $\C$ is the IF-system of analytic functions given in the previous example.  (See \cite{RSW} for references.)

\item
If for each $r$ we let $\C_r$ be the set of all real-valued functions on $[-r,r]$ which extend to a function $f:U\to\RR$ defined on a neighborhood $U$ of $[-r,r]$ such that $f$ is $C^\infty$ and algebraic over $\QQ[x]$ (equivalently, $C^\infty$ and $0$-definable in the real field), then $\C = \bigcup_{r}\C_r$ is an IF-system.  Every function in $\C$ is analytic (see \cite{BCR}, the section on Nash functions), so $\C$ is quasianalytic.  Note that $\C_0$ is the field of algebraic reals, so $\C_0\neq \RR$.

\item
More generally, if we fix an expansion $\R$ of the real field, and  if for each $r$ we let $\C_r$ be the collection of all real-valued functions on $[-r,r]$ which extend to a function $f:U\to\RR$ defined in a neighborhood $U$ of $[-r,r]$  such that $f$ is $C^\infty$ and $0$-definable in $\R$, then $\C=\bigcup_{r}\C_r$ is an IF-system.  The set $\C_0$ is the set of $0$-definable constants of $\R$, so $\C_0$ need not equal $\RR$.  If $\R$ is an o-minimal structure, then $\C$ is quasianalytic if and only if $\R$ is polynomially bounded (see C. Miller \cite{CM95}).  In light of this result, if $\R$ is polynomially bounded and o-minimal, it is natural to wonder when $\R$ is definably equivalent to $\RR_\C$.  This relates to Theorem \ref{introThm:OminPolyUnifChar}, which will be proven in the last section in Theorem \ref{thm:strucChar}.  (Note: The proof of Theorem \ref{thm:strucChar} could be read right away.  It is simple and is independent of the rest of the paper.)
}
\end{enumerate}
\end{examples}

We now list some useful closure properties of IF-systems.

\begin{remarks}\label{rmk:IFsystem}
Let $\C$ be an IF-system.
\begin{enumerate}
\item
Let $r\in\QQ^{n}_{+}$, $f\in\C_r$, and $i\in\{1,\ldots,n\}$.  Then $\PD{}{f}{x_i}\in\C_r$.

\begin{proof}
Let $s>r$ and $g\in\C_s$ be such that $f(x) = g(x)$ for all $x\in[-r,r]$.  Then $H(x,y) = g(x_1,\ldots,x_{i-1}, x_i + y, x_{i+1},\ldots,x_n) - g(x)$ is in $\C_{(r,s_i-r_i)}$, and $H(x,0) = 0$, so $H(x,y) = y h(x,y)$ for some $h\in\C_{(r,s_i-r_i)}$.  Therefore $\PD{}{f}{x_i}(x) = h(x,0)$ is in $\C_r$.
\end{proof}

\item
Let $f\in\C_r$ be such that $f(x)\neq 0$ for all $x\in[-r,r]$, where $r\in\QQ_{+}^{n}$.  Then $1/f\in\C_r$.

\begin{proof}
We may assume without loss of generality that $f(x) > 0$ for all $x\in[-r,r]$. Fix a rational number $s > \max\{\frac{1}{f(x)} : x\in[-r,r]\}$, and define $g\in\C_{(r,s)}$ by
\[
g(x,y) = y f(x) - 1.
\]
For all $(x,y)\in[-r,r]\times[-s,s]$, we have $\PD{}{g}{y}(x,y) = f(x) > 0$, $g(x,s) = s f(x) - 1 > 0$, and $g(x,-s) = -sf(x) - 1 < 0$.  So $\IF(g;r,s)$ holds.  Since $\{(x,y)\in[-r,r]\times[-s,s] : g(x,y) = 0\}$ is the graph of $1/f$, we have $1/f\in\C_r$.
\end{proof}

\item
Let $(r,s)\in\QQ_{+}^{m}\times\QQ_{+}^{n}$ and $f\in\C_{(r,s)}$ be such that $\IF(f;r,s)$ holds, and let $g:[-r,r]\to(-s,s)$ be the $C^1$ function whose graph is $\{(x,y)\in[-r,r]\times[-s,s] : f(x,y)=0\}$.  Then $g\in\C_{r}^{n}$.

\begin{proof}
This follows from the inductive step in the proof of Lemma \ref{lemma:IF}.1, since that proof only uses the fact the $C^1$ functions are closed under composition and implicitly defined functions when $n=1$, and $\C$ has these closure properties.
\end{proof}

\item
Let $f:U\to\RR^n$ be $\C$-analytic, where $U$ is an open set in $\RR^m\times\RR^n$, and suppose that $(a,b)\in U$ is such that $f(a,b) = 0$ and $\det \PD{}{f}{y}(a,b) \neq 0$, where $x=(x_1,\ldots,x_m)$ and $y=(y_1,\ldots,y_n)$.  Let $g$ be the $C^\infty$ function implicitly defined in a neighborhood of $(a,b)$ by the equations $f(x,g(x)) = 0$ and $g(a) = b$.  Then $g$ is $\C$-analytic at $(a,b)$.

\begin{proof}
It follows from clauses 2 and 3 of Lemma \ref{lemma:IF} that there exist $(r,s)\in\QQ_{+}^{m}\times\QQ_{+}^{n}$ and $(c,d)\in\QQ^m\times\QQ^n$ such that $(a,b) \in (c-r,c+r)\times(d-s,d+s)$ and $\IF(f_{(c,d)};r,s)$ holds, where $f_{(c,d)}(x,y) = f(x+c,y+d)$.  Since $f_{(c,d)}\in\C_{(r,s)}$, we are done by the previous remark.
\end{proof}

\item
If $f:U\to(0,+\infty)$ is a $\C$-analytic function on an open set $U$ in $\RR^n$, and $q\in\QQ$, then $f^q$ is $\C$-analytic on $U$.

\begin{proof}
Because $\C$-analytic functions are closed under multiplication, and because of Remark 2 above, we may assume without loss of generality that $q = 1/k$ for a positive integer $k$.  The function $g:U\times(0,+\infty)\to\RR$ defined by $g(x,y) = y^k - f(x)$ is $\C$-analytic, $\PD{}{g}{y}(x,y) = k y^{k-1} > 0$ for all $y>0$, and $g(x,f(x)^{1/k}) = 0$ for all $x\in U$. So $f^{1/k}$ is $\C$-analytic on $U$ by the previous remark.
\end{proof}
\end{enumerate}
\end{remarks}

Because of these closure properties, we can do elementary differential geometry relative to the class of $\C$-analytic functions.  By simply using the word ``$\C$-analytic'' in place of ``$C^\infty$'' in the standard definitions from differential geometry (see Spivak \cite{spivak65} and \cite{spivak79}), we can define the notion of a ``$\C$-analytic submanifold of $\RR^n$'' (or more generally, an abstract $\C$-analytic manifold, which is a differential manifold with $\C$-analytic transition maps), a ``$\C$-analytic function'' on a $\C$-analytic manifold, a ``$\C$-analytic isomorphism'', and a ``$\C$-analytic embedding''.

We conclude this section with some simple observations about zero sets of $\C$-analytic functions when $\C$ is quasianalytic.

\begin{proposition}\label{prop:realZeros}
If $\C$ is quasianalytic, and $f:[a,b]\to\RR$ is a $\C$-analytic function which is not identically zero, then $\{x\in[a,b] : f(x) = 0\}$ is a finite set of points in $\C_0$.  Thus, $\C_0$ is a real-closed field, since all polynomial functions with coefficients in $\C_0$ are $\C$-analytic.
\end{proposition}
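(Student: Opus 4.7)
The plan is to first prove that the zero set $Z := \{x \in [a,b] : f(x) = 0\}$ is finite, then that each point of $Z$ lies in $\C_0$, and finally to derive the real-closedness of $\C_0$.

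For finiteness, I would argue by contradiction using quasianalyticity plus connectedness of $[a,b]$. If $Z$ were infinite, by compactness it would have an accumulation point $x_0 \in [a,b]$, and the classical iterated Rolle argument (between consecutive distinct zeros) would force $f^{(k)}(x_0) = 0$ for every $k \geq 0$. Picking $\beta \in \QQ$ and $r \in \QQ_+$ with $x_0 \in (\beta-r, \beta+r)$ and $\tilde f(y) := f(y + \beta) \in \C_r$, the Taylor series of $\tilde f$ at $y_0 := x_0 - \beta$ would vanish, so quasianalyticity would force $\tilde f \equiv 0$ on $[-r,r]$, hence $f \equiv 0$ on $(\beta - r, \beta + r)$. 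The set $Z^\circ$ of points of $[a,b]$ possessing a neighborhood on which $f$ vanishes would then be nonempty, relatively open in $[a,b]$, and (by the very same accumulation argument applied at a frontier point) relatively closed; connectedness of $[a,b]$ would give $Z^\circ = [a,b]$, contradicting the hypothesis.

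Next, I would fix $x_0 \in Z$ and retain $\beta, r, \tilde f, y_0$ as above. Since $f$ is not identically zero on any neighborhood of $x_0$, $\tilde f \not\equiv 0$ on $[-r,r]$, so quasianalyticity produces a least $k \geq 1$ with $\tilde f^{(k)}(y_0) \neq 0$; replacing $\tilde f$ by $\tilde f^{(k-1)} \in \C_r$ (Remark \ref{rmk:IFsystem}.1) reduces matters to the case where $y_0$ is a simple zero. I would then choose rationals $c$ and $s$ with $[c-s, c+s] \subseteq (-r,r)$, $y_0 \in (c-s, c+s)$, $\tilde f'$ of constant sign on $[c-s, c+s]$, and $\tilde f(c-s), \tilde f(c+s)$ of opposite signs, and set $F(y) := \tilde f(y + c)$ on $[-s,s]$. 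The shift $y + c$ lies in $\C_s$ (the coordinate $y$ is in $\C_s$, and the rational constant $c$ is in $\C_s$ via the ring identity together with Remark \ref{rmk:IFsystem}.2), so closure under composition (Definition \ref{def:IFsystem}.3(a)) puts $F \in \C_s$. Since $\IF(F; 0, s)$ then holds in the $m = 0$ base case of Definition \ref{def:IF}, closure under implicit functions (Definition \ref{def:IFsystem}.3(c)) outputs the unique zero of $F$ in $(-s, s)$ as an element of $\C_0$; this zero is $y_0 - c$, so $x_0 = (y_0 - c) + c + \beta \in \C_0$ because $\QQ \subseteq \C_0$.

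For the final sentence, I would first observe that every polynomial $p \in \C_0[x]$ is $\C$-analytic on $\RR$: constants from $\C_0$ belong to each $\C_r$ by applying closure under composition to the empty-tuple map $[-r,r] \to \RR^0 = \{0\}$ followed by $c \in \C_0$, while the coordinate $x$ lies in $\C_r$, and $\C_r$ is a ring. Applying the first half of the proposition to the restriction of any nonzero $p \in \C_0[x]$ to a compact interval containing a chosen real root $x_0$ gives $x_0 \in \C_0$, so $\C_0$ is relatively algebraically closed in $\RR$; for an ordered subfield of $\RR$ this is equivalent to real-closedness, since positive elements have square roots (take $p = x^2 - a$) and odd-degree polynomials in $\C_0[x]$ have real roots by the intermediate value theorem. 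The most delicate step will be in the middle paragraph: I must verify carefully that the degenerate $m = 0$ case of Definition \ref{def:IFsystem}.3(c) applies to produce an element of $\C_0$, and that the requisite rational constants and shifts genuinely lie in $\C_s$; once these closure properties are in hand, the rest is bookkeeping.
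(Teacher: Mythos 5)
Your proof is correct and follows essentially the same route as the paper: reduce to a simple zero of a derivative via quasianalyticity, then invoke the $m=0$ case of closure under implicit functions to place the zero in $\C_0$, and deduce real-closedness by applying this to polynomials over $\C_0$. The only differences are cosmetic: you establish finiteness first and then $\C_0$-membership (the paper does it in the opposite order, getting isolatedness as a byproduct), and you verify the $\IF(F;0,s)$ condition by hand where the paper simply cites Remark \ref{rmk:IFsystem}.4, which packages the identical argument.
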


\begin{proof}
Fix $c\in[a,b]$ such that $f(c) = 0$.  Since $\C$ is quasianalytic, there exists $k\in\NN$ such that $f^{(k)}(c) = 0$  and $f^{(k+1)}(c)\neq 0$.  Since derivatives of $\C$-analytic functions are $\C$-analytic, we may simply assume that $f(c) = 0$ and $f'(c)\neq 0$.  Thus $c$ (considered as a function from $\{0\}$ into $\RR$) is in $\C_0$, by the Remark \ref{rmk:IFsystem}.4.  Since this argument also shows that $c$ is an isolated zero of $f$, the fact that $\{x\in[a,b] : f(x)=0\}$ is finite follows from the compactness of $[a,b]$.
\end{proof}

\begin{proposition}\label{prop:denseZeros}
Let $\C$ be quasianalytic, and let $\F$ be a finite set of real-valued $\C$-analytic functions on an open set $U$ in $\RR^n$, where $n\in\NN$.  Put
\[
A = \{x\in U : \text{$f(x) = 0$ for all $f\in\F$}\}.
\]
\begin{enumerate}{\setlength{\itemsep}{3pt}
\item
The set $U\setminus A$ is open in $U$.  If $A\neq U$ and $U$ is connected, then $U\setminus A$ is also dense in $U$.

\item
The set $A\cap \C_{0}^{n}$ is dense in $A$.
}\end{enumerate}
\end{proposition}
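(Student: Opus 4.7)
For Part 1, the set $U\setminus A = \bigcup_{f\in\F}\{x\in U : f(x)\neq 0\}$ is visibly open. For the density claim, assume $U$ is connected with $A\neq U$; I would show $A$ has empty interior by contradiction. If the interior $W$ of $A$ in $U$ is nonempty, then for each $f\in\F$ the set $T_f = \{x\in U : \PDn{\alpha}{f}{x}(x) = 0\text{ for all }\alpha\in\NN^n\}$ is closed in $U$ by continuity of derivatives, and open because whenever $x\in T_f$ lies in a box $B$ on which $f$ represents (after translation) a $\C_r$-element, quasianalyticity forces $f\equiv 0$ on $B$. Since $T_f\supseteq W\neq\emptyset$ and $U$ is connected, $T_f=U$, so each $f\in\F$ vanishes identically on $U$, contradicting $A\neq U$.

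For Part 2 the plan is induction on $n$. The case $n=0$ is trivial. For $n=1$, Proposition \ref{prop:realZeros} applied to any compact rational interval $I$ around $a$ in $U$ yields that either every $f\in\F$ is identically zero on $I$ (so rationals in $I$ are $\C_0$-approximants in $A$), or some $f\in\F$ has a finite zero set in $I$ contained in $\C_0$, so in particular $a\in\C_0$. For the inductive step with $n\geq 2$, fix $a\in A$ and $\epsilon>0$, and choose a closed box $B\subseteq U$ around $a$ of diameter less than $\epsilon$ on which each $f\in\F$ is $\C$-analytic. If all $f\in\F$ vanish identically on $B$, then any rational point in $\Int(B)$ does the job.

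Otherwise I would construct a $\C$-analytic function $g$ near $a$ with $g(a)=0$ and $\nabla g(a)\neq 0$. Pick $f_0\in\F$ not identically zero on $B$; by quasianalyticity, the Taylor expansion of $f_0$ at $a$ is nontrivial, so the least integer $k\geq 1$ with $\PDn{\alpha}{f_0}{x}(a)\neq 0$ for some $|\alpha|=k$ is finite. If $k=1$ take $g=f_0$; otherwise pick $\beta$ with $|\beta|=k-1$ and some $i$ with $\PDn{\beta+e_i}{f_0}{x}(a)\neq 0$, and set $g=\PDn{\beta}{f_0}{x}$, which is $\C$-analytic by Remark \ref{rmk:IFsystem}.1, vanishes at $a$ by minimality of $k$, and satisfies $\PD{}{g}{x_i}(a)\neq 0$.

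Now apply Remark \ref{rmk:IFsystem}.4 to $g$: there is a $\C$-analytic $\phi$ on an open box $V\subseteq\RR^{n-1}$ around $a_{\{i\}^c}$ such that $\{g=0\}$ near $a$ is the graph $\{(x_{\{i\}^c},\phi(x_{\{i\}^c})):x_{\{i\}^c}\in V\}$. The family $\tld\F=\{\tld f\}_{f\in\F}$ defined by $\tld f(x_{\{i\}^c})=f(x_{\{i\}^c},\phi(x_{\{i\}^c}))$ consists of $\C$-analytic functions on $V$, and its common zero set $\tld A$ corresponds via the graph to $A\cap\{g=0\}$ near $a$, with $a_{\{i\}^c}\in\tld A$. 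By the induction hypothesis there is some $b'\in\tld A\cap\C_0^{n-1}$ arbitrarily close to $a_{\{i\}^c}$, and then $b=(b',\phi(b'))$ lies in $A\cap\{g=0\}\subseteq A$, is within $\epsilon$ of $a$ by continuity, and satisfies $\phi(b')\in\C_0$ since evaluating the $\C$-analytic map $\phi$ at a point of $\C_0^{n-1}$ yields a $\C_0$-value. The main obstacle is the singular case where every $f\in\F$ has vanishing gradient at $a$: one cannot apply IFT to any $f\in\F$ directly, and the key maneuver is to introduce an auxiliary partial derivative $g$ of some $f_0\in\F$ whose zero set need not contain $A$, merely to cut out a smooth $\C$-analytic hypersurface through $a$ for the induction to descend along, still recovering $\C_0^n$-points in the subset $A\cap\{g=0\}$ of $A$.
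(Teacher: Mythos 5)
Your proof is correct and follows essentially the same route as the paper: Part~1 rests on the clopen argument for the Taylor-vanishing locus (the paper phrases this more tersely but uses the same fact), and Part~2 uses the identical key maneuver of descending along the graph of a $(d-1)$-order partial derivative of a non-identically-zero $f_0\in\F$, via Remarks \ref{rmk:IFsystem}.1 and \ref{rmk:IFsystem}.4, together with the observation that a $\C$-analytic map carries $\C_0$-points to $\C_0$-points. The only cosmetic difference is that you carve out $n=1$ as a separate case via Proposition~\ref{prop:realZeros}, whereas the paper's inductive step already covers $n=1$ by reducing to $n=0$; this is harmless.
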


\begin{proof}
We first prove 1.  The set $U\setminus A$ is clearly open since the functions in $\F$ are continuous.  Now, suppose that $U$ is connected and that $U\setminus A$ is not dense in $U$.  Fix an open set $V\subseteq U$ such that $V\cap(U\setminus A) = \emptyset$.  Thus $ V\subseteq A$, so every function in $\F$ vanishes identically on $V$, and hence vanishes identically on $U$, since $U$ is connected and $\C$ is quasianalytic.  Thus $A = U$.  This proves 1.

We now prove 2 by generalizing the technique used to prove Proposition \ref{prop:realZeros}.  We proceed by induction on $n$.  The result is trivial when $n=0$,  so let $n > 0$ and assume the Proposition holds with $n-1$ in place of $n$.  There is nothing to prove if $A$ is empty, so we fix $a\in A$ and a connected open set $V$ such that $a\in V\subseteq U$; we must show that $A\cap V\cap\C_{0}^{n}\neq\emptyset$.  The result is trivial if every function in $\F$ is identically zero on $V$, so assume that we can fix $F\in\F$ which is is not identically zero on $V$.  Let $d = \inf\{|\alpha| : \alpha\in\NN^n, \PDn{\alpha}{F}{x}(a)\neq 0\}$, and note that $d < \infty$ because $\C$ is quasianalytic.  Since $a\in A$ and $F\in\F$, we have $F(a)=0$, so $d > 0$.  Fix $\alpha\in\NN^n$ such that $|\alpha| = d-1$ and $\PDn{\alpha+e_i}{F}{x}(a)\neq 0$ for some $i\in\{1,\ldots,n\}$, where $e_i$ is the $i$th standard unit vector in $\NN^n$.  Write $x' = (x_1,\ldots,x_{i-1},x_{i+1},\ldots,x_n)$ for coordinates on $\RR^{\{1,\ldots,n\}\setminus\{i\}}$.  Fix open sets $W\subseteq\RR^{\{1,\ldots,n\}\setminus\{i\}}$ and $I\subseteq\RR^{\{i\}}$ such that $a\in W\times I \subseteq V$ and such that $\{(x',x_i)\in W\times I : \PDn{\alpha}{F}{x}(x',x_i) = 0\}$ is the graph of a $\C$-analytic function $g:W\to I$.  Let
\[
B = \{x'\in W : \text{$f(x',g(x')) = 0$ for all $f\in\F$}\}.
\]
Note that $B\neq\emptyset$ since $a'\in B$.  By the induction hypothesis, $B\cap \C_{0}^{n-1}$ is dense in $B$.  Since $\{(x',g(x')) : x\in B\}\subseteq A\cap V$ and the map $x'\mapsto (x',g(x'))$ sends point in $\C_{0}^{n-1}$ to points in $\C_{0}^{n}$, it follows that $A\cap V\cap\C_{0}^{n}$ is nonempty.
\end{proof}

\section{Order}\label{s:order}

This section covers the basic facts we shall need about the order a function at a point and along a coordinate submanifold of $\RR^n$.  Throughout this section, $\C$ denotes an IF-system.

\begin{definition}\label{def:orderPoint}
Consider a real-valued $\C$-analytic function $f$ on an open set $U$ in $\RR^n$.  For any $a\in U$, the {\bf order of $f$ at $a$} is defined by
\[
\ord(f;a) = \inf\left\{|\alpha| : \alpha\in\NN^n, \PDn{\alpha}{f}{x}(a)\neq 0\right\},
\]
with the understanding that $\inf\emptyset = \infty$.
\end{definition}

\begin{proposition}\label{prop:order}
Let $f:U\to\RR$ and $g:U\to\RR$ be $\C$-analytic functions, and let $a\in U$.  In Statement 5, also let $h:V\to U$ be $\C$-analytic, and suppose that $b\in V$ is such that $h(b) = a$.  Then the following hold:
\begin{enumerate}{\setlength{\itemsep}{3pt}
\item
If $\C$ is quasianalytic, then $\ord(f;a) = \infty$ if and only if $f(x) = 0$ for all $x$ in the connected component of $U$ containing $a$.

\item
There exists a neighborhood $W$ of $a$ such that $\ord(f;x)\leq \ord(f;a)$ for all $x\in W$.  Moreover, the set $\{x\in W:\ord(f;x) = \ord(f;a)\}$ is closed in $W$.

\item
$\ord(f+g;a) \geq \min\{\ord(f;a), \ord(g;a)\}$, and equality holds if $\ord(f;a) \neq \ord(g;a)$.

\item
$\ord(fg;a) = \ord(f;a) + \ord(g;a)$.

\item
$\ord(f;a)\leq \ord(f\circ h;b)$, and equality holds if $m=n$ and $h$ is a $\C$-analytic isomorphism.
}
\end{enumerate}
\end{proposition}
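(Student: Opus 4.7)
The plan is to handle the five clauses independently, since each rests on a different basic idea about Taylor coefficients at $a$. Clauses (2)--(5) are essentially calculations with the Leibniz and chain rules, while clause (1) is where quasianalyticity genuinely enters.

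For (1), I would observe that $\ord(f;a)=\infty$ means every Taylor coefficient of $f$ at $a$ vanishes. Fix a neighborhood of $a$ on which $f$ is represented by a function in $\C_r$ (after a rational translation). Quasianalyticity, applied to the difference of this function and the zero function in $\C_r$, forces $f$ to vanish in a neighborhood of $a$. Then I would show that the set $Z$ of points in the connected component $V$ of $a$ in $U$ at which $f$ vanishes in some neighborhood is both open (obvious) and closed in $V$: a boundary point would itself have $\ord(f;\cdot)=\infty$ by continuity of all partial derivatives, and by the same local argument would lie in $Z$. Thus $Z=V$. The converse direction is trivial.

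For (2), let $d=\ord(f;a)$. If $d<\infty$, choose $\alpha$ with $|\alpha|=d$ and $\PDn{\alpha}{f}{x}(a)\neq 0$; continuity of $\PDn{\alpha}{f}{x}$ gives a neighborhood $W$ of $a$ on which this partial stays nonzero, so $\ord(f;x)\leq d$ on $W$. If $d=\infty$, take $W=U$. For the closedness claim, note that $\{x\in W:\ord(f;x)<d\}=\bigcup_{|\beta|<d}\{x\in W:\PDn{\beta}{f}{x}(x)\neq 0\}$ is open, so its complement in $W$ is closed. For (3), if $|\alpha|<\min\{\ord(f;a),\ord(g;a)\}$ then $\PDn{\alpha}{(f+g)}{x}(a)=0$ by linearity, giving the inequality. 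If the orders differ, say $\ord(f;a)<\ord(g;a)$, then at $|\alpha|=\ord(f;a)$ with $\PDn{\alpha}{f}{x}(a)\neq 0$ we still have $\PDn{\alpha}{g}{x}(a)=0$, so the sum is nonzero.

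For (4), I would use Leibniz: $\PDn{\alpha}{(fg)}{x}(a)=\sum_{\beta\leq\alpha}\binom{\alpha}{\beta}\PDn{\beta}{f}{x}(a)\PDn{\alpha-\beta}{g}{x}(a)$. Writing $p=\ord(f;a)$ and $q=\ord(g;a)$, for $|\alpha|<p+q$ each summand vanishes since either $|\beta|<p$ or $|\alpha-\beta|<q$, giving $\ord(fg;a)\geq p+q$. The reverse inequality is the main obstacle, since it amounts to showing the product of the two leading Taylor forms is nonzero; I would argue by introducing the homogeneous polynomials $F(y)=\sum_{|\beta|=p}\tfrac{1}{\beta!}\PDn{\beta}{f}{x}(a)\,y^\beta$ and $G(y)=\sum_{|\gamma|=q}\tfrac{1}{\gamma!}\PDn{\gamma}{g}{x}(a)\,y^\gamma$, observing that $F,G\neq 0$ in $\RR[y]$, hence $FG\neq 0$, and noting that for $|\alpha|=p+q$ the Leibniz formula above simplifies to $\alpha!$ times the coefficient of $y^\alpha$ in $FG$; choosing $\alpha$ so that this coefficient is nonzero gives the matching lower bound. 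Finally, for (5), I would use the Faà di Bruno formula: $\PDn{\alpha}{(f\circ h)}{x}(b)$ is a polynomial in partials of $h$ at $b$ and partials of $f$ at $a$ of order at most $|\alpha|$. If $|\alpha|<\ord(f;a)$ all such partials of $f$ vanish, so $\PDn{\alpha}{(f\circ h)}{x}(b)=0$, yielding $\ord(f\circ h;b)\geq\ord(f;a)$. When $h$ is a $\C$-analytic isomorphism, applying the same inequality to $h^{-1}$ and the composition $f\circ h$ gives the reverse inequality, hence equality.
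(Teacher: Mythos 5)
The paper omits the proof entirely, saying only that these properties are well known, so there is no argument in the text to compare against; the question is simply whether your proof is correct, and it is. The only clause that requires a genuine idea is the reverse inequality in (4), and you handle it with the standard device: pass to the leading homogeneous Taylor forms $F$ and $G$ of degrees $p=\ord(f;a)$ and $q=\ord(g;a)$, use that $\RR[y_1,\ldots,y_n]$ is an integral domain to get $FG\neq 0$, and observe that the degree-$(p+q)$ Leibniz sum computes exactly $\alpha!$ times the coefficient of $y^\alpha$ in $FG$; your reduction to this is correct. Clause (1) uses quasianalyticity exactly as Definition \ref{def:quasianal} intends, and your open--closed argument for propagating local vanishing to the entire connected component is the right one. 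Clauses (2) and (3) are routine and carried out correctly. In (5), the one point worth making explicit is why the Fa\`a di Bruno expansion of $\partial^\alpha(f\circ h)(b)$ is forced to vanish: every summand (for $|\alpha|\geq 1$) carries at least one factor $\partial^\lambda f(a)$ with $1\leq|\lambda|\leq|\alpha|$ --- there is no ``constant'' term not involving a partial of $f$ --- and for $\alpha=0$ the expression is $f(a)$ itself, so all contributing partials of $f$ have order $\leq|\alpha|<\ord(f;a)$ and therefore vanish; this is what your phrasing implicitly relies on and is worth stating.
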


\noindent We omit the proof of Proposition \ref{prop:order} since these properties are all very well-known.

\begin{definition}\label{def:orderAlong}
Let $\emptyset\neq C\subseteq U$ for an open set $U$ in $\RR^n$, and let $f:U\to\RR$ be $\C$-analytic.  Define the {\bf order of $f$ along $C$} by
\[
\ord_C(f) = \min\{\ord(f;x) : x\in C\}.
\]
If $a\in C$, define the {\bf order of $f$ along $C$ at $a$} by
\[
\ord_C(f;a) = \sup\{\ord_{C\cap V}(f) : \text{$V$ is a neighborhood of $a$ in $U$}\}.
\]
\end{definition}

\begin{remarks}\label{rmk:orderAlong}
Consider the situation of Definition \ref{def:orderAlong}.
\begin{enumerate}{\setlength{\itemsep}{5pt}
\item
If $V$ and $W$ are neighborhoods of $a\in U$ such that $V\subseteq W\subseteq U$, then $\ord_{C\cap W}(f) \leq \ord_{C\cap V}(f) \leq \ord(f;a)$.  It follows that if $\ord(f;a) < \infty$, or if $\ord(f;a) = \infty$ and $\C$ is quasianalytic, then $\ord_{C\cap V}(f) = \ord_C(f;a)$ for any sufficiently small neighborhood $V$ of $a$.

\item
If $F:V\to U$ is a $\C$-analytic isomorphism, Proposition \ref{prop:order}.5 shows that $\ord_C(f) = \ord_{F^{-1}(C)}(f\circ F)$.
}
\end{enumerate}
\end{remarks}

For simplicity, and because this is sufficient for our needs, we state the following proposition only for coordinate submanifolds of $\RR^n$.  It generalizes easily to arbitrary $\C$-analytic submanifolds of $U$ by using coordinate charts.  The content of the proposition is also rather well-known, but possibly not all of it to the extent of Proposition \ref{prop:order}, so we include a proof.

\begin{proposition}\label{prop:orderAlong}
Let $U\subseteq\RR^n$ be open and $f:U\to\RR$ be $\C$-analytic, and let $C=  \{x\in U : x_I = 0\}$ for some $I\subseteq\{1,\ldots,n\}$.
\begin{enumerate}{\setlength{\itemsep}{5pt}
\item
For every $k\leq \ord_C(f)$,
\begin{equation}\label{eq:orderAlong}
f(x) = \sum_{\alpha\in\NN^{I}_{k}} x_{I}^{\alpha} g_\alpha(x)
\end{equation}
on $U$, for some $\C$-analytic functions $g_\alpha :U\to\RR$ such that $g_\alpha(x) = \frac{1}{\alpha!}\PDn{\alpha}{f}{x_I}(x)$ for all $x\in C$.

\item
Let
\begin{eqnarray*}
d_1
    & = &
    \ord_C(f),
\\
d_2
    & = &
    \sup\{k\in\NN : f\in\lb x_I\rb^k\},
\\
d_3
    & = &
    \inf\left\{|\alpha| : \text{$\alpha\in\NN^I$, $\PDn{\alpha}{f}{x_I}(a)\neq 0$ for some $a\in C$}\right\},
\end{eqnarray*}
where $\lb x_I\rb$ is the ideal of $\C[U]$ generated by $\{x_i\}_{i\in I}$.  Then $d_1 = d_2 \leq d_3$.  If we additionally assume that $\C$ is quasianalytic, then $d_1 = d_2 = d_3$.

\item
The following two statements are equivalent if $\C$ is quasianalytic and $\ord_C(f) < \infty$:
\begin{enumerate}
\item
For all $a\in C$, $\ord(f;a) = \ord_C(f)$.

\item
For all $a\in C$ there exists $\alpha\in\NN^I$ such that $|\alpha| = \ord_C(f)$ and $\PDn{\alpha}{f}{x_I}(a)\neq 0$.
\end{enumerate}
}\end{enumerate}
\end{proposition}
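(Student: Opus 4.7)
The plan is to prove Part 1 by induction on $k$, relying on an auxiliary lemma: \emph{if $h$ is $\C$-analytic on $U$ and vanishes on $\{x\in U : x_I = 0\}$, then $h = \sum_{i\in I} x_i h_i$ for $\C$-analytic $h_i$ on $U$}. This lemma is the main technical obstacle, since Definition 4.1.3(b) provides closure under division by only a single (distinguished) variable. I would establish it by induction on $|I|$: the base case $|I|=1$ follows from Definition 4.1.3(b) after permuting coordinates via closure under composition; the inductive step picks $i_1\in I$, restricts $h$ to $\{x_{i_1}=0\}$ (still $\C$-analytic, by composition with the inclusion), applies the induction hypothesis to obtain coefficients there, lifts each coefficient back to $U$ via composition with the retraction $x\mapsto (x_1,\ldots,x_{i_1-1},0,x_{i_1+1},\ldots,x_n)$, subtracts from $h$, and divides the remainder by $x_{i_1}$. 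With this tool in hand, Part 1 proceeds by induction on $k$: the base $k=0$ is $g_0 = f$, and for the inductive step, starting from $f = \sum_{|\beta|=k-1,\,\beta\in\NN^I} x_I^\beta h_\beta$, a Leibniz calculation at $a\in C$ (where $a_I = 0$) reduces $\PDn{\beta}{f}{x_I}(a)$ to $\beta!\,h_\beta(a)$; since $|\beta|<k\leq\ord_C(f)$ forces $\PDn{\beta}{f}{x_I}(a)=0$, we obtain $h_\beta|_C \equiv 0$, so the auxiliary lemma lets us rewrite each $h_\beta$ and collect the result into a level-$k$ expansion. The identity $g_\alpha|_C = \tfrac{1}{\alpha!}\PDn{\alpha}{f}{x_I}|_C$ then follows from the same Leibniz calculation one level higher.

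For Part 2, Part 1 applied at $k=d_1$ immediately gives $f\in\lb x_I\rb^{d_1}$, hence $d_2\geq d_1$; conversely, if $f = \sum_{|\alpha|=k}x_I^\alpha h_\alpha$ then a direct Leibniz computation at $a\in C$ shows every partial derivative of $f$ of order $<k$ vanishes (each term keeps a surviving factor of some $x_i$, $i\in I$), so $\ord_C(f)\geq k$ and $d_1\geq d_2$. The bound $d_1\leq d_3$ is immediate since any $\alpha\in\NN^I$ with $\PDn{\alpha}{f}{x_I}(a)\neq 0$ at $a\in C$ yields $\ord(f;a)\leq|\alpha|$. For $d_3\leq d_1$ under quasianalyticity, the case $d_1=\infty$ is handled separately (a finite $d_3$ would yield a nonzero partial derivative of $f$ at some $a\in C$ of finite order, contradicting $\ord(f;a)=\infty$); when $d_1<\infty$, assume $d_3>d_1$, so $\PDn{\alpha}{f}{x_I}|_C\equiv 0$ for every $\alpha\in\NN^I_{d_1}$, making each coefficient $g_\alpha$ in the Part 1 expansion vanish on $C$. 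Applying the auxiliary lemma to each $g_\alpha$ gives $g_\alpha\in\lb x_I\rb$, forcing $f\in\lb x_I\rb^{d_1+1}$ and contradicting $d_1 = d_2$.

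For Part 3, the implication (b)$\Rightarrow$(a) is immediate: a nonvanishing $\NN^I$-derivative of $f$ at $a$ of order $d_1$ is also a nonvanishing $\NN^n$-derivative, giving $\ord(f;a)\leq d_1$, and $\ord(f;a)\geq d_1$ always holds. For (a)$\Rightarrow$(b), fix $a\in C$ and use (a) to select $\alpha\in\NN^n$ with $|\alpha|=d_1$ and $\PDn{\alpha}{f}{x}(a)\neq 0$. I would then expand $f$ via Part 1 at $k=d_1$ and apply the general Leibniz rule to $\PDn{\alpha}{f}{x}$: evaluation at $a\in C$ kills every term in which any positive power of some $x_i$ ($i\in I$) remains, so the sole surviving contribution requires $\alpha_{I^c} = 0$ (i.e., $\alpha\in\NN^I$) and the index $\beta=\alpha$, yielding $\PDn{\alpha}{f}{x}(a) = \alpha!\,g_\alpha(a)$. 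Since the left side is nonzero, $\PDn{\alpha}{f}{x_I}(a) = \alpha!\,g_\alpha(a) \neq 0$, which is precisely (b).
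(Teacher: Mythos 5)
Your proof is correct but organizes the argument differently than the paper. For Part 1, the paper runs a single double induction on $(n,k)$, at each step splitting $f(x) = f(x',0) + x_i h(x)$ so that the first summand drops $n$ and the second drops $k$; you instead factor out the $k=1$ case as a standalone Hadamard-type lemma (a $\C$-analytic function vanishing on $\{x_I=0\}$ lies in $\lb x_I\rb$), proved by induction on $|I|$, and then bootstrap to all $k$ by a single induction on $k$. Both share the same implicit assumption that the retraction $x\mapsto(x',0)$ maps $U$ into $U$ (the paper also writes $f(x',0)$ as a function on $U$ without comment), so this is a wash; your separation buys cleaner bookkeeping, while the paper's merged induction is slightly more compact. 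The most notable divergence is in Part 2: the paper proves $d_3\leq d_1$ by fixing a witness $\alpha\in\NN^n$ with $\PDn{\alpha}{f}{x}(a)\neq 0$, observing that $x_{I^c}\mapsto\PDn{\alpha_I}{f}{x_I}(x_{I^c},0)$ then has a nonzero Taylor jet, and invoking quasianalyticity to locate a point of $C$ where $\PDn{\alpha_I}{f}{x_I}$ is actually nonzero. You instead argue by contradiction purely through the already-established equality $d_1=d_2$ together with the auxiliary lemma, which makes no appeal to quasianalyticity at all — your phrase ``under quasianalyticity'' is thus a leftover, and your argument in fact shows the hypothesis is dispensable in Part 2 (in Part 3 neither proof actually uses it either, though both hypotheses appear in the statement). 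Part 3 is essentially identical: both proofs extract (a)$\Rightarrow$(b) from the Part 1 expansion via a Leibniz/order calculation at a point of $C$, and (b)$\Rightarrow$(a) is immediate.
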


\begin{proof}
We first prove 1.  Let $k\leq\ord_C(f)$.  We proceed by induction on $(n,k)$ ordered lexicographically.  There is nothing to prove if $n = 0$ or $k=0$, so we assume that $n$ and $k$ are both positive.  Choose $i\in I$, and let $I' = I\setminus\{i\}$.  Write $x = (x',x_i)$ for coordinates on $\RR^n$, where $x' = (x_1,\ldots,x_{i-1},x_{i+1},\ldots,x_n)$, and similarly write $a' = (a_1,\ldots,a_{i-1},a_{i+1},\ldots,a_n)$ for any point $a = (a_1,\ldots,a_n)\in\RR^n$.  Now,
\begin{equation}\label{eq:fSplit}
f(x) = f(x',0) + x_i h(x)
\end{equation}
for some $\C$-analytic function $h:U\to\RR$.  Since $x_i$ divides $x_i h(x)$, and $f(x',0)$ is independent of $x_i$, it follows that for any $a\in C$,
\[
\ord(f;a) = \min\{\ord(f(x',0);a'), \ord(x_i h(x);a)\},
\]
so
\[
k\leq \ord_C(f) \leq \ord(f;a) = \min\{\ord(f(x',0);a'), \ord(h;a) + 1\},
\]
and hence $\ord(f(x',0);a') \geq k$ and $\ord(h;a)\geq k-1$.  Since $a$ was an arbitrary point of $C$, we have $\ord_C(f(x',0)) \geq k$ and $\ord_C(h) \geq k-1$.  By the induction hypothesis,
\begin{eqnarray}
\label{eq:fRestr}
f(x',0)
    & = &
    \sum_{\alpha\in\NN^{I'}_{k}}
    x_{I'}^{\alpha} g_\alpha(x),
\\
\label{eq:h}
h(x)
    & = &
    \sum_{\beta\in\NN^{I}_{k-1}}
    x_{I}^{\beta} h_\beta(x),
\end{eqnarray}
on $U$, for some $\C$-analytic functions $g_\alpha:U\to\RR$ (which are independent of $x_i$) and $h_\beta:U\to\RR$ such that $g_\alpha(x) = \frac{1}{\alpha!}\PDn{\alpha}{f}{x_{I'}}(x)$ and $h_\beta(x) = \frac{1}{\beta!}\PDn{\beta}{h}{x_I}(x)$ for all $x\in C$.  For any $\alpha\in\NN^I$ with $\alpha_i > 0$,
\[
\PDn{\alpha}{f}{x}(x)
=
\PDn{\alpha}{(x_i h(x))}{x}
=
\alpha_i \frac{\partial^{|\alpha|-1} h}{\partial x_{I}^{\alpha - e_i}}(x)
+ x_i \PDn{\alpha}{h}{x}(x),
\]
where $e_i$ is the $i$th standard unit vector in $\NN^n$, so
\begin{equation}\label{eq:xhDiff}
\frac{1}{\alpha!}\PDn{\alpha}{f}{x}(x)
= \frac{1}{(\alpha-e_i)!}
\frac{\partial^{|\alpha|-1} h}{\partial x_{I}^{\alpha - e_i}}(x)
=
h_{\alpha-e_i}(x)
\end{equation}
on $C$.  By writing $g_\alpha = h_{\alpha-e_i}$ for each $\alpha\in\NN^n$ with $|\alpha|=k$ and $\alpha_i > 0$, \eqref{eq:h} gives
\begin{equation}\label{eq:xh}
x_i h(x) = \sum_{\alpha\in\NN^{I}_{k} \atop \alpha_i > 0}
x_{I}^{\alpha} g_\alpha(x),
\end{equation}
and \eqref{eq:xhDiff} gives $\frac{1}{\alpha!}\PD{\alpha}{f}{x}(x) = g_\alpha(x)$ on $C$.  Statement 1 now follows from \eqref{eq:fSplit}, \eqref{eq:fRestr}, and \eqref{eq:xh}.

We now prove 2.  Statement 1 implies that for any $k\leq d_1$ we have $f\in\lb x_I\rb^k$, so $k\leq d_2$, and hence $d_1\leq d_2$ since $k$ was arbitrary.  By the definition of $d_2$, for any $k\leq d_2$ we have
\[
f(x) = \sum_{\alpha\in\NN^{I}_{k}} x_{I}^{\alpha} g_\alpha(x)
\]
for some $\C$-analytic functions $g_\alpha:U\to\RR$, so for each $a\in C$,
\[
\ord(f;a)
\geq
\min\{\ord(x_{I}^{\alpha} g_\alpha(x);a) : \alpha\in\NN^{I}_{k}\}
\geq
k,
\]
which shows that $d_1 \geq k$, and hence $d_1\geq d_2$ since $k$ was arbitrary.  This shows that $d_1 = d_2$.  It is clear that $d_1\leq d_3$, so we now assume that $\C$ is quasianalytic and prove that $d_3 \leq d_1$.  We assume that $d_1 < \infty$, for else there is nothing to prove.  Fix $a = (a_{I^c},0)\in C$ and $\alpha\in\NN^n$ such that $|\alpha| = d_1$ and $\PDn{\alpha}{f}{x}(a) \neq 0$.  Thus the $\C$-analytic map $C\to\RR:x_{I^c}\mapsto \PDn{\alpha_I}{f}{x_I}(x_{I^c},0)$ has a nonzero Taylor series at $a_{I^c}$.  Since $\C$ is quasianalytic, the map does not vanish identically in some neighborhood of $a_{I^c}$, which means that there exists $b = (b_{I^c},0)\in C$ such that $\PDn{\alpha_I}{f}{x_I}(b)\neq 0$.  So $d_3 \leq |\alpha_I| \leq |\alpha| = d_1$.  This proves 2.

We now prove 3.  Assume (a). Write $f$ in the form \eqref{eq:orderAlong} with $k = \ord_C(f)$.  Then for any $a\in C$,
\begin{eqnarray*}
\ord_C(f)
    & = &
    \ord(f;a)
\\
    & \geq &
    \min\left\{\ord(x_{I}^{\alpha} g_\alpha(x); a) : \alpha\in\NN^I, |\alpha| = \ord_C(f)\right\}
\\
    & = &
    \ord_C(f)
    +
    \min\left\{\ord\left(\PDn{\alpha}{f}{x_I};a\right) : \alpha\in\NN^I, |\alpha| = \ord_C(f)\right\}
\\
    & \geq &
    \ord_C(f),
\end{eqnarray*}
so $\min\left\{\ord\left(\PDn{\alpha}{f}{x_I};a\right) : \alpha\in\NN^I, |\alpha| = \ord_C(f)\right\}$ must equal $0$, which  means that there exists $\alpha\in\NN^I$ such that $|\alpha| = \ord_C(f)$ and $\PDn{\alpha}{f}{x}(a)\neq 0$.  This proves (b).

Now assume (b).  Then for all $a\in C$, $\ord_C(f) \leq \ord(f;a)\leq \ord_C(f)$, with the first inequality following from the definition of $\ord_C(f)$ and the second inequality following from (b).  This proves (a).
\end{proof}

\begin{corollary}\label{cor:orderAlong}
Let $U\subseteq\RR^n$ be open and $f:U\to\RR$ be $\C$-analytic, let $C=  \{x\in U : x_I = 0\}$ for some $I\subseteq\{1,\ldots,n\}$, and let $d\in\NN$.  Then
\[
f(x) = \sum_{\alpha\in\NN^{I}_{<d}} \frac{1}{\alpha!}\PDn{\alpha}{f}{x}(x_{I^c},0) x_{I}^{\alpha}
+
\sum_{\alpha\in\NN^{I}_{d}}
x_{I}^{\alpha} f_\alpha(x)
\]
on $U$, for some $\C$-analytic functions $f_\alpha :U\to\RR$ such that $f_\alpha(x) = \frac{1}{\alpha!}\PDn{\alpha}{f}{x_I}(x)$ for all $x\in C$.
\end{corollary}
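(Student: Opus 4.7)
The plan is to reduce the corollary to Proposition \ref{prop:orderAlong}.1 by subtracting off the Taylor polynomial of $f$ in the $x_I$ variables (with $x_{I^c}$ as parameters) to obtain a function whose order along $C$ is at least $d$.

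First I would define
\[
P(x) = \sum_{\alpha\in\NN^{I}_{<d}} \frac{1}{\alpha!}\PDn{\alpha}{f}{x}(x_{I^c},0)\, x_{I}^{\alpha},
\]
and $g = f - P$. The function $P$ is $\C$-analytic on $U$: the coefficient functions $x_{I^c}\mapsto \frac{1}{\alpha!}\PDn{\alpha}{f}{x_I}(x_{I^c},0)$ are restrictions of $\C$-analytic functions to the $\C$-analytic submanifold $C$ (hence $\C$-analytic as functions of $x_{I^c}$ by the closure properties listed at the end of Section \ref{s:IFsystem}), and $P$ is a polynomial in $x_I$ with such coefficients. So $g$ is $\C$-analytic on $U$.

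Next I would verify that $\ord_C(g)\geq d$. Fix $a = (a_{I^c},0)\in C$ and $\gamma\in\NN^n$ with $|\gamma|<d$, and write $\gamma = \gamma_I + \gamma_{I^c}$ with $\gamma_I\in\NN^I$, $\gamma_{I^c}\in\NN^{I^c}$ (extended by zeros). For each $\alpha\in\NN^I_{<d}$, the monomial $x_I^\alpha$ differentiated by $\partial^{|\gamma_I|}/\partial x_I^{\gamma_I}$ and evaluated at $x_I=0$ yields $\alpha!$ when $\alpha=\gamma_I$ and $0$ otherwise; since $|\gamma_I|\leq|\gamma|<d$, the term $\alpha = \gamma_I$ actually appears in $P$. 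A direct calculation using this observation gives
\[
\PDn{\gamma}{P}{x}(a) = \PDn{\gamma_{I^c}}{}{x_{I^c}}\PDn{\gamma_I}{f}{x_I}(a_{I^c},0) = \PDn{\gamma}{f}{x}(a),
\]
so $\PDn{\gamma}{g}{x}(a)=0$. Hence $\ord(g;a)\geq d$ for every $a\in C$, and therefore $\ord_C(g)\geq d$.

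Now I would apply Proposition \ref{prop:orderAlong}.1 to $g$ with $k=d$, obtaining $\C$-analytic functions $f_\alpha:U\to\RR$ such that
\[
g(x) = \sum_{\alpha\in\NN^{I}_{d}} x_{I}^{\alpha} f_\alpha(x)
\qquad\text{and}\qquad
f_\alpha(x) = \frac{1}{\alpha!}\PDn{\alpha}{g}{x_I}(x)\ \text{for all } x\in C.
\]
Finally, since every monomial in $P$ has $x_I$-degree strictly less than $d$, differentiating $P$ by any $\partial^{|\alpha|}/\partial x_I^\alpha$ with $|\alpha|=d$ kills it, so on $C$ we have $\PDn{\alpha}{g}{x_I} = \PDn{\alpha}{f}{x_I}$ for $|\alpha|=d$. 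Adding $P$ back to the displayed identity gives the formula in the corollary, with $f_\alpha = \frac{1}{\alpha!}\PDn{\alpha}{f}{x_I}$ on $C$. The main (very mild) obstacle is just the bookkeeping in showing the partial derivatives of $P$ at points of $C$ reproduce those of $f$ up to order $d-1$; everything else is a direct application of Proposition \ref{prop:orderAlong}.1.
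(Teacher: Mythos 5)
Your proof is correct and is essentially the same as the paper's: the paper also defines $R = f - P$ (with $P$ the truncated Taylor polynomial in $x_I$), notes $\ord_C(R)\geq d$, and applies Proposition~\ref{prop:orderAlong}.1 to $R$. You simply spell out the (routine) verification that $\ord_C(g)\geq d$ and that the resulting $f_\alpha$ agree on $C$ with $\frac{1}{\alpha!}\PDn{\alpha}{f}{x_I}$, which the paper leaves implicit.
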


\begin{proof}
Let $R(x) = f(x) - \sum_{\alpha\in\NN^{I}_{<d}} \frac{1}{\alpha!}\PDn{\alpha}{f}{x}(x_{I^c},0) x_{I}^{\alpha}$, and note that $\ord_C(R)\geq d$.  Now apply Proposition \ref{prop:orderAlong}.1 to $R$.
\end{proof}

\section{Linear Transformations}\label{s:linTrans}

To construct the centers of blowing-up in our resolution procedure, we will perform two types of coordinate transformations: generically chosen linear transformations, and translations by implicitly defined functions.  This section derives the basic computational facts we will need to construct the first type of transformation. To efficiently state these facts, we first introduce some 2-dimensional multi-index notation.

\begin{notation}\label{notation:multi-index}
Consider $E\subseteq\{1,\ldots,n\}$ and a nonempty set $X\subseteq\RR$.  Write $E^c = \{1,\ldots,n\}\setminus E$.  Let $X^{E^c \times E}$ denote the set of matrices $(x_{i,j})_{(i,j)\in E^c\times E}$ with entries $x_{i,j}$ in $X$, with the understanding that $X^\emptyset = \RR^0 = \{0\}$.  Thus $E^c$ serves as the set of row indices and $E$ serves as the set of column indices.  If $\gamma\in\NN^{E^c\times E}$ and $w\in\RR^{E^c\times E}$, define
\begin{eqnarray*}
w^\gamma
    & = &
    \prod_{(i,j)\in E^c\times E} w_{i,j}^{\gamma_{i,j}}, \\
\gamma!
    & = &
    \prod_{(i,j)\in E^c\times E} \gamma_{i,j}!, \\
|\gamma|_{\col}
    & = &
    \left(\sum_{j\in E} \gamma_{i,j}\right)_{i\in E^c}, \\
|\gamma|_{\row}
    & = &
    \left(\sum_{i\in E^c} \gamma_{i,j}\right)_{j\in E}.
\end{eqnarray*}
Thus $|\gamma|_{\col}$ is obtained from $\gamma$ by summing over the column indices, and $|\gamma|_{\row}$ is obtained by summing over the row indices.
\end{notation}

\begin{definition}\label{def:TE}
Define $T_E:\RR^{E^c\times E}\times\RR^n\to\RR^n$ by
\[
T_E(w,y) = \left(\left(y_i + \sum_{j\in E} w_{i,j}\, y_j\right)_{i\in E^c}, (y_i)_{i\in E}\right).
\]
For each $\lambda\in\RR^{E^c\times E}$, write $T_\lambda:\RR^n\to\RR^n$ for the map defined by $T_\lambda(y) = T_E(\lambda,y)$.
\end{definition}

Note that if $E$ or $E^c$ is empty, then $\RR^{E^c\times E} = \{0\}$ and $T_E(0,y) = T_0(y) = y$.  Also note that
\begin{equation}\label{eq:Linv}
\text{$x = T_E(w,y)$ if and only if $y = T_E(-w,x)$,}
\end{equation}
so $T_{\lambda}^{-1} = T_{-\lambda}$ for each $\lambda\in\RR^{E^c\times E}$.

\begin{lemma}\label{lemma:diff_fT}
Let $f:U\to\RR$ be $C^\infty$, where $U\subseteq\RR^n$ is open, and let $E$ be a nonempty proper subset of $\{1,\ldots,n\}$.  Then for all $\alpha \in\NN^{E}$ and all $(w,y)\in T_{E}^{-1}(U)$,
\begin{equation}\label{eq:diff_fL}
\PDn{\alpha}{(f\circ T_E)}{y_{E}}(w,y)
=
\sum_{\gamma\in\NN^{E^c\times E} \atop |\gamma|_{\row} \leq \alpha}
\frac{\alpha!}{\gamma!(\alpha-|\gamma|_{\row})!}
\PDmix{|\alpha|}{f}{x_{E^c}}{|\gamma|_{\col}}{x_{E}}{\alpha-|\gamma|_{\row}}\circ T_E(w,y) w^{\gamma}
\end{equation}
\end{lemma}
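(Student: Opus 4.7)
The plan is to recognize that the $y_j$-derivatives, for $j\in E$, pull back under $T_E$ to constant-coefficient (in $x$) linear combinations of $x$-derivatives, and that these combinations commute, so iterating them is governed by the multinomial theorem.

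First I would compute $\PD{}{(f\circ T_E)}{y_j}$ for $j\in E$ by the chain rule. Because $T_E$ is affine in $y$, its Jacobian is trivial to read off: $\PD{}{x_k}{y_j}=w_{k,j}$ for $k\in E^c$, $\PD{}{x_j}{y_j}=1$, and all other entries vanish. Thus $\PD{}{(f\circ T_E)}{y_j}(w,y) = (\tld{D}_j f)\circ T_E(w,y)$, where
\[
\tld{D}_j \;=\; \PD{}{}{x_j} + \sum_{k\in E^c} w_{k,j}\,\PD{}{}{x_k}
\]
is regarded as a differential operator acting on functions of $x$ (with $w$ frozen as a parameter).

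The key observation is that the operators $\{\tld{D}_j\}_{j\in E}$ have coefficients that are constant in $x$, and hence they commute with each other as operators on $C^\infty$ functions. Consequently, for every $\alpha\in\NN^E$,
\[
\PDn{\alpha}{(f\circ T_E)}{y_E}(w,y) \;=\; \Bigl(\prod_{j\in E}\tld{D}_j^{\alpha_j}f\Bigr)\circ T_E(w,y).
\]
I would then expand each $\tld{D}_j^{\alpha_j}$ by the multinomial theorem, indexing summands by pairs $(\gamma_{\cdot,j},\beta_j)\in\NN^{E^c}\times\NN$ with $|\gamma_{\cdot,j}|+\beta_j=\alpha_j$; the $(\gamma_{\cdot,j},\beta_j)$-summand is
\[
\frac{\alpha_j!}{\gamma_{\cdot,j}!\,\beta_j!}\;\Bigl(\prod_{k\in E^c}w_{k,j}^{\gamma_{k,j}}\Bigr)\,\PD{}{^{\beta_j}}{x_j^{\beta_j}}\prod_{k\in E^c}\PD{}{^{\gamma_{k,j}}}{x_k^{\gamma_{k,j}}}.
\]

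Finally I would multiply over $j\in E$ and collect. Assembling the data $(\gamma_{\cdot,j})_{j\in E}$ into a single matrix $\gamma\in\NN^{E^c\times E}$, the summation becomes a sum over $\gamma$ with $|\gamma|_{\row}\leq\alpha$ and $\beta=\alpha-|\gamma|_{\row}$. The product of multinomial coefficients collapses to $\alpha!/\bigl(\gamma!(\alpha-|\gamma|_{\row})!\bigr)$, the $w$-factors combine to $w^\gamma$, and the accumulated order of differentiation in $x_k$ for $k\in E^c$ is $\sum_{j\in E}\gamma_{k,j}=(|\gamma|_{\col})_k$ while that in $x_j$ for $j\in E$ is $\beta_j=\alpha_j-(|\gamma|_{\row})_j$, which gives exactly the $\PDmix{|\alpha|}{f}{x_{E^c}}{|\gamma|_{\col}}{x_E}{\alpha-|\gamma|_{\row}}$ factor in the stated identity. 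There is no real obstacle; the work is purely bookkeeping, and the only subtlety is keeping the row- and column-sums of $\gamma$ straight against the double-indexed notation $\NN^{E^c\times E}$ introduced in \ref{notation:multi-index}.
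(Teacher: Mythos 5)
Your proof is correct, and it organizes the computation differently from the paper. The paper proves the lemma by first fixing a single index $k\in E$ and performing an induction on $\alpha_k$ to obtain a formula with a raw sum over tuples $j_1,\ldots,j_{\beta_k}\in E^c$, then counting the number of tuples yielding a given monomial $w_k^{\gamma_k}$ to introduce the multinomial coefficient $\beta_k!/\gamma_k!$, and finally iterating this one-variable formula successively over all $k\in E$. You instead introduce the pull-back operator $\tld{D}_j = \partial_{x_j} + \sum_{k\in E^c}w_{k,j}\partial_{x_k}$, observe that $\partial_{y_j}(f\circ T_E) = (\tld{D}_jf)\circ T_E$ because $T_E$ is affine in $y$ with Jacobian independent of $y$, and note that the $\tld{D}_j$ commute because their coefficients are constant in $x$; this yields $\PDn{\alpha}{(f\circ T_E)}{y_E} = \bigl(\prod_{j\in E}\tld{D}_j^{\alpha_j}f\bigr)\circ T_E$ in one stroke, after which the result is literally the multinomial theorem for commuting operators. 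What your route buys is conceptual clarity: the commutativity that lets the paper ``apply the formula successively'' to the variables $y_k$ is isolated as an explicit structural fact about constant-coefficient operators, rather than being left implicit behind the iterated application, and the regrouping of like terms becomes a single invocation of the multinomial theorem instead of a counting argument. What the paper's route buys is concreteness: it never requires the reader to manipulate differential operators as abstract objects, only iterated partial derivatives. The final bookkeeping — identifying $|\gamma|_{\row}$ with the total power of $\partial_{x_j}$ absorbed from each $\alpha_j$, and $|\gamma|_{\col}$ with the accumulated order of $\partial_{x_k}$ for $k\in E^c$ — is the same in both.
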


\begin{proof}
For each $k\in E$, an induction on $\alpha_k$ shows that for each $\alpha_k\in\NN$,
\[
\PD{\alpha_k}{(f\circ T_E)}{y_k}(w,y)
=
\sum_{\beta_k\in\NN \atop \beta_k\leq\alpha_k} {\alpha_k\choose \beta_k} \sum_{j_1,\ldots,j_{\beta_k}\in E^c} \frac{\partial^{\alpha_k} f}{\partial x_{j_1}\cdots \partial x_{j_{\beta_k}} \partial {x_k}^{\alpha-\beta}}\circ T_E(w,y) w_{j_1,k}\cdots w_{j_{\beta_k},k}.
\]
Write $w_k = (w_{i,k})_{i\in E^c}$, and note that for each $\beta_k\leq\alpha_k$ and $\gamma_k\in\NN^{E^c}$ with $|\gamma_k| = \beta_k$, there are exactly $\frac{\beta_k!}{\gamma_k!}$ many choices of $j_1,\ldots,j_{\beta_k}\in E^c$ such that $w_{k}^{\gamma_k} = w_{j_1,k}\cdots w_{j_{\beta_k},k}$.  So combining like terms gives
\[
\PD{\alpha_k}{(f\circ T_E)}{y_k}(w,y)
=
    \sum_{\beta_k\in\NN \atop \beta_k\leq\alpha_k}
    \sum_{\gamma_k\in\NN^{E^c} \atop |\gamma_k|=\beta_k} \frac{\alpha_k!}{\gamma_k!(\alpha_k-\beta_k)!}
    \PDmix{\alpha_k}{f}{x_{E^c}}{\gamma_k}{x_k}{\alpha_k-\beta_k}\circ T_E(w,y) w_{k}^{\gamma_k}.
\]
Applying this formula successively to each of the variables $y_k$ with $k\in E$ gives
\begin{eqnarray*}
\PDn{\alpha}{(f\circ T_E)}{y_{E}}(w,y)
    & = &
    \sum_{(\beta_k)_{k\in E}\in\NN^E \atop \forall k\in E(\beta_k\leq\alpha_k)}
    \sum_{(\gamma_k)_{k\in E} \in (\NN^{E^c})^E \atop \forall k\in E(|\gamma_k| = \beta_k)}
    \left(
    \left(\prod_{k\in E} \frac{\alpha_k!}{\gamma_k!(\alpha_k-\beta_k)!}\right)
    \right.
\\
    & &
    \left.
    \frac{\partial^{\sum_{k\in E}\alpha_k} f}
    {\partial x_{E^c}^{\sum_{k\in E} \gamma_k}
    \left(\partial x_{k}^{\alpha_k - \beta_k}\right)_{k\in E}} \circ T_E(w,y)
    \left(\prod_{k\in E} w_{k}^{\gamma_k}\right)
\right).
\end{eqnarray*}
This can be more succinctly written as
\[
\PDn{\alpha}{(f\circ T_E)}{y_{E}}(w,y)
=
\sum_{\beta\in\NN^E \atop \beta\leq\alpha}
\sum_{\gamma\in\NN^{E^c\times E}\atop |\gamma|_{\row} = \beta}
\frac{\alpha!}{\gamma!(\alpha-\beta)!}
\PDmix{|\alpha|}{f}{x_{E^c}}{|\gamma|_{\col}}{x_{E}}{\alpha-\beta}\circ T_E(w,y) w^{\gamma},
\]
which is equivalent to \eqref{eq:diff_fL}.
\end{proof}

\begin{lemma}\label{lemma:RowColSums}
Let $E$ be a nonempty proper subset of $\{1,\ldots,n\}$.  For all $\beta\in\NN^n$ and $\alpha\in\NN^E$ such that $|\alpha| = |\beta|$ and $\alpha\geq \beta_E$, there exists $\gamma\in \NN^{E^c\times E}$ such that
\begin{eqnarray*}
|\gamma|_{\col}\, & = & \beta_{E^c}, \\
|\gamma|_{\row} & = & \alpha - \beta_E.
\end{eqnarray*}
\end{lemma}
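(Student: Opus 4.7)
The plan is to prove existence of such a $\gamma$ by induction on the common value $N := |\alpha| = |\beta|$. Observe first that the problem is a finite transportation problem: we seek a nonnegative integer matrix with prescribed row sums $\beta_{E^c}$ (indexed by $E^c$) and prescribed column sums $\alpha - \beta_E$ (indexed by $E$). The hypothesis $|\alpha| = |\beta|$ together with $\alpha \geq \beta_E$ delivers the essential compatibility identity
\[
|\beta_{E^c}| \;=\; |\beta| - |\beta_E| \;=\; |\alpha| - |\beta_E| \;=\; \sum_{j\in E}(\alpha_j - \beta_j),
\]
so the total row sum and total column sum agree, and both vectors lie in $\NN^{E^c}$ and $\NN^E$ respectively.

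For the base case $N = 0$ we have $\alpha = 0$ and $\beta = 0$, so $\gamma = 0$ works. For the inductive step assume $N > 0$. If $\beta_{E^c} = 0$, the compatibility identity forces $\sum_{j\in E}(\alpha_j - \beta_j) = 0$; since each summand is nonnegative by hypothesis, $\alpha = \beta_E$, and $\gamma = 0$ again works. Otherwise, pick some $i_0 \in E^c$ with $\beta_{i_0} > 0$; then the compatibility identity gives $\sum_{j\in E}(\alpha_j - \beta_j) \geq |\beta_{E^c}| \geq 1$, so we can pick $j_0 \in E$ with $\alpha_{j_0} > \beta_{j_0}$.

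Now set $\beta' = \beta - e_{i_0} \in \NN^n$ and $\alpha' = \alpha - e_{j_0} \in \NN^E$, where $e_k$ denotes the relevant standard unit vector. Then $|\alpha'| = |\beta'| = N - 1$, and $\alpha' \geq \beta'_E = \beta_E$ because $\alpha_{j_0} - 1 \geq \beta_{j_0}$. By the induction hypothesis there exists $\gamma' \in \NN^{E^c\times E}$ with $|\gamma'|_{\col} = \beta'_{E^c}$ and $|\gamma'|_{\row} = \alpha' - \beta'_E$. Defining $\gamma$ by $\gamma_{i,j} = \gamma'_{i,j}$ for $(i,j) \neq (i_0,j_0)$ and $\gamma_{i_0,j_0} = \gamma'_{i_0,j_0} + 1$, the increment adds $1$ to the $i_0$-th row sum and $1$ to the $j_0$-th column sum, yielding exactly $|\gamma|_{\col} = \beta_{E^c}$ and $|\gamma|_{\row} = \alpha - \beta_E$, as required.

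The argument is entirely elementary and there is no genuine obstacle; the only thing to watch is bookkeeping of which coordinate lives in $E$ vs.\ $E^c$, which is handled by the hypothesis $\alpha \geq \beta_E$ that guarantees $j_0$ can be chosen so that subtracting $e_{j_0}$ preserves nonnegativity of $\alpha' - \beta_E$.
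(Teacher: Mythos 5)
Your proof is correct, and the observation that the lemma is precisely the existence of a nonnegative integer matrix with prescribed row and column sums (a transportation problem, with the hypothesis $|\alpha|=|\beta|$ giving exactly the required total-sum compatibility) is a nice conceptual framing that the paper doesn't make explicit. However, you take a genuinely different inductive route. You induct on the total mass $N = |\alpha| = |\beta|$, peeling off one unit at a time by decrementing $\beta_{i_0}$ and $\alpha_{j_0}$ and incrementing a single entry $\gamma_{i_0,j_0}$. The paper instead inducts on $n = |E^c| + |E|$, the dimension of the problem: it sets $\gamma_{e,n} = \min\{\beta_e, \alpha_n - \beta_n\}$ (split into the two cases $\beta_e \le \alpha_n - \beta_n$ and $\beta_e \ge \alpha_n - \beta_n$), which saturates an entire row or column in one step, and then deletes that row or column to drop to a smaller matrix. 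The paper's approach terminates in at most $n$ steps and is closer to how one would actually construct $\gamma$ greedily; your approach takes $N$ steps but has the merit of avoiding the case split entirely and needing only the single unit vector $e_{i_0}, e_{j_0}$ bookkeeping, which keeps the inductive hypothesis verification very short. Both are elementary and correct.
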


\begin{proof}
To have more concrete notation, assume that $E = \{e+1,\ldots,n\}$ for some $e\in\{1,\ldots,n-1\}$.  We want to find $\gamma$ such that the following holds:
\begin{equation}\label{eq:RowColSum}
\begin{matrix}
\gamma_{1,e+1} & \cdots & \gamma_{1,n} & \to & \beta_1\\
\vdots         &        & \vdots       &     & \vdots \\
\gamma_{e,e+1} & \cdots & \gamma_{e,n} & \to & \beta_e \\
\downarrow     &        & \downarrow \\
\alpha_{e+1} - \beta_{e+1} & \cdots & \alpha_n - \beta_n &   & & ,
\end{matrix}
\end{equation}
where the arrows denote summation of the numbers in the corresponding row or column.
We proceed by induction on $n = |E^c| + |E|$.  Because $0 < e < n$, the base case is when $e=1$ and $n=2$, in which case \eqref{eq:RowColSum} is simply
\[
\begin{matrix}
\gamma_{1,2} & \to & \beta_1\\
\downarrow \\
\alpha_{2} - \beta_{2} & & &.
\end{matrix}
\]
We are assuming that $\beta = (\beta_1,\beta_2)$ and $\alpha = (\alpha_2)$ are such that $\beta_1 + \beta_2 = \alpha_2$, so let $\gamma_{1,2} = \beta_1$.

Now assume that $n > 2$.   We will assume that $\beta_e\leq \alpha_n-\beta_n$, because the case that $\beta_e \geq \alpha_n - \beta_n$ can be handled similarly.  Define $\gamma_{e,n} = \beta_e$.  Making this choice for $\gamma_{e,n}$ forces us to also define $\gamma_{e,e+1} = \cdots = \gamma_{e,n-1} = 0$, so \eqref{eq:RowColSum} now reduces to finding $\gamma' = (\gamma_{i,j})$ in $\NN^{(E^c\setminus\{e\}) \times E}$ satisfying
\[
\begin{matrix}
\gamma_{1,e+1}   & \cdots   & \gamma_{1,n-1}   & \gamma_{1,n}   & \to & \beta_1\\
\vdots           &          & \vdots           & \vdots         &     & \vdots \\
\gamma_{e-1,e+1} & \cdots   & \gamma_{e-1,n-1} & \gamma_{e-1,n} & \to & \beta_{e-1} \\
\downarrow       &          & \downarrow       & \downarrow     \\
\alpha_{e+1} - \beta_{e+1} & \cdots & \alpha_{n-1} - \beta_{n-1} & (\alpha_n - \beta_e) - \beta_n &   & & .
\end{matrix}
\]
Let $\alpha' = (\alpha_{e+1},\ldots,\alpha_{n-1},\alpha_n-\beta_e)$ and $\beta' = (\beta_1,\ldots,\beta_{e-1},\beta_{e+1},\ldots,\beta_n)$.  Note that $|\alpha'| = |\beta'|$, so we are done by the induction hypothesis.
\end{proof}

\begin{proposition}\label{prop:Gen}
Let $f:U\to\RR$ be $C^\infty$, where $U\subseteq\RR^n$ is open.  Let $a\in U$, and assume that $m = \ord(f;a) < \infty$.  Let $E$ be a nonempty proper subset of $\{1,\ldots,n\}$, and fix $\alpha\in\NN^E$ such that $|\alpha| = m$.  Then
\begin{equation}\label{eq:fTpoly}
\PDn{\alpha}{(f\circ T_E)}{y_E}(T_E^{-1}(a))
=
\sum_{\gamma\in\NN^{E^c\times E} \atop |\gamma|_{\row} \leq \alpha}
\frac{\alpha!}{\gamma!(\alpha-|\gamma|_{\row})!}
\PDmix{|\alpha|}{f}{x_{E^c}}{|\gamma|_{\col}}{x_E}{\alpha-|\gamma|_{\row}}(a) w^{\gamma}
\end{equation}
is a nonzero polynomial in $w$ if and only if
\begin{equation}\label{eq:Gen}
\text{\parbox{5in}{$\alpha\geq\beta_E$ for some $\beta\in\NN^n$ such that $|\beta| = m$ and $\PDn{\beta}{f}{x}(a)\neq 0$.}}
\end{equation}
\end{proposition}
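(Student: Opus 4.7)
The plan is to derive the formula \eqref{eq:fTpoly} directly from Lemma \ref{lemma:diff_fT}, and then to analyze when the resulting polynomial in $w$ vanishes by matching the indexing sets $\gamma \in \NN^{E^c \times E}$ with multi-indices $\beta \in \NN^n$.

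First, I would apply Lemma \ref{lemma:diff_fT} at the point $(w, y) = (w, T_{-w}(a))$, using \eqref{eq:Linv} to note $T_E(w, T_{-w}(a)) = a$. Since the right-hand side of the Lemma's formula depends on $T_E(w, y)$ only through the evaluation points of the partial derivatives of $f$, this substitution immediately rewrites all those evaluations as derivatives at $a$, yielding \eqref{eq:fTpoly} as a polynomial in the entries $w_{i,j}$.

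The next step is to observe two simple facts about this polynomial: distinct $\gamma \in \NN^{E^c \times E}$ give distinct monomials $w^\gamma$, and the scalar coefficient $\alpha!/[\gamma!(\alpha - |\gamma|_{\row})!]$ is strictly positive for all $\gamma$ with $|\gamma|_{\row} \leq \alpha$. Therefore the polynomial is nonzero iff there exists some such $\gamma$ for which $\PDmix{|\alpha|}{f}{x_{E^c}}{|\gamma|_{\col}}{x_E}{\alpha - |\gamma|_{\row}}(a) \neq 0$. I would then set up the correspondence $\gamma \leftrightarrow \beta$ via $\beta_{E^c} := |\gamma|_{\col}$ and $\beta_E := \alpha - |\gamma|_{\row}$, and verify that such a $\beta$ always satisfies $|\beta| = ||\gamma|_{\col}| + |\alpha| - ||\gamma|_{\row}| = |\alpha| = m$ and $\beta_E \leq \alpha$. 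This correspondence makes the ``only if'' direction immediate: a nonvanishing coefficient in \eqref{eq:fTpoly} yields a $\beta$ witnessing \eqref{eq:Gen}.

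The main (though short) work lies in the converse. Given $\beta \in \NN^n$ with $|\beta| = m$, $\alpha \geq \beta_E$, and $\PDn{\beta}{f}{x}(a) \neq 0$, I would invoke Lemma \ref{lemma:RowColSums} — whose hypotheses $|\alpha| = |\beta|$ and $\alpha \geq \beta_E$ are exactly what we have — to produce some $\gamma \in \NN^{E^c \times E}$ satisfying $|\gamma|_{\col} = \beta_{E^c}$ and $|\gamma|_{\row} = \alpha - \beta_E$. The corresponding term in \eqref{eq:fTpoly} then has nonzero coefficient, completing the equivalence. The only genuinely combinatorial content of the proof is the existence statement in Lemma \ref{lemma:RowColSums}; everything else is bookkeeping with the multi-index notation of Notation \ref{notation:multi-index}.
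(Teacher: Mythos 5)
Your proof is correct and takes essentially the same route as the paper: derive \eqref{eq:fTpoly} from Lemma \ref{lemma:diff_fT}, then use the bijection $\gamma\leftrightarrow\beta$ (with $\beta_{E^c}=|\gamma|_{\col}$, $\beta_E=\alpha-|\gamma|_{\row}$) together with Lemma \ref{lemma:RowColSums} for the ``if'' direction. Your explicit remark that distinct $\gamma$ give distinct monomials $w^\gamma$ (so there is no cancellation) makes precise a point the paper leaves implicit; otherwise the arguments coincide.
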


An important special case is when $|E| = 1$, in which case $\alpha = (m)$ and \eqref{eq:Gen} is automatically satisfied.

\begin{proof}
Lemma \ref{lemma:diff_fT} shows that \eqref{eq:fTpoly} holds, and this function is clearly a polynomial in $w$, so the only issue is whether or not this polynomial is nonzero.  If \eqref{eq:Gen} holds, as witnessed by $\beta$, then because $|\alpha| = m = |\beta|$, Lemma \ref{lemma:RowColSums} shows that there exists $\gamma\in\NN^{E^c\times E}$ such that $|\gamma|_{\col} = \beta_{E^c}$ and $|\gamma|_{\row} = \alpha - \beta_E$, so \eqref{eq:fTpoly} is a nonzero polynomial.  If \eqref{eq:Gen} does not hold, then because every derivative occurring in the sum \eqref{eq:fTpoly} is of the form $\PDn{\beta}{f}{x}(a)$ for some $\beta\in\NN^n$ with $|\beta| = m$ and $\beta_E\leq \alpha$, \eqref{eq:fTpoly} is the zero polynomial.
\end{proof}

\section{Blowing-up}\label{s:blowup}

This section proves the key computational lemmas about blowings-up that we will use in our resolution procedure.  We will use the following notation throughout this section.

\begin{notation}
Fix a quasianalytic IF-system $\C$.  Also fix an open set $U\subseteq\RR^n$.  We write $x = (x_1,\ldots,x_n)$ for coordinates on $U$, and if we are given a function $F:V\to U$ where $V$ is open in $\RR^n$, we write $y = (y_1,\ldots,y_n)$ for coordinates on $V$.  Fix a nonempty set $I\subseteq\{1,\ldots,n\}$, and define
\[
C = \{x\in U : x_I = 0\}.
\]
For each $i\in I$, let
\[
I_i = I\setminus\{i\}.
\]
For any $J\subseteq\{1,\ldots,n\}$, write $J^c = \{1,\ldots,n\}\setminus J$.  For each $i\in I$, define $\pi_i:\RR^n\to\RR^n$ by $\pi_i(y) = x$, where for each $j\in\{1,\ldots,n\}$,
\[
x_j = \begin{cases}
y_i y_j, & \text{if $j\in I_i$}, \\
y_j,  & \text{if $j\in I_{i}^{c}$.}
\end{cases}
\]
Thus
\[
\pi_i(y) = (y_{I_{i}^{c}}, y_i y_{I_i}) = (y_{I^c}, y_i, y_i y_{I_i}),
\]
where $y_i y_{I_i} = (y_i y_j)_{j\in I_i}$.  We also let
\[
U_{i} = \pi_{i}^{-1}(U)
\]
for each $i\in I$.
\end{notation}

\begin{definition}\label{def:blowup}
We call the family of maps $\{\pi_i:U_{i}\to U\}_{i\in I}$ the {\bf standard charts for the blowing-up of $U$ with center $C$}, and we call
\[
\pi_{i}^{-1}(C) = \{y\in U_{i} : y_i = 0\}
\]
the {\bf exceptional divisor} of $\pi_i$.
\end{definition}

\begin{remark}\label{rmk:blowup}
Note that $U_{i}\setminus \pi_{i}^{-1}(C) = \{y\in U_{i} : y_i \neq 0\}$ and that the restriction of $\pi_i$ to $U_{i}\setminus \pi_{i}^{-1}(C)$ is a $\C$-analytic isomorphism from $U_{i}\setminus \pi_{i}^{-1}(C)$ onto $\{x\in U : x_i \neq 0\}$.
\end{remark}

We do not need the following remark, but it explains our choice of terminology in Definition \ref{def:blowup}.

\begin{remark}\label{rmk:blowupGlobal}
Define
\begin{eqnarray*}
U'
    & = &
    \left\{(x,\xi)\in U\times\PP^{|I|-1}(\RR) : x_I\in\xi\right\} \\
    & = &
    \left\{(x,\xi)\in U\times\PP^{|I|-1}(\RR) : \bigwedge_{i,j\in I} x_i\xi_j = x_j\xi_i\right\},
\end{eqnarray*}
where $\PP^{|I|-1}(\RR)$ is the $(|I|-1)$-dimensional projective space of all lines through the origin in $\RR^{|I|}$, and $\xi = [\xi_i]_{i\in I}$ are homogeneous coordinates for the line $\xi$.  The projection $\Pi:U'\to U$ given by
\[
\Pi(x,\xi) = x
\]
is called a {\bf blowing-up of $U$ with center $C$}, and $\Pi^{-1}(C) = C\times\PP^{|I|-1}(\RR)$ is called the {\bf exceptional divisor} of $\Pi$.  The set $U'$ is an algebraic manifold with coordinate charts $\{\psi_i:U'_i\to U_i\}_{i\in I}$, where for each $i\in I$,
\begin{eqnarray*}
U'_i
    & = &
    \{(x,\xi)\in U' : \xi_i\neq 0\}, \\
\psi_i(x,\xi)
    & = &
    \left(\left(x_j\right)_{j\in I_{i}^{c}}, \left(\frac{\xi_j}{\xi_i}\right)_{j\in I_i}\right).
\end{eqnarray*}
The map $\pi_i:U_i\to U$ is the pushout of $\Pi\Restr{U'_i}$ by $\psi_i$, namely, $\pi_i = \Pi\circ\psi_{i}^{-1}$.
\end{remark}

\begin{lemma}\label{lemma:graph}
Consider a finite, transitive, directed graph such that for all nodes $p$ and $q$, there is an edge from $p$ to $q$ or from $q$ to $p$.  Then there exists a node $p$ such that for every node $q$ there is an edge from $q$ to $p$.
\end{lemma}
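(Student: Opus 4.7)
The plan is a proof by contradiction. Suppose that no node $p$ has the desired property---that is, for every node $p$ there exists a node, call it $q(p)$, such that there is no edge from $q(p)$ to $p$. By the tournament hypothesis (there is an edge from $p$ to $q(p)$ or from $q(p)$ to $p$), there must then be an edge from $p$ to $q(p)$.

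Fix any starting node $p_0$ and form the iterated sequence defined by $p_{k+1} = q(p_k)$. By construction, for every $k$ there is an edge $p_k \to p_{k+1}$ but no edge $p_{k+1} \to p_k$. Since the graph has only finitely many nodes, the sequence must repeat: $p_j = p_k$ for some $j < k$. I would then apply transitivity to the chain $p_{j+1} \to p_{j+2} \to \cdots \to p_k = p_j$ to deduce the single edge $p_{j+1} \to p_j$, which directly contradicts the defining property of $p_{j+1} = q(p_j)$.

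The only minor wrinkle is the degenerate case $k = j+1$, where the chain has no intermediate steps. Here the repetition reads $q(p_j) = p_j$, and the very definition of $q$ then gives both the edge $p_j \to p_j$ and the absence of such an edge---already a contradiction, with no transitivity needed. No genuine obstacle is anticipated; conceptually, the relation ``$q \to p$'' is a transitive, total preorder on a finite set, so a maximum element (which is exactly a node $p$ with edges from everywhere into it) is guaranteed to exist.
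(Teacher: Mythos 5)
Your proof is correct. You argue by contradiction: assuming no node works, each node $p$ has a witness $q(p)$ with no edge $q(p)\to p$ (hence, by totality, an edge $p\to q(p)$); iterating $q$ yields a sequence that must repeat by finiteness, and transitivity applied around the resulting cycle produces an edge $p_{j+1}\to p_j$, contradicting the choice of $q(p_j)=p_{j+1}$. You handle the degenerate one-step cycle $p_{j+1}=p_j$ separately, where the self-loop (which the hypothesis forces at every node) directly clashes with the defining property of $q$. This is sound. The paper instead argues by induction on the number of nodes $\{0,\dots,k\}$: if every node of $\{0,\dots,k-1\}$ has an edge to $k$ then $k$ is the desired node (using the self-loop at $k$); otherwise there is some $q<k$ with an edge $k\to q$, the inductive hypothesis gives a sink $p$ for $\{0,\dots,k-1\}$, and transitivity through $q$ yields $k\to p$. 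The two arguments both hinge on transitivity but package the finiteness differently: you use it as a pigeonhole on an iterated sequence, leading to a cycle-based contradiction, while the paper uses it as a well-founded induction that peels off one node at a time. The inductive proof avoids case-splitting on the cycle length and is a bit more compact; your version is a direct, constructive-feeling argument that makes the role of the strict relation ``$p\to q$ but not $q\to p$'' more visible. Your closing remark that the edge relation is a total transitive preorder and hence has a maximum is essentially a restatement of the lemma rather than an independent justification, but since you supply the concrete argument first, this is only a presentational aside.
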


Note that in the hypothesis of Lemma \ref{lemma:graph}, we allow the possibility of an edge from $p$ to $q$ and from $q$ to $p$.  Also, if one takes $p = p$ in the hypothesis of the lemma, we see that there is an edge from $p$ to $p$ for each node $p$.

\begin{proof}
Let $\{0,\ldots,k\}$ be the nodes.  The result of the lemma is obvious if $k = 0$, so assume that $k > 0$ and that the lemma holds for all such graphs with $k$ nodes.  We may assume that there is a node $q\in\{0,\ldots,k-1\}$ with an edge from $k$ to $q$, for if this was not the case, then for every node $r\in\{0,\ldots,k-1\}$ there would be an edge from $r$ to $k$, and we would be done.  By the induction hypothesis, there exists $p\in\{0,\ldots,k-1\}$ such that for every node $r\in\{0,\ldots,k-1\}$ there is an edge from $r$ to $p$.  There is also an edge from $k$ to $p$ since there is an edge from $k$ to $q$ and from $q$ to $p$, and the graph is transitive.
\end{proof}

\begin{lemma}\label{lemma:blowupCover}
Write $I = \bigcup_{l=0}^{k} N_l$ for disjoint nonempty subsets $N_0,\ldots,N_k$ of $I$, and for each $i\in I$ let $\ell(i)\in\{0,\ldots,k\}$ be such that $i\in N_{\ell(i)}$.  Let $K\subseteq U$, and let $\epsilon > 0$ be rational.  For each $i\in I$, define
\[
K_i = \{x\in \pi_{i}^{-1}(K) : \text{$|y_j| \leq \epsilon^{\ell(j) - \ell(i)}$ for all $j\in I$}\}.
\]
Then
\[
K = \bigcup_{i\in I} \pi_i(K_i).
\]
Furthermore, if $K$ is co-c.e.\ compact, then each set $K_i$ is co-c.e.\ compact.
\end{lemma}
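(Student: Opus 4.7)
The plan is to split the proof into the set equality $K = \bigcup_i \pi_i(K_i)$ and the co-c.e.\ compactness of each $K_i$, handling them in that order.

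The containment $\bigcup_i \pi_i(K_i) \subseteq K$ is immediate from $K_i \subseteq \pi_i^{-1}(K)$, so only the reverse inclusion requires work. Given $x \in K$, I would split into two cases. If $x_I = 0$, pick any $i \in I$ and set $y := (x_{I^c}, 0) \in \RR^n$; then $\pi_i(y) = x$ and every constraint $|y_j| \leq \epsilon^{\ell(j)-\ell(i)}$ holds trivially since $y_j = 0$ for $j \in I$. Otherwise, let $S = \{j \in I : x_j \neq 0\}$ and consider the directed graph on $S$ with an edge $j \to j'$ precisely when $|x_j|\, \epsilon^{-\ell(j)} \leq |x_{j'}|\, \epsilon^{-\ell(j')}$. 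This graph is transitive and total, so Lemma \ref{lemma:graph} supplies some $i \in S$ with $j \to i$ for every $j \in S$. Setting $y := \pi_i^{-1}(x)$ (a well-defined single point since $x_i \neq 0$), the choice of $i$ gives $|y_j| = |x_j/x_i| \leq \epsilon^{\ell(j)-\ell(i)}$ for $j \in S \setminus \{i\}$, while for $j \in I \setminus S$ the same inequality is trivial because $|y_j| = 0$.

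For the compactness statement, I would use the characterization ``co-c.e.\ compact iff co-c.e.\ closed and bounded'' from Proposition \ref{prop:compHeineBorel}. The map $\pi_i$ is a polynomial with rational coefficients, hence computably continuous on all of $\RR^n$ by Propositions \ref{prop:compArith} and \ref{prop:compComp}, and $K$ is co-c.e.\ closed in $\RR^n$ by Proposition \ref{prop:compHeineBorel}, so $\pi_i^{-1}(K)$ is co-c.e.\ closed in $\RR^n$ by Proposition \ref{prop:compCont}. The set of $y$ satisfying the rational inequalities $|y_j| \leq \epsilon^{\ell(j)-\ell(i)}$ is itself co-c.e.\ closed, and Remark \ref{rmk:coceClosed} then shows $K_i$ is co-c.e.\ closed. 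Boundedness is direct: the $I$-coordinates of $y$ are bounded by the rational box, and the $I^c$-coordinates satisfy $y_{I^c} = x_{I^c}$ with $x$ in the bounded set $K$. A final appeal to Proposition \ref{prop:compHeineBorel} converts this to co-c.e.\ compactness.

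The main obstacle I foresee is the $j = i$ instance of the box constraint, which reads $|x_i| \leq 1$ and is not automatically guaranteed by the $\preceq$-maximality argument above. My plan is to exploit the context in which the lemma is applied: in the RSW construction, all blow-ups are performed inside the normalized coordinate box $[-1,1]^n$, so $|x_i| \leq 1$ for $i \in I$ is automatic and the argument goes through. If stricter bookkeeping is desired, this boundedness can be inserted as an explicit hypothesis on $K$ or arranged by a preliminary localization step before invoking the lemma.
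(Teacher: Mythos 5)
Your covering argument and compactness argument both follow the paper's route. For the covering direction, the paper first picks a representative $n(p)\in N_p$ maximizing $|x_j|$ over $j\in N_p$, and then applies Lemma~\ref{lemma:graph} to the $k+1$ partition indices, whereas you apply Lemma~\ref{lemma:graph} directly to the coordinate indices in $S=\{j\in I : x_j\neq 0\}$ with the preorder $j\preceq j' \iff |x_j|\epsilon^{-\ell(j)}\le|x_{j'}|\epsilon^{-\ell(j')}$. Both preorders are total and transitive, both feed Lemma~\ref{lemma:graph}, and both yield the same inequality $|x_j|\le\epsilon^{\ell(j)-\ell(i)}|x_i|$; your version saves a step. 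Your co-c.e.\ compactness argument, via Propositions~\ref{prop:compArith}, \ref{prop:compComp}, \ref{prop:compCont}, Remark~\ref{rmk:coceClosed}, and Proposition~\ref{prop:compHeineBorel}, is exactly what the paper does in compressed form.

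The ``obstacle'' you flag at the end is real, and it is worth stressing that it is a gap in the lemma's statement, not in your argument: the paper's own proof asserts in display \eqref{eq:KiImage} that $\pi_i(K_i)=\{x\in K : |x_j|\le\epsilon^{\ell(j)-\ell(i)}|x_i|\ \forall j\in I\}$, but this omits the $j=i$ box constraint, which translates under $\pi_i$ (since $y_i=x_i$) to $|x_i|\le 1$. Consequently $K=\bigcup_i\pi_i(K_i)$ can fail: with $n=1$, $I=\{1\}$, $\pi_1=\mathrm{id}$, and $K=\{2\}$, the set $K_1$ is empty while $K$ is not. Your proposed repair, however, is not quite the one the paper needs. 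After a blowing-up the chart domains $U_i=\pi_i^{-1}(U)$ are unbounded in the $y_{I_i}$ coordinates, so one cannot simply assume everything lives in $[-1,1]^n$. What actually saves the application in Lemma~\ref{lemma:completePresBlowup} is that the conclusion one wants there is \eqref{eq:KiCover}, which only concerns $\{x\in K : |x_i|\le\delta_i\ \forall i\in I\}$ with $\delta_i\le 2\epsilon\le 1$ forced by the construction. So the cleanest fix is to weaken the lemma's conclusion to $\{x\in K : |x_i|\le 1\ \forall i\in I\}\subseteq\bigcup_i\pi_i(K_i)\subseteq K$; your argument, combined with the graph step, proves precisely this corrected statement, and it combines with Lemma~\ref{lemma:blowupCoverShrink} to give \eqref{eq:KiCover} unchanged.
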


\begin{proof}
Clearly $\bigcup_{i\in I}\pi_i(K_i) \subseteq K$ since $K_i\subseteq \pi_{i}^{-1}(K)$ for each $i\in I$, so we must prove the reverse inclusion.  Note that for each $i\in I$,
\begin{equation}\label{eq:KiImage}
\pi_i(K_i) = \{x\in K : \text{$|x_j| \leq \epsilon^{\ell(j)-\ell(i)}|x_i|$ for all $j\in I$}\}.
\end{equation}

Now, fix $x\in K$.  For each $p\in\{0,\ldots,k\}$, fix $n(p)\in N_p$ such that $|x_j| \leq |x_{n(p)}|$ for all $j\in N_p$.  Thus
\begin{equation}\label{eq:lnId}
\text{$\ell(n(p)) = p$ for all $p\in\{0,\ldots,k\}$,}
\end{equation}
and
\begin{equation}\label{eq:nlBd}
\text{$|x_j| \leq |x_{n(\ell(j))}|$ for all $j\in I$.}
\end{equation}
Note that for all $p,q\in\{0,\ldots,k\}$, either $|x_{n(p)}| \leq \epsilon^{p-q}|x_{n(q)}|$ or $|x_{n(q)}| \leq \epsilon^{q-p}|x_{n(p)}|$.  Consider the directed graph with nodes $\{0,\ldots,k\}$ and edges specified as follows: for each $p,q\in\{0,\ldots,k\}$, the graph has an edge from $q$ to $p$ if and only if $|x_{n(q)}| \leq \epsilon^{q-p}|x_{n(p)}|$.  This graph satisfies the hypothesis of Lemma \ref{lemma:graph}, so we may fix $p\in\{0,\ldots,k\}$ such that
\begin{equation}\label{eq:pMaxNode}
\text{$|x_{n(q)}| \leq \epsilon^{q-p}|x_{n(p)}|$ for all $q\in\{0,\ldots,k\}$.}
\end{equation}
Statements \eqref{eq:lnId}, \eqref{eq:nlBd}, and \eqref{eq:pMaxNode} imply that for all $j\in I$,
\[
|x_j| \leq |x_{n(\ell(j))}| \leq \epsilon^{\ell(j)-p}|x_{n(p)}| = \epsilon^{\ell(j)-\ell(n(p)))}|x_{n(p)}|,
\]
so $x\in\pi_{n(p)}(K_{n(p)})$.  Therefore $K \subseteq \bigcup_{i\in I}\pi_i(K_i)$.

To finish, suppose that $K$ is co-c.e.\ compact.  Each set $K_i$ is co-c.e.\ closed by Proposition \ref{prop:compCont} and is also bounded since $K$ is bounded and $\pi_i$ acts trivially in the coordinates $y_{I^c}$.  Thus $K_i$ is co-c.e.\ compact by Proposition \ref{prop:compHeineBorel}.
\end{proof}

\begin{lemma}\label{lemma:blowupCoverShrink}
For each $i\in I$ let $A_i \subseteq U_i$ and $\delta_i > 0$, and let
\[
A = \bigcup_{i\in I} \pi_i(A_i).
\]
Then
\[
\{x\in A : \text{$|x_i|\leq\delta_i$ for all $i\in I$}\}
\subseteq
\bigcup_{i\in I} \pi_i\left(\{y\in A_i : |y_i|\leq\delta_i\}\right).
\]
\end{lemma}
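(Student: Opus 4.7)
The plan is to unwind the definitions and exploit a single, very specific feature of the standard charts $\pi_j$: in the $j$-th coordinate, $\pi_j$ acts as the identity, so the condition $|x_j|\leq\delta_j$ on the image automatically pulls back to the condition $|y_j|\leq\delta_j$ on the preimage, regardless of which chart is used to express $x$.

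More precisely, suppose $x$ belongs to the left-hand side, so $x\in A$ and $|x_i|\leq\delta_i$ for every $i\in I$. Since $A=\bigcup_{i\in I}\pi_i(A_i)$, fix some $j\in I$ and $y\in A_j$ with $\pi_j(y)=x$. Recall from the definition of $\pi_j$ that $\pi_j(y)=(y_{I_j^c},y_jy_{I_j})$, and that $j\in I_j^c$ (because $I_j=I\setminus\{j\}$). Reading off the $j$-th coordinate of $\pi_j(y)=x$ therefore gives $y_j=x_j$, so in particular $|y_j|=|x_j|\leq\delta_j$. Hence $y\in\{y'\in A_j:|y'_j|\leq\delta_j\}$, and consequently $x=\pi_j(y)$ lies in $\pi_j(\{y'\in A_j:|y'_j|\leq\delta_j\})$, which is contained in the right-hand side.

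There is no real obstacle here; the main thing to be careful about is keeping the index bookkeeping straight, namely verifying that $j\in I_j^c$ so that the $j$-th coordinate of $\pi_j(y)$ is indeed $y_j$ and not $y_jy_j$ or $y_jy_k$ for some $k\neq j$. Once that observation is made, the inclusion is immediate from the chain of equalities $|y_j|=|x_j|\leq\delta_j$.
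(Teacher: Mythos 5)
Your proof is correct and follows exactly the same route as the paper's: pick any chart $\pi_j$ realizing $x$ as $\pi_j(y)$, note that the $j$-th coordinate is preserved so $y_j = x_j$, and conclude $|y_j|\leq\delta_j$. The only difference is that you spell out the bookkeeping ($j\in I_j^c$) that the paper leaves implicit.
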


\begin{proof}
Let $x\in A$ be such that $|x_i|\leq\delta_i$ for all $i\in I$.  Fix $i\in I$ and $y\in A_i$ such that $\pi_i(y) = x$.  Since $y_i = x_i$, $|y_i| \leq\delta_i$.
\end{proof}

\begin{lemma}\label{lemma:strictTrans}
Consider a $\C$-analytic function $f:U\to\RR$, and let $i\in I$.  Then
\[
\ord_{\pi_{i}^{-1}(C)}(f\circ\pi_i) = \ord_C(f).
\]
\end{lemma}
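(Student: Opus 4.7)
The plan is to prove the two inequalities $\ord_{\pi_{i}^{-1}(C)}(f\circ\pi_i)\ge \ord_C(f)$ and $\ord_{\pi_{i}^{-1}(C)}(f\circ\pi_i)\le \ord_C(f)$ separately, using Proposition \ref{prop:orderAlong} as the main engine together with the explicit formula $\pi_i(y)=(y_{I^c},y_i,y_i y_{I_i})$. Set $d=\ord_C(f)$ and first assume $d<\infty$.

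For the inequality $\ge d$, I would apply Proposition \ref{prop:orderAlong}.1 to write
\[
f(x)=\sum_{\alpha\in\NN^{I}_{d}} x_{I}^{\alpha}\,g_\alpha(x)
\]
for certain $\C$-analytic $g_\alpha:U\to\RR$ with $g_\alpha(x)=\tfrac{1}{\alpha!}\PDn{\alpha}{f}{x_I}(x)$ on $C$. Pulling back by $\pi_i$, each factor $x_j^{\alpha_j}$ with $j\in I_i$ contributes $(y_i y_j)^{\alpha_j}$ while $x_i^{\alpha_i}=y_i^{\alpha_i}$, so a direct count gives $x_{I}^{\alpha}\circ\pi_i= y_i^{|\alpha|} y_{I_i}^{\alpha_{I_i}}=y_i^{d} y_{I_i}^{\alpha_{I_i}}$. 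Hence $(f\circ\pi_i)(y)=y_i^{d}\,h(y)$ with $h(y):=\sum_{\alpha\in\NN^{I}_{d}} y_{I_i}^{\alpha_{I_i}} g_\alpha(\pi_i(y))$. Because $\pi_{i}^{-1}(C)=\{y\in U_i:y_i=0\}$ is the coordinate hyperplane $\{y_i=0\}$, Proposition \ref{prop:orderAlong}.2 applied with $I$ replaced by $\{i\}$ immediately gives $\ord_{\pi_{i}^{-1}(C)}(f\circ\pi_i)\ge d$.

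For the reverse inequality, I would exhibit a single point in $\pi_{i}^{-1}(C)$ at which $h$ does not vanish. Since $\C$ is quasianalytic, the equality $d_1=d_3$ in Proposition \ref{prop:orderAlong}.2 furnishes some $a\in C$ and $\alpha_0\in\NN^{I}_{d}$ with $g_{\alpha_0}(a)\ne 0$. Restricting $h$ to $\pi_{i}^{-1}(a)=\{a_{I^c}\}\times\{0\}\times\RR^{I_i}$ gives
\[
h(a_{I^c},0,y_{I_i})=\sum_{\alpha\in\NN^{I}_{d}} g_\alpha(a)\, y_{I_i}^{\alpha_{I_i}},
\]
and the key observation --- that $|\alpha|=d$ forces $\alpha_i=d-|\alpha_{I_i}|$, so the monomials $y_{I_i}^{\alpha_{I_i}}$ are pairwise distinct as $\alpha$ ranges over $\NN^{I}_{d}$ --- shows that this is a \emph{nonzero} polynomial in $y_{I_i}$. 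Choosing $b_{I_i}\in\RR^{I_i}$ where it is nonzero and setting $b=(a_{I^c},0,b_{I_i})$, one has $\pi_i(b)=a\in U$, so $b\in U_i$ and $b\in\pi_{i}^{-1}(C)$; then Proposition \ref{prop:order}.4 yields $\ord(f\circ\pi_i;b)=\ord(y_i^{d};b)+\ord(h;b)=d$, completing the inequality $\le d$. The remaining case $d=\infty$ I would dispatch by noting that Proposition \ref{prop:orderAlong}.1 then places $f$ in $\langle x_I\rangle^{k}$ for every $k$, so every Taylor coefficient of $f$ at every point of $C$ vanishes; quasianalyticity forces $f\equiv 0$ on each connected component of $U$ meeting $C$, and since $\pi_i$ sends components of $U_i$ meeting $\pi_{i}^{-1}(C)$ into such components, $f\circ\pi_i$ vanishes there too, giving $\ord_{\pi_{i}^{-1}(C)}(f\circ\pi_i)=\infty$.

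The only substantive obstacle is the combinatorial observation underlying the $\le$ direction --- that the monomials $y_{I_i}^{\alpha_{I_i}}$ appearing after restriction are distinct, so that no cancellation can destroy the single nonzero coefficient $g_{\alpha_0}(a)$. Everything else is routine bookkeeping in the pullback formula combined with the order-along-a-submanifold calculus already developed in Section \ref{s:order}.
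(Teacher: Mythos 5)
Your argument is correct, and it rests on the same core observations as the paper's: the pullback identity $x_I^\alpha\circ\pi_i = y_i^{|\alpha|}\,y_{I_i}^{\alpha_{I_i}}$, the fact that distinct $\alpha\in\NN^{I}_{d}$ yield distinct monomials $y_{I_i}^{\alpha_{I_i}}$ (since $\alpha_i = d-|\alpha_{I_i}|$), and the quasianalytic equality $d_1=d_3$ from Proposition~\ref{prop:orderAlong}.2. What differs is the starting formula. You begin from the decomposition $f=\sum_{|\alpha|=d}x_I^\alpha g_\alpha$ of Proposition~\ref{prop:orderAlong}.1 and pull it back through $\pi_i$, so the divisibility $f\circ\pi_i = y_i^d h$ drops out by inspection; the paper instead proves, by an induction on $l$ followed by a Leibniz-type collection of terms, the identity
\[
\frac{1}{l!}\,\PD{l}{(f\circ\pi_i)}{y_i}\Big|_{y_i=0}
= \sum_{\alpha\in\NN^{I}_{l}} \frac{1}{\alpha!}\,\PDn{\alpha}{f}{x_I}(y_{I^c},0)\,y_{I_i}^{\alpha_{I_i}},
\]
reads off that this is identically zero for $l<d$ and a nonzero function for $l=d$, and then invokes $d_1=d_3$ once. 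After unwinding definitions your $h\Restr{y_i=0}$ coincides with the paper's right-hand side at $l=d$, so the two proofs converge on the same object. Your route trades the inductive derivative computation for an appeal to Proposition~\ref{prop:orderAlong}.1 together with a split into the two inequalities, and needs a short separate argument for $\ord_C(f)=\infty$, which the paper's one-shot formulation absorbs silently. Both are sound and of comparable length; yours is perhaps slightly more transparent about \emph{where} the divisibility comes from, which is pleasant given that the very next definition (Definition~\ref{def:strictTrans}) is built on it.
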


\begin{proof}
An induction on $l$ shows that for any $i\in I$ and any $l\in\NN$,
\[
\PD{l}{(f\circ\pi_i)}{y_i}(y) = \sum_{j=0}^{l} {l \choose j}
\sum_{k_1,\ldots,k_j \in I_i}
\frac{\partial^l f}{\partial x_{k_1}\cdots\partial
x_{k_j}\partial x_{i}^{l-j}}\circ\pi_{i}(y) y_{k_1}\cdots y_{k_j}.
\]
By combining like terms, this formula can be rewritten as
\[
\PD{l}{(f\circ\pi_i)}{y_i}(y) = \sum_{j=0}^{l} \frac{l!}{j!(l-j)!}
\sum_{\alpha_{I_i}\in\NN^{I_i} \atop |\alpha_{I_i}| = j}
\frac{j!}{\alpha_{I_i}!} \PDmix{l}{f}{x_{I_i}}{\alpha_{I_i}}{x_i}{l-j}\circ \pi_i(y) y_{I_i}^{\alpha_{I_i}}.
\]
This simplifies to
\begin{equation}\label{eq:diffBlowup}
\frac{1}{l!} \PD{l}{(f\circ\pi_i)}{y_i}(y)
=
\sum_{\alpha\in\NN^{I} \atop |\alpha| = l} \frac{1}{\alpha!} \PDn{\alpha}{f}{x_I}\circ\pi_i(y) y_{I_i}^{\alpha_{I_i}}.
\end{equation}
Setting $y_i = 0$ in \eqref{eq:diffBlowup} gives
\begin{equation}\label{eq:diffBlowupEval}
\left.\frac{1}{l!} \PD{l}{(f\circ\pi_i)}{y_i}(y)\right|_{y_i = 0}
=
\sum_{\alpha\in\NN^{I} \atop |\alpha| = l} \frac{1}{\alpha!} \PDn{\alpha}{f}{x_I}(y_{I^c},0) y_{I_i}^{\alpha_{I_i}}.
\end{equation}
Since $\C$ is quasianalytic, Proposition \ref{prop:orderAlong}.2 shows that \eqref{eq:diffBlowupEval} is identically zero if $l < \ord_C(f)$ and is a nonzero function if $l=\ord_C(f)$.  Thus $\ord_{\pi_{i}^{-1}(C)}(f\circ\pi_i) = \ord_C(f)$.
\end{proof}

\begin{definition}\label{def:strictTrans}
Let $f:U\to\RR$ be $\C$-analytic, and suppose that $m = \ord_C(f) < \infty$.  Lemma \ref{lemma:strictTrans} implies that $m$ is the greatest integer for which there exists a $\C$-analytic function $f'_i:U_{i}\to\RR$ such that
\begin{equation}\label{eq:strictTransform}
f\circ\pi_i(y) = y_{i}^{m} f'_i(y)
\end{equation}
on $U_{i}$.  We call $f'_i$ the {\bf strict transform of $f$ by $\pi_i$}.
\end{definition}

We do not need the following remark, but it explains our choice of terminology in Definition \ref{def:strictTrans}.

\begin{remark}\label{rmk:strictTransGlobal}
Consider the situation of Remark \ref{rmk:blowupGlobal} and Definition \ref{def:strictTrans}. A calculation (which we omit) shows that for all $i,j\in I$, $f'_i\circ\psi_i(x,\xi)$ and $f'_j\circ\psi_j(x,\xi)$ are equivalent up to a unit on $U'_i\cap U'_j$.  So $\{f'_i\circ\psi_i:U'_i\to\RR\}_{i\in I}$ generates a principle ideal sheaf on $U'$.  This principle ideal sheaf is called the {\bf strict transform of $f$ by $\Pi$}.
\end{remark}

\begin{lemma}\label{lemma:STHi}
Fix distinct $i,j\in I$ and a positive integer $m$.  Let $f:U\to\RR$ be a $\C$-analytic function such that $\ord(f;x) = m$ for all $x\in C$, and such that $\PD{m-1}{f}{x_j}(x) = 0$ for all $x\in U$ with $x_j = 0$.  Then $\PD{m-1}{f'_i}{y_j}(y) = 0$ for all $y\in U_i$ with $y_j = 0$.
\end{lemma}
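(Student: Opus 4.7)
The plan is to compute $\partial^{m-1}/\partial y_j^{m-1}$ of both sides of the identity $f\circ\pi_i(y) = y_i^m f'_i(y)$ and then match them up. The key observation that makes this easy is that $j\in I_i$ (since $j\neq i$), so under $\pi_i$ we have $x_j = y_i y_j$, and among all of the $x$-coordinates expressed as functions of $y$, the coordinate $y_j$ occurs only in $x_j$. Consequently, when one differentiates $f\circ\pi_i$ with respect to $y_j$, the chain rule collapses to a single term: an induction on $k$ shows
\[
\frac{\partial^k(f\circ\pi_i)}{\partial y_j^k}(y) \;=\; y_i^{\,k}\,\frac{\partial^k f}{\partial x_j^k}(\pi_i(y))
\]
for every $k\geq 0$.

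Next I would differentiate the right-hand side $y_i^m f'_i(y)$ with respect to $y_j$. Since $y_i$ is independent of $y_j$, the factor $y_i^m$ passes through the derivative, giving
\[
\frac{\partial^{m-1}(y_i^m f'_i)}{\partial y_j^{m-1}}(y) \;=\; y_i^{\,m}\,\frac{\partial^{m-1} f'_i}{\partial y_j^{m-1}}(y).
\]
Equating the two expressions at $k=m-1$ and cancelling one power of $y_i$ (on the open dense set $\{y_i\neq 0\}$) yields
\[
y_i\,\frac{\partial^{m-1} f'_i}{\partial y_j^{m-1}}(y) \;=\; \frac{\partial^{m-1} f}{\partial x_j^{m-1}}(\pi_i(y)).
\]
(The function $f'_i$ itself is well defined with the correct power of $y_i$ because the hypothesis $\ord(f;x)=m$ on $C$ gives $\ord_C(f)=m$, matching Lemma \ref{lemma:strictTrans} and Definition \ref{def:strictTrans}.)

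Now suppose $y_j = 0$. Then $\pi_i(y)$ has its $j$-th coordinate equal to $y_iy_j = 0$, so by the hypothesis on $f$ the right-hand side vanishes, yielding $y_i\,\partial^{m-1}f'_i/\partial y_j^{m-1}(y) = 0$ on $\{y_j=0\}\cap U_i$. Dividing by $y_i$ shows that $\partial^{m-1}f'_i/\partial y_j^{m-1}$ vanishes on the set $\{y\in U_i : y_j = 0,\ y_i\neq 0\}$. The main (and mild) obstacle is the final extension from this set to all of $\{y_j=0\}\cap U_i$: this subset is dense in the hyperplane $\{y_j=0\}\cap U_i$ because $\{y_i = 0, y_j = 0\}$ has codimension $2$ in $U_i$, and since $\partial^{m-1}f'_i/\partial y_j^{m-1}$ is continuous (indeed $\C$-analytic) on $U_i$, the vanishing propagates by continuity to all of $\{y\in U_i : y_j = 0\}$, which is the desired conclusion.
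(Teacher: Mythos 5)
Your proof is correct, and it genuinely differs from the paper's in how the cancellation is handled. You both start from the same identity obtained by differentiating $y_i^m f'_i(y) = f\circ\pi_i(y)$ a total of $m-1$ times in $y_j$ (using that $x_j = y_i y_j$ is the only coordinate of $\pi_i$ that depends on $y_j$), namely
\[
y_i^m\,\frac{\partial^{m-1}f'_i}{\partial y_j^{m-1}}(y) = y_i^{m-1}\,\frac{\partial^{m-1}f}{\partial x_j^{m-1}}(\pi_i(y)),
\]
which is the paper's equation (5.4). The paper then invokes the mean value theorem to factor $\partial^{m-1}f/\partial x_j^{m-1}(x) = x_j\cdot\partial^m f/\partial x_j^m(\ldots,\xi_j(x),\ldots)$; since $x_j = y_iy_j$, this yields one more power of $y_i$ on the right, so $y_i^m$ cancels cleanly and one gets an identity of the form $\partial^{m-1}f'_i/\partial y_j^{m-1} = y_j\cdot(\ldots)$, which vanishes at $y_j=0$ with no residual $y_i$ factor. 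You instead divide through by $y_i^{m-1}$ directly (a small wording slip: you are cancelling $m-1$ powers of $y_i$, not one), leaving a residual $y_i$ on the left, set $y_j=0$ to make the right side vanish, and handle the locus $\{y_i=0\}$ by the density of $\{y_j=0,\,y_i\neq 0\}$ in $\{y_j=0\}\cap U_i$ together with continuity of the $\C$-analytic function $\partial^{m-1}f'_i/\partial y_j^{m-1}$. Both are valid; yours avoids the MVT step at the cost of a topological argument at the end, while the paper's MVT step buys a pointwise identity from which the conclusion is immediate. (One very minor point: the transition from equality on $\{y_i\neq 0\}$ to the statement "$y_i\,\partial^{m-1}f'_i/\partial y_j^{m-1}=0$ on $\{y_j=0\}\cap U_i$" implicitly uses the same density-and-continuity reasoning you spell out for the final step; it would be cleaner to state that the displayed identity holds on all of $U_i$ by continuity before specializing to $y_j=0$.)
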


\begin{proof}
Note that $y_{i}^{m} f'_i(y) = f\circ\pi_i(y) = f\left(y_{I_{i}^{c}},y_i y_{I_i}\right)$ on $U_i$.  Differentiating gives
\begin{equation}\label{eq:fiDiff}
y_{i}^{m}\PD{m-1}{f'_i}{y_j}(y) = y_{i}^{m-1}\PD{m-1}{f}{x_j}\left(y_{I_{i}^{c}},y_i y_{I_i}\right).
\end{equation}
Because $\PD{m-1}{f}{x_j}(x) = 0$ for all $x\in U$ with $x_j=0$, the mean value theorem shows that in some neighborhood of $\{x\in U : x_j = 0\}$ we have
\begin{equation}\label{eq:fMVT}
\PD{m-1}{f}{x_j}(x) = x_j\PD{m}{f}{x_j}(x_1,\ldots,x_{j-1},\xi_j(x),x_{j+1},\ldots,x_n),
\end{equation}
for some $\xi_j(x)$ which is between $x_j$ and $0$ when $x_j\neq 0$ and which equals zero when $x_j = 0$.  By writing $x = \pi_i(y)$ in \eqref{eq:fMVT} (so $x_j = y_i y_j$), substituting into \eqref{eq:fiDiff}, and canceling $y_{i}^{m}$, we get
\begin{equation}\label{eq:fiMVT}
\PD{m-1}{f'_i}{y_j}(y) = y_j \PD{m}{f}{x_j}\left(y_{I_{i}^{c}}, y_i y_{I\setminus\{i,j\}}, \xi_j(y_{I_{i}^{c}},y_i y_{I_i})\right)
\end{equation}
on $U_i$.  Setting $y_j=0$ in \eqref{eq:fiMVT} gives $\PD{m-1}{f'_i}{y_j}(y)\Restr{y_j = 0} = 0$.
\end{proof}

\begin{lemma}\label{lemma:derivRestr}
Let $U\subseteq\RR^n$ be open.  Let $i\in\{1,\ldots,n\}$, write $x = (x',x_i)$ for coordinates on $\RR^n$, where $x' = (x_1,\ldots,x_{i-1},x_{i+1},\ldots,x_n)$, and put $U' = \{x' : (x',0)\in U\}$.  Suppose that $f:U\to\RR$ is a $C^{p+d}$ function such that $\PD{j}{f}{x_i}(x',0) = 0$ for all $j\in\{0,\ldots,d-1\}$ and all $x'\in U'$.  Let $g:U\to\RR$ be the unique $C^p$ function satisfying
\[
f(x) = x_{i}^{d} g(x)
\]
on $U$ (the existence and uniqueness of $g$ follows from repeated application of Proposition \ref{prop:compDivVar}, ignoring the computability assumptions).  Then for all $\alpha\in\NN^n$ with $|\alpha|\leq p$,
\[
\frac{1}{\alpha_i!} \PDn{\alpha}{g}{x}(x',0)
=
\frac{1}{(\alpha_i+d)!} \frac{\partial^{|\alpha|+d} f}{\partial x^{\alpha+d e_i}}(x',0)
\]
on $U'$, where $ e_i$ is the $i$th standard unit vector in $\NN^n$.
\end{lemma}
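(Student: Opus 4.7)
The plan is to derive the identity directly from the defining relation $f(x) = x_i^d g(x)$ via the Leibniz rule, using the hypothesis in the mildest possible way (it was already consumed in guaranteeing the existence of $g$).

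First I would separate the differentiation into the ``tangential'' variables $x'$ and the normal variable $x_i$. Writing $\beta = \alpha - \alpha_i e_i$, so that $\beta_i = 0$, the factor $x_i^d$ is constant in $x'$, hence differentiating $f(x)=x_i^d g(x)$ with respect to $x^{\beta}$ yields $\PDn{\beta}{f}{x}(x) = x_i^d \, \PDn{\beta}{g}{x}(x)$ on $U$. This reduces the problem to computing $(\alpha_i+d)$ derivatives of a product in the single variable $x_i$.

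Next, applying the Leibniz rule $(\alpha_i+d)$ times in $x_i$ to this last identity gives
\[
\PDn{\alpha+d e_i}{f}{x}(x)
= \sum_{k=0}^{\alpha_i+d} \binom{\alpha_i+d}{k} \PD{k}{(x_i^d)}{x_i}\, \PDn{\beta+(\alpha_i+d-k)e_i}{g}{x}(x).
\]
Now $\PD{k}{(x_i^d)}{x_i} = \frac{d!}{(d-k)!}\, x_i^{d-k}$ for $k\leq d$ and vanishes identically for $k>d$. Evaluating at $x_i=0$ annihilates every term with $k<d$ (because of the factor $x_i^{d-k}$) and every term with $k>d$; only $k=d$ survives, giving
\[
\PDn{\alpha+d e_i}{f}{x}(x',0)
= \binom{\alpha_i+d}{d}\, d!\, \PDn{\alpha}{g}{x}(x',0)
= \frac{(\alpha_i+d)!}{\alpha_i!}\, \PDn{\alpha}{g}{x}(x',0).
\]
Dividing by $(\alpha_i+d)!$ yields the claimed formula.

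There is really no obstacle here: the only thing to watch is that the manipulations are legitimate at the claimed regularity, namely that $\PDn{\alpha+d e_i}{f}{x}$ exists and is continuous (guaranteed by $f\in C^{p+d}$ since $|\alpha|\le p$), and that $\PDn{\alpha}{g}{x}$ exists (guaranteed by $g\in C^p$). The hypothesis $\PD{j}{f}{x_i}(x',0)=0$ for $j<d$ is only used implicitly, to invoke the iterated version of Proposition \ref{prop:compDivVar} that produces the unique $g$.
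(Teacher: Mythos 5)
Your proof is correct and follows essentially the same route as the paper's: take the tangential $x'$-derivatives of $f=x_i^d g$ first (so the factor $x_i^d$ passes through unchanged), then apply Leibniz $\alpha_i+d$ times in $x_i$, then observe that at $x_i=0$ only the $k=d$ term of the sum survives. The bookkeeping $\binom{\alpha_i+d}{d}\,d! = (\alpha_i+d)!/\alpha_i!$ is right.

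One remark is in order on your closing paragraph, where you list the regularity checks and conclude there is ``really no obstacle.'' The intermediate Leibniz terms with $k<d$ involve $\PDn{\beta+(\alpha_i+d-k)e_i}{g}{x}$, a derivative of $g$ of total order $|\alpha|+d-k$, which can exceed $p$ (for $|\alpha|=p$ and $k=0$ it is $p+d$). Since $g$ is only guaranteed $C^p$, those derivatives need not exist at all, even though $x_i^d\PDn{\beta}{g}{x}=\PDn{\beta}{f}{x}$ is certainly $C^{\alpha_i+d}$ in $x_i$ and hence the left side of your Leibniz identity is well-defined. One cannot discard an undefined term just because it carries a factor of $x_i^{d-k}$ that vanishes at $x_i=0$. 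The paper's own displayed computation has the identical gap, and in every place this lemma is invoked the functions are $\C$-analytic and hence $C^\infty$, so it is harmless in context --- but it is precisely the one obstacle your checklist passes over. A proof valid at exactly the stated regularity compares the degree-$(\alpha_i+d)$ Taylor expansion in $x_i$ at $x_i=0$ of $\PDn{\beta}{f}{x}$ (a $C^{p+d-|\beta|}$ function, so this expansion exists since $\alpha_i+d\le p+d-|\beta|$) with $x_i^d$ times the degree-$\alpha_i$ Taylor polynomial of $\PDn{\beta}{g}{x}$ (a $C^{p-|\beta|}$ function, and $\alpha_i\le p-|\beta|$); both remainders are $o(x_i^{\alpha_i+d})$, and matching coefficients of $x_i^{\alpha_i+d}$ gives the identity without ever differentiating $g$ beyond order $|\alpha|$.
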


\begin{proof}
Let $\alpha\in\NN^n$ with $|\alpha|\leq p$, and write $\alpha' = (\alpha_1,\ldots,\alpha_{i-1},\alpha_{i+1},\ldots,\alpha_n)$.  Then on $U$,
\begin{eqnarray*}
\frac{\partial^{|\alpha|+d} f}{\partial x^{\alpha+d e_i}}(x)
    & = &
    \PD{\alpha_i+d}{}{x_i}\left(x_{i}^{d} \PDn{\alpha'}{g}{(x')}(x)\right)
\\
    & = &
    \sum_{j=0}^{\alpha_i+d} {\alpha_i + d \choose j}
    \frac{\partial^{|\alpha'|+j} g}{\partial (x')^{\alpha'} \partial x_{i}^{j}}(x)
    \,\PD{\alpha_i+d-j}{(x_{i}^{d})}{x_n}
\\
    & = &
    \sum_{j=\alpha_i}^{\alpha_i+d} \frac{(\alpha_i+d)!}{j!(\alpha_i+d-j)!}
    \frac{\partial^{|\alpha'|+j} g}{\partial (x')^{\alpha'} \partial x_{i}^{j}}(x)
    \frac{d!}{(j-\alpha_i)!} x_{i}^{j-\alpha_i}.
\end{eqnarray*}
When this is evaluated at $x_i=0$, all terms with  $j > \alpha_i$ are zero, so this gives
\[
\frac{\partial^{|\alpha|+d} f}{\partial x^{\alpha+d e_i}}(x',0)
=
\frac{(\alpha_i+d)!}{\alpha_i!}\PDn{\alpha}{g}{x}(x',0)
\]
on $U'$.
\end{proof}

\begin{lemma}\label{lemma:STorderBound}
Let $f:U\to\RR$ be $\C$-analytic, and suppose that $\ord(f;x) = m < \infty$ for all $x\in C$. Let $i\in I$ and $b\in\pi_{i}^{-1}(C)$, and write $a = \pi_i(b)$.  Choose $\alpha\in\NN^{I}$ such that
\begin{equation}\label{eq:alpha}
\text{$|\alpha| = m$ and $\PDn{\alpha}{f}{x_I}(a)\neq 0$.}
\end{equation}
(Such an $\alpha$ exists by Proposition \ref{prop:orderAlong}.3.)
Then
\[
\PDn{\alpha_{I_i}}{f'_i}{y_{I_i}}(b)\neq 0
\]
if either of the following two conditions hold:
\begin{enumerate}{\setlength{\itemsep}{3pt}
\item[]\hspace*{-10pt}
\emph{Condition 1}:
$\alpha_i$ is minimal with respect to $\alpha$ satisfying \eqref{eq:alpha}.

\item[]\hspace*{-10pt}
\emph{Condition 2}:
$b_I = 0$.
}
\end{enumerate}
\end{lemma}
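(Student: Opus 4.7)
The plan is to restrict $f'_i$ to the exceptional divisor $\{y_i=0\}$, write this restriction as an explicit polynomial in the coordinates $y_{I_i}$, and then check that differentiating by $\alpha_{I_i}$ and evaluating at $b$ collapses to a single nonzero term under either hypothesis.

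First, differentiating the defining relation $f\circ\pi_i(y)=y_i^m f'_i(y)$ exactly $m$ times in $y_i$ via Leibniz shows that $\PD{k}{(f\circ\pi_i)}{y_i}(y)\eval{y_i=0}=0$ for $k<m$ and that $\PD{m}{(f\circ\pi_i)}{y_i}(y)\eval{y_i=0}=m!\,f'_i(y_{I^c},0,y_{I_i})$. Combining this with the identity \eqref{eq:diffBlowupEval} (established inside the proof of Lemma \ref{lemma:strictTrans}) at $l=m$ yields
\[
f'_i(y_{I^c},0,y_{I_i})=\sum_{\alpha'\in\NN^{I}_{m}}\frac{1}{\alpha'!}\,\PDn{\alpha'}{f}{x_I}(y_{I^c},0)\,y_{I_i}^{\alpha'_{I_i}}.
\]
Since any $b\in\pi_{i}^{-1}(C)$ satisfies $b_{I^c}=a_{I^c}$ and $b_i=0$, applying $\PDn{\alpha_{I_i}}{}{y_{I_i}}$ to this polynomial and then evaluating at $b$ gives
\[
\PDn{\alpha_{I_i}}{f'_i}{y_{I_i}}(b)=\sum_{\substack{\alpha'\in\NN^{I}_{m}\\ \alpha'_{I_i}\geq\alpha_{I_i}}}\frac{\alpha'_{I_i}!}{\alpha'!\,(\alpha'_{I_i}-\alpha_{I_i})!}\,\PDn{\alpha'}{f}{x_I}(a)\,b_{I_i}^{\,\alpha'_{I_i}-\alpha_{I_i}}.
\]

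The combinatorial core is then to check that under either hypothesis only the summand $\alpha'=\alpha$ survives. Under Condition 2, $b_{I_i}=0$ annihilates every term with $\alpha'_{I_i}\neq\alpha_{I_i}$, and $|\alpha'|=m=|\alpha|$ then forces $\alpha'=\alpha$. Under Condition 1, a nonvanishing summand requires $\PDn{\alpha'}{f}{x_I}(a)\neq 0$, so the minimality of $\alpha_i$ among all such multi-indices yields $\alpha'_i\geq\alpha_i$ and hence $|\alpha'_{I_i}|\leq|\alpha_{I_i}|$; combined with the componentwise constraint $\alpha'_{I_i}\geq\alpha_{I_i}$ coming from the differentiation, this again forces $\alpha'=\alpha$. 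In both cases the sum collapses to $\tfrac{1}{\alpha_i!}\PDn{\alpha}{f}{x_I}(a)$, which is nonzero by the hypothesis \eqref{eq:alpha} on $\alpha$.

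The argument is essentially bookkeeping once the Taylor slice of $f'_i$ along the exceptional divisor has been written down, so no serious obstacle is anticipated. The only place requiring a small amount of care is the simultaneous use of the two inequalities $\alpha'_{I_i}\geq\alpha_{I_i}$ and (under Condition 1) $\alpha'_i\geq\alpha_i$, which in the presence of $|\alpha'|=m$ squeeze $\alpha'$ all the way down to $\alpha$.
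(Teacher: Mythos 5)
Your proof is correct. It arrives at the same summation formula for $\PDn{\alpha_{I_i}}{f'_i}{y_{I_i}}(b)$ as the paper's, and the final combinatorial collapse under Conditions 1 and 2 (the componentwise bound $\alpha'_{I_i}\geq\alpha_{I_i}$ squeezed against the total-degree constraint $|\alpha'|=m$, together with minimality of $\alpha_i$ in the one case and $b_{I_i}=0$ in the other) is the same, down to the identification of the surviving term $\tfrac{1}{\alpha_i!}\PDn{\alpha}{f}{x_I}(a)$. The route to that summation formula differs modestly. The paper first converts the target into the full mixed derivative $\PDn{\xi}{(f\circ\pi_i)}{y_I}(b)$ with $\xi_{I_i}=\alpha_{I_i}$, $\xi_i=m$ via Lemma \ref{lemma:derivRestr}, and then re-derives the needed identity by composing the Taylor series of $x_I\mapsto f(a_{I^c},x_I)$ with the blowup chart and applying the multinomial theorem. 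You instead observe from Leibniz applied to $f\circ\pi_i=y_i^m f'_i$ that $f'_i\eval{y_i=0}=\tfrac{1}{m!}\PD{m}{(f\circ\pi_i)}{y_i}\eval{y_i=0}$, and plug in the ready-made identity \eqref{eq:diffBlowupEval} from the proof of Lemma \ref{lemma:strictTrans}, obtaining $f'_i$ along the exceptional divisor as an explicit polynomial in $y_{I_i}$ with $y_{I^c}$-dependent coefficients; differentiating term-by-term in $y_{I_i}$ and evaluating at $b$ (where $b_{I^c}=a_{I^c}$, $b_i=0$) is then immediate. Your version reuses work and avoids both Lemma \ref{lemma:derivRestr} and the Taylor-series/multinomial step; the paper's version is more self-contained in not relying on an equation buried inside an earlier proof.
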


\begin{proof}
Fix $\alpha\in\NN^I$ satisfying \eqref{eq:alpha}.  Define $\xi\in\NN^I$ by $\xi_{I_i} = \alpha_{I_i}$ and $\xi_i = m$.  Assuming that either Condition 1 or Condition 2 holds, we will prove that $\PDn{\xi}{(f\circ\pi_i)}{y_I}(b)\neq 0$.  This will suffice to prove the lemma because $f\circ\pi_i(y) = y_{i}^{m}f'_i(y)$, and hence
\[
\frac{1}{\alpha_{I_i}!}\PDn{\alpha_{I_i}}{f'_i}{y_{I_i}}(b)
=
\frac{1}{\xi!} \PDn{\xi}{(f\circ\pi_i)}{y_I}(b)
\]
by Lemma \ref{lemma:derivRestr}.

Note that $a_{I^c} = b_{I^c}$, $a_I = 0$, and $b_i = 0$.  The Taylor series for $x_{I}\mapsto f(a_{I^c},x_{I})$ at $0$ is given by
\[
\sum_{\beta\in\NN^I} \frac{1}{\beta!} \PDn{\beta}{f}{x_I}(a) x_{I}^{\beta}.
\]
By formally composing this series with $(y_i(y_{I_i} + b_{I_i}), y_i)$, which are the $I$-components of
\[
\pi_i(a_{I^c},y_{I_i} + b_{I_i}, y_i) = (a_{I^c}, y_i(y_{I_i} + b_{I_i}), y_i),
\]
we obtain the Taylor series for $y_I \mapsto f\circ\pi_i(a_{I_c},y_I)$ at $b_I$.  So
\begin{eqnarray}\label{eq:TaylorSeries1}
\sum_{\beta\in\NN^I} \frac{1}{\beta!}\PDn{\beta}{(f\circ\pi_i)}{y_I}(b) y_{I}^{\beta}
    & = &
    \sum_{\beta\in\NN^I} \frac{1}{\beta!} \PDn{\beta}{f}{x_I}(a)
    \left(y_{I_i} + b_{I_i}\right)^{\beta_{I_i}} y_{i}^{|\beta|}
    \nonumber\\
    & = &
    \sum_{\beta\in \NN^I} \frac{1}{\beta!} \PDn{\beta}{f}{x_I}(a)
    \left(
    \sum_{\gamma\in\NN^{I_i} \atop \gamma\leq\beta_{I_i}}
    \frac{\beta_{I_i}!}{\gamma!(\beta_{I_i} - \gamma)!}
    y_{I_i}^{\gamma} b_{I_i}^{\beta_{I_i} - \gamma}
    \right)
    y_{i}^{|\beta|}
    \nonumber\\
    & = &
    \sum_{\delta\in\NN^I}
    \left(
    \sum_{\beta\in\NN^I \atop |\beta| = \delta_i, \beta_{I_i} \geq \delta_{I_i}}
    \frac{1}{\beta_i! \delta_{I_i}! (\beta_{I_i} - \delta_{I_i})!}
    \PDn{\beta}{f}{x_I}(a) b_{I_i}^{\beta_{I_i} - \delta_{I_i}}
    \right)
    y_{I}^{\delta},
\end{eqnarray}
where the second equality is by the multinomial theorem, and the third equality comes from reindexing by setting $\delta_{I_i} = \gamma$ and $\delta_i = |\beta|$.

First suppose that Condition 1 holds. To show that $\PDn{\xi}{(f\circ\pi_i)}{y_I}(b) \neq 0$, it suffices to show that the coefficient of $y_{I}^{\xi}$ in \eqref{eq:TaylorSeries1} is nonzero.  To analyze this coefficient, consider $\beta\in\NN^I$ with $|\beta| = \xi_i$ and $\beta_{I_i} \geq \xi_{I_i}$.  The definition of $\xi$ shows that $|\beta| = m$ and $\beta_{I_i} \geq \alpha_{I_i}$.  There are now two cases to consider:
\begin{enumerate}{\setlength{\itemsep}{3pt}
\item
First suppose that $\beta_{I_i} = \alpha_{I_i}$.  Since $|\beta| = |\alpha|$, it follows that $\beta_i = \alpha_i$.  So $\beta = \alpha$.

\item
Now suppose that $\beta_{I_i} \neq \alpha_{I_i}$.  Since $\beta_{I_i} \geq \alpha_{I_i}$, this means that $|\beta_{I_i}| > |\alpha_{I_i}|$, so
$\beta_i = m - |\beta_{I_i}| < m - |\alpha_{I_i}| = \alpha_i$.  By the minimality of $\alpha_i$, $\PDn{\beta}{f}{x_I}(a) = 0$.
}\end{enumerate}
These two cases and \eqref{eq:TaylorSeries1} show that
\[
\frac{1}{\xi!}\PDn{\xi}{(f\circ\pi_i)}{y_I}(b)
=
\frac{1}{\alpha!}\PDn{\alpha}{f}{x_I}(a)
\neq 0,
\]
as desired.

Now suppose that Condition 2 holds.  Equation \eqref{eq:TaylorSeries1} simplifies to
\begin{eqnarray}\label{eq:TaylorSeries2}
\sum_{\beta\in\NN^I} \frac{1}{\beta!}\PDn{\beta}{(f\circ\pi_i)}{y_I}(b) y_{I}^{\beta}
    & = &
    \sum_{\beta\in\NN^I} \frac{1}{\beta!} \PDn{\beta}{f}{x_I}(a)
    y_{I_i}^{\beta_{I_i}} y_{i}^{|\beta|}
    \nonumber\\
    & = &
    \sum_{\delta\in\NN^I \atop \delta_i \geq |\delta_{I_i}|} \frac{1}{(\delta_i - |\delta_{I_i}|)!\delta_{I_i}!}
    \frac{\partial^{\delta_i} f}{\partial x_{I_i}^{\delta_{I_i}}
    \partial x_{i}^{\delta_i - |\delta_{I_i}|}}(a) y_{I}^{\delta},
\end{eqnarray}
where the second equality comes from reindexing by setting $\delta_{I_i} = \beta_{I_i}$ and $\delta_i = |\beta|$.  Comparing the coefficients of $y_{I}^{\xi}$ in \eqref{eq:TaylorSeries2} shows that
\[
\frac{1}{\xi!}\PDn{\xi}{(f\circ\pi_i)}{y_I}(b)
=
\frac{1}{\alpha!}\PDn{\alpha}{f}{x_I}(a)\neq 0.
\]
\end{proof}

\section*{\bf Part III: Lifting}

Throughout Part III, we fix an IF-system $\C$.  Section \ref{s:fiberProd} proves a fiber product lemma. Section \ref{s:blowupSet} discusses blowup sets, which are a kind of set that occurs naturally as the domains of our maps in our resolution procedure.  Section \ref{s:lifting} defines the notion of a lifting, which is a nonsingular representation of a family of $\C$-analytic functions defined on an open blowup set.  The section concludes with the lifting lemmas, which are a collection of results showing how liftings are preserved under various operations.

\section{The Fiber Product Lemma}\label{s:fiberProd}

\begin{definition}\label{def:goodDir}
A coordinate projection $\Pi:\RR^n\to\RR^d$ is called a {\bf good direction} for a $\C$-analytic submanifold $M$ of $\RR^n$ if $\Pi$ defines a $\C$-analytic isomorphism from $M$ onto an open subset of $\RR^d$.  (Of course, this notion also applies to coordinate projections $\Pi_E:\RR^n\to\RR^E$, where $E\subseteq\{1,\ldots,n\}$.)
\end{definition}

\begin{lemma}\label{lemma:goodDir}
Suppose that $M$ is a $\C$-analytic submanifold of $\RR^n$ defined by
\[
M = \{x\in U : f(x) = 0\}
\]
for some open set $U\subseteq\RR^n$ and $\C$-analytic function $f:U\to\RR^{n-d}$ such that $\rank\PD{}{f}{x} = n-d$ on $M$.  Fix complementary coordinate projections $\Pi:\RR^n\to\RR^d$ and $\Pi':\RR^n\to\RR^{n-d}$, and write $x = (y,y')$, where $\Pi(x) = y$ and $\Pi'(x) = y'$.
\begin{enumerate}{\setlength{\itemsep}{3pt}
\item
If $\det\PD{}{f}{y'}(x) \neq 0$ on $M$, and if $M = \{x\in U : g(x) = 0\}$ for some $\C$-analytic function $g:U\to\RR^{n-e}$ such that $\rank\PD{}{g}{x} = n-e$ on $M$, then $e = d$ and $\det\PD{}{g}{y'} \neq 0$ on $M$.

\item
The projection $\Pi$ is a good direction for $M$ if and only if $\Pi$ is injective on $M$ and $\det\PD{}{f}{y'} \neq 0$ on $M$.
}\end{enumerate}
\end{lemma}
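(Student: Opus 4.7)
The plan is to translate the condition $\det\PD{}{f}{y'}(x)\neq 0$ into an intrinsic statement about the tangent space $T_xM$, and then to read off both parts from that translation. Since $T_xM = \ker\PD{}{f}{x}(x)$, a tangent vector $(v_y, v_{y'})$ lies in $\ker\Pi$ iff $v_y = 0$, and such vectors $(0, v_{y'})\in T_xM$ are exactly those with $\PD{}{f}{y'}(x) v_{y'} = 0$. Because the rank hypothesis on $f$ gives $\dim T_xM = d$, and $\PD{}{f}{y'}(x)$ is a square $(n-d)\times(n-d)$ matrix, a short linear-algebra check should show that $\det\PD{}{f}{y'}(x)\neq 0$ is equivalent to $d\Pi_x : T_xM \to \RR^d$ being a linear isomorphism.

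For (1), I would first note that the rank hypotheses on $f$ and on $g$ force $\dim M = d$ and $\dim M = e$ respectively at each point of $M$, so $e = d$. Applying the translation above to both defining equations, the conditions $\det\PD{}{f}{y'}(x)\neq 0$ and $\det\PD{}{g}{y'}(x)\neq 0$ both reduce to the same intrinsic assertion that $d\Pi_x : T_xM \to \RR^d$ is an isomorphism, and so the hypothesis on $f$ transfers to the conclusion for $g$.

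For (2), the forward direction is almost automatic: if $\Pi|_M$ is a $\C$-analytic isomorphism onto an open subset of $\RR^d$, then $\Pi|_M$ is injective and $d\Pi_x$ is a linear isomorphism at each $x\in M$, so the translation yields $\det\PD{}{f}{y'}(x)\neq 0$. For the converse, assuming injectivity of $\Pi|_M$ together with $\det\PD{}{f}{y'}\neq 0$ on $M$, I would invoke Remark \ref{rmk:IFsystem}.4 (closure of $\C$-analytic functions under implicitly defined functions) to produce, in a neighborhood of each $x_0\in M$, a $\C$-analytic function $h$ on an open subset of $\RR^d$ whose graph agrees with $M$ near $x_0$. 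Thus $\Pi|_M$ is a local $\C$-analytic isomorphism, hence an open map; combined with the global injectivity, this promotes $\Pi|_M$ to a $\C$-analytic isomorphism onto the open set $\Pi(M)\subseteq\RR^d$, making $\Pi$ a good direction.

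The main obstacle is really just the tangent-space translation in the first paragraph; once that equivalence between $\det\PD{}{f}{y'}(x)\neq 0$ and the coordinate-free condition on $d\Pi_x$ is pinned down, both parts drop out with only bookkeeping, the one subtlety being the appeal to Remark \ref{rmk:IFsystem}.4 to ensure that the local inverses produced by the implicit function theorem remain within the $\C$-analytic category.
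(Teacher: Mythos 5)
Your proposal is correct and takes a genuinely different route from the paper's proof, most noticeably in part (1). The paper argues there by factoring $g$ through $f$: after fixing a neighborhood $V$ of $M$ on which $\det\PD{}{f}{y'}\neq 0$, it invokes Proposition~\ref{prop:orderAlong}.1 to write $g_i = \sum_j h_{i,j} f_j$ with $h_{i,j}\in\C[V]$, concludes $\PD{}{g}{x}(a) = h(a)\PD{}{f}{x}(a)$ with $h(a)$ invertible by a rank comparison, and then reads off $\det\PD{}{g}{y'}(a) = \det h(a)\det\PD{}{f}{y'}(a)\neq 0$. You instead exploit the coordinate-free characterization: both $\ker\PD{}{f}{x}(x)$ and $\ker\PD{}{g}{x}(x)$ coincide with $T_xM$ once $d=e$ is known, and $\det\PD{}{f}{y'}(x)\neq 0$ is, by the short linear-algebra argument you sketch, precisely the condition $T_xM\cap\ker\Pi = \{0\}$, which is intrinsic to $(M,\Pi)$ and hence symmetric in $f$ and $g$. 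This avoids the algebraic factorization $g = hf$ (and the ambient coordinate change it secretly requires in order to apply Proposition~\ref{prop:orderAlong}.1 to a non-coordinate manifold), at the cost of needing the standard fact $T_xM = \ker\PD{}{f}{x}(x)$ explicitly. For part (2) the forward direction you give is also a bit more direct than the paper's: rather than constructing the inverse of $\Pi\Restr{M}$ as a graph $y\mapsto(y,g(y))$ and then feeding $y'-g(y)$ into Statement~1, you simply note that $d(\Pi\Restr{M})_x$ is an isomorphism and invoke the same tangent-space translation. The converse direction, via the $\C$-analytic implicit function theorem (Remark~\ref{rmk:IFsystem}.4) together with global injectivity, matches the paper's. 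Both proofs are sound; yours is more geometric, the paper's is more in the spirit of the algebraic machinery developed in Section~\ref{s:order}.
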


\begin{proof}
We first prove 1.  Write
\[
M = \{x\in U : f(x) = 0\} = \{x\in U : g(x) = 0\},
\]
for $\C$-analytic functions $f = (f_1,\ldots,f_{n-d}):U\to\RR^{n-d}$ and $g = (g_1,\ldots,g_{n-e}):U\to\RR^{n-e}$, where $\det\PD{}{f}{y'}\neq 0$ on $M$ and $\PD{}{g}{x}$ has constant rank $n-e$ on $M$.  Applying the implicit function theorem to $f$ locally about any point of $M$ shows that $M$ has pure dimension $d$, and doing the same for $g$ shows that $M$ has pure dimension $e$, so $d = e$ (by invariance of domain).  Now, fix a neighborhood $V$ of $M$ such that $\det\PD{}{f}{y'}\neq 0$ on $V$. For each $i\in\{1,\ldots,n-d\}$, $g_i(x) = 0$ on $M$, so Proposition \ref{prop:orderAlong}.1 shows that there exist $h_{i,1},\ldots,h_{i,n-d}\in\C[V]$ such that
\[
g_i(x) = \sum_{j=1}^{n-d}h_{i,j}(x) f_j(x)
\]
on $V$.  Thus $g(x) = h(x)f(x)$ on $V$, where $h$ is the matrix of functions $(h_{i,j})_{i,j=1,\ldots,n-d}$, and $f$ and $g$ are written as column vectors.  Fix $a\in M$.  Thus
\[
\PD{}{g}{x}(a) = h(a)\PD{}{f}{x}(a).
\]
Because $\PD{}{g}{x}(a)$ and $\PD{}{f}{x}(a)$ have rank $n-d$, $h(a)$ must be invertible.  Therefore $\PD{}{g}{y'}(a)$ must be invertible, since
\[
\PD{}{g}{y'}(a) = h(a)\PD{}{f}{y'}(a)
\]
and $\PD{}{f}{y'}(a)$ is invertible.  This proves 1.

We now prove 2.  Assume that $\Pi$ is a good direction for $M$.  We may assume that $\Pi(x) = (x_1,\ldots,x_d)$.  The inverse of $\Pi\Restr{M}:M\to\Pi(M)$ is of the form $\Pi(M)\to M:y\mapsto(y,g(y))$ for a $\C$-analytic function $g:\Pi(M)\to\RR^{n-d}$.  Note that $M = \{(y,y')\in U : y' - g(y) = 0\}$ and $\det \PD{}{(y'-g(y))}{y'} = 1$, so $\det\PD{}{f}{y'}\neq 0$ on $M$ by Statement 1.  This proves the forward implication of 2.  The converse follows directly from the implicit function theorem.
\end{proof}

\begin{definition}\label{def:Fmanifold}
Let $\F$ be a $\QQ$-algebra of real-valued $\C$-analytic functions defined on a rational box manifold $X\subseteq\RR^m$; fix $D\subseteq\{1,\ldots,m\}$, an open rational box $A\subseteq\RR^M$, and $a\in\RR^{M^c}$ such that $X = A\times\{a\}$, where $D^c = \{1,\ldots,m\}\setminus D$.  A set $M$ is called an {\bf $\F$-manifold} if there exist $d\in\{0,\ldots,\dim(M)\}$, and injection $\lambda:\{1,\ldots,d\} \to \{1,\ldots,m\}$ with $\im(\lambda)\subseteq D$, and a $\C$-analytic function $f:X\to\RR^{\dim(M)-d}$ such that
\[
M = \{x\in X : f(x) = 0\},
\]
$\rank\PD{}{f}{x_D}(x) = \dim(M)-d$ for all $x\in M$, and $\Pi_\lambda:\RR^n\to\RR^d$ is a good direction on $M$.  We say that $M$ is {\bf defined nonsingularly by $f$}. (We may also say that $M$ is defined nonsingularly by $f(x) = 0$.)
\end{definition}

Therefore by Lemma \ref{lemma:goodDir}, if we extend $\lambda$ to a bijection $\sigma:\{1,\ldots,\dim(M)\}\to D$ and define $\lambda':\{1,\ldots,\dim(M)-d\}\to\{1,\ldots,m\}$ by $\lambda'(i) = \sigma(i+d)$ for all $i$, then $\det\PD{}{f}{x_{\lambda'}}(x) \neq 0$ for all $x\in M$.

\begin{definition}
If $\F$ and $\G$ are $\QQ$-algebras of real-valued functions defined on the
sets $X$ and $Y$, respectively, then their {\bf tensor product}, $\F\otimes\G$,
is the $\QQ$-algebra of functions on $X\times Y$ generated by the functions
$h:X\times Y\to\RR$ of the form
\begin{enumerate}
\item
$h(x,y) = f(x)$ for some $f\in\F$, or

\item
$h(x,y) = g(y)$ for some $g\in\G$.
\end{enumerate}
\end{definition}

Note that in the definition of $\F\otimes\G$, the variables $x$ and $y$ are disjoint.  So if we identify polynomials with rational coefficients with the functions from $\RR^n$ into $\RR$ that they define (so, for example, $\QQ[x_1,x_2]$ and $\QQ[x_4,x_5]$ denote the same $\QQ$-algebra of functions), then we have, for example,
\[
\QQ[x_1,x_2,x_3]\otimes\QQ[x_1,x_2] = \QQ[x_1,x_2,x_3]\otimes\QQ[x_4,x_5] = \QQ[x_1,x_2,x_3,x_4,x_5].
\]

\begin{definition}\label{def:fiberProd}
For any sets $X$, $Y$, and $Z$, if two maps $\varphi:X\to Z$ and $\psi:Y\to Z$
are specified, define the {\bf fiber product of $X$ and $Y$ over $Z$ (with
respect to $\varphi$ and $\psi$)} by
\[
X\times_Z Y := \{(x,y)\in X\times Y : \varphi(x) = \psi(y)\}.
\]
\end{definition}

Note that we have natural projections $X\times_Z Y\to X : (x,y)\mapsto x$ and $X\times_Z Y \to Y : (x,y)\mapsto y$ and that the following diagram commutes:
\[
\xymatrix{
X\times_Z Y \ar[r]\ar[d] & Y \ar[d]^{\psi}\\
X \ar[r]^{\varphi}       & Z \quad.}
\]
\hfill\\

We now describe the situation to be considered in the fiber product lemma.  Consider the diagram,
\begin{equation}\label{eq:fiberProductGiven}
\xymatrix{
    & N \ar[d]^-{\Pi^{N}_{P}} \ar[r]_-{\Pi^{N}_{V}}^-{\isom}
    & V
\\
M \ar[r]_{\Pi^{M}_{P}}^-{\isom}
    & P \ar[r]_-{\Pi^{P}_{U}}^-{\isom}
    & U \quad ,
}
\end{equation}
where $\C$-analytic isomorphisms are denoted by ``isom'' and the sets and maps in \eqref{eq:fiberProductGiven} have the following meaning:
\begin{quote}
$ $\indent Let $\F$ be a $\QQ$-algebra of real-valued $\C$-analytic functions defined on a rational box manifold $X\subseteq\RR^m$, and assume that $\F$ contains all the coordinate variables $x_1,\ldots,x_m$. Let $d\in\{0,\ldots,\dim(X)\}$, let
\[
M = \{x\in X : f(x) = 0\}
\]
be an $\F$-manifold defined nonsingularly by $f\in\F^{\dim(X)-d}$ with a good direction $\Pi^{M}_{U}:\RR^m\to\RR^d$, and let $U = \Pi^{M}_{U}(M)$.     Consider two coordinate projections $\Pi^{M}_{P}:\RR^{m}\to\RR^p$ and $\Pi^{P}_{U}:\RR^p\to\RR^d$ such that $\Pi^{M}_{U} = \Pi^{P}_{U}\circ\Pi^{M}_{P}$, where $d\leq p\leq m$, and let
\[
P = \Pi^{M}_{P}(M).
\]

Similarly, let $\G$ be a $\QQ$-algebra of real-valued $\C$-analytic functions defined on a rational box manifold $Y\subseteq\RR^n$, where $n\geq p$, and assume that $\G$ contains all the coordinate variables $y_1,\ldots,y_n$. Let $e\in\{0,\ldots,\dim(Y)\}$, let
\[
N = \{y\in Y: g(y) = 0\}
\]
be a $\G$-manifold defined nonsingularly by $g\in\G^{\dim(Y)-e}$ with a good direction $\Pi^{N}_{V}:\RR^n\to\RR^e$, and let $V = \Pi^{N}_{V}(N)$.  Assume that $\Pi^{N}_{P}:\RR^n\to\RR^p$ is a coordinate projection such that $\Pi^{N}_{P}(N)\subseteq P$, and let $\Pi^{N}_{U} = \Pi_{U}^{P}\circ\Pi^{N}_{P}$.
\end{quote}
Now, consider the fiber product
\[
M\times_{P}N = \{(x,y)\in M\times N : \Pi^{M}_{P}(x) = \Pi^{N}_{P}(y)\},
\]
and complete the diagram \eqref{eq:fiberProductGiven} to
\begin{equation}\label{eq:fiberProductCompleted}
\xymatrix{
M\times_{P}N \ar[r] \ar[d]
    & N \ar[d]^{\Pi^{N}_{P}} \ar[r]_{\Pi^{N}_{V}}^{\isom}
    & V \\
M \ar[r]_{\Pi^{M}_{P}}^{\isom}
    & P \ar[r]_{\Pi^{P}_{U}}^{\isom}
    & U \quad .
}
\end{equation}
Since $\Pi_{U}^{P}:P\to U$ is a bijection, $M\times_{P}N$ is the set of all $(x,y)\in X\times Y$ such that
\begin{equation}\label{eq:fiberProdFormula}
\left(f(x) = 0\right)\wedge
\left(g(y) = 0\right)
\wedge\left(\Pi^{M}_{U}(x) - \Pi^{N}_{U}(y) = 0\right),
\end{equation}
which is a system of equations with functions in $\F\otimes\G$.

\begin{lemma}[Fiber Product Lemma]\label{lemma:fiberProduct}
For the situation just described, $M\times_{P} N$ is an $\F\otimes\G$-manifold defined nonsingularly by \eqref{eq:fiberProdFormula}, and the natural projection $M\times_P N\to N$ is a $\C$-analytic isomorphism.
\end{lemma}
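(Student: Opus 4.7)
The plan is to verify the isomorphism claim about $M\times_P N\to N$ directly from the diagram, use it to exhibit a good direction for $M\times_P N$ in the ambient rational box manifold $X\times Y$, and then check that the Jacobian of the system \eqref{eq:fiberProdFormula} is block upper-triangular with nonsingular diagonal blocks.

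First I would observe that since $\Pi^M_U = \Pi^P_U \circ \Pi^M_P$ is a $\C$-analytic isomorphism $M\to U$ and $\Pi^P_U\colon P\to U$ is a $\C$-analytic isomorphism, the restriction $\Pi^M_P|_M\colon M\to P$ is a $\C$-analytic isomorphism as well. Hence for each $y\in N$, the condition $\Pi^M_P(x)=\Pi^N_P(y)\in P$ has the unique solution $x=(\Pi^M_P|_M)^{-1}(\Pi^N_P(y))\in M$, which exhibits $M\times_P N$ as the graph of a $\C$-analytic map $N\to M$. This already shows that $M\times_P N$ is a $\C$-analytic submanifold of $X\times Y$ and that the natural projection $M\times_P N\to N$ is a $\C$-analytic isomorphism, delivering the second conclusion of the lemma.

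Next I would fix coordinates by writing $X=A_X\times\{a_X\}$, $Y=A_Y\times\{a_Y\}$ with $A_X\subseteq\RR^{D_X}$ and $A_Y\subseteq\RR^{D_Y}$ open rational boxes, so that $X\times Y$ is a rational box manifold in $\RR^{m+n}$ with coordinate index set $D_X\cup D_Y$. Let $\lambda_M$ and $\lambda_N$ name $\Pi^M_U$ and $\Pi^N_V$, and let $\lambda_M', \lambda_N'$ be the complementary injections in $D_X, D_Y$ provided by Definition~\ref{def:Fmanifold}; Lemma~\ref{lemma:goodDir} then gives $\det\frac{\partial f}{\partial x_{\lambda_M'}}\neq 0$ on $M$ and $\det\frac{\partial g}{\partial y_{\lambda_N'}}\neq 0$ on $N$. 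My candidate good direction on $X\times Y$ will be the coordinate projection $(x,y)\mapsto y_{\lambda_N}$, whose name lies in $D_X\cup D_Y$. Restricted to $M\times_P N$, this map factors as the isomorphism $M\times_P N\to N$ from the previous paragraph followed by $\Pi^N_V\colon N\to V$, and so is a $\C$-analytic isomorphism onto $V$.

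Finally, I would verify that \eqref{eq:fiberProdFormula} defines $M\times_P N$ nonsingularly as an $\F\otimes\G$-manifold. Membership in $\F\otimes\G$ is immediate: $f$ has components in $\F$, $g$ has components in $\G$, and each component of $\Pi^M_U(x)-\Pi^N_U(y)$ has the form $x_i-y_j$ with $x_i\in\F$ and $y_j\in\G$. For the Jacobian, I would order the equations as $(f(x),\,\Pi^M_U(x)-\Pi^N_U(y),\,g(y))$ and the coordinates complementary to $\lambda_N$ in $D_X\cup D_Y$ as $(x_{\lambda_M'},\,x_{\lambda_M},\,y_{\lambda_N'})$. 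Since $f$ depends only on $x$, $g$ only on $y$, and $\Pi^M_U(x)=x_{\lambda_M}$, the resulting Jacobian is block upper-triangular with diagonal blocks $\frac{\partial f}{\partial x_{\lambda_M'}}$, the identity $I_d$, and $\frac{\partial g}{\partial y_{\lambda_N'}}$, each nonsingular on $M\times_P N$ by the previous paragraph. I expect the only nonroutine point is confirming that $(x,y)\mapsto y_{\lambda_N}$ really is a good direction for $M\times_P N$ (the Jacobian computation being mechanical once the equations and coordinates are ordered as above); and that is handled cleanly by the graph description from the first paragraph.
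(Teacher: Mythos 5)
Your proof is correct and follows essentially the same route as the paper's: establish that the natural projection $M\times_P N\to N$ is a bijection using that $\Pi^{M}_{P}\Restr{M}\colon M\to P$ is a $\C$-analytic isomorphism, then verify nonsingularity of the Jacobian of \eqref{eq:fiberProdFormula} on $M\times_P N$ via Lemma \ref{lemma:goodDir}. Your ordering of the equations as $(f,\ \Pi^{M}_{U}(x)-\Pi^{N}_{U}(y),\ g)$ and the coordinates as $(x_{\lambda_M'},\ x_{\lambda_M},\ y_{\lambda_N'})$ makes the block upper-triangular structure explicit, whereas the paper displays the Jacobian \eqref{eq:Nmatrix} under a finer variable partition and reads off the same two nonvanishing determinants.
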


Since $\Pi_{N}^{V}$ is a good direction for $N$, saying that the natural projection $M\times_P N \to N$ is a $\C$-analytic isomorphism is equivalent to saying that the composition of the natural projection $M\times_P N\to N$ with $\Pi^{N}_{V}:N\to V$ is a good direction for $M\times_P N$.

\begin{proof}
Consider points $(x_1,y)$ and $(x_2,y)$ in $M\times_P N$.  Then $\Pi^{M}_{P}(x_1) = \Pi^{N}_{P}(y) = \Pi^{M}_{P}(x_2)$, so $x_1=x_2$ because $\Pi^{M}_{P}$ is injective.  This shows that the natural projection $M\times_P N\to N$ is injective.

Now consider $y\in N$.  Then $\Pi^{N}_{P}(y)\in P$, so there exists $x\in M$ such that $\Pi^{M}_{P}(x) = \Pi^{N}_{P}(y)$ because $\Pi^{M}_{P}$ maps $M$ onto $P$.  Thus $(x,y)\in M\times_P N$.  This shows that the natural projection $M\times_P N\to N$ is surjective.

To finish, we must show that \eqref{eq:fiberProdFormula} defines $M\times_P N$ nonsingularly as a subset of $X\times Y$ with good direction $\Pi^{N}_{V}$.  In order to simplify our notation, we begin with a reduction of the problem.  Fix $D\subseteq\{1,\ldots,m\}$, an open rational box $A\subseteq\RR^D$, and $a\in\QQ^{D^c}$ such that $X = A\times\{a\}$, and similarly fix $E\subseteq\{1,\ldots,n\}$, an open rational box $B\subseteq\RR^E$, and $b\in\QQ^{E^c}$ such that $Y = B\times\{b\}$.  Define $\tld{f}:A\times\RR^{D^c}\to\RR^{m-d}$ by $\tld{f}(x) = (f(x_D,a), x_{D^c} - a)$, and define $\tld{g}:B\times\RR^{E^c}\to\RR^{n-e}$ by $\tld{g}(y) = (g(y_E,b),y_{E^c} - b)$.  Observe that
\[
\left(\tld{f}(x) = 0\right)\wedge
\left(\tld{g}(y) = 0\right)
\wedge\left(\Pi^{M}_{U}(x) - \Pi^{N}_{U}(y) = 0\right)
\]
defines $M\times_P N$ nonsingularly as a subset of $(A\times\RR^{D^c})\times(B\times\RR^{E^c})$ with good direction $\Pi^{N}_{V}$
if and only if \eqref{eq:fiberProdFormula} defines $M\times_P N$ nonsingularly as a subset of $X\times Y$ with good direction $\Pi^{N}_V{}$.  We may therefore assume, without loss of generality, that $X$ and $Y$ are open rational boxes.

Partition the tuples of variables $x$ and $y$ into $x = (x_1,x_2,x_3,x_4,x_5)$
and $y=(y_1,y_2,y_3,y_4,y_5,y_6)$, where the $x_i$'s and $y_j$'s are not just
single variables but are (possibly empty) tuples of variables, so that
\begin{eqnarray*}
\Pi^{M}_{P}(x_1,x_2,x_3,x_4,x_5) & = & (x_1,x_2,x_3,x_4), \\
\Pi^{N}_{P}(y_1,y_2,y_3,y_4,y_5,y_6) & = & (y_1,y_2,y_3,y_4) \\
            &   & \text{(where $x_i$ corresponds to $y_i$ for each $i\in\{1,2,3,4\}$)}, \\
\Pi^{P}_{U}(x_1,x_2,x_3,x_4) & = & (x_1, x_2), \\
\Pi^{N}_{V}(y_1,y_2,y_3,y_4,y_5,y_6) & = & (y_1,y_3,y_5).
\end{eqnarray*}
This notation allows for all possible coincidences and noncoincidences of the
variables in the images of the four coordinate projections $\Pi^{M}_{P}$,
$\Pi^{N}_{P}$, $\Pi^{P}_{U}$ and $\Pi^{N}_{V}$.  With this notation,
\[
M\times_P N = \{(x,y)\in X\times Y : f(x) = 0, g(y) = 0, x_1-y_1 = 0, x_2 -
y_2 = 0\}.
\]
Now,
\[
\PD{}{(f,g,x_1-y_1,x_2-y_2)}{(x,y_2,y_4,y_6)}
\]
is the $(m+n-e)\times(m+n-e)$ matrix given in block form by
\begin{equation}\label{eq:Nmatrix}
\left(\begin{matrix} \PD{}{f}{x_1} & \PD{}{f}{x_2} &
\PD{}{f}{x_3}&\PD{}{f}{x_4} & \PD{}{f}{x_5} & 0 & 0 & 0 \\
0 & 0 & 0 & 0 & 0 & \PD{}{g}{y_2} & \PD{}{g}{y_4} & \PD{}{g}{y_6} \\
\id & 0 & 0 & 0 & 0 & 0 & 0 & 0 \\
0 & \id & 0 & 0 & 0 & -\id & 0 & 0
\end{matrix}\right),
\end{equation}
where each ``$\id$'' denotes an identity matrix.  Since $x\mapsto(x_1,x_2)$ and $y\mapsto(y_1,y_3,y_5)$ restrict to isomorphisms on $M$ and $N$, respectively, Lemma \ref{lemma:goodDir}.2 implies that
$\det\PD{}{f}{(x_3,x_4,x_5)}\neq 0$ and $\det\PD{}{g}{(y_2,y_4,y_6)} \neq 0$ on $M\times N$.  Therefore \eqref{eq:Nmatrix} is nonsingular on $M\times N$, so \eqref{eq:fiberProdFormula} defines $M\times_P N$ nonsingularly with good direction $(x,y)\mapsto \Pi^{N}_{V}(y)$, again by Lemma \ref{lemma:goodDir}.2.
\end{proof}

\section{Blowup Sets}\label{s:blowupSet}

In this section we work with sequences of blowings-up with various centers, so we shall use the following notation for the standard charts of our blowings-up, which is more detailed than the notation used in Section \ref{s:blowup}.

\begin{notation}\label{notation:blowupChartI}
If $\emptyset\neq I\subseteq\{1,\ldots,n\}$ and $i\in I$, write $\pi_{I,i}:\RR^n\to\RR^n$ for the $i$th-standard chart of the blowing-up of $\RR^n$ with center $\{x\in\RR^n : x_I = 0\}$.  Thus $\pi_{I,i}(y) = (y_{I^c}, y_i, y_i y_{I_i})$, where $I_i = I\setminus\{i\}$ and the superscript $c$ always denotes complementation in $\{1,\ldots,n\}$.
\end{notation}

\begin{definition}\label{def:blowupSet}
A set $Y\subseteq\RR^n$ is a {\bf blowup set} if it is a blowup set of length $k$ for some $k\in\NN$, which is defined inductively as follows:
\begin{enumerate}{\setlength{\itemsep}{5pt}
\item
The set $Y$ is a blowup set of length $0$ if it is a rational box.

\item
For $k > 0$, the set $Y$ is a blowup set of length $k$ if $Y = \pi_{I,i}^{-1}(X)\cap(\RR^{I^c}\times B)$ for some nonempty $I\subseteq\{1,\ldots,n\}$, $i\in I$, rational box $B\subseteq \RR^I$, and blowup set $X\subseteq\RR^n$ of length $k-1$ such that $\{x\in X : x_I = 0\}$ is nonempty.
}\end{enumerate}
\end{definition}

A blowup set is, by definition, a blowup set of length $k$ for some $k$, but the number $k$ need not be uniquely determined by the set itself.  For example, if $X$ is a blowup set of length $k$ and $\{x\in X : x_i = 0\}$ is nonempty for some $i\in\{1,\ldots,n\}$, then taking $I = \{i\}$ and $B = \RR^I$ in clause 2 of Definition \ref{def:blowupSet} shows that $Y = X$; thus $X$ is a blowup set of length $l$ for all $l\geq k$.

\begin{notation}\label{notation:CenBox}
For any $Y\subseteq\RR^n$ and $y\in\RR^n$, define
\begin{eqnarray*}
\Cen(Y)
    & = &
    \left\{j\in\{1,\ldots,n\} : 0\in\Pi_{\{j\}}(Y)\right\},
\\
\BOX(y,Y)
    & = &
    \left\{\left((y_j)_{j\in\Cen(Y)^c}, (t_j y_j)_{j\in\Cen(Y)}\right) : \text{$0\leq t_j\leq 1$ for all $j\in\Cen(Y)$}\right\}.
\end{eqnarray*}
\end{notation}

\begin{notation}
In Definition \ref{def:box}, we defined $[a,b] = [a_1,b_1]\times \cdots \times [a_n,b_n]$ for any $a = (a_1,\ldots,a_n)$ and $b = (b_1,\ldots,b_n)$ in $\RR^n$. However, we now make an exception.  From now through the proof of Lemma \ref{lemma:blowupSet}, for any $a,b\in\RR^n$ we will write
\[
[a,b] = \{(1-t)a+tb : 0\leq t\leq 1\},
\]
which is the line segment from $a$ to $b$.
\end{notation}

\begin{definition}\label{def:polyConn}
A set $Y\subseteq\RR^n$ is {\bf polygonally connected} if for all $x,y\in Y$ there exist finitely many points $x_0,\ldots,x_n\in\RR^n$ such that $x = x_0$, $y = x_n$, and $[x_{j-1},x_j]\subseteq Y$ for all $j\in\{1,\ldots,n\}$.
\end{definition}

Clearly, a set which is polygonally connected is connected.

\begin{lemma}\label{lemma:blowupSet}
Let $Y\subseteq\RR^n$ be a blowup set.
\begin{enumerate}{\setlength{\itemsep}{5pt}
\item
Suppose that $Y = \pi_{I,i}^{-1}(X)\cap(\RR^{I^c}\times B)$, as in clause 2 of Definition \ref{def:blowupSet}, where $B = \prod_{j\in I}B_j$ for rational intervals $B_j$.  Then
\[
\Cen(Y) = \left(\Cen(X)\setminus I\right)\cup\{j\in I : 0\in B_j\}.
\]

\item
For all $y\in Y$, $\BOX(y,Y)\subseteq Y$.

\item
For all $J\subseteq\{1,\ldots,n\}$, the set $\{y\in Y : y_J = 0\}$ is nonempty if and only if $J\subseteq\Cen(Y)$.

\item
The set $Y$ is polygonally connected.
}\end{enumerate}
\end{lemma}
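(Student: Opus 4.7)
The plan is to prove all four parts by a single induction on the length $k$ of $Y$ as a blowup set. The base case $k=0$, where $Y = \prod_j I_j$ is a rational box, is essentially trivial: Part 1 is vacuous; $\Cen(Y) = \{j : 0\in I_j\}$; Part 2 follows because any interval $I_j$ containing both $0$ and $y_j$ also contains the segment between them; Part 3 is immediate from the product structure; and nonempty rational boxes are convex, giving Part 4.

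For the inductive step, write $Y = \pi_{I,i}^{-1}(X)\cap(\RR^{I^c}\times B)$ with $X$ a blowup set of length $k-1$ to which the lemma applies. Observe first that $I\subseteq\Cen(X)$ by the inductive Part 3 for $X$, since $\{x\in X:x_I=0\}$ is nonempty by hypothesis. For Part 1, the inclusion $\Cen(Y)\subseteq(\Cen(X)\setminus I)\cup\{j\in I:0\in B_j\}$ is direct from the formula $\pi_{I,i}(y)=(y_{I^c},y_i,y_iy_{I_i})$ combined with the requirement $y_I\in B$; the reverse inclusion is shown by exhibiting explicit points of $Y$ with the prescribed vanishing coordinates, using the inductive Parts 2 and 3 on $X$ together with $I\subseteq\Cen(X)$ to freely adjust the $I$-coordinates of the lift so that the $B$-constraint is satisfied.

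Part 2 is the main computational step and drives the rest. Given $y\in Y$, set $x=\pi_{I,i}(y)\in X$, and fix $y'\in\BOX(y,Y)$. A coordinate-by-coordinate check—splitting into the cases $j\in I^c$, $j=i$, and $j\in I_i$, and invoking $I\subseteq\Cen(X)$—shows that $\pi_{I,i}(y')$ is obtained from $x$ by scaling each $\Cen(X)$-coordinate by a value in $[0,1]$ while fixing the rest; hence $\pi_{I,i}(y')\in\BOX(x,X)\subseteq X$ by the inductive Part 2. Simultaneously $y'_I\in B$ because the only $I$-coordinates that get scaled lie in $\Cen(Y)\cap I$, where Part 1 guarantees $0\in B_j$ and hence $[0,y_j]\subseteq B_j$. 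Part 3 then follows easily: the forward direction is immediate from the definition of $\Cen(Y)$, and for the reverse one zeros out the $J$-coordinates of any $y\in Y$ via Part 2 (the claim being vacuous when $Y=\emptyset$).

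The main obstacle is Part 4. Each $y\in Y$ can be connected to its ``corner'' $y^{\ast}:=(y_{\Cen(Y)^c},0_{\Cen(Y)})\in Y$ by a straight segment inside $\BOX(y,Y)\subseteq Y$, reducing the problem to connecting any two corner points. For this I lift polygonal paths in $X$ supplied by the inductive Part 4 through $\pi_{I,i}$. The delicate step is controlling the lift so that $y_I$ stays in $B$: when $0\in B_i$, one works inside the exceptional fiber $\{y_i=0\}\cap Y$, which contains the product of $\Pi_{I^c}(\{x\in X:x_I=0\})$ with the box $B_{I_i}$ and reduces the problem to moving along paths in $X$ confined to the coordinate slice $\{x_I=0\}$ combined with free motion in $B_{I_i}$; when $0\notin B_i$, the restriction of $\pi_{I,i}$ to $Y$ is a $\C$-analytic bijection onto its image, so one lifts each segment of an $X$-path and subdivides it as necessary, using Part 2 and the fact that $y_i$ never vanishes, to keep the lifted polygonal path inside the cylindrical constraint $y_I\in B$.
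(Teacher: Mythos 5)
Your Parts 1, 2, and 3 follow the paper's proof closely: a single induction on the length $k$, with the base case trivial, and the inductive step for Part 2 built on the coordinate-by-coordinate formula for $\pi_{I,i}(z)$ together with the fact that $I\subseteq\Cen(X)$ (from the inductive Part 3) and $0\in B_j$ for $j\in\Cen(Y)\cap I$ (from Part 1). Those three are fine.

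Part 4 is where you diverge from the paper, and the case $0\notin B_i$ has a real gap. After reducing to corner points and projecting into $X$, you propose to ``lift each segment of an $X$-path and subdivide it as necessary.'' But on $\{x_i\neq 0\}$ the inverse chart sends $x$ to $(x_{I^c},\,x_i,\,x_{I_i}/x_i)$, so a line segment in $X$ along which $x_i$ varies lifts to a \emph{non-polygonal} (hyperbolic) arc in $\RR^n$. Subdividing a curve into finitely many pieces and joining consecutive points by segments is not guaranteed to stay in $Y$ --- the intervals $B_j$ need not be open, so $Y$ need not have any interior to wiggle in, and $\BOX$-shrinkage from Part 2 only permits scaling toward $0$, not lateral displacement to chase a hyperbola. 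There is also a prior problem: the polygonal path supplied by the inductive Part 4 for $X$ has no reason to lie in $\pi_{I,i}(Y)$, because the constraint $y_I\in B$ pulls back under $\pi_{I,i}^{-1}$ to a non-box condition on $x_I$, and $\pi_{I,i}(Y)$ is not itself a blowup set in which you can invoke the inductive hypothesis.

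The paper sidesteps both issues at once by never lifting paths across $\pi_{I,i}$ in a direction where $x_i$ changes. Given $x,y\in Y$, it builds an auxiliary $z\in\RR^I$ (coordinate-wise the common value closer to $0$, or $0$ if signs disagree), joins $x$ to $(x_{I^c},z)$ and $y$ to $(y_{I^c},z)$ by straight segments --- these lie in $\BOX(\cdot,\pi_{I,i}^{-1}(X))$ and in $\RR^{I^c}\times B$, hence in $Y$ --- and then connects $(x_{I^c},z)$ to $(y_{I^c},z)$ by a polygonal path that moves only in the $I^c$-coordinates, obtained from the inductive Part 4 applied to the slice $\{w\in X : w_I = (z_i, z_i z_{I_i})\}$. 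Since $\pi_{I,i}$ is the identity in the $I^c$-coordinates and the $I$-coordinates are held at the fixed value $z$, segments in this slice pull back to segments in $Y$ with no division involved. That uniform construction is what you are missing; your $0\in B_i$ case, which confines everything to the exceptional fiber $\{y_i=0\}$, works precisely because there $\pi_{I,i}$ also degenerates to the identity on the relevant coordinates, but you need an analogous device when $0\notin B_i$ rather than an appeal to subdivision.
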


\begin{proof}
Suppose that $Y$ is a blowup set of length $k$.  If $k = 0$, then statement 1 does not apply, and statements 2-4 are trivial.  So assume that $k > 0$ and that the lemma holds for all blowup sets of length $k-1$.  Write
\begin{equation}\label{eq:Ydef}
Y = \pi_{I,i}^{-1}(X)\cap(\RR^{I^c}\times B),
\end{equation}
as in clause 2 of Definition \ref{def:blowupSet}, where $B = \prod_{j\in I}B_j$ for rational intervals $B_j$.  Note that since $\{x\in X : x_I = 0\}$ is nonempty, applying statement 3 to the set $X$ shows that
\begin{equation}\label{eq:IcenX}
I\subseteq\Cen(X).
\end{equation}

We now prove 1.  Let $j\in\Cen(Y)$.  Fix $y\in Y$ such that $y_j = 0$, and put $x = \pi_{I,i}(y)$.  Then $x_j = 0$ and $x\in X$, so $j\in\Cen(X)$.  Thus $\Cen(Y)\subseteq\Cen(X)$, so in particular, $\Cen(Y)\setminus I \subseteq \Cen(X)\setminus I$.  Also, \eqref{eq:Ydef} implies that $\Cen(Y)\cap I\subseteq\{j\in I : 0 \in B_j\}$, so $\Cen(Y) \subseteq \left(\Cen(X)\setminus I\right)\cup\{j\in I : 0\in B_j\}$.

To show the reverse inclusion, let $j\in(\Cen(X)\setminus I) \cup\{j\in I : 0\in B_j\}$.  Fix $y\in Y$, and note that $\Pi_{I,i}(y)\in X$ and $y_I\in B$ by \eqref{eq:Ydef}.  Define $z = (z_1,\ldots,z_n)$ by $z_j = 0$ and $z_l = y_l$ for all $l\in\{1,\ldots,n\}\setminus\{j\}$.  Note that $j\in\Cen(X)$ by \eqref{eq:IcenX}, so applying statement 2 to $X$ shows that $\pi_{I,i}(z) \in \BOX(\pi_{I,i}(y),X) \subseteq X$.  If $j\in \Cen(X)\setminus I$, then $z_I = y_I \in B$.  If $j\in I$, then $0\in B_j$, so $z_I\in B$.  Either way, we have $z_I\in B$.  So $z\in Y$ by \eqref{eq:Ydef}, and hence $j\in\Cen(Y)$.  This proves 1.

We now prove 2.  Let $y\in Y$.  Thus $(y_{I^c},y_i,y_iy_{I_i})\in X$ and $y_I\in B$.  Let $t=(t_j)_{j\in\Cen(Y)}\in[0,1]^{\Cen(Y)}$, and put $z = ((y_j)_{j\in\Cen(Y)^c}, (t_jy_j)_{j\in\Cen(Y)})$.  Then
\[
\pi_{I,i}(z) = \left((t_j)_{j\in I^c\cap\Cen(Y)^c}, (t_jy_j)_{j\in I^c\cap\Cen(Y)},
t_i y_i, (y_iy_j)_{j\in I_i\cap\Cen(Y)^c}, (t_it_jy_iy_j)_{j\in I_i\cap\Cen(Y)}\right),
\]
where $0\leq t_i\leq 1$ if $0\in B_i$, and $t_i = 1$ if $0\not\in B_i$.  Since the $t_j$ and $t_it_j$ are in $[0,1]$, $\pi_{I,i}(z)\in\BOX(\Pi_{I,i}(y),X) \subseteq X$.  Also, by Statement 1 we have $0\in B_j$ for all $j\in\Cen(Y)\cap I$, so $z_I\in B$, and hence $z\in Y$.  Thus $\BOX(y,Y)\subseteq Y$, which proves 2.

We now prove 3.  Let $J\subseteq\{1,\ldots,n\}$.  If $\{y\in Y:y_J=0\}$ is nonempty, then $0\in\Pi_{\{j\}}(Y)$ for all $j\in J$, so $J\subseteq\Cen(Y)$.  Conversely, if $J\subseteq\Cen(Y)$, fix $b\in Y$, and note that $(b_{J^c},0)\in\BOX(b,Y)\subseteq Y$ by statement 2, so $\{y\in Y : y_J = 0\}$ is nonempty.  This proves 3.

We now prove 4.  Let $x,y\in Y$.  Define $z = (z_j)_{j\in I}\in\RR^I$ by
\[
z_j = \begin{cases}
x_j,    & \text{if $x_j y_j \geq 0$ and $|x_j| \leq |y_j|$,} \\
y_j,    & \text{if $x_j y_j \geq 0$ and $|y_j| \leq |x_j|$,} \\
0,      & \text{if $x_j y_j < 0$,}
\end{cases}
\]
for each $j\in I$.  Applying statement 1 to $\pi_{I,i}^{-1}(X)$ shows that $I\subseteq\Cen(\pi_{I,i}^{-1}(X))$, so $[x,(x_{I^c},z)]\subseteq\BOX(x,\pi_{I,i}^{-1}(X))$.  Applying statement 2 to $\pi_{I,i}^{-1}(X)$ shows that $\BOX(x,\pi_{I,i}^{-1}(X)) \subseteq \pi_{I,i}^{-1}(X)$, and hence $[x,(x_{I^c},z)] \subseteq \pi_{I,i}^{-1}(X)$.  Also, $[x_I,z]\subseteq B$, so $[x,(x_{I^c},z)] \subseteq Y$.  Likewise, $[y,(y_{I^c},z)]\subseteq Y$.  Since $\pi_{I,i}(x_{I^c},z)$ and $\pi_{I,i}(y_{I^c},z)$ are in $\{w\in X : w_I = (z_i,z_i z_{I_i})\}$, which is a blowup set of length $k-1$, the induction hypothesis implies that there exist finitely many $w^{(0)},\ldots,w^{(l)}\in\RR^{I^c}$ such that $x_{I^c} = w^{(0)}$, $y_{I^c} = w^{(l)}$, and $[(w^{(j-1)},z_i,z_iz_{I_i}), (w^{(j)},z_i,z_iz_{I_i})] \subseteq \{w\in X : w_I = (z_i,z_i z_{I_i})\}$ for all $j\in\{1,\ldots,l\}$.   Hence $[(w^{(j-1)},z),(w^{(j)},z)]\subseteq Y$ for all $j\in\{1,\ldots,l\}$.   Thus $x,(w^{(0)},z),\ldots,(w^{(l)},z),y$ are the vertices of a polygonal path from $x$ to $y$ in $Y$, which proves 4.
\end{proof}

A set $X\subseteq\RR^n$ is a blowup set of length $k$ if and only if there exist nonempty $I_1,\ldots,I_k\subseteq\{1,\ldots,n\}$, $(i_1,\ldots,i_k)\in I_1\times\cdots\times I_k$, and rational boxes $A_0\subseteq\RR^n$, $A_1\subseteq\RR^{I_1}$, \ldots, $A_k\subseteq\RR^{I_k}$ such that $X = X_k$, where
\begin{eqnarray}\label{eq:blowupSetConst}
X_0
    & = &
    A_0,
\\
X_j
    & = &
    \pi^{-1}_{I_j,i_j}(X_{j-1})\cap(\RR^{I_{j-1}^{c}}\times A_j),
    \quad
    \text{for each $j\in\{1,\ldots,k\}$,}
    \nonumber
\end{eqnarray}
with the stipulation that $I_j\subseteq\Cen(X_{j-1})$ for all $j\in\{1,\ldots,k\}$. Note that we may easily compute $\Cen(X_j)$ for all $j\in\{0,\ldots,k\}$ using Lemma \ref{lemma:blowupSet}.1.  Also note that the construction of $X$ given in \eqref{eq:blowupSetConst} is not uniquely determined by the set $X$.

\begin{notation}\label{notation:lifting}
If $X$ is the blowup set constructed in \eqref{eq:blowupSetConst}, and this construction is implicitly understood from context (or is unimportant), then we shall write
\[
\bar{X} = \left\{(x_0,\ldots,x_k)\in(\RR^n)^{k+1} : (x_0\in A_0) \wedge
 \left(\bigwedge_{j=1}^{k}(\pi_{I_j,i_j}(x_j) = x_{j-1})\wedge(x_{j,I_j}\in A_j)\right)\right\},
\]
and shall also write $\bar{\Pi}:(\RR^n)^{k+1}\to\RR^n$ for the coordinate projection
\[
\bar{\Pi}(x_0,\ldots,x_k) = x_k.
\]
Note that $\bar{\Pi}$ defines a bijection from $\bar{X}$ onto $X$, with inverse map
\[
x_k\mapsto (\pi_{I_1,i_1}\circ\cdots\pi_{I_k,i_k}(x_k),\ldots,\pi_{I_k,i_k}(x_k),x_k).
\]
If the construction of $X$ given in \eqref{eq:blowupSetConst} needs to be specified, then for $I = (I_1,\ldots,I_k)$, $i=(i_1,\ldots,i_k)$, and $A = A_0\times\cdots\times A_k$, we shall write
\begin{eqnarray*}
\Bus(I,i,A) & = & X,\\
\bar{\Bus}(I,i,A) & = & \bar{X}.
\end{eqnarray*}
(The notation ``$\Bus$'' is an acronym for ``\underline{B}low\underline{u}p \underline{s}et''.)
\end{notation}

\begin{definition}\label{def:sameBlowup}
We say that a blowup set $X$ is {\bf defined by the sequence of blowings-up} $\pi_{I_1,i_1},\ldots,\pi_{I_k,i_k}$ if $X = \Bus(I,i,A)$ for some choice of $A$, where $I = (I_1,\ldots,I_k)$ and $i=(i_1,\ldots,i_k)$.  Two blowup sets $X$ and $Y$ are {\bf defined by the same sequence of blowings-up} if there exists a sequence of blowings-up $\pi_{I_1,i_1},\ldots,\pi_{I_k,i_k}$ which define both $X$ and $Y$.
\end{definition}

\begin{definition}\label{def:blowupSetLifting}
Suppose $X = \Bus(I,i,A)$ and $\bar{X} = \bar{\Bus}(I,i,A)$.  We call the map $\bar{\Pi}:\bar{X}\to X$ a {\bf lifting} of $X$, and we call
$(I,i,\name(A))$ a {\bf name} for this lifting. The data $(I,i,\name(A))$
determines the map $\bar{\Pi}:\bar{X}\to X$, and therefore also determined the sets $\bar{X}$ and $X$, so we shall also refer to $(I,i,\name(A))$
as a name for $X$ and as a name for $\bar{X}$.
\end{definition}

\begin{remarks}\label{rmk:blowupSet}
Consider a lifting $\bar{\Pi}:\bar{X}\to X$ with name $(I,i,\name(A))$, where $I = (I_1.\ldots,I_k)$, $i = (i_1,\ldots,i_k)$, and $A = A_0\times\cdots\times A_k$.
\begin{enumerate}{\setlength{\itemsep}{5pt}
\item
If $Y = \Bus(I,i,B)$ and $B\subseteq A$, then $Y\subseteq X$.

\item
If the rational box $A$ is (open/closed/bounded/compact), then $X$ and $\bar{X}$ are (c.e.\ open/co-c.e.\ closed/bounded/co-c.e.\ compact).\\

\noindent Motivated by these two observation, we will always use the following conventions. \vspace*{5pt}

\noindent{\bf Conventions:}

If we state that a blowup set $X$ is (open/closed/bounded/compact), we tacitly mean that $X$ has been constructed as $X = \Bus(I,i,A)$ for some rational box $A$ which is (open/closed/bounded/compact).  If we state that $Y\subseteq X$ for blowup-sets $X = \Bus(I,i,A)$ and $Y = \Bus(I,i,B)$, we tacitly mean that $B\subseteq A$.

\item
For any rational box $B\subseteq\RR^n$ there exists a rational box $A'\subseteq A$ such that
\begin{equation}\label{eq:BusIntersect}
X\cap B = \Bus(I,i,A').
\end{equation}

\begin{proof}
Write $B = \prod_{j=1}^{n} B_j$ for rational intervals $B_1,\ldots,B_n$.  Put $I_0 = \{1,\ldots,n\}$, and for each $l\in\{0,\ldots,k\}$ define
\begin{eqnarray*}
J_l
    & = &
    I_l\setminus\bigcup_{s=l+1}^{k} I_s, \\
A'_l
    & = &
    A_l\cap\left(\RR^{J_{l}^{c}}\times\prod_{j\in J_l} B_j\right).
\end{eqnarray*}
Then \eqref{eq:BusIntersect} holds for $A' = A'_0\times\cdots\times A'_k$.
\end{proof}
}\end{enumerate}
\end{remarks}

\begin{definition}\label{def:blowupManifold}
We call $M\subseteq\RR^n$ a {\bf blowup manifold} if there exist an open blowup set $U\subseteq\RR^n$ and a rational box manifold $B\subseteq\RR^n$ such that $M = U\cap B$.  If $U = \Bus(I,i,A)$ (with $A$ open, by convention), then Remark \ref{rmk:blowupSet}.2 and its proof imply that $M$ is a blowup set defined by $M = \Bus(I,i,A')$ for some rational box manifold $A'\subseteq A$.  If $\Pi_E:\RR^n\to\RR^E$ is the unique good direction for $B$, where $E\subseteq\{1,\ldots,n\}$, and if $M$ is nonempty, then $M$ is a connected $|E|$-dimensional manifold, and $\Pi_E$ is also the unique good direction for $M$.  A {\bf name} for the blowup manifold $M$ consists of the following data:
\begin{itemize}
\item
$(I,i,A')$ (that is, a name for the blowup set $M$);

\item
the set $E$.
\end{itemize}
\end{definition}

\begin{notation}\label{notation:variety}
For a family $\F$ of real-valued functions on a set $U\subseteq\RR^n$, and a set $X\subseteq U$, denote the variety of $\F$ on $X$ by
\[
\VV(\F;X) = \bigcap_{f\in\F}\VV(f;X),
\]
where
\[
\VV(f;X) = \{x\in X : f(x) = 0\}.
\]
\end{notation}

\begin{lemma}\label{lemma:blowupSetVar}
Let $\F$ be a finite family of real-valued computably continuous functions on a c.e.\ open set $U$ in $\RR^n$, and consider a blowup set $\Bus(I,i,A) \subseteq U$, where $A$ is compact.
\begin{enumerate}{\setlength{\itemsep}{5pt}
\item
If $\VV(\F;\Bus(I,i,A)) = \emptyset$, then we can effectively find an open rational box $B$ containing $A$ such that $\Bus(I,i,B)\subseteq U$ and $\VV(\F;\Bus(I,i,B)) = \emptyset$.

\item
If $\VV(f;\Bus(I,i,A)) = \emptyset$ for all $f\in\F$, then we can effectively find an open rational box $B$ containing $A$ such that $\Bus(I,i,B)\subseteq U$ and $\VV(f;\Bus(I,i,B)) = \emptyset$ for all $f\in\F$.
}\end{enumerate}
\end{lemma}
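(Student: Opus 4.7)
The plan begins with a reduction: Part 2 follows from Part 1 applied separately to each singleton family $\{f\}$, $f\in\F$, since then intersecting the finitely many resulting open rational boxes yields an enlargement $B$ of $A$ satisfying $\VV(f;\Bus(I,i,B))=\emptyset$ for every $f\in\F$ (using $\Bus(I,i,B\cap B')=\Bus(I,i,B)\cap\Bus(I,i,B')$ together with $A\subseteq B$ for each factor). So the main task is Part 1.

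For Part 1, I would form $V=\bigcup_{f\in\F}\{x\in U:f(x)\neq 0\}$, which is c.e.\ open in $\RR^n$ by Proposition \ref{prop:compCont} (each $f$ is computably continuous, $\RR\setminus\{0\}$ is c.e.\ open, and a finite union of c.e.\ opens is c.e.\ open) and is contained in $U$. Thus once I find an open rational box $B\supseteq A$ with $\Bus(I,i,B)\subseteq V$, both desired conclusions ($\Bus(I,i,B)\subseteq U$ and $\VV(\F;\Bus(I,i,B))=\emptyset$) hold simultaneously. The hypothesis is precisely $\Bus(I,i,A)\subseteq V$.

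Next I would pass to the lifting. Writing $\bar X=\bar\Bus(I,i,A)\subseteq(\RR^n)^{k+1}$, I observe that $\bar X$ is co-c.e.\ compact: the blowing-up relations $\pi_{I_j,i_j}(x_j)=x_{j-1}$ express the coordinates of each $x_j$ outside of $I_j\setminus\{i_j\}$ directly in terms of $x_{j-1}$, which combined with the box conditions $x_0\in A_0$ and $x_{j,I_j}\in A_j$ confine $\bar X$ to a bounded region and cut it out by vanishing of computably continuous polynomials. Then $X=\bar\Pi(\bar X)$ is co-c.e.\ compact by Proposition \ref{prop:compContCompact}, and $\bar\Pi^{-1}(V)$ is c.e.\ open in $(\RR^n)^{k+1}$ with $\bar X\subseteq\bar\Pi^{-1}(V)$.

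The main step, and the main obstacle, is the effective search for $B$. I would enumerate computably all open rational boxes $B=B_0\times\cdots\times B_k$ with $A\subseteq B$; for each such $B$ the compact analogue $\bar\Bus(I,i,\cl(B))$ is co-c.e.\ compact by the same reasoning, and I would use Lemma \ref{lemma:coceCompactInCeOpen} to try to verify $\bar\Bus(I,i,\cl(B))\subseteq\bar\Pi^{-1}(V)$, outputting the first $B$ for which this verification halts. The structural requirement $I_j\subseteq\Cen(X_{j-1})$ is preserved---and possibly strengthened---under enlargement of $A$ (enlarging an interval can only add $0$ to it, hence only add indices to the $\Cen$), so every enumerated $B$ defines a genuine blowup set. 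Termination rests on a compactness argument: for any sequence $B^{(m)}\searrow A$, the decreasing compacts $\bar\Bus(I,i,\cl(B^{(m)}))$ intersect in $\bar X$ by continuity of the polynomial defining relations in the box data, so openness of $\bar\Pi^{-1}(V)$ together with $\bar X\subseteq\bar\Pi^{-1}(V)$ forces $\bar\Bus(I,i,\cl(B^{(m)}))\subseteq\bar\Pi^{-1}(V)$ for all sufficiently large $m$; combined with the effective verifiability of Lemma \ref{lemma:coceCompactInCeOpen}, this guarantees the search halts and produces the required $B$.
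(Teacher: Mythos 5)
Your proof is correct and follows essentially the same route as the paper: use co-c.e.\ compactness of $\Bus(I,i,A)$, the fact that it equals $\bigcap_{B\supseteq A}\Bus(I,i,\cl(B))$, and an effective search terminating by compactness. The detour through the lifting $\bar\Bus$ and $\bar\Pi^{-1}(V)$ is unnecessary, since $\Bus(I,i,\cl(B))$ is itself co-c.e.\ compact (Remark \ref{rmk:blowupSet}.2) and one can apply Lemma \ref{lemma:coceCompactInCeOpen} directly in the base space; likewise, the paper dispenses with your reduction of Part 2 to Part 1 by simply insisting that \emph{every} $f\in\F$ be nonvanishing on each covering box $\cl(D_j)$.
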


\begin{proof}
Suppose that $\VV(\F;\Bus(I,i,A)) = \emptyset$.  Using the computability assumptions on $U$ and $\F$, and the fact that $\Bus(I,i,A)$ is co-c.e.\ compact, we can effectively find a finite family $\{D_j\}_{j\in J}$ of open rational boxes such that $\Bus(I,i,A)\subseteq \bigcup_{j\in J} D_j$ and such that for each $j\in J$, $\cl(D_j)\subseteq U$ and we have found some $f_j\in\F$ such that $f_j(x)\neq 0$ on $\cl(D_j)$.  The set $\Bus(I,i,A)$ is compact and $\Bus(I,i,A) = \bigcap_{B \supseteq A}\Bus(I,i,\cl(B))$, where the intersection is over all bounded open rational boxes $B$ containing $A$.  So there exists such a $B \supseteq A$ with $\Bus(I,i,\cl(B)) \subseteq \bigcup_{j\in J}D_j$, and hence $\VV(\F;\Bus(I,i,B)) = \emptyset$.  Since each set $\Bus(I,i,\cl(B))$ is co-c.e.\ compact, we can effectively find such a $B$, which proves Statement 1.  Statement 2 is proven similarly, the only difference being that one constructs the set $\bigcup_{j\in J}D_j$ so that $f(x)\neq 0$ on $\cl(D_j)$ for all $j\in J$ and all $f\in\F$.
\end{proof}

\section{Liftings}\label{s:lifting}

For this entire section, fix a family $\S\subseteq\C$.  Write
\[
\S = \{S_\sigma\}_{\sigma\in\Sigma}
\]
for functions $S_\sigma:[-r_\sigma,r_\sigma]\to\RR$, where $r_\sigma\in\QQ_{+}^{\eta(\sigma)}$.  Assume that the map
\[
\Sigma\to\bigcup_{n\in\NN}\QQ_{+}^{n} : \sigma\mapsto r_\sigma
\]
is computable.  Thus, the index set $\Sigma$ and the arity map $\eta:\Sigma\to\NN$ are also computable.

\begin{definition}\label{def:pullback}
Consider a family $\F = \{F_j\}_{j\in J}$ of functions on a set $X$, and a function $g:Y\to X$.  The {\bf pullback of $\F$ by $g$} is the family of functions on $Y$ given by
\[
g^*\F = \{F_j\circ g\}_{j\in J}.
\]
For $Y\subseteq X$, the {\bf restriction of $\F$ to $Y$}, denoted by $\F\Restr{Y}$, is the pullback of $\F$ by the inclusion map $Y\hookrightarrow X$.
\end{definition}

\begin{definition}\label{def:naturalStratBox}
The {\bf natural stratification} of a nonempty interval $I\subseteq\RR$ is the set $\Strat(I)$ consisting of $\Int(I)$ (when $I$ is nondegenerate) and any connected components of $\bd(I)$ that are contained in $I$.  For example, $\Strat(\{a\}) = \{\{a\}\}$, $\Strat((a,b]) = \{(a,b),\{b\}\}$, $\Strat([a,b]) = \{\{a\},(a,b),\{b\}\}$, and $\Strat(\RR) = \RR$.   The natural stratification of a nonempty box $B = \prod_{i=1}^{n}B_i \subseteq\RR^n$ is defined by
\[
\Strat(B) = \left\{\prod_{i=1}^{n}C_i : \text{$C_i\in\Strat(B_i)$ for all $i\in\{1,\ldots,n\}$}\right\}.
\]
\end{definition}

\begin{definition}\label{def:Salgebra}
An {\bf $\S$-algebra on a natural domain} is a finite tensor product of algebras of the form $\QQ[x_1]$ (defined on $\RR$) or of the form $\QQ[x_1,\ldots,x_{\eta(\sigma)},\PDn{\alpha}{S_\sigma}{x}]$ (defined on $[-r_\sigma,r_\sigma]$) for some $\sigma\in\Sigma$ and $\alpha\in\NN^{\eta(\sigma)}$.  If $\F$ is an $\S$-algebra on a natural domain $D\subseteq\RR^n$, and $B$ is a rational box manifold which is open in some member of the natural stratification of $D$, then we call $\F\Restr{B}$ an {\bf $\S$-algebra}.
\end{definition}

\begin{definition}\label{def:Spoly}
A function $P:D\to\RR^k$ is an {\bf $\S$-polynomial map} if $P\in\F^k$ for some  $\S$-algebra $\F$ on $D$.  If $\F$ is defined on its natural domain, then we call $D$ the {\bf natural domain of $P$}.
\end{definition}

\begin{definition}
If $P:D\to\RR^k$ is an $\S$-polynomial map, with $D\subseteq\RR^m$, then a {\bf name} for $P$ is a tuple
\[
(p(x,y),\sigma,\alpha,\xi,\name(D))
\]
such that
\begin{equation}\label{eq:name}
P(x) =
p\left(x,\PDn{\alpha(1)}{S_{\sigma(1)}}{x}\circ\Pi_{\xi(1)}(x), \ldots, \PDn{\alpha(n)}{S_{\sigma(n)}}{x}\circ\Pi_{\xi(n)}(x)\right),
\end{equation}
where
\begin{enumerate}{\setlength{\itemsep}{3pt}
\item
$n\in\NN$ and $p(x,y)\in\QQ[x,y]^k$, with $x = (x_1,\ldots,x_m)$ and $y=(y_1,\ldots,y_n)$;

\item
$\sigma$, $\alpha$ and $\xi$ are maps with domain
$\{1,\ldots,n\}$ such that
\begin{enumerate}{\setlength{\itemsep}{3pt}
\item
for each $i\in\{1,\ldots,n\}$, $\sigma(i)$ is a member of $\Sigma$,
$\alpha(i)$ is a member of $\NN^{\eta\circ\sigma(i)}$, and
$\xi(i)$ is an increasing map from
$\{1,\ldots,\eta\circ\sigma(i)\}$ into $\{1,\ldots,m\}$;

\item
the images of $\xi(1),\ldots,\xi(n)$ are disjoint (so
$\eta\circ\sigma(1)+\cdots+\eta\circ\sigma(n)\leq m$).
}\end{enumerate}
}\end{enumerate}
\end{definition}

For any $\S$-algebra $\F$ with domain $D$, there exist maps $\sigma$, $\alpha$, and $\xi$ such that
\begin{equation}\label{eq:names}
\left\{(p(x,y),\sigma,\alpha,\xi,\name(D)) : p(x,y)\in\QQ[x,y]^k\right\}
\end{equation}
is a collection of names for the members of $\F^k$.  The map from the set
\eqref{eq:names} to $\F^k$ that sends each name
$(p(x,y),\sigma,\alpha,\xi,\name(D))$ to $P(x)$, as defined by \eqref{eq:name}, is surjective but is not necessarily injective.

\begin{definition}\label{def:Smanifold}
An {\bf $\S$-manifold} is an $\F$-manifold for some $\S$-algebra $\F$.  If $M\subseteq \RR^n$ is an $\S$-manifold defined nonsingularly by an $\S$-polynomial map $P:D\to\RR^{\dim(D)-d}$ with good direction $\Pi:\RR^n\to\RR^d$, and if $\name(F)$ is a name for $F$, then we call $(\name(P),\name(\Pi))$ a {\bf name} for $M$.
\end{definition}

\begin{definition}\label{def:basicLifting}
Let $f:U\to\RR^m$ be a function defined on a set $U\subseteq\RR^n$, and let $M$ be a blowup manifold contained in $U$.  Suppose that for some integer $n'\geq n+m$ there exists an $\S$-manifold $M'\subseteq\RR^{n'}$ and a coordinate projection $\Pi:\RR^{n'}\to\RR^{m+n}$ which defines a $\C$-analytic isomorphism from $M'$ onto the graph of $f\Restr{M}$.  Then we call the following data a {\bf basic $\S$-lifting of $f$ on $M$}:
\begin{itemize}
\item
a name for the blowup manifold $M$;

\item
a name for an $\S$-polynomial map $P$ which defines $M'$ nonsingularly by $M = \{x\in U' : P(x) = 0\}$, where $U'\subseteq\RR^{n'}$;

\item
the name for the coordinate projection $\Pi$.
\end{itemize}
\end{definition}

Consider the situation of Definition \ref{def:basicLifting}.  Let $E$ be the unique subset of $\{1,\ldots,n\}$ such that $\Pi_E:\RR^n\to\RR^E$ defines an isomorphism from $M$ onto an open set $V\subseteq\RR^E$.  Then we have the following diagram of coordinates projections, all of which are isomorphisms:
\[
\xymatrix{
M' \ar[r]^-{\Pi}
    & \Graph\left(f\Restr{M}\right) \ar[r]^-{\Pi_n}
    & M \ar[r]^{\Pi_E}
    & V.
}
\]
Thus the projection $\Pi_E\circ\Pi_n\circ\Pi:\RR^{n'}\to\RR^E$ is a good direction for $M'$, so
\[
\left(\name(P), \name(\Pi_E\circ\Pi_n\circ\Pi)\right)
\]
is a name for the $\S$-manifold $M'$.

Basic $\S$-liftings are purely syntactic objects, but it is convenient to speak about them in a semantic manner.  We will write
\begin{equation}\label{eq:basicLiftingNotation}
\xymatrix{
M' \ar[r]^-{\Pi}
    & \Graph(f\Restr{M})
}
\end{equation}
to denote the basic $\S$-lifting given in Definition \ref{def:basicLifting}.  If we do not wish to specify the projection $\Pi$, it will be omitted in \eqref{eq:basicLiftingNotation}.

\begin{lemmas}[Basic Lifting Lemmas]\label{lemma:basicLifting}\hfill
\begin{enumerate}{\setlength{\itemsep}{5pt}
\item{\bf Arithmetic Functions.}
The graphs of the following functions are all $\emptyset$-manifolds:
\begin{enumerate}
\item
$\RR^n\to\RR : x\mapsto p(x)$, for any $n\in\NN$ and $p\in\QQ[x]$, where $x = (x_1,\ldots,x_n)$;

\item
$(-\infty,0)\to\RR:x\mapsto \frac{1}{x}$ and $(0,+\infty)\to\RR:x\mapsto \frac{1}{x}$.
\end{enumerate}
So these functions all have basic $\emptyset$-liftings, since their graphs are their own liftings.

\item{\bf Identity Map.}
Let $\bar{\Pi}:\bar{M}\to M$ be a lifting of a blowup manifold $M$, where $M\subseteq\RR^n$ and $\bar{M}\subseteq\RR^{\bar{n}}$.  Define
\[
M' = \{(x,y) \in\RR^{\bar{n}}\times\RR^n : x\in \bar{M}, y = \bar{\Pi}(x)\},
\]
and define $\Pi':\RR^{\bar{n}+n}\to\RR^{2n}$ by $\Pi'(x,y) = (\bar{\Pi}(x),y)$.  Then
\[
\xymatrix{
M' \ar[r]^-{\Pi'}
    & \Graph(\id_M)
}
\]
is a basic $\emptyset$-lifting of the identity map $\id_M:M\to M$.

\item{\bf Pairing.}
If $M_1\to\Graph(f\Restr{M})$ and $M_2\to\Graph(f_2\Restr{M})$
are basic $\S$-liftings of $f_1$ and $f_2$ on $M$, then
\[
M_1\times_{M} M_2\to\Graph(f_1\times f_2\Restr{M})
\]
is a basic $\S$-lifting of $f_1\times f_2$ on $M$.

\item{\bf Composition.}
If $M'\to\Graph(f\Restr{M})$ is basic $\S$-lifting of $f$ on $M$, $N'\to\Graph(g\Restr{N})$ is a basic $\S$-lifting of $g$ on $N$, and $g(N)\subseteq M$, then
\[
M'\times_M N'\to\Graph(f\circ g\Restr{N})
\]
is a basic $\S$-lifting of $f\circ g$ on $N$.

\item{\bf Implicit Functions.}
Let $f:U\to\RR^n$ be a $\C$-analytic function, with $U$ open in $\RR^{m+n}$.  Suppose that $\IF(f;\cl(A),\cl(B))$ holds for some bounded, open, rational boxes $A\subseteq\RR^m$ and $B\subseteq\RR^n$ such that $\cl(A)\times \cl(B)\subseteq U$, and let $g:\cl(A)\to B$ be the function implicitly defined by $f(x,g(x)) = 0$ on $\cl(A)$.  Fix $I\subseteq\{1,\ldots,m\}$, and put $A_I = \{x\in A : x_I = 0\}$.  Suppose that $\xymatrix{M \ar[r]^-{\Pi\times\Pi'} & \Graph\left(f\Restr{A_I\times B}\right)}$ is a basic $\S$-lifting of $f$ on $A_I\times B$, with $\Pi(M) = A_I\times B$ and $\Pi'(M) = f(A_I\times B)$, and define
\[
N = \{x\in M : \Pi'(x) = 0\}.
\]
Then
\[
\xymatrix{
N \ar[r]^-{\Pi} & \Graph\left(g\Restr{A_I}\right)
}
\]
is a basic $\S$-lifting of $g$ on $A_I$.
}\end{enumerate}
\end{lemmas}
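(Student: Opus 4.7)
The plan is to exhibit explicit nonsingular defining data making $N$ into an $\S$-manifold and then check that $\Pi$ restricts to a $\C$-analytic isomorphism $N\to\Graph(g\Restr{A_I})$. Write the $\S$-manifold $M\subseteq\RR^{n'}$ as $\{z\in X : P(z)=0\}$ for some $\S$-polynomial map $P$ on a rational box manifold $X$ with support set $D\subseteq\{1,\ldots,n'\}$. Each component of $\Pi'$ is a coordinate function on $\RR^{n'}$, hence lies in every $\S$-algebra, so $(P,\Pi')$ is again $\S$-polynomial. I propose to use $(P,\Pi')=0$ as the defining system for $N$, paired with a good direction $\Pi_{\nu_1}$, where $\nu_1$ is defined as follows: $\Pi:\RR^{n'}\to\RR^{m+n}$ is a coordinate projection with $\Pi(M)=A_I\times B$, so the $|I|$ coordinates of $\Pi$ hitting the $x_I$-slots vanish on $M$; let $\nu_1$ be the sub-injection of $\Pi$ picking out exactly the $m-|I|$ coordinates that hit the $x_{I^c}$-slots.

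First I would pin down $\Pi(N)$ from the implicit function data. Since $\IF(f;\cl(A),\cl(B))$ holds, the version of Lemma~\ref{lemma:IF}.1 for $\IF$ on arbitrary boxes (Definition~\ref{def:IFbox}) gives $\{(x,y)\in\cl(A)\times\cl(B):f(x,y)=0\}=\Graph(g)$ and $\det\PD{}{f}{y}\neq 0$ on this graph. Intersecting with $A_I\times B$ yields $\Graph(g\Restr{A_I})$, so the isomorphism $\Pi\times\Pi':M\to\Graph(f\Restr{A_I\times B})$ identifies $N=\{z\in M:\Pi'(z)=0\}$ bijectively with $\Graph(g\Restr{A_I})\times\{0\}$. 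Thus $\Pi\Restr{N}$ is a bijection onto $\Graph(g\Restr{A_I})$; its inverse is $\C$-analytic because $g$ is $\C$-analytic on $A_I$ (by Remark~\ref{rmk:IFsystem}.4 applied locally at each point of $A_I$) and $(\Pi\times\Pi')^{-1}$ is $\C$-analytic by hypothesis. Composing $\Pi\Restr{N}$ with the projection $\Graph(g\Restr{A_I})\to A_{I^c}:(x,g(x))\mapsto x_{I^c}$ gives that $\Pi_{\nu_1}\Restr{N}$ is a $\C$-analytic isomorphism onto the open subset $A_{I^c}\subseteq\RR^{\nu_1}$, supplying the required good direction for $N$.

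The remaining point is the nonsingularity of $(P,\Pi')$ on $N$, namely $\rank\PD{}{(P,\Pi')}{z_D}=|D|-(m-|I|)$ at every $z\in N$. The rows of $\PD{}{P}{z_D}$ already contribute $|D|-\dim(M)$ independent directions on all of $M$, so this reduces to showing that $d\Pi'\Restr{T_zM}:T_zM\to\RR^n$ is surjective at each $z\in N$. Transporting along the isomorphism $\Pi\times\Pi'$, this becomes the statement that $df_{(x,g(x))}:T_{(x,g(x))}(A_I\times B)=\RR^{I^c}\oplus\RR^n\to\RR^n$ is surjective, which follows immediately from the nonvanishing of $\det\PD{}{f}{y}$ on $\Graph(g\Restr{A_I})$. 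This gives the required rank condition, so by Definition~\ref{def:Fmanifold} $N$ is an $\S$-manifold defined nonsingularly by $(P,\Pi')$ with good direction $\Pi_{\nu_1}$, and the data exhibit the claimed basic $\S$-lifting of $g$ on $A_I$.

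The main obstacle is really just bookkeeping: defining $\nu_1$ from the coordinate projection $\Pi$ (rather than from the a priori good direction of $M$ recorded in the $\S$-manifold name, which could be an unrelated selection of coordinates), and then carefully translating the transversality condition on the lifted side into the invertibility of $\PD{}{f}{y}$ on the base side via the coordinate-projection isomorphism $\Pi\times\Pi'$. The geometric content --- that cutting $M$ by $\Pi'=0$ pulls back under this isomorphism to the graph of $g$ in $A_I\times B$, with transversality supplied by the implicit-function hypothesis --- is standard.
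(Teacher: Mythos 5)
Your proof is correct and is essentially the paper's argument: same defining system $(P,\Pi')$, same identification of $\Pi\Restr{N}$ with a bijection onto $\Graph(g\Restr{A_I})$, and the same reduction of the nonsingularity of the augmented system to the invertibility of $\PD{}{f}{y}$ on $\Graph(g)$, transported through the isomorphism $\Pi\times\Pi'$. The only difference is presentational: you phrase the rank condition as surjectivity of $d\Pi'\Restr{T_zM}$ and argue at the level of tangent spaces, while the paper writes out the explicit block Jacobian $\PD{}{(F,z)}{(w,x_I,y,z)}$ and verifies its nonsingularity via the chain-rule identity obtained by parameterizing $M$ through $(x_{I^c},y)$ --- the geometric content is identical.
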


\begin{proof}
Statements 1 and 2 are obvious.  Statements 3 and 4 are corollaries of Lemma \ref{lemma:fiberProduct}, and their proofs are best expressed in diagrams.  We use solid arrows to denote  assumed information and use dotted arrows to denote deduced information.

To prove 3, write $M\to U$ for the good direction for $M$, and note that
\[
\xymatrix{
M_1\times_M M_2
    \ar@{-->}[rr]^-{\isom}
    \ar@{-->}[dd]^-{\isom}
    \ar@{-->}[ddrr]^-{\Pi}_-{\isom}
    \ar@{-->}@/_6pc/[dddr]_-{\Pi_1}
    \ar@{-->}@/^5pc/[drrr]^-{\Pi_2}
    & & M_2 \ar[d]^-{\isom} \\
& & \Graph(f_2\Restr{M}) \ar[r] \ar[d]^-{\isom}
    & f_2(M) \\
M_1 \ar[r]^-{\isom} & \Graph(f_1\Restr{M}) \ar[d] \ar[r]^-{\isom}
    & M \ar[ur]_{f_2} \ar[dl]^{f_1} \ar[r]^-{\isom}
    & U \\
& f_1(M) &  & ,}
\]
so $\xymatrix{M_1\times_{M} M_2 \ar[rr]^-{\Pi\times\Pi_1\times\Pi_2} & & \Graph\left(f_1\times f_2\Restr{M}\right)}$ is a basic $\S$-lifting of $f_1\times f_2$ on $M$.

To prove 4, write $M\to U$ and $N\to V$ for the good directions for $M$ and $N$, and note that
\[
\xymatrix{ & & & \\
M'\times_M N' \ar@{-->}[rr]^-{\isom} \ar@{-->}[dd]
    \ar@{-->}@/^5pc/[rrrd]^{\Pi}_{\isom}
    \ar@{-->}@/_6pc/[dddr]_{\Pi'}
    & & N' \ar[d]^-{\isom} \\
& & \Graph(g\Restr{N}) \ar[r]^-{\isom} \ar[d]
    & N \ar[ld]^{g} \ar[r]^-{\isom}
    & V \\
M' \ar[r]^-{\isom}
    & \Graph(f\Restr{M}) \ar[r]^-{\isom} \ar[d]
    & M \ar[ld]^{f} \ar[r]^-{\isom}
    & U \\
&  f(M) & & & ,}
\]
so
$
\xymatrix{M'\times_M N' \ar[r]^-{\Pi\times\Pi'} & \Graph(f\circ g)}
$
is a basic $\S$-lifting of $f\circ g$ on $N$.

We now prove 5.  Write
\[
M = \{v\in C : F(v) = 0\}
\]
for a rational box $C\subseteq\RR^{n'}$ and an $\S$-polynomial map $F:C\to\RR^{n'+|I|-(m+n)}$ which defines $M$ nonsingularly with good direction $\Pi$.  Write the $n'$-tuple of variables $v$ as $v = (w,x,y,z)$ for tuples of variables $w$ (an $(n' - (m+2n))$-tuple), $x$ (an $m$-tuple), $y$ (an $n$-tuple), and $z$ (an $n$-tuple), where $\Pi(v) = (x,y)$ and $\Pi'(v) = z$.  Also write $x = (x_{I^c},x_I)$, where $I^{c} = \{1,\ldots,m\}\setminus I$.  It is clear that $\Pi$ defines a bijection from $N$ onto the graph of $g\Restr{A_I}$.  We must show that $\PD{}{(F, z)}{(w,x_I,y,z)}$ is nonsingular on $N$.

Write $\tld{A}_I = \Pi_{I^c}(A_I)$ (so $A_I = \tld{A}_I\times\{0\}$, with $\tld{A}_I\subseteq\RR^{I^c}$ and $0\in\RR^I$).  Since the maps
\[
\xymatrix{M \ar[r]^-{\Pi\times\Pi'} & \Graph\left(f\Restr{A_I\times B}\right) \ar[r]^-{\Pi} & A_I\times B \ar[rr]^-{\Pi_{I^c}\times \id_B} & & \tld{A}_I\times B}
\]
are all isomorphisms, it follows that
\[
F\left(h(x_{I^c},y), x_{I^c}, 0, y, f(x_{I^c},0,y)\right) = 0
\]
on $\tld{A}_I\times B$ for some $\C$-analytic function $h:\tld{A}_I\times B \to\RR^{n'-(m+2n)}$, where $0\in\RR^I$ in these functions.  Differentiating in $y$ gives
\begin{equation}\label{eq:Fdiffx2}
\PD{}{F}{w}\PD{}{h}{y} + \PD{}{F}{y} + \PD{}{F}{z}\PD{}{f}{y} = 0
\end{equation}
on $A_I\times B$, with the understanding that the partial derivatives of $F$, $f$, and $h$ are evaluated at $\left(h(x_{I^c},y), x_{I^c}, 0, y, f(x_{I^c},0,y)\right)$, $(x_{I^c},0,y)$, and $(x_{I^c},y)$, respectively.  On $N$, the matrix $\PD{}{F}{(w,x_I,y)}$ is nonsingular because by \eqref{eq:Fdiffx2},
\[
\left[\begin{matrix}
\PD{}{F}{w} & \PD{}{F}{x_I} & \PD{}{F}{y}
\end{matrix}\right]
=
\left[\begin{matrix}
\PD{}{F}{w} & \PD{}{F}{x_I} & \PD{}{F}{z}
\end{matrix}\right]
\left[\begin{matrix}
\id & 0     & -\PD{}{h}{y} \\
0   & \id   & 0 \\
0   & 0     & -\PD{}{f}{y}
\end{matrix}\right],
\]
which expresses $\PD{}{F}{(w,x_I,y)}$ as a product of two nonsingular matrices.  It follows that
\[
\PD{}{(F,z)}{(w,x_I,y,z)}
=
\left[\begin{matrix}
\PD{}{F}{w} & \PD{}{F}{x_I} & \PD{}{F}{y} & \PD{}{F}{z} \\
0           & 0             &             & \id
\end{matrix}\right]
\]
is nonsingular on $N$.
\end{proof}

\begin{remark}\label{rmk:basicLifting}
If $f,g:M\to\RR$ have basic $\S$-liftings, then so do $f+g$ and $fg$.
If $f:M\to\RR$ has a basic $\S$-lifting, then so does $1/f$, provided that $f(x)\neq 0$ on $M$.
\end{remark}

\begin{proof}
Apply Lemma \ref{lemma:basicLifting}.1-3.
\end{proof}

\begin{definition}\label{def:Estrat}
Let $U\subseteq\RR^n$ be an open blowup set, and let $E\subseteq\{1,\ldots,n\}$.  For each $\sigma\in\{-1,0,1\}^E$, let
\begin{equation}\label{eq:Estrat}
U_\sigma = \left\{x\in U : \bigwedge_{i\in E} \sign(x_i) = \sigma_i\right\}.
\end{equation}
The {\bf $E$-stratification of $U$} is the family
\[
\Strat(U,E) = \left\{U_\sigma\right\}_{\sigma\in\{-1,0,1\}^E}.
\]
\end{definition}

Note that for each $\sigma\in\{-1,0,1\}^E$, the set $U_\sigma$ is a blowup manifold defined from the same sequence of blowings-up as $U$, and
\[
\fr_U\left(U_\sigma\right)
=
\bigcup\left\{U_\xi : \text{$\xi\in\{-1,0,1\}^E$, $\xi\neq \sigma$, and $\xi_i\in\{0,\sigma_i\}$ for all $i\in E$}\right\}.
\]
Thus $\Strat(U,E)$ is indeed a stratification of $U$.

\begin{definition}\label{def:SElifting}
Let $U\subseteq\RR^n$ be an open blowup set and $E\subseteq\{1,\ldots,n\}$.  Consider a family $\F = \{f_j:U\to\RR^{m_j}\}_{j\in J}$ of $\C$-analytic functions on $U$, where $J$ is some computable index set and the function $J\to\NN:j\mapsto m_j$ is computable.  An {\bf $(\S,E)$-lifting of $\F$} is a computable map which assigns to each $j\in J$, $\alpha\in\NN^{m_j}$, and $\sigma\in\{-1,0,1\}^E$, a basic $\S$-lifting of $\PDn{\alpha}{f_j}{x}$ on $U_\sigma$.  If $\F = \{f\}$ for a single function $f$, we say $(\S,E)$-lifting ``of $f$'' rather than ``of $\{f\}$''.
\end{definition}

\begin{definition}\label{def:Ecompatible}
For open blowup sets $U\subseteq\RR^n$ and $V\subseteq\RR^m$, and sets $E\subseteq\{1,\ldots,n\}$ and $D\subseteq\{1,\ldots,m\}$, a map $g:V\to U$ is {\bf $(D,E)$-compatible} if for all $N\in\Stratify(V,D)$ there exists $M\in\Stratify(U,E)$ such that $g(N)\subseteq M$.
\end{definition}

\begin{notation}\label{notation:refine}
For any $A\subseteq\RR^n$ and $N\subseteq\{1,\ldots,n\}$, let
\[
A|_N = \{x_{N^c} : x\in A, x_N = 0\},
\]
where $N^c = \{1,\ldots,n\}\setminus N$.  If $A\subseteq U$, then for any function $f$ on $U$, write $f\Restr{A|_N}$ for the pullback of $f$ by the map $A|_N\to\RR:x_{N^c}\mapsto (x_{N^c},0)$, where $0\in\RR^N$.  Likewise, for any family $\F$ of functions on $U$, write $\F\Restr{A|_N}$ for the pullback of $\F$ by $x_{N^c}\mapsto (x_{N^c},0)$.
\end{notation}

In Definition \ref{def:pullback} we wrote $\F\Restr{A}$ for restriction of $\F$ to $A$, which is the pullback of $\F$ by the inclusion map $A\hookrightarrow U$.  The notations $\F\Restr{A}$ and $\F\Restr{A|_N}$ can be distinguished by considering the ambient space of the sets $A$ and $A|_N$.  For example, for $0\in\RR^N$, the family $\F\Restr{A|_N\times\{0\}}$ is the restriction of $\F$ to $A|_N\times\{0\}$, which is a family of functions in $x$, but $\F\Restr{A|_N}$ is a family of functions in $x_{N^c}$.

\begin{lemmas}[Lifting Lemmas, Part I]\label{lemma:lifting1}
Let $\F = \{F_j:U\to\RR^{m_j}\}_{j\in J}$ be a computably indexed family of $\C$-analytic functions on an open blowup set $U\subseteq\RR^n$, and let $E\subseteq\{1,\ldots,n\}$.
\begin{enumerate}{\setlength{\itemsep}{5pt}
\item{\bf Slicing.}
If $\F$ has an $(\S,E)$-lifting and $E\subseteq D\subseteq\{1,\ldots,n\}$, then $\F$ has an $(\S,D)$-lifting.

\item{\bf Restricting.}
If $U = \Bus(I,i,A)$ and $V = \Bus(I,i,B)$ for open rational boxes $A$ and $B$, with $B\subseteq A$, and if $\F$ has an $(\S,E)$-lifting, then $\F\Restr{V}$ has an $(\S,E)$-lifting.

\item{\bf Compatible Composition.}
If $\F$ has an $(\S,E)$-lifting, and if $g:V\to U$ is a $(D,E)$-compatible $\C$-analytic function with an $(\S,D)$-lifting, where $V\subseteq\RR^m$ and $D\subseteq\{1,\ldots,m\}$, then $g^*\F$ has an $(\S,D)$-lifting.

\item{\bf Division by Variables.}
Let $k\in E$ and $d\in\NN$ be such that for each $j\in J$ there exists a $\C$-analytic function $g_j:U\to\RR^{m_j}$ such that $f_j(x) = x_{k}^{d}g_j(x)$ on $U$.  Define $x_{k}^{-d}\F = \{g_j\}_{j\in J}$.  If $\F$ has an $(\S,E)$-lifting, then $x_{k}^{-d}\F$ has an $(\S,E)$-lifting.

\item{\bf Refinement.}
Let $N\subseteq\{1,\ldots,n\}$ be such that $U|_N$ is nonempty.  If $\F$ has an $(\S,E)$-lifting, then $\F\Restr{U|_N}$ has an $(\S,E\setminus N)$-lifting.

\item{\bf Maps with Trivial Components.}
Let $N\subseteq\{1,\ldots,n\}$, $V\subseteq\RR^{N^c}$ be an open blowup set, $B\subseteq\RR^N$ be an open rational box, and  $g:V\to\RR^{N^c}$ be $\C$-analytic.  Define $G:V\times B\to\RR^n$ by $G(y) = (g(y_{N^c}), y_N)$.  If $g$ has an $(\S,E\setminus N)$-lifting and is $(E\setminus N, E\setminus N)$-compatible, then $G$ has an $(\S,E\setminus N)$-lifting and is $(E,E)$-compatible.
}\end{enumerate}
\end{lemmas}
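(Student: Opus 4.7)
The plan is to treat each of the six clauses separately, in each case building the required basic $\S$-liftings on every stratum $U_\sigma$ (or $V_\tau$, etc.) out of the given data by invoking the basic lifting lemmas (Lemma \ref{lemma:basicLifting}) together with Remark \ref{rmk:basicLifting}. For \textbf{Slicing} and \textbf{Restricting}, the new strata are obtained from the original strata by intersecting with a rational box: for slicing, $U_\tau\subseteq U_{\tau|_E}$ is cut out by sign conditions (rational intervals) on the coordinates $x_i$ with $i\in D\setminus E$; for restricting, $V_\sigma\subseteq U_\sigma$ is cut out by the rational box defining $V$. By Remark \ref{rmk:blowupSet}.3, each such intersection is a blowup manifold defined by the \emph{same} sequence of blowings-up, so the basic $\S$-lifting $M'\to\Graph(\PDn{\alpha}{F_j}{x}\Restr{U_\sigma})$ is transported to the smaller stratum by replacing the ambient rational box of $M'$ with its intersection with the preimage of the smaller blowup manifold, while keeping the $\S$-polynomial defining equations unchanged.

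For \textbf{Compatible Composition}, given $\tau\in\{-1,0,1\}^D$, use compatibility to pick $\sigma$ with $g(V_\tau)\subseteq U_\sigma$. The multivariate Fa\`a di Bruno formula expresses each $\PDn{\beta}{(F_j\circ g)}{y}$ as a $\QQ$-polynomial in the partial derivatives of $g$ on $V_\tau$ and the partial derivatives of $F_j$ composed with $g$; the latter have basic $\S$-liftings on $U_\sigma$ by hypothesis, and pulling back along $g$ via Lemma \ref{lemma:basicLifting}(4) yields basic $\S$-liftings of their compositions with $g$ on $V_\tau$; one then pairs and assembles polynomially by Lemma \ref{lemma:basicLifting}(1)--(3) and Remark \ref{rmk:basicLifting}. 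For \textbf{Refinement}, fix $\sigma\in\{-1,0,1\}^{E\setminus N}$ and let $\tau\in\{-1,0,1\}^E$ extend $\sigma$ by $\tau_i=0$ on $N$; the inclusion map $x_{N^c}\mapsto (x_{N^c},0)$ is an $\emptyset$-polynomial isomorphism between $(U|_N)_\sigma$ and $U_\tau$ (using the hypothesis that $U|_N$ is nonempty so $N\subseteq\Cen(U)$, which gives $U|_N$ blowup-set structure), under which partial derivatives of $F_j\restr{U|_N}$ in $x_{N^c}$ correspond to partial derivatives of $F_j$ in $x_{N^c}$ on $U_\tau$, so Lemma \ref{lemma:basicLifting}(4) transports the hypothesized basic $\S$-liftings. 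For \textbf{Maps with Trivial Components}, the partial derivatives of $G$ decompose into partial derivatives of $g$, Kronecker deltas, and zeros, each of which has a basic $\S$-lifting by Lemma \ref{lemma:basicLifting}(1)--(3) applied to the $(\S,E\setminus N)$-lifting of $g$ and assembled via Remark \ref{rmk:basicLifting}; the $(E,E)$-compatibility of $G$ follows immediately from $(E\setminus N,E\setminus N)$-compatibility of $g$ combined with the identity action of $G$ on the $y_N$ coordinates.

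The main obstacle is \textbf{Division by Variables}. The difficulty is that no single formula expresses $\PDn{\alpha}{g_j}{x}$ as a function of $\PDn{\gamma}{f_j}{x}$ uniformly across $U$: the formula $g_j = x_k^{-d}f_j$ becomes singular on $\{x_k=0\}$, while the formula coming from Lemma \ref{lemma:derivRestr} only makes sense on $\{x_k=0\}$. Since $k\in E$, however, each stratum $U_\sigma$ lies entirely in one or the other locus. On strata $U_\sigma$ with $\sigma_k\neq 0$, Leibniz's rule writes $\PDn{\alpha}{g_j}{x}$ as a $\QQ$-polynomial in $\PDn{\beta}{(x_k^{-d})}{x}$ (lifted by Lemma \ref{lemma:basicLifting}(1)(b) via arithmetic) and $\PDn{\gamma}{f_j}{x}$ (lifted by hypothesis), and Remark \ref{rmk:basicLifting} assembles. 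On strata $U_\sigma$ with $\sigma_k=0$, Lemma \ref{lemma:derivRestr} gives $\PDn{\alpha}{g_j}{x}(x) = \frac{\alpha_k!}{(\alpha_k+d)!}\PDn{\alpha+de_k}{f_j}{x}(x)$ on $U_\sigma$, so the basic $\S$-lifting of $\PDn{\alpha+de_k}{f_j}{x}$ on $U_\sigma$, scaled by a rational constant via Lemma \ref{lemma:basicLifting}(1), yields the required basic $\S$-lifting of $\PDn{\alpha}{g_j}{x}$. The effectivity of the construction in $\sigma$, $\alpha$, $j$ is preserved by the case split because $\sigma_k$ is part of the input, so the overall $(\S,E)$-lifting of $x_k^{-d}\F$ is computable.
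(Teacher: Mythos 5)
Your proposal follows essentially the same approach as the paper's proof: stratum-by-stratum construction of basic $\S$-liftings via the basic lifting lemmas (identity, composition, fiber product, arithmetic) and Remark \ref{rmk:basicLifting}, with the Division-by-Variables clause handled by the same case split on whether the stratum lies in $\{x_k = 0\}$ or its complement, using Leibniz's rule in the latter case and Lemma \ref{lemma:derivRestr} in the former. The paper dispatches Slicing and Restricting more briskly by routing them through the Identity Map (\ref{lemma:basicLifting}.2) and Composition (\ref{lemma:basicLifting}.4) lemmas rather than directly surgically modifying the rational box of the lifting manifold $M'$; the latter is fine in spirit but would need a word about preserving connectedness and the nonsingular manifold structure when the intersecting box is degenerate (sign $= 0$), which the fiber-product route sidesteps.

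There is one genuine gap in your Refinement argument: you pass to a sign pattern $\tau\in\{-1,0,1\}^E$ with $\tau_i = 0$ on $N$, but if $N\not\subseteq E$ then $\tau$ places no constraint on $x_i$ for $i\in N\setminus E$, and $U_\tau$ is therefore not contained in $\{x : x_N = 0\}$; the inclusion $x_{N^c}\mapsto(x_{N^c},0)$ then maps $(U|_N)_\sigma$ into a \emph{proper}, non-open subset of $U_\tau$, not onto it. The fix is exactly what the paper does: first apply Slicing (Statement 1) to pass from an $(\S,E)$-lifting to an $(\S,E\cup N)$-lifting, and then restrict attention to the members of $\Strat(U,E\cup N)$ with sign $0$ on all of $N$ and to partial derivatives in the $x_{N^c}$ variables only. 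After that correction, the identification of strata and transport via \ref{lemma:basicLifting}.4 works as you intended.
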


\begin{proof}
Statements 1 and 2 follow from Lemma \ref{lemma:basicLifting}.2,4.

We now prove 3.  Let $N\in\Strat(V,D)$, and choose the unique $M\subseteq\Strat(U,E)$ such that $g(N)\subseteq M$.  For each $f\in\F$, $\alpha\in\NN^n$, and $\beta\in\NN^m$, the functions $\PDn{\alpha}{f}{x}\Restr{M}$ and $\PDn{\beta}{g}{y}\Restr{N}$ have basic $\S$-liftings, where $x = (x_1,\ldots,x_n)$ and $y = (y_1,\ldots,y_m)$.  Thus by Lemma \ref{lemma:basicLifting}.4, each function $\PDn{\alpha}{f}{x}\circ g\Restr{N}$ has a basic $\S$-lifting.  Therefore by Remark \ref{rmk:basicLifting} and repeated application of the chain rule, each $\PD{\beta}{(f\circ g)}{y}$ has a basic $\S$-lifting on $N$.  This proves 3.

We now prove 4.  Consider some $f\in\F$, $\alpha\in\NN^n$, and $M\in\Strat(U,E)$.  If $M\subseteq\{x\in U : x_k\neq 0\}$, then differentiating $g(x) = x_{j}^{-d} f(x)$ using the the product rule, and applying Remark \ref{rmk:basicLifting}, shows that $\PD{\alpha}{g}{x}$ has a basic $\S$-lifting on $M$. If $M\subseteq\{x\in U : x_k = 0\}$, then by Lemma \ref{lemma:derivRestr},
\[
\frac{1}{\alpha!} \PDn{\alpha}{g}{x}(x)
=
\frac{1}{(\alpha_k+d)!}
\frac{\partial^{|\alpha|+d} f}{\partial x^{\alpha+de_k}}(x),
\quad\text{for $x\in U$ with $x_k = 0$,}
\]
which shows that $\PDn{\alpha}{g}{x}$ has a basic $\S$-lifting on $M$.  This proves 4.

We now prove 5.  By Statement 1, $\F$ has an $(\S,E\cup N)$-lifting.  By only considering the members of $\Strat(U,E\cup N)$ contained in $\{x\in\RR^n : x_N = 0\}$ and only considering partial derivatives $\PDn{\alpha}{f}{x_{N^c}}$ with $f\in\F$ and $\alpha\in\NN^{N^c}$, we obtain an $(\S,E\setminus N)$-lifting of $\F|_N$.  This proves 5.

Statement 6 is obvious.
\end{proof}

\begin{lemmas}[Lifting Lemmas, Part II]\label{lemma:lifting2}
\hfill
\begin{enumerate}{\setlength{\itemsep}{5pt}
\item{\bf Linear transformations.}
Let $E\subseteq\{1,\ldots,n\}$.  For each $\lambda\in\QQ^{E^c\times E}$ and open rational box $A\subseteq\RR^n$, the map $T_\lambda:A\to\RR^n$ is $(E,E)$-compatible and has an $(\emptyset,\emptyset)$-lifting.

\item{\bf Translations by implicitly defined functions.}
Let $f:U\to\RR^n$ be a $\C$-analytic function, where $U\subseteq\RR^{m+n}$ is open.  Suppose that $\IF(f;\cl(A),\cl(B))$ holds for some bounded, open, rational boxes $A\subseteq\RR^m$ and $B\subseteq\RR^n$ such that $\cl(A)\times \cl(B)\subseteq U$, and let $g:\cl(A)\to B$ be the function implicitly defined by $f(x,g(x)) = 0$ on $\cl(A)$.  Fix $\epsilon\in\QQ^{n}_{+}$ such that $[g(x)-\epsilon,g(x)+\epsilon]\subseteq B$ for all $x\in \cl(A)$, and define $G:A\times(-\epsilon,\epsilon)\to U$ by
\[
G(x,y) = (x, y + g(x)).
\]
If $E\subseteq\{1,\ldots,m\}$ and $f$ has an $(\S,E)$-lifting, then $G$ has an $(\S,E)$-lifting and is $(E,E)$-compatible.

\item{\bf Blowings-up.}
Let $I\subseteq\{1,\ldots,n\}$, and let $U\subseteq\RR^n$ be an open blowup set such that $C = \{x\in U : x_I = 0\}$ is nonempty.  Let $i\in I$, and let $\pi_i:U_i\to U$ be the $i$th standard chart of the blowing-up of $U$ with center $C$.  Then for any $E\subseteq\{1,\ldots,n\}$, $\pi_i:U_i\to U$ has an $(\emptyset,\emptyset)$-lifting and is $(E\cup\{i\},E)$-compatible.
}\end{enumerate}
\end{lemmas}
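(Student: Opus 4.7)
My plan is to handle the three parts in order, leveraging Lemma \ref{lemma:basicLifting} and Lemma \ref{lemma:lifting1} throughout.

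For Part 1, each component of $T_\lambda$ is a rational-linear polynomial in $y$, so every partial derivative $\PDn{\alpha}{T_\lambda}{y}$ lies in $\QQ[y]^n$. The Identity sublemma (Lemma \ref{lemma:basicLifting}.2) applied to the open rational box $A$ (as a blowup set of length $0$), combined with Lemma \ref{lemma:basicLifting}.1(a), 3, 4, produces basic $\emptyset$-liftings of these derivatives on the unique stratum of $\Strat(A, \emptyset) = \{A\}$. The $(E, E)$-compatibility is immediate from $T_\lambda(y)_j = y_j$ for all $j \in E$.

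For Part 2, $(E, E)$-compatibility of $G(x, y) = (x, y + g(x))$ is immediate since $G$'s first $m$ components are the identity in $x$ and $E \subseteq \{1, \ldots, m\}$. The substantive task is to build the $(\S, E)$-lifting. Since $\PD{}{G}{y}$ is constant and $\PD{}{G}{x}$ reduces by the chain rule to expressions in partial derivatives of $g$, Pairing, Composition and arithmetic (Remark \ref{rmk:basicLifting}) reduce the problem to producing, for each $\sigma \in \{-1, 0, 1\}^E$, basic $\S$-liftings of every $\PDn{\alpha}{g}{x}$ on the stratum $A_\sigma$. To do this, set $I_\sigma = \{i \in E : \sigma_i = 0\}$ and shrink $A$ to an open rational sub-box $A^\sigma \subseteq A$ whose $i$-th factor is restricted to the appropriate half-interval on one side of $0$ when $i \in E \setminus I_\sigma$, and is left as a rational interval containing $0$ when $i \in I_\sigma$. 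Then $(A^\sigma)_{I_\sigma} = A_\sigma$, and $\IF(f; \cl(A^\sigma), \cl(B))$ is inherited directly from Definition \ref{def:IF} since the pointwise conditions there pass to any closed sub-box of the original domain. The $(\S, E)$-lifting of $f$ restricted to the stratum $A_\sigma \times B = (A^\sigma)_{I_\sigma} \times B$ supplies exactly the input required by Lemma \ref{lemma:basicLifting}.5, which then outputs a basic $\S$-lifting of $g$ on $A_\sigma$. Higher partial derivatives of $g$ are expressed by implicit differentiation of $f(x, g(x)) = 0$: each $\PDn{\alpha}{g}{x}$ is a rational combination of pullbacks of partial derivatives of $f$ by $(x, g(x))$ and $1/\det \PD{}{f}{y}(x, g(x))$ (nonvanishing on the graph by Definition \ref{def:IF}), so Remark \ref{rmk:basicLifting} and Lemma \ref{lemma:basicLifting}.1(b), 3, 4 close this step.

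For Part 3, the coordinate formula $\pi_i(y) = (y_{I^c}, y_i, y_i y_{I_i})$ has integer polynomial components in $y$, so $\PDn{\alpha}{\pi_i}{y} \in \QQ[y]^n$ for each $\alpha$. Lemma \ref{lemma:basicLifting}.1(a), together with the Identity sublemma applied to $\id_{U_i}$ (which uses the blowup lifting of $U_i$ supplied by the structure of $U_i$) and the Composition sublemma, yields basic $\emptyset$-liftings of each $\PDn{\alpha}{\pi_i}{y}$ on $U_i$, the only stratum of $\Strat(U_i, \emptyset)$. For $(E \cup \{i\}, E)$-compatibility, fix $\sigma' \in \{-1, 0, 1\}^{E \cup \{i\}}$ and compute $\sign(\pi_i(y)_j)$ for each $j \in E$: if $j \in I^c$ or $j = i$, then $\pi_i(y)_j = y_j$ has sign $\sigma'_j$; if $j \in I_i \cap E$, then $\pi_i(y)_j = y_i y_j$ has sign $\sigma'_i \sigma'_j$. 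In both cases the sign is determined by $\sigma'$, so a suitable $\sigma \in \{-1, 0, 1\}^E$ exists. The main obstacle is the alignment in Part 2 between sign-strata $A_\sigma$ of the $E$-stratification of $A \times B$ and zero-strata of the form $(A')_I = \{x \in A' : x_I = 0\}$ required by Lemma \ref{lemma:basicLifting}.5; the sub-box trick above reconciles the two, and once the sub-boxes $A^\sigma$ are in hand the rest of the argument is a routine assembly of previously established lifting operations.
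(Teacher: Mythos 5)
Your proof is correct and takes essentially the same route as the paper's: $(E,E)$-compatibility is immediate from the defining formulas in all three parts, Parts 1 and 3 follow because $T_\lambda$ and $\pi_i$ are rational-polynomial maps, and Part 2 is settled by applying Lemma \ref{lemma:basicLifting}.5 stratum-by-stratum and then using implicit differentiation together with Remark \ref{rmk:basicLifting} for higher derivatives. The one place you go beyond the paper is the sub-box alignment in Part 2: the paper says only ``Fix $M\in\Strat(A,E)$; Lemma \ref{lemma:basicLifting}.5 shows that $g$ has a basic $\S$-lifting on $M$,'' tacitly treating the identification of a sign-stratum $A_\sigma$ with a zero-set $(A^\sigma)_{I_\sigma}$ as obvious, whereas you explicitly construct the sub-box $A^\sigma$ and note that $\IF(f;\cl(A^\sigma),\cl(B))$ is inherited from $\IF(f;\cl(A),\cl(B))$. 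That claim is correct (the pointwise conditions in Definition \ref{def:IF} and its inductive step persist when $A$ shrinks), and spelling it out is a useful elaboration rather than a departure.
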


\begin{proof}
By definition, $T_\lambda(y) = \left(\left(y_i + \sum_{j\in E} \lambda_{i,j} y_j\right)_{i\in E^c}, y_E\right)$, so Statement 1 is obvious.

We now prove 2.  The $(E,E)$-compatibility of $G$ is clear.  Fix $M\in\Strat(A,E)$.  Lemma \ref{lemma:basicLifting}.5 shows that $g$ has a basic $\S$-lifting on $M$.  Basic $\S$-liftings on $M$ of all the partial derivatives of $g$ can be constructed by repeatedly differentiating the equation $f(x,g(x)) = 0$ to solve for the partial derivatives of $g$, using the formula $\left(\PD{}{f}{y}\right)^{-1} = \adj\left(\PD{}{f}{y}\right) / \det\left(\PD{}{f}{y}\right)$, and then applying Remark \ref{rmk:basicLifting}.  Basic $\S$-liftings for all partial derivatives of $G$ on $M\times B$ can be constructed using the basic $\S$-liftings of the partial derivatives of $g$ on $M$ and Remark \ref{rmk:basicLifting}. This proves 2.

We now prove 3.  The set $U_i = \pi_{i}^{-1}(U)$ is an open blowup set since $U$ is, and the fact that $\pi_i:U_i\to U$ has an $(\emptyset,\emptyset)$-lifting and is $(E\cup\{i\},E)$-compatible follows easily from its defining formula, $\pi_i(y) = \left(y_{I^c}, y_i, y_i y_{I_i}\right)$.  This proves 3.
\end{proof}

\section*{\bf Part IV: Desingularization}

Throughout all of Part IV, we fix an IF-system $\C$ and a family $\S\subseteq\C$. Unless explicitly stated otherwise, it should be assumed that $\C$ is quasianalytic.  Write
\[
\S = \{S_\sigma\}_{\sigma\in\Sigma}
\]
for functions $S_\sigma:[-r_\sigma,r_\sigma]\to\RR$, where $r_\sigma\in\QQ_{+}^{\eta(\sigma)}$.  For all statements involving $\S$ which have any effective content, we assume that the map
\[
\Sigma\to\bigcup_{n\in\NN}\QQ_{+}^{n} : \sigma\mapsto r_\sigma
\]
is computable, and hence, that the index set $\Sigma$ and the arity map $\eta:\Sigma\to\NN$ are also computable.  Write
\[
\Delta(\S) = \left\{\PDn{\alpha}{S_\sigma}{x}\right\}_{\sigma\in\Sigma, \alpha\in\NN^{\eta(\sigma)}}.
\]

We briefly outline Part IV.  Section \ref{s:oracles} defines the approximation and precision oracles for $\S$.  Sections \ref{s:basicPres} and \ref{s:pres} discuss basic $\S$-presentations and $\S$-presentations, respectively, which are data structures used in our effective desingularization theorems.  These two sections complete all the hard work needed to obtain an effective local resolution of singularities theorem, which is then given rather effortlessly at the beginning of Section \ref{s:desingThms} (see Theorem \ref{thm:presDesing}).  Section \ref{s:desingThms} then proceeds to prove  an effective fiber cutting theorem (see Proposition \ref{prop:SparamFC}) and an effective theorem of the complement (see Theorem \ref{thm:compl}), which are then combined to give an effective parameterization theorem for the $0$-definable sets of $\RR_{\S}$ (see Theorem \ref{thm:SparamDefinable}).  Section \ref{s:IFconstants} constructs the smallest IF-system $\C(E)$ containing $\C$ and $E$, where $E$ is an additive group subgroup of $\RR$ which contains $\C_0$.  The IF-system $\C(E)$ is useful when discussing definability, rather than $0$-definability, since the former uses parameters.  Finally, Section \ref{s:MT} completes the proofs of the main model theoretic results of the paper, the first of which being our characterization of the decidability of the theory of $\RR_{\S}$.

\section{The Approximation and Precision Oracles}\label{s:oracles}

\begin{definition}\label{def:approxOracle}
The {\bf approximation oracle for $\S$} is an oracle which acts as a $C^\infty$ approximation algorithm for the family $\S$.
\end{definition}

Thus, given $\sigma\in\Sigma$, $\alpha\in\NN^{\eta(\sigma)}$, and names for a compact rational box $B\subseteq[-r_\sigma,r_\sigma]$ and a rational open interval $I$, the approximation oracle for $\S$ stops if and only if
\begin{equation}\label{eq:approx}
\PDn{\alpha}{S_\sigma}{x}(B) \subseteq I.
\end{equation}

\begin{definition}\label{def:precOracle}
The {\bf precision oracle for $\S$} acts as follows:
\begin{quote}
Given the following data:
\begin{itemize}
\item
a name for an $\S$-polynomial map $P:D\to\RR^{\dim(D)-d}$, where $D\subseteq\RR^m$ and $d\in\{0,\ldots,\dim(D)\}$,

\item
an injection $\lambda:\{1,\ldots,d\}\to\{1,\ldots,m\}$ such that $\Pi_\lambda(D)$ is open in $\RR^d$,

\item
a name for a bounded rational box manifold $B$ which is open in $D$ with $\cl(B)\subseteq D$,

\item
$i\in\{1,\ldots,m\}$,
\end{itemize}
if we write $\varphi = (\varphi_1,\ldots,\varphi_m):\Pi_\lambda(B)\to B$ for the section of the projection $\Pi_\lambda:\RR^m\to\RR^d$ implicitly defined by $f\circ\varphi(x_\lambda) = 0$ on $\Pi_\lambda(B)$, the oracle stops if and only if
\begin{equation}\label{eq:prec}
\text{$\varphi_i(x_\lambda) = 0$ for all $x_\lambda\in \Pi_\lambda(B)$.}
\end{equation}
\end{quote}
\end{definition}

The theory of $\RR_{\S}$ clearly decides the approximation and precision oracles for $\S$, since \eqref{eq:approx} and \eqref{eq:prec} are expressible as $\L_{\S}$-sentences which can be effectively constructed from the data given as input to the two oracles.  The main purpose of this paper is to prove that the converse is also true, namely, that these two oracles decide the theory of $\RR_{\S}$.

\begin{proposition}\label{prop:precOracle}
Relative to the approximation oracle for $\S$, the following oracle is computably equivalent to the precision oracle for $\S$:
\begin{equation}\label{eq:FmanifoldPrec}
\text{\parbox{5.5in}{
Given a name for some $\S$-manifold $M\subseteq\RR^m$, and given $i\in\{1,\ldots,m\}$, the oracle states whether or not $M$ is a subset of $\{x\in\RR^m : x_i = 0\}$.
}}
\end{equation}
\end{proposition}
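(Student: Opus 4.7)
The plan is to establish the computable equivalence in both directions, with the approximation oracle available throughout.

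For the direction in which oracle \eqref{eq:FmanifoldPrec} simulates the precision oracle: given precision oracle input $(P, \lambda, B, i)$, I would observe that the set $N := \{x \in B : P(x) = 0\}$ is itself an $\S$-manifold whose name is effectively constructible from the precision oracle input. The bounded rational box manifold $B$ serves as the ambient domain, $P|_B$ is the defining $\S$-polynomial map (nonsingular by the hypothesis $\IF_\lambda(P; B)$ together with Remark \ref{rmk:IFsection}.1(a)), and $\Pi_\lambda$ is the good direction. Since $N$ coincides with the image of the implicit section $\varphi : \Pi_\lambda(B) \to B$, we have $\varphi_i \equiv 0$ on $\Pi_\lambda(B)$ if and only if $N \subseteq \{x_i = 0\}$. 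Hence querying \eqref{eq:FmanifoldPrec} on $(N, i)$ and halting iff the answer is YES simulates the one-sided precision oracle.

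For the reverse direction, given an $\S$-manifold $M = \{x \in X : P(x) = 0\}$ with $X = A \times \{a\}$ (for $A \subseteq \RR^E$ an open rational box and $a \in \QQ^{E^c}$) and good direction $\Pi_\lambda$, along with an index $i$, I would decide $M \subseteq \{x_i = 0\}$ as follows. When $i \in E^c$ the question reduces to whether $a_i = 0$, which is trivial. When $i \in E$, I would run two processes in parallel by time-sharing. The \emph{NO-process} enumerates bounded rational boxes $B \subseteq A$ with $\cl(B) \subseteq A$ for which (i) $\IF_\lambda(P; B \times \{a\})$ is verifiable using the approximation oracle via Remark \ref{rmk:IFsection}.1(c), (ii) $\Pi_\lambda(B \times \{a\})$ is nonempty, and (iii) $0$ is excluded from the $i$-th interval factor of $B$. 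Any such $B$ exhibits a point of $M$ with $x_i \neq 0$, certifying NO; conversely, Remark \ref{rmk:IFsection}.2 guarantees that such a box is eventually enumerated whenever $M \not\subseteq \{x_i = 0\}$. The \emph{YES-process} enumerates bounded rational boxes $B_k \subseteq A$ with $\IF_\lambda(P; B_k \times \{a\})$ verifiable, queries the precision oracle on each to certify $\varphi_i^{(k)} \equiv 0$ on $\Pi_\lambda(B_k \times \{a\})$, and outputs YES once the certified pieces collectively account for all of $M$.

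The main obstacle lies in ensuring termination of the YES-process in the general case, since $\Pi_\lambda(M)$ may be unbounded or possess infinitely many connected components, so that no finite cover by certifying boxes is obviously sufficient. My plan to address this is to exploit the quasianalyticity of $\C$ through Proposition \ref{prop:denseZeros}: on each connected component of $\Pi_\lambda(M)$, the $\C$-analytic function $\varphi_i$ vanishes identically as soon as it vanishes on any nonempty open subset. Thus a single successful precision oracle query suffices to certify $\varphi_i \equiv 0$ on an entire component, and the verification reduces to one precision oracle call per component. Enumerating the components effectively, and bounding the number that are ``relevant'' at any finite stage using the $\S$-polynomial structure of $P$, is the delicate step where the combinatorial bookkeeping of the parallel search meets the quasianalytic structure; this constitutes the technical heart of the argument and is where I expect the hardest work to reside.
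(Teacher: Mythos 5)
Your first direction (simulating the precision oracle using \eqref{eq:FmanifoldPrec}) matches the paper's argument exactly: the set $N = \{x\in B : P(x)=0\}$ is an $\S$-manifold whose name you build from the precision oracle's input, and one yes/no query finishes the job.

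The second direction contains a genuine gap, and it is the one you yourself flag at the end: the YES-process as you have set it up cannot be made to terminate, because you are trying to \emph{cover} $M$ by boxes on which $\IF_\lambda$ can be verified, and no finite such cover is forthcoming in general. Your proposed repair --- exploit quasianalyticity so that a single positive precision-oracle answer propagates over an entire connected component of $\Pi_\lambda(M)$ --- is exactly the right idea, but you then stall on enumerating and bounding the number of components, which you describe as "the technical heart of the argument."

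The observation you are missing, which is what the paper's proof actually uses, is that this enumeration never has to happen because $M$ is \emph{connected}. This is built into the way $\S$-manifolds arise in the paper: every $\S$-manifold that the precision oracle is ever queried on is (isomorphic to) the graph of a $\C$-analytic function over a blowup manifold, and blowup manifolds are connected (see the use in Lemma \ref{lemma:SpresZero} and the remark following Definition \ref{def:blowupManifold}). Once you grant that $M$ is connected, the whole procedure collapses to the following: after disposing of the two trivial cases $i\in E^c$ (then $x_i\equiv a_i$ on $M$) and $i\in\im(\lambda)$ (then $\Pi_\lambda(M)$ open forces $M\not\subseteq\{x_i=0\}$), search --- using the approximation oracle and Remark \ref{rmk:IFsection}.1(c) --- for a single bounded rational box manifold $B$ with $\cl(B)\subseteq D$ on which $\IF_\lambda(P;B)$ holds. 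Such a $B$ exists because $M\neq\emptyset$, and it can be found effectively. Since $M$ is a connected $\C$-analytic manifold and $M\cap B = \im(\varphi)$ is a nonempty open piece of it, quasianalyticity of $\C$ gives $M\subseteq\{x_i=0\}$ if and only if $\varphi_i\equiv 0$ on $\Pi_\lambda(B)$. The YES-process is therefore a \emph{single} call to the precision oracle on the data $(P,\lambda,B,i)$, run in parallel with the NO-process (searching for a compact rational box on which $\varphi_i$ is bounded away from $0$ via the approximation oracle), and exactly one of the two halts. No further bookkeeping over components is needed, so the "technical heart" you were dreading is not there.
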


\begin{proof}
Assume that the oracle \eqref{eq:FmanifoldPrec} is decidable.  Suppose we are given names for the data $P:D\to\RR^{\dim(D) - d}$, $B$, $\lambda$, and $i$ from Definition \ref{def:precOracle}, and let $\varphi = (\varphi_1,\ldots,\varphi_m):\Pi_\lambda(B)\to B$ for the section of the projection $\Pi_\lambda:\RR^m\to\RR^d$ implicitly defined by $P\circ\varphi(x_\lambda) = 0$ on $\Pi_\lambda(B)$.  Then $M = \{x\in B : P(x) = 0\}$ is an $\S$-manifold, and $M\subseteq\{x\in\RR^m : x_i = 0\}$ if and only if $\varphi(x_\lambda) = 0$ for all $x_\lambda\in\Pi_\lambda(B)$.  So the precision oracle for $\S$ is decidable.

Conversely, assume that the approximation and precision oracles for $\S$ are decidable.  Suppose we are given a name $(\name(P),\lambda)$ for an $\S$-manifold $M = \{x\in D : P(x) = 0\} \subseteq\RR^m$ and some $i\in\{1,\ldots,m\}$, where $P:D\to\RR^{\dim(D) - d}$ and $\lambda:\{1,\ldots,d\}\to\{1,\ldots,m\}$.  We want to determine if
\begin{equation}\label{eq:xi=0?}
M\subseteq\{x\in\RR^m : x_i = 0\}
\end{equation}
is true.  Since $\Pi_\lambda(M)$ is open in $\RR^d$, then \eqref{eq:xi=0?} is clearly false if $i\in\im(\lambda)$.  So assume that $i\not\in\im(\lambda)$.  Using the approximation oracle for $\S$, by Remark \ref{rmk:IFsection}.1(c) we may find a name for a bounded rational box manifold $B$ such that $B$ is open $D$, $\cl(B) \subseteq D$, and $\IF_\lambda(P;B)$ holds.  Let $\varphi = (\varphi_1,\ldots,\varphi_m):\Pi_\lambda(B)\to B$ be the section of the projection $\Pi_\lambda:\RR^m\to\RR^d$ implicitly defined by $P\circ\varphi(x_\lambda) = 0$ on $\Pi_\lambda(B)$.  Since $M$ is connected, the quasianalyticity of $\C$ implies that \eqref{eq:xi=0?} holds if and only if $\varphi_i(x_\lambda) = 0$ for all $x_\lambda\in\Pi_\lambda(B)$.  If $\varphi_i(x_\lambda) \neq 0$ for some $x_\lambda \in \Pi_\lambda(B)$, then this can be effectively verified using the approximation oracle, since we can search for a compact rational box $A\subseteq \Pi_\lambda(B)$ such that $\varphi_\lambda(A) \subseteq \RR\setminus\{0\}$. If $\varphi_i(x_\lambda) =0$ for all $x_\lambda \in \Pi_\lambda(B)$, then this can be effectively verified using the precision oracle for $\S$.  Therefore by running these two verification procedures simultaneously, we may effectively determine if \eqref{eq:xi=0?} is true.
\end{proof}

Remark \ref{rmk:IFsection}.1(c) implies that if we are given valid input data for the precision oracle for $\S$, then we can use the approximation oracle for $\S$ to effectively verify that the input data is actually valid.  In contrast, suppose we are given a name $(\name(f),\lambda)$ for an $\S$-manifold $M = \{x\in D : P(x) = 0\}$.  How can we effectively verify that the set $M$ is actually an $\S$-manifold?  It does not seem possible to effectively verify this using only the approximation oracle for $\S$, for it does not seem possible to verify that $M$ is connected, and when $M$ is not compact, it does seem possible to verify that $\det\PD{}{P}{x_{\lambda'}}(x)\neq 0$ for all $x\in M$, where $\lambda'$ is defined as in Remark \ref{rmk:IFsection}.  For this reason we define the precision oracle for $\S$ as in Definition \ref{def:precOracle}, not \eqref{eq:FmanifoldPrec}.

\section{Basic Presentations}\label{s:basicPres}

\begin{notation}\label{notation:EA}
For each $i\in\{1,\ldots,n\}$, let
\[
H_i = \{x\in\RR^n : x_i = 0\}.
\]
For each $A\subseteq\RR^n$ and $E\subseteq\{1,\ldots,n\}$, let
\[
E_A = \{i\in E : A\cap H_i\neq\emptyset\}.
\]
For each $a = (a_1,\ldots,a_n)\in\RR^n$, we simply write $E_a$ instead of $E_{\{a\}}$.  Thus
\[
E_a = \{i\in E : a_i = 0\}.
\]
\end{notation}

\begin{definition}\label{def:basicPres}
A {\bf basic presentation on $U$} is a tuple
\begin{equation}\label{eq:basicPres}
(\F,E;K),
\end{equation}
where $\F$ is a finite nonempty family of $\C$-analytic functions on an open blowup set $U\subseteq\RR^n$, and $K$ is a compact blowup set which is contained in $U$ and which is defined by the same sequence of blowings-up as $U$.
For each $a\in U$, let $\F_a$ be the family of germs at $a$ of the functions in $\F$.  We call $(\F_a,E_a)$ the {\bf germ of $(\F,E)$ at $a$}.
\end{definition}

For the rest of the section, $(\F,E;K)$ denotes a basic presentation on $U\subseteq\RR^n$.  In addition, fix a set $I\subseteq\Cen(K)$, let
\[
C = \{x\in U : x_I = 0\},
\]
and for each $i\in I$, let $\pi_i:U_i\to U$ be the $i$th standard chart of the blowing-up of $U$ with center $C$.

\begin{definition}\label{def:basicSPres}
We call $(\F,E;K)$ a {\bf basic $\S$-presentation} if $\F$ has an $(\S,E)$-lifting and if $\F$ is computably $C^\infty$ relative to the approximation oracle for $\S$. A {\bf representation} for a basic $\S$-presentation $(\F,E;K)$ consists of the following data:
\begin{itemize}
\item
the number $n$ and the set $E$;

\item
names for liftings of the blowup sets $U$ and $K$ (using the same sequence of blowings-up);

\item
an $(\S,E)$-lifting of $\F$;

\item
a $C^\infty$ approximation algorithm for $\F$ relative to the approximation oracle for $\S$.
\end{itemize}
\end{definition}

\begin{definition}\label{def:Forder}
For each $a\in U$, define the {\bf order of $\F$ at $a\in U$} by
\[
\ord(\F;a) = \min\{\ord(f;a) : f\in\F\}.
\]
For each $A\subseteq U$, define the {\bf order of $\F$ on $A$} by
\[
\ord(\F;A) = \sup\{\ord(\F;a) : a\in A\},
\]
and define the {\bf order of $\F$ along $A$} by
\[
\ord_A\F = \min\{\ord(\F;a) : a\in A\}.
\]
Finally, define the {\bf order of $\F$} by
\[
\ord\F = \ord(\F;U).
\]
\end{definition}

\begin{notation}\label{notation:fE}
For each $a\in U$ and each $i\in E_a$, let
\[
d_{a,i} = \min\{\ord_{H_i}(f;a) : f\in\F\},
\]
and define
\[
\Div(\F,E;a) = d_a = (d_{a,i})_{i\in E_a}.
\]
If $\ord(\F;a) < \infty$, then $d_a\in\NN^{E_a}$ and $x_{E_a}^{d_a}$ is the greatest common divisor of $\F_a$ which is a monomial in $x_{E_a}$; in this case, for each $f\in\F$ we define $f_{E_a}$ to be the unique $\C$-analytic germ at $a$ such that $f(x) = x_{E_a}^{d_a} f_{E_a}(x)$ near $a$.   If $\ord(\F;a) = \infty$, then $d_a = (\infty)_{i\in E}$ and $\F_a$ is a family of germs of zero functions; in this case, for each $f\in\F$ we define $f_{E_a} = 0$.

Similarly, for each $i\in E$ let
\[
d_i = \min\{\ord_{U\cap H_i}(f) : f\in\F\},
\]
and define
\[
\Div(\F,E) = d = (d_i)_{i\in E}.
\]
If $\F$ contains a nonzero function, then $x_{E}^{d}$ is the greatest common divisor for $\F$ which is a monomial in $x_E$; in this case, for each $f\in\F$ we define $f_E:U\to\RR$ be the unique $\C$-analytic function such that $f(x) = x_{E}^{d} f_E(x)$ on $U$.   If $\F$ is a family of zero functions, then $d = (\infty)_{i\in E}$; in this case, for each $f\in\F$ we define $f_E:U\to\RR$ to be the zero function on $U$.

Define
\[
\F_E = \{f_E\}_{f\in\F}.
\]
For each $e\in\NN^E$ such that $e\leq d$, define
\[
x^{-e}_{E}\F = \{x^{-e}_{E}f\}_{f\in\F}.
\]
Thus $\F_E = x_{E}^{-d}\F$ when $d$ is finite.
\end{notation}

\begin{definition}\label{def:FEorder}
For each $a\in U$, define the {\bf order of $(\F,E)$ at $a\in U$} by
\[
\ord(\F,E;a) = \min\{\ord(f_{E_a};a):f\in\F\}.
\]
For each $A\subseteq U$, define the {\bf order of $(\F,E)$ on $A$} by
\[
\ord(\F,E;A) = \sup\{\ord(\F,E;a) : a\in A\},
\]
and define the {\bf order of $(\F,E)$ along $A$} by
\[
\ord_A(\F,E) = \min\{\ord(\F,E;a) : a\in A\}.
\]
Finally, define the {\bf order of $(\F,E)$} by
\[
\ord(\F,E) = \ord(\F,E;U).
\]
\end{definition}

\begin{remarks}\label{rmk:Forder}\hfill
\begin{enumerate}{\setlength{\itemsep}{5pt}
\item
If $x$ is sufficiently close to $a\in U$, then $E_x = \{i\in E_a : a_i=0\}$, $\ord(\F;x)\leq \ord(\F;a)$, and $\ord(\F,E;x)\leq \ord(\F,E;a)$.

\item
The set $U$ is open and is connected by Lemma \ref{lemma:blowupSet}.4.  Thus for any $f\in\F$, the quasianalyticity of $\C$ implies that if $f$ vanishes identically on a neighborhood of a point $a\in U$, then $f$ vanishes identically on all of $U$.  It follows that $\F$ is a family of zero functions if and only if $\ord(\F;a) = \infty$ for some $a\in U$.

Similarly, for each $i\in\{1,\ldots,n\}$, the set $U\cap H_i$ is connected because it is a blowup manifold.  Thus for any $f\in\F$, if $k\in\NN$ is such that $f(x) = \PD{}{f}{x_i}(x) = \cdots = \PD{k-1}{f}{x_i}(x) = 0$ for all $x\in H_i$ sufficiently close to a point $a\in U\cap H_i$, then $f(x) = \PD{}{f}{x_i}(x) = \cdots = \PD{k-1}{f}{x_i}(x) = 0$ for all $x\in U\cap H_i$.  This means that if $x_{i}^{k}$ divides $f$ locally near a point $a\in U\cap H_i$, then $x_{i}^{k}$ divides $f$ on all of $U$.  It follows that for all $a\in U$ and all $f\in\F$,
\[
\ord(f_{E_a};a) = \ord(f_E;a),
\]
so
\begin{equation}\label{eq:ordLocal}
\ord(\F,E;a) = \ord(\F_E;a)
\end{equation}
and
\begin{equation}\label{eq:divLocal}
d_a = (d_i)_{i\in E_a}.
\end{equation}
Also,
\begin{equation}\label{eq:ordDivLocal}
\ord(\F;a) = |\Div(\F,E;a)| + \ord(\F,E;a),
\end{equation}
so
\begin{equation}\label{eq:ordDiv}
\ord\F \leq |\Div(\F,E)| + \ord(\F,E).
\end{equation}
}\end{enumerate}
\end{remarks}

\begin{lemma}\label{lemma:SpresZero}
Suppose that $(\F,E;K)$ is a basic $\S$-presentation.  Write $\F = \{F_j\}_{j\in J}$ for  a finite index set $J$, let $N\subseteq\{1,\ldots,n\}$, and suppose that $A = \{x\in U : x_N = 0\}$ is nonempty.  Then relative to the approximation and precision oracles for $\S$, the set
\begin{equation}\label{eq:SpresZero}
\left\{(j,\alpha)\in J\times\NN^n : \text{$\PDn{\alpha}{F_j}{x}(x) = 0$ for all $x\in A$}\right\}
\end{equation}
is computable.
\end{lemma}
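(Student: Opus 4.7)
The plan is to reduce the question ``$\PDn{\alpha}{F_j}{x}\Restr{A}\equiv 0$?'' to finitely many queries of the form ``is a given $\S$-manifold contained in a coordinate hyperplane?'', each of which is decidable relative to the approximation and precision oracles for $\S$ by Proposition \ref{prop:precOracle}.

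First, I would enlarge $\F$ to the computably indexed family $\G=\{G_{k,\beta}:=\PDn{\beta}{F_k}{x}\}_{(k,\beta)\in J\times\NN^n}$. Since an $(\S,E)$-lifting of a family automatically supplies, by Definition \ref{def:SElifting}, basic $\S$-liftings of every partial derivative of every member on every $E$-stratum, the $(\S,E)$-lifting of $\F$ immediately yields an $(\S,E)$-lifting of $\G$. Now apply the Refinement lemma (Lemma \ref{lemma:lifting1}.5): because $A\neq\emptyset$ forces $U|_N\neq\emptyset$, we obtain an $(\S,E\setminus N)$-lifting of $\G\Restr{U|_N}$. Unwinding Definitions \ref{def:SElifting} and \ref{def:basicLifting}, this means that for each $(k,\beta)\in J\times\NN^n$ and each $\tau\in\{-1,0,1\}^{E\setminus N}$ we obtain, computably in $(k,\beta,\tau)$ from a representation of $(\F,E;K)$, an $\S$-manifold $M'_{k,\beta,\tau}\subseteq\RR^{n'_{k,\beta,\tau}}$ together with a coordinate projection that is a $\C$-analytic isomorphism from $M'_{k,\beta,\tau}$ onto the graph of $G_{k,\beta}\Restr{(U|_N)_\tau}$. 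Let $i^{*}=i^{*}(k,\beta,\tau)$ index the coordinate of $\RR^{n'_{k,\beta,\tau}}$ that the projection sends to the function-value component of the graph.

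Identifying $A$ with $U|_N$ via $x_{N^c}\mapsto(x_{N^c},0)$ and observing that $U|_N=\bigcup_{\tau}(U|_N)_\tau$, the derivative $\PDn{\alpha}{F_j}{x}$ vanishes identically on $A$ if and only if $G_{j,\alpha}\Restr{U|_N}$ vanishes on each $(U|_N)_\tau$, which in turn happens if and only if $M'_{j,\alpha,\tau}\subseteq\{z\in\RR^{n'_{j,\alpha,\tau}}:z_{i^{*}}=0\}$ for every $\tau$. Since $\{-1,0,1\}^{E\setminus N}$ is finite, and each such containment is decidable relative to the approximation and precision oracles for $\S$ by Proposition \ref{prop:precOracle}, the conjunction of these $3^{|E\setminus N|}$ queries decides, uniformly in $(j,\alpha)$, membership of $(j,\alpha)$ in the set \eqref{eq:SpresZero}.

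The main obstacle is essentially bookkeeping rather than mathematical depth. One must verify that the Refinement lemma's construction is uniform in $(k,\beta)$, so that the basic $\S$-liftings and hence the data $(M'_{k,\beta,\tau}, i^{*})$ are produced computably from a representation of $(\F,E;K)$; this is built into the effectivity of Lemma \ref{lemma:lifting1}. One must also check that empty strata pose no issue: an empty blowup manifold yields an empty $\S$-manifold, which is vacuously contained in any coordinate hyperplane, and the decider of Proposition \ref{prop:precOracle} returns YES on such input, consistently with the vacuous truth of ``$G_{j,\alpha}\equiv 0$ on the empty set''.
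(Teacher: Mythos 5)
Your proof is correct, but it takes a genuinely different route from the paper's. The paper slices (Lemma \ref{lemma:lifting1}.1) from $(\S,E)$ to $(\S,E\cup N)$, then exploits the connectedness of the blowup manifold $A$ together with quasianalyticity: it picks a \emph{single} nonempty stratum $M\in\Strat(U,E\cup N)$ that is open in $A$, argues that $\PDn{\alpha}{F_j}{x}$ vanishes on $A$ iff it vanishes on $M$, and makes one call to Proposition \ref{prop:precOracle}. You instead enlarge $\F$ to the derivative family $\G$, apply the Refinement lemma (Lemma \ref{lemma:lifting1}.5) to reach $(\S,E\setminus N)$, and exhaust over all $3^{|E\setminus N|}$ strata, conjoining the oracle answers. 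The exhaustive approach has the advantage of not needing to locate a nonempty open stratum; its cost is that it queries the oracle on strata that may be empty. That edge case is exactly where your argument is thinnest: the precision oracle's definition and the proof of Proposition \ref{prop:precOracle} are calibrated for nonempty manifolds (e.g., the step ``\eqref{eq:xi=0?} is clearly false if $i\in\im(\lambda)$'' is wrong when $M=\emptyset$, and $\IF_\lambda(P;B)$ requires a nonempty $B$), so ``the decider returns YES on such input'' is not something you can read off the definitions. It is easy to repair — either check nonemptiness of each $(U|_N)_\tau$ from the name of the blowup set before querying and treat empty strata as vacuously satisfying the condition, or adopt the paper's single-stratum trick — but as stated that sentence is an assertion rather than a justification. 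The enlargement to $\G$ is also necessary in your route (Refinement only hands you $x_{N^c}$-derivatives), which you handled correctly by taking $\alpha=0$ after the enlargement, but the paper's Slicing route avoids this extra layer of bookkeeping entirely.
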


\begin{proof}
By Lemma \ref{lemma:lifting1}.1, $\F$ has an $(\S,E\cup N)$-lifting.  Consider $f\in\F$ and $\alpha\in\NN^n$.  Fix $M\in\Strat(U,E\cup N)$ which is open in $A$, and let
\[
\xymatrix{M' \ar[r]^-{\Pi_\lambda} & \Graph\left(\PD{\alpha}{f}{x}\Restr{M}\right)} \]
be a basic $\S$-lifting given by the $(\S,E\cup N)$-lifting of $\F$, where $M'\subseteq\RR^{n'}$ and $\lambda:\{1,\ldots,n+1\}\to\{1,\ldots,n'\}$ is an injection.  The set $A$ is a connected manifold and $M$ is open in $A$, so $\PDn{\alpha}{f}{x}$ vanishes identically on $A$ if and only if $\PDn{\alpha}{f}{x}$ vanishes identically on $M$, which occurs if and only if $M' \subseteq\{x\in\RR^{n'} : x_{\lambda(n+1)} = 0\}$.  By Proposition \ref{prop:precOracle}, whether or not the later condition is true can be effectively determined relative to our oracles.
\end{proof}

\begin{lemma}\label{lemma:SpresZeroDivOrder}
If $(\F,E;K)$ is a basic $\S$-presentation, then relative to the approximation and precision oracles for $\S$, we can effectively determine if $\F$ contains a nonzero function.  If it is determined that $\F$ contains a nonzero function, then relative to our two oracles, we can compute $\Div(\F,E)$ and can effectively find some $m\in\NN$ such that $\ord(\F,E;K) \leq m$.
\end{lemma}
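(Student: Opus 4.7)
The plan is to reduce all three tasks to applications of Lemma \ref{lemma:SpresZero} (which handles identical-vanishing tests on coordinate subspaces via both oracles) combined with verifications of local nonvanishing via the approximation oracle and with the co-c.e.\ compactness of $K$.

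First, to decide whether $\F$ contains a nonzero function, write $\F = \{F_j\}_{j\in J}$ for the finite index set $J$.  Since $U$ is connected (Lemma \ref{lemma:blowupSet}.4) and $\C$ is quasianalytic, each $F_j$ either vanishes identically on $U$ or is nonzero at some point; by Lemma \ref{lemma:SpresZero} applied with $N=\emptyset$ and $\alpha=0$, we can decide, for each $j\in J$, whether $F_j\equiv 0$ on $U$.  Then $\F$ contains a nonzero function iff the test fails for some $j\in J$.

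Second, assuming $\F$ contains a nonzero function, I would compute $d=\Div(\F,E)=(d_i)_{i\in E}$ coordinate-wise.  For each $i\in E$, first test whether $U\cap H_i=\emptyset$; by Lemma \ref{lemma:blowupSet}.3 this is equivalent to $i\not\in\Cen(U)$, which can be read off from the name of the lifting of $U$ by Lemma \ref{lemma:blowupSet}.1.  In that trivial case $x_i$ contributes nothing to the monomial GCD and we set $d_i=0$.  Otherwise, Proposition \ref{prop:orderAlong}.2 (using quasianalyticity) identifies $d_i$ as the smallest $k\in\NN$ such that $\PD{k}{F_j}{x_i}$ fails to vanish identically on $U\cap H_i$ for some $j\in J$; this condition is decidable by Lemma \ref{lemma:SpresZero} applied with $N=\{i\}$ and $\alpha=k\cdot e_i$.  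The search terminates because a nonzero $F_j\in\F$ cannot lie in $\lb x_i\rb^k$ for every $k$, again by Proposition \ref{prop:orderAlong}.2 and quasianalyticity.

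Third, to find $m$ bounding $\ord(\F,E;K)$, I would work with $\F_E=\{f_E\}_{f\in\F}$.  By Lifting Lemma \ref{lemma:lifting1}.4, $\F_E$ inherits an $(\S,E)$-lifting, and by repeated application of Proposition \ref{prop:compDivVar} it is computably $C^\infty$ relative to the approximation oracle; by \eqref{eq:ordLocal}, $\ord(\F,E;x)=\ord(\F_E;x)$ on $U$, so it suffices to bound $\ord(\F_E;K)$.  At each $x\in K$, quasianalyticity together with the existence of a nonzero $f_E\in\F_E$ forces $\ord(\F_E;x)<\infty$, so by Proposition \ref{prop:order}.2 some derivative $\PD{\alpha}{f_E}{x}$ is nonzero on a rational box around $x$.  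The plan is then to enumerate triples $(B,f,\alpha)$ with $B$ a bounded open rational box, $f\in\F$, and $\alpha\in\NN^n$, using the approximation oracle and the c.e.\ openness of $U$ to certify both $\cl(B)\subseteq U$ and $\PD{\alpha}{f_E}{x}(y)\neq 0$ on $\cl(B)$; in parallel, use the co-c.e.\ compactness of $K$ (Definition \ref{def:coceCompact}) to test whether the collected boxes already cover $K$.  Compactness ensures halting, after which we output $m=\max|\alpha|$ over the finitely many triples retained, since on each such $B$ we have $\ord(\F_E;y)\leq|\alpha|$.

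The main obstacle is conceptual rather than computational: each step's termination rests on a quasianalyticity argument (to rule out infinite-order vanishing on a connected domain) and on the interplay between the two oracles, since only the precision oracle can certify identical vanishing on coordinate subspaces, while only the approximation oracle can certify local nonvanishing of a derivative.  Once these tools are in hand, the searches above are all routine time-sharing procedures.
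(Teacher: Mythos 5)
Your proof is correct and takes essentially the same route as the paper: detect nonzero functions via Lemma \ref{lemma:SpresZero} with $N=\emptyset$, compute each $d_i$ by searching for the least $k$ with $\PD{k}{f}{x_i}\not\equiv 0$ on $U\cap H_i$ (via Proposition \ref{prop:orderAlong}.2 and Lemma \ref{lemma:SpresZero} with $N=\{i\}$), pass to $\F_E$ via Proposition \ref{prop:compDivVar}, and bound $\ord(\F_E;K)$ by a time-shared search over covering families of rational boxes using co-c.e.\ compactness. The extra edge-case check for $i\notin\Cen(U)$ and the citation of \eqref{eq:ordLocal} and of the lifting of $\F_E$ are harmless additions that the paper treats implicitly.
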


\begin{proof}
To see if $\F$ contains a nonzero function, apply Lemma \ref{lemma:SpresZero} with $N = \emptyset$ and compute the fiber of \eqref{eq:SpresZero} over $\alpha = 0$.

Suppose it was determined that $\F$ contains a nonzero function.  Then for each $i\in E$ and nonzero $f\in\F$, we can compute $\ord_{H_i\cap U}(f)$ using Proposition \ref{prop:orderAlong}.2 and Lemma \ref{lemma:SpresZero} with $N = \{i\}$.  We can therefore compute $\Div(\F,E)$.  Thus by Proposition \ref{prop:compDivVar}, for each $f\in\F$ we have a computably $C^\infty$ approximation algorithm for $f_E$.

Now, let $\B$ be the set of all finite families of bounded, open, rational boxes $\{B_i\}_{i\in I}$ such that $K\subseteq\bigcup_{i\in I} B_i$ and $\cl(B_i)\subseteq U$ for all $i\in I$.  There exist $\{B_i\}_{i\in I}\in\B$ and corresponding families $\{f_i\}_{i\in I}\subseteq\F$ and $\{\alpha_i\}_{i\in I}\subseteq\NN^n$ such that $\PDn{\alpha_i}{(f_i)_E}{x}(B_i)\subseteq\RR\setminus\{0\}$ for all $i\in I$.
Since $K$ is a co-c.e.\ compact subset of the c.e.\ open set $U$, we can construct a computable enumeration $\{\{B_i\}_{i\in I_j}\}_{j\in\NN}$ of $\B$.  By using a time sharing procedure, for each $j\in\NN$, $i\in I_j$, $f\in\F$, and $\alpha\in\NN^n$, we can try to verify that $\PDn{\alpha}{f_E}{x}(B_i) \subseteq\RR\setminus\{0\}$.  Eventually we will find some $j\in\NN$ and families $\{f_i\}_{i\in I_j}\subseteq\F$ and $\{\alpha_i\}_{i\in I_j}\subseteq\NN^n$ such that $\PDn{\alpha_i}{(f_i)_E}{x}(B_i) \subseteq\RR\setminus\{0\}$ for each $i\in I_j$, so $\ord(\F,E;K) \leq \max\{|\alpha_i| : i\in I_j\}$.
\end{proof}

\begin{lemma}\label{lemma:fEblowup}
If $\ord_C(\F,E) = m < \infty$, then for all $i\in I$ and $f\in\F$,
\[
(f\circ\pi_i)_{E\cup\{i\}}(y) = y_{i}^{-m}(f_E\circ\pi_i)(y) .
\]
\end{lemma}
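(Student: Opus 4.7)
\textit{Proof proposal.} My plan is to begin with the global factorization $f(x) = x_{E}^{d} f_{E}(x)$ on $U$ (where $d = \Div(\F,E)$), to pull it back by $\pi_i$, then to extract an extra factor of $y_{i}^{m}$ from $f_E\circ\pi_i$ using the hypothesis, and finally to check that the resulting exponent tuple agrees with $\Div(\F\circ\pi_i,E\cup\{i\})$, so that the claimed identity will follow from the definition of the subscript.

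First, since $\pi_i(y)=(y_{I^c},y_i,y_iy_{I_i})$, the monomial $x_E^d$ pulls back coordinate-by-coordinate as $y_j^{d_j}$ for $j\in E\cap I^c$ or $j=i$, and as $(y_iy_j)^{d_j}=y_i^{d_j}y_j^{d_j}$ for $j\in E\cap I_i$. Collecting the $y_i$-factors separately from the factors in the remaining variables gives
\[
(f\circ\pi_i)(y) \;=\; y_i^{k}\,y_{E\setminus\{i\}}^{d_{E\setminus\{i\}}}\,(f_E\circ\pi_i)(y),
\qquad k \;:=\; \sum_{j\in E\cap I} d_j.
\]
Here $k\in\NN$ because $m<\infty$ forces $\F$ to contain a nonzero function and hence $d\in\NN^{E}$.

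Next, the hypothesis $m=\ord_{C}(\F,E)$ combined with \eqref{eq:ordLocal} gives $\ord_{C}(\F_E)=m$. Lemma \ref{lemma:strictTrans} applied to each $f_E$ yields $\ord_{\pi_i^{-1}(C)}(f_E\circ\pi_i)=\ord_C(f_E)\geq m$, with equality realized by at least one $f\in\F$. Since $\pi_i^{-1}(C)=\{y\in U_i:y_i=0\}=U_i\cap H_i$, Proposition \ref{prop:orderAlong}.1 (with $I$ replaced by $\{i\}$) produces, for each $f\in\F$, a $\C$-analytic function $h_f:U_i\to\RR$ with $(f_E\circ\pi_i)(y)=y_i^{m}h_f(y)$, and the equality case forces $\min_{f\in\F}\ord_{U_i\cap H_i}(h_f)=0$. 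Substituting into the formula above gives
\[
(f\circ\pi_i)(y) \;=\; y_i^{k+m}\,y_{E\setminus\{i\}}^{d_{E\setminus\{i\}}}\,h_f(y).
\]

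To finish, I would verify that the tuple $e\in\NN^{E\cup\{i\}}$ with $e_i=k+m$ and $e_j=d_j$ for $j\in E\setminus\{i\}$ equals $\Div(\F\circ\pi_i,E\cup\{i\})$, which by the definition of the subscript will give $(f\circ\pi_i)_{E\cup\{i\}}=h_f=y_i^{-m}(f_E\circ\pi_i)$. Maximality at the $i$-slot is exactly the conclusion of the previous paragraph. For each $j\in E\setminus\{i\}$, pick $f\in\F$ realizing $\ord_{U\cap H_j}(f_E)=0$, so that $f_E|_{x_j=0}$ is not identically zero on the connected blowup manifold $U\cap H_j$. The restriction of $\pi_i$ to $\{y\in U_i:y_j=0,\,y_i\neq 0\}$ is a $\C$-analytic isomorphism onto a nonempty open subset of $U\cap H_j$ (namely $(U\cap H_j)\setminus H_i$), so quasianalyticity of $\C$ forces $(f_E\circ\pi_i)|_{y_j=0}\not\equiv 0$, and since $j\neq i$ the factor $y_i^{-m}$ is a nonvanishing unit at a generic point of $\{y_j=0\}$; therefore $\ord_{U_i\cap H_j}(h_f)=0$, which gives the desired maximality of $e_j=d_j$.

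The main obstacle is the bookkeeping of $y_i$-exponents: one must carefully separate the contribution $k$ that comes purely from pulling back the monomial $x_E^d$ (a combinatorial calculation on the blowup chart) from the contribution $m$ that comes from vanishing of $\F_E$ along the center $C$ (where Lemma \ref{lemma:strictTrans} and Proposition \ref{prop:orderAlong} enter), and then confirm that no unexpected cancellation occurs in the $y_j$-directions for $j\in E\setminus\{i\}$, which is where the quasianalyticity of $\C$ is genuinely needed.
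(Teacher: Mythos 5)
Your proposal is correct and follows essentially the same route as the paper's proof: both begin with the factorization $f = x_E^d f_E$ and its pullback by $\pi_i$, both invoke Lemma \ref{lemma:strictTrans} to peel off the extra factor $y_i^m$ (and to see that factor cannot be raised because some $f$ realizes $\ord_C(f_E)=m$), and both settle the $j\ne i$ exponents by the observation that $\pi_i$ restricts to an isomorphism from $\{y_j=0,\,y_i\ne0\}$ onto $(U\cap H_j)\setminus H_i$, so a monomial $y_j$ dividing every transform would force $x_j$ to divide every $f_E$, contrary to the definition of $\F_E$. The only difference is presentational: the paper phrases the maximality of the exponent tuple as a proof by contradiction (assume some $y_j$ divides all $\tld f_i$ and derive that all $f_E$ vanish on $H_j$), while you argue the contrapositive directly (exhibit, for each $j$, a witness $f$ whose transform does not vanish on $H_j$), and you make explicit the quasianalyticity-plus-connectedness step that the paper uses implicitly when passing from vanishing on $(U\cap H_j)\setminus H_i$ to vanishing on all of $U\cap H_j$. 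Both are sound, and your version is somewhat more careful on that last point, but the key lemmas and decomposition are identical.
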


\begin{proof}
Fix $i\in I$.  For each $f\in\F$, $f(x) = x_{E}^{d} f_E(x)$, so
\[
f\circ\pi_i(y) = y_{E\setminus\{i\}}^{d_{E\setminus\{i\}}} y_{i}^{|d_{E\cap I}|} f_E\circ\pi_i(y).
\]
Since $\ord_C(f_E)\geq m$, Lemma \ref{lemma:strictTrans} shows that
\[
f_E\circ\pi_i(y) = y_{i}^{m}\tld{f}_i(y)
\]
for a $\C$-analytic function $\tld{f}_i:U_i\to\RR$.  Thus $y_{E\setminus\{i\}}^{d_{E\setminus\{i\}}} y_{i}^{|d_{E\cap I}|+m}$ is a common divisor of $\pi_{i}^{*}\F$ which a monomial in $y_{E\cup\{i\}}$.  To finish we need to show that this is the greatest common divisor of $\pi_{i}^{*}\F$ which is a monomial in $y_{E\cup\{i\}}$.

Suppose for a contradiction that there exists $j\in E\cup\{i\}$ such that $y_j$ divides $\tld{f}_i(y)$ for all $f\in\F$.  If $j = i$, then Lemma \ref{lemma:strictTrans} shows that $\ord_C(f_E) > m$ for all $f\in\F$, which contradicts the assumption that $\ord_C(\F,E) = m$.  So suppose that $j \neq i$.  Writing $x = \pi_i(y)$ gives
\[
0
=
y_{i}^{m}\tld{f}_i(y)\Restr{y_j=0}
=
f_E(y_{I_{i}^{c}}, y_i y_{I_i})\Restr{y_j = 0}
=
f_E(x)\Restr{x_j=0}.
\]
Since this holds for all $y\in U_i$ with $y_j = 0$, this holds for all $x\in U$ with $x_j = 0$.  So $x_j$ divides $f_E(x)$ for all $f\in\F$, which contradicts the definition of $f_E$.
\end{proof}

\begin{lemma}\label{lemma:orderBlowup}
If $\ord_C(\F,E) = \ord(\F,E) = m < \infty$, then for all $i\in I$,
\[
\ord(\pi_{i}^{*}\F,E\cup\{i\}) \leq m.
\]
\end{lemma}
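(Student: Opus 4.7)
The plan is to fix $i \in I$ and $a \in U_i$ and exhibit, for each such $a$, some $f \in \F$ with $\ord((f \circ \pi_i)_{E \cup \{i\}}; a) \leq m$; taking the supremum over $a$ and applying \eqref{eq:ordLocal} to $\pi_i^{*}\F$ and $E \cup \{i\}$ then gives the desired inequality. The key input is Lemma \ref{lemma:fEblowup}, which identifies
\[
(f \circ \pi_i)_{E \cup \{i\}}(y) = y_i^{-m}(f_E \circ \pi_i)(y)
\]
for every $f \in \F$, so the task reduces to estimating the order of the right-hand side at $a$. Let $x = \pi_i(a)$; since $\ord(\F, E) = m$, I can pick $f \in \F$ with $\ord(f_E; x) \leq m$.

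The case $a_i \neq 0$ is easy. There $\pi_i$ restricts to a $\C$-analytic isomorphism near $a$ by Remark \ref{rmk:blowup}, and $y_i^{-m}$ is a $\C$-analytic unit at $a$. Parts 4 and 5 of Proposition \ref{prop:order} then give $\ord(y_i^{-m}(f_E \circ \pi_i); a) = \ord(f_E; x) \leq m$, as required.

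The interesting case is $a_i = 0$, so $x \in C$. Here the hypothesis $\ord_C(\F, E) = m$, combined with \eqref{eq:ordLocal}, forces $\ord(f_E; x) \geq m$ for every $f \in \F$, so the $f$ picked above in fact satisfies $\ord(f_E; x) = m = \ord_C(f_E)$. This lets me invoke the pointwise form of the implication $(a) \Rightarrow (b)$ of Proposition \ref{prop:orderAlong}.3 at $x$ (the proof of that implication is already pointwise) to produce $\alpha \in \NN^I$ with $|\alpha| = m$ and $\PDn{\alpha}{f_E}{x_I}(x) \neq 0$; I then choose such an $\alpha$ with $\alpha_i$ minimal. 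The Taylor-series calculation in the proof of Lemma \ref{lemma:STorderBound}, Condition 1, now applies to $f_E$ at the blown-up point $a$ and yields $\PDn{\alpha_{I_i}}{(f_E)'_i}{y_{I_i}}(a) \neq 0$, where $(f_E)'_i = y_i^{-m}(f_E \circ \pi_i)$ is the strict transform of $f_E$ by $\pi_i$. Hence $\ord(y_i^{-m}(f_E \circ \pi_i); a) \leq |\alpha_{I_i}| \leq m$, completing this case.

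The main obstacle is a small mismatch with Lemma \ref{lemma:STorderBound}: its literal hypothesis demands $\ord(f_E; x') = m$ for every $x' \in C$, whereas in general we have this equality only at the single point $x = \pi_i(a)$ (elsewhere on $C$, the order can be strictly greater than $m$). I will need to verify that the proof of Lemma \ref{lemma:STorderBound} consumes its stronger hypothesis only through the pointwise existence of $\alpha$ at $x$, so that the weaker pointwise condition established above suffices. Once that bookkeeping is done, the proof reduces to the identity of Lemma \ref{lemma:fEblowup} together with the essentially trivial case split on whether $a$ lies on the exceptional divisor.
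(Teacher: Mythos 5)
Your proof is correct and follows essentially the same route as the paper: the split on whether $a_i \neq 0$, the identity from Lemma~\ref{lemma:fEblowup}, the choice of $\alpha$ with $|\alpha|=m$ and $\alpha_i$ minimal via Proposition~\ref{prop:orderAlong}.3, and the appeal to Lemma~\ref{lemma:STorderBound} (Condition~1) all match. The one issue you flag---the apparent hypothesis mismatch with Lemma~\ref{lemma:STorderBound}---is real, but the paper patches it slightly differently: by upper semicontinuity (Proposition~\ref{prop:order}.2) together with $\ord(f_E;x')\geq m$ on $C$, the equality $\ord(f_E;x')=m$ in fact holds on a whole neighborhood of $x$ in $C$, so Lemma~\ref{lemma:STorderBound} applies verbatim after restricting to that neighborhood, with no need to reopen its proof.
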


(Note that the hypothesis of the lemma implies that $\ord(\F,E;x) = m$ for all $x\in C$.)

\begin{proof}
Let $i\in I$ and $b\in U_i$, and write $a = \pi_i(b)$.  If $b_i\neq 0$, then $\ord(\F,E\cup\{i\};b) = \ord(\F,E;a) \leq m$ because $\pi_i$ restricts to an isomorphism on $U_i\setminus\pi_{i}^{-1}(C)$.  So we may assume that $b_i = 0$, and hence $a\in C$.
Lemma \ref{lemma:fEblowup} implies that for all $f\in\F$,
\begin{equation}\label{eq:orderBound1}
\ord(\pi_{i}^{*}\F,E\cup\{i\};b) \leq \ord\left(y_{i}^{-m}(f_E\circ\pi_i)(y); b\right).
\end{equation}

Since $\ord(\F,E;a) = m$, we may now fix $f\in\F$ such that $\ord(f_E;a) = m$.
Because $\ord_C(\F,E) = m$, $\ord(f_E;x) = m$ for all $x\in C$ sufficiently close to $a$.  Therefore Proposition \ref{prop:orderAlong}.3 implies that there exists $\alpha\in\NN^I$ such that $|\alpha| = m$ and $\PDn{\alpha}{f_E}{x_I}(a)\neq 0$.  We may choose such an $\alpha$ with $\alpha_i$ minimal.  Therefore Lemma \ref{lemma:STorderBound} shows that
\begin{equation}\label{eq:fEaST}
\PDn{\alpha_{I_i}}{(f_E)'_i}{y_{I_i}}(b)\neq 0,
\end{equation}
where $(f_E)'_i(y) = y_{i}^{-m}(f_E\circ\pi_i)(y)$, which is the strict transform of $f_E$ by $\pi_i$.  From \eqref{eq:orderBound1} and \eqref{eq:fEaST} we get
\begin{equation}\label{eq:orderBound2}
\ord(\pi_{i}^{*}\F,E\cup\{i\};b) \leq \ord((f_E)'_i;b) \leq |\alpha_{I_i}| = |\alpha| - \alpha_i \leq |\alpha| = m.
\end{equation}
\end{proof}

Note that in the proof of Lemma \ref{lemma:orderBlowup}, if $\alpha_i$ happened to be positive, then $|\alpha_{I_i}| < |\alpha|$, in which case \eqref{eq:orderBound2} would give $\ord(\pi_{i}^{*}\F,E\cup\{i\};b) < m$.  This observation leads naturally to the following definition and to Lemma \ref{lemma:refinableBlowup}, which improves upon Lemma \ref{lemma:orderBlowup} under stronger assumptions.

\begin{definition}\label{def:refinable}
Let $m$ be a positive integer, let $N$ be a nonempty subset of $\{1,\ldots,n\}$, and let $r\in\{0,|N|\}$.  For  each $A\subseteq U$, we say that $(\F,E)$ is {\bf $(m,N,r)$-refinable on $A$} if $\{x\in A : x_N = 0\}$ is nonempty, and if there exists an open set $V$, with $A\subseteq V\subseteq U$, such that the following hold:
\begin{enumerate}{\setlength{\itemsep}{5pt}
\item[]\emph{Case 1}: $r = 0$.

In this case, $N = \{i\}$ for some $i\in\{1,\ldots,n\}\setminus E$, and there exists $f_i\in\F$ such that $\PD{m}{(f_i)_E}{x_i}(x)\neq 0$ for all $x\in V$, and such that $\PD{m-1}{(f_i)_E}{x_i}(x) = 0$ for all $x\in V\cap H_i$.

\item[]\emph{Case 2}: $r = |N|$.

In this case, $N\subseteq E$, and for each $i\in N$ there exist $f_i\in\F$ and $\alpha_i=(\alpha_{i,j})_{j\in N}\in\NN^{N}$ such that $|\alpha_i| = m$, $\alpha_{i,i} > 0$, and $\PDn{\alpha_i}{(f_i)_E}{x_N}(x)\neq 0$ for all $x\in V$.
}\end{enumerate}
In both cases 1 and 2, we say that $\{(f_i,\alpha_i)\}_{i\in N}$ {\bf witnesses} the fact that $(\F,E)$ is $(m,N,r)$-refinable on $A$, where $\alpha_i = (m)\in\NN^{\{i\}}$ when $r=0$.  For each $a\in U$, we say that $(\F,E)$ is {\bf $(m,N,r)$-refinable at $a$} (or that the germ $(\F_a,E_a)$ is $(m,N,r)$-refinable) if $(\F,E)$ is $(m,N,r)$-refinable on $\{a\}$.
We say that the basic presentation $(\F,E;K)$ is {\bf $(m,N,r)$-refinable} if $(\F,E)$ is $(m,N,r)$-refinable on $U$ and $N\subseteq\Cen(K)$.
\end{definition}

\begin{remark}\label{rmk:refinableOnK}
Suppose that $(\F,E;K)$ is a basic $\S$-presentation such that $(\F,E)$ is $(m,N,r)$-refinable on $K$ and $N\subseteq\Cen(K)$.
The condition $N\subseteq\Cen(K)$ can be effectively verified because $\Cen(K)$ can be computed through repeated application of Lemma \ref{lemma:blowupSet}.1.  Also, relative to the approximation and precision oracles for $\S$, we can effectively verify that $(\F,E)$ is $(m,N,r)$-refinable on $K$, and we can use Lemma \ref{lemma:blowupSetVar} to effectively construct an open blowup set $V$ such that $(\F\Restr{V},E;K)$ is $(m,N,r)$-refinable, where $K\subseteq V \subseteq U$ and $V$ is defined by the same sequence of blowings-up as $K$.
\end{remark}

\begin{remarks}\label{rmk:refinable}
Suppose that $(\F,E)$ is $(m,N,r)$-refinable on $U$.
\begin{enumerate}{\setlength{\itemsep}{5pt}
\item
Then $\ord(\F,E) \leq m$.

\begin{proof}
Let $\{(f_i,\alpha_i)\}_{i\in N}$ witness the fact that $(\F,E)$ is $(m,N,r)$-refinable on $U$, and fix $i\in N$.  Then for all $a\in U$, $\ord(\F,E;a) \leq \ord((f_i)_E;a) \leq |\alpha_i| = m$.
\end{proof}

\item
We have $\ord_C(\F,E) = m$ if and only if $\ord(\F,E;x) = m$ for all $x\in C$.

\begin{proof}
This follows from the previous remark and Definition \ref{def:FEorder}.
\end{proof}

\item
If $r=0$, where $N = \{i\}$, then $\ord(\F,E;x) < m$ for all $x\in U$ with $x_i\neq 0$.

\begin{proof}
Fix $f\in\F$ such that $\PD{m}{f_E}{x_i}(x)\neq 0$ for all $x\in U$, and such that $\PD{m-1}{f_E}{x_i}(x) = 0$ for all $x\in U\cap H_i$.  Since $U$ is connected, $\PD{m}{f_E}{x_i}$ has a constant nonzero sign on $U$. For each $a\in \Pi_{\{i\}^c}(U)$, the fiber $\{x_i : x\in U, \Pi_{\{i\}^c}(x) = a\}$ is an open interval (the connectedness follows from Lemma \ref{lemma:blowupSet}.4), so the map $x_i\mapsto \PD{m-1}{f_E}{x_i}(a,x_i)$ is strictly monotonic on this fiber and is zero at $x_i = 0$.  So for all $x\in U$ with $x_i\neq 0$, we have $\PD{m-1}{f_E}{x_i}(x)\neq 0$, and hence $\ord(\F,E;x) < m$.
\end{proof}

\item
If $\ord_C(\F,E) = m$, then $N\subseteq I$. \\

\noindent If $r=0$, this is obvious from the previous remark.  The following proof handles both cases $r=0$ and $r = |N|$ at once.

\begin{proof}
Let $i\in N$, and fix $\alpha\in\NN^n$ and $f\in\F$ such that $\supp(\alpha)\subseteq N$, $|\alpha| = m$, $\alpha_i > 0$, and $\PDn{\alpha}{f_E}{x}(x)\neq 0$ for all $x\in U$.  By Corollary \ref{cor:orderAlong},
\[
f_E(x) = \sum_{\beta\in\NN^{I}_{m}} \frac{1}{\beta!} x_{I}^{\beta} \PDn{\beta}{f_E}{x_I}(x_{I^c},0) + g(x)
\]
on $U$ for some $g\in\lb x_I\rb^{m+1}$.  Thus
\begin{equation}\label{eq:fEdiff}
\PDn{\alpha}{f_E}{x}(x)
=
\sum_{\beta\in\NN^{I}_{m} \atop \beta\geq\alpha_I}
\frac{1}{(\beta - \alpha_I)!} x_{I}^{\beta - \alpha_I}
\PDmix{m+|\alpha_{I^c}|}{f_E}{x_{I^c}}{\alpha_{I^c}}{x_I}{\beta}(x_{I^c},0)
+ \PDn{\alpha}{g}{x}(x).
\end{equation}
Suppose for a contradiction that $i\not\in I$.  Then $|\alpha_I| < m$, so $\beta\neq\alpha_I$ for all $\beta\in\NN^{I}_{m}$ such that $\beta\geq \alpha_I$.  Therefore evaluating the right hand side of \eqref{eq:fEdiff} at $x_I = 0$ gives $0$, which contradicts the fact that $\PDn{\alpha}{f_E}{x}(x)\neq 0$ for all $x\in U$.  So in fact $i\in I$.
\end{proof}

\item
If $\ord_C(\F,E) = m$, then $\ord_C(\F) = |d_{E\cap I}| + m$.

\begin{proof}
For any $a\in C$, \eqref{eq:ordDivLocal} shows that
\[
\ord(\F;a) = |\Div(\F,E;a)| + \ord(\F,E;a) \geq |d_{E\cap I}| + m.
\]
Equality holds when $a$ is chosen so that $a_j\neq 0$ for all $j\in E\setminus I$.
\end{proof}

\item
Let $p\in\NN$.  If $\ord_C(\F,E) = m$ and $\ord_C(\F) \geq p$, then $y_{i}^{p}$ divides $\pi_{i}^{*}\F$ for each $i\in I$.

\begin{proof}
For each $f\in\F$, we have $f(x) = x_{E}^{d} f_E(x)$, so
\[
f\circ\pi_i(y) = y_{E\setminus\{i\}}^{d_{E\setminus\{i\}}} \, y_{i}^{|d_{E\cap I}| + m} \left(y_{i}^{-m} (f_E\circ \pi_i)(y)\right),
\]
and $|d_{E\cap I} | + m \geq p$ by the previous remark.
\end{proof}
}\end{enumerate}
\end{remarks}

\begin{lemma}\label{lemma:refinableBlowup}
Suppose that $(\F,E;K)$ is an $(m,N,r)$-refinable basic $\S$-presentation, and let $\{(f_j,\alpha_j)\}_{j\in N}$ witness this fact, where each $\alpha_j = (\alpha_{j,k})_{k\in N}$.  Also suppose that $\ord_C(\F,E) = m$  (so $N\subseteq I$ by Remark \ref{rmk:refinable}.4).  Then relative to the approximation and precision oracles for $\S$, the following hold for all $i\in I$:
\begin{enumerate}{\setlength{\itemsep}{5pt}
\item
If $i\in I\setminus N$, then for any co-c.e.\ compact set $K_i\subseteq U_i$, we can effectively find a rational number $\delta_i > 0$ such that $\left(\pi_{i}^{*}\F, E\cup\{i\}\right)$ is $(m,N,r)$-refinable on $\{y\in K_i : |y_i|\leq\delta_i\}$.

\item
If $i\in N$, then we can effectively find a rational number $\epsilon_i > 0$ such that
\[
\ord(\pi_{i}^{*}\F;E\cup\{i\};y) \leq m - \alpha_{i,i} < m
\]
for all $y\in\pi_{i}^{-1}(K)$ such that $|y_j|\leq\epsilon_i$ for all $j\in I$.
}\end{enumerate}
\end{lemma}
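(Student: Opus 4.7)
Both parts hinge on the strict transform identity of Lemma~\ref{lemma:fEblowup}, namely
\[
(f\circ\pi_i)_{E\cup\{i\}}(y) = y_i^{-m}(f_E\circ\pi_i)(y),
\]
together with the observation that the hypothesis $\ord_C(\F,E) = m$ combined with the upper bound $|\alpha_j|=m$ from each witness forces $\ord((f_j)_E;a) = m$ for every $a\in C$ and every $j\in N$; Remark~\ref{rmk:refinable}.4 moreover gives $N\subseteq I$. For the witness function $f_j$, the right-hand side above is exactly the strict transform $((f_j)_E)'_i$ of $(f_j)_E$ by $\pi_i$.

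For \emph{Part~1}, fix $i\in I\setminus N$. Then $N\subseteq I_i$, so under $\pi_i$ we have $x_k = y_i y_k$ for every $k\in N$. A direct chain-rule computation gives, for any $\alpha\in\NN^N$,
\[
\PDn{\alpha}{((f_j)_E\circ\pi_i)}{y_N}(y) = y_i^{|\alpha|}\,\PDn{\alpha}{(f_j)_E}{x_N}(\pi_i(y)),
\]
and since $y_i$ is independent of $y_N$, dividing by $y_i^m$ and specialising to $|\alpha| = m$ yields
\[
\PDn{\alpha}{((f_j\circ\pi_i)_{E\cup\{i\}})}{y_N}(y) = \PDn{\alpha}{(f_j)_E}{x_N}(\pi_i(y)).
\]
Applying this to each witness $(f_j,\alpha_j)$ shows that $\PDn{\alpha_j}{((f_j\circ\pi_i)_{E\cup\{i\}})}{y_N}$ is the $\pi_i$-pullback of a nowhere-zero function on $U$, hence is nowhere zero on $U_i$. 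In Case~1, where $N=\{j\}$ with $j\in I\setminus(E\cup\{i\})$, Lemma~\ref{lemma:STHi} applied to $(f_j)_E$ additionally yields $\PD{m-1}{((f_j\circ\pi_i)_{E\cup\{i\}})}{y_j}(y)=0$ on $U_i\cap H_j$. Thus $\{(f_j\circ\pi_i,\alpha_j)\}_{j\in N}$ witnesses $(m,N,r)$-refinability of $(\pi_i^*\F,E\cup\{i\})$ on the whole of $U_i$.

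For \emph{Part~2}, fix $i\in N\subseteq I$ and view $\alpha_i$ as an element of $\NN^I$ by zero-extension; then $|\alpha_i|=m$, $\alpha_{i,i}>0$, and $\PDn{\alpha_i}{(f_i)_E}{x_I}(a)\neq 0$ for every $a\in C$. For any $b\in U_i$ with $b_I=0$, the point $a=\pi_i(b)$ lies in $C$, and since $(f_i)_E$ has order $m$ at $a$, Condition~2 of Lemma~\ref{lemma:STorderBound} applies with $\alpha=\alpha_i$ to give
\[
\PDn{(\alpha_i)_{I_i}}{((f_i)_E)'_i}{y_{I_i}}(b)\neq 0.
\]
Because $|(\alpha_i)_{I_i}|=m-\alpha_{i,i}$ and $((f_i)_E)'_i=(f_i\circ\pi_i)_{E\cup\{i\}}$, the bound $\ord(\pi_i^*\F,E\cup\{i\};b)\leq m-\alpha_{i,i}$ holds at every such $b$ (Remark~\ref{rmk:Forder}.2 reducing the order of the pulled-back family to that of its divided version). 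The set $\pi_i^{-1}(K)\cap\{y_I=0\}$ is mapped bijectively by $\pi_i$ onto the compact set $K\cap C$, so it is itself compact; by $\C$-analyticity the above derivative is nowhere zero on an open neighborhood of it; since $K$ is bounded, a sufficiently small rational $\epsilon_i>0$ forces $\{y\in\pi_i^{-1}(K):|y_j|\leq\epsilon_i\text{ for all }j\in I\}$ to lie inside that neighborhood, yielding the order bound on the desired set.

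The \emph{effective content} is in turning the existence assertions for $\delta_i$ and $\epsilon_i$ into algorithms halting relative to the approximation and precision oracles for $\S$. The family $\pi_i^*\F$ inherits an $(\S,E\cup\{i\})$-lifting and a computable $C^\infty$ approximation algorithm from the corresponding data for $\F$, using that $\pi_i$ has an $(\emptyset,\emptyset)$-lifting and is $(E\cup\{i\},E)$-compatible by Lemma~\ref{lemma:lifting2}.3 and that compatible composition of liftings is effective by Lemma~\ref{lemma:lifting1}.3. A halting search over candidate rationals $\delta_i$ or $\epsilon_i$, coupled with the approximation oracle (to cover the relevant co-c.e.\ compact set by finitely many rational boxes on which each witnessing partial derivative has constant nonzero sign, in the style of Lemma~\ref{lemma:SpresZeroDivOrder}) and, in Case~1 of Part~1, the precision oracle (to certify identical vanishing on $H_j$, via Lemma~\ref{lemma:SpresZero}), then terminates. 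The main obstacle is guaranteeing termination of these searches: this is precisely where the $\C$-analytic propagation supplied by Lemmas~\ref{lemma:STHi} and~\ref{lemma:STorderBound} is essential, since it upgrades the pointwise nonvanishing to nonvanishing on an open neighborhood of the compact set of interest, which is what the approximation oracle requires in order to produce a finite certificate.
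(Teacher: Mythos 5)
Your proposal is correct, and Part 1 takes a genuinely different and cleaner route than the paper's. The paper invokes Condition~1 of Lemma~\ref{lemma:STorderBound}, a Taylor-series argument, to conclude that $\PDn{\alpha_j}{(f_j\circ\pi_i)_{E\cup\{i\}}}{y_N}$ is nonzero at points of $\pi_i^{-1}(C)$ (i.e.\ where $y_i=0$); it then uses continuity plus co-c.e.\ compactness of $K_i\cap\pi_i^{-1}(C)$ to push the nonvanishing out to a neighborhood of the form $\{y\in K_i:|y_i|\leq\delta_i\}$. You instead exploit the special structure of the situation ($i\notin N$, $\supp(\alpha_j)\subseteq N\subseteq I_i$), under which the chain rule telescopes to the identity $\PDn{\alpha}{((f_j)_E\circ\pi_i)}{y_N}=y_i^{|\alpha|}\,\PDn{\alpha}{(f_j)_E}{x_N}\circ\pi_i$; dividing by $y_i^m$ and specializing to $|\alpha|=m$ then shows $\PDn{\alpha_j}{(f_j\circ\pi_i)_{E\cup\{i\}}}{y_N}$ is the pullback of a nowhere-zero function, hence nonvanishing on \emph{all} of $U_i$. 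This is strictly stronger than what the paper establishes and renders the subsequent choice of $\delta_i$ trivial: any positive rational works, and no compactness certificate from the approximation oracle is needed for Part~1. Your invocation of Lemma~\ref{lemma:STHi} to handle the $r=0$ subcase matches the paper, and your Part~2 argument (Condition~2 of Lemma~\ref{lemma:STorderBound} applied at $b_I=0$, then continuity and compactness of $K\cap C$ to extract $\epsilon_i$) is essentially identical to the paper's. One small inconsistency in exposition: your closing paragraph about effective content still speaks of a ``halting search'' for $\delta_i$ and of using the precision oracle to certify vanishing on $H_j$, but neither is required under your stronger Part~1 claim --- the chain-rule identity and Lemma~\ref{lemma:STHi} are proved facts, not conditions to verify. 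This does not affect correctness.
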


\begin{proof}
Let $i\in I$.  Lemma \ref{lemma:fEblowup} implies that for each $j\in N$, $(f_j\circ\pi_i)_{E\cup\{i\}}$ is the strict transform of $(f_j)_E$ by $\pi_i$.

We first prove 1.  Suppose that $i\in I\setminus N$, and let $K_i\subseteq U_i$ be co-c.e.\ compact.  For each $j\in N$, since the $i$th component of $(0,\alpha_j)\in\NN^{I\setminus N}\times\NN^N$ is $0$, which is clearly minimal, Condition 1 in Lemma \ref{lemma:STorderBound} implies that
\[
\PDn{\alpha_j}{(f_j\circ\pi_i)_{E\cup\{i\}}}{y_N}(y)\neq 0
\]
for all $y\in\pi_{i}^{-1}(C)$ and $j\in N$.  When $r=0$, note also that by Lemma \ref{lemma:STHi}, $\PD{m-1}{(f_j\circ\pi_i)_{E\cup\{i\}}}{y_j}(y) = 0$ for all $y\in U_i\cap H_j$.  Since $K_i\cap\pi_{i}^{-1}(C)$ is co-c.e.\ compact and the family $\pi_{i}^{*}\F$ is computably $C^\infty$, we can effectively find a rational number $\delta_i > 0$ such that
\begin{equation}\label{eq:InotN}
\PDn{\alpha_j}{(f_j\circ\pi_i)_{E\cup\{i\}}}{y_N}(y)\neq 0
\end{equation}
for all $j\in N$ and all $y\in K_i$ with $|y_i|\leq\delta_i$.  This proves 1.

We now prove 2.  Suppose that $i\in N$.  Condition 2 in Lemma \ref{lemma:STorderBound} implies that
\begin{equation}\label{eq:N}
\PDn{(\alpha_i)_{N_i}}{(f_j\circ\pi_i)_{E\cup\{i\}}}{y_N}(y)\neq 0
\end{equation}
for all $y\in U_i$ with $y_I = 0$. Since $\{y\in \pi_{i}^{-1}(K) : y_I = 0\} = \{x\in K : x_I = 0\}$, which is co-c.e.\ compact, we can effectively find a rational number $\epsilon_i > 0$ such that \eqref{eq:N} holds for all $y\in K_i$ satisfying $|y_j|\leq \epsilon_i$ for all $j\in I$.  Thus for all such $y$,
\begin{equation}\label{eq:orderDrop}
\ord(\pi_{i}^{*}\F,E\cup\{i\};y) \leq |(\alpha_i)_{N_i}| = m - \alpha_{i,i} < m.
\end{equation}
This proves 2.
\end{proof}

Below we use Notation \ref{notation:refine}.

\begin{definition}\label{def:refine}
Let $p,m\in\NN$, with $m > 0$, and let $N\subseteq\{1,\ldots,n\}$ be nonempty.  Let
\begin{eqnarray*}
A
    & = &
    \left\{\alpha\in\NN^{N}_{<m} : \text{$\left.\PDn{\alpha}{f_E}{x_N}\right|_{U|_N}\not\equiv 0$ for some $f\in\F$}\right\},
\\
q
    & = &
    \max\left\{0, p - \left(m+\left|d_{E\cap N}\right|\right)\right\},
\end{eqnarray*}
and define
\[
\mu^{p,m}_{N}(\F,E) =
\lcm\left(\{q\}\cup\{m-|\alpha| : \alpha\in A\}\right).
\]
For brevity, write $\tld{p} = \mu^{p,m}_{N}(\F,E)$.  Define
\[
\partial^{p,m}_{N,E} \F
=
\begin{cases}
\left\{\left(\left.\PDn{\alpha}{f_E}{x_N}\right|_{U|_N}\right)^{\tld{p}/(m-|\alpha|)}
\right\}_{\alpha\in\NN^{N}_{<m},\, f\in\F},
    &
    \text{if $q = 0$,}
\\
\hfill\\
\left\{ \left(x_{E\setminus N}^{d_{E\setminus N}}\right)^{\tld{p}/q} \right\}
\cup
\left\{\left(\left.\PDn{\alpha}{f_E}{x_N}\right|_{U|N}\right)^{\tld{p}/(m-|\alpha|)}
\right\}_{\alpha\in\NN^{N}_{<m},\, f\in\F},
    &
    \text{if $q > 0$,}
\end{cases}
\]
and define
\[
\partial^{p,m}_{N}(\F,E;K) = \left(\partial^{p,m}_{N,E}\F, E\setminus N; K|_N\right).
\]
We call $\partial^{p,m}_{N}(\F,E;K)$ the {\bf $(p,m,N)$-refinement of $(\F,E;K)$}. It is a basic presentation on $U|_N$.
\end{definition}

The following lemma relates the orders of a basic presentation with its refinement.

\begin{lemma}\label{lemma:refine}
Suppose that $(\F,E;K)$ is $(m,N,r)$-refinable, write $\tld{p} = \mu^{p,m}_{N}(\F,E)$, and let $a\in U$ with $a_N = 0$.  Then
\begin{equation}\label{eq:refine1}
\text{$\ord(\F;a) \geq p$ and $\ord(\F,E;a) = m$}
\end{equation}
if and only if
\begin{equation}\label{eq:refine2}
\ord\left(\partial^{p,m}_{N,E}\F ; a_{N^c}\right) \geq \tld{p}.
\end{equation}
\end{lemma}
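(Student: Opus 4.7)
The plan is to translate both sides of the equivalence into conditions on the orders of individual elements of $\partial^{p,m}_{N,E}\F$ at $a_{N^c}$, then match them term-by-term.

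First, I would reduce the condition \eqref{eq:refine1} to a pair of independent statements. Since $(\F,E;K)$ is $(m,N,r)$-refinable, Remark \ref{rmk:refinable}.1 gives $\ord(\F,E) \leq m$, so $\ord(\F,E;a) = m$ is the same as $\ord(\F,E;a) \geq m$, which by Remark \ref{rmk:Forder}.2 (in particular \eqref{eq:ordLocal}) is equivalent to $\ord(f_E;a) \geq m$ for every $f \in \F$.  Combined with \eqref{eq:ordDivLocal}, $\ord(\F;a) \geq p$ then reduces to $|d_a| \geq p - m$.  Next I would analyze $|d_a|$ using $a_N = 0$: in case $r = 0$, we have $E \cap N = \emptyset$, so $|d_a| = |d_{E_a\setminus N}|$ and $q = \max\{0, p-m\}$; in case $r = |N|$, we have $N \subseteq E \subseteq E_a$, so $|d_a| = |d_{E\cap N}| + |d_{E_a\setminus N}|$, and $q = \max\{0, p - m - |d_{E\cap N}|\}$. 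In both cases, $|d_a| \geq p - m$ is equivalent to $|d_{E_a\setminus N}| \geq q$, which is vacuous when $q = 0$, and when $q > 0$ is exactly the statement $\ord(x_{E\setminus N}^{d_{E\setminus N}}; a_{N^c}) \geq q$ (viewing this monomial as a function on $U|_N \subseteq \RR^{N^c}$, using $E \setminus N \subseteq N^c$). Since $q \mid \tilde{p}$, this is further equivalent to $\ord\bigl((x_{E\setminus N}^{d_{E\setminus N}})^{\tilde{p}/q}; a_{N^c}\bigr) \geq \tilde{p}$, which is precisely the condition that the extra monomial generator of $\partial^{p,m}_{N,E}\F$ (present exactly when $q > 0$) has order $\geq \tilde{p}$ at $a_{N^c}$.

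Second, I would handle $\ord(f_E; a) \geq m$ via a Taylor expansion in the $N$-variables around the coordinate subspace $\{x_N = 0\}$. Writing $\beta = (\beta_N,\beta_{N^c}) \in \NN^N\times\NN^{N^c}$, the fact that $a_N = 0$ gives
\[
\PDn{\beta}{f_E}{x}(a) \;=\; \PDn{\beta_{N^c}}{\bigl(\PDn{\beta_N}{f_E}{x_N}\big|_{U|_N}\bigr)}{x_{N^c}}(a_{N^c}).
\]
Hence $\ord(f_E;a) \geq m$ iff for every $\alpha \in \NN^{N}_{<m}$, $\ord\bigl(\PDn{\alpha}{f_E}{x_N}\big|_{U|_N};\, a_{N^c}\bigr) \geq m - |\alpha|$.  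Ranging over $f \in \F$, the set $A$ captures precisely the pairs $(f,\alpha)$ for which the restricted derivative is not identically zero; for $\alpha \notin A$ the condition is automatic. Since $(m-|\alpha|) \mid \tilde{p}$ for $\alpha \in A$ by construction of $\tilde{p}$, the equivalence $\ord(g;b) \geq k \Leftrightarrow \ord(g^{\tilde{p}/k}; b) \geq \tilde{p}$ (for positive integer $k$ dividing $\tilde{p}$) converts each of these conditions into the requirement that the corresponding generator of $\partial^{p,m}_{N,E}\F$ have order $\geq \tilde{p}$ at $a_{N^c}$.

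Combining the two parts, \eqref{eq:refine1} holds iff every generator of $\partial^{p,m}_{N,E}\F$ has order at least $\tilde{p}$ at $a_{N^c}$, which by Definition \ref{def:Forder} is exactly \eqref{eq:refine2}.  The only real obstacle is careful bookkeeping of the index sets $E_a$, $E \cap N$, and $E_a \setminus N$ in the two refinability cases $r = 0$ and $r = |N|$; once this is handled uniformly via the observation that $|d_{E_a \setminus N}|$ coincides with the order of the monomial $x_{E\setminus N}^{d_{E\setminus N}}$ at $a_{N^c}$, the argument becomes a routine matching.
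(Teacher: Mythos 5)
Your proof is correct and follows essentially the same route as the paper's: a Taylor expansion (the paper invokes Corollary \ref{cor:orderAlong}) converts $\ord(f_E;a)\geq m$ into the derivative-generator conditions, and \eqref{eq:ordDivLocal} together with the identity $\ord(x_{E\setminus N}^{d_{E\setminus N}};a_{N^c}) = |d_{E_a\setminus N}|$ handles the monomial generator. The only cosmetic difference is that the paper first disposes of the case $\ord(\F,E;a)<m$ and then analyzes the monomial under the standing assumption $\ord(\F,E;a)=m$, whereas you phrase the same dependence via the word ``then'' after establishing the derivative-generator equivalence; both organizations are sound.
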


\begin{proof}
We use the notation of Definition \ref{def:refine}.  Consider $f\in\F$.  Corollary \ref{cor:orderAlong} implies that
\[
f_E(x) = \sum_{\alpha\in\NN^{N}_{<m}} \frac{1}{\alpha!}\PDn{\alpha}{f_E}{x_N}(x_{N^c},0) x_{N}^{\alpha} + \sum_{\alpha\in\NN^{N}_{m}} x_{N}^{\alpha} f_{\alpha}(x)
\]
for some $\C$-analytic functions $f_\alpha:U\to\RR$.  Thus $\ord(f_E;a)\geq m$ if and only if $\ord\left(\PDn{\alpha}{f_E}{x_N};a\right) \geq m-|\alpha|$ for all $\alpha\in\NN^{N}_{<m}$.  This and Remark \ref{rmk:refinable}.1 imply that $\ord(\F,E) = m$ if and only if
\[
\ord\left(\left\{\left(\left.\PDn{\alpha}{f_E}{x_N}\right|_{U|_N}\right)^{\tld{p}/(m-|\alpha|)}
\right\}_{\alpha\in\NN^{N}_{<m},\, f\in\F} ; a_{N^c}\right)
\geq \tld{p}.
\]
Thus \eqref{eq:refine1} and \eqref{eq:refine2} are both false if $\ord(\F,E) < m$.  So assume that $\ord(\F,E) = m$.  Then
\[
\ord(\F;a) = \ord(x_{E\setminus N}^{d_{E\setminus N}};a_{N^c}) + |d_{E\cap N}| + m, \]
so $\ord(\F;a) \geq p$ if and only if $\ord(x_{E\setminus N}^{d_{E\setminus N}};a_{N^c}) \geq p - (m + |d_{E\cap N}|)$.  This holds if $q = 0$.  If $q > 0$, this holds if and only if
\[
\ord\left(\left(x_{E\setminus N}^{d_{E\setminus N}} \right)^{\tld{p}/q};a_{N^c}\right) \geq \tld{p}.
\]
Thus \eqref{eq:refine1} holds if and only \eqref{eq:refine2} holds.
\end{proof}

The following lemma shows that, up to dividing by appropriate powers of the exceptional divisor $y_i$, the operations of blowing-up and refining commute.

\begin{lemma}\label{lemma:blowupRefineCommute}
Suppose that $(\F,E;K)$ is $(m,N,r)$-refinable and that $\ord_C\F\geq p$ and $\ord_C(\F,E) = m$.  (Thus $N\subseteq I$ by Remark \ref{rmk:refinable}.4.)  Let $i\in I\setminus N$, and write $\pi_i:U_i\to U$ and $\tld{\pi}_i:U_i|_N \to U|_N$ for the $i$th standard charts for the blowing-up of $U$ with center $C$ and the blowing-up of $U|_N$ with center $C|_N$, respectively.  Write $\tld{p} = \mu^{p,m}_{N}(\F,E)$.  Then
\begin{equation}\label{eq:blowupRefineCommute}
\partial^{p,m}_{N,E\cup\{i\}}\left(y_{i}^{-p}\, \pi_{i}^{*} \F\right)
=
y_{i}^{-\tld{p}}\, \tld{\pi}_{i}^{*}\, \partial^{p,m}_{N,E}\F.
\end{equation}
\end{lemma}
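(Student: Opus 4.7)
The plan is a direct computation, tracking how blowing-up and refinement each affect a generic element of $\F$, and verifying that the resulting families coincide.  The whole proof will rely on the single key identity that, because $i\in I\setminus N$ and $N\subseteq I$, for each $j\in N$ the map $\pi_i$ sends $y_j\mapsto y_iy_j$, so for every $f\in\F$ and $\alpha\in\NN^N$,
\[
\PDn{\alpha}{(f_E\circ\pi_i)}{y_N}(y)
= y_i^{|\alpha|}\,\PDn{\alpha}{f_E}{x_N}(\pi_i(y)).
\]

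\emph{Step 1 (rewrite $y_i^{-p}\pi_i^*f$).}  First I would use Lemma \ref{lemma:fEblowup} and Remark \ref{rmk:refinable}.5,6 to write, for each $f\in\F$,
\[
y_i^{-p}\,\pi_i^*f(y)=y_{E\setminus\{i\}}^{d_{E\setminus\{i\}}}\,y_i^{|d_{E\cap I}|+m-p}\,(f\circ\pi_i)_{E\cup\{i\}}(y),
\]
where the $y_i$-exponent is nonnegative.  Since $(f\circ\pi_i)_{E\cup\{i\}}=y_i^{-m}(f_E\circ\pi_i)$ has no monomial factor in $y_{E\cup\{i\}}$ (by Lemma \ref{lemma:fEblowup}), it follows that $\Div(y_i^{-p}\pi_i^*\F,E\cup\{i\})_j=d_j$ for $j\in E\setminus\{i\}$ and $=|d_{E\cap I}|+m-p$ for $j=i$; and $(y_i^{-p}\pi_i^*f)_{E\cup\{i\}}=(f\circ\pi_i)_{E\cup\{i\}}=y_i^{-m}(f_E\circ\pi_i)$.

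\emph{Step 2 (compare the refinement data $\mu$, $q$ and the index set $A$).}  Because $i\notin N$ we have $(E\cup\{i\})\cap N=E\cap N$ and the values $d_j$ for $j\in E\cap N$ are unchanged in the blowup, so the quantity $q$ in Definition \ref{def:refine} is the same for $(\F,E)$ and for $(y_i^{-p}\pi_i^*\F,E\cup\{i\})$.  Next, applying Step 1 and the key identity above, for any $\alpha\in\NN^N_{<m}$ and $f\in\F$,
\[
\left.\PDn{\alpha}{(y_i^{-p}\pi_i^*f)_{E\cup\{i\}}}{y_N}\right|_{U_i|_N}(y_{N^c})
= y_i^{|\alpha|-m}\,\tld{\pi}_i^{*}\!\left(\left.\PDn{\alpha}{f_E}{x_N}\right|_{U|_N}\right)(y_{N^c}).
\]
Since $\tld{\pi}_i$ is a $\C$-analytic isomorphism on a dense open subset of $U_i|_N$ and $\C$ is quasianalytic, the pullback $\tld{\pi}_i^*$ is injective, so the ``$A$-set'' for the blown-up presentation agrees with that for $(\F,E)$.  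Hence $\mu^{p,m}_N(y_i^{-p}\pi_i^*\F,E\cup\{i\})=\mu^{p,m}_N(\F,E)=\tld p$.

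\emph{Step 3 (match the derivative generators).}  Raising the displayed identity of Step 2 to the power $\tld p/(m-|\alpha|)$ converts the prefactor $y_i^{|\alpha|-m}$ into exactly $y_i^{-\tld p}$, and commutes with $\tld\pi_i^*$ since the latter is a ring homomorphism.  This shows that the derivative generators of $\partial^{p,m}_{N,E\cup\{i\}}(y_i^{-p}\pi_i^*\F)$ match, factor-for-factor, with $y_i^{-\tld p}\,\tld\pi_i^*$ applied to the corresponding derivative generators of $\partial^{p,m}_{N,E}\F$.

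\emph{Step 4 (match the monomial generator, when $q>0$).}  This is the step that requires the most bookkeeping and will be the main technical obstacle.  I would split on whether $i\in E$ or not, but the upshot is the same: a direct computation gives
\[
\tld\pi_i^{*}\!\left(x_{E\setminus N}^{d_{E\setminus N}}\right)
= y_i^{|d_{E\cap I}|-|d_{E\cap N}|}\cdot y_{(E\setminus N)\setminus\{i\}}^{d_{(E\setminus N)\setminus\{i\}}}.
\]
Using $q=p-(m+|d_{E\cap N}|)$ and the fact from Step 1 that $d'_i=|d_{E\cap I}|+m-p$, a one-line arithmetic check shows that the $y_i$-exponent obtained on the right side of \eqref{eq:blowupRefineCommute} (namely $-\tld p+(|d_{E\cap I}|-|d_{E\cap N}|)\tld p/q$) equals $(|d_{E\cap I}|+m-p)\tld p/q=d'_i\tld p/q$, which is the $y_i$-exponent of the monomial generator on the left side; the remaining factor is $y_{(E\setminus N)\setminus\{i\}}^{(\tld p/q)\,d_{(E\setminus N)\setminus\{i\}}}$ on both sides.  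This finishes the match in the case $q>0$; when $q=0$ (equivalently $q'=0$), the monomial generator is absent from both sides and there is nothing further to verify.

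The main obstacle in the whole argument is the bookkeeping in Steps 1 and 4: one must carefully separate the coordinates according to their membership in $E$, $I$, and $N$, keep track of strict-transform exponents, and handle the corner case $i\in E$ vs.\ $i\notin E$ uniformly.  The rest of the proof is a direct substitution once the key identity $\partial_{y_N}^\alpha(f_E\circ\pi_i)=y_i^{|\alpha|}(\partial_{x_N}^\alpha f_E)\circ\pi_i$ is in hand.
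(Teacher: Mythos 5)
Your proof is correct and follows essentially the same approach as the paper: a direct computation comparing the monomial and derivative generators of each side, relying on the same divisor bookkeeping (via Lemma \ref{lemma:fEblowup} and Remarks \ref{rmk:refinable}.5--6) and on the same cancellation of $y_i$-powers. The only minor differences in organization are that you package the derivative comparison as the clean chain-rule identity $\PDn{\alpha}{(f_E\circ\pi_i)}{y_N} = y_i^{|\alpha|}\,\PDn{\alpha}{f_E}{x_N}\circ\pi_i$ rather than expanding $f_E$ via Corollary \ref{cor:orderAlong}, and that your Step 2 explicitly verifies $\mu^{p,m}_N(y_i^{-p}\pi_i^*\F, E\cup\{i\}) = \tld{p}$ (a point the paper treats as implicit in the term-by-term match).
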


\begin{proof}
We use the notation of Definition \ref{def:refine}.  We also write $\tld{x} = x_{N^c}$ and $\tld{y} = y_{N^c}$.  Note that
$\pi_i(y) = \left(y_{I^c}, y_i, y_i y_{I_i}\right)$ and
$\tld{\pi}_i(\tld{y}) = \left(y_{I^c}, y_i, y_i y_{I_i\setminus N}\right)$.
We assume that $q > 0$.  (The case $q = 0$ is similar and simpler.)

Consider $f\in\F$, and write $f(x) = x_{E}^{d} f_E(x)$ and
\[
f_E(x) = \sum_{\alpha\in\NN^{N}_{<m}} \frac{1}{\alpha!}\PDn{\alpha}{f_E}{x_N}(\tld{x},0) x_{N}^{\alpha}
+
\sum_{\alpha\in\NN^{N}_{m}} x_{N}^{\alpha} f_\alpha(x).
\]
Thus
\[
f\circ\pi_i(y) = y_{E\setminus\{i\}}^{d_{E\setminus\{i\}}} y_{i}^{|d_{E\cap I}| + m}
\left(y_{i}^{-m} (f_E\circ\pi_i)(y)\right),
\]
and
\[
y_{i}^{-m} (f_E\circ\pi_i)(y)
=
\sum_{\alpha\in\NN^{N}_{<m}} \frac{1}{\alpha!} y_{i}^{|\alpha|-m} \PDn{\alpha}{f_E}{x_N}(\tld{\pi}_i(\tld{y}),0) \,y_{N}^{\alpha}
+
\sum_{\alpha\in\NN^{N}_{m}} y_{N}^{\alpha} (f_\alpha\circ\pi_i)(y);
\]
note that for each $\alpha\in\NN^{N}_{<m}$, the function $y_{i}^{|\alpha|-m} \PDn{\alpha}{f_E}{x_N}(\tld{\pi}_i(\tld{y}),0)$ is indeed $\C$-analytic because Lemma \ref{lemma:strictTrans} shows that
\[
\ord_{\pi^{-1}_{i}(C)}\left(\PDn{\alpha}{f_E}{x_N}(\tld{\pi}_i(\tld{y}),0)\right) = \ord_C\left(\PDn{\alpha}{f_E}{x_N}(\tld{x},0)\right) \geq m - |\alpha|.
\]
So $\partial^{p,m}_{N,E\cup\{i\}}\left(y_{i}^{-p}\, \pi_{i}^{*} \F\right)$ consists of
\begin{equation}\label{eq:blowupRefineCommute1}
\left(
y_{E\setminus(N\cup\{i\})}^{d_{E\setminus(N\cup\{i\})}}
\,y_{i}^{|d_{E\cap I}| + m - p}
\right)^{\tld{p}/q}
\end{equation}
and the functions
\begin{equation}\label{eq:blowupRefineCommute2}
\left(
y_{i}^{|\alpha|-m}
\PDn{\alpha}{f_E}{x_N}\left(\tld{\pi}_i(\tld{y}),0\right)
\right)^{\tld{p}/(m-|\alpha|)},
\quad
\text{for each $\alpha\in\NN^{N}_{<m}$ and $f\in\F$.}
\end{equation}
And, $y_{i}^{-\tld{p}}\, \tld{\pi}_{i}^{*}\, \partial^{p,m}_{N,E}\F$ consists of
\begin{equation}\label{eq:blowupRefineCommute3}
y_{i}^{-\tld{p}}
\left(
y_{E\setminus(N\cup\{i\})}^{d_{E\setminus(N\cup\{i\})}}
y_{i}^{|d_{(E\setminus N)\cap I}|}
\right)^{\tld{p}/q}
\end{equation}
and the functions
\begin{equation}\label{eq:blowupRefineCommute4}
y_{i}^{-\tld{p}}
\left(
\PDn{\alpha}{f_E}{x_N}\left(\tld{\pi}_i(\tld{y}),0\right)
\right)^{\tld{p}/(m-|\alpha|)},
\quad
\text{for each $\alpha\in\NN^{N}_{<m}$ and $f\in\F$.}
\end{equation}
Clearly \eqref{eq:blowupRefineCommute2} and \eqref{eq:blowupRefineCommute4} are the same, and
\begin{eqnarray*}
\left(|d_{E\cap I}| + m - p\right)\frac{\tld{p}}{q}
    & = &
    \left(|d_{(E\setminus N)\cap I}| + |d_{E\cap N}| + m - p\right) \frac{\tld{p}}{q}
\\
    & = &
    \left(|d_{(E\setminus N)\cap I}| - q\right)
    \frac{\tld{p}}{q}
\\
    & = &
    |d_{(E\setminus N)\cap I}| \frac{\tld{p}}{q} - \tld{p},
\end{eqnarray*}
so \eqref{eq:blowupRefineCommute1} and \eqref{eq:blowupRefineCommute3} are also the same.  This proves \eqref{eq:blowupRefineCommute}.
\end{proof}

\begin{lemma}\label{lemma:blowupCase0}
Suppose that $\ord(\F,E) = 0$ and $|d| \geq p > 0$, where $d = \Div(\F,E)$ and $p\in\NN$.  Suppose that $I\subseteq E$ is minimal such that $|d_I| \geq p$, and let $i\in I$ and $d' = \Div(y_{i}^{-p}\pi_{i}^{*}\F,E)$.  Then
\[
d'_{E\setminus\{i\}} = d_{E\setminus\{i\}}
\quad
\text{and}
\quad
d'_i < d_i.
\]
\end{lemma}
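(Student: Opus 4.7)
The plan is to factor each $f \in \F$ as $f = x_E^d f_E$ and to compute the pullback $f \circ \pi_i$ explicitly. Using $I \subseteq E$ and the definition of $\pi_i$, the pullback of the monomial prefactor is
\[
(x_E^d) \circ \pi_i(y) \;=\; y_{E \setminus I}^{d_{E \setminus I}} \cdot y_{I_i}^{d_{I_i}} \cdot y_i^{|d_I|},
\]
so after dividing by $y_i^p$ (which produces a $\C$-analytic function thanks to $|d_I| \geq p$), the explicit monomial prefactor of $g_f := y_i^{-p}(f \circ \pi_i)$ already contributes exactly $d_j$ to $\ord_{\{y_j = 0\} \cap U_i}(g_f)$ for each $j \in E \setminus \{i\}$, and contributes $|d_I| - p$ when $j = i$. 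The proof therefore reduces to showing that $\min_{f \in \F} \ord_{\{y_j = 0\} \cap U_i}(f_E \circ \pi_i) = 0$ for every $j \in E$.

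For each $j \in E \setminus \{i\}$, the minimality of $d_j$ provides some $f^* \in \F$ with $x_j \nmid f^*_E$, and I plan to find a test point $a \in U$ with $a_j = 0$, $a_i \neq 0$, and $f^*_E(a) \neq 0$; its unique preimage $b \in U_i$ under the isomorphism $\pi_i : U_i \setminus \{y_i = 0\} \to U \setminus \{x_i = 0\}$ (Remark \ref{rmk:blowup}) then automatically satisfies $b_j = 0$ (whether $j \in E \setminus I$ or $j \in I_i$), producing the desired non-vanishing of $f^*_E \circ \pi_i$. The main obstacle is to ensure that such $a$ exists, i.e.\ that $\{f^*_E \neq 0\} \cap U \cap \{x_j = 0\} \cap \{x_i \neq 0\}$ is nonempty. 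To handle this I will show that $U \cap \{x_j = 0\}$ is polygonally connected by propagating any polygonal path in $U$ onto $\{x_j = 0\}$ via the $\BOX$-inclusion of Lemma \ref{lemma:blowupSet}.2: projecting each vertex to $\{x_j = 0\}$ keeps the vertices in $U$, and the segment between two such projections lies in $U$ because it lies in $\BOX(v, U) \subseteq U$ for every $v$ on the corresponding original segment. Connectedness of $U \cap \{x_j = 0\}$ combined with quasianalyticity forces the nonvanishing locus of $f^*_E$ to be dense in $U \cap \{x_j = 0\}$; since $\{x_i = 0\}$ meets $\{x_j = 0\}$ in a proper coordinate subspace, density survives removal of this locus, giving the desired $a$. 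This yields $d'_j = d_j$ for all $j \in E \setminus \{i\}$.

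The case $j = i$ is cleaner: Lemma \ref{lemma:strictTrans} applied to $f_E$ gives $\ord_{\{y_i = 0\} \cap U_i}(f_E \circ \pi_i) = \ord_C(f_E)$, and $\ord(\F, E) = 0$ forces $\ord(\F,E;a) = 0$ at every $a \in U$; picking any $a \in C$ thus furnishes some $f^* \in \F$ with $f^*_E(a) \neq 0$, whence $\min_f \ord_C(f_E) = 0$ and $d'_i = |d_I| - p$. The required strict inequality $d'_i < d_i$ rearranges to $|d_{I_i}| < p$, which is exactly the minimality of $I$ asserting that the proper subset $I_i \subsetneq I$ fails $|d_{I_i}| \geq p$. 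Apart from the connectedness argument for $U \cap \{x_j = 0\}$, the proof is a direct computation.
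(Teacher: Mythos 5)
Your argument is correct, but it re-derives material the paper has already established. The paper's own proof is a three-line computation: it writes $(x_E^d)\circ\pi_i = y_{E\setminus\{i\}}^{d_{E\setminus\{i\}}}y_i^{|d_I|}$ and asserts that this also equals $y_E^{d'}y_i^{p}$ ``since $\ord(\F,E)=0$.'' The unstated content of that assertion is exactly Lemma~\ref{lemma:fEblowup} applied with $m=0$ (note that $\ord(\F,E)=0$ forces $\ord(\F,E;x)=0$ for every $x\in U$, hence $\ord_C(\F,E)=0$): since $i\in I\subseteq E$, so that $E\cup\{i\}=E$, Lemma~\ref{lemma:fEblowup} says $(f\circ\pi_i)_E = f_E\circ\pi_i$, i.e.\ the family $\{f_E\circ\pi_i\}_{f\in\F}$ has trivial monomial GCD in $y_E$, which is precisely your reduction. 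You then prove this from scratch, separately for $j=i$ (via Lemma~\ref{lemma:strictTrans}, which is also what the proof of Lemma~\ref{lemma:fEblowup} uses) and for $j\neq i$ (via a test-point plus density-by-quasianalyticity argument, again mirroring the proof of Lemma~\ref{lemma:fEblowup}). That part of your write-up is sound, but it essentially duplicates an existing lemma. The one genuinely redundant detour is the polygonal-connectedness argument for $U\cap\{x_j=0\}$: this set is a blowup manifold (Definition~\ref{def:blowupManifold}), hence a blowup set, hence already polygonally connected by Lemma~\ref{lemma:blowupSet}.4, so the $\BOX$-projection construction is unnecessary. Citing Lemma~\ref{lemma:fEblowup} directly would collapse your proof to the paper's short computation; the final step identifying $d'_i = |d_I|-p < d_i$ via minimality of $I$ matches the paper exactly.
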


\begin{proof}
The pullback of $x_{E}^{d}$ by $\pi_i$ equals $y_{E\setminus\{i\}}^{d_{E\setminus\{i\}}} y_{i}^{|d_I|}$, and also equals $y_{E}^{d'} y_{i}^{p}$ since $\ord(\F,E) = 0$.  So
\[
y_{E}^{d'} = y_{E\setminus\{i\}}^{d_{E\setminus\{i\}}} \, y_{i}^{|d_I|-p}.
\]
Thus $d'_{E\setminus\{i\}} = d_{E\setminus\{i\}}$.  The minimality of $I$ implies that $p > |d_{I\setminus\{i\}}| = |d_I| - d_i$, so $d'_i = |d_I| - p < d_i$.
\end{proof}

The final task in this section is to show how to perform a change of coordinates to pullback a basic presentation to one which is refinable.

\begin{notation}\label{notation:germ}
For $a,b\in\RR^n$, write $F:\RR^{n}_{b}\to\RR^{n}_{a}$ to denote the germ at $b$ of a function $F:V\to\RR^n$ defined in a neighborhood $V$ of $b$, where $F(b) = a$.  We call $F:\RR^{n}_{b}\to\RR^{n}_{a}$ a  ``$\C$-analytic local isomorphism'' if $F$ is $\C$-analytic and $\det\PD{}{F}{x}(b)\neq 0$.
\end{notation}

\begin{definition}\label{def:localCoordTrans}
Let $a\in U$.  A {\bf local coordinate transformation for $(\F_a,E_a)$} is a $\C$-analytic local isomorphism $F = (F_1,\ldots,F_n):\RR^{n}_{b}\to\RR^{n}_{a}$ such that
\begin{equation}\label{eq:localCoordTrans}
F_i(y) = y_i
\quad\text{for all $i\in E_a$.}
\end{equation}
We call
\[
(F^*\F_a, E_a)
\]
the {\bf pullback of $(\F_a,E_a)$ by $F$}.  Note that $(F^*\F)_b = F^*\F_a$, and that \eqref{eq:localCoordTrans} implies that $(E_a)_b = E_a$.  Thus $(F^*\F_a, E_a)$ is the germ at $b$ of $(F^*\F,E_a)$.
\end{definition}

Local coordinate transformations may be composed in the natural way: if $F:\RR^{n}_{b}\to\RR^{n}_{a}$ is a local coordinate transformation for $(\F_a,E_a)$, and $G:\RR^{n}_{c}\to\RR^{n}_{b}$ is a local coordinates transformation for $(F^*\F_a,E_a)$, then $F\circ G:\RR^{n}_{c}\to\RR^{n}_{a}$ is a local coordinate transformation for $(\F_a,E_a)$, whose pullback by $F\circ G$ is $((F\circ G)^*\F_a,E_a)$.  These composition and pullback operations are associative.

\begin{definition}\label{def:localAdmissCoordTrans}
Consider the following two types of local coordinate transformations $F:\RR^{n}_{b}\to\RR^{n}_{a}$ for a germ $(\F_a,E_a)$, where $a\in U$.
\begin{enumerate}{\setlength{\itemsep}{5pt}
\item\emph{Linear Transformation for $(\F_a,E_a)$}:\\
Let $b = T^{-1}_{\lambda}(a)$ and
\[
F(y) = T_\lambda(y)
\]
for some $\lambda\in\QQ^{D^c\times D}$, where $D = \{i\}$ for some $i\in\{1,\ldots,n\}$ if $E_a$ is empty, and $D = E_a$ if $E_a$ is nonempty.

\item\emph{Translation by an Implicitly Defined Function for $(\F_a,E_a)$}:\\
Suppose that there exist $f\in\F$, $\alpha\in\NN^n$, and $i\in\{1,\ldots,n\}\setminus E_a$ such that
\[
\PDn{\alpha}{f_E}{x}(a) = 0 \quad\text{and}\quad \frac{\partial^{|\alpha|+1} f_E}{\partial x^\alpha \partial x_i}(a) \neq 0.
\]
Let $g:\RR^{\{i\}^c}_{a_{\{i\}^c}} \to \RR_{a_i}^{\{i\}}$ be the $\C$-analytic germ implicitly defined by
\[
\PDn{\alpha}{f_E}{x}\left(x_{\{i\}^c}, g(x_{\{i\}^c})\right) = 0.
\]
Let $b = (a_{\{i\}^c},0)$ and
\[
F(y) = \left(y_{\{i\}^c},\,\, y_i + g(y_{\{i\}^c})\right).
\]
}\end{enumerate}
A {\bf basic admissible local coordinate transformation for $(\F_a,E_a)$} is either a linear transformation for $(\F_a,E_a)$ or a translation by an implicitly defined function for $(\F_a,E_a)$.  An {\bf admissible local coordinate transformation for $(\F_a,E_a)$} is a local coordinate transformation for $(\F_a,E_a)$ which is a composition of finitely many basic admissible local coordinate transformations.
\end{definition}

We now give the analogs of Definitions \ref{def:localCoordTrans} and \ref{def:localAdmissCoordTrans} for basic presentations, rather than germs of basic presentations.

\begin{definition}\label{def:coordTrans}
A {\bf coordinate transformation for $(\F,E;K)$} consists of a $\C$-analytic embedding $F:U'\to U$ and a set $K'$, where $U'$ is an open blowup set contained in $\RR^n$, $K'$ is a compact blowup set which is contained in $U'$ and is defined by the same sequence of blowings-up as $U'$, and
\begin{equation}\label{eq:coordTrans}
\text{$F_i(y) = y_i$ for all $i\in E_{F(U')}$ and $y\in U'$.}
\end{equation}
We write
\[
F:(U';K')\to (U;K)
\]
to denote the coordinate transformation for $(\F,E;K)$ consisting of the map $F:U'\to U$ and compact set $K'$.  Note that \eqref{eq:coordTrans} implies that $(E_{F(U')})_{U'} = E_{F(U')}$.  Define the {\bf pullback of $(\F,E;K)$ by $F:(U';K')\to(U;K)$} to be the basic presentation
\[
(F^*\F,E_{F(U')};K').
\]
\end{definition}

Coordinate transformations for basic presentations can be composed in the natural way: if $F:(U';K')\to(U;K)$ is a coordinate transformation for $(\F,E;K)$ and $G:(U'';K'')\to(U';K')$ is a coordinate transformation for $(F^*\F,E_{F(U')};K')$, then $F\circ G:(U'';K'')\to(U;K)$ is a coordinate transformation for $(\F,E;K)$, whose pullback by  $F\circ G:(U'';K'')\to(U;K)$ is $((F\circ G)^*\F,E_{F\circ G(U'')}; K'')$.  Note that
\begin{equation}\label{eq:coordTransG}
\text{$G_i(y) = y_i$ for all $i\in (E_{F(U')})_{G(U'')}$ and  $y\in U''$.}
\end{equation}
It follows from \eqref{eq:coordTrans} and \eqref{eq:coordTransG} that $(E_{F(U')})_{G(U'')} = E_{F\circ G(U'')}$, so these composition and pullback operations are associative.

\begin{definition}\label{def:admissCoordTrans}
Consider the following three types of coordinate transformations $F:(U';K')\to(U;K)$ for a basic presentation $(\F,E;K)$.
\begin{enumerate}{\setlength{\itemsep}{5pt}
\item\emph{Inclusion for $(\F,E;K)$}:

The set $K'$ is given by $K' = K\cap B$ for some closed rational box $B\subseteq\RR^n$, the set $U'$ is an open blowup set such that $K' \subseteq U' \subseteq U$, where $U'$ is defined by the same sequence of blowings-up as $K$, and $F:U'\to U$ is the inclusion map.

\item\emph{Linear Transformation for $(\F,E;K)$}:\\
Given
\begin{itemize}
\item
a bounded open rational box $B$ such that $\cl(B)\subseteq U$,

\item
$\lambda\in\QQ^{D^c\times D}$, where $D = \{i\}$ for some $i\in\{1,\ldots,n\}$ if $E_B$ is empty, and $D = E_B$ if $E_B$ is nonempty,

\item
a bounded open rational box $U'\subseteq\RR^n$ such that $T_\lambda(\cl(U')) \subseteq B$,
\end{itemize}
define $F:U'\to U$ by $F(y) = T_\lambda(y)$, and let $K'$ be any compact rational box contained in $U'$.

\item\emph{Translation by an Implicitly Defined Function for $(\F,E;K)$}:\\
Given
\begin{itemize}
\item
$f\in\F$,

\item
$\alpha\in\NN$,

\item
$i\in \{1,\ldots,n\}\setminus E$,

\item
bounded open rational boxes $A\subseteq \RR^{\{i\}^c}$ and $B\subseteq\RR^{\{i\}}$ such that $\cl(A)\times\cl(B)\subseteq U$ and such that
\[
\IF\left(\PDn{\alpha}{f_E}{x}; \cl(A), \cl(B)\right)
\]
holds,
\end{itemize}
let $g:\cl(A)\to B$ be the function implicitly defined by $\PDn{\alpha}{f_E}{x}(x_{\{i\}^c},g(x_{\{i\}^c})) = 0$ on $\cl(A)$.  Fix $\epsilon\in\QQ_{+}^{\{i\}}$ such that $[g(x)-\epsilon,g(x)+\epsilon]\subseteq B$ for all $x\in \cl(A)$.  Let $U' = A\times(-\epsilon,\epsilon)$, define $F:U'\to U$ by
\[
F(y) = \left(y_{\{i\}^c},y_i + g(y_{\{i\}^c})\right),
\]
and let $K'$ be any compact rational box contained in $U'$.
}\end{enumerate}
A {\bf basic admissible coordinate transformation for $(\F,E;K)$} is an inclusion for $(\F,E;K)$, a linear transformation for $(\F,E;K)$, or a translation by an implicitly defined function for $(\F,E;K)$.  An {\bf admissible coordinate transformation for $(\F,E;K)$} is a coordinate transformation for $(\F,E;K)$ which is a composition of finitely many basic admissible coordinate transformations.

The {\bf name} of a basic admissible coordinate transformation for $(\F,E;K)$ is defined according to the type of the transformation, as follows (we use the notation from 1-3 above):
\begin{enumerate}{\setlength{\itemsep}{3pt}
\item
$(\name(U'), \name(K'))$,

\item
$(\name(B),D,\lambda,\name(U'),\name(K'))$,

\item
$(\Index(f),\alpha,i,\name(A),\name(B),\epsilon)$, where $\Index(f)$ is an index for $f$ (recall that $\F$ is an indexed family of functions).
}\end{enumerate}
If $F:(U';K')\to(U;K)$ is an admissible coordinate transformation for $(\F,E;K)$ expressed as a composition $F = F_1\circ\cdots\circ F_l$ of basic admissible coordinate transformations, diagramed as follows,
\[
\xymatrix{
(U';K') = (U_l;K_l) \ar[r]^-{F_l}
    & (U_{l-1};K_{l-1}) \ar[r]
    & \cdots \ar[r]
    & (U_1;K_1) \ar[r]^-{F_1}
    & (U_0;K_0) = (U;K) ,
}
\]
then define the {\bf name} of $F:(U';K')\to(U;K)$ to be $(\name(F_1),\ldots,\name(F_l))$, where each $\name(F_i)$ is the name of $F_i:(U_i;K_i)\to(U_{i-1};K_{i-1})$.
\end{definition}

In Definition \ref{def:admissCoordTrans} we defined an ``inclusion for $(\F,E;K)$'', but we did not explicitly give a local analog of this in Definition \ref{def:localAdmissCoordTrans} because its local analog is simply the germ of the identity map at $a$, which is a special type of linear transformation for $(\F_a,E_a)$.  Such inclusions will not actually be used until Section \ref{s:pres}.

\begin{remarks}\label{rmk:admissTrans}
Suppose that $(\F,E;K)$ is a basic $\S$-presentation.
\begin{enumerate}{\setlength{\itemsep}{5pt}
\item
The pullback of $(\F,E;K)$ by an admissible coordinate transformation is a basic $\S$-presentation.

\begin{proof}
This follows from the results of Section \ref{s:compClos} and the Lifting Lemmas \ref{lemma:lifting1} and \ref{lemma:lifting2}.
\end{proof}

\item
Relative to the approximation and precision oracles for $\S$, the set of names of all admissible coordinate transformations for $(\F,E;K)$ is computably enumerable.

\begin{proof}
The name of an admissible coordinate transformation for $(\F,E;K)$ is a finite string of symbols in the alphabet containing all the symbols used to name rational boxes (see Definition \ref{def:box} for a list of these symbols), along with the index set for $\F$ and the curly braces $\{$ and $\}$, which are used along with the integers $1,\ldots,n$ to name subsets of $\{1,\ldots,n\}$.  It is easy to determine when a string is of a syntactically correct form to potentially be a name for an admissible coordinate transformation for $(\F,E;K)$.  We must prove that if we are given such a syntactically correct string, and if this string happens to actually be a name for an admissible coordinate transformation for $(\F,E;K)$, then this fact can be effectively verified relative to our two oracles for $\S$.  Because of the previous remark, it suffices to prove the analogous fact for basic admissible coordinate transformations for $(\F,E;K)$.  Below, we use the notation of Definition \ref{def:admissCoordTrans}.

Keeping in mind the convention set forth in Remark \ref{rmk:blowupSet}.2, this is trivial for admissible inclusions.  So consider a name for a linear transformation.  The condition $\cl(B)\subseteq U$ can be effectively verified because $\cl(B)$ is a compact rational box and $U$ is c.e.\ open.  Whether or not $E_B$ is empty can be determined since $B$ is a rational box.  Finally, the condition $T_\lambda(\cl(U'))\subseteq B$ can be effectively verified because $T_\lambda(\cl(U'))$ is co-c.e. compact by Proposition \ref{prop:compContCompact}.

Now consider a name for a translation by an implicitly defined function.  The condition $\cl(A)\times\cl(B)\subseteq U$ can be effectively verified because $\cl(A)\times\cl(B)$ is a compact rational box and $U$ is c.e.\ open.  The condition $\IF\left(\PDn{\alpha}{f_E}{x}; \cl(A), \cl(B)\right)$ can be effectively verified by Lemma \ref{lemma:IF}.4.  Finally, the condition, $[g(x) - \epsilon, g(x) + \epsilon]\subseteq B$ for all $x\in\cl(A)$, can be effectively verified because the image of $\cl(A)\times[-\epsilon,\epsilon]$ under the map $(x,t)\mapsto g(x)+t$ is co-.c.e\ compact by Proposition \ref{prop:compContCompact}.
\end{proof}

\item
Let $a\in U$, let $F:\RR^{n}_{b}\to\RR^{n}_{a}$ be a local admissible coordinate transformation for the germ $(\F_a,E_a)$, and suppose that the germ $(F^*\F_a,E_a)$ is $(m,N,r)$-refinable.  Then there is a representative $F:U'\to\RR^n$ for the germ $F:\RR^{n}_{b}\to\RR^{n}_{a}$ and a compact rational box $K'$ such that $F:(U';K')\to(U;K)$ is an admissible coordinate transformation for $(\F,E;K)$, $K'$ is a neighborhood of $b$, $E_{F(U')} = E_a$, and $(F^*\F,E_{F(U')})$ is $(m,N,r)$-refinable on $K'$.  Relative to the approximation and precision oracles for $\S$, Remark \ref{rmk:refinableOnK} shows that we can effectively verify that $(F^*\F,E_{F(U')})$ is $(m,N,r)$-refinable on $K'$ and that by possibly shrinking $U'$, in an effective manner, we can ensure that $(\F^*\F,E_{F(U')};K')$ is $(m,N,r)$-refinable.
}\end{enumerate}
\end{remarks}

\begin{lemma}\label{lemma:transRefinable1}
Let $(\F,E;K)$ be a basic $\S$-presentation, and suppose that we know that $\ord(\F,E;K)\leq m$, where $m\in\NN$.  Then relative to the approximation and precision oracles for $\S$, we can effectively find a finite family
\begin{equation}\label{eq:admTransFamily1}
\{F^{(j)}:(U^{(j)};K^{(j)})\to(U;K)\}_{j\in J}
\end{equation}
of admissible coordinate transformations for $(\F,E;K)$ such that
\begin{equation}\label{eq:covering1}
K\subseteq\bigcup_{j\in J} F^{(j)}(\Int(K^{(j)})),
\end{equation}
and such that for each $j\in J$, either
\begin{enumerate}{\setlength{\itemsep}{3pt}
\item
$\ord(\F^{(j)}, E^{(j)}) = 0$, or

\item
$(\F^{(j)},E^{(j)};K^{(j)})$ is $(m^{(j)},N^{(j)},r^{(j)})$-refinable for some $m^{(j)}\leq m$, $N^{(j)}$, and $r^{(j)}$,
}\end{enumerate}
where $(\F^{(j)},E^{(j)};K^{(j)})$ is the pullback of $(\F,E;K)$ by $F^{(j)}:(U^{(j)};K^{(j)})\to(U;K)$.
\end{lemma}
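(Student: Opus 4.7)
The strategy is a pointwise construction on $K$ combined with Remark~\ref{rmk:admissTrans}.3 and the co-c.e.\ compactness of $K$. At each $a\in K$ I will produce a local admissible coordinate transformation for the germ $(\F_a,E_a)$ whose pullback is either of order $0$ (so clause~(i) applies after shrinking) or $(m',N,r)$-refinable with $m':=\ord(\F,E;a)\leq m$. Remark~\ref{rmk:admissTrans}.3 then inflates each such local transformation to a genuine admissible coordinate transformation $F_a\colon(U'_a;K'_a)\to(U;K)$ whose compact rational-box $K'_a$ is a neighborhood of $F_a^{-1}(a)$ and on which the relevant clause still holds, with $U'_a$ further shrunk via Remark~\ref{rmk:refinableOnK} or Lemma~\ref{lemma:blowupSetVar}. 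The sets $F_a(\Int(K'_a))$ form an open cover of $K$, so a finite subcover exists.

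The local construction splits on $m':=\ord(\F,E;a)$. If $m'=0$: take the identity; Lemma~\ref{lemma:SpresZeroDivOrder} shows $\ord(\F,E;K')=0$ on a sufficiently small compact box, and Lemma~\ref{lemma:blowupSetVar} applied to $\F_E$ extends this to a shrunken open domain. If $m'>0$ and $E_a\neq\emptyset$: pick $f^\ast\in\F$ with $\ord(f^\ast_{E_a};a)=m'$. Since a linear transformation $T_\lambda$ with $\lambda\in\QQ^{E_a^c\times E_a}$ fixes the $E_a$-coordinates, $(f^\ast\circ T_\lambda)_{E_a}=f^\ast_{E_a}\circ T_\lambda$ locally, and Proposition~\ref{prop:Gen} applied with $E:=E_a$ to $f^\ast_{E_a}$ yields $\alpha\in\NN^{E_a}$ with $|\alpha|=m'$ and $\lambda$ such that $\PDn{\alpha}{(f^\ast_{E_a}\circ T_\lambda)}{y_{E_a}}(T_\lambda^{-1}(a))\neq 0$; this gives Case~2 refinability with $N:=\supp(\alpha)\subseteq E_a$, $r=|N|$, uniformly witnessed by $(f^\ast,\alpha)$ (the sub-case $E_a=\{1,\dots,n\}$ needs no linear transformation). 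If $m'>0$ and $E_a=\emptyset$: choose $i_0\in\{1,\dots,n\}$ and apply Proposition~\ref{prop:Gen} with $E:=\{i_0\}$, $\alpha=(m')$---where \eqref{eq:Gen} is automatic since $|E|=1$---to obtain $\lambda\in\QQ^{\{i_0\}^c\times\{i_0\}}$ with $\PD{m'}{\tilde f}{y_{i_0}}(b_1)\neq 0$, where $\tilde f:=f^\ast_E\circ T_\lambda$ and $b_1:=T_\lambda^{-1}(a)$. Since $\ord(\tilde f;b_1)=m'$ automatically forces $\PD{m'-1}{\tilde f}{y_{i_0}}(b_1)=0$, the translation of Definition~\ref{def:localAdmissCoordTrans} Case~2 with $\alpha=(m'-1)e_{i_0}$, $i=i_0$ applies, and the composite transformation delivers Case~1 refinability at the new base point $b_2$ with $N=\{i_0\}$, $r=0$: $\PD{m'-1}{\tilde{\tilde f}}{z_{i_0}}$ vanishes identically on $H_{i_0}$ near $b_2$ by the implicit-function construction, while $\PD{m'}{\tilde{\tilde f}}{z_{i_0}}(b_2)\neq 0$ is preserved.

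For effectivity, Remark~\ref{rmk:admissTrans}.2 provides a computable enumeration of admissible coordinate transformations for $(\F,E;K)$; for each candidate the pullback is a basic $\S$-presentation (Remark~\ref{rmk:admissTrans}.1), and, relative to the approximation and precision oracles for $\S$, one checks clause~(i) via Lemma~\ref{lemma:SpresZeroDivOrder} on $K^{(j)}$ followed by Lemma~\ref{lemma:blowupSetVar} applied to $\F_E^{(j)}$ to shrink $U^{(j)}$, or clause~(ii) by enumerating triples $(m^{(j)},N^{(j)},r^{(j)})$ with $m^{(j)}\leq m$ and invoking Remark~\ref{rmk:refinableOnK}. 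One then enumerates finite subfamilies of verified candidates until the interiors of their images cover $K$; this last verification is effective via Proposition~\ref{prop:compContCompact} and the computable continuity of the inverses of the admissible embeddings, which lets one pull small rational boxes in $U$ back into the interiors of the $K^{(j)}$. The main obstacle is threading the refinability witness through the pullback so that the correct divisor data appears---especially in Case~C when $E\neq\emptyset$, where the divisor factor $x_E^{d_{f^\ast}}$ must be controlled via a careful choice of $\lambda$---and the key enabling observation throughout is that $\ord(\tilde f;b_1)=m'$ automatically annihilates all lower-order derivatives at $b_1$, making the translation-by-implicit-function step immediately available after the linear transformation.
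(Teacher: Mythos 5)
Your proof is correct and follows essentially the same route as the paper: prove a pointwise claim at each $a\in K$ that either $\ord(\F,E;a)=0$ or some basic admissible local transformation makes the germ $(m',N,r)$-refinable, then use Remark~\ref{rmk:admissTrans}.3 plus co-c.e.\ compactness of $K$, c.e.\ openness of $\bigcup F^{(j)}(\Int(K^{(j)}))$, and the computable enumeration of Remark~\ref{rmk:admissTrans}.2 to extract and verify a finite cover. Two small remarks on the differences. First, in the case $E_a\neq\emptyset$ you pick a single $\alpha\in\NN^{E_a}_{m'}$ and set $N:=\supp(\alpha)$, with the single pair $(f^\ast,\alpha_N)$ serving as the witness for every $i\in N$ (which is legitimate since $\alpha_{i,i}>0$ for $i\in\supp(\alpha)$); the paper instead takes $N_a$ to be the support of the full set~\eqref{eq:Na} and chooses $\lambda$ generic enough to realize \emph{all} $\alpha$ in that set simultaneously, giving a potentially larger $N$. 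Both choices satisfy Definition~\ref{def:refinable} and either serves the lemma as stated, so your simplification is fine here. Second, you write that Proposition~\ref{prop:Gen} ``yields'' $\alpha$, but that proposition takes $\alpha$ as input and characterizes nonvanishing via condition~\eqref{eq:Gen}; you should note explicitly that since $\ord(f^\ast_{E_a};a)=m'$ there is some $\beta\in\NN^n_{m'}$ with $\PDn{\beta}{f^\ast_{E_a}}{x}(a)\neq 0$, and $|\beta_{E_a}|\leq m'$ lets you choose $\alpha\in\NN^{E_a}_{m'}$ with $\alpha\geq\beta_{E_a}$, so \eqref{eq:Gen} is satisfiable. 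Finally, the ``obstacle'' you flag about controlling the divisor factor $x_E^{d}$ is a non-issue: any linear transformation $T_\lambda$ with $\lambda\in\QQ^{E_a^c\times E_a}$ fixes every $E_a$-coordinate, so $\Div$ and $(\cdot)_{E_a}$ commute with pullback by $T_\lambda$ automatically, exactly as you already observed in writing $(f^\ast\circ T_\lambda)_{E_a}=f^\ast_{E_a}\circ T_\lambda$.
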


\begin{proof}
Fix $a\in K$.  We claim that either $\ord(\F,E;a)=0$ or there exists a local admissible coordinate transformation $F:\RR^{n}_{b}\to\RR^{n}_{a}$ for $(\F_a,E_a)$ such that the germ $(F^{*}\F_a,E_a)$ is $(m(a),N(a),r(a))$-refinable for some $m(a)\leq m$, $N(a)$, and $r(a)$.

We first show that the claim implies the lemma.  It follows from the claim and Remark \ref{rmk:admissTrans}.3 that for each $a\in K$ there exists an admissible coordinate transformation  $F^{(a)}:(U^{(a)};K^{(a)})\to (U;K)$ such that  $F^{(a)}(\Int(K^{(a)}))$ is a neighborhood of $a$, and such that relative to our oracles, we can effectively verify that the pullback of $(\F,E;K)$ by $F^{(a)}:(U^{(a)};K^{(a)})\to (U;K)$ either has order $0$ or is $(m(a),N(a),r(a))$-refinable for some $m(a)\leq m$, $N(a)$, and $r(a)$.  Since $K$ is compact, there exists a finite set $A\subseteq K$ such that $K\subseteq\bigcup_{a\in A}F^{(a)}(\Int(K^{(a)}))$, and this too can be effectively verified since $K$ is co-c.e.\ compact and $\bigcup_{a\in A}F^{(a)}(\Int(K^{(a)}))$ is c.e.\ open.  By Remark \ref{rmk:admissTrans}.2 we can computably enumerate all admissible coordinate transformations for $(\F,E;K)$, so we will eventually discover such a family $\{F^{(a)}:(U^{(a)};K^{(a)})\to(U;K)\}_{a\in A}$, which proves the lemma from the claim.

We now prove the claim.  Write $m_a = \ord(\F,E;a)$, and note that $m_a \leq m$.  There is nothing to prove if $m_a=0$, so assume that $m_a > 0$.  First suppose that $E_a$ is empty.  It follows from Proposition \ref{prop:Gen} that there exist $f\in\F$ and $\lambda\in\QQ^{\{1,\ldots,n-1\}\times\{n\}}$ such that $\PD{m_a}{(f\circ T_\lambda)}{y_n}(p) \neq 0$, where $p = T^{-1}_{\lambda}(a)$.  Let $g:\RR^{\{n\}^c}_{p_{\{n\}^c}}\to\RR_{p_n}^{\{n\}}$ be the $\C$-analytic germ implicitly defined by $\PD{m_a-1}{(f\circ T_\lambda)}{y_n}(y_{\{n\}^c},g(y_{\{n\}^c})) = 0$ for $y$ near $p_{\{n\}^c}$, and let $G(y) = (y_{\{n\}^c}, y_n + g(y_{\{n\}^c}))$.  Let $F = T_\lambda\circ G$ and $b = F^{-1}(a)$.  Thus $\PD{m_a}{(f\circ F)}{y_n}(b)\neq 0$ and $\PD{m_a-1}{(f\circ F)}{y_n}(y) = 0$ for all $y$ near $b$ with $y_n = 0$, so the pullback of $(\F_a,E_a)$ by $F$ is $(m_a,\{n\},0)$-refinable.

Now suppose that $E_a$ is nonempty.  Let $N_a$ be the support of the set
\begin{equation}\label{eq:Na}
\left\{\alpha\in\NN^{E_a}_{m_a} : \text{there exist $f\in\F$ and $\beta\in\NN^{n}_{m_a}$ such that $\PDn{\beta}{f_{E_a}}{x}(a)\neq 0$ and $\alpha\geq\beta_{E_a}$}\right\}.
\end{equation}
If $E_a = \{1,\ldots,n\}$, then $(\F_a,E_a)$ is $(m_a,N_a,|N_a|)$-refinable.  So suppose that $E_a\neq \{1,\ldots,n\}$.
Then Proposition \ref{prop:Gen} implies that there exists $\lambda\in\QQ^{E_{a}^{c}\times E_a}$ such that for all $\alpha$ in the set \eqref{eq:Na}, there exists $f\in\F$ such that $\PDn{\alpha}{(f_{E_a}\circ T_\lambda)}{y_{E_a}}(b)\neq 0$, where $b = T^{-1}_{\lambda}(a)$.  Thus the pullback of $(\F_a,E_a)$ by $T_\lambda$ is $(m_a,N_a,|N_a|)$-refinable.
\end{proof}

\begin{lemma}\label{lemma:transRefinable2}
Let $(\F,E;K)$ be a basic $\S$-presentation which is $(m,N,r)$-refinable, let $\epsilon > 0$ be rational, and let $i\in N$.  Then relative to the approximation and precision oracles for $\S$, we can effectively find a finite family
\begin{equation}\label{eq:admTransFamily2}
\{F_j:(U^{(j)};K^{(j)})\to(U;K)\}_{j\in J}
\end{equation}
of admissible coordinate transformations for $(\F,E;K)$ such that
\begin{equation}\label{eq:covering2}
\{x\in K : |x_i| \geq \epsilon\} \subseteq\bigcup_{j\in J} F^{(j)}(\Int(K^{(j)})),
\end{equation}
and such that for each $j\in J$,  either
\begin{enumerate}{\setlength{\itemsep}{3pt}
\item
$\ord(\F^{(j)}, E^{(j)}) < m$, or

\item
$(\F^{(j)},E^{(j)};K^{(j)})$ is $(m,N^{(j)},r^{(j)})$-refinable for some $N^{(j)}\subseteq N$ and $r^{(j)} < r$,
}\end{enumerate}
where $(\F^{(j)},E^{(j)};K^{(j)})$ is the pullback of $(\F,E;K)$ by $F^{(j)}:(U^{(j)};K^{(j)})\to(U;K)$.
\end{lemma}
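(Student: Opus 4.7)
The plan is to proceed pointwise, mirroring the proof of Lemma~\ref{lemma:transRefinable1}: at each $a\in K\cap\{|x_i|\geq\epsilon\}$ I will exhibit a local admissible coordinate transformation $F:\RR^{n}_{b}\to\RR^{n}_{a}$ whose pullback germ $(F^*\F_a,E_a)$ either has order strictly less than $m$ or is $(m,N^{(a)},r^{(a)})$-refinable with $N^{(a)}\subseteq N$ and $r^{(a)}<r$. Combining these local constructions with the co-c.e.\ compactness of $K\cap\{|x_i|\geq\epsilon\}$, Remark~\ref{rmk:admissTrans}, and Remark~\ref{rmk:refinableOnK} then yields the required finite, effectively computable family exactly as in Lemma~\ref{lemma:transRefinable1}.

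When $r=0$, Remark~\ref{rmk:refinable}.3 already gives $\ord(\F,E;x)<m$ at every $x$ with $x_i\neq 0$, and admissible inclusions into small rational boxes covering $K\cap\{|x_i|\geq\epsilon\}$ realise conclusion~(1). When $r=|N|$ and $\ord(\F,E;a)<m$, semicontinuity of the order (Proposition~\ref{prop:order}.2) again reduces to a small admissible inclusion. The interesting case is $r=|N|$ together with $\ord(\F,E;a)=m$, which splits on $E_a\subseteq E\setminus\{i\}$ (since $a_i\neq 0$). If $E_a=\emptyset$, I take a box $B$ around $a$ disjoint from every $H_k$ with $k\in E\cup\{i\}$, so that $E_B=\emptyset$ and the admissible rules permit the linear transformation $T_\lambda$ with $D=\{i\}$. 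Proposition~\ref{prop:Gen} applied with $E=\{i\}$ (where the compatibility condition $\alpha\geq\beta_E$ is automatic because $|E|=1$) supplies a rational $\lambda\in\QQ^{\{i\}^c\times\{i\}}$ making $\PD{m}{(f_i\circ T_\lambda)_{E}}{y_i}(T_\lambda^{-1}(a))\neq 0$; composing $T_\lambda$ with the admissible translation by the implicit function defined by $\PD{m-1}{\cdot}{y_i}=0$ and restricting to a box disjoint from $H_i$ yields a pullback that is $(m,\{i\},0)$-refinable (the vanishing condition on $H_i$ is vacuous), giving conclusion~(2) with $N^{(a)}=\{i\}\subseteq N$ and $r^{(a)}=0<|N|$.

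If $E_a\neq\emptyset$, the only admissible choice is $D=E_a$, which excludes $i$, so the previous strategy is unavailable; I instead aim at $(m,N\cap E_a,|N\cap E_a|)$-refinability, which will satisfy $N\cap E_a\subseteq N\setminus\{i\}$ with $|N\cap E_a|\leq |N|-1<|N|$. Because $T_\lambda$ with $D=E_a$ fixes $y_{E_a}$ one checks that $(f_j\circ T_\lambda)_{E_a}=(f_j)_{E_a}\circ T_\lambda$, so Proposition~\ref{prop:Gen} applied with $E=E_a$ and $\alpha'_j=m\cdot e_j\in\NN^{E_a}$ reduces, for each $j\in N\cap E_a$, to finding a multi-index $\beta\in\NN^{n}_{m}$ with $\PDn{\beta}{(f_j)_{E_a}}{x}(a)\neq 0$ and $\beta_{E_a}\leq m\cdot e_j$. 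Such $\beta$ are extracted from the original witnesses $\alpha_j\in\NN^N$ (with $\alpha_{j,j}>0$) via the Leibniz expansion of the unit relation $(f_j)_{E_a}=x_{E\setminus E_a}^{d_{E\setminus E_a}}(f_j)_E$ at $a$; a single rational $\lambda\in\QQ^{E_{a}^{c}\times E_a}$ simultaneously realising all $|N\cap E_a|$ nonvanishing polynomial conditions is obtained by the standard product-of-nonzero-polynomials argument, and the construction is completed by admissible translations aligning the relevant $(m-1)$-derivatives with coordinate hyperplanes.

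The main obstacle is the residual subcase in which $a_k=0$ for some $k\in E\setminus N$, where $E_B$ is forced to contain $k\notin N$ and the support index $M_a$ produced directly by the pointwise argument of Lemma~\ref{lemma:transRefinable1} can spill outside $N$. The plan is to exploit the extra rigidity of the refinability witnesses, whose multi-indices $\alpha_j$ are supported only in $N$, together with the Leibniz analysis of the unit factor $x_{E\setminus E_a}^{d_{E\setminus E_a}}$ just above, to show that after a further well-chosen rational linear transformation with $D=E_a$ and an appropriate admissible translation by an implicit function built from the witness data, the effective refinability support can be pushed into $N\cap E_a$. Verifying this reduction uniformly enough for compactness to produce a finite family is the technical heart of the proof; once it is in place, the effectivity follows in the same manner as in Lemma~\ref{lemma:transRefinable1} from Remark~\ref{rmk:admissTrans} and Remark~\ref{rmk:refinableOnK}.
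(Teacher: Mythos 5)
Your overall shape is right — pointwise reduction and a final compactness step, exactly as in Lemma~\ref{lemma:transRefinable1} — and you correctly use Remark~\ref{rmk:refinable}.3 to kill the case $r=0$ when $\ord(\F,E;a)=m$ and $a_i\neq 0$.  But your case split, on $E_a=\emptyset$ versus $E_a\neq\emptyset$, is the wrong one: the paper splits on $N_a := N\cap E_a = \emptyset$ versus $N_a\neq\emptyset$, and the difference is material.

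The fatal gap is in your ``$E_a\neq\emptyset$'' branch.  You target $(m,N\cap E_a,|N\cap E_a|)$-refinability, which is only meaningful when $N\cap E_a\neq\emptyset$: Definition~\ref{def:refinable} requires a nonempty refinement set.  When $E_a\neq\emptyset$ but $N\cap E_a=\emptyset$ — precisely the ``residual subcase'' you flag — there is nowhere to push the support, so no amount of Leibniz expansion or further admissible transformation can make $(m,\emptyset,0)$-refinability make sense.  The fix is not to keep $N^{(j)}\subseteq N_a$; for the entire case $N_a=\emptyset$ (whether or not $E_a$ is empty) you should instead target $(m,\{i\},0)$-refinability.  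The mechanism: since $(\F,E;K)$ is $(m,N,|N|)$-refinable, there are a witness $f\in\F$ and $\beta\in\NN^{E}_{m}$ with $\supp(\beta)\subseteq N$ and $\PDn{\beta}{f_{E_a}}{x_E}(a)\neq 0$.  Because $N\cap E_a=\emptyset$, one has $\beta_{E_a}=0$; taking $\alpha\in\NN^{E_a\cup\{i\}}$ with $\alpha_i=m$ and $\alpha_{E_a}=0$, one gets $\alpha\geq\beta_{E_a\cup\{i\}}$, so Proposition~\ref{prop:Gen} applied with $E_a\cup\{i\}$ in the role of $E$ yields a rational shear $T_\lambda$, $\lambda\in\QQ^{(E_a\cup\{i\})^c\times(E_a\cup\{i\})}$, with $\PD{m}{(f_{E_a}\circ T_\lambda)}{y_i}(T_\lambda^{-1}(a))\neq 0$.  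Composing with the admissible translation by the implicit function solving $\PD{m-1}{(f_{E_a}\circ T_\lambda)}{y_i}=0$ makes the pullback $(m,\{i\},0)$-refinable, giving conclusion~(2) with $N^{(j)}=\{i\}\subseteq N$ and $r^{(j)}=0<|N|=r$.  (Your strict reading of Definition~\ref{def:localAdmissCoordTrans} — that ``the only admissible choice is $D=E_a$'' — blocked this move; note the paper's own proof works with $D=E_a\cup\{i\}$, which still fixes all $E_a$-coordinates, and this is what makes the whole argument run.)

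There is also a smaller error in your $N_a\neq\emptyset$ branch.  The choice $\alpha'_j=m\cdot e_j\in\NN^{E_a}$ is too rigid: Proposition~\ref{prop:Gen} needs $\alpha'_j\geq\beta_{E_a}$ for some $\beta\in\NN^n_m$ with $\PDn{\beta}{(f_j)_{E_a}}{x}(a)\neq 0$, and the natural witness $\beta_j\in\NN^N_m$ from the refinability hypothesis has $\beta_{j,E_a}=\beta_{j,N_a}$, which need not be $\leq m\cdot e_j$ once $|N_a|>1$.  Rather than hoping to extract such a $\beta$ by a Leibniz trick, choose $\alpha_j\in\NN^{N_a}_{m}$ directly so that $\alpha_j\geq\beta_{j,N_a}$ and $\alpha_{j,j}>0$, for instance by moving the $N\setminus N_a$ mass onto the $j$-slot: $\alpha_{j,j}=\beta_{j,j}+\sum_{l\in N\setminus N_a}\beta_{j,l}$ and $\alpha_{j,k}=\beta_{j,k}$ for $k\in N_a\setminus\{j\}$.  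Then Proposition~\ref{prop:Gen} with $E=E_a$ applies and one lands on $(m,N_a,|N_a|)$-refinability with $|N_a|<|N|=r$.
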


\begin{proof}
Fix $a\in K$ with $|a_i| \geq \epsilon$.  As in the proof of Lemma \ref{lemma:transRefinable1}, it suffices to prove the existence of a local admissible coordinate transformation $F:\RR^{n}_{b}\to\RR^{n}_{a}$ such that the pullback of $(\F_a,E_a)$ by $F$ has the desired reduction in order or local refinability property at $b$.  If $\ord(\F,E;a) < m$, simply take $F$ to be the germ of the identity map.  Now assume that $\ord(\F,E;a) = m$.  Since $a_i\neq 0$, this assumption excludes the case that $r=0$, by Remark \ref{rmk:refinable}.3.  Thus $r = |N|$.

First suppose that $N_a$ is empty.  Since $(\F,E;K)$ is $(m,N,r)$-refinable, we may fix $\beta\in\NN^{E}_{m}$ and $f\in\F$ such that $\PDn{\beta}{f_{E_a}}{x_E}(a)\neq 0$ and $\supp(\beta)\subseteq N$.  Consider $\alpha\in\NN^E$ defined by $\alpha_i = m$ and $\alpha_{E\setminus\{i\}} = 0$.  Note that $\beta_{E_a} = 0$ because $N_a = \emptyset$, and also $\beta_i\leq |\beta| = m$, so $\alpha_{E_a\cup\{i\}} \geq \beta_{E_a\cup\{i\}}$.  Therefore Proposition \ref{prop:Gen} implies that there exists $\lambda\in\QQ^{(E_a\cup\{i\})^c\times(E_a\cup\{i\})}$ such that $\PD{m}{(f_{E_a}\circ T_\lambda)}{y_i}(p)\neq 0$, where $p = T^{-1}_{\lambda}(a)$.  Let $g:\RR^{\{i\}^c}_{p_{\{i\}^c}} \to \RR^{\{i\}}_{p_i}$ be the $\C$-analytic germ implicitly define by $\PD{m-1}{(f_{E_a}\circ T_\lambda)}{y_i}(y_{\{i\}^c}, g(y_{\{i\}^c})) = 0$ for $y_{\{i\}^c}$ near $p_{\{i\}^c}$. Let $G(y) = (y_{\{i\}^c}, y_i + g(y_{\{i\}^c}))$, and $F = T_\lambda\circ G$.  Then the pullback of $(\F_a,E_a)$ by $F$ is $(m,\{i\},0)$-refinable.

Now suppose that $N_a$ is nonempty.  For each $j\in N_a$ there exist $f_j\in\F$ and $\beta_j = (\beta_{j,k})_{k\in N}\in\NN^{N}_{m}$ such that $\beta_{j,j} > 0$ and $\PDn{\beta}{(f_j)_{E_a}}{x_N}(a)\neq 0$.  For each $j\in N_a$ choose $\alpha_j = (\alpha_{j,k})_{k\in N_a}\in\NN^{N_a}_{m}$ such that $\alpha\geq \beta_{j,N_a}$: for example, for each $k\in N_a$ one could let
\[
\alpha_{j,k} = \begin{cases}
\beta_{j,j} + \sum_{l\in N\setminus N_a} \beta_{j,l},
    & \text{if $k = j$,} \\
\beta_{j,k},
    & \text{otherwise.}
\end{cases}
\]
Since $\alpha\geq \beta_{j,N_a}$, we have $\alpha_j\geq \beta_{j,j} > 0$.
By Proposition \ref{prop:Gen} there exists $\lambda\in\QQ^{E_{a}^{c}\times E_a}$ such that for all $j\in N_a$, $\PDn{\alpha_j}{((f_j)_{E_a}\circ T_\lambda)}{y_{N_a}}(b)\neq 0$, where $b = T^{-1}_{\lambda}(a)$.  Thus the pullback of $(\F,E_a)$ by $T_\lambda$ is $(m,N_a,|N_a|)$-refinable.  Since $a_i\neq 0$, we have $i\in N\setminus N_a$, so $|N_a| < |N| = r$.
\end{proof}

\section{Presentations}\label{s:pres}

Fix a basic presentation $(\F,E;K)$ on $U\subseteq\RR^n$.  Let $k\in\NN$, let $m_0,\ldots,m_{k-1}$ be positive integers, let $m_k\in\NN\cup\{\infty\}$, let $N_0,\ldots,N_{k-1}$ be nonempty disjoint subsets of $\Cen(K)$, and for each $i\in\{0,\ldots,k-1\}$ let $r_i\in\{0,|N_i|\}$.  (Note that $k\leq \sum_{j=0}^{k-1}|N_j| \leq n$.)  Inductively define a sequence of basic presentations $(\F_0,E_0;K_0),\ldots,(\F_k,E_k;K_k)$ by setting
\begin{equation}\label{eq:presSeq1}
(\F_0,E_0;K_0) = (\F,E;K) \quad\text{and}\quad p_0 = 0,
\end{equation}
and by setting
\begin{equation}\label{eq:presSeq2}
(\F_i,E_i;K_i) = \partial^{p_{i-1},m_{i-1}}_{N_{i-1}}(\F_{i-1},E_{i-1};K_{i-1})
\quad
\text{and}
\quad
p_i = \mu^{p_{i-1},m_{i-1}}_{N_{i-1}}(\F_{i-1},E_{i-1})
\end{equation}
for each $i\in\{1,\ldots,k\}$.  For each $i\in\{0,\ldots,k\}$, let
\[
d_i = (d_{i,j})_{j\in E_i} = \Div(\F_i,E_i),
\]
and let $U_i\subseteq\RR^{M_i}$ be the domain of $\F_i$, where
\[
M_i = \left(\bigcup_{j=0}^{i-1} N_j\right)^c,
\]
and where the superscript $c$ always denotes complementation in $\{1,\ldots,n\}$.
If $m_k = 0$, define $r_k = |d_k|$.  If $m_k = 0$ and $|d_k|\geq p_k$, let $N_k$ be some subset of $E_k$ which is minimal such that $|d_{k,N_k}| \geq p_k$.  If $m_k > 0$, or if $m_k = 0$ and $|d_k| < p_k$, define $N_k = \emptyset$.

\begin{definition}\label{def:pres}
We call the sequence $\{(\F_i,E_i;K_i)\}_{i\in\{0,\ldots,k\}}$ an {\bf admissible sequence of refinements} for $(\F,E;K)$ if $(\F_i,E_i;K_i)$ is $(m_i,N_i,r_i)$-refinable for all $i\in\{0,\ldots,k-1\}$.  We call
\begin{equation}\label{eq:pres}
\P = \begin{cases}
(\F,E;K:m_0,N_0,r_0,\ldots,m_{k-1},N_{k-1},r_{k-1},m_k),
    & \text{if $m_k > 0$,}
\\
(\F,E;K:m_0,N_0,r_0,\ldots,m_{k-1},N_{k-1},r_{k-1},m_k,N_k,r_k),
    & \text{if $m_k = 0$,}
\end{cases}
\end{equation}
a {\bf presentation} of $(\F,E;K)$ if $\{(\F_i,E_i;K_i)\}_{i\in\{0,\ldots,k\}}$ is an admissible sequence of refinements for $(\F,E;K)$ and $\ord(\F_k,E_k)\leq m_k$.  Thus the data in the presentation $\P$ determines the admissible sequence of refinements $\{(\F_i,E_i;K_i)\}_{i\in\{0,\ldots,k\}}$ and also gives the additional information that $\ord(\F_k,E_k)\leq m_k$.
\end{definition}

For the rest of the section, we shall assume that $\P$, as defined in \eqref{eq:pres}, is a presentation of $(\F,E;K)$, and we shall use all the notation specified prior to Definition \ref{def:pres}; thus, $\{(\F_,E_i;K_i)\}_{i\in\{0,\ldots,k\}}$ denotes an admissible sequence of refinements for $(\F,E;K)$.  Also, for any set $A\subseteq U$ and $i\in\{0,\ldots,k\}$, we shall write
\[
A_i = A|_{N_0\cup\cdots\cup N_{i-1}},
\]
as in Notation \ref{notation:refine}.  Thus $A_i$ is defined from $A$ in the same way that $U_i$ is defined from $U$ and $K_i$ is defined from $K$.

Note that if $(\F,E;K)$ is a basic $\S$-presentation, then $(\F_i,E_i;K_i)$ is a basic $\S$-presentation for each $i\in\{0,\ldots,k\}$.  In fact, by using a representation for $(\F,E;K)$ and the data $m_0,N_0,\ldots,m_{k-1},N_{k-1}$, we can effectively construct a representation for $(\F_i,E_i;K_i)$ for each $i\in\{0,\ldots,k\}$.  For this reason, we make the following definition.

\begin{definition}\label{def:Spres}
We call $\P$ an {\bf $\S$-presentation} if $(\F,E;K)$ is a basic $\S$-presentation.  In this case, a {\bf representation} for $\P$ consists of the following data:
\begin{itemize}
\item
a representation for the basic $\S$-presentation $(\F,E;K)$;

\item
$m_0,N_0,r_0,\ldots,m_{k-1},N_{k-1},r_{k-1},m_k$, and also $N_k$ and $r_k$ when $m_k=0$;

\item
$p_0,\ldots,p_k$ and $d_0,\ldots,d_k$;

\item
for each $i\in\{0,\ldots,k-1\}$, the data $\{(\Index(f_{i,j}),\alpha_{i,j})\}_{j\in N_i}$, where $\{(f_{i,j},\alpha_{i,j})\}_{j\in N_i}$ witnesses the fact that $(\F_i,E_i;K_i)$ is $(m_i,N_i,r_i)$-refinable, and $\Index(f_{i,j})$ is an index for the function $f_{i,j}\in\F_i$ (recall that $\F_i$ is an \emph{indexed} family of functions).
\end{itemize}
\end{definition}

\begin{remark}\label{rmk:Spres}
The key information in a representation for an $\S$-presentation $\P$ is the representation for $(\F,E;K)$ and the data $m_0,N_0,r_0,\ldots,m_{k-1},N_{k-1},r_{k-1},m_k$, along with the choice of $N_k$ when $m_k = 0$ and $|d_k|\geq p_k$.  The other data is included only for computational convenience.  Namely, using the approximation and precision oracles for $\S$, the data $p_0,\ldots,p_k$ and $d_0,\ldots,d_k$ can be computed from a representation for $(\F,E;K)$ and  from the data $m_0,N_0,\ldots,m_{k-1},N_{k-1}$.  But since the data $p_0,\ldots,p_k$ and $d_0,\ldots,d_k$ will be used often, it makes sense to include it in our data structure so that we do not have to repeatedly recompute it.  Similarly, using our oracles, we can effectively verify that for each $i\in\{0,\ldots,k-1\}$, $\{(f_{i,j},\alpha_{i,j})\}_{j\in N_i}$ witnesses the fact that $(\F_i,E_i)$ is $(m_i,N_i,r_i)$-refinable on $K_i$.  So by Remark \ref{rmk:refinableOnK}, we could effectively construct $V$, with $K\subseteq V\subseteq U$, such that $\{(f_{i,j},\alpha_{i,j})\}_{j\in N_i}$ witnesses that $(\F_i\Restr{V_i},E_i;K_i)$ is $(m_i,N_i,r_i)$-refinable for each $i\in\{0,\ldots,k-1\}$.  But then we would be repeatedly and unnecessarily shrinking the neighborhood of $K$ on which we are working, and would be repeatedly recomputing the witness $\{(f_{i,j},\alpha_{i,j})\}_{j\in N_i}$, so it makes more sense to also include $\{(\Index(f_{i,j}),\alpha_{i,j})\}_{j\in N_i}$ in our data structure.
\end{remark}

\begin{definition}\label{def:presRank}
We call
\[
\rank\P =
\begin{cases}
(m_0,r_0,\ldots,m_{k-1},r_{k-1},m_k),
    & \text{if $m_k > 0$,}
\\
(m_0,r_0,\ldots,m_{k-1},r_{k-1},m_k,r_k),
    & \text{if $m_k = 0$,}
\end{cases}
\]
the {\bf rank} of $\P$.  The rank of a presentation is, in general, a member of
\begin{equation}\label{eq:rankValues}
\bigcup_{i\in\NN}(\NN\cup\{\infty\})^i.
\end{equation}
The set of all possible ranks of presentations are well-ordered by giving \eqref{eq:rankValues} the following lexicographical order: for any $a = (a_1,\ldots,a_m)$ and $b = (b_1,\ldots,b_n)$ in \eqref{eq:rankValues}, define $a < b$ if and only if $(a_1,\ldots,a_m,\infty,\infty,\infty,\ldots)$ is less than $(b_1,\ldots,b_n,\infty,\infty,\infty,\ldots)$ lexicographically.
\end{definition}

\begin{definition}\label{def:complete}
We say that $\P$ is {\bf complete} if  $\F_k$ is a family of zero functions (in which case $m_k = \infty$), or if $m_k = 0$ and $|d_k| \geq p_k$.  We say that $\P$ is {\bf incomplete} if $\P$ is not complete.
\end{definition}

\begin{lemma}\label{lemma:completeCenter}
If $\P$ is complete, then for all $i\in\{0,\ldots,k\}$ and all $x\in C$,
\begin{equation}\label{eq:completeCenter}
\ord(\F_i;x_{M_i}) \geq p_i
\quad
\text{and}
\quad
\ord(\F_i,E_i;x_{M_i}) = m_i.
\end{equation}
\end{lemma}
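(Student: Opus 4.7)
The plan is to prove the statement by downward induction on $i$, from $i=k$ down to $i=0$. Throughout, I read $C$ as $\{x\in U : x_I = 0\}$ with $I = N_0\cup\cdots\cup N_k$, so that for any $i\in\{0,\ldots,k\}$ and any $x\in C$ the point $a := x_{M_i}$ lies in $U_i$ (since $x_{N_0\cup\cdots\cup N_{i-1}} = 0$) and satisfies $a_{N_i} = x_{N_i} = 0$. I will also use $M_{i+1} = M_i\setminus N_i$, which yields $x_{M_{i+1}} = a_{N_{i}^{c}}$ (complement in $M_i$), exactly the slot required by Lemma \ref{lemma:refine}.

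The base case $i = k$ splits on completeness. If $m_k = \infty$, then $\F_k$ is a family of zero functions, so both $\ord(\F_k;x_{M_k})$ and $\ord(\F_k,E_k;x_{M_k})$ equal $\infty$ and the required inequalities are automatic. Otherwise $m_k = 0$ and $|d_k|\geq p_k$; since $\P$ is a presentation we have $\ord(\F_k,E_k)\leq m_k = 0$, hence $\ord(\F_k,E_k;x_{M_k}) = 0 = m_k$. Because $N_k\subseteq E_k$ and $x_{N_k}=0$, we have $(E_k)_{x_{M_k}}\supseteq N_k$, and applying \eqref{eq:ordDivLocal} at $x_{M_k}$ together with $|d_{k,(E_k)_{x_{M_k}}}|\geq |d_{k,N_k}|\geq p_k$ yields $\ord(\F_k;x_{M_k})\geq p_k$.

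For the inductive step, assume the conclusion for $i+1$, where $0\leq i < k$. Fix $x\in C$, set $a = x_{M_i}$, and observe $a_{N_i^c} = x_{M_{i+1}}$. The inductive hypothesis gives $\ord(\F_{i+1};x_{M_{i+1}})\geq p_{i+1}$, i.e.
\[
\ord\bigl(\partial^{p_i,m_i}_{N_i,E_i}\F_i;\, a_{N_i^c}\bigr)\geq p_{i+1} = \mu^{p_i,m_i}_{N_i}(\F_i,E_i).
\]
Since the admissible sequence of refinements forces $(\F_i,E_i;K_i)$ to be $(m_i,N_i,r_i)$-refinable, Lemma \ref{lemma:refine} applies at $a$ (with $a_{N_i}=0$) and translates this inequality directly into the two desired conclusions $\ord(\F_i;x_{M_i})\geq p_i$ and $\ord(\F_i,E_i;x_{M_i}) = m_i$.

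There is no real obstacle here; the substantive content is already encoded in Lemma \ref{lemma:refine} and in the completeness condition. The only care needed is the index-set bookkeeping that aligns $x_{M_i}$ with the hypotheses of Lemma \ref{lemma:refine} and reads $x_{M_{i+1}}$ as the $N_i^c$-projection of $x_{M_i}$.
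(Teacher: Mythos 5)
Your proof is correct and takes essentially the same route as the paper's: downward induction from $i=k$ to $i=0$, with the base case extracted from the definition of completeness and the inductive step supplied by Lemma \ref{lemma:refine}. The paper's proof is terser (it simply asserts the base case from the definition of complete), but your more detailed treatment of the $m_k=\infty$ and $m_k=0$ cases is a faithful unpacking of the same argument.
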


\begin{proof}
The definition of complete implies that \eqref{eq:completeCenter} holds for $i = k$.  And, for any $j\in\{1,\ldots,k\}$, Lemma \ref{lemma:refine} shows that if \eqref{eq:completeCenter} holds for $i = j$, then \eqref{eq:completeCenter} holds for $i = j-1$.
\end{proof}

Our choice of the word ``complete'' in Definition \ref{def:complete} is due to the following two closely related reasons.  First, finding a presentation $\P$ for $(\F,E;K)$ serves as a way of computing an upper bound for $\ord(\F,E)$, since $\ord(\F_i,E_i)\leq m_i$ for all $i\in\{0,\ldots,k\}$.  When $\P$ is complete, Lemma \ref{lemma:completeCenter} implies that $\ord(\F_i,E_i) = m_i$ for all $i\in\{0,\ldots,k\}$, and thus the computation of $\ord(\F,E)$ has been ``completed''.  Second, when $\P$ is complete, we shall use the set
\begin{equation}\label{eq:C}
C = \{x\in U : x_I = 0\}
\end{equation}
as a center of blowing-up in our resolution procedure, where
\begin{equation}\label{eq:I}
I = \bigcup_{j=0}^{k} N_j,
\end{equation}
and thus we have ``completed'' the process of finding the center.

Our effective resolution theorem will employ ``admissible transformations'' for an $\S$-presentation $\P$ (see Definition \ref{def:presAdmTrans}).  Such admissible transformations are of two types: ``admissible coordinate transformation for $\P$'' (see Definition \ref{def:presAdmCoordTrans}) and ``admissible blowup transformations for $\P$'' (see Definitions and Remarks \ref{defrmk:presAdmBlowupTrans}).

\subsection{Admissible Coordinate Transformations for a Presentation}
\label{ss:presAdmCoordTrans}

Recall that we have fixed an admissible sequence of refinements $\{(\F_i,E_i;K_i)\}_{i\in\{0,\ldots,k\}}$ for $(\F,E;K)$, and that we use the notation given prior to Definition \ref{def:pres}.

\begin{definition}\label{def:presAdmissCoordTrans}
Let $l\in\{0,\ldots,k\}$, and suppose that $F_l:(U'_l;K'_l)\to(U_l;K_l)$ is an admissible coordinate transformation for $(\F_l,E_l;K_l)$.  Let $A\subseteq\RR^{N_0\cup\cdots\cup N_{l-1}}$ be a bounded open rational box such that $\cl(F_l(U_l))\times\cl(A)\subseteq U$, and let $B$ be a compact rational box contained in $A$.  Put $U' = U_l\times A$ and $K' = K'_l\times B$, and define $F:U'\to U$ by
\[
F(y) = \left(y_{N_0},\ldots,y_{N_{l-1}}, F_l(y_{M_l})\right).
\]
We call $F:(U';K')\to(U;K)$ a {\bf basic admissible coordinate transformation} for the sequence $\{(\F_i,E_i;K_i)\}_{i\in\{0,\ldots,k\}}$.  The {\bf pullback} of $\{(\F_i,E_i;K_i)\}_{i\in\{0,\ldots,k\}}$ by $F:(U';K')\to(U;K)$ is the admissible sequence of refinements for $(F^*\F,E_{F(U')};K')$ defined by
\[
\{(\F'_i,E'_i;K'_i)\}_{i\in\{0,\ldots,l\}},
\]
where $(\F'_i,E'_i;K'_i)$ is the pullback of $(\F_i,E_i;K_i)$ by $F_i:(U'_i;K'_i)\to (U_i;K_i)$, with
$F_i(y_{M_i}) = \left(y_{N_i},\ldots,y_{N_{l-1}}, F_l(y_{M_l})\right)$, $U'_i = U'\Restr{N_0\cup\cdots\cup N_{i-1}}$, and $K'_i = K'\Restr{N_0\cup\cdots\cup N_{i-1}}$.  Basic admissible coordinate transformations for admissible sequences of refinements can be composed in the natural way: if $F:(U';K')\to(U;K)$, $\{(\F_i,E_i;K_i)\}_{i\in\{0,\ldots,k\}}$, and $\{(\F'_i,E'_i;K'_i)\}_{i\in\{0,\ldots,l\}}$ are as above, and if $G:(U'';K'')\to(U';K')$ is a basic admissible coordinate transformation for $\{(\F'_i,E'_i;K'_i)\}_{i\in\{0,\ldots,l\}}$, with pullback $\{(\F''_i,E''_i;K''_i)\}_{i\in\{0,\ldots,m\}}$ for some $m\in\{0,\ldots,l\}$, then $F\circ G:(U'';K'')\to(U;K)$ pulls back $\{(\F_i,E_i;K_i)\}_{i\in\{0,\ldots,k\}}$ to $\{(\F''_i,E''_i;K''_i)\}_{i\in\{0,\ldots,m\}}$.  An {\bf admissible coordinate transformation} for $\{(\F_i,E_i;K_i)\}_{i\in\{0,\ldots,k\}}$ is a composition of finitely many basic admissible coordinate transformations for admissible sequences of refinements, the first of which acts on $\{(\F_i,E_i;K_i)\}_{i\in\{0,\ldots,k\}}$.
\end{definition}

\begin{remarks}\label{rmk:presAdmissCoordTrans}
Let $F:(U;K')\to(U;K)$ be an admissible coordinate transformation for $\{(\F_i,E_i;K_i)\}_{i\in\{0,\ldots,k\}}$, and let $\{(\F'_i,E'_i;K'_i)\}_{i\in\{0,\ldots,l\}}$ be the the pullback of $\{(\F_i,E_i;K_i)\}_{i\in\{0,\ldots,k\}}$ by $F:(U';K')\to(U;K)$.
\begin{enumerate}{\setlength{\itemsep}{5pt}
\item
In order to keep the terminology straight, we briefly compare Definitions \ref{def:coordTrans} and \ref{def:presAdmissCoordTrans}.  Note that $F:(U';K')\to(U;K)$ is also a coordinate transformation for the basic presentation $(\F;E;K)$.  If $(\F',E';K')$ is the pullback of $(\F;E;K)$ by $F:(U;K')\to(U;K)$, then $(\F',E';K') = (\F'_0,E'_0;K'_0)$.

\item
For each $i\in\{0,\ldots,l-1\}$, $(\F'_i,E'_i;K'_i)$ is $(m_i,N_i,r_i)$-refinable.
}\end{enumerate}
\end{remarks}

We now fix a common notation to be used in Lemmas \ref{lemma:presCoord1}-\ref{lemma:presCoord3}, since the statements of these three lemmas are similar.  Each lemma constructs a finite family of admissible coordinate transformations for $\{(\F_i,E_i;K_i)\}_{i\in\{0,\ldots,k\}}$, which we denote by
\begin{equation}\label{eq:Fcovering}
\{F^{(j)}:(U^{(j)};K^{(j)})\to(U;K)\}_{j\in J}.
\end{equation}
For each $j\in J$, write
\[
F^{(j)}(y) = \left(y_{N_0},\ldots,y_{N_{l_j-1}}, F^{(j)}_{l_j}(y_{M_{l_j}})\right)
\]
for an admissible coordinate transformation $F^{(j)}_{l_j}:(U_{l_j}^{(j)};K_{l_j}^{(j)}) \to (U_{l_j};K_{l_j})$ for $(\F_{l_j},E_{l_j};K_{l_j})$, where $l_j\in\{0,\ldots,k\}$, and write
\[
\text{$(\F^{(j)},E^{(j)};K^{(j)})$ and $\{(\F^{(j)}_{i},E^{(j)}_{i};K^{(j)}_{i})\}_{i\in\{0,\ldots,l_j\}}$}
\]
for the pullbacks of $(\F,E;K)$ and $\{(\F_i,E_i;K_i)\}_{i\in\{0,\ldots,k\}}$ by $F^{(j)}:(U^{(j)};K^{(j)})\to(U;K)$, respectively.

\begin{lemma}\label{lemma:presCoord1}
Suppose that $\P$ is an $\S$-presentation.  Fix $l\in\{0,\ldots,k-1\}$, $i\in N_l$, and a rational number $\epsilon > 0$.  Then relative to the approximation and precision oracles for $\S$, we can effectively construct a finite family \eqref{eq:Fcovering} of admissible coordinate transformations for $\{(\F_i,E_i;K_i)\}_{i\in\{0,\ldots,k\}}$ such that \begin{equation}\label{eq:Kcover1}
\{x\in K : |x_i| \geq \epsilon\} \subseteq \bigcup_{j\in J} F^{(j)}\left(\Int\!\left(K^{(j)}\right)\right),
\end{equation}
and such that for each $j\in J$, we have $l_j\in\{0,\ldots,l\}$ and either
\begin{enumerate}
\item
$\ord(\F_{l_j}^{(j)}, E_{l_j}^{(j)}) = 0$, or

\item
$\left(\F^{(j)}_{l_j},E^{(j)}_{l_j};K^{(j)}_{l_j}\right)$ is $(m_{l_j}^{(j)},N_{l_j}^{(j)},r_{l_j}^{(j)})$-refinable for some $(m_{l_j}^{(j)},N_{l_j}^{(j)},r_{l_j}^{(j)})$ with $(m_{l_j}^{(j)},r_{l_j}^{(j)})$ lexicographically less than $(m_{l_j},r_{l_j})$.
\end{enumerate}
\end{lemma}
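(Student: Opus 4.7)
The approach is to invoke Lemma \ref{lemma:transRefinable2} at refinement level $l$, combine it with Lemma \ref{lemma:transRefinable1} to reach the order-zero dichotomy required in Case~1, and then lift the resulting admissible coordinate transformations to the full sequence via a product structure on the already-refined coordinates $N_0 \cup \cdots \cup N_{l-1}$, using a blowup-set BOX argument to reduce the covering problem on $K$ to one on $K_l$.

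First I would apply Lemma \ref{lemma:transRefinable2} to the basic $\S$-presentation $(\F_l, E_l; K_l)$ (which is $(m_l, N_l, r_l)$-refinable by hypothesis) with the given $i \in N_l$ and $\epsilon$. This yields a finite, effective family $\{G^{(\alpha)} : (V^{(\alpha)}; L^{(\alpha)}) \to (U_l; K_l)\}_\alpha$ of admissible coordinate transformations for $(\F_l, E_l; K_l)$ covering $\{x_{M_l} \in K_l : |x_i| \geq \epsilon\}$ and such that each pullback is either of order strictly less than $m_l$ or is $(m_l, N^{(\alpha)}, r^{(\alpha)})$-refinable with $r^{(\alpha)} < r_l$. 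For each $\alpha$ in the first branch I would further invoke Lemma \ref{lemma:transRefinable1} with the now-known bound $m_l - 1$, composing $G^{(\alpha)}$ with the resulting finer transformations; this splits that branch into sub-transformations whose pullbacks are either of order $0$ (Case~1 of the present lemma) or $(m'', N'', r'')$-refinable with $m'' \leq m_l - 1 < m_l$, which is Case~2 with $l_j = l$ because then $(m'', r'')$ is lexicographically below $(m_l, r_l)$ regardless of $r''$. The second branch already lands in Case~2. Relabelling, I obtain a finite family $\{G^{(j)} : (V^{(j)}; L^{(j)}) \to (U_l; K_l)\}_{j \in J}$ which still covers $\{x_{M_l} \in K_l : |x_i| \geq \epsilon\}$ and realizes the desired Case~1/Case~2 dichotomy at level $l$.

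Next I would lift each $G^{(j)}$ to a basic admissible coordinate transformation for the sequence $\{(\F_i, E_i; K_i)\}_{i \in \{0, \ldots, k\}}$ at level $l_j = l$ in the sense of Definition \ref{def:presAdmissCoordTrans}. The crucial geometric input is that $N_0 \cup \cdots \cup N_{l-1} \subseteq \Cen(K)$, so by the BOX property (Lemma \ref{lemma:blowupSet}.2) every $x \in K$ satisfies $(0, x_{M_l}) \in K$; consequently $\Pi_{M_l}(K) = K_l$, and any $x \in K$ with $|x_i| \geq \epsilon$ has $x_{M_l} \in K_l$ with $|x_i| \geq \epsilon$. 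Using co-c.e.\ compactness of $\Pi_{N_0 \cup \cdots \cup N_{l-1}}(K)$ and of $\cl(G^{(j)}(V^{(j)})) \subseteq U_l$, together with c.e.\ openness of $U$ and the pointwise fact that $(0, x_{M_l}) \in U$ for every $x_{M_l} \in \cl(G^{(j)}(V^{(j)}))$, I would effectively construct for each $j$ finitely many pairs $(A^{(j, \beta)}, B^{(j, \beta)})$ in which $A^{(j, \beta)} \subseteq \RR^{N_0 \cup \cdots \cup N_{l-1}}$ is a bounded open rational box with $\cl(A^{(j, \beta)}) \times \cl(G^{(j)}(V^{(j)})) \subseteq U$, $B^{(j, \beta)} \subseteq A^{(j, \beta)}$ is a compact rational sub-box, and the interiors of the $B^{(j, \beta)}$ collectively cover $\Pi_{N_0 \cup \cdots \cup N_{l-1}}(K)$. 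Setting $F^{(j, \beta)}(y) = (y_{N_0 \cup \cdots \cup N_{l-1}}, G^{(j)}(y_{M_l}))$ on $V^{(j)} \times A^{(j, \beta)}$ with compact part $L^{(j)} \times B^{(j, \beta)}$ produces the desired basic admissible coordinate transformation; by Definition \ref{def:presAdmissCoordTrans} its pullback at level $l$ coincides with the pullback of $(\F_l, E_l; K_l)$ by $G^{(j)}$, and therefore inherits the Case~1/Case~2 conclusion.

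For the covering, any $x \in K$ with $|x_i| \geq \epsilon$ has $x_{M_l} \in G^{(j)}(\Int(L^{(j)}))$ for some $j$ and $x_{N_0 \cup \cdots \cup N_{l-1}} \in \Int(B^{(j, \beta)})$ for some $\beta$, placing $x \in F^{(j, \beta)}(\Int(L^{(j)} \times B^{(j, \beta)}))$ as required. The main obstacle will be the careful bookkeeping of the rational product-box data: bridging the gap between the output of Lemma \ref{lemma:transRefinable2} (order strictly less than $m_l$) and Case~1 here (order equal to $0$) via a second invocation of Lemma \ref{lemma:transRefinable1}, and constructively producing the pairs $(A^{(j, \beta)}, B^{(j, \beta)})$ that simultaneously satisfy the openness constraint $\cl(A^{(j, \beta)}) \times \cl(G^{(j)}(V^{(j)})) \subseteq U$ and the full covering requirement on $\{x \in K : |x_i| \geq \epsilon\}$.
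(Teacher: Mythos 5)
Your treatment of the level-$l$ construction is essentially the same as the paper's: apply Lemma \ref{lemma:transRefinable2} with the given $i$ and $\epsilon$, then feed the $\ord < m_l$ branch through Lemma \ref{lemma:transRefinable1} to reach the order-$0$/refinable dichotomy, and compose. That part is fine.

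The genuine gap is in the lifting step. You assert that for each $j$ you can find pairs $(A^{(j,\beta)},B^{(j,\beta)})$ satisfying $\cl(A^{(j,\beta)})\times\cl(G^{(j)}(V^{(j)}))\subseteq U$ whose interiors cover $\Pi_{N_0\cup\cdots\cup N_{l-1}}(K)$. This is false in general: the constraint $\cl(A^{(j,\beta)})\times\cl(G^{(j)}(V^{(j)}))\subseteq U$ only guarantees (via the BOX property and compactness of $\cl(G^{(j)}(V^{(j)}))$) that a small box around $0\in\RR^{N_0\cup\cdots\cup N_{l-1}}$ is admissible; it does not allow $A^{(j,\beta)}$ to reach far out. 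Because $U$ is an open blowup set, not a product, the admissible range of $a\in\RR^{N_0\cup\cdots\cup N_{l-1}}$ in the condition ``$\{a\}\times\cl(G^{(j)}(V^{(j)}))\subseteq U$'' shrinks as $\cl(G^{(j)}(V^{(j)}))$ approaches the boundary of $U_l$, and $K$ contains points $x$ for which $x_{M_l}$ lies in exactly such a $G^{(j)}$-image while $|x_r|$ is large for some $r\in N_0\cup\cdots\cup N_{l-1}$. Your construction would simply miss those points. The paper resolves this by inducting on $l$: it only asks that $0\in\Int(B^{(j)})$, covers the region where $|x_r|\leq\epsilon_r$ for all $r\in N_0\cup\cdots\cup N_{l-1}$, and then applies the lemma's own induction hypothesis (with $l'<l$ in place of $l$ and $\epsilon_r$ in place of $\epsilon$) to the complementary regions $\{x\in K:|x_r|\geq\epsilon_r\}$. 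That outer induction on $l$ is the missing idea; your proposal has no mechanism to reach the high-$|x_r|$ part of $K$, and without it the covering claim \eqref{eq:Kcover1} cannot be established.
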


\begin{proof}
The proof is by induction on $l$.
By applying Lemma \ref{lemma:transRefinable2}, and then Lemma \ref{lemma:transRefinable1}, we may effectively construct a finite family of admissible coordinate transformations $\{F_{l}^{(j)}:(U_{l}^{(j)};K_{l}^{(j)}) \to (U_l;K_l)\}_{j\in J}$ for $(U_l,E_l;K_l)$ such that
\[
\{x\in K_l : |x_i| \geq \epsilon\} \subseteq \bigcup_{j\in J} F_{l}^{(j)}\left(\Int\!\left(K_{l}^{(j)}\right)\right),
\]
and such that for each $j\in J$, either $\ord(\F_{l}^{(j)}, E_{l}^{(j)}) = 0$ or $(\F_{l}^{(j)}, E_{l}^{(j)};K_{l}^{(j)})$ is $(m_{l}^{(j)},N_{l}^{(j)},r_{l}^{(j)})$-refinable for some $(m_{l}^{(j)},N_{l}^{(j)},r_{l}^{(j)})$ with $(m_{l}^{(j)},r_{l}^{(j)})$ is lexicographically less than $(m_l,r_l)$.  We are done if $l = 0$, so assume that $l > 0$ and that the lemma holds with $l'$ in place of $l$, for each $l' < l$.  For each $j\in J$, since $F^{(j)}_{l}(\cl(U^{(j)}_{l})) = \cl(F^{(j)}_{l}(U^{(j)}_{l}))$, Proposition \ref{prop:compContCompact} implies that the set $\cl(F_{l}^{(j)}(U_{l}^{(j)}))\times\{0\}$, with $0\in\RR^{N_0\cup\cdots\cup N_{l-1}}$, is a co-c.e.\ compact subset of the c.e.\ open set $U$.  We may therefore effectively find a bounded open rational box $A^{(j)} \subseteq\RR^{N_0\cup\cdots\cup N_{l-1}}$ and a compact rational box $B^{(j)}\subseteq A^{(j)}$ such that $0\in\Int(B^{(j)})$ and $\cl(F_{l}^{(j)}(U_{l}^{(j)}))\times \cl(A^{(j)}) \subseteq U$.  For each $j\in J$, let $U^{(j)} = U_{k}^{(j)}\times A^{(j)}$ and $K^{(j)} = K_{k}^{(j)}\times B^{(j)}$, define $F^{(j)}:U^{(j)}\to U$ by
\[
F^{(j)}(y) = \left(y_{N_0},\ldots,y_{N_{l-1}}, F_{l}^{(j)}(y_{M_l})\right),
\]
and write
\[
B^{(j)} = \prod_{r\in\bigcup_{s=0}^{l-1}N_s} [a_{r}^{(j)}, b_{r}^{(j)}].
\]
Since $0\in\Int(B^{(j)})$, we have $a_{r}^{(j)} < 0 < b_{r}^{(j)}$ for all $j,r$.  For each $r\in\bigcup_{s=0}^{l-1}N_s$, let
\[
\epsilon_r = \min\{-a^{(j)}_{r}, b^{(j)}_{r} : j\in J\}.
\]
It follows from Lemma \ref{lemma:blowupSet}.2 that $K\subseteq \RR^{N_0\cup\cdots\cup N_{l-1}} \times K_l$, so
\begin{equation}\label{eq:Kl}
\left\{x\in K : (|x_i|\geq \epsilon)\wedge\left(\bigwedge_{r\in\bigcup_{s=0}^{l-1}N_s} |x_r| \leq \epsilon_r\right)\right\}
\subseteq
\bigcup_{j\in J} F^{(j)}(K^{(j)}).
\end{equation}
Applying the induction hypothesis to the sets
\begin{equation}\label{eq:K<l}
\{x\in K : |x_r|\geq \epsilon_r\},
\quad \text{for each $r\in\bigcup_{s=0}^{l-1}N_s$,}
\end{equation}
completes the proof of the lemma, since the left side of \eqref{eq:Kl} and the sets \eqref{eq:K<l} cover the left side of \eqref{eq:Kcover1}.
\end{proof}

\begin{lemma}\label{lemma:presCoord2}
Suppose that $\P$ is an $\S$-presentation, that $l\in\{1,\ldots,k\}$, and that we know that $\ord(\F_l;x) < p_l$ for all $x\in K_l$.  Then relative to the approximation and precision oracles for $\S$, we can effectively construct a finite family \eqref{eq:Fcovering} of admissible coordinate transformations for $\{(\F_i,E_i;K_i)\}_{i\in\{0,\ldots,k\}}$ such that
\begin{equation}\label{eq:Kcover2}
K \subseteq \bigcup_{j\in J} F^{(j)}\left(\Int\!\left(K^{(j)}\right)\right),
\end{equation}
and such that for each $j\in J$, we have $l_j\in\{0,\ldots,l-1\}$ and either
\begin{enumerate}
\item
$\ord(\F_{l_j}^{(j)}, E_{l_j}^{(j)}) = 0$ and $|\Div(\F_{l_j}^{(j)}, E_{l_j}^{(j)})| \geq p_{l_j}$, or

\item
$\left(\F^{(j)}_{l_j},E^{(j)}_{l_j};K^{(j)}_{l_j}\right)$ is $(m_{l_j}^{(j)},N_{l_j}^{(j)},r_{l_j}^{(j)})$-refinable for some $(m_{l_j}^{(j)},N_{l_j}^{(j)},r_{l_j}^{(j)})$ with $(m_{l_j}^{(j)},r_{l_j}^{(j)})$ lexicographically less than $(m_{l_j},r_{l_j})$.
\end{enumerate}
\end{lemma}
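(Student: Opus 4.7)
The plan is to proceed by strong induction on $l$, using Lemma \ref{lemma:refine} to convert the hypothesis $\ord(\F_l;\cdot) < p_l$ on $K_l$ into pointwise information at lower levels, and combining Lemma \ref{lemma:transRefinable1} (for the ``$(\F, E)$-order drop'' case), Lemma \ref{lemma:presCoord1} (for points away from $\{x_{N_{l-1}} = 0\}$), and the induction hypothesis itself (for the ``$\ord(\F_{l-1}) < p_{l-1}$ drop'' case).

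For the base case $l = 1$, applying Lemma \ref{lemma:refine} at level $0$ and noting that $\ord(\F_0; x) < p_0 = 0$ is impossible gives $\ord(\F_0, E_0; x) < m_0$ for every $x \in K$ with $x_{N_0} = 0$. Upper semicontinuity of $\ord((\F_0)_{E_0}; \cdot)$ (Proposition \ref{prop:order}.2 combined with Remarks \ref{rmk:Forder}.2) makes $V := \{x \in U : \ord(\F_0, E_0; x) \leq m_0 - 1\}$ open and containing the compact set $\{x \in K : x_{N_0} = 0\}$. I would find a rational $\epsilon > 0$ and finitely many compact rational boxes covering $\{x \in K : \max_{i \in N_0} |x_i| \leq \epsilon\}$ inside $V$, restrict $K$ to each box via Remark \ref{rmk:blowupSet}.3, and apply Lemma \ref{lemma:transRefinable1} with bound $m_0 - 1$; each output at level $l_j = 0$ satisfies either option $1$ (with $|\Div| \geq p_0 = 0$ automatic) or option $2$ with $m' < m_0$. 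The complement $\{x \in K : \max_{i \in N_0} |x_i| > \epsilon/2\} \subseteq \bigcup_{i \in N_0} \{x \in K : |x_i| \geq \epsilon/2\}$ is then handled by Lemma \ref{lemma:presCoord1} with $l_{\text{lemma}} = 0$.

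For the induction step $l > 1$, Lemma \ref{lemma:refine} yields that each $x \in K_{l-1}$ with $x_{N_{l-1}} = 0$ lies in $V^{\text{div}} := \{\ord(\F_{l-1}; \cdot) < p_{l-1}\}$ or in $V^{\text{ord}} := \{\ord(\F_{l-1}, E_{l-1}; \cdot) \leq m_{l-1} - 1\}$, both open in $U_{l-1}$. Choosing suitably small rational $\epsilon > 0$, I would split $K$ into three parts: (i) $\{x_{M_{l-1}} \in V^{\text{ord}},\ \max_{i \in N_{l-1}} |x_i| \leq \epsilon\}$; (ii) $\{x_{M_{l-1}} \in V^{\text{div}},\ \max_{i \in N_{l-1}} |x_i| \leq \epsilon\}$; (iii) $\{\max_{i \in N_{l-1}} |x_i| > \epsilon/2\}$. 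Part (i) is covered by lifting finitely many rational boxes in $V^{\text{ord}}$ up to $U$ (keeping $0$ in the $N_0 \cup \cdots \cup N_{l-2}$ coordinates so that $N_j \subseteq \Cen$ is preserved at all levels $j < l-1$), restricting $K$, applying Lemma \ref{lemma:transRefinable1} at level $l-1$ with bound $m_{l-1} - 1$, and lifting the resulting basic transformations to the full sequence via the product construction of Definition \ref{def:presAdmissCoordTrans}. Part (ii) is handled analogously, but now the restricted sub-$\S$-presentation of length $l-1$ satisfies the hypothesis of Lemma \ref{lemma:presCoord2} at $l_{\text{new}} = l-1 < l$, so strong induction supplies the cover with $l_j < l-1$. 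Part (iii) is handled by Lemma \ref{lemma:presCoord1} with $l_{\text{lemma}} = l-1$, producing outputs at $l_j \in \{0, \ldots, l-1\}$. Any output from Lemma \ref{lemma:transRefinable1} or Lemma \ref{lemma:presCoord1} which satisfies only $\ord(\F_{l_j}^{(j)}, E_{l_j}^{(j)}) = 0$ but fails $|\Div| \geq p_{l_j}$ automatically satisfies $\ord(\F_{l_j}^{(j)}; \cdot) < p_{l_j}$ on $K_{l_j}^{(j)}$ (via $\ord = |\Div| + \ord_E$ and $|\Div(\cdot; a)| \leq |\Div(\cdot)|$ from Remarks \ref{rmk:Forder}.2); I would then recursively invoke Lemma \ref{lemma:presCoord2} on the pulled-back sub-presentation of length $l_j$ at $l_{\text{new}} = l_j < l$, with strong induction terminating the recursion. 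Effectivity at each step follows from Lemma \ref{lemma:SpresZeroDivOrder}, upper semicontinuity plus enumeration of rational boxes (to locate witnessing covers of $V^{\text{div}} \cup V^{\text{ord}}$), and the effectivity of the invoked lemmas; the hard part will be the bookkeeping to ensure that each restricted $K$ remains a compact blowup set and that every restricted sub-presentation is a legitimate $\S$-presentation with refinability and the $N_j \subseteq \Cen(K_j)$ condition preserved, which is handled uniformly by systematically including $0$ in the $N_0 \cup \cdots \cup N_{l-2}$ coordinates of every lifted rational box.
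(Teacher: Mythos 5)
Your overall strategy matches the paper's proof closely: induct on $l$, invoke Lemma \ref{lemma:refine} at level $l-1$ to obtain the two c.e.\ open sets $V^{\mathrm{div}}$ and $V^{\mathrm{ord}}$ covering the slice $\{x\in K_{l-1}:x_{N_{l-1}}=0\}$, handle the region away from that slice by Lemma \ref{lemma:presCoord1}, treat the $V^{\mathrm{div}}$ part by direct application of the induction hypothesis, and treat the $V^{\mathrm{ord}}$ part by Lemma \ref{lemma:transRefinable1} at level $l-1$ followed by lifting to the full sequence. You also correctly identify (via \eqref{eq:ordDiv} and Remarks \ref{rmk:Forder}.2) that outputs with $\ord(\F_{l_j}^{(j)},E_{l_j}^{(j)})=0$ but $|\Div|<p_{l_j}$ satisfy the hypothesis of this lemma at $l'=l_j<l$ and can be recursively repaired.

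There is, however, a coverage gap in your treatment of part (i) (and, implicitly, whenever you lift a level-$(l-1)$ transformation up to the full sequence). The product construction of Definition \ref{def:presAdmissCoordTrans} requires a \emph{bounded} open rational box $A\subseteq\RR^{N_0\cup\cdots\cup N_{l-2}}$ with $\cl(F^{(j)}_{l-1}(U^{(j)}_{l-1}))\times\cl(A)\subseteq U$, and because $U$ is a blowup set rather than a product one cannot in general enlarge $A$ to contain all of $\Pi_{N_0\cup\cdots\cup N_{l-2}}(K)$. Consequently the lifted family only covers $\{x\in K:|x_r|\leq\epsilon_r\text{ for }r\in\bigcup_{s=0}^{l-2}N_s\}$ for some new rational thresholds $\epsilon_r>0$, yet your three-way decomposition of $K$ only slices by the coordinates $N_{l-1}$, so the points of $K$ with $x_{N_{l-1}}$ small but $|x_r|$ large for some $r\in N_0\cup\cdots\cup N_{l-2}$ are left uncovered. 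Your closing remark about ``keeping $0$'' in those coordinates addresses legitimacy of the restricted presentations but not this coverage deficit. The paper's proof fills this exact hole: after the lift it explicitly covers the residual sets $\{x\in K:|x_r|\geq\epsilon_r\}$ for $r\in\bigcup_{s=0}^{l-2}N_s$ by Lemma \ref{lemma:presCoord1} (the same device you already use in part (iii) but only for $N_{l-1}$). Adding that step would close the gap and bring your argument in line with the paper's.
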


\begin{proof}
The proof is by induction on $l$.  Lemma \ref{lemma:refine} implies that for all $x\in K_{l-1}$ with $x_{N_{l-1}} = 0$, either $\ord(\F_{l-1};x) < p_{l-1}$ or $\ord(\F_{l-1},E_{l-1};x) < m_{l-1}$.  It follows from Remarks \ref{rmk:Forder} and Proposition \ref{prop:compCont} that
\begin{equation}\label{eq:pDrop}
\{x\in U_{l-1} : \ord(\F_{l-1};x) < p_{l-1}\}
\end{equation}
and
\begin{equation}\label{eq:mDrop}
\{x\in U_{l-1} : \ord(\F_{l-1},E_{l-1};x) < m_{l-1}\}
\end{equation}
are c.e. open subsets of $U_{l-1}$.  These sets cover the co-c.e.\ compact set
\begin{equation}\label{eq:Krefine}
\{x\in K_{l-1} : x_{N_{l-1}} = 0\},
\end{equation}
so by computably enumerating all compact rational boxes $B\subseteq\RR^{M_{l-1}}$, with $N_{l-1}\subseteq\Cen(\Int(B))$ and with $B$ contained in either \eqref{eq:pDrop} or \eqref{eq:mDrop}, and by simultaneously trying to check if the interiors of the currently enumerated boxes cover \eqref{eq:Krefine}, we will find a finite family $\{B_j\}_{j\in J'}$ of compact rational boxes in $\RR^{M_{l-1}}$ whose interiors cover \eqref{eq:Krefine} and such that for each $j\in J'$, we have $N_{l-1}\subseteq\Cen(\Int(B_j))$ and $B_j$ is a subset of either \eqref{eq:pDrop} or \eqref{eq:mDrop}.

For each $j\in J'$ write
\[
B_j = \prod_{r\in M_{l-1}} [a_{j,r}, b_{j,r}].
\]
Thus $a_{j,r} < 0 < b_{j,r}$ for all $j\in J'$ and $r\in N_{l-1}$.  For each $r\in N_{l-1}$ let
\[
\epsilon_r = \min\{-a_{j,r}, b_{j,r} : j\in J'\}.
\]
Note that the sets
\begin{equation}\label{eq:KB}
\{x\in K : x_{M_{l-1}}\in B_j\}, \quad \text{for  $j\in J'$,}
\end{equation}
and the sets
\begin{equation}\label{eq:K>e}
\{x\in K : |x_r| \geq \epsilon_r\}, \quad \text{for  $r\in N_{l-1}$,}
\end{equation}
collectively cover $K$.

For each $r\in N_{l-1}$, apply Lemma \ref{lemma:presCoord1} to the set \eqref{eq:K>e}.  This constructs a finite family $\{F^{(j)} : (U^{(j)};K^{(j)})\to(U;K)\}_{j\in J}$ of admissible coordinate transformations for $\{(\F_i,E_i;K_i)\}_{i\in\{0,\ldots,k\}}$ such that
\[
\{x\in K : |x_r| \geq \epsilon_r\}
\subseteq
\bigcup_{j\in J} F^{(j)}(\Int(K^{(j)})),
\]
and such that for each $j\in J$, we have $l_j\in\{0,\ldots,l-1\}$, and either
$\ord(\F^{(j)}_{l_j},E^{(j)}_{l_j}) = 0$ or $(\F^{(j)}_{l_j},E^{(j)}_{l_j};K^{(j)}_{l_j})$ is $(m^{(j)}_{l_j}, N^{(j)}_{l_j}, r^{(j)}_{l_j})$-refinable for some $(m^{(j)}_{l_j}, N^{(j)}_{l_j}, r^{(j)}_{l_j})$ with $(m^{(j)}_{l_j},r^{(j)}_{l_j})$ lexicographically less than $(m_{l_j},r_{l_j})$.  We may assume that $|\Div(\F^{(j)}_{l_j},E^{(j)}_{l_j})| \geq p_{l_j}$ whenever $\ord(\F^{(j)}_{l_j},E^{(j)}_{l_j}) = 0$, because this is true when $l=1$, and if $l > 1$ we may apply the induction hypothesis to each $(\F^{(j)},E^{(j)}; K^{(j)})$ with $\ord(\F^{(j)}_{l_j},E^{(j)}_{l_j}) = 0$ and $|\Div(\F^{(j)}_{l_j},E^{(j)}_{l_j})| < p_{l_j}$.  We have thus covered each of the sets \eqref{eq:K>e} using admissible transformations with pullbacks of the desired form, so we may focus on covering the sets \eqref{eq:KB}.

Fix $j\in J'$.  To simplify notation, we may replace $K$ with $\{x\in K : x_{M_{l-1}} \in B_j\}$ and thereby assume that $K_{l-1}$ is a subset of either \eqref{eq:pDrop} or \eqref{eq:mDrop}.  If $l=1$, it is impossible for $K_{l-1}$ to be a subset of \eqref{eq:pDrop} since this set is empty.  If $l > 1$ and $K_{l-1}$ is a subset of \eqref{eq:pDrop}, then we are done by the induction hypothesis.  We may therefore assume that $K_{l-1}$ is a subset of \eqref{eq:mDrop}.  Applying Lemma \ref{lemma:transRefinable1} constructs a finite family
\begin{equation}\label{eq:transFamilyRefine2} \{F^{(j)}_{l-1}:(U^{(j)}_{l-1};K^{(j)}_{l-1})\to(U_{l-1};K_{l-1})\}_{j\in J}
\end{equation}
of admissible coordinate transformations for $(\F_{l-1},E_{l-1};K_{l-1})$ such that
\[
K_{l-1} \subseteq \bigcup_{j\in J} F^{(j)}_{l-1}(\Int(K^{(j)}_{l-1})),
\]
and such that for each $j\in J$, either $\ord(\F_{l-1}^{(j)},E_{l-1}^{(j)}) = 0$ or
$(\F_{l-1}^{(j)},E_{l-1}^{(j)};K_{l-1}^{(j)})$ is $(m_{l-1}^{(j)},N_{l-1}^{(j)},r_{l-1}^{(j)})$-refinable for some $(m_{l-1}^{(j)},N_{l-1}^{(j)},r_{l-1}^{(j)})$ with $m_{l-1}^{(j)} < m_{l-1}$.  We are done if $l=1$, so assume that $l>1$.  Then, as in the proof of Lemma \ref{lemma:presCoord1}, we may use \eqref{eq:transFamilyRefine2} to construct a corresponding family $\{F^{(j)}:(U^{(j)};K^{(j)}) \to (U;K)\}_{j\in J}$ of admissible transformations for $\P$ such that
\[
\left\{x\in K : \text{$|x_r|\leq \epsilon_r$ for all $r\in \bigcup_{s=0}^{l-2} N_s$}\right\}
\subseteq
\bigcup_{j\in J} F^{(j)}(\Int(K^{(j)}))
\]
for some rational numbers $\epsilon_r > 0$.  By applying the induction hypothesis to each $(\F^{(j)},E^{(j)};K^{(j)})$ with $\ord(\F_{l-1}^{(j)},E_{l-1}^{(j)}) = 0$ and $|\Div(\F_{l-1}^{(j)},E_{l-1}^{(j)})| < p_{l-1}$, we may assume that $|\Div(\F_{l-1}^{(j)},E_{l-1}^{(j)})| \geq p_{l-1}$ whenever $\ord(\F_{l-1}^{(j)},E_{l-1}^{(j)}) = 0$.  To finish, simply handle the sets
\[
\{x\in K : |x_r| \geq \epsilon_r\},
\quad\text{for each $r\in \bigcup_{s=0}^{l-2} N_s$,}
\]
in the same way as the sets \eqref{eq:K>e}.
\end{proof}

The following lemma builds upon Lemmas \ref{lemma:presCoord1} and \ref{lemma:presCoord2}, including them as special cases.  When applying the following lemma, the set $B$ will be taken to be either $B = \RR^n$ or $B = \{x\in\RR^n : |x_i|\geq \epsilon\}$ for some $i\in\{1,\ldots,n\}$ and rational $\epsilon > 0$.

\begin{lemma}\label{lemma:presCoord3}
Suppose that $\P$ is an $\S$-presentation with $m_k < \infty$, and let $L = K\cap B$, where $B\subseteq\RR^n$ is a finite union of closed rational boxes.  Then relative to the approximation and precision oracles for $\S$, we can effectively construct a finite family \eqref{eq:Fcovering} of admissible coordinate transformations for $\{(\F_i,E_i;K_i)\}_{i\in\{0,\ldots,k\}}$ such that
\begin{equation}\label{eq:Kcover3}
L \subseteq \bigcup_{j\in J} F^{(j)}\left(\Int\!\left(K^{(j)}\right)\right),
\end{equation}
and such that for each $j\in J$, the transformation $F^{(j)}:(U^{(j)};K^{(j)})\to(U;K)$ can be classified as being one of the following three types, where each type is defined according to properties of the pullback $(\F^{(j)},E^{(j)};K^{(j)})$:
\begin{description}
\item[Type 1]
We have $\ord(\F^{(j)}_{l_j},E^{(j)}_{l_j}) = 0$ and $|\Div(\F^{(j)}_{l_j},E^{(j)}_{l_j})| \geq p_{l_j}$.\\
(When applying this case, we shall choose some $N^{(j)}_{l_j} \subseteq E^{(j)}_{l_j}$ minimal such that $|\Div(\F^{(j)}_{l_j},E^{(j)}_{l_j})_{N^{(j)}_{l_j}}| \geq p_{l_j}$, and put $r^{(j)}_{l_j} = |N^{(j)}_{l_j}|$.)

\item[Type 2]
We have $l_j < k$, and $\left(\F^{(j)}_{l_j},E^{(j)}_{l_j};K^{(j)}_{l_j}\right)$ is $(m_{l_j}^{(j)},N_{l_j}^{(j)},r_{l_j}^{(j)})$-refinable for some $(m_{l_j}^{(j)},N_{l_j}^{(j)},r_{l_j}^{(j)})$ with $(m_{l_j}^{(j)},r_{l_j}^{(j)})$ lexicographically less than $(m_{l_j},r_{l_j})$.

\item[Type 3]
We have $l_j = k$, and $\left(\F^{(j)}_{k},E^{(j)}_{k};K^{(j)}_{k}\right)$ is $(m_{k}^{(j)},N_{k}^{(j)},r_{k}^{(j)})$-refinable for some $(m_{k}^{(j)},N_{k}^{(j)},r_{k}^{(j)})$ with $m_{k}^{(j)} \leq m_k$.
\end{description}
In addition, if we are in either of the following two special cases, then each transformation $F^{(j)}:(U^{(j)};K^{(j)})\to(U;K)$ is either of Type 1 with $l_j < k$, or of Type 2:
\begin{description}
\item[Special Case (a)]
We are given $l\in\{0,\ldots,k-1\}$, $i\in N_l$, and a rational number $\epsilon > 0$ such that $L \subseteq \{x\in K : |x_i|\geq \epsilon\}$.

\item[Special Case (b)]
We are given $l\in\{0,\ldots,k-1\}$, and we know that $\ord(\F_{l+1};x) < p_{l+1}$ for all $x\in K_{l+1}$.
\end{description}
\end{lemma}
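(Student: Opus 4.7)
\medskip

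The plan is to prove the general statement and the two special cases in parallel, essentially assembling Lemmas \ref{lemma:transRefinable1}, \ref{lemma:transRefinable2}, \ref{lemma:presCoord1}, and \ref{lemma:presCoord2} into a single cover whose pieces are then classified. First I would handle the general statement. By Lemma \ref{lemma:SpresZeroDivOrder} and the $\S$-presentation data, I can effectively find a bound $m_k$ on $\ord(\F_k,E_k;K_k)$, so Lemma \ref{lemma:transRefinable1} applied to $(\F_k,E_k;K_k)$ yields a finite family of admissible coordinate transformations for $(\F_k,E_k;K_k)$ covering $K_k$ and producing pullbacks that are either of order $0$ or $(m',N',r')$-refinable with $m'\le m_k$. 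Each such transformation lifts to an admissible transformation of the full sequence $\{(\F_i,E_i;K_i)\}_{i\in\{0,\ldots,k\}}$ by crossing its domain with a small box in the $\bigcup_{s<k}N_s$-coordinates (this is the basic admissible coordinate transformation at level $l=k$ from Definition \ref{def:presAdmissCoordTrans}).

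After this first step the refinable pullbacks already have the form required by Type 3. The order-zero pullbacks need to be sorted: if $|\Div(\F^{(j)}_k,E^{(j)}_k)|\ge p_k$ they are already of Type 1 with $l_j=k$; if instead $|\Div(\F^{(j)}_k,E^{(j)}_k)|<p_k$, then Remark \ref{rmk:Forder} gives $\ord(\F^{(j)}_k;x)=|\Div(\F^{(j)}_k,E^{(j)}_k;x)|\le |\Div(\F^{(j)}_k,E^{(j)}_k)|<p_k$ on all of $K^{(j)}_k$, and Lemma \ref{lemma:presCoord2} applied to such a pullback completes the cover of the corresponding region by transformations whose pullbacks are of Type 1 (with $l_j<k$) or Type 2. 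This handles the portion of $L$ lying near $K_k$, namely points whose $\bigcup_{s<k}N_s$-coordinates are all small. For the remaining portion, I exploit Lemma \ref{lemma:blowupCoverShrink}: choose the small box in the lift above to be $\prod_{r\in\bigcup_{s<k}N_s}[-\epsilon_r,\epsilon_r]$ for rationals $\epsilon_r>0$ extracted uniformly from the collection of lifts just constructed; the uncovered residue is then contained in the finite union of sets $\{x\in K:|x_r|\ge\epsilon_r\}$ with $r\in\bigcup_{s<k}N_s$. Applying Lemma \ref{lemma:presCoord1} to each of these (possibly intersected with $B$) produces the remaining transformations, all of whose pullbacks are either of Type 1 (with $l_j<k$) or Type 2. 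Intersecting the resulting $K^{(j)}$ with the closed rational box structure of $B$ via admissible inclusions refines the cover to one of $L$ itself.

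For Special Case (a), the set $L\subseteq\{x\in K:|x_i|\ge\epsilon\}$ is already disjoint from the set where $x_i=0$, so Lemma \ref{lemma:presCoord1} applied directly with this $l$, $i$, $\epsilon$ produces a cover by transformations whose pullbacks are either refinable with lex-smaller $(m,r)$ at some level $l_j\le l<k$ (Type 2) or have $\ord(\F^{(j)}_{l_j},E^{(j)}_{l_j})=0$; the latter are split into $|\Div|\ge p_{l_j}$ (Type 1 with $l_j<k$) or $|\Div|<p_{l_j}$, and the latter are further processed by Lemma \ref{lemma:presCoord2} to give Type 1 and Type 2 pullbacks with $l_j<k$. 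Special Case (b) is an immediate application of Lemma \ref{lemma:presCoord2} at level $l+1$, followed by intersecting with $B$.

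The main obstacle, as always in this bookkeeping, is effectivity: verifying at each stage that the hypothesis of the next lemma holds, and ensuring that the $\epsilon_r$ governing the residual decomposition via Lemma \ref{lemma:blowupCoverShrink} can be computed from the output of the earlier steps rather than being guessed. Both follow from Remark \ref{rmk:admissTrans}.2 and the constructive content of Lemmas \ref{lemma:transRefinable1}--\ref{lemma:presCoord2}, so the ingredients are in place; the work is essentially in arranging them and justifying that the cover is valid.
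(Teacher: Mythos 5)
Your proposal is essentially the paper's proof, presented in the opposite order (general case first, then the two special cases, versus the paper's special cases first). The ingredients and their arrangement coincide: Lemma \ref{lemma:transRefinable1} at level $k$, lifted to the full sequence via the product structure, Lemma \ref{lemma:presCoord2} to dispatch the order-zero/small-divisor pullbacks, and the Special Case (a) machinery for the residue sets $\{x\in K:|x_r|\geq\epsilon_r\}$.

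Three small corrections. (i) The containment of the residue follows from Lemma \ref{lemma:blowupSet}.2 (which gives $K\subseteq\RR^{N_0\cup\cdots\cup N_{k-1}}\times K_k$), not from Lemma \ref{lemma:blowupCoverShrink}, which concerns blowings-up and is not applicable in this purely coordinate-transformation setting. (ii) Lemma \ref{lemma:SpresZeroDivOrder} need not be invoked to "find a bound $m_k$": the datum $m_k$ is part of the representation of the $\S$-presentation $\P$, and the hypothesis $m_k<\infty$ is assumed in the statement of the lemma. (iii) When you say applying Lemma \ref{lemma:presCoord1} to the residue sets yields only Type 1 or Type 2 pullbacks, this overstates what that lemma delivers: it only guarantees order-zero or lexicographically-smaller refinable pullbacks, with no divisor bound on the order-zero ones; the further pass with Lemma \ref{lemma:presCoord2} is needed, exactly as your Special Case (a) paragraph correctly spells out. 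If you invoke Special Case (a) in the general case, as the paper does, you should prove Special Case (a) first to avoid the apparent circularity.
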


\begin{proof}
First suppose we are in Special Case (a).  Apply Lemma \ref{lemma:presCoord1} to construct a finite family \eqref{eq:Fcovering} of admissible coordinate transformations such that
\[
\{x\in K : |x_i|\leq\epsilon\} \subseteq \bigcup_{j\in J} F^{(j)}(\Int(K^{(j)}))
\]
and such that for each $j\in\{0,\ldots,l\}$, either $\ord(\F^{(j)}_{l_j}, E^{(j)}_{l_j}) = 0$ or $\left(\F^{(j)}_{l_j},E^{(j)}_{l_j};K^{(j)}_{l_j}\right)$ is $(m_{l_j}^{(j)},N_{l_j}^{(j)},r_{l_j}^{(j)})$-refinable for some $(m_{l_j}^{(j)},N_{l_j}^{(j)},r_{l_j}^{(j)})$ with $(m_{l_j}^{(j)},r_{l_j}^{(j)})$ lexicographically less than $(m_{l_j},r_{l_j})$.  Now apply Lemma \ref{lemma:presCoord2} to each $\left(\F^{(j)}_{l_j},E^{(j)}_{l_j};K^{(j)}_{l_j}\right)$ with $\ord(\F^{(j)}_{l_j}, E^{(j)}_{l_j}) = 0$ and $|\Div(\F^{(j)}_{l_j}, E^{(j)}_{l_j})| < p_{l_j}$.  This proves Special Case (a).

Special case (b) is immediate from Lemma \ref{lemma:presCoord2}, so we now consider the general case (so we may as well assume that $L = K$).  Apply Lemma \ref{lemma:transRefinable1} to effectively construct a finite family of admissible coordinate transformations
\begin{equation}\label{eq:transFamilyRefine3}
\{F^{(j)}_{k} : (U^{(j)}_{k}; K^{(j)}_{k}) \to (U_k;K_k)\}_{j\in J}
\end{equation}
for $(\F_k,E_k;K_k)$ such that
\[
K_k\subseteq \bigcup_{j\in J} F^{(j)}_{k}(\Int(K^{(j)}_{k})),
\]
and such that for each $j\in J$, either $\ord(\F^{(j)}_{k}, E^{(j)}_{k}) = 0$ or $(\F^{(j)}_{k}, E^{(j)}_{k}; K^{(j)}_{k})$ is $(m^{(j)}_{k}, N^{(j)}_{k}, r^{(j)}_{k})$-refinable for some $(m^{(j)}_{k}, N^{(j)}_{k}, r^{(j)}_{k})$ with $m^{(j)}_{k} \leq m_k$.   As in the proof of Lemma \ref{lemma:presCoord1}, we may use \eqref{eq:transFamilyRefine3} to construct a corresponding family \begin{equation}\label{eq:transFamRef3}
\{F^{(j)}:(U^{(j)};K^{(j)}) \to (U;K)\}_{j\in J}
\end{equation}
of admissible transformations for $\P$ such that
\begin{equation}\label{eq:leqEpsilon}
\left\{x\in K : \text{$|x_r|\leq \epsilon_r$ for all $r\in \bigcup_{s=0}^{k-1} N_s$}\right\}
\subseteq
\bigcup_{j\in J} F^{(j)}(\Int(K^{(j)}))
\end{equation}
for some rational numbers $\epsilon_r > 0$.  Now finish by applying Special Case (b) to each $(F^{(j)},E^{(j)};K^{(j)})$ with $\ord(\F^{(j)}_{k}, E^{(j)}_{k}) = 0$ and $|\Div(\F^{(j)}_{k}, E^{(j)}_{k})| < p_k$, and by applying Special Case (a) to the sets $\{x\in K : |x_r| \geq \epsilon_r\}$, for each $r\in \bigcup_{s=0}^{k-1} N_s$.
\end{proof}

\begin{definition}\label{def:presAdmCoordTrans}
An {\bf admissible coordinate transformation for $\P$} is a Type 1, Type 2, or Type 3 admissible coordinate transformation for $\{(\F_i,E_i;K_i)\}_{i\in\{0,\ldots,k\}}$, as constructed in Lemma \ref{lemma:presCoord3}, or is a Type 0 admissible coordinate transformation for $\P$, which is defined as follows:
\begin{description}
\item[Type 0]
Suppose that $m_k = \infty$ and that $m'_k\in\NN$ is such that $\ord(\F_k,E_k;K_k) \leq m'_k$.  Relative to the approximation and precision oracles for $\S$ (assuming that $\P$ is an $\S$-presentation), we may use Lemma \ref{lemma:blowupSetVar} to effectively find an open blowup set $U'_k$ such that $K_k\subseteq U'_k \subseteq U_k$, $U'_k$ is defined by the same sequence of blowings-up as $K_k$, and $\ord(\F_k,E_k;U'_k) \leq m'_k$.  Put $U' = U\cap(\RR^{N_0\cup\cdots\cup N_{k-1}}\times U'_k)$.  We call the inclusion $\iota:(U';K)\to(U;K)$ a Type 0 admissible coordinate transformation for $\P$.  If $m'_k = 0$, define $r'_k = |d_k|$.  If $m'_k = 0$ and $|d_k|\geq p_k$, choose $N'_k\subseteq E_k$ minimal such that $|d_{k,N'_k}| \geq p_k$; otherwise, let $N'_k = \emptyset$.
\end{description}
The {\bf pullback} of $\P$ by an admissible coordinate transformation for $\P$ is the presentation $\P'$ defined as follows, according to the type of the transformation (we use the notation above for Type 0 transformations and the notation from Lemma \ref{lemma:presCoord3} for Type 1-3 transformations):
\[
\P' =
\begin{cases}
(\F\Restr{U'}, E; K: (m_j,N_j,r_j)_{j=0}^{k-1}, m'_k,N'_k,r'_k),
    & \text{Type 0 with $m'_ k =0$,}
\vspace*{5pt}
\\
(\F\Restr{U'}, E; K: (m_j,N_j,r_j)_{j=0}^{k-1}, m'_k),
    & \text{Type 0 with $m'_ k > 0$,}
\vspace*{5pt}
\\
(\F^{(j)}, E^{(j)}; K^{(j)}: (m_j,N_j,r_j)_{j=0}^{l_j-1}, 0,N^{(j)}_{l_j},r^{(j)}_{l_j}),
    & \text{Type 1,}
\vspace*{5pt}
\\
(\F^{(j)}, E^{(j)}; K^{(j)}:(m_j,N_j,r_j)_{j=0}^{l_j-1}, m^{(j)}_{l_j},N^{(j)}_{l_j},r^{(j)}_{l_j},\infty),
    & \text{Type 2 or 3.}
\end{cases}
\]
\end{definition}

The following is an important observation.

\begin{remark}\label{rmk:admissCoordTransRankDrop}
Suppose that $\P'$ is the pullback of a presentation $\P$ by any admissible coordinate transformation $F:(U';K')\to(U;K)$ for $\P$.  If $\P$ is incomplete, or if $F:(U';K')\to(U;K)$ is constructed as in Special Case (a) of Lemma \ref{lemma:presCoord3}, then
\[
\rank\P' < \rank\P.
\]
\end{remark}

\subsection{Admissible Blowup Transformations for a Complete Presentation}
\label{ss:presAdmBlowupTrans}

Now suppose that $\P$ is complete.

\begin{notation}\label{notation:blowupCompPres}
Define $C$ and $I$ as in \eqref{eq:C} and \eqref{eq:I}.  For each $i\in I$, let $\pi_i:U^{[i]}\to U$ be the $i$th standard chart of the blowing-up of $U$ with center $C$, and let $\ell(i)\in\{0,\ldots,k\}$ be such that $i\in N_{\ell(i)}$.  We use the notation $U^{[i]}$ for $\pi_{i}^{-1}(U)$, rather than the notation $U_i$ used prior in the paper, so as not to conflict with notation $U_i$ introduced at the beginning of this section (namely, $U_i = U|_{N_0\cup\cdots\cup N_{i-1}}$).  We use similar notation below.

Let $i\in I$.  For each $j\in\{0,\ldots,\ell(i)\}$, let $\pi_{i,j}:U^{[i]}_{j}\to U_j$ be the blowing-up of $U_j$ with center $C_j = \{x\in U_j : x_{I\cap M_j} = 0\}$.  Thus $\pi_{i,j}(y_{M_j}) = (y_{I^c}, y_i, y_i y_{I_i\cap M_j})$, with $I_i = I\setminus\{i\}$ as usual, and $U^{[i]}_{j} = U^{[i]}\Restr{N_0\cup\cdots\cup N_{j-1}}$. Let
\[
\F^{[i]} = \pi_{i}^{*}\F,
\quad
E^{[i]} = E\cup\{i\},
\quad\text{and}\quad
d^{[i]} = \Div(\F^{[i]},E^{[i]}).
\]
Also let
\[
\F^{[i]}_{0} = \F^{[i]},
\quad
E^{[i]}_{0} = E^{[i]},
\quad\text{and}\quad
d^{[i]}_{0} = d^{[i]},
\]
and for each $j\in\{1,\ldots,\ell(i)\}$ inductively define
\[
\F^{[i]}_{j} = \partial^{p_{j-1},m_{j-1}}_{N_{j-1},E^{[i]}_{j-1}}(\F^{[i]}_{j-1}),
\quad
E^{[i]}_{j} = E^{[i]}_{j-1}\setminus N_{j-1},
\quad\text{and}\quad
d^{[i]}_{j} = \Div(\F^{[i]}_{j},E^{[i]}_{j}).
\]
Thus $E^{[i]}_{j} = E_j\cup\{i\}$ for each $j\in\{0,\ldots,\ell(i)\}$.  Finally, for any $i\in I$ and any set $A\subseteq U^{[i]}$, write $A_{j} = A\Restr{N_0\cup\cdots\cup N_{j-1}}$ for each $j\in\{0,\ldots,\ell(i)\}$, as usual.
\end{notation}

We now make two important observations.  First, note that since $p_0 = 0$, repeated application of Lemma \ref{lemma:blowupRefineCommute} shows that for all $j\in\{0,\ldots,\ell(i)\}$,
\begin{equation}\label{eq:transFormula}
\F^{[i]}_{j} = y_{i}^{-p_j} \pi_{i,j}^{*}\F_j,
\end{equation}
and hence
\begin{equation}\label{eq:orderTransFormula}
\ord(\F^{[i]}_{j},E^{[i]}_{j};y) = \ord(\pi_{i,j}^{*}\F_j,E_j\cup\{i\};y)
\end{equation}
for all $y\in U^{[i]}_{j}$.  Second, note that if $m_k = 0$ and if $i\in I$ is chosen so that $\ell(i) = k$, then the minimality of $N_k$, equation \eqref{eq:transFormula} with $j = k$, and Lemma \ref{lemma:blowupCase0} together imply that
\begin{equation}\label{eq:divDrop}
|d^{[i]}_{k}| < |d_{k}|.
\end{equation}

The following lemma considers a family of sets $\{K^{[i]}\}_{i\in I}$ of the form
\begin{equation}\label{eq:Ki}
K^{[i]}
=
\left\{y\in \pi_{i}^{-1}(K) : \left(|y_i| \leq \delta_i\right)\wedge \left(\bigwedge_{j\in I} |y_j| \leq \epsilon^{\ell(j)-\ell(i)}\right)\right\},
\end{equation}
for some rational numbers $\epsilon > 0$ and $\delta_i > 0$.  Note that for each $i\in I$, $K^{[i]}$ is a compact blowup set. Lemmas \ref{lemma:blowupCover} and \ref{lemma:blowupCoverShrink} imply that
\begin{equation}\label{eq:KiCover}
\{x\in K : \text{$|x_i| \leq \delta_i$ for all $i\in I$}\} \subseteq \bigcup_{i\in I} \pi_i(K^{[i]}).
\end{equation}

\begin{lemma}\label{lemma:completePresBlowup}
Suppose that $\P$ is a complete $\S$-presentation; we use Notation \ref{notation:blowupCompPres}.  Then relative to the approximation and precision oracles for $\S$, we can effectively find rational numbers $\epsilon > 0$ and $\delta_i > 0$ such that if we define $\{K^{[i]}\}_{i\in I}$ as in \eqref{eq:Ki}, then for each $i\in I$ we can effectively find an open blowup set $\tld{U}^{[i]}$ such that $K^{[i]} \subseteq \tld{U}^{[i]} \subseteq U^{[i]}$, $\tld{U}^{[i]}$ is defined by the same sequence of blowings-up as $K^{[i]}$, and the following hold:
\begin{enumerate}{\setlength{\itemsep}{3pt}
\item
For each $j\in\{0,\ldots,\ell(i)-1\}$, $(\F^{[i]}_{j}\Restr{\tld{U}^{[i]}_{j}}, E^{[i]}_{j}; K^{[i]}_{j})$ is $(m_j,N_j,r_j)$-refinable.

\item
If $\ell(i)\neq k$, then
\[
\ord(\F^{[i]}_{\ell(i)},E^{[i]}_{\ell(i)};y) < m_{\ell(i)}
\]
for all $y\in \tld{U}^{[i]}_{\ell(i)}$ such that $|y_j| \leq 2\epsilon$ for all $j\in N_{\ell(i)}\setminus\{i\}$.
}\end{enumerate}
\end{lemma}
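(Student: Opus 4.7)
The plan is to apply Lemma \ref{lemma:refinableBlowup} level-by-level for each $i\in I$. The completeness of $\P$ (via Lemma \ref{lemma:completeCenter}) guarantees the hypothesis $\ord_{C_j}(\F_j, E_j) = m_j$ at every $j \in \{0,\ldots,k\}$. The identifications \eqref{eq:transFormula}--\eqref{eq:orderTransFormula} show that $\F^{[i]}_j$ differs from $\pi^*_{i,j}\F_j$ only by the factor $y_i^{p_j}$, so their orders and $E$-parts agree; any $(m_j, N_j, r_j)$-refinability of $(\pi^*_{i,j}\F_j, E_j\cup\{i\})$ produced by Lemma \ref{lemma:refinableBlowup}.1 is automatically $(m_j, N_j, r_j)$-refinability of $(\F^{[i]}_j, E^{[i]}_j)$, and each $(\F^{[i]}_j, E^{[i]}_j; K^{[i]}_j)$ is a basic $\S$-presentation since $\F^{[i]}_j$ inherits a lifting from $\F$ by the Lifting Lemmas \ref{lemma:lifting2}.3, \ref{lemma:lifting1}.3, \ref{lemma:lifting1}.4, and \ref{lemma:lifting1}.5.

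First, for each $i\in I$ with $\ell(i) < k$, I apply Lemma \ref{lemma:refinableBlowup}.2 to $(\F_{\ell(i)}, E_{\ell(i)}; K_{\ell(i)})$ with the blowing-up $\pi_{i,\ell(i)}$ (in which $I\cap M_{\ell(i)}\supseteq N_{\ell(i)}\ni i$ plays the role of ``$I$''), obtaining a rational $\epsilon^{(i)} > 0$. I then fix $\epsilon \in \QQ_+$ with $\epsilon < 1$ and $4\epsilon \leq \epsilon^{(i)}$ for every such $i$, and take $M \in \QQ_+$ with $M \geq \epsilon^{-k}$ and $K_j \subseteq [-M, M]^{M_j}$ for all $j$ (possible since each $K_j$ is compact). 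For each $i\in I$ and $j\in \{0,\ldots,\ell(i)-1\}$, noting $i \in (I \cap M_j) \setminus N_j$ (since $\ell(i) > j$), I then apply Lemma \ref{lemma:refinableBlowup}.1 to $(\F_j, E_j; K_j)$ with the blowing-up $\pi_{i,j}$ and the co-c.e.\ compact set $K^{(i,j)} = \pi^{-1}_{i,j}(K_j) \cap [-M,M]^{M_j}$ to obtain $\delta_{i,j} > 0$. Setting $\delta_i = \min_{j<\ell(i)} \delta_{i,j}$, and additionally requiring $\delta_i \leq \epsilon^{(i)}/2$ when $\ell(i) < k$, I define $K^{[i]}$ via \eqref{eq:Ki}; the choice of $M$ ensures $K^{[i]}_j \cap \{|y_i|\leq\delta_i\} \subseteq K^{(i,j)}\cap\{|y_i|\leq\delta_{i,j}\}$ for each $j < \ell(i)$, so Statement 1 holds on $K^{[i]}_j$.

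To produce the open set $\tld{U}^{[i]}$ and verify Statement 2, I note that the refinability witnesses at each level $j < \ell(i)$, together with (when $\ell(i) < k$) the specific nonvanishing condition \eqref{eq:N} extracted from the proof of Lemma \ref{lemma:refinableBlowup}.2, are c.e.\ open conditions on partial derivatives of elements of $\F^{[i]}_j$, all accessible via the approximation oracle for $\S$. Since $K^{[i]}$ is co-c.e.\ compact and contained in the c.e.\ open intersection of these conditions---the condition at level $\ell(i)$ applies to $K^{[i]}_{\ell(i)} \cap \{|y_j|\leq 2\epsilon\ \forall j\in N_{\ell(i)}\setminus\{i\}\}$ because $\delta_i \leq \epsilon^{(i)}/2$ gives $|y_i|\leq\epsilon^{(i)}/2$ and $|y_l|\leq\epsilon^{\ell(l)-\ell(i)}\leq\epsilon\leq\epsilon^{(i)}/4$ for $l \in N_{\ell(i)+1}\cup\cdots\cup N_k$, so every $l\in I\cap M_{\ell(i)}$ satisfies $|y_l|\leq\epsilon^{(i)}$---Remark \ref{rmk:refinableOnK} and Lemma \ref{lemma:coceCompactInCeOpen} let me effectively find an open blowup set $\tld{U}^{[i]}$ with $K^{[i]}\subseteq\tld{U}^{[i]}\subseteq U^{[i]}$, defined by the same sequence of blowings-up as $K^{[i]}$, on which all the nonvanishing conditions continue to hold, after possibly shrinking $\tld{U}^{[i]}$ so that the bounds on $|y_i|$ and on $|y_l|$ for $l \in N_{\ell(i)+1}\cup\cdots\cup N_k$ remain comfortably below $\epsilon^{(i)}$ throughout $\tld{U}^{[i]}$.

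The main obstacle will be the bookkeeping among the constants $\epsilon$, $M$, the $\delta_{i,j}$, $\delta_i$, and the $\epsilon^{(i)}$, together with extracting the underlying c.e.\ open nonvanishing condition \eqref{eq:N} from the proof of Lemma \ref{lemma:refinableBlowup}.2 rather than using the lemma's stated conclusion (which restricts $y$ to $\pi^{-1}_{i,\ell(i)}(K_{\ell(i)})$), so that Statement 2 can be enforced throughout the open set $\tld{U}^{[i]}_{\ell(i)}$ and not merely on the compact set $K^{[i]}_{\ell(i)}$.
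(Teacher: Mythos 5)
Your proof is correct and follows essentially the same strategy as the paper's: apply Lemma \ref{lemma:refinableBlowup}.2 at level $\ell(i)$ to obtain the $\epsilon$-threshold, apply Lemma \ref{lemma:refinableBlowup}.1 at each lower level $j<\ell(i)$ to obtain the $\delta$-threshold, take minima, and pass to an open blowup neighborhood using the co-c.e.\ compactness of $K^{[i]}$; the identifications \eqref{eq:transFormula}--\eqref{eq:orderTransFormula} transport refinability from $\pi^*_{i,j}\F_j$ to $\F^{[i]}_j$ exactly as you describe. The only difference is cosmetic bookkeeping: you apply Lemma \ref{lemma:refinableBlowup}.1 to the compact set $\pi^{-1}_{i,j}(K_j)\cap[-M,M]^{M_j}$ for an a priori bound $M$, whereas the paper applies it to the set $K^{[i,\epsilon]}_j$ already cut down by the $\epsilon^{\ell(\cdot)-\ell(i)}$ constraints (and builds the analogue of your $\delta_i\leq\epsilon^{(i)}/2$ into the stipulation $\delta_{i,j}\in(0,2\epsilon]$), but both choices yield co-c.e.\ compact sets containing $K^{[i]}_j$ and the resulting constants serve the same purpose.
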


\begin{proof}
For each $i\in I$ with $\ell(i)\neq k$, apply Lemma \ref{lemma:refinableBlowup}.2 to $(\F_{\ell(i)},E_{\ell(i)};K_{\ell(i)})$, and use \eqref{eq:orderTransFormula}, to effectively find a rational number $\epsilon_i\in(0,1]$ such that
\[
\ord(\F^{[i]}_{\ell(i)}, E^{[i]}_{\ell(i)}; y) < m_{\ell(i)}
\]
for all $y\in\pi_{i,\ell(i)}^{-1}(K_{\ell(i)})$ such that $|y_j|\leq \epsilon_i$ for all $j\in N_{\ell(i)}\cup\cdots\cup N_k$.  Put $\epsilon = \frac{1}{2}\min\{\epsilon_i : i\in I\}$.  For each $i\in I$, define
\[
K^{[i,\epsilon]} = \left\{y\in\pi_{i}^{-1}(K) : \bigwedge_{j\in I} |y_j| \leq \epsilon^{\ell(j)-\ell(i)}\right\}.
\]
Then for each $i\in I$ with $\ell(i)\neq k$,
\[
\ord(\F^{[i]}_{\ell(i)},E^{[i]}_{\ell(i)};y) < m_{\ell(i)}
\]
for all $y\in K^{[i,\epsilon]}_{\ell(i)}$ such that $|y_j| \leq 2\epsilon$ for all $j\in N_{\ell(i)}$.  Next, for each $i\in I$ and $j\in\{0,\ldots,\ell(i) - 1\}$, apply Lemma \ref{lemma:refinableBlowup}.1 to the co-c.e.\ compact set $K^{[i,\epsilon]}$ to effectively find a rational number $\delta_{i,j} \in (0,2\epsilon]$ such that $(\F^{[i]}_{j},E^{[i]}_{j})$ is $(m_j,N_j,r_j)$-refinable on
\[
K^{[i,\epsilon,\delta_{i,j}]}
=
\left\{y\in \pi_{i}^{-1}(K) : \left(|y_i| \leq \delta_{i,j}\right)\wedge \left(\bigwedge_{j\in I} |y_j| \leq \epsilon^{\ell(j)-\ell(i)}\right)\right\}.
\]
For each $i\in I$, put $\delta_i = \min\{\delta_{i,j} : 0\leq j < \ell(i)\}$, and define $K^{[i]}$ as in \eqref{eq:Ki}.  Thus we have the following for each $i\in I$:
\begin{enumerate}{\setlength{\itemsep}{3pt}
\item
For each $j\in\{0,\ldots,\ell(i)-1\}$, $(\F^{[i]}_{j}, E^{[i]}_{j})$ is $(m_j,N_j,r_j)$-refinable on $K^{[i]}_{j}$.

\item
If $\ell(i)\neq k$, then
\[
\ord(\F^{[i]}_{\ell(i)},E^{[i]}_{\ell(i)};y) < m_{\ell(i)}
\]
for all $y\in K^{[i]}_{\ell(i)}$ such that $|y_j| \leq 2\epsilon$ for all $j\in N_{\ell(i)}\setminus\{i\}$.
}\end{enumerate}
To finish, apply Lemma \ref{lemma:blowupSetVar} to effectively construct the open blowup sets $\tld{U}^{[i]}$ as in the conclusion of the lemma.
\end{proof}

Suppose that $\P$ is complete and that $\epsilon$, $\{\delta_i\}_{i\in I}$, $\{K^{[i]}\}_{i\in I}$, and $\{\tld{U}^{[i]}\}_{i\in I}$ are constructed as in Lemma \ref{lemma:completePresBlowup}.

\begin{definition}\label{def:presAdmissBlowupFamily}
We call
\begin{equation}\label{eq:admFamBlowup}
\{\pi_i:(\tld{U}^{[i]};K^{[i]})\to(U;K)\}_{i\in I}
\end{equation}
an {\bf admissible family of blowings-up for $\P$}, and for each $i\in I$ we define $(\F^{[i]}\Restr{\tld{U}^{[i]}}, E^{[i]};K^{[i]})$ to be the {\bf pullback} of $(\F,E;K)$ by $\pi_i:(\tld{U}^{[i]};K^{[i]})\to(U;K)$.  (Note: This is slightly different from the definition of a pullback of a basic presentation by a coordinate transformation, given in Definition \ref{def:coordTrans}, since $E^{[i]}$ equals $E\cup\{i\}$, not $E$.)
\end{definition}

We are not interested in the members of \eqref{eq:admFamBlowup} directly, but are instead interested in compositions of members of \eqref{eq:admFamBlowup} with certain coordinate transformations for $(\F^{[i]}\Restr{\tld{U}^{[i]}}, E^{[i]};K^{[i]})$, which we know construct.  For each $i\in I$ such that $\ell(i)\neq k$, define
\begin{eqnarray*}
K^{[i]}(\epsilon)
    & = &
    \{y\in K^{[i]} : \text{$|y_j|\leq \epsilon$ for all $j\in N_{\ell(i)}\setminus\{i\}$}\},
\\
\tld{U}^{[i]}(\epsilon)
    & = &
    \{y\in \tld{U}^{[i]} : \text{$|y_j| < 2\epsilon$ for all $j\in N_{\ell(i)}\setminus\{i\}$}\},
\\
K^{[i]}(\epsilon,j,\sigma)
    & = &
    \{y\in K^{[i]} : \sigma y_j\geq \epsilon\},
    \qquad\text{for each $(j,\sigma)\in (N_{\ell(i)}\setminus\{i\})\times\{-1,1\}$,}
\\
\tld{U}^{[i]}(\epsilon,j,\sigma)
    & = &
    \{y\in \tld{U}^{[i]} : \sigma y_j >  \epsilon/2\},
     \qquad\text{for each $(j,\sigma)\in (N_{\ell(i)}\setminus\{i\})\times\{-1,1\}$,}
\end{eqnarray*}
and define
\[
\D_i
=
\left\{\left(\tld{U}^{[i]}(\epsilon);K^{[i]}(\epsilon)\right)\right\}
\cup
\left\{
\left(\tld{U}^{[i]}(\epsilon,j,\sigma); K^{[i]}(\epsilon,j,\sigma)\right) : (j,\sigma)\in (N_{\ell(i)}\setminus\{i\})\times\{-1,1\}
\right\}.
\]
For each $i\in I$ such that $\ell(i) = k$, define
\[
\D_i = \left\{\left(\tld{U}^{[i]}; K^{[i]}\right)\right\}.
\]
Note that for each $i\in I$,
\[
K^{[i]} = \bigcup_{(U';K')\in\D_i} K'
\quad\text{and}\quad
\tld{U}^{[i]} = \bigcup_{(U';K')\in\D_i} U'.
\]
For each $i\in I$ and $(U';K')\in\D_i$, write $\pi_i:(U';K')\to(U;K)$ for the composition of $\pi_i:(\tld{U}^{[i]},K^{[i]})\to(U;K)$ and the inclusion $\iota:(U';K')\to(\tld{U}^{[i]};K^{[i]})$.

\begin{defrmk}\label{defrmk:presAdmBlowupTrans}
Let $i\in I$ and $(U';K')\in\D_i$, and write $\F' = \F^{[i]}\Restr{U'}$ and $E' = E^{[i]}_{U'}$.  Define an admissible sequence of refinements $\{(\F'_j,E'_j;K'_j)\}_{j\in\{0,\ldots,\ell(i)\}}$ for $(\F',E';K')$ by
\begin{eqnarray*}
(\F'_0, E'_0; K'_0)
    & = &
    (\F',E';K'),
\\
(\F'_j, E'_j; K'_j)
    & = &
    \partial^{p_{j-1},m_{j-1}}_{N_{j-1},E'_{j-1}}(\F'_{j-1},E'_{j-1};K'_{j-1}),
\quad
\text{for each $j\in\{1,\ldots,\ell(i)\}$.}
\end{eqnarray*}
Note that Lemma \ref{lemma:completePresBlowup} shows that $(\F'_j,E'_j;K'_j)$ is indeed $(m_j,N_j,r_j)$-refinable for each $j\in\{0,\ldots,\ell(i)-1\}$.  Write
\[
d'_j = \Div(\F'_j,E'_j)
\]
for each $j\in\{0,\ldots,\ell(i)\}$.  We now make a number of observations and definitions, according to various cases:
\begin{enumerate}{\setlength{\itemsep}{5pt}
\item
Suppose that $\ell(i)\neq k$ and that $(U';K') = (U^{[i]}(\epsilon); K^{[i]}(\epsilon))$. Lemma \ref{lemma:completePresBlowup} shows that
\[
\ord(\F'_{\ell(i)},E'_{\ell(i)}) < m_{\ell(i)}.
\]
\begin{enumerate}{\setlength{\itemsep}{3pt}
\item
Suppose that $m_{\ell(i)} = 1$.  Then let $r'_{\ell(i)} = |d'_{\ell(i)}|$. If $|d'_{\ell(i)}| \geq p_{\ell(i)}$, choose $N'_{\ell(i)}\subseteq E'_{\ell(i)}$ minimal such that $|d'_{\ell(i),N'_{\ell(i)}}| \geq p_{\ell(i)}$.  Otherwise let $N'_{\ell(i)} = \emptyset$.

\item
Suppose that $m_{\ell(i)} > 1$.  Then let $N_{\ell(i)} = \emptyset$.
}\end{enumerate}

\item
Suppose that $\ell(i)\neq k$ and that $(U';K') = (U'_i(\epsilon,j,\sigma); K'_i(\epsilon,j,\sigma))$ for some $(j,\sigma)\in (N_{\ell(i)}\setminus\{i\}) \times \{-1,1\}$.  Lemma \ref{lemma:orderBlowup} shows that
\[
\ord(\F'_{\ell(i)},E'_{\ell(i)}) \leq m_{\ell(i)}.
\]
\begin{enumerate}{\setlength{\itemsep}{3pt}
\item
Suppose that $r_{\ell(i)} = 0$.  Then Remark \ref{rmk:refinable}.3 shows that we in fact have
\[
\ord(\F'_{\ell(i)},E'_{\ell(i)}) < m_{\ell(i)}.
\]
\begin{enumerate}{\setlength{\itemsep}{3pt}
\item
Suppose that $m_{\ell(i)} = 1$.  Then let $r'_{\ell(i)} = |d'_{\ell(i)}|$. If $|d'_{\ell(i)}| \geq p_{\ell(i)}$, choose $N'_{\ell(i)}\subseteq E'_{\ell(i)}$ minimal such that $|d'_{\ell(i),N'_{\ell(i)}}| \geq p_{\ell(i)}$.  Otherwise let $N'_{\ell(i)} = \emptyset$.

\item
Suppose that $m_{\ell(i)} > 1$.  Then let $N_{\ell(i)} = \emptyset$.
}\end{enumerate}

\item
Now suppose that $r_{\ell(i)} > 0$.  Apply Special Case (a) in Lemma \ref{lemma:presCoord3} to $(\F',E';K')$.  Relative to the approximation and precision oracles for $\S$, this effectively constructs a finite family $\{G^{(s)}:(V^{(s)};L^{(s)})\to(U';K')\}_{s\in S}$ of admissible coordinate transformations for $\{(\F'_j,E'_j;K'_j)\}_{j\in\{0,\ldots,\ell(i)\}}$ such that if we write
\[
(\G^{(s)},D^{(s)};L^{(s)})
\quad\text{and}\quad \{(\G^{(s)}_{j},D^{(s)}_{j};L^{(s)}_{j})\}_{j\in\{0,\ldots,l_s\}}
\]
for the pullbacks of $(\F',E';K')$ and $\{(\F'_j,E'_j;K'_j)\}_{j\in\{0,\ldots,\ell(i)\}}$ by $G^{(s)}:(V^{(s)};L^{(s)})\to(U';K')$, respectively, then
\[
K' \subseteq \bigcup_{s\in S} G^{(s)}(\Int(L^{(s)})),
\]
and for each $s\in S$, we have $l_s\in\{0,\ldots,\ell(i)-1\}$ and one of the following holds:
\begin{enumerate}{\setlength{\itemsep}{3pt}
\item
We have $\ord(\G^{(s)}_{l_s},D^{(s)}_{l_s}) = 0$ and $|\Div(\G^{(s)}_{l_s},D^{(s)}_{l_s})| \geq p_{l_s}$.  In this case, let $r'_{l_s} = |d'_{l_s}|$ and choose $N'_{l_s}\subseteq D_{l_s}$ minimal such that $|d'_{l_s,N'_{l_s}}| \geq p_{l_s}$.

\item
We have that $\left(\G^{(s)}_{l_s},D^{(s)}_{l_s};L^{(s)}_{l_s}\right)$ is $(m_{l_s}^{(s)},N_{l_s}^{(s)},r_{l_s}^{(s)})$-refinable for some\\ $(m_{l_s}^{(s)},N_{l_s}^{(s)},r_{l_s}^{(s)})$ with $(m_{l_s}^{(s)},r_{l_s}^{(s)})$ lexicographically less than $(m_{l_s},r_{l_s})$.
}\end{enumerate}
}\end{enumerate}

\item
Suppose that $\ell(i) = k$.  (Thus $m_k = 0$ and $i\in E_k$, so $E'_k = E^{[i]}_k = E_k$.)  Then \eqref{eq:divDrop} gives $|d'_k| < |d_k|$.  Define $r'_k = |d'_k|$. If $|d'_k| \geq p_k$, choose $N'_k\subseteq E_k$ minimal such that $|d'_{k,N'_k}| \geq p_k$.  If $|d'_k| < p_k$, let $N'_k = \emptyset$.
}\end{enumerate}
We call $F:(V;L)\to(U;K)$ an {\bf admissible blowup transformation for $\P$} if \begin{itemize}
\item
$F:(V,L)\to(U;K)$ equals $\pi_i:(U';K')\to(U;K)$ for some $i\in I$ and some $(U';K')\in\D_i$ in cases 1, 2(a), or 3,
\end{itemize}
or if
\begin{itemize}
\item
$F:(V,L)\to(U;K)$ equals $\pi_i\circ G^{(s)}:(V^{(s)};L^{(s)})\to(U;K)$ for some $i\in I$, some $(U';K')\in\D_i$ in case 2(b), and some $G^{(s)}:(V^{(s)};L^{(s)})\to(U';K')$ constructed in case 2(b).
\end{itemize}
By considering all the cases above, we categorize admissible blowup transformations for $\P$ into seven types: Type 1(a), Type 1(b), Type 2(a,i), Type 2(a,ii), Type 2(b,i), Type 2(b,ii), and Type 3.  The {\bf pullback} of $(\F,E;K)$ by $F:(V;L)\to(U;K)$ is the basic presentation $(\G,D;L)$ constructed by first taking the pullback $(\F^{[i]}\Restr{\tld{U}^{[i]}}, E^{[i]}; K^{[i]})$ of $(\F,E;K)$ by $\pi_i:(\tld{U}^{[i]};K^{[i]})\to(U;K)$ (for an appropriate $i\in I$), and then taking the pullback of $(\F^{[i]}\Restr{\tld{U}^{[i]}}, E^{[i]}; K^{[i]})$ by any additional coordinate transformations used to construct $F:(V;L)\to(U;K)$ from $\pi_i:(\tld{U}^{[i]};K^{[i]})\to(U;K)$.  The {\bf pullback} of $\P$ by $F:(V;L)\to(U;K)$ is the presentation $\P'$ define as follows, according to the type of $F:(V;L)\to(U;K)$:
\[
\P'
=
\begin{cases}
(\G,D;L : (m_j,N_j,r_j)_{j=0}^{\ell(i)-1}, 0, N'_{\ell(i)}, r'_{\ell(i)}),
    & \text{for Types 1(a) or 2(a,i),}
\vspace*{5pt}\\
(\G,D;L : (m_j,N_j,r_j)_{j=0}^{\ell(i)-1}, m_{\ell(i)} - 1),
    & \text{for Types 1(b) or 2(a,ii),}
\vspace*{5pt}\\
(\G,D;L : (m_j,N_j,r_j)_{j=0}^{l_s-1}, 0, N'_{l_s}, r'_{l_s}),
    & \text{for Type 2(b,i),}
\vspace*{5pt}\\
(\G,D;L : (m_j,N_j,r_j)_{j=0}^{l_s-1}, m^{(s)}_{l_s}, N^{(s)}_{l_s}, r^{(s)}_{l_s}, \infty),
    & \text{for Type 2(b,ii),}
\vspace*{5pt}\\
(\G,D;L : (m_j,N_j,r_j)_{j=0}^{k-1}, 0, N'_k, r'_k),
    & \text{for Type 3.}
\end{cases}
\]
It is important to note that
\begin{equation}\label{eq:rankDropBlowup}
\rank\P' < \rank P.
\end{equation}
\end{defrmk}

\section{Effective Desingularization Theorems}\label{s:desingThms}

This section proves an effective local resolution of singularities theorem, an effective fiber cutting theorem, and an effective theorem of the complement, and then combines these results into an effective parameterization theorem for the $0$-definable sets of $\RR_{\S}$.  Many of the results can be considered to be effective desingularization theorems.

\begin{definition}\label{def:presResolved}
We say that a presentation $\P = (\F,E;K : m_0,\ldots)$ is {\bf resolved} if $m_0=0$, and that $\P$ is {\bf unresolved} if $m_0 > 0$.
\end{definition}

\begin{definition}\label{def:presAdmTrans}
An {\bf admissible transformation} for an unresolved presentation $\P$ is either an admissible coordinate transformation for $\P$ or an admissible blowup transformation for $\P$.  If we write $\P^{(0)} = \P$, and if for each $i\in\{1,\ldots,l\}$ we inductively define $\P^{(i)}$ to be the pullback of the presentation $\P^{(i-1)}$ by an admissible transformation $F^{(i)}:(U^{(i)};K^{(i)}) \to (U^{(i-1};K^{(i-1)})$ for $\P^{(i-1)}$, then we call $F^{(1)}\circ\cdots\circ F^{(l)}:(U^{(l)};K^{(l)}) \to (U;K)$ a {\bf composition of admissible transformations} for $\P$, and we define $\P^{(l)}$ to be the {\bf pullback} of $\P$ by this composition.
\end{definition}

\begin{remark}\label{rmk:rankDrop}
Suppose that $\P'$ is the pullback of a presentation $\P$ by an admissible transformation $F:(U';K')\to(U;K)$ for $\P$.  Remark \ref{rmk:admissCoordTransRankDrop} and \eqref{eq:rankDropBlowup} show that if $\P$ is incomplete, or if $\P$ is complete and $F:(U';K')\to(U;K)$ is either an admissible blowup transformation or is an admissible coordinate transformation constructed as in Special Case (a) of Lemma \ref{lemma:presCoord3}, then $\rank\P' < \rank\P$.
\end{remark}

\begin{proposition}\label{prop:presRankDrop}
Suppose that $\P$ is an unresolved $\S$-presentation.  Then relative to the approximation and precision oracles for $\S$, we can effectively construct a finite family
\begin{equation}\label{eq:presAdmTransFamily}
\{F_j:(U^{(j)},K^{(j)})\to(U;K)\}_{j\in J}
\end{equation}
of admissible transformations for $\P$ such that
\begin{equation}\label{eq:KcoverPresRankDrop}
K \subseteq \bigcup_{j\in J} F_j\left(K^{(j)}\right),
\end{equation}
and such that for each $j\in J$,
\begin{equation}\label{eq:presRankDrop}
\rank\P^{(j)} < \rank\P,
\end{equation}
where $\P^{(j)}$ is the pullback of $\P$ by $F^{(j)}:(U^{(j)};K^{(j)}) \to (U;K)$.
\end{proposition}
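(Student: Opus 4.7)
The argument splits into two cases according to whether $\P$ is complete. If $\P$ is incomplete, I apply Lemma \ref{lemma:presCoord3} with $L = K$ (taking $B = \RR^n$) to effectively produce a finite family of admissible coordinate transformations whose images cover $K$. By Remark \ref{rmk:rankDrop}, every pullback---regardless of whether it falls into Type 1, Type 2, or Type 3---has rank strictly smaller than $\rank\P$, since $\P$ is incomplete. Assembling these transformations yields the required family.

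Now suppose $\P$ is complete. I invoke Lemma \ref{lemma:completePresBlowup} to effectively find rational constants $\epsilon > 0$ and $\delta_i > 0$ for each $i \in I := \bigcup_{j=0}^{k} N_j$, together with the compact blowup sets $K^{[i]}$ from \eqref{eq:Ki} and open blowup sets $\tld{U}^{[i]}$ satisfying the conclusions of that lemma. The admissible family of blowings-up from Definition \ref{def:presAdmissBlowupFamily}, further subdivided into admissible blowup transformations for $\P$ according to Definitions and Remarks \ref{defrmk:presAdmBlowupTrans}, then yields finitely many admissible transformations whose images collectively contain $\bigcup_{i \in I}\pi_i(K^{[i]})$; by Lemma \ref{lemma:blowupCoverShrink} this set in turn contains the inner region $K^{\mathrm{in}} := \{x \in K : |x_i| \leq \delta_i \text{ for all } i \in I\}$, and each corresponding pullback has rank strictly smaller than $\rank\P$ by \eqref{eq:rankDropBlowup}.

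It remains to cover the outer region $K \setminus K^{\mathrm{in}} \subseteq \bigcup_{i \in I}\{x \in K : |x_i| > \delta_i\}$. Choose rational $0 < \delta'_i < \delta_i$ for each $i \in I$. For each $i \in I$ with $\ell(i) < k$ I apply Special Case (a) of Lemma \ref{lemma:presCoord3} with $l = \ell(i)$ and $\epsilon = \delta'_i$ to cover $\{x \in K : |x_i| \geq \delta'_i\}$ by admissible coordinate transformations whose pullbacks, being of Type 1 with $l_j < k$ or of Type 2, satisfy $\rank\P' < \rank\P$ by Remark \ref{rmk:rankDrop}. The principal obstacle is to handle the residual outer pieces $\{x \in K : |x_i| \geq \delta'_i\}$ for $i \in N_k$, since Special Case (a) requires $l \leq k-1$. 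I resolve this by first restricting via the coverings already constructed to the subregion where $|x_j| \leq \delta'_j$ holds for every $j \in \bigcup_{l<k} N_l$; on this smaller co-c.e.\ compact set the refinability constraints from Lemma \ref{lemma:refinableBlowup}.1 that originally forced $\delta_i$ to be small for $i \in N_k$ can be relaxed, so that a second application of Lemma \ref{lemma:completePresBlowup} with enlarged $\delta_i$ for $i \in N_k$ supplies top-level blowup transformations covering the residue; the divisibility drop $|d'_k| < |d_k|$ from \eqref{eq:divDrop} then furnishes the required rank decrease, and collecting all these families completes the construction.
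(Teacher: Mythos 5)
Your proof is missing a case that the paper treats separately, and this is a genuine gap. You split into ``incomplete'' and ``complete,'' and in the incomplete branch you invoke Lemma \ref{lemma:presCoord3} with $L = K$. But Lemma \ref{lemma:presCoord3} explicitly requires $m_k < \infty$ in its hypothesis. When $\P$ is incomplete with $m_k = \infty$ --- which happens exactly when $\F_k$ contains a nonzero function --- the lemma does not apply, and your argument stalls. The paper handles this subcase first, before anything else: it applies Lemma \ref{lemma:SpresZeroDivOrder} to $(\F_k,E_k;K_k)$ to decide effectively whether $\F_k$ has a nonzero function; if so, it computes a finite bound $m'_k$ on $\ord(\F_k,E_k;K_k)$ and applies a single Type 0 admissible coordinate transformation. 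That one transformation already replaces $m_k = \infty$ by $m'_k < \infty$ in the last $m$-slot of the rank tuple, so the rank strictly drops and this subcase is finished without any covering argument at all. Your plan should be reorganized to treat $m_k = \infty$ with $\F_k$ nonzero via Type 0, then $m_k < \infty$ incomplete via Lemma \ref{lemma:presCoord3}, then complete.

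In the complete case you correctly notice the delicate point that Special Case (a) of Lemma \ref{lemma:presCoord3} is stated only for $l \in \{0,\ldots,k-1\}$ and $i \in N_l$, so it does not literally apply to $i \in N_k$ (which is nonempty exactly when $\P$ is complete with $m_k = 0$); this is an observation the paper's own proof does not make explicit. However, your proposed fix --- first restricting to $\{|x_j| \leq \delta'_j : j \in \bigcup_{l<k}N_l\}$ and then rerunning Lemma \ref{lemma:completePresBlowup} with an ``enlarged $\delta_i$'' to cover the residue --- is not supported by the cited lemmas. The bound on $\delta_i$ for $i \in N_k$ in Lemma \ref{lemma:completePresBlowup} comes from iterated uses of Lemma \ref{lemma:refinableBlowup}.1 along $j = 0,\ldots,k-1$, and shrinking the ambient $K$ in the directions $\bigcup_{l<k}N_l$ does not obviously let you enlarge those bounds; nor does \eqref{eq:divDrop} apply, since that is a statement about blowup transforms, not restrictions. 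The clean route here is more elementary: for $i \in N_k$, restricting to $\{x \in K : |x_i| \geq \delta_i\}$ forces $i$ to leave $E_k$ in the pullback (the hyperplane $H_i$ no longer meets the new domain), and since $d_{k,i} > 0$ by the minimality of $N_k$, this strictly decreases $|d_k| = r_k$, so the rank drops at the final slot of the rank tuple. This works via the general case of Lemma \ref{lemma:presCoord3} (Type 3 is impossible when $m_k = 0$ because refinability requires a positive order bound), and no second application of Lemma \ref{lemma:completePresBlowup} is needed.
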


\begin{proof}
First suppose that $m_k = \infty$.  By applying Lemma \ref{lemma:SpresZeroDivOrder} to $(\F_k,E_k;K_k)$, we may effectively determine if $\F_k$ contains a nonzero function.  If every function in $\F_k$ is identically zero, then $\P$ is complete; we will show how to handle this case in a moment.  So suppose that $\F_k$ contains a nonzero function.  By applying Lemma \ref{lemma:SpresZeroDivOrder}, we can effectively find some $m'_k\in\NN$ such that $\ord(\F_k,E_k;K_k)\leq m'_k$.  Now apply a Type 0 admissible coordinate transformation for $\P$.

Now suppose that $m_k < \infty$ and that $\P$ is incomplete.  In this case, apply Lemma \ref{lemma:presCoord3}.

Finally, suppose that $\P$ is complete.  Apply the family of all admissible blowup transformations for $\P$, as given in Definition and Remarks \ref{defrmk:presAdmBlowupTrans}.  The images of these maps cover $\{x\in K :\text{$|x_i| \leq \delta_i$ for all $i\in I$}\}$ by \eqref{eq:KiCover}.  To finish, apply Special Case (a) in Lemma \ref{lemma:presCoord3} to the sets $\{x\in K : |x_i|\geq \delta_i\}$, for each $i\in I$.

This completes the construction of \eqref{eq:presAdmTransFamily} such that \eqref{eq:KcoverPresRankDrop} holds; \eqref{eq:presRankDrop} follows from Remark \ref{rmk:rankDrop}.
\end{proof}

\begin{theorem}[Effective Resolution of $\S$-presentations]
\label{thm:presDesing}
Suppose that $\P$ is an $\S$-presentation.  Then relative to the approximation and precision oracles for $\S$, we can effectively construct a finite family $\{F^{(j)}:(U^{(j)},K^{(j)})\to(U;K)\}_{j\in J}$ of compositions of admissible transformations for $\P$ such that
\begin{equation}\label{eq:Kcover}
K \subseteq \bigcup_{j\in J} F^{(j)}\left(K^{(j)}\right),
\end{equation}
and such that $\P^{(j)}$ is resolved for all $j\in J$, where $\P^{(j)}$ is the pullback of $\P$ by $F^{(j)}:(U^{(j)};K^{(j)})\to(U;K)$.
\end{theorem}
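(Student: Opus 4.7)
The plan is straightforward well-founded induction on $\rank\P$, with Proposition \ref{prop:presRankDrop} doing essentially all the work. For the base case, if $\P$ is already resolved (i.e.\ $m_0=0$) take $J=\{0\}$ with $U^{(0)}=U$, $K^{(0)}=K$, and $F^{(0)}=\id_U$, regarded as the empty composition of admissible transformations for $\P$; the conclusion is then trivially satisfied.

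For the inductive step, assume $\P$ is unresolved. Relative to the approximation and precision oracles for $\S$, Proposition \ref{prop:presRankDrop} effectively produces a finite family $\{F_j:(U^{(j)};K^{(j)})\to(U;K)\}_{j\in J}$ of admissible transformations for $\P$ satisfying $K\subseteq\bigcup_{j\in J}F_j(K^{(j)})$ and $\rank\P^{(j)}<\rank\P$ for every $j\in J$, where $\P^{(j)}$ denotes the pullback of $\P$ by $F_j$. Applying the inductive hypothesis to each $\P^{(j)}$ yields an effectively constructed finite family $\{G^{(j,l)}:(V^{(j,l)};L^{(j,l)})\to(U^{(j)};K^{(j)})\}_{l\in L_j}$ of compositions of admissible transformations for $\P^{(j)}$ that covers $K^{(j)}$ and whose pullbacks of $\P^{(j)}$ are all resolved. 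The doubly-indexed family $\{F_j\circ G^{(j,l)}\}_{j\in J,\,l\in L_j}$ then consists of compositions of admissible transformations for $\P$ (by the way compositions are chained in Definition \ref{def:presAdmTrans}), covers $K$ since $K\subseteq\bigcup_j F_j(K^{(j)})\subseteq\bigcup_{j,l}F_j\circ G^{(j,l)}(L^{(j,l)})$, and has resolved pullbacks because the pullback of $\P$ by $F_j\circ G^{(j,l)}$ coincides with the (resolved) pullback of $\P^{(j)}$ by $G^{(j,l)}$.

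The main obstacle to this plan is legitimizing the well-founded induction, because the lexicographic order of Definition \ref{def:presRank} on $\bigcup_{i\in\NN}(\NN\cup\{\infty\})^i$ is not globally well-founded: a chain of the form $(m_0)>(m_0,r_0)>(m_0,r_0,m_1)>\cdots$ with finite appended entries would exhibit infinite descent. The remedy is to observe that every admissible transformation preserves the ambient dimension $n$ of $U$ (inclusions, linear transformations $T_\lambda$, translations by implicitly defined functions, and the standard charts $\pi_i$ of blowings-up all act on subsets of $\RR^n$), and that the length of $\rank\P$ is at most $2k+2$, where $k$ is bounded by $n$ because $N_0,\ldots,N_{k-1}$ are disjoint nonempty subsets of $\{1,\ldots,n\}$. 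Hence throughout the recursion all ranks lie in the set $(\NN\cup\{\infty\})^{\leq 2n+2}$, on which the lexicographic order is well-founded (as a finite lex product of the well-founded order on $\NN\cup\{\infty\}$). This justifies the induction and hence termination of every branch. Since Proposition \ref{prop:presRankDrop} outputs a finite $J$ at every step, the recursion tree is finitely branching with only finite branches, so by K\"onig's lemma it is itself finite; consequently the algorithm halts after finitely many invocations of Proposition \ref{prop:presRankDrop} and produces the required finite family, effectively relative to the approximation and precision oracles for $\S$.
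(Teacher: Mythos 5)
Your proof is correct and follows the paper's proof essentially line by line: induction on $\rank\P$, with Proposition \ref{prop:presRankDrop} supplying the covering family $\{F_j\}_{j\in J}$ in the inductive step, and the induction hypothesis applied to each pullback $\P^{(j)}$; the base case is the resolved case, handled trivially.

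The paragraph you add about well-foundedness is not present in the paper and is a genuine improvement. Definition \ref{def:presRank} asserts without argument that ``the set of all possible ranks of presentations are well-ordered,'' but as you observe, the lexicographic order it defines on $\bigcup_{i\in\NN}(\NN\cup\{\infty\})^i$ is not globally well-founded; in fact even the set of legal ranks contains infinite descending chains such as $(1)>(1,0,1)>(1,0,1,0,1)>\cdots$ once the ambient dimension is allowed to grow. Your observation that every admissible transformation preserves the ambient dimension $n$, and that consequently every rank arising during the recursion has length at most $2n+2$ and hence lies in the well-founded set $(\NN\cup\{\infty\})^{\leq 2n+2}$, is precisely the justification the paper's one-line proof silently relies on. This is not a different route --- the algorithm and decomposition are identical --- but it repairs a glossed-over step that is in fact necessary for the induction to terminate.
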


\begin{proof}
The proof is by induction on $\rank\P$.  There is nothing to prove if $\P$ is resolved, so assume that $\P$ is unresolved and that the theorem holds for all presentations of rank lower than $\rank\P$.  Let $\{F_j:(U^{(j)},K^{(j)})\to(U;K)\}_{j\in J}$ be the family of admissible transformations for $\P$ given by Proposition \ref{prop:presRankDrop}.  Then $\rank\P^{(j)} < \rank\P$ for each $j\in J$, where $\P^{(j)}$ is the pullback of $\P$ by $F_j:(U^{(j)},K^{(j)})\to(U;K)$.  Now apply the induction hypothesis to each presentation $\P^{(j)}$.
\end{proof}

Theorem \ref{thm:presDesing} is stated in terms of $\S$-presentations, but it applies as well to basic $\S$-presentations by noting that any basic presentation $(\F,E;K)$ always has a trivial presentation, namely $\P = (\F,E;K : \infty)$, which is the presentation for $(\F,E;K)$ of maximal rank.  Thus, relative to the approximation and precision oracles for $\S$, Theorem \ref{thm:presDesing} is an effective local resolution of singularities theorem for $\F$-varieties, where $(\F,E;K)$ is a $\S$-basic presentation.

\begin{definition}\label{def:Sparam}
For each $j$ in a (possibly empty) finite index set $J$, let
\[
\Phi^{(j)} = (\F^{(j)},E^{(j)};K^{(j)} \,|\, F^{(j)},\psi^{(j)},\sigma^{(j)},\tau^{(j)}),
\]
where $(\F^{(j)},E^{(j)};K^{(j)})$ is a basic $\S$-presentation with domain $U^{(j)}\subseteq\RR^{d(j)}$ for some $d(j)\in\NN$; $F^{(j)}:U^{(j)}\to\RR^n$ is a $\C$-analytic map which is computably $C^\infty$ relative to the approximation oracle for $\S$ and which has an $(\S,E^{(j)})$-lifting; $\psi^{(j)}:\RR^{I(j)}\times(\RR\setminus\{0\})^{I(j)^c} \to \RR^n$ is defined by
\begin{equation}\label{eq:psi}
\psi^{(j)}(x) = \left((x_i)_{i\in I(j)}, (x_{i}^{-1})_{i\in I(j)^c}\right)
\end{equation}
for some $I(j)\subseteq\{1,\ldots,n\}$, where $I(j)^c = \{1,\ldots,n\}\setminus I(j)$; and $\sigma^{(j)}:\I(j)\to\{-1,0,1\}$ and $\tau^{(j)}:E^{(j)}\to\{-1,1\}$ are sign maps, where $\I(j)$ is the index set of the family $\F^{(j)} = \{f_i\}_{i\in\I(j)}$.  Define the {\bf locus} of $\Phi^{(j)}$ by
\begin{equation}\label{eq:V}
\Loc(\Phi^{(j)})
= \left\{x\in U^{(j)} : \left(\bigwedge_{i\in\I^{(j)}} \sign((f_i)_{E^{(j)}}(x)) = \sigma^{(j)}(i)\right)
\wedge \left(\bigwedge_{i\in E^{(j)}} \sign(x_i) = \tau^{(j)}(i)\right)\right\}.
\end{equation}
We call $\{\Phi^{(j)}\}_{j\in J}$ an {\bf $\S$-parameterization} if $F^{(j)}(\Loc(\Phi^{(j)})) \subseteq \dom(\psi^{(j)})$ for all $j\in J$.

Assume that $\{\Phi^{(j)}\}_{j\in J}$ is an $\S$-parameterization.  A {\bf representation} for a $\{\Phi^{(j)}\}_{j\in J}$ consists of the following family of discrete data, for each $j\in J$:
\begin{itemize}
\item
a representation for the basic $\S$-presentation $(\F^{(j)},E^{(j)};K^{(j)})$;

\item
a computably $C^\infty$ approximation algorithm for $F^{(j)}$ relative to the approximation oracle for $\S$, and an $(\S,E^{(j)})$-lifting of $F^{(j)}$;

\item
the set $I(j)$ and the number $n$;

\item
the maps $\sigma^{(j)}$ and $\tau^{(j)}$.
\end{itemize}
For a (possibly empty) set $M\subseteq\{1,\ldots,n\}$, we say that $\{\Phi^{(j)}\}_{j\in J}$ is {\bf $M$-bounded} if $M\subseteq I(j)$ for all $j\in J$.  We say that $\{\Phi^{(j)}\}_{j\in J}$ is {\bf resolved} if for all $j\in J$ and all $f\in\F^{(j)}$, $\sign(f_{E^{(j)}}(x))$ is constant on $U^{(j)}$.  We say that $\{\Phi^{(j)}\}_{j\in J}$ is {\bf trivial} if $K^{(j)}$ and $U^{(j)}$ are rational boxes for all $j\in J$.  We say that $\{\Phi^{(j)}\}_{j\in J}$ is {\bf immersive} if it is trivial and resolved, and if for each $j\in J$ there exists a coordinate projection $\Pi^{(j)}:\RR^n\to\RR^{d(j)}$ such that $\Pi^{(j)}\circ F^{(j)}\Restr{\Loc(\Phi^{(j)})} : \Loc(\Phi^{(j)})\to\RR^{d(j)}$ is an immersion.  A {\bf representation} of an immersive $\S$-parameterization $\{\Phi^{(j)}\}_{j\in J}$ consists of a representation for $\{\Phi^{(j)}\}_{j\in J}$, as defined above, along with names for the projections $\Pi^{(j)}$ for each $j\in J$.  For a set $A\subseteq\RR^n$, we call $\{\Phi^{(j)}\}_{j\in J}$ an {\bf $\S$-parameterization of $A$} if
\[
A = \bigcup_{j\in J} \psi^{(j)}\circ F^{(j)}(K^{(j)}\cap \Loc(\Phi^{(j)})) = \bigcup_{j\in J} \psi^{(j)}\circ F^{(j)}(\Loc(\Phi^{(j)})).
\]
\end{definition}

We now define the notion of the dimension of a set and list some of its basic properties, as given in \cite[pg. 4379]{vdDS98} (Note: \cite{vdDS98} works in the analytic category, but the facts remain true in the $C^1$ category as well).  Whenever we call a subset of $\RR^n$ a ``$C^1$-manifold'', we mean an embedded $C^1$-submanifold of $\RR^n$.

\begin{definition}\label{def:dimension}
A set $A\subseteq\RR^n$ {\bf has dimension} if $A$ is a countable union of $C^1$-manifolds.  When $A$ has dimension, define
\[
\dim(A)
=
\begin{cases}
\max\{\dim(M) : \text{$M\subseteq A$ is $C^1$-manifold}\},
    & \text{if $A\neq\emptyset$,} \\
-\infty,
    & \text{if $A = \emptyset$.}
\end{cases}
\]
\end{definition}

This notion of dimension has the following useful properties:
\begin{enumerate}{\setlength{\itemsep}{3pt}
\item
If $A = \bigcup_{i\in\NN} A_i$ and each $A_i$ has dimension, then $A$ also has dimension and $\dim(A) = \max\{\dim(A_i) : i\in\NN\}$.

\item
If $f:M\to\RR^n$ is a $C^1$-map from a $C^1$-manifold $M\subseteq\RR^m$ into $\RR^n$ of constant rank $r$, then $f(M)$ has dimension, and $\dim(f(M)) = r$.

\item
If $A\subseteq\RR^n$ has dimension, $F:U\to\RR^m$ is a $C^1$-map defined on a neighborhood $U$ of $A$, and $F(A)$ has dimension, then $\dim F(A) \leq \dim A$.

\begin{proof}
This is stated in \cite[pg. 4379]{vdDS98} when $F$ is projection map.  To reduce to this case, note that $\dim A = \dim\Graph(F\Restr{A}) \geq \dim F(A)$, where the inequality follows from projecting $\Graph(F\Restr{A})$ onto $F(A)$.
\end{proof}

\noindent Note: The assumption in 3 that $F(A)$ has dimension is actually unnecessary; it follows from the other assumptions.  But we will not use this fact, nor we will prove it.
}\end{enumerate}

\begin{remarks}\label{rmk:Sparam}
Suppose that $\{\Phi^{(j)}\}_{j\in J}$ is an $\S$-parameterization of $A$; we use the notation of Definition \ref{def:Sparam}.
\begin{enumerate}{\setlength{\itemsep}{5pt}
\item
If we are given a representation of $\{\Phi^{(j)}\}_{j\in J}$, then using the $(\S,E^{(j)})$-liftings of $F^{(j)}$ and $\F^{(j)}$ that it gives, we can effectively (actually, quite trivially) write down an existential $\L_{\Delta(\S)}$-formula defining $A$.

\item
{\bf Notational Convention:}
For each $j\in J$, it is frequently notationally more convenient to view $\sigma^{(j)}$ as a function on $\F^{(j)}$ rather than on $\I(j)$, so we will henceforth write $\sigma^{(j)}:\F^{(j)}\to\{-1,0,1\}$ rather than $\sigma^{(j)}:\I(j)\to\{-1,0,1\}$.\vspace*{5pt}

This notational convention is not completely sound because it is possible for there to exist $j\in J$ and distinct $i_1,i_2\in\I(j)$ such that $f_{i_1} = f_{i_2}$ but $\sigma^{(j)}(i_1)\neq\sigma^{(j)}(i_2)$.  But in this case, $\Loc(\Phi^{(j)})$ is empty, so we may simply exclude $j$ from the index set $J$ and still retain the fact that $\Phi$ is an $\S$-parameterization of $A$.  Relative to the approximation and precision oracles for $\S$, this exclusion of $j$ can be done effectively because for each $i,i'\in\I(j)$, the function $f_{i} - f_{i'}$ has an $(\S,E^{(j)})$-lifting, so we can effectively determine if $f_i - f_{i'} = 0$ using Proposition \ref{prop:precOracle} (as in the proof of Lemma \ref{lemma:SpresZero}).  Because of this reduction, we may assume that each $\F^{(j)}$ is an injectively indexed family of functions, which justifies our notational convention.

\item
If $j\in J$ and $f\in\F^{(j)}$, we can effectively determine if $f=0$ using the
approximation and precision oracles for $\S$.  If $0\in\F^{(j)}$ and $\sigma^{(j)}(0) = 0$, then $0$ may be omitted from $\F^{(j)}$ without changing $\Loc(\Phi^{(j)})$.  If $0\in\F^{(j)}$ and $\sigma^{(j)}(0)\in\{-1,1\}$, then $\Loc(\Phi^{(j)}) = \emptyset$, so we may omit the index $j$ from the set $J$.  This justifies the following convention.\vspace*{5pt}

\noindent{\bf Convention:}  We shall assume that $0\not\in\F^{(j)}$ for each $j\in J$.

\item
Suppose that $\{\Phi^{(j)}\}_{j\in J}$ is resolved.  Then for each $j\in J$, either $\Loc(\Phi^{(j)})$ is empty or
\[
\Loc(\Phi^{(j)}) = \left\{x\in U^{(j)} : \bigwedge_{i\in E^{(j)}} \sign(x_i) = \tau^{(j)}(i)\right\}.
\]
Thus $\Loc(\Phi^{(j)})$ is open in $\RR^{d(j)}$.  Also, using the approximation and precision oracles for $\S$, we can effectively determine if $\Loc(\Phi^{(j)})$ is empty, since we can effectively determine the constant sign of $f_{E^{(j)}}$ on $U^{(j)}$ for each $j\in J$ and $f\in\F(j)$.  Because of this, we will use the following convention.\vspace*{5pt}

\noindent{\bf Convention:} When $\{\Phi^{(j)}\}_{j\in J}$ is a resolved $\S$-parameterization of a set $A$, we will henceforth always assume that $\Loc(\Phi^{(j)})$ is nonempty for each $j\in J$, since we may simply omit from the index set $J$ each $j$ for which $\Loc(\Phi^{(j)})$ is empty.

\item
If $\{\Phi^{(j)}\}_{j\in J}$ is resolved and $A$ is nonempty, we can effectively find an $(\S,\emptyset)$-lifting of $\{a\}$ for some $a\in A$.  In particular, we can find an existential $\L_{\Delta(\S)}$-formula defining some point $a\in A$.

\begin{proof}
Choose $j\in J$ and $p\in \QQ^{d(j)}\cap\Loc(\Phi^{(j)})$, and put $a = \psi^{(j)}\circ F^{(j)}(p)$.  Then $a$ has an $(\S,\emptyset)$-lifting.
\end{proof}

\item
Suppose that $\{\Phi^{(j)}\}_{j\in J}$ is immersive.  Then $\Pi^{(j)}\circ F^{(j)}(\Loc(\Phi^{(j)}))$ is open in $\RR^{d(j)}$.  Also, for each $j\in J$, since $K^{(j)}$, $\Loc(\Phi^{(j)})$ and $U^{(j)}$ are just rational boxes, we can effectively find a bounded open rational box $B^{(j)}$ such that $\Loc(\Phi^{(j)})\cap K^{(j)} \subseteq B^{(j)} \subseteq \Loc(\Phi^{(j)})$ and $\cl(B^{(j)}) \subseteq U^{(j)}$.  (To construct $B^{(j)}$, find a bounded open rational box $D$ such that $K^{(j)}\subseteq D$ and $\cl(D)\subseteq U^{(j)}$, and put $B^{(j)} = \Loc(\Phi^{(j)})\cap D$.)  Note that
\begin{equation}\label{eq:simpleSparam}
A = \bigcup_{j\in J} \psi^{(j)}\circ F^{(j)}(B^{(j)}),
\end{equation}
and that for each $j\in J$, $\Pi^{(j)}\circ F^{(j)}\Restr{B^{(j)}}:B^{(j)}\to\RR^{d(j)}$ is an immersion,
$\dim \bd(B^{(j)}) = d(j)-1$, and each set $C$ in the natural stratification of $\cl(B^{(j)})$ is contained in some $M\in\Strat(U^{(j)},E^{(j)})$.  By applying Lemma \ref{lemma:basicLifting}.4 to the inclusion map $C\hookrightarrow M$ and the basic $\S$-lifting of $F^{(j)}$ on $M$ specified by the $(\S,E^{(j)})$-lifting of $F^{(j)}$, we can construct an $(\S,\emptyset)$-lifting of $F^{(j)}\Restr{C}$.
}\end{enumerate}
\end{remarks}

\begin{lemma}\label{lemma:SparamQF}
Let
\[
A = \bigcup_{i\in I} \bigcap_{j=1}^{k_i}\{x\in\RR^n : \sign(f_{i,j}(x)) = \sigma_{i,j}\}
\]
for a finite index set $I$, signs $\sigma_{i,j}\in\{-1,0,1\}$, and functions $f_i = (f_{i,1},\ldots,f_{i,k_i}):\RR^n\to\RR^{k_i}$ given by
\[
f_i(x) = \begin{cases}
g_i(x), & \text{if $x\in D_i$}, \\
0,      & \text{if $x\in\RR^n\setminus D_i$,}
\end{cases}
\]
for some $\S$-polynomial functions $g_i:D_i\to\RR^{k_i}$ with natural domains $D_i\subseteq\RR^n$.  Suppose we are given $M\subseteq\{1,\ldots,n\}$ and $R\in\QQ_{+}^{M}$ such that $\Pi_M(A)\subseteq[-R,R]$.  Then we can effectively find a representation for an $M$-bounded $\S$-parameterization of $A$.
\end{lemma}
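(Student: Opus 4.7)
The plan is to construct the parameterization data explicitly, piece by piece, using the natural-domain stratification to replace each $f_{i,j}$ by either the $\S$-polynomial $g_{i,j}$ or the zero function, and using the substitution $x_i = \epsilon_i/y_i$ --- implemented via $\psi^{(j)}$ --- to handle unbounded coordinates in $M^c$. No appeal to Theorem \ref{thm:presDesing} is required, since the lemma asks only for an $\S$-parameterization (not a resolved or immersive one), and the $\S$-polynomial hypothesis supplies the basic $\S$-liftings directly via Lemma \ref{lemma:basicLifting}.  Handling each $i\in I$ separately, I reduce to $A = \bigcap_{j=1}^k\{x\in\RR^n : \sign(f_j(x)) = \sigma_j\}$ with $f_j = g_j$ on the rational box $D_j$ and $f_j = 0$ off $D_j$.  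The hyperplanes bounding $D_1,\ldots,D_k$ induce a finite partition of $\RR^n$ into effectively-named rational box manifolds $P$ on each of which, for each $j$, either $P\subseteq D_j$ (so $f_j\Restr{P} = g_j\Restr{P}$) or $P\cap D_j = \emptyset$ (so $f_j\Restr{P}\equiv 0$, which either forces $\sigma_j = 0$ to keep the cell nonempty or discards it).  This reduces the task to producing $M$-bounded $\S$-parameterizations of the sets $\{x\in P : \bigwedge_{j\in S}\sign(g_j(x))=\sigma_j\}$ for $S = \{j : P\subseteq D_j\}$.

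Next, I handle boundedness.  For each $i\in M$, intersect $P$ with $[-R_i,R_i]$ using the hypothesis $\Pi_M(A)\subseteq[-R,R]$.  For each $i\in M^c$ in which $P$ is unbounded, refine $P$ further by the hyperplanes $x_i=\pm 1$, producing cells whose $i$-th factor lies in $(-\infty,-1)$, $\{-1\}$, $(-1,1)$, $\{1\}$, or $(1,\infty)$; on each unbounded branch, substitute $x_i = \epsilon_i/y_i$ with $\epsilon_i\in\{\pm 1\}$ and $y_i\in(0,1)$.  Unboundedness of $P$ in the $i$-th coordinate forces every $D_j$ with $P\subseteq D_j$ to have $\RR$ as its $i$-th factor, so each $g_j$ is polynomial in $x_i$ with $\S$-polynomial coefficients in the other coordinates, and $\tilde g_j(y) := y_i^{\deg_{x_i} g_j}\, g_j(\ldots,\epsilon_i/y_i,\ldots)$ is $\S$-polynomial in the new coordinates with $\sign(\tilde g_j(y)) = \sign(g_j(x))$ on $\{y_i>0\}$.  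Iterating over all free coordinates, every cell becomes a bounded rational box manifold (in the new coordinates) described by sign conditions on $\S$-polynomial functions.

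For each resulting piece I then assemble the parameterization data.  Let $J$ be the set of singleton coordinates of the piece (fixed at rational values $a_i$) and $J'\subseteq M^c$ the set of coordinates inverted by substitution.  Let $d = n-|J|$, take $U\subseteq\RR^d$ to be an open rational box containing (in the non-singleton coordinates) the closure of the piece, and $K\subseteq U$ to be the corresponding compact rational box.  Set $E=\emptyset$, and define $F:U\to\RR^n$ by $y_i\mapsto y_i$ on non-inverted non-singleton coordinates, $y_i\mapsto\epsilon_i y_i$ on inverted coordinates, and by inserting the constants $a_i$ on singleton coordinates.  Let $I(j)=\{1,\ldots,n\}\setminus J'\supseteq M$ (automatically giving $M$-boundedness), and let $\psi^{(j)}$ be the substitution from \eqref{eq:psi}, so that $\psi^{(j)}\circ F(y)$ recovers the intended $x\in\RR^n$.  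Take $\F^{(j)}$ to consist of the functions $\tilde g_j\Restr{U}$ for $j\in S$, the coordinate functions $y_i$ and the linear polynomials $1-y_i$ for $i\in J'$ (to confine $y_i$ to $(0,1)$), and analogous linear functions confining the remaining non-singleton coordinates to their open rational intervals; set $\sigma^{(j)}$ to impose $\sigma_j$ on $\tilde g_j$ and $+1$ on the confining functions, and $\tau^{(j)}$ empty.  These sign constraints cut out exactly the intended piece on $\Loc(\Phi^{(j)})$, so $\psi^{(j)}\circ F(\Loc)\subseteq A$, while $\psi^{(j)}\circ F(K^{(j)}\cap\Loc)$ recovers the full (possibly unbounded) piece of $A$ because $K$ meets $y_i=0$ on its boundary while $\Loc$ excludes that boundary.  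Each function in $\F^{(j)}$ is $\S$-polynomial on $U$ (resp.\ after a singleton insertion, realized as the projection of an $\S$-manifold defined in a suitably enlarged ambient by the $\S$-polynomial equations $y_i-a_i=0$ and $z-\tilde g_j(y)=0$), so each admits a basic $\S$-lifting via Lemma \ref{lemma:basicLifting}, and $F$ has a trivial $(\S,\emptyset)$-lifting as a polynomial map.  Collecting all $\Phi^{(j)}$'s gives the required $M$-bounded $\S$-parameterization.

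The main obstacle is the combinatorial bookkeeping: correctly enumerating the natural-domain strata (further refined by the $x_i=\pm 1$ hyperplanes on free coordinates), tracking how each $g_j$ transforms under $x_i=\epsilon_i/y_i$ (using $y_i^{d}>0$ to preserve signs), inserting enough linear sign-confinement functions in $\F^{(j)}$ so that both $\bigcup \psi^{(j)}\circ F(K^{(j)}\cap\Loc) = A$ and $\bigcup\psi^{(j)}\circ F(\Loc) = A$ hold simultaneously, and describing the liftings of restrictions to singleton coordinates by moving to an enlarged ambient space when the singleton lies at a non-boundary point of some $D_j$.  Once that bookkeeping is in place, Lemma \ref{lemma:basicLifting} supplies everything else, and the construction is effective throughout because all input data is rational.
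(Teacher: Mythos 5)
Your proposal is correct in outline, but it takes a genuinely different route from the paper, and it is worth making that difference precise.

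The paper reduces (as you do) to a single piece $\bigcap_j\{x\in D : \sign(g_j(x))=\sigma_j\}$ on a natural domain $D = [-r,r]\times\RR^{N^c}$, but then parameterizes each compact factor $[-r_i,r_i]$ by the family of maps $t\mapsto t$ (on the interior), $t\mapsto\pm(r_i-t^2)$ (near the boundary, from a small interval around $0$), and the constant $0\mapsto\pm r_i$ (for the boundary points themselves). The quadratic maps are the whole point: the only constants ever inserted are the endpoints $\pm r_i$, which are members of $\Strat(D)$, so every restriction of $g_j$ that arises is directly an $\S$-algebra restriction per Definition \ref{def:Salgebra}, and the boundary locus is absorbed into the $E^{(\alpha,\beta)}$ / $\tau^{(\alpha,\beta)}$ data rather than into $\F^{(j)}$. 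Your route instead decomposes $\RR^n$ into cells of the arrangement generated by the facets of all the $D_j$'s (further refined for unbounded $M^c$-coordinates), and in doing so you are forced to insert constants at \emph{interior} points of the natural domains (a corner of $D_1$ need not be a corner of $D_2$). Such a restriction of $g_j$ is not an $\S$-algebra restriction in the sense of Definition \ref{def:Salgebra}; your fix is to realize the graph of the restricted function as the projection of an $\F$-manifold cut out by the extra equation $y_i-a_i=0$ in an enlarged ambient whose $\S$-algebra domain \emph{is} a stratum of the natural domain. This does work --- one checks that the rank condition and a good direction exist (the submatrix in the $(y_i,z)$-columns is unitriangular), so the enlarged object is a genuine $\S$-manifold --- but it is not covered verbatim by Lemma \ref{lemma:basicLifting} and needs to be verified against Definitions \ref{def:Fmanifold} and \ref{def:Smanifold} directly; you gesture at this but should spell it out. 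You also replace the paper's use of $E$ and $\tau$ with explicit affine confinement functions in $\F^{(j)}$ carrying $\sigma$-sign conditions; that is a legitimate alternative since with $E=\emptyset$ the locus formula \eqref{eq:V} degenerates to sign conditions on the raw functions. What your approach buys is geometric directness (each piece of the parameterization is visibly a cell of a compatible arrangement); what the paper's quadratic-substitution trick buys is that the entire lifting never leaves the natural-stratification framework, so there is no interior-singleton case to handle at all, and the resulting $\{\Phi^{(\alpha,\beta)}\}$ has a uniform product structure indexed simply by $(\alpha,\beta)\in\{-1,0,1\}^n\times\{0,1\}^n$. Finally, two small precision points: when you ``refine by $x_i=\pm1$'' the arrangement cells may already be disjoint from $\{\pm1\}$, so you should refine instead by the rational endpoint of the unbounded factor of $P$; and since $K^{(j)}$ must be a compact subset of $U^{(j)}$, the open box $U^{(j)}$ for an inverted coordinate must extend slightly past $0$ (e.g.\ to $(-\epsilon,1+\epsilon)$) so that $[0,1]\subseteq U^{(j)}_i$ with positivity then re-imposed by the confinement function $y_i\in\F^{(j)}$.
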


\begin{proof}
By extending the tuples of functions $f_i = (f_{i,1},\ldots,f_{i,k_i})$ by the one-variable affine functions used to define the rational box $D_i$, enlarging the index set $I$ so as to include more conjunctive cases, and using the fact that each function $f_{i,j}$ is identically zero on $\RR^n\setminus D_i$, we may rewrite $A$ in the form
\[
A = \bigcup_{i\in I} \bigcap_{j=1}^{k_i}\{x\in D_i : \sign(g_{i,j}(x)) = \sigma_{i,j}\}
\]
for $\S$-polynomial functions $g_i = (g_{i,1},\ldots,g_{i,k_i}):D_i\to\RR^{k_i}$ on their natural domains $D_i$.  It suffices to show that each set in this union has an $M$-bounded $\S$-parameterization, so we may assume that $|I| = 1$ and will write
\begin{equation}\label{eq:AsignCond}
A = \bigcap_{j=1}^{k}\{x\in D : \sign(g_j(x)) = \sigma_j\}.
\end{equation}
There is no harm in replacing $D$ with a rational box contained in the natural domain of $g$, as long as this does not affect the definition of $A$ given in \eqref{eq:AsignCond}.   So by using the given $R\in\QQ_{+}^{M}$ such that $\Pi_M(A)\subseteq[-R,R]$, we may write $D$ in the form $D = [-r,r]\times\RR^{N^c}$ for some set $N$ such that $M\subseteq N\subseteq\{1,\ldots,n\}$ and some $r = (r_i)_{i\in N}\in\QQ_{+}^{N}$.  We now construct an $N$-bounded $\S$-parameterization $\Phi = \{\Phi^{(\alpha,\beta)}\}_{(\alpha,\beta)\in J}$ of the set $A$ by pulling back the functions $g_1,\ldots,g_k$ by a family of maps whose $i$th components are of the form $y_i\mapsto -r_i+y_{i}^{2}$, $0\mapsto -r_i$, $y_i\mapsto y_i$, $0\mapsto r_i$, or $y_i\mapsto r_i - y_{i}^{2}$ if $i\in N$ (in order to cover $[-r_i,r_i]$), and are of the form $y_i\mapsto y_i$ or $y_i\mapsto y_{i}^{-1}$ if $i\in N^c$ (in order to cover $\RR$).  Here are the specifics.

For each $i\in N$, choose $p_i,q_i\in\QQ_+$ such that $\sqrt{\frac{r_i}{2}} < p_i < q_i < \sqrt{r_i}$.  For each $i\in\{1,\ldots,n\}$, $\alpha\in\{-1,0,1\}$, and $\beta\in\{0,1\}$ such that $\beta = 0$ if either $i\in N^c$ or $\alpha = 0$, define
\begin{eqnarray*}
U_{i}^{(\alpha,\beta)}
    & = &
    \begin{cases}
    (-r_i,r_i),
        & \text{if $i\in N$ and $\alpha = 0$,} \\
    (-q_i,q_i),
        & \text{if $i\in N$, $\alpha\in\{-1,1\}$, and $\beta = 0$,} \\
    \RR^0,
        & \text{if $i\in N$, $\alpha\in\{-1,1\}$, and $\beta = 1$} \\
    (-2,2),
        & \text{if $i\in N^c$},
    \end{cases}
\\
K_{i}^{(\alpha,\beta)}
    & = &
    \begin{cases}
    [-\frac{r_i}{2},\frac{r_i}{2}],
        & \text{if $i\in N$ and $\alpha = 0$,} \\
    [-p_i,p_i],
        & \text{if $i\in N$, $\alpha\in\{-1,1\}$, and $\beta = 0$,} \\
    \RR^0,
        & \text{if $i\in N$, $\alpha\in\{-1,1\}$, and $\beta = 1$,} \\
    [-1,1],
        & \text{if $i\in N^c$},
    \end{cases}
\end{eqnarray*}
define $F_{i}^{(\alpha,\beta)}:U_{i}^{(\alpha,\beta)}\to\RR$ by
\[
F_{i}^{(\alpha,\beta)}(t)
=
\begin{cases}
\alpha(r_i - t^2),
    & \text{if $i\in N$, $\alpha\in\{-1,1\}$, and $\beta = 0$,} \\
\alpha r_i,
    & \text{if $i\in N$, $\alpha\in\{-1,1\}$, and $\beta = 1$,} \\
t,
    & \text{otherwise,}
\end{cases}
\]
and define
\[
\psi_{i}^{(\alpha,\beta)}(t)
=
\begin{cases}
t^{-1},
    & \text{if $i\in N^c$ and $\beta\in\{-1,1\}$,} \\
t,
    & \text{otherwise.}
\end{cases}
\]
Thus
\[
\psi_{i}^{(\alpha,\beta)}\circ F_{i}^{(\alpha,\beta)}(t)
=
\begin{cases}
t,
    & \text{if $\alpha=0$,}\\
\alpha(r_i - t^2),
    & \text{if $i\in N$, $\alpha\in\{-1,1\}$, and $\beta = 0$,} \\
\alpha r_i,
    & \text{if $i\in N$, $\alpha\in\{-1,1\}$, and $\beta = 1$,} \\
t^{-1},
    & \text{if $i\in N^c$ and $\alpha\in\{-1,1\}$.} \\
\end{cases}
\]

Now, define the index set
\[
J = \{(\alpha,\beta)\in\{-1,0,1\}^n\times\{0,1\}^n : 
\text{for all $i\in\{1,\ldots,n\}$, if $i\in N^c$ or $\alpha_i = 0$, then $\beta_i = 0$}\}.
\]
For each $(\alpha,\beta)\in J$, define
\begin{itemize}{\setlength{\itemsep}{3pt}
\item
$U^{(\alpha,\beta)} = \prod_{i=1}^{n} U^{(\alpha_i,\beta_i)}_{i}$ and $K^{(\alpha,\beta)} = \prod_{i=1}^{n} K^{(\alpha_i,\beta_i)}_{i}$;

\item
$F^{(\alpha,\beta)}:U^{(\alpha,\beta)}\to\RR^n$ by $F^{(\alpha,\beta)}(y) = (F^{(\alpha_1,\beta_1)}_{1}(y_1),\ldots,F^{(\alpha_n,\beta_n)}_{n}(y_n))$;

\item
$\psi^{(\alpha,\beta)}:\RR^{I(\alpha,\beta)} \times (\RR\setminus\{0\})^{I(\alpha,\beta)^c}$ by $\psi^{(\alpha,\beta)}(x) = (\psi^{(\alpha_1,\beta_1)}_{1}(x_1), \ldots, \psi^{(\alpha_n,\beta_n)}_{n}(x_n))$, \\
where $I(\alpha,\beta) = \{i\in\{1,\ldots,n\} : \text{$i\in N$, or $i\in N^c$ and $\alpha = 0$}\}$;

\item
$\F^{(\alpha,\beta)} = \{g_l\circ F^{(\alpha,\beta)}\}_{l\in\{1,\ldots,k\}}$;

\item
$E^{(\alpha,\beta)}
=
\{i\in N : \alpha_i\in\{-1,1\}, \beta_i = 0\}
\cup
\{i\in N^c : \alpha_i\in\{-1,1\}\}$;

\item
$\sigma^{(\alpha,\beta)}:\F^{(\alpha,\beta)}\to\{-1,0,1\}$ by $\sigma^{(\alpha,\beta)}(g_i\circ F^{(\alpha,\beta)}) = \sigma_i$ for each $i\in\{1,\ldots,k\}$;

\item
$\tau^{(\alpha,\beta)}:E^{(\alpha,\beta)}\to\{-1,1\}$ by $\tau^{(\alpha,\beta)}(i) = \alpha_i$ for each $i\in E^{(\alpha,\beta)}$;

\item
$\Phi^{(\alpha,\beta)}
=
(\F^{(\alpha,\beta)}, E^{(\alpha,\beta)}; K^{(\alpha,\beta)} \,|\, F^{(\alpha,\beta)}, \psi^{(\alpha,\beta)}, \sigma^{(\alpha,\beta)}, \tau^{(\alpha,\beta)})$.
}\end{itemize}
Then $\{\Phi^{(\alpha,\beta)}\}_{(\alpha,\beta)\in J}$ is an $N$-bounded $\S$-parameterization of $A$.

(Note:  Technically speaking, each set $U^{(\alpha,\beta)}$ is an open subset of $(\RR^0)^{D(\alpha,\beta)^c}\times \RR^{D(\alpha,\beta)} = \RR^{D(\alpha,\beta)}$, where $D(\alpha,\beta) = \{i\in \{1,\ldots,n\} : \beta_i = 0\}$, but this may be identified with a subset of $\RR^{|D(\alpha,\beta)|}$ by fixing a bijection from $\{1,\ldots,|D(\alpha,\beta)|\}$ to $D(\alpha,\beta)$.)
\end{proof}

\begin{lemma}\label{eq:SparamExist}
Given an existential $\L_{\Delta(\S)}$-formula defining a set $A\subseteq\RR^m$, and given $M\subseteq\{1,\ldots,m\}$ and $R\in\QQ_{+}^{M}$ such that $\Pi_M(A) \subseteq[-R,R]$, we can effectively find a representation for an $M$-bounded $\S$-parameterization of $A$.
\end{lemma}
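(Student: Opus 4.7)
The plan is to reduce to the quantifier-free case of Lemma \ref{lemma:SparamQF} by projecting out the existential quantifiers. Given an existential $\L_{\Delta(\S)}$-formula $\exists y_1\cdots\exists y_k\,\varphi(x,y)$ defining $A\subseteq\RR^m$, I would first put $\varphi$ into a normal form suitable for Lemma \ref{lemma:SparamQF}. The $\L_{\Delta(\S)}$-terms appearing in atomic subformulas of $\varphi$ may involve arbitrary nesting of division and of the function symbols $\widehat{g}$ for $g\in\Delta(\S)$, whereas Lemma \ref{lemma:SparamQF} requires atomic conditions to be sign conditions on $\S$-polynomial functions extended by zero off their natural domains. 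I would effectively flatten such nesting by introducing additional existentially quantified auxiliary variables $z$ along with defining equations---a variable $z$ to name each subterm $\widehat{g}(u_1,\ldots,u_n)$ and an equation $bz=a$ to replace each quotient $a/b$---and then rewrite the quantifier-free kernel in disjunctive normal form. The outcome is an equivalent existential formula $\exists y'\,\psi(x,y')$ with $y'$ of length $k'\geq k$, in which $\psi$ defines a set $B\subseteq\RR^{m+k'}$ of exactly the form handled by Lemma \ref{lemma:SparamQF}, and $A=\Pi_m(B)$.

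Since $\Pi_M(A)\subseteq[-R,R]$ and $M$ may be viewed inside $\{1,\ldots,m+k'\}$, we also have $\Pi_M(B)\subseteq[-R,R]$; applying Lemma \ref{lemma:SparamQF} therefore yields an effective representation of an $M$-bounded $\S$-parameterization $\{\tld{\Phi}^{(j)}\}_{j\in J}$ of $B$, with
\[
\tld{\Phi}^{(j)}=(\F^{(j)},E^{(j)};K^{(j)}\mid\tld{F}^{(j)},\tld{\psi}^{(j)},\sigma^{(j)},\tau^{(j)}),
\]
where $\tld{F}^{(j)}:U^{(j)}\to\RR^{m+k'}$, the map $\tld{\psi}^{(j)}:\RR^{\tld{I}(j)}\times(\RR\setminus\{0\})^{\tld{I}(j)^c}\to\RR^{m+k'}$ acts coordinatewise (identity or reciprocal), and $M\subseteq\tld{I}(j)\subseteq\{1,\ldots,m+k'\}$. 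I would then define $F^{(j)}=\Pi_m\circ\tld{F}^{(j)}$, $I(j)=\tld{I}(j)\cap\{1,\ldots,m\}$, and let $\psi^{(j)}:\RR^{I(j)}\times(\RR\setminus\{0\})^{\{1,\ldots,m\}\setminus I(j)}\to\RR^m$ be the first $m$ coordinatewise factors of $\tld{\psi}^{(j)}$, so that $\psi^{(j)}\circ F^{(j)}=\Pi_m\circ\tld{\psi}^{(j)}\circ\tld{F}^{(j)}$. Setting $\Phi^{(j)}=(\F^{(j)},E^{(j)};K^{(j)}\mid F^{(j)},\psi^{(j)},\sigma^{(j)},\tau^{(j)})$, the locus $\Loc(\Phi^{(j)})$ coincides with $\Loc(\tld{\Phi}^{(j)})$ since loci depend only on $(\F^{(j)},E^{(j)},\sigma^{(j)},\tau^{(j)})$; the inclusion $F^{(j)}(\Loc(\Phi^{(j)}))\subseteq\dom(\psi^{(j)})$ descends by projection from its counterpart for $\tld{\Phi}^{(j)}$; a computably $C^\infty$ approximation algorithm and an $(\S,E^{(j)})$-lifting of $F^{(j)}$ are obtained by extracting the first $m$ components from those of $\tld{F}^{(j)}$ (cf.\ Definition \ref{def:SElifting} and Lemma \ref{lemma:basicLifting}); and $M\subseteq I(j)$ is inherited from the $M$-boundedness of $\tld{\Phi}^{(j)}$. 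Consequently,
\[
\bigcup_{j\in J}\psi^{(j)}\circ F^{(j)}(\Loc(\Phi^{(j)}))=\Pi_m(B)=A,
\]
so $\{\Phi^{(j)}\}_{j\in J}$ is the desired $M$-bounded $\S$-parameterization of $A$, produced effectively from the input formula.

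The main obstacle is the preliminary syntactic normalization of the existential formula into the shape required by Lemma \ref{lemma:SparamQF}; the device of abbreviating nested $\widehat{g}$-applications and quotients by fresh existential variables is routine but essential, since the quantifier-free kernel must end up as a boolean combination of sign conditions on $\S$-polynomial functions applied to disjoint tuples of distinct coordinate variables. Once that effective rewriting is completed, the remainder of the argument is the composition of Lemma \ref{lemma:SparamQF} with a coordinate projection, together with straightforward bookkeeping about loci, lifting data, and approximation algorithms, each step of which is manifestly effective in the representation of the input.
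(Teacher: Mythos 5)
Your proof is correct and follows essentially the same route as the paper: apply Lemma \ref{lemma:SparamQF} to the set $B$ defined by the quantifier-free matrix, then push the resulting $\S$-parameterization of $B$ forward through the coordinate projection $\Pi_m$. The one substantive difference is that the paper, as part of its ``standard reductions,'' first restricts the quantified variables to a bounded rational box $[-S,S]$ and includes the quantified coordinates $\{m+1,\ldots,m+n\}$ in the bounded index set when invoking Lemma \ref{lemma:SparamQF}, so that the resulting $\psi$-map factors as $\psi^{(j)}\times\id$ and the projection is immediate. You instead leave the quantified coordinates unbounded and simply drop the (possibly reciprocal) $\psi$-components on those coordinates after projecting. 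Both arguments work, since $\Pi_m\circ\tld{\psi}^{(j)}=\psi^{(j)}\circ\Pi_m$ holds regardless of whether the quantified-coordinate factors of $\tld{\psi}^{(j)}$ are identities or reciprocals; your version avoids the need to justify that the quantified variables can always be taken to range over a compact box, which the paper leaves implicit. One small remark: the language $\L_{\Delta(\S)}$ has no division symbol, so the $bz=a$ flattening device for quotients is unnecessary, though harmless; what is genuinely needed (and you do include it) is the introduction of fresh auxiliary variables so that each occurrence of a function symbol $\widehat g$ is applied to a tuple of distinct variables, disjoint from the tuples used by other occurrences, as required by the definition of an $\S$-polynomial map.
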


\begin{proof}
Consider an existential $\L_{\Delta(\S)}$-formula $\exists y \varphi(x,y)$ defining a set $A\subseteq\RR^m$, where $x = (x_1,\ldots,x_m)$, $y = (y_1,\ldots,y_n)$, and $\varphi(x,y)$ is quantifier-free.  By using a number of rather standard reductions in the quantified variables $y$, we may assume that
\[
\varphi(x,y) = \bigvee_{s\in S} \bigwedge_{j=1}^{k_s} \sign(f_{s,j}(x,y)) = \sigma_{s,j},
\]
where each $f_s = (f_{s,1},\ldots,f_{s,k_s}):\RR^{m+n}\to\RR^{k_i}$ is defined from an $\S$-polynomial by extending it by $0$ off its natural domain, and that $A = \{x\in\RR^m : \exists y\in[-S,S]\,\text{s.t.}\, \varphi(x,y)\}$ for some $S\in\QQ_{+}^{n}$.   By Lemma \ref{lemma:SparamQF} we may effectively find a representation for an $M\cup\{m+1,\ldots,m+n\}$-bounded $\S$-parameterization
\[
\{(\F^{(j)},E^{(j)};K^{(j)} \,|,\ F^{(j)},\psi^{(j)}\times\id,\sigma^{(j)},\tau^{(j)})\}_{j\in J}
\]
of the set
\[
B = \{(x,y)\in\RR^m\times[-S,S] : \varphi(x,y)\},
\]
where $\psi^{(j)}:\RR^{I(j)}\times(\RR\setminus\{0\})^{I(j)^c}\to\RR^m$ for some $M\subseteq I(j)\subseteq\{1,\ldots,m\}$, and $\id:\RR^n\to\RR^n$ is the identity map.  Then
\[
\{(\F^{(j)},E^{(j)};K^{(j)} \,|,\ \Pi_m\circ F^{(j)},\psi^{(j)},\sigma^{(j)},\tau^{(j)})\}_{j\in J}
\]
is an $M$-bounded $\S$-parameterization of $A = \Pi_m(B)$.
\end{proof}

\begin{lemma}\label{lemma:SparamResolve}
Suppose we are given a representation for an $\S$-parameterization
\[
\{\Phi\} = \{(\F,E;K\,|\, F,\psi,\sigma,\tau)\}
\]
of size $1$, where $U\subseteq\RR^d$ is the domain of $\F$.  Then relative to the approximation and precision oracles for $\S$, we can effectively find a representation for a trivial, resolved $\S$-parameterization $\{\Phi^{(j)}\}_{j\in J}$ such that
\[
\Loc(\Phi)\cap K \subseteq \bigcup_{j\in J} F^{(j)}(\Loc(\Phi^{(j)})\cap K^{(j)})
\quad\text{and}\quad
\bigcup_{j\in J} F^{(j)}(\Loc(\Phi^{(j)})) \subseteq \Loc(\Phi),
\]
where for each $j\in J$,
\[
\Phi^{(j)} = \{(\F^{(j)},E^{(j)};K^{(j)}\,|\,
F^{(j)},\id,\sigma^{(j)},\tau^{(j)})\}_{j\in J},
\]
where the domain of $\F^{(j)}$ is $U^{(j)}\subseteq \RR^{d(j)}$, $\id:U\to U$ the identity map, and $F^{(j)}:U^{(j)}\to U$ is such that $\F^{(j)}\Restr{\Loc(\Phi^{(j)})} : \Loc(\Phi^{(j)})\to F^{(j)}(\Loc(\Phi^{(j)}))$ is a $\C$-analytic isomorphism.
\end{lemma}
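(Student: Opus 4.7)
My plan is to apply Theorem \ref{thm:presDesing} to the product of the functions in $\F$, cover each resolved pullback's compact set by rational boxes, and generate one output parameterization per (box, orthant)-pair, using induction on $d = \dim U$ to handle the lower-dimensional portion of $\Loc(\Phi)\cap K$ arising from zero-sign conditions.

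First, set $g = \prod_{f\in\F} f$. By Remark \ref{rmk:basicLifting} and the convention $0\notin\F$, $g$ is a nonzero $\C$-analytic function on the connected open blowup set $U$ (Lemma \ref{lemma:blowupSet}.4), has a basic $\S$-lifting, and is computably $C^{\infty}$ relative to the approximation oracle. Apply Theorem \ref{thm:presDesing} to the trivial $\S$-presentation $\P = (\{g\}, E; K : \infty)$ to obtain a finite family $\{F^{(j)} : (U^{(j)}; K^{(j)}) \to (U; K)\}_{j\in J_0}$ of compositions of admissible transformations with $K \subseteq \bigcup_j F^{(j)}(K^{(j)})$ and each pullback resolved, so $\ord(\{g\circ F^{(j)}\}, E^{(j)}) = 0$. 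Since pullback commutes with multiplication and the $y_{E^{(j)}}^{d}$-factoring of Notation \ref{notation:fE} is unique, $(g\circ F^{(j)})_{E^{(j)}} = \prod_{f\in\F}(f\circ F^{(j)})_{E^{(j)}}$ is non-vanishing on the connected $U^{(j)}$, so each $(f\circ F^{(j)})_{E^{(j)}}$ is non-vanishing with a constant sign $\sigma_0(f, j) \in \{-1, 1\}$ that I can effectively determine using the approximation oracle.

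Next, for each $j\in J_0$ effectively cover $K^{(j)}$ by finitely many compact rational boxes $L^{(l)} \subseteq B^{(l)}$ with $\cl(B^{(l)}) \subseteq U^{(j)}$ (possible since $K^{(j)}$ is co-c.e.\ compact and $U^{(j)}$ is c.e.\ open). The inclusion $\iota : B^{(l)} \hookrightarrow U^{(j)}$ is $(E^{(j)}, E^{(j)})$-compatible and has an $(\S, E^{(j)})$-lifting, so by Lifting Lemma \ref{lemma:lifting1} (Compatible Composition), $\F^{(j)}\Restr{B^{(l)}}$ inherits an $(\S, E^{(j)})$-lifting. For each $(j, l)$ and each sign assignment $\tau : E^{(j)} \to \{-1, 1\}$ form the candidate output $\Phi^{(j, l, \tau)} = (\F^{(j)}\Restr{B^{(l)}}, E^{(j)}; L^{(l)} \mid F^{(j)}\Restr{B^{(l)}}, \id, \sigma_0(\cdot, j), \tau)$, whose locus is the orthant $\{y\in B^{(l)} : \sign(y_i) = \tau(i) \text{ for } i\in E^{(j)}\}$; on this orthant $F^{(j)}$ is a $\C$-analytic isomorphism onto its image, since $F^{(j)}$ is a composition of coordinate transformations (always isomorphisms) and blowings-up (isomorphisms away from their exceptional divisors, which are precisely the hyperplanes $\{y_i = 0\}$ for $i\in E^{(j)}$ that the orthant avoids). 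Using the approximation oracle I compute the constant signs of each $f\circ F^{(j)}$ and each $F_i^{(j)}$ (for $i\in E$) on the orthant---each being $\sigma_0(f, j)$ or $\pm 1$ times a monomial in the values $\tau(i)$ raised to the exponents appearing in the factorings $f\circ F^{(j)} = y_{E^{(j)}}^{d_{f, j}} (f\circ F^{(j)})_{E^{(j)}}$ and similarly for the components of $F^{(j)}$---and retain only those $(j, l, \tau)$ for which these signs match $\sigma$ and $\tau$ from $\Phi$, ensuring $F^{(j)}(\Loc(\Phi^{(j, l, \tau)})) \subseteq \Loc(\Phi)$.

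Finally, points $a \in \Loc(\Phi)\cap K$ with $\sigma(f) = 0$ for some $f$ satisfy $f(a) = 0$; after pullback, since $(f\circ F^{(j)})_{E^{(j)}}$ is nonvanishing, $y = (F^{(j)})^{-1}(a)$ must lie on some exceptional divisor $\{y_i = 0\}$ with $i\in E^{(j)}$, so such points are not covered by the full-dimensional orthant outputs above. I proceed by induction on $d$, the base case $d = 0$ being trivial: for each $j$ and each nonempty $N \subseteq E^{(j)}$ for which $U^{(j)}|_N$ is nonempty, $V_N := \{y \in U^{(j)} : y_N = 0\}$ is a blowup manifold of dimension $d - |N| < d$, and by Lifting Lemma \ref{lemma:lifting1} (Refinement), $\F^{(j)}\Restr{U^{(j)}|_N}$ has an $(\S, E^{(j)}\setminus N)$-lifting; I form a new size-$1$ $\S$-parameterization $\Phi_{j, N}$ on $U^{(j)}|_N$ (with $\psi_{j, N} = \id$ on $U$, with $F_{j, N}$ the composition of the embedding into $U^{(j)}$ with $F^{(j)}$, and with $\sigma_{j, N}, \tau_{j, N}$ obtained by pulling back $\sigma, \tau$ while accounting for the enforced zeros $y_N = 0$) and apply the lemma recursively, adjoining its outputs to those constructed in paragraph two. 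The main obstacle will be the careful sign-bookkeeping: identifying exactly which $(j, l, \tau)$ produce outputs inside $\Loc(\Phi)$, showing that the full-dimensional outputs together with the recursively obtained lower-dimensional outputs cover all of $\Loc(\Phi)\cap K$, and threading the $(\S, E^{(j)})$-lifting structure correctly through every restriction and recursive invocation.
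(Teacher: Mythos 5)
The proposal takes a genuinely different route from the paper. The paper does not apply Theorem \ref{thm:presDesing} wholesale and then re-derive the sign conditions afterward; instead it introduces the auxiliary notion of a \emph{presented $\S$-parameterization} $(\Phi,\P)$ and an extra transformation type, the \emph{admissible inclusion by a center} (Definition \ref{def:extSparamTrans}.3), and runs a single induction on $(n,\rank\P)$ ordered lexicographically. Each admissible coordinate transformation gives one pullback $\Phi'$ with $G(\Loc(\Phi'))=\Loc(\Phi)\cap G(U')$, each admissible blowup transformation gives two pullbacks $\Phi'_{\pm 1}$ with $\bigcup_\xi G(\Loc(\Phi'_\xi))=(\Loc(\Phi)\setminus C)\cap G(U')$, and the inclusion by the center $C$ gives a pullback with $G(\Loc(\Phi'))=\Loc(\Phi)\cap C$; each of these identities is verified once, and the sign map is transported formally via $\sigma'(f\circ G)=\sigma(f)$, so the locus-bookkeeping is automatic and the covering and rank/dimension drop come from Proposition \ref{prop:presRankDrop}. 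Your plan instead fully resolves $(\{g\},E;K:\infty)$ via Theorem \ref{thm:presDesing}, reads off the signs on each full-dimensional orthant by evaluating a rational point, and then handles the lower-dimensional strata $V_N=\{y_N=0\}$ by a separate recursion on $d$.

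The orthant/sign-filtering part of your plan is essentially sound: on an orthant $(f\circ F^{(j)})_{E^{(j)}}$ and $f_E\circ F^{(j)}$ differ by a monomial in $y_{E^{(j)}}$ with a fixed sign, so matching $\sigma$ and $\tau$ can be done by one rational evaluation per orthant. The genuine gap is in the recursion on $V_N$, which you yourself flag as ``the main obstacle''. The set you must capture is $\{y\in V_N : \sign(f_E\circ F^{(j)}(y))=\sigma(f)\text{ for all }f\in\F,\ \sign(F^{(j)}_i(y))=\tau(i)\text{ for all }i\in E\}$, but the size-$1$ $\S$-parameterization $\Phi_{j,N}$ you build carries the family $\F^{(j)}|_N=\{(f\circ F^{(j)})|_{V_N}\}$ and its locus is expressed through the $E^{(j)}\setminus N$-strict transforms $((f\circ F^{(j)})|_{V_N})_{E^{(j)}\setminus N}$. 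When $N$ contains blowup-added exceptional indices, $f_E\circ F^{(j)}$ can have positive order along $\{y_i=0\}$ for $i\in N$, so $f_E\circ F^{(j)}|_{V_N}$ may be identically zero while $(f\circ F^{(j)})_{E^{(j)}}|_{V_N}$ is nowhere zero; the two notions of ``sign of $f$'' that your $\sigma_{j,N}$ must reconcile genuinely disagree, and the proposal does not say how $\sigma_{j,N}$ is determined, nor how the cases where it forces $\Loc(\Phi_{j,N})=\emptyset$ are detected. Filling this in would require an explicit lemma in the spirit of Lemma \ref{lemma:fEblowup} after restriction to $V_N$, plus a precision-oracle check of $f_E\circ F^{(j)}|_{V_N}\equiv 0$ for each $f$ and each $N$ — none of which appears. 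The paper's interleaved construction sidesteps this by never needing to reconstruct $\sigma$ after the fact: it is pushed along transformation by transformation, and the case $\Loc(\Phi)\cap C$ (the only place where a coordinate restriction happens) is handled by a dedicated transformation type whose correctness is verified directly in Definition \ref{def:extSparamTrans}.
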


We need some additional concepts to prove Lemma \ref{lemma:SparamResolve}.

\begin{definition}\label{def:extSparam}
We call $(\Phi,\P)$ a {\bf presented $\S$-parameterization} if $\{\Phi\}$ is an $\S$-parameterization of size $1$ and $\P$ is an $\S$-presentation which are related as follows:
\[
\Phi = (\F,E;K \,|\, F,\psi,\sigma,\tau),
\]
and
\[
\P = (\{g\}, E; K : m_0,\ldots),
\]
where
\[
g = \prod_{f\in\F} f.
\]
(Recall from Remark \ref{rmk:Sparam}.3 that we are assuming that $0\not\in\F$.)
\end{definition}

Note that if $(\Phi,\P)$ is a presented $\S$-parameterization, $\P$ is resolved if and only if $\{\Phi\}$ is resolved.

\begin{definition}\label{def:extSparamTrans}
Suppose that $(\Phi,\P)$ is a presented $\S$-parameterization.  We call $G:(U';K')\to(U;K)$ an {\bf admissible transformation for $(\Phi,\P)$} if it is an admissible transformation for $\P$, or if it is an {\bf admissible inclusion by a center}, which means that $\P$ is complete and that $(U';K) = (U|_I;K|_I)$ and $G(y_{I^c}) = (y_{I^c},0)$, with $0\in\RR^I$ and $I = \bigcup_{j=0}^{k}N_j$, as in the notation specified prior to Definition \ref{def:pres}.  We define the pullback $(\Phi',\P')$ of $(\Phi,\P)$ by $G:(U';K')\to(U;K)$ by defining
\[
\Phi' = (\F',E';K'\,|\, F\circ G,\psi,\sigma',\tau'),
\]
where $E'$, $\sigma'$, $\tau'$ and $\P'$ are defined as follows, according to the type of the transformation:
\begin{enumerate}{\setlength{\itemsep}{5pt}
\item
\emph{Admissible Coordinate Transformation}:

Define $(\F',E';K')$ and $\P'$ to be the pullbacks of $(\F,E;K)$ and $\P$ by $G:(U'K')\to(U;K)$, respectively.  Thus $\F' = G^*\F$ and $E'\subseteq E$.  Define $\sigma':\F'\to\{-1,0,1\}$ by $\sigma'(f\circ G) = \sigma(f)$ for all $f\in\F$, and define $\tau'=\tau\Restr{E'}$.

\item
\emph{Admissible Blowup Transformation}:

Define $(\F',E';K')$ and $\P'$ to be the pullbacks of $(\F,E;K)$ and $\P$ by $G:(U'K')\to(U;K)$, respectively.  Thus $\F' = G^*\F$ and $E'\subseteq (E\setminus\{i\})\cup\{i\}$ for some $i\in I$, where $C = \{x\in U : x_I = 0\}$ is the center of blowing-up and $G$ is constructed from $\pi_i:U_i\to U$, the $i$th standard chart of the blowing-up of $U$ with center $C$.  Define $\sigma':\F'\to\{-1,0,1\}$ by $\sigma'(f\circ G) = \sigma(f)$ for all $f\in\F$, and define $\tau:E'\to\{-1,1\}$ by
\[
\tau'(j) =
\begin{cases}
\tau(j),
    & \text{if $j\in E\setminus\{i\}$,} \\
\xi
    & \text{if $j = i$,}
\end{cases}
\]
for some choice of $\xi\in\{-1,1\}$.  Thus to any admissible blowup transformation for $\P$, we associate two admissible blowup transformations for $(\Phi,\P)$, one with $\xi = 1$ and the other with $\xi = -1$.

\item
\emph{Admissible Inclusion by a Center}:

Define $U' = U|_I$, $\F' = \F|_I$, $E' = E\setminus I$, $K' = K|_I$, and $\P' = (\F',E';K' : \infty)$.  Define $\sigma':\F'\to\{-1,0,1\}$ by $\sigma'(f\Restr{U'}) = \sigma(f)$ for all $f\in\F$, and define $\tau' = \tau\Restr{E'}$.
}\end{enumerate}
\end{definition}

Observe that if $(\Phi',\P')$ is the pullback of $(\Phi,P)$ by an admissible transformation $G:(U';K')\to(U;K)$ for $(\Phi,\P)$, then $G\Restr{\Loc(\Phi')} :\Loc(\Phi')\to\Loc(\Phi)$ is a $\C$-analytic embedding.
Also, if $G:(U';K')\to(U;K)$ is an admissible coordinate transformation for $\P$, then
\[
G(\Loc(\Phi')) = \Loc(\Phi)\cap G(U').
\]
If $G:(U';K')\to(U;K)$ is an admissible blowup transformation for $\P$ with center $C$, and $\{(\Phi'_\xi,\P')\}_{\xi\in\{-1,1\}}$ are the two pullbacks of $(\Phi,\P)$ by $G:(U';K')\to(U;K)$, then
\[
\bigcup_{\xi\in\{-1,1\}} G(\Loc(\Phi'_\xi)) = (\Loc(\Phi)\setminus C) \cap G(U').
\]
Finally, if $G:(U';K')\to(U;K)$ is an admissible inclusion by a center $C$ for $(\Phi,\P)$, then
\[
G(\Loc(\Phi')) = \Loc(\Phi)\cap C.
\]

\begin{proof}[Proof of Lemma \ref{lemma:SparamResolve}]
Write $\Phi = (\F,E;K\,|\, F, \psi, \sigma,\tau)$, where $U\subseteq\RR^n$ is the domain of $\F$.  We can always associate $\Phi$ with the presented $\S$-parameterization $(\Phi,\P)$ with $\P = (\prod_{f\in\F} f, E;K : \infty)$.  Therefore it suffices to assume the more general situation that $(\Phi,\P)$ is a presented $\S$-parameterization for a general $\S$-presentation $\P$, and to prove the lemma by induction of $(n,\rank\P)$, ordered lexicographically.
If $n = 0$, then $\{\Phi\}$ is trivial and resolved.  So assume that $n > 0$.

First suppose that $\P$ is resolved.  Thus $\{\Phi\}$ is resolved.  By using the fact that $K$ is a co-c.e.\ compact subset of the c.e.\ open set $U$, we can effectively find a finite family $\{(U^{(j)}; K^{(j)})\}_{j\in J}$, where each $U^{(j)}$ is an open rational box whose closure is contained in $U$, each $K^{(j)}$ is a nondegenerate compact rational box contained in $U^{(j)}$, and $K \subseteq \bigcup_{j\in J} \Int(K^{(j)})$.  Define an $\S$-parameterization by pulling back $(\Phi,\P)$ by the inclusions $\iota^{(j)} : (U^{(j)}; K^{(j)})\to (U;K)$, which are a type of admissible coordinate transformation.  The resulting $\S$-parameterization is still resolved and is also trivial.

Now suppose that $\P$ is unresolved.  Apply Proposition \ref{prop:presRankDrop} to construct a finite family
\begin{equation}\label{eq:Gfamily}
\{(G^{(j)}:(U^{(j)};K^{(j)})\to(U;K)\}_{j\in J}
\end{equation}
of admissible transformations for $\P$ such that $K\subseteq\bigcup_{j\in J}G^{(j)}(K^{(j)})$ and $\rank\P^{(j)} < \rank\P$ for all $j\in J$, where $\P^{(j)}$ is the pullback of $\P$ by $G^{(j)}:(U^{(j)};K^{(j)})\to(U;K)$.  Let $J'$ be the set of all $j\in J$ for which $G^{(j)}:(U^{(j)};K^{(j)})\to(U;K)$ is an admissible coordinate transformation; thus $J\setminus J'$ is the set of all $j\in J$ for which $G^{(j)}:(U^{(j)};K^{(j)})\to(U;K)$ is an admissible blowup transformation.
For each $j\in J'$, write $(\Phi^{(j)},\P^{(j)})$ for the pullback of $(\Phi,\P)$ by $G^{(j)}:(U^{(j)};K^{(j)})\to(U;K)$, and for each $j\in J\setminus J'$, write $\{(\Phi^{(j,\xi)},\P^{(j,\xi)})\}_{\xi\in\{-1,1\}}$ for the two pullbacks of $(\Phi,\P)$ by $G^{(j)}:(U^{(j)};K^{(j)})\to(U;K)$.  There are now two cases to consider.

First, suppose that $\P$ is incomplete.  Then $J' = J$.  Note that
\[
\Loc(\Phi)\cap K \subseteq \bigcup_{j\in J} G^{(j)}(\Loc(\Phi^{(j)})\cap K^{(j)})
\quad\text{and}\quad
\bigcup_{j\in J} G^{(j)}(\Loc(\Phi^{(j)}) \subseteq \Loc(\Phi),
\]
so we are done by applying the induction hypothesis to each $(\Phi^{(j)},\P^{(j)})$.

Second, suppose that $\P$ is complete, and write $C = \{x\in U : x_I = 0\}$ for the center of blowing-up associated to $\P$.  Note that
\begin{eqnarray*}
(\Loc(\Phi)\setminus C)\cap K
    & \subseteq &
    \left(\bigcup_{j\in J'} G^{(j)}(\Loc(\Phi^{(j)})\cap K^{(j)})\right)
    \\
    &&
    \cup \left(\bigcup_{(j,\xi)\in (J\setminus J')\times\{-1,1\}} G^{(j)}(\Loc(\Phi^{(j,\xi)})\cap K^{(j,\xi)})\right)
\end{eqnarray*}
and
\[
\left(\bigcup_{j\in J'} G^{(j)}(\Loc(\Phi^{(j)})\right)
\cup
\left(\bigcup_{(j,\xi)\in(J\setminus J)'\times\{-1,1\}} G^{(j)}(\Loc(\Phi^{(j,\xi)}))\right)
\subseteq
\Loc(\Phi).
\]
Let $\iota:(U|_I;K|_I)\to(U;K)$ denote the admissible inclusion by the center $C$ , and write $(\Phi|_I,\P|_I)$ for the pullback of $(\Phi,\P)$ by this inclusion.  Then
\[
(\Loc(\Phi)\cap C) \cap K = \iota(\Loc(\Phi|_I)\cap K|_I)
\quad\text{and}\quad
\iota(\Loc(\Phi|_I)) = \Loc(\Phi)\cap C.
\]
We are now done by applying the induction hypothesis to each member of $\{(\Phi|_I,\P|_I)\}\cup\{(\Phi^{(j)},\P^{(j)})\}_{j\in J}$.
\end{proof}

We use the following two observations about the statement of Lemma \ref{lemma:SparamResolve} in the corollary below: (1) each set $F^{(j)}(\Loc(\Phi^{(j)}))$ is a $\C$-analytic manifold; (2) if $\Loc(\Phi)\subseteq K$, then $\Loc(\Phi) = \bigcup_{j\in J} F^{(j)}(\Loc(\Phi^{(j)}))$.

\begin{corollary}\label{cor:dimension}
Let $X\subseteq\RR^n$, and suppose that for each $a\in X$ there exists a neighborhood $U_a$ of $a$ and a countable family of sets $\{Y_{a,i}\}_{i\in I_a}$ such that $X\cap U_a = \bigcup_{i\in I_a} Y_{a,i}$, where for each $i\in I_a$,
\[
Y_{a,i} = \{x\in U_a : f_{a,i}(x) = 0, g_{a,i}^{(1)}(x)>0,\ldots,g_{a,i}^{(k_{a,i})}(x) > 0\}
\]
for some $\C$-analytic functions $f_{a,i},g_{a,i}^{(1)},\ldots,g_{a,i}^{(k_{a,i})}:U_a\to\RR$.  Then $X$ has dimension.
\end{corollary}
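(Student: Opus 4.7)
The plan is to apply Lemma \ref{lemma:SparamResolve} locally at each point of $X$ to cover $X$ by countably many $\C$-analytic manifolds, and then invoke the definition of ``having dimension''.

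First, I would fix $a \in X$ and $i \in I_a$, and cover the open neighborhood $U_a$ by a countable family of compact rational boxes $\{K_m\}_{m\in\NN}$ with each $K_m$ contained in an open rational box $V_m$ whose closure lies in $U_a$. Both $K_m$ and $V_m$ are blowup sets of length $0$ (defined by the empty sequence of blowings-up). Using the fact that $f_{a,i}, g^{(1)}_{a,i}, \ldots, g^{(k_{a,i})}_{a,i}$ are $\C$-analytic on $U_a$, for each $m$ the restrictions of these functions to $V_m$ are built, via Definition \ref{def:Canalytic}, from a finite family $\S_m \subseteq \C$ of functions defined on compact rational boxes. I then form the $\S_m$-parameterization of size one
\[
\Phi_m = (\F_m, \emptyset; K_m \mid \id_{V_m}, \id, \sigma, \tau),
\]
where $\F_m = \{f_{a,i}, g^{(1)}_{a,i}, \ldots, g^{(k_{a,i})}_{a,i}\}$ viewed as $\C$-analytic functions on $V_m$, $\tau$ is the empty map, and $\sigma$ sends $f_{a,i}$ to $0$ and each $g^{(l)}_{a,i}$ to $1$. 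Since $E = \emptyset$, no division by variables occurs and $\Loc(\Phi_m) = Y_{a,i} \cap V_m$.

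Next, I would apply the non-effective content of Lemma \ref{lemma:SparamResolve} to $\Phi_m$ to obtain a trivial, resolved $\S_m$-parameterization $\{\Phi^{(j,m)}\}_{j \in J_m}$ satisfying
\[
Y_{a,i} \cap K_m \,\subseteq\, \bigcup_{j \in J_m} F^{(j,m)}\bigl(\Loc(\Phi^{(j,m)}) \cap K^{(j,m)}\bigr) \,\subseteq\, \bigcup_{j \in J_m} F^{(j,m)}\bigl(\Loc(\Phi^{(j,m)})\bigr) \,\subseteq\, Y_{a,i}.
\]
By observation (1) recorded just before the corollary, each $F^{(j,m)}(\Loc(\Phi^{(j,m)}))$ is a $\C$-analytic manifold. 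Since $J_m$ is finite and $Y_{a,i} = \bigcup_m (Y_{a,i} \cap K_m)$, the set $Y_{a,i}$ is therefore a countable union of $\C$-analytic (hence $C^1$-) manifolds.

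Finally, because $\RR^n$ is second countable, I can extract from the open cover $\{U_a\}_{a \in X}$ a countable subcover $\{U_{a_s}\}_{s \in \NN}$. Each index set $I_{a_s}$ is countable by hypothesis, so
\[
X \,=\, \bigcup_{s \in \NN} (X \cap U_{a_s}) \,=\, \bigcup_{s \in \NN}\, \bigcup_{i \in I_{a_s}} Y_{a_s, i}
\]
is a countable union of $C^1$-manifolds, which by Definition \ref{def:dimension} means $X$ has dimension. The main technical point is the initial translation from the hypothesis of $\C$-analyticity into the $\S_m$-parameterization framework required by Lemma \ref{lemma:SparamResolve}; the rest is a routine Lindelöf argument together with the two observations extracted from that lemma.
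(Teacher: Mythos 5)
Your proposal is correct and follows essentially the same strategy as the paper's own proof: localize to compact rational boxes via a Lindel\"of argument, form size-one $\S$-parameterizations with $E=\emptyset$ whose loci are $Y_{a,i}$ intersected with those boxes, apply Lemma~\ref{lemma:SparamResolve} and observation~(1) preceding the corollary to express each such intersection as a finite union of $\C$-analytic manifolds, and conclude via Definition~\ref{def:dimension}. The only differences are cosmetic: the paper applies Lindel\"of once (choosing a single $K_a$ per point and discarding all but countably many), and it shrinks the locus to $Y_{a,i}\cap\Int(K_a)$ by adjoining the affine boundary functions of $\Int(K_a)$ to $\F_{a,i}$, whereas you shrink by restricting the domain to $V_m$ and run a two-level countability argument.
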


\begin{proof}
For each $a\in X$, fix a compact rational box $K_a$ contained in $U_a$ such that $a\in\Int(K_a)$.  Only countably many of the sets $\Int(K_a)$ are needed to cover $X$, so it suffices to fix $a\in\RR^n$ and show that $X\cap\Int(K_a)$ has dimension.  For each $i\in I_a$, define an $\S$-parameterization $\{\Phi_{a,i}\}$ of $Y_{a,i}\cap \Int(K_{a,i})$ by setting $\Phi_{a,i} = (\F_{a,i}, \emptyset; K_a\,|\, \id, \id, \sigma_{a,i}, \emptyset)\}$, where $\F_{a,i}$ consists of $f_{a,i},g_{a,i}^{(1)},\ldots,g_{a,i}^{(k_{a,i})}$ along with all the single-variable affine functions used to define the open rational box $\Int(K_a)$ using positive sign conditions; thus the sign map $\sigma_{a,i}:\F_{a,i} \to\{-1,0,1\}$ is chosen so that $Y_{a,i}\cap\Int(K_{a,i}) = \Loc(\Phi_{a,i}) = \{x\in U_a : \bigwedge_{h\in\F_{a,i}} \sign(h(x)) = \sigma_{a,i}(h)\}$.   For each $i\in I_a$, applying Lemma \ref{lemma:SparamResolve} to $\{\Phi_{a,i}\}$ expresses $Y_{a,i}\cap\Int(K_{a,i})$ as a finite union of $\C$-analytic manifolds.  Therefore each set $Y_{a,i}\cap\Int(K_{a,i})$ has dimension, and hence so does $X\cap\Int(K_a) = \bigcup_{i\in I_a}\left(Y_{a,i}\cap\Int(K_{a,i})\right)$.
\end{proof}

In the following remarks we review some elementary differential geometry in order to prepare for the proof of Proposition \ref{prop:SparamFC}, which uses an effective fiber cutting procedure.

\begin{remarks}\label{rmk:diffGeom}
Let $F:U\to\RR^m$ be a $\C$-analytic map defined on an open set $U\subseteq\RR^d$.  We write $y = (y_1,\ldots,y_d)$ for coordinates on $\RR^d$.  The rank of $F$ at $y\in U$ is, by definition, the rank of the Jacobian matrix $\PD{}{F}{y}(y)$.  Put
\[
r = \max\left\{\rank\PD{}{F}{y}(y) : y\in U\right\}.
\]
Note the following:
\begin{enumerate}{\setlength{\itemsep}{3pt}
\item
The number $r$ is the maximum value of $s\in\{0,\ldots,\min\{d,m\}\}$ for which there exists an $s\times s$ submatrix of $\PD{}{F}{y}(y)$ with nonzero determinant at some $y\in U$.

\item
If $U$ is connected and $V\subseteq U$ has nonempty interior, then the determinant of a square submatrix of $\PD{}{F}{y}(y)$ vanishes identically on $U$ if and only if it vanishes identically on $V$, and hence $r = \max\left\{\rank\PD{}{F}{y}(y) : y\in V\right\}$.

\item
If $F$ is computably $C^\infty$ relative to the approximation oracle for $\S$ and also has an $(\S,E)$-lifting for some $E\subseteq\{1,\ldots,d\}$, then relative to the approximation and precision oracles for $\S$, we can compute $r$ by finding a point $p\in U\cap\QQ^d$ and an $r\times r$ submatrix of $\PD{}{F}{y}(p)$ with nonzero determinant, and if $r < \min\{d,m\}$, by verifying that the determinants of all $(r+1)\times(r+1)$ submatrices of $\PD{}{F}{y}(y)$ are identically equal to $0$.
}\end{enumerate}

For each pair $(\lambda,\mu)$ of increasing maps $\lambda:\{1,\ldots,d-r\}\to\{1,\ldots,d\}$ and $\mu:\{1,\ldots,r\}\to\{1,\ldots,m\}$, write $\lambda':\{1,\ldots,r\}\to\{1,\ldots,d\}$ and $\mu':\{1,\ldots,m-r\}\to\{1,\ldots,m\}$ for their complementary increasing maps, and
put
\begin{equation}\label{eq:U}
U_{\lambda,\mu,\xi} = \left\{y\in U : \sign\left(\det\PD{}{F_\mu}{y_{\lambda'}}(b)\right) = \xi\right\}
\end{equation}
for each $\xi\in\{-1,1\}$.  Thus each set $U_{\lambda,\mu,\xi}$ is open in $U$, and
\[
\left\{x\in U : \rank\PD{}{F}{y}(y) = r\right\} = \bigcup_{\lambda,\mu, \xi} U_{\lambda,\mu,\xi}.
\]

Fix $(\lambda,\mu,\xi)$.  For the rest of the discussion, we shall assume that $U = U_{\lambda,\mu,\xi}$.  Let $b\in U$, and put $a = F(b)$.  By the implicit function theorem for $\C$, there exists a $\C$-analytic function $g$ implicitly defined by
\[
F_\mu\left(y_\lambda, g(x_\mu, y_\lambda)\right) = x_\mu
\quad\text{and}\quad
g(a_\mu,b_\lambda) = b_{\lambda'},
\]
where $g$ is defined in an open box containing $(a_\mu,b_\lambda)$ and maps into $\RR^{\im(\lambda')}$.  Differentiating in $y_\lambda$ gives
\begin{equation}\label{eq:implicitDiff}
\PD{}{F_\mu}{y_\lambda}\left(y_\lambda, g(x_\mu, y_\lambda)\right)
+
\PD{}{F_\mu}{y_{\lambda'}}\left(y_\lambda, g(x_\mu, y_\lambda)\right)
\PD{}{g}{y_\lambda}(x_\mu, y_\lambda)
=
0,
\end{equation}
and hence
\begin{equation}\label{eq:Dg}
\PD{}{g}{y_\lambda}(a_\mu, b_\lambda)
=
- \left[\PD{}{F_\mu}{y_{\lambda'}}(b)\right]^{-1} \PD{}{F_\mu}{y_\lambda}(b)
=
- \frac{\adj\left(\PD{}{F_\mu}{y_{\lambda'}}(b)\right)}
{\det\left(\PD{}{F_\mu}{y_{\lambda'}}(b)\right)}
\, \PD{}{F_\mu}{y_\lambda}(b),
\end{equation}
where $\adj$ is the classical adjoint operator.  Put
\[
h(x_\mu,y_\lambda) = F_{\mu'}\left(y_\lambda, g(x_\mu, y_\lambda)\right).
\]
Note that
\begin{equation}\label{eq:Dh}
\PD{}{h}{y_\lambda}(x_\mu,y_\lambda)
=
\PD{}{F_{\mu'}}{y_\lambda}\left(y_\lambda, g(x_\mu, y_\lambda)\right)
+
\PD{}{F_{\mu'}}{y_{\lambda'}}\left(y_\lambda, g(x_\mu, y_\lambda)\right)
\PD{}{g}{y_\lambda}(x_\mu, y_\lambda).
\end{equation}
Now, for each fixed $y\in U$, we have $r = \rank\PD{}{F_\mu}{y}(y) = \rank\PD{}{F}{y}(y)$, so the rows of $\PD{}{F_{\mu'}}{y}(y)$ are contained in the span of the rows of $\PD{}{F_\mu}{y}(y)$.  Thus \eqref{eq:implicitDiff} and \eqref{eq:Dh} show that $\PD{}{h}{y_\lambda}(x_\mu,y_\lambda) = 0$.  Since $y\in U$ was arbitrary, this means that $h$ only depends on $x_\mu$, so we may write $h(x_\mu)$ instead of $h(x_\mu,y_\lambda)$.

We now consider $a\in F(U)$ to be fixed and study the fiber $F^{-1}(a) = \{y\in U : F(y) = a\}$.  The above discussion shows that $F^{-1}(a)$ is a ($d-r)$-dimensional $\C$-analytic manifold given locally about any $b\in F^{-1}(a)$ as the graph of a $\C$-analytic function $y_\lambda\mapsto g(a_\mu,y_\lambda)$.  (Note:  This is the rank theorem, which was just proved in the course of the discussion.)  Note that the map $\Pi_\mu\circ F:U\to\RR^d$ is an immersion if and only if $r=d$.

Now, consider a $\C$-analytic function $f:U\to\RR$, and suppose that $f\Restr{F^{-1}(a)}$ achieves a local extremum at a point $b\in F^{-1}(a)$.  Then
\[
\PD{}{}{y_\lambda} f(y_\lambda,g(a_\mu,y_\lambda))
=
\PD{}{f}{y_\lambda}(y_\lambda,g(a_\mu,y_\lambda))
+
\PD{}{f}{y_{\lambda'}}(y_\lambda,g(a_\mu,y_\lambda))
\PD{}{g}{y_\lambda}(a_\mu,y_\lambda)
\]
equals $0$ at $y_\lambda = b_\lambda$, for the locally and implicitly defined map $g$ as above.  This and \eqref{eq:Dg} give
\begin{equation}\label{eq:critPt}
\det\left(\PD{}{F_\mu}{y_{\lambda'}}(b)\right) \PD{}{f}{y_{\lambda'}}(b)
-
\PD{}{f}{y_{\lambda'}}(b) \,\adj\left(\PD{}{F_\mu}{y_{\lambda'}}(b)\right) \PD{}{F_\mu}{y_\lambda}(b)
=
0.
\end{equation}
The significance of \eqref{eq:critPt} is that it expresses the fact that $f\Restr{F^{-1}(a)}$ has a critical point at $b$ using only the partial derivatives of the functions $F$ and $f$, without reference to the locally defined map $g$.  Thus for any $b\in F^{-1}(a)$, $b$ is a critical point of $f\Restr{F^{-1}(a)}$ if and only if \eqref{eq:critPt} holds.
\end{remarks}

\begin{proposition}[Effective Desingularization of Existentially Definable Sets]
\label{prop:SparamFC}
Suppose we are given an existential $\L_{\Delta(\S)}$-formula defining a set $A\subseteq\RR^m$, and are given $M\subseteq\{1,\ldots,m\}$ and $R\in\QQ_{+}^{M}$ such that $\Pi_M(A) \subseteq[-R,R]$.  Then relative to the approximation and precision oracles for $\S$, we can effectively find a representation for an immersive, $M$-bounded $\S$-parameterization of $A$.
\end{proposition}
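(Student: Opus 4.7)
The plan is to combine Lemmas~\ref{eq:SparamExist} and~\ref{lemma:SparamResolve} with an effective fiber cutting step, then induct on the parameter dimension $d$. First, apply Lemma~\ref{eq:SparamExist} to obtain a representation for an $M$-bounded $\S$-parameterization $\{\Phi^{(i)}\}_{i\in I}$ of $A$. Then, for each $i\in I$, apply Lemma~\ref{lemma:SparamResolve} to $\{\Phi^{(i)}\}$, compose the output maps with the original $F^{(i)}$, and retain the original $\psi^{(i)}$; this yields a trivial, resolved, $M$-bounded $\S$-parameterization of $A$. It therefore suffices to show that any single resolved, trivial, $M$-bounded piece $\Phi = (\F,E;K\,|\, F,\psi,\sigma,\tau)$, with $\Loc(\Phi) = U$ an open rational box in $\RR^d$, can be effectively refined into immersive pieces.

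Proceed by induction on $d$, base case $d = 0$ trivial. Using the $(\S,E)$-lifting of $F$ together with Remarks~\ref{rmk:diffGeom} and Proposition~\ref{prop:precOracle}, effectively compute $r = \max\{\rank\PD{}{F}{y}(y) : y\in U\}$. Stratify $U$ by sign conditions on the $r\times r$ subdeterminants of $\PD{}{F}{y}$, which are $\S$-polynomials whose $(\S,E)$-liftings follow from Remark~\ref{rmk:basicLifting}. Augmenting $\F$ accordingly and reapplying Lemma~\ref{lemma:SparamResolve}, each resulting piece either has $\PD{}{F}{y}$ of constant rank $r$ (witnessed by some $(\lambda,\mu,\xi)$ as in~\eqref{eq:U}), or lies in the proper $\C$-analytic subvariety where all such subdeterminants vanish; the latter has dimension strictly less than $d$ and is handled by the inductive hypothesis. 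On a constant-rank piece, if $r = d$ then $\Pi_\mu\circ F$ is an immersion and the piece is already immersive. If $r < d$, apply fiber cutting: fix any $i\in\im(\lambda)$, so that $y_i$ is not constant on generic fibers of $F$, and write
\[
F(K) = F(K_-) \cup F(K_+) \cup F(K_{\mathrm{crit}}),
\]
where $K_\pm = K\cap\{y_i = a_\pm\}$ are the two boundary faces of the compact rational box $K$ in the $y_i$-direction (with $a_\pm$ rational), and $K_{\mathrm{crit}} = K\cap\{y\in U : \text{\eqref{eq:critPt} holds for } f = y_i\}$. This covering is valid because, for any $y_0\in K$, the function $y_i$ attains its minimum on the compact fiber $F^{-1}(F(y_0))\cap K$ at a point of $K_\pm$ or $K_{\mathrm{crit}}$. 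Each of the three pieces has dimension strictly less than $d$: the $K_\pm$ are $(d-1)$-dimensional rational boxes, and $K_{\mathrm{crit}}$ is a $\C$-analytic subvariety whose defining equation, being polynomial in the partial derivatives of $F$, carries $(\S,E)$-liftings via Lemmas~\ref{lemma:basicLifting},~\ref{lemma:lifting1} and~\ref{lemma:lifting2}. Apply Lemma~\ref{lemma:SparamResolve} to each piece and invoke the inductive hypothesis.

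The main obstacle is the careful bookkeeping of liftings: at each decomposition step, $\F$ must be augmented with new $\S$-polynomial functions (Jacobian subdeterminants in the rank stratification, or the critical-point equation~\eqref{eq:critPt} in the fiber cutting) whose $(\S,E)$-liftings must propagate, which is exactly what the lifting lemmas guarantee. Since $\psi$ is never modified during the process, the $M$-bounded property is automatically preserved throughout. Termination of the induction is ensured by the strict drop in $d$ at each recursive call, and every step is effective relative to the approximation and precision oracles for $\S$.
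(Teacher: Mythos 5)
Your proposal follows the same high-level strategy as the paper's proof: first reduce to trivial, resolved pieces via Lemmas~\ref{eq:SparamExist} and~\ref{lemma:SparamResolve}, then stratify by the rank of $\PD{}{F}{y}$, then fiber-cut on the constant-rank strata of rank $r < d$, and induct on the maximum parameter dimension of the not-yet-immersive pieces. The divergence — and the gap — is in the fiber cutting step.

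You claim that for $f = y_i$ with $i\in\im(\lambda)$, the union $F(K) = F(K_-)\cup F(K_+)\cup F(K_{\mathrm{crit}})$ holds because the minimum of $y_i$ on the compact set $F^{-1}(F(y_0))\cap K$ is attained on one of the two $y_i$-faces of $K$ or at an interior critical point. Neither half of this is right. First, the minimum may instead land on a face $\{y_j = \mathrm{const}\}$ with $j\neq i$: take $d=2$, $m=1$, $F(y_1,y_2) = y_2 - y_1^2$ on $K = [-1,1]^2$ with $\lambda = (1)$, $\lambda' = (2)$, $i=1$. Then $F(K) = [-2,1]$ while $F(K_-)\cup F(K_+) = [-2,0]$; e.g.\ for the fiber $F = 1/2$, the minimizer of $y_1$ on $F^{-1}(1/2)\cap K$ is $(-\sqrt{1/2},1)$, which lies on the face $\{y_2 = 1\}$. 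Second, and more basically, with $i\in\im(\lambda)$ the coordinate $y_i$ is one of the free parameters $y_\lambda$ of the fiber (the fiber is locally the graph of a map in $y_\lambda$), so $\PD{}{y_i}{y_\lambda}$ is a standard unit vector, $\PD{}{y_i}{y_{\lambda'}} = 0$, and equation~\eqref{eq:critPt} with $f = y_i$ reduces to $\det\bigl(\PD{}{F_\mu}{y_{\lambda'}}\bigr)\cdot e_{\lambda^{-1}(i)} \neq 0$ on the stratum, so $K_{\mathrm{crit}} = \emptyset$. Your fiber cut therefore produces only the two $y_i$-faces, which do not cover.

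The paper closes both gaps at once by replacing $y_i$ with a function that vanishes on \emph{all} of $\bd(B^{(j)})$: namely $g^{(j)}_{\lambda,\mu,\xi}(y) = \xi\bigl(\det\PD{}{F^{(j)}_{\mu}}{y_{\lambda'}}(y)\bigr)\prod_i (b^{(j)}_i - y_i)(y_i - a^{(j)}_i)$. Since this is strictly positive on each fiber slice $N_x = F^{-1}(x)\cap W^{(j)}_{\lambda,\mu,\xi}\cap B^{(j)}$ and zero on $\fr(N_x)\subseteq\bd(B^{(j)})\cup\bd(W^{(j)}_{\lambda,\mu,\xi})$, its maximum on $\cl(N_x)$ is attained at an interior point of $N_x$, which is therefore a genuine critical point of $g^{(j)}_{\lambda,\mu,\xi}\Restr{N_x}$, and one gets $F^{(j)}(\tld{W}^{(j)}_{\lambda,\mu,\xi}\cap B^{(j)}) = F^{(j)}(W^{(j)}_{\lambda,\mu,\xi}\cap B^{(j)})$ where $\tld{W}^{(j)}_{\lambda,\mu,\xi} = \{y : H^{(j)}_{\lambda,\mu,\xi}(y) = 0\}$ is a proper $\C$-analytic subvariety. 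If you want to rescue the coordinate-function idea you would have to include all $2d$ faces of $K$ (not just the two $y_i$-faces) in the decomposition, and even then you would need to choose $i\in\im(\lambda')$ rather than $i\in\im(\lambda)$ and separately argue that the critical equation is not identically zero; the paper's single boundary-vanishing function $g^{(j)}_{\lambda,\mu,\xi}$ handles all of this uniformly.
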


\begin{proof}
By applying Lemmas \ref{eq:SparamExist} and \ref{lemma:SparamResolve}, we obtain a trivial, resolved, $M$-bounded $\S$-parameterization $\{\Phi^{(j)}\}_{j\in J}$ of
$A$.  For each $j\in J$, write
\[
\Phi^{(j)} = (\F^{(j)}, E^{(j)}; K^{(j)} \,|\, F^{(j)}, \psi^{(j)}, \sigma^{(j)},\tau^{(j)}),
\]
where $\F^{(j)}$ has domain $U^{(j)}\subseteq\RR^{d(j)}$.  Thus \begin{equation}\label{eq:AparamKV}
A
= \bigcup_{j\in J}\psi^{(j)}\circ F^{(j)}(\Loc(\Phi^{(j)})\cap K^{(j)})
=\bigcup_{j\in J}\psi^{(j)}\circ F^{(j)}(\Loc(\Phi^{(j)})).
\end{equation}
Suppose that we have a ``distinguished set of indices'' $J'\subseteq J$ such that for each $j\in J'$ we have found a coordinate projection $\Pi^{(j)}:\RR^m\to\RR^{d(j)}$ such that $\Pi^{(j)}\Restr{\Loc(\Phi^{(j)})}:\Loc(\Phi^{(j)})\to\RR^{d(j)}$ is an immersion.    At the beginning of the proof we simply have $J' = \emptyset$, but we are considering a more general situation in order to set up an inductive argument.  Define
\[
d = \begin{cases}
\max\{d(j) : j\in J\setminus J'\},
    & \text{if $J'\neq J$,} \\
0,
    & \text{if $J' = J$.}
\end{cases}
\]
We now proceed by induction on $d$.  Note that if $j\in J$ is such that $d(j) = 0$, then $\Loc(\Phi^{(j)}) = \RR^0$ and $\Pi_0\circ F^{(j)}:\RR^0\to\RR^0$ is a immersion.  Therefore we are done if $d = 0$.  So suppose that $d > 0$, and inductively assume that we can find an immersive, $M$-bounded $\S$-parameterization of $A$ if we can find a trivial, resolved, $M$-bounded $\S$-parameterization $\{\Phi^{(t)}\}_{t\in T}$ of $A$ with a distinguished set of indices $T'\subseteq T$ such that $e < d$ for the number $e$ defined by
\[
e =
\begin{cases}
\max\{d(t) : t\in T\setminus T'\},
    & \text{if $T'\neq T$,} \\
0,
    & \text{if $T' = T$,}
\end{cases}
\]
where for each $t\in T$, $\Phi^{(t)} = (\F^{(t)}, E^{(t)}; K^{(t)} \,|\, F^{(t)}, \psi^{(t)}, \sigma^{(t)}, \tau^{(t)})$ and $\F^{(t)}$ has domain $U^{(t)}\subseteq \RR^{d(t)}$.

For each $j\in J$, compute
\[
r(j) = \max\left\{\rank\PD{}{F^{(j)}}{y}(y) : y\in U^{(j)}\right\}.
\]
(Equivalently, $r(j) = \max\{\rank\PD{}{F^{(j)}}{y}(y) : y\in \Loc(\Phi^{(j)})\}$, since $\Loc(\Phi^{(j)})$ is open in the connected set $U^{(j)}$.)  If $j\in J\setminus J'$ is such that $r(j) = 0$, then there exists $c^{(j)}\in\RR^m$ such that $\psi^{(j)}\circ F^{(j)}(y) = c^{(j)}$ for all $y\in U^{(j)}$, and by choosing any $p\in\QQ^{d(j)}\cap U^{(j)}$ and noting that $\psi^{(j)}\circ F^{(j)}(p) = c^{(j)}$, we see that $c^{(j)}$ has an $(\S,\emptyset)$-lifting.  Thus in this case, we may redefine $\Phi^{(j)}$ to be $\Phi^{(j)} = (\emptyset,\emptyset;\{0\}\,|\, c^{(j)}, \psi^{(j)}, \emptyset, \emptyset)$, and we may remove $j$ from $J\setminus J'$ and place $j$ in $J'$, since the composition of maps $\xymatrix{\RR^0 \ar[r] & \RR^m \ar[r]^-{\Pi_0} & \RR^0} : 0\mapsto c^{(j)} \mapsto 0$ is an immersion.  By this reduction, we may assume that $r(j) > 0$ for all $j\in J\setminus J'$.

Now, fix $j\in J\setminus J'$.  Also fix a bounded open rational box $B^{(j)} = \prod_{i=1}^{d(j)} (a^{(j)}_{i},b^{(j)}_{i})$ such that
\begin{equation}\label{eq:KBV}
K^{(j)}\cap \Loc(\Phi^{(j)}) \subseteq B^{(j)} \subseteq \Loc(\Phi^{(j)})
\end{equation}
and $\cl(B^{(j)}) \subseteq U^{(j)}$ (as constructed in Remark \ref{rmk:Sparam}.6).  Let $\Lambda(j)$ denote the set of all pairs $(\lambda,\mu)$ of increasing maps $\lambda:\{1,\ldots,d(j)-r(j)\}\to\{1,\ldots,d(j)\}$ and $\mu:\{1,\ldots,r(j)\}\to\{1,\ldots,m\}$.  For each $(\lambda,\mu)\in\Lambda(j)$ and $\xi\in\{-1,1\}$, define
\[
W^{(j)}_{\lambda,\mu,\xi}
=
\left\{y\in \Loc(\Phi^{(j)}) : \sign\left(\det \PD{}{F^{(j)}_{\mu}}{y_{\lambda'}}(y)\right) = \xi \right\},
\]
where $\lambda':\{1,\ldots,r(j)\}\to\{1,\ldots,d(j)\}$ is the increasing map complementary to $\lambda$.  Also define
\[
W^{(j)}_{*} = \bigcap_{(\lambda,\mu)\in\Lambda(j)}
\left\{y\in \Loc(\Phi^{(j)}) : \det \PD{}{F^{(j)}_{\mu}}{y_{\lambda'}}(y) = 0 \right\}.
\]
Note that each set $W^{(j)}_{\lambda,\mu,\xi}$ is open in $\Loc(\Phi^{(j)})$, that $\displaystyle \dim W^{(j)}_{*} < d(j)$, and that
\begin{equation}\label{eq:VW}
\Loc(\Phi^{(j)}) = W^{(j)}_{*} \cup \bigcup_{(\lambda,\mu,\xi)\in\Lambda(j)\times\{-1,1\}} W^{(j)}_{\lambda,\mu,\xi}.
\end{equation}

Suppose for the moment that $r(j) < d(j)$, and consider some $(\lambda,\mu,\xi)\in\Lambda(j)\times\{-1,1\}$.  Define $g^{(j)}_{\lambda,\mu,\xi} : U^{(j)}\to\RR$ by
\[
g^{(j)}_{\lambda,\mu,\xi}(y)
=
\xi\left(\det \PD{}{F^{(j)}_{\mu}}{y_{\lambda'}}(y)\right)
\left(\prod_{i=1}^{d(j)}(b^{(j)}_{i} - y_i)(y_i - a^{(j)}_{i})\right).
\]
Note that $g^{(j)}_{\lambda,\mu,\xi}(y) > 0$ on $W^{(j)}_{\lambda,\mu,\xi}\cap B^{(j)}$ and that $g^{(j)}_{\lambda,\mu,\xi}(y) = 0$ on $\bd(W^{(j)}_{\lambda,\mu,\xi}\cap B^{(j)})$.  For each $x\in\RR^m$, write
\[
N_x = \{y\in W^{(j)}_{\lambda,\mu,\xi} \cap B^{(j)} : F^{(j)}(y) = x\}.
\]
Since $F^{(j)}$ has constant rank $r(j)$ on $W^{(j)}_{\lambda,\mu,\xi}$, the rank theorem shows that for each $x\in F^{(j)}(W^{(j)}_{\lambda,\mu,\xi}\cap B^{(j)})$, the set $N_x$ is a nonempty submanifold of $U^{(j)}$ of pure dimension $d(j) - r(j)$; also, the closure of $N_x$ is a compact subset of $U^{(j)}$ because $\cl(N_x)\subseteq\cl(B^{(j)})\subseteq U^{(j)}$, and we have $g^{(j)}_{\lambda,\mu,\xi}(y) > 0$ on $N_x$ and $g^{(j)}_{\lambda,\mu,\xi}(y) = 0$ on $\fr(N_x)$.  Therefore for each $x\in F^{(j)}(W^{(j)}_{\lambda,\mu,\xi}\cap B^{(j)})$, the restriction of $g^{(j)}_{\lambda,\mu,\xi}$ to $N_x$ achieves a maximum value, which necessarily occurs at a critical point $y\in N_x$ satisfying $H^{(j)}_{\lambda,\mu,\xi}(y) = 0$, where
\[
H^{(j)}_{\lambda,\mu,\xi}(y) = \det\left(\PD{}{F^{(j)}_{\mu}}{y_{\lambda'}}(y)\right) \PD{}{g^{(j)}_{\lambda,\mu,\xi}}{y_{\lambda'}}(y)
-
\PD{}{g^{(j)}_{\lambda,\mu,\xi}}{y_{\lambda'}}(y) \,\adj\left(\PD{}{F^{(j)}_{\mu}}{y_{\lambda'}}(y)\right) \PD{}{F^{(j)}_{\mu}}{y_\lambda}(y) .
\]
Put
\[
\tld{W}^{(j)}_{\lambda,\mu,\xi} = \left\{y\in W^{(j)}_{\lambda,\mu,\xi} : H^{(j)}_{\lambda,\mu,\xi}(y) = 0\right\}.
\]
Note that $\dim\tld{W}^{(j)}_{\lambda,\mu,\xi} < d(j)$ since $H^{(j)}_{\lambda,\mu,\xi}$ is a nonzero $\C$-analytic function, and that \begin{equation}\label{eq:tldWimage}
F^{(j)}(\tld{W}^{(j)}_{\lambda,\mu,\xi}\cap B^{(j)}) = F^{(j)}(W^{(j)}_{\lambda,\mu,\xi}\cap B^{(j)}).
\end{equation}

We shall now define a new $\S$-parameterization of $A$, so $j$ is no longer fixed. Define a new index set
\[
S = J'
\cup
\{(j,*) : j\in J\setminus J'\}
\cup
\left\{(j,\lambda,\mu,\xi) : j\in J\setminus J', (\lambda,\mu)\in\Lambda(j), \xi\in\{-1,1\}\right\}.
\]
Define an $\S$-parameterization $\{\Phi^{(s)}\}_{s\in S}$, where for each $s\in S$, \[
\Phi^{(s)} = (\F^{(s)}, E^{(s)}; K^{(s)} \,|\, F^{(s)}, \psi^{(s)}, \sigma^{(s)}, \tau^{(s)}),
\]
where the domain of $\F^{(s)}$ is $U^{(s)}\subseteq \RR^{d(s)}$, and where
\begin{eqnarray*}
d(s)
    & = &
    d(j), \quad\text{if $s \in\{j, (j,*), (j,\lambda,\mu,\xi)\}$,}
\\
U^{(s)}
    & = &
    U^{(j)}, \quad\text{if $s \in\{j, (j,*), (j,\lambda,\mu,\xi)\}$,}
\\
\F^{(s)}
    & = &
    \begin{cases}
    \F^{(j)},
        & \text{if $s = j$,} \\
    \F^{(j)}
    \cup
    \left\{\PD{}{F^{(j)}_{\mu}}{y_{\lambda'}}\right\}_{(\lambda,\mu)\in\Lambda(j)},
        & \text{if $s = (j,*)$,} \vspace*{3pt}\\
    \F^{(j)} \cup \{\det\PD{}{F^{(j)}}{y_{\lambda'}}\},
        & \text{if $s = (j,\lambda,\mu,\xi)$ and $r(j) = d(j)$,} \vspace*{3pt}\\
    \F^{(j)} \cup \{\det\PD{}{F^{(j)}}{y_{\lambda'}}, H^{(j)}_{\lambda,\mu,\xi}\},
        & \text{if $s = (j,\lambda,\mu,\xi)$ and $r(j)  < d(j)$,}
    \end{cases}
\\
E^{(s)}
    & = &
    E^{(j)}, \quad\text{if $s \in\{j, (j,*), (j,\lambda,\mu,\xi)\}$,}
\\
K^{(s)}
    & = &
    \begin{cases}
    K^{(j)},
        & \text{if $s\in\{j,(j,*)\}$, or $s = (j,\lambda,\mu,\xi)$ and $r(j) = d(j)$,} \\
    \cl(B^{(j)}),
        & \text{if $s = (j,\lambda,\mu,\xi)$ and $r(j) < d(j)$,}
    \end{cases}
\\
\psi^{(s)}
    & = &
    \psi^{(j)}, \quad\text{if $s \in\{j, (j,*), (j,\lambda,\mu,\xi)\}$,}
\\
\tau^{(s)}
    & = &
    \tau^{(j)}, \quad\text{if $s \in\{j, (j,*), (j,\lambda,\mu,\xi)\}$,}
\end{eqnarray*}
and $\sigma^{(s)}:\F^{(s)}\to\{-1,0,1\}$ is defined by
\[
\sigma^{(s)}(f)
=
\begin{cases}
\sigma^{(j)}(f),
    & \text{if $s\in\{j, (j,*), (j,\lambda,\mu,\xi)\}$ and $f\in\F^{(j)}$,} \\
0,
    & \text{if $s = (j,*)$ and $f = \PD{}{F^{(j)}_{\mu}}{y_{\lambda'}}$ for some $(\lambda,\mu)$,} \\
\xi,
    & \text{if $s = (j,\lambda,\mu,\xi)$ and $f = \PD{}{F^{(j)}_{\mu}}{y_{\lambda'}}$,} \\
0,
    & \text{if $s = (j,\lambda,\mu,\xi)$, $r(j) < d(j)$, and $f = H^{(j)}_{\lambda,\mu,\xi}$.}
\end{cases}
\]
Note that for each $s\in S$,
\begin{equation}\label{eq:Vs}
\Loc(\Phi^{(s)})
=
\begin{cases}
\Loc(\Phi^{(j)}),
    & \text{if $s = j$,} \\
{\displaystyle W^{(j)}_{*}},
    & \text{if $s = (j,*)$,} \\
W^{(j)}_{\lambda,\mu,\xi},
    & \text{if $j = (\lambda,\mu,\xi)$ and $r(j) = d(j)$,} \\
\tld{W}^{(j)}_{\lambda,\mu,\xi},
    & \text{if $j = (\lambda,\mu,\xi)$ and $r(j) < d(j)$.}
\end{cases}
\end{equation}
It follows from equations \eqref{eq:AparamKV}-\eqref{eq:Vs} that
\[
A
= \bigcup_{s\in S}\psi^{(s)}\circ F^{(s)}(\Loc(\Phi^{(s)})\cap K^{(s)})
=\bigcup_{s\in S}\psi^{(s)}\circ F^{(s)}(\Loc(\Phi^{(s)})).
\]

For each $s\in S\setminus J'$, apply Lemma \ref{lemma:SparamResolve} to $\Phi^{(s)}$.  This constructs a trivial, resolved $\S$-parameterization $\{\Psi^{(t)}\}_{t\in T_s}$ such that
\[
\Loc(\Phi^{(s)})\cap K^{(s)} \subseteq \bigcup_{t\in T_s} G^{(t)}(\Loc(\Psi^{(t)})\cap K^{(t)})
\quad\text{and}\quad
\bigcup_{t\in T_s} G^{(t)}(\Loc(\Psi^{(t)})) \subseteq \Loc(\Phi^{(s)}),
\]
where for each $t\in T_s$,
\[
\Psi^{(t)} = (\F^{(t)}, E^{(t)}; K^{(t)} \,|\, G^{(t)}, \id, \sigma^{(t)}, \tau^{(t)}),
\]
the domain of $\F^{(t)}$ is $U^{(t)}\subseteq\RR^{d(t)}$, and the restriction of $G^{(t)}$ to $\Loc(\Phi^{(t)})$ is an isomorphism onto its image.  We assume that the index sets $T_s$ are all disjoint from $J'$ and from each other.  Define $T = J'\cup\bigcup_{s\in S\setminus J'} T_s$, and define an $\S$-parameterization $\{\Phi^{(t)}\}_{t\in T}$, where for each $s\in S\setminus J'$ and $t\in T_s$,
\[
\Phi^{(t)} = (\F^{(t)}, E^{(t)} ; K^{(t)} \,|\, F^{(t)}, \psi^{(t)}, \sigma^{(t)}, \tau^{(t)}),
\]
with $F^{(t)} = F^{(s)}\circ G^{(t)}$ and $\psi^{(t)} = \psi^{(s)}$.  Note that $\{\Phi^{(t)}\}_{t\in T}$ is a trivial, resolved, $M$-bounded $\S$-parameterization of $A$.

Define a distinguished set of indices $T'\subseteq T$ as follows.  Consider $t\in T$.  If $t = j$ for some $j\in J'$, then $\Pi^{(t)}\circ F^{(t)}\Restr{\Loc(\Phi^{(t)})}:\Loc(\Phi^{(t)})\to\RR^{d(t)}$ is an immersion; in this case, place $t$ in $T'$.  So suppose that $t\in T_s$ for some $s\in S\setminus J'$.  If $s = (j,*)$, or if $s = (j,\lambda,\mu,\xi)$ and $r(j) < d(j)$, then $d(t) \leq \dim \Loc(\Phi^{(s)}) < d(j)$ since $G^{(t)}\Restr{\Loc(\Phi^{(t)})}$ is an isomorphism onto its image $G^{(t)}(\Loc(\Phi^{(t)}) )\subseteq \Loc(\Phi^{(s)})$ and $\dim \Loc(\Phi^{(s)}) < d(j)$; in this case, place $t$ in $T\setminus T'$.  If $s = (j,\lambda,\mu,\xi)$ and $r(j) = d(j)$, then either $d(t) < d(j)$, in which case we place $t$ in $T\setminus T'$, or else $d(t) = d(j)$, in which case $\Pi_{\lambda}\circ F^{(t)}\Restr{\Loc(\Phi^{(t)})} : \Loc(\Phi^{(t)})\to\RR^{d(t)}$ is an immersion so we place $t$ in $T'$.  Define
\[
e =
\begin{cases}
\max\{d(t) : t\in T\setminus T'\},
    & \text{if $T'\neq T$,} \\
0,
    & \text{if $T' = T$.}
\end{cases}
\]
Then $e < d$, so we are done by the induction hypothesis.
\end{proof}

\begin{corollary}\label{cor:existDec}
If we are given an existential $\L_{\Delta(\S)}$-sentence $\varphi$, then relative to the approximation and precision oracles for $\S$, we can decide whether $\varphi$ is true or false in the structure $\RR_{\Delta(\S)}$.
\end{corollary}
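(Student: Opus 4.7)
The plan is to view the existential $\L_{\Delta(\S)}$-sentence $\varphi$ as defining the subset $A = \{x \in \RR^0 : \varphi\}$ of $\RR^0 = \{0\}$, which is $\{0\}$ if $\varphi$ is true in $\RR_{\Delta(\S)}$ and $\emptyset$ if $\varphi$ is false. Deciding $\varphi$ is therefore equivalent to deciding whether $A$ is empty or not. With $m = 0$ and $M = \emptyset$, the hypothesis $\Pi_M(A) \subseteq [-R,R]$ holds trivially (both sides are $\RR^0$), so Proposition~\ref{prop:SparamFC} applies and, relative to the approximation and precision oracles for $\S$, effectively produces a representation for an immersive, $\emptyset$-bounded $\S$-parameterization $\{\Phi^{(j)}\}_{j\in J}$ of $A$.

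Next, I would invoke the convention set out in Remark~\ref{rmk:Sparam}.4: since $\{\Phi^{(j)}\}_{j\in J}$ is immersive, hence resolved, one may effectively inspect each index $j \in J$ and remove it from the parameterization whenever $\Loc(\Phi^{(j)}) = \emptyset$, because that emptiness is decidable relative to our two oracles by testing the constant signs of the $f_{E^{(j)}}$ on $U^{(j)}$ (as explained in the remark). After this pruning, one has $\Loc(\Phi^{(j)}) \neq \emptyset$ for every $j$ in the (effectively computable) new index set, and consequently
\[
A = \bigcup_{j \in J} \psi^{(j)} \circ F^{(j)}\!\left(\Loc(\Phi^{(j)})\right)
\]
is empty if and only if $J$ is empty.

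The decision procedure is then immediate: compute the pruned index set $J$, answer ``$\varphi$ is true'' if $J \neq \emptyset$, and ``$\varphi$ is false'' if $J = \emptyset$. There is essentially no obstacle in this corollary beyond invoking the two results above; all the difficulty has already been absorbed into Proposition~\ref{prop:SparamFC} (which packages the effective resolution, fiber cutting, and parameterization machinery of Sections~\ref{s:pres} and~\ref{s:desingThms}) and into Remark~\ref{rmk:Sparam}.4 (whose emptiness test is the one genuine place where the precision oracle enters the final step, via Proposition~\ref{prop:precOracle} applied to the $\S$-manifolds appearing in the lifting of each $f_{E^{(j)}}$).
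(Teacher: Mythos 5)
Your proof is correct and follows the same route as the paper: define $A \subseteq \RR^0$ from $\varphi$, apply Proposition~\ref{prop:SparamFC} to get an immersive $\S$-parameterization of $A$, and observe that $\varphi$ is true iff the index set $J$ is nonempty. The only cosmetic difference is that you make the pruning step from Remark~\ref{rmk:Sparam}.4 explicit, whereas the paper leaves it implicit (it is already built into the convention adopted there); this is a good clarification but not a different argument.
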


\begin{proof}
Let $A$ be the subset of $\RR^0$ defined by $\varphi$; namely, $A = \{0\}$ if $\varphi$ is true, and $A = \emptyset$ if $\varphi$ is false.  Apply Proposition \ref{prop:SparamFC} to construct an immersive $\S$-parameterization $\{\Phi^{(j)}\}_{j\in J}$  of $A$.  Then $\varphi$ is true if and only if $J\neq\emptyset$.
\end{proof}

\begin{corollary}\label{cor:existSetMeet}
If we are given existential $\L_{\Delta(\S)}$-formulas defining sets $A_1,A_2\subseteq\RR^m$, then relative to the approximation and precision oracles for $\S$, we can decide whether $A_1\cap A_2$ is empty or nonempty.
\end{corollary}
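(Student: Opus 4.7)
The plan is to reduce the problem directly to Corollary \ref{cor:existDec}, which already establishes decidability of the truth of existential $\L_{\Delta(\S)}$-sentences relative to the approximation and precision oracles for $\S$. The key observation is that nonemptiness of $A_1\cap A_2$ is itself expressible by an existential sentence, since existential formulas are closed under conjunction and existential quantification.

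More precisely, suppose we are given existential $\L_{\Delta(\S)}$-formulas $\exists y_1\,\varphi_1(x,y_1)$ and $\exists y_2\,\varphi_2(x,y_2)$ defining $A_1$ and $A_2$, respectively, where $x = (x_1,\ldots,x_m)$, $\varphi_1$ and $\varphi_2$ are quantifier-free, and the tuples of bound variables $y_1$ and $y_2$ are disjoint from each other and from $x$ (renaming bound variables if necessary). Then
\[
A_1\cap A_2 = \{x\in\RR^m : \exists y_1\exists y_2 \left(\varphi_1(x,y_1)\wedge\varphi_2(x,y_2)\right)\},
\]
so $A_1\cap A_2$ is itself existentially definable by an $\L_{\Delta(\S)}$-formula which can be effectively constructed from the given formulas defining $A_1$ and $A_2$. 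Consequently, the statement ``$A_1\cap A_2\neq\emptyset$'' is equivalent to the existential $\L_{\Delta(\S)}$-sentence
\[
\exists x\,\exists y_1\,\exists y_2 \left(\varphi_1(x,y_1)\wedge\varphi_2(x,y_2)\right),
\]
which can likewise be effectively constructed.

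Applying Corollary \ref{cor:existDec} to this sentence decides, relative to the approximation and precision oracles for $\S$, whether it holds in $\RR_{\Delta(\S)}$, and hence whether $A_1\cap A_2$ is empty or nonempty. There is no real obstacle here; the corollary is essentially a syntactic observation together with an invocation of the previously proved decision procedure for existential sentences, which in turn is the substantive content delivered by Proposition \ref{prop:SparamFC} and the effective parameterization machinery of the paper.
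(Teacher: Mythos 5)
Your proposal is correct and matches the paper's proof almost verbatim: both form the existential sentence $\exists x\exists y_1\exists y_2\left(\varphi_1(x,y_1)\wedge\varphi_2(x,y_2)\right)$ (after renaming bound variables) and then invoke Corollary \ref{cor:existDec}.
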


\begin{proof}
For each $i\in\{1,2\}$, let $\varphi_i(x,y_i)$ be a quantifier-free $\L_{\Delta(\S)}$-formula such that $A_i = \{x\in\RR^m : \exists y_i\varphi(x,y_i)\}$, where $y_i$ is a tuple of variables.  Then $A_1\cap A_2\neq \emptyset$ if and only if $\exists x\exists y_1\exists y_2\left(\varphi_1(x,y_1)\wedge \varphi_2(x,y_2)\right)$ holds, and this sentence is existential.  Apply Corollary \ref{cor:existDec}.
\end{proof}

\begin{proposition}[Computing Connected Components and Dimension]
\label{prop:CCdim}
Any set $A\subseteq\RR^m$ which is existentially $0$-definable in $\RR_{\Delta(\S)}$ has dimension and has finitely many connected components $C_1,\ldots,C_P$.  In fact, if we are given an existential $\L_{\Delta(\S)}$-formula defining the set $A$, then relative to the approximation and precision oracles for $\S$, we can effectively find representations for immersive $\S$-parameterizations
\[
\{\Phi^{(q)}\}_{q\in Q_p},
\quad\text{for $p=1,\ldots,P$,}
\]
for $C_1,\ldots,C_P$, respectively, where the $Q_p$ are disjoint index sets.
\end{proposition}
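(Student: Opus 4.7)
By Proposition \ref{prop:SparamFC}, applied to the given existential formula with $M=\emptyset$, we effectively obtain a representation for an immersive $\S$-parameterization $\{\Phi^{(j)}\}_{j\in J}$ of $A$, with $\Phi^{(j)} = (\F^{(j)},E^{(j)};K^{(j)}\,|\,F^{(j)},\psi^{(j)},\sigma^{(j)},\tau^{(j)})$ and coordinate projections $\Pi^{(j)}$ making $\Pi^{(j)}\circ F^{(j)}\Restr{\Loc(\Phi^{(j)})}$ an immersion. Using Remark \ref{rmk:Sparam}.6, we also effectively produce bounded open rational boxes $B^{(j)}$ with $\Loc(\Phi^{(j)})\cap K^{(j)} \subseteq B^{(j)} \subseteq \Loc(\Phi^{(j)})$ and $\cl(B^{(j)})\subseteq U^{(j)}$, so that $A = \bigcup_{j\in J} V_j$ for $V_j := \psi^{(j)}\circ F^{(j)}(B^{(j)})$. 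Each $V_j$ is the continuous image of the connected open rational box $B^{(j)}$, hence connected, and is a $d(j)$-dimensional $\C$-analytic subset of $\RR^m$ (since $\Pi^{(j)}\circ F^{(j)}\Restr{B^{(j)}}$ is an immersion and $\psi^{(j)}$ is a $\C$-analytic diffeomorphism on its natural domain). Thus $A$ is a finite union of connected $\C$-analytic manifolds, so $A$ has dimension with $\dim A = \max\{d(j) : j\in J\}$ and has at most $|J|$ connected components.

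For the effective part, define a symmetric relation $\sim_0$ on $J$ by $j \sim_0 j'$ iff $V_j \cup V_{j'}$ is connected in $\RR^m$. Since each $V_j$ is connected, the standard fact that a union of two connected sets is connected iff one meets the closure of the other gives $j \sim_0 j'$ iff $V_j \cap \cl_{\RR^m}(V_{j'}) \neq \emptyset$ or $\cl_{\RR^m}(V_j) \cap V_{j'} \neq \emptyset$. Let $\sim$ be the transitive closure of $\sim_0$, and let $Q_1,\ldots,Q_P$ be its equivalence classes; set $C_p := \bigcup_{j\in Q_p} V_j$. A chain of $\sim_0$-steps makes each $C_p$ connected, while for $k\in Q_p,\,l\in Q_{p'}$ with $p\ne p'$ the non-relation $k\not\sim_0 l$ forces $V_k\cap\cl(V_l)=\cl(V_k)\cap V_l=\emptyset$; distributing closure across the finite unions gives $C_p\cap\cl(C_{p'})=\cl(C_p)\cap C_{p'}=\emptyset$, so the $C_p$ are pairwise separated. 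Hence they are exactly the connected components of $A$, and the restricted families $\{\Phi^{(j)}\}_{j\in Q_p}$ inherit immersivity and parameterize the $C_p$.

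It remains to decide $\sim_0$ effectively. By Corollary \ref{cor:existSetMeet} this reduces to exhibiting $V_j$ and $\cl_{\RR^m}(V_j)$ as existentially $0$-definable sets in $\RR_{\Delta(\S)}$. For $V_j$ this is immediate from the $(\S,E^{(j)})$-lifting of $F^{(j)}$ combined with the rational form of $\psi^{(j)}$. For the closure, I use the identity
\[
\cl_{\RR^m}(V_j) \;=\; \psi^{(j)}\!\left(F^{(j)}(\cl(B^{(j)}))\,\cap\,\dom(\psi^{(j)})\right),
\]
which holds because any finite limit point of a sequence in $V_j$ comes, via compactness of $\cl(B^{(j)})$ and continuity of $F^{(j)}$, from some $y\in\cl(B^{(j)})$; finiteness of the limit forces $F^{(j)}_i(y)\neq 0$ for every $i\in I(j)^c$ since $\psi^{(j)}_i(t)=t^{-1}$ on those coordinates, and the reverse inclusion follows from continuity of $\psi^{(j)}$ on its open domain. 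The right-hand side is existentially $0$-definable in $\RR_{\Delta(\S)}$ because $\cl(B^{(j)})$ is cut out by rational weak inequalities, each stratum of its natural stratification carries an inherited basic $\S$-lifting of the restriction of $F^{(j)}$ to that stratum (as noted in Remark \ref{rmk:Sparam}.6 via Lemma \ref{lemma:basicLifting}.4), and the graph of $\psi^{(j)}$ is quantifier-free definable in the language of ordered rings.

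The main obstacle is correctly identifying $\cl_{\RR^m}(V_j)$ when the inverse coordinates of $\psi^{(j)}$ drive part of $V_j$ to infinity along $\bd(B^{(j)})$: such escape contributes no finite limit point to $\cl_{\RR^m}(V_j)$, and the restriction $F^{(j)}(y)\in\dom(\psi^{(j)})$ in the identity above precisely excises the offending boundary points. Once this identity is in hand, determining each edge of $\sim_0$, forming the resulting graph on $J$, computing its connected components, and outputting the restricted families $\{\Phi^{(j)}\}_{j\in Q_p}$ are all effective relative to the approximation and precision oracles for $\S$.
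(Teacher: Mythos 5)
Your proof is correct and follows essentially the same route as the paper: apply Proposition \ref{prop:SparamFC} and Remark \ref{rmk:Sparam}.6 to write $A$ as a finite union of connected pieces $V_j$, establish the closure identity $\cl(V_j)=\psi^{(j)}(F^{(j)}(\cl B^{(j)})\cap\dom\psi^{(j)})$ to make $\cl(V_j)$ existentially $0$-definable, and then compute the component partition from the graph on $J$ whose edges are decided by Corollary \ref{cor:existSetMeet}. The paper phrases the partition via the two characterizing conditions \eqref{eq:compChar1}--\eqref{eq:compChar2} rather than as the transitive closure of your relation $\sim_0$, but this is the same combinatorics.
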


The proposition implies the following:
\begin{enumerate}{\setlength{\itemsep}{3pt}
\item
We can compute $P$.

\item
For each $p\in\{1,\ldots,P\}$, we can find an immersive $\S$-parameterization for a point $c_p\in C_p$ (by Remark \ref{rmk:Sparam}.5).

\item
We can compute the dimensions of $C_1,\ldots,C_P$ and $A$ by the following formulas,
\begin{eqnarray*}
\dim C_p
    & = &
    \max\{d(q) : q\in Q_p\}, \quad\text{for each $p\in\{1,\ldots,P\}$,}\\
\dim A
    & = &
    \max\{\dim(C_1),\ldots,\dim(C_P)\}
    =
    \max\left\{d(q) : q\in \bigcup_{p=1}^{P} Q_p\right\},
\end{eqnarray*}
where for each $q$, $\Phi^{(q)} = (\F^{(q)}, E^{(q)}; K^{(q)} \,|\, F^{(q)}, \psi^{(q)}, \sigma^{(q)}, \tau^{(q)})$ and the domain of $\F^{(q)}$ is an open set $U^{(q)}\subseteq\RR^{d(q)}$.
}\end{enumerate}

\begin{proof}
Suppose we are given an existential $\L_{\Delta(\S)}$-formula defining $A$.  Apply Proposition \ref{prop:SparamFC} to construct an immersive $\S$-parameterization
$\{\Phi^{(j)}\}_{j\in J}$ of $A$; we use the notation of Definition \ref{def:Sparam}.  By Remark \ref{rmk:Sparam}.6, we may choose bounded, open, rational boxes $B^{(j)} \subseteq\RR^{d(j)}$ such that $A = \bigcup_{j\in J}\psi^{(j)}\circ F^{(j)}(B^{(j)})$, and such that for each $j\in J$, $\cl(B^{(j)})\subseteq U^{(j)}$ and $\Pi^{(j)}\circ F^{(j)}\Restr{B^{(j)}}:B^{(j)}\to\RR^{d(j)}$ is an immersion.
Each set $\psi^{(j)}\circ F^{(j)}(B^{(j)})$ is connected, so $A$ has finitely many connected components $C_1,\ldots,C_P$, and there exists a partition $\{Q_1,\ldots,Q_P\}$ of $J$ such that $C_p = \bigcup_{q\in Q_p}\psi^{(q)}\circ F^{(q)}(B^{(q)})$ for each $p\in\{1,\ldots,P\}$.  So it suffices to compute the partition $\{Q_1,\ldots,Q_P\}$ of $J$.

Let $j\in J$.  Write $W^{(j)} = \RR^{I(j)}\times(\RR\setminus\{0\})^{I(j)^c}$, which is both the domain and range of $\psi^{(j)}$.  Note that $F^{(j)}(B^{(j)})\subseteq W^{(j)}$, but $F^{(j)}(\cl B^{(j)})$ is not necessarily a subset of $W^{(j)}$.  We claim that
\begin{equation}\label{eq:SparamCL}
\psi^{(j)}(W^{(j)}\cap F^{(j)}(\cl B^{(j)}))
=
\cl\left(\psi^{(j)}\circ F^{(j)}(B^{(j)})\right).
\end{equation}
To see this, note that since $\cl(B^{(j)})\subseteq U^{(j)}$, and since the maps $F^{(j)}:U^{(j)}\to\RR^m$ and $\psi^{(j)}:W^{(j)}\to W^{(j)}$ are continuous, we have $\psi^{(j)}(W^{(j)}\cap F^{(j)}(\cl B^{(j)})) \subseteq \cl\left(\psi^{(j)}\circ F^{(j)}(B^{(j)})\right)$.  To show the reverse inclusion, let $x\in \cl\left(\psi^{(j)}\circ F^{(j)}( B^{(j)})\right)$.  Fix a sequence $\{y_k\}_{k\in\NN}$ in $B^{(j)}$ such that $x = \lim_{k\to\infty} \psi^{(j)}\circ F^{(j)}(y_k)$.  Since $B^{(j)}$ is bounded, by passing to a subsequence we may assume that $\lim_{k\to\infty} y_k = y$ for some $y\in\cl(B^{(j)})$.  Put $b = (b_1,\ldots,b_m)  = F^{(j)}(y)$.  Now, if $b\not\in W^{(j)}$, then $b_i = 0$ for some $i\in I(j)^c$, so the absolute value of the $i$th components of the sequence $\{\psi^{(j)}\circ F^{(j)}(y_k)\}_{k\in\NN}$ diverges to $\infty$, contradicting the fact that this sequence converges to $x$.  Thus $b\in W^{(j)}$, and hence $x = \psi^{(j)}\circ F^{(j)}(y) \in \psi^{(j)}\circ(W^{(j)}\cap F^{(j)}(\cl B^{(j)}))$, which proves the claim.

Write $D^{(j)} = \psi^{(j)}\circ F^{(j)}(B^{(j)})$ for each $j\in J$.  The significance of the claim is that for each $j\in J$, the closure of $D^{(j)}$ is existentially $0$-definable in $\RR_{\Delta(\S)}$, since the left side of \eqref{eq:SparamCL} is clearly so.  Now note the following:
\begin{enumerate}{\setlength{\itemsep}{3pt}
\item
For each $p\in\{1,\ldots,P\}$ and each each $q,r\in Q_p$, there exist $q_0,\ldots,q_k\in Q_p$ such that $q_0 = q$, $q_k = r$, and for each $i\in\{0,\ldots,k-1\}$,
\begin{equation}\label{eq:compChar1}
\cl(D^{(q_i)}) \cap D^{(q_{i+1})} \neq\emptyset
\quad\text{or}\quad
D^{(q_i)} \cap \cl(D^{(q_{i+1})}) \neq\emptyset.
\end{equation}

\item
For each $p,p'\in\{1,\ldots,P\}$ with $p\neq p'$, and each $q\in Q_p$ and $q'\in Q_{p'}$,
\begin{equation}\label{eq:compChar2}
\cl(D^{(q)}) \cap D^{(q')} = \emptyset
\quad\text{and}\quad
D^{(q)} \cap \cl(D^{(q')}) = \emptyset.
\end{equation}
}\end{enumerate}
These two properties uniquely characterize the partition $\{Q_1,\ldots,Q_P\}$ of $J$, and the conditions \eqref{eq:compChar1} and \eqref{eq:compChar2} can be decided using Corollary \ref{cor:existSetMeet}, since the sets $D^{(j)}$ and $\cl(D^{(j)})$ are existentially $0$-definable in $\RR_{\Delta(\S)}$ for each $j\in J$.  We may therefore compute $\{Q_1,\ldots,Q_P\}$.
\end{proof}

\begin{lemma}\label{lemma:compl}
Let $d\in\NN$, and suppose that the number $d$ has the following property:
\begin{quote}
For every set $B\subseteq\RR^d$ which is existentially $0$-definable in $\RR_{\S}$, if we are given an existential $\L_{\S}$-formula defining $B$, then relative to the approximation and precision oracles for $\S$, we can effectively find an existential $\L_{\S}$-formula defining $\RR^d\setminus B$.
\end{quote}
Let $\lambda:\{1,\ldots,d\}\to\{1,\ldots,n\}$ be an increasing map, and suppose we are given $N\in\NN$ and an existential $\L_{\S}$-formula defining a set  $A\subseteq\RR^n$ such that $|A\cap\Pi_{\lambda}^{-1}(x)|\leq N$ for all $x\in\RR^d$.  Then relative to the approximation and precision oracles for $\S$, we can effectively find an existential $\L_{\S}$-formula defining $\RR^n\setminus A$.
\end{lemma}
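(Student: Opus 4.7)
My plan is to reduce existential complementation in $\RR^n$ to $N+1$ applications of the complement hypothesis in $\RR^d$, by case-splitting on the exact cardinality of the fiber $A_x=\{y:(x,y)\in A\}$. After reordering coordinates via $\lambda$ and its complement $\lambda':\{1,\ldots,n-d\}\to\{1,\ldots,n\}$, I identify $\RR^n$ with $\RR^d\times\RR^{n-d}$ so that $\Pi_\lambda$ becomes projection to the first factor; throughout I use that inequality $\neq$ is expressible existentially through the order.

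For each $k\in\{-1,0,\ldots,N\}$ I would set
\[
B_k=\{x\in\RR^d:|A_x|\geq k+1\},
\]
with the conventions $B_{-1}=\RR^d$ and $B_N=\emptyset$. The set $B_k$ is the $\Pi_\lambda$-projection of the existentially definable set of tuples $(x,y_0,\ldots,y_k)$ with $(x,y_i)\in A$ for all $i$ and $y_i\neq y_j$ for $i\neq j$, so an existential $\L_\S$-formula for $B_k$ can be written down effectively from the one given for $A$. Applying the hypothesis on $d$ to each such $B_k$ produces, relative to the approximation and precision oracles for $\S$, an existential $\L_\S$-formula for $\RR^d\setminus B_k$, and hence for
\[
D_k=B_{k-1}\cap(\RR^d\setminus B_k)=\{x\in\RR^d:|A_x|=k\}.
\]

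The core step is then the decomposition
\[
\RR^n\setminus A=\bigsqcup_{k=0}^{N}\bigl((D_k\times\RR^{n-d})\setminus A\bigr),
\]
together with the rewriting, for each $k\geq 1$,
\[
(D_k\times\RR^{n-d})\setminus A=\bigl\{(x,y):x\in\RR^d\setminus B_k\ \wedge\ \exists y_1,\ldots,y_k\text{ pairwise distinct with }(x,y_i)\in A\text{ and }y\neq y_i\text{ for all }i\bigr\}.
\]
Indeed, $x\notin B_k$ forces $|A_x|\leq k$ while the $k$ distinct witnesses force $|A_x|\geq k$, so $\{y_1,\ldots,y_k\}=A_x$ and $y\neq y_i$ for all $i$ is equivalent to $y\notin A_x$; the $k=0$ term is simply $D_0\times\RR^{n-d}$. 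Each of the $N+1$ pieces is visibly existential, and their disjunction is the desired existential $\L_\S$-formula for $\RR^n\setminus A$, assembled effectively from the computed formulas by pure bookkeeping. I do not anticipate a serious obstacle: the only delicate point, and the whole reason the bounded-fiber condition appears in the statement, is precisely to allow the universal assertion "no further fiber points beyond $y_1,\ldots,y_k$" to be replaced by the existentially accessible condition $x\in\RR^d\setminus B_k$, which the hypothesis on $d$ makes available.
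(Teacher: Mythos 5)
Your argument is correct, and it spells out precisely the fiber-counting case split that the paper defers to by citing \cite[Lemma 2.5]{vdDS98} rather than giving its own proof. The chain $\RR^d = B_{-1} \supseteq B_0 \supseteq \cdots \supseteq B_N = \emptyset$, the use of the hypothesis on $d$ to make each $\RR^d\setminus B_k$ existential, and the rewriting of ``$y\notin A_x$'' as ``$y$ differs from $k$ exhaustive witnesses'' once $|A_x|=k$ is pinned down are exactly the standard argument there.
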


This lemma of course applies to $\RR_{\Delta(\S)}$ as well, since we could replace $\S$ with $\Delta(\S)$.

\begin{proof}
See the proof of \cite[Lemma 2.5]{vdDS98}.
\end{proof}

\begin{theorem}[Effective Theorem of the Complement]\label{thm:compl}
If we are given an existential $\L_{\Delta(\S)}$-formula defining a set $A\subseteq\RR^m$, then relative to the approximation and precision oracles for $\S$, we can effectively find an existential $\L_{\Delta(\S)}$-formula defining $\RR^m\setminus A$.
\end{theorem}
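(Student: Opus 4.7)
The plan is to proceed by induction on $m$, using Proposition~\ref{prop:SparamFC} to produce an effective immersive parameterization and then invoking Lemma~\ref{lemma:compl} piece-by-piece. The base case $m=0$ is immediate from Corollary~\ref{cor:existDec}: the set $A\subseteq\{0\}$ is either $\emptyset$ or $\{0\}$, and the corresponding complement formula is either $x=x$ or $x\neq x$.

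For the inductive step, I assume the theorem is known at every ambient dimension $m'<m$, so that Lemma~\ref{lemma:compl} is available for every $d<m$. Apply Proposition~\ref{prop:SparamFC} (with $M=\emptyset$) to the given existential formula for $A$ to produce an immersive $\S$-parameterization $\{\Phi^{(j)}\}_{j\in J}$ of $A$. By Remark~\ref{rmk:Sparam}.6, select bounded open rational boxes $B^{(j)}$ with $\cl(B^{(j)})\subseteq U^{(j)}$ so that $A=\bigcup_{j\in J}\psi^{(j)}\circ F^{(j)}(B^{(j)})$ and each $\Pi^{(j)}\circ F^{(j)}\Restr{B^{(j)}}$ is an immersion into $\RR^{d(j)}$. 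Compactness of $\cl(B^{(j)})$ yields a uniform fiber bound $N_j\in\NN$ for this immersion that is effectively computable from the two oracles. Writing $A_j=\psi^{(j)}\circ F^{(j)}(B^{(j)})$ and using $\RR^m\setminus A=\bigcap_{j\in J}(\RR^m\setminus A_j)$, it suffices to produce an existential formula for each $\RR^m\setminus A_j$. For every $j$ with $d(j)<m$, compose $\Pi^{(j)}$ with the componentwise inversion of $\psi^{(j)}$ on $I(j)^c$ to obtain a coordinate projection $\RR^m\to\RR^{d(j)}$ along which $A_j$ has fibers of cardinality at most $N_j$; Lemma~\ref{lemma:compl} with $n=m$ and $d=d(j)$, together with the inductive hypothesis at $d(j)<m$, then delivers the desired complement.

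The main obstacle is the case $d(j)=m$, where $\Pi^{(j)}$ is forced to be the identity map, $F^{(j)}$ is a local diffeomorphism of equidimensional spaces, and $A_j$ is an open subset of $\RR^m$; the fibers of $A_j$ over any projection to $\RR^{m-1}$ are open, hence infinite, subsets of $\RR$, so Lemma~\ref{lemma:compl} is not directly applicable at any $d<m$, while at $d=m$ it would require the very complement property we are trying to prove. To escape this circularity I plan to adapt the frontier-and-slicing argument of \cite[Theorem~2.7]{vdDS98}. Using Remark~\ref{rmk:Sparam}.6 on the natural stratification of $\cl(B^{(j)})$, I will construct immersive $\S$-parameterizations of each stratum of $F^{(j)}(\bd(B^{(j)}))$, producing an existential formula for the frontier $\fr(A_j)$, a set of dimension at most $m-1$; the inductive hypothesis, via Lemma~\ref{lemma:compl} at $d=m-1$, then yields an existential formula for its complement. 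Exploiting the decomposition $\RR^m\setminus A_j=(\RR^m\setminus\cl(A_j))\sqcup\bd(A_j)$ together with $\bd(A_j)\subseteq\fr(A_j)$ reduces the task to producing an existential formula for the open exterior $\RR^m\setminus\cl(A_j)$. For this last step I slice by a generic coordinate projection $\pi\colon\RR^m\to\RR^{m-1}$: each one-dimensional fiber of $\cl(A_j)$ is a finite union of compact intervals whose endpoints form a $\pi$-parameterized family extractable from the frontier data, so a further application of Lemma~\ref{lemma:compl} at $d=m-1$ assembles the fibered description of $\RR^m\setminus\cl(A_j)$ into an existential formula. The delicate technical point I anticipate is ensuring that this endpoint data is uniformly existentially describable and that the slicing argument preserves effectivity relative to the approximation and precision oracles for $\S$.
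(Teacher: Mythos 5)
Your induction scheme, base case, use of Proposition~\ref{prop:SparamFC}, DeMorgan reduction, and split into the cases $d(j)<m$ and $d(j)=m$ all mirror the paper's argument, and you correctly recognize that $d(j)=m$ is where the real difficulty lies. There are, however, two points where your plan departs from a proof that actually closes.

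First, a technical issue in the $d(j)<m$ case: ``compose $\Pi^{(j)}$ with the componentwise inversion of $\psi^{(j)}$ on $I(j)^c$'' does not produce a coordinate projection, so Lemma~\ref{lemma:compl} cannot be applied to it. The paper sidesteps this by first separating out $\psi^{(j)}$ entirely: writing $W=\RR^{I}\times(\RR\setminus\{0\})^{I^c}$, one has $\RR^m\setminus A_j = (\RR^m\setminus W)\cup\psi(W\cap(\RR^m\setminus F(B)))$, and since $W$ and $\psi$ are quantifier-free definable, everything reduces to finding an existential formula for $\RR^m\setminus F(B)$. After this reduction the relevant projection $\Pi$ \emph{is} a genuine coordinate projection on $\RR^m$, and Lemma~\ref{lemma:compl} applies. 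Relatedly, your assertion that ``compactness of $\cl(B^{(j)})$ yields a uniform fiber bound $N_j$ effectively computable from the two oracles'' hides most of the work: the actual argument shows that $G\Restr{B\cap G^{-1}(G(B)\setminus D)}$ is a proper local homeomorphism over $G(B)\setminus D$ (where $D=G(\bd B)$ and $G=\Pi\circ F$), so the fiber count is constant on components, and the effectivity of the bound comes from combining Proposition~\ref{prop:CCdim} with Corollary~\ref{cor:existDec} \emph{after} invoking the induction hypothesis to get an existential formula for $G(B)\setminus D$.

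Second, and more seriously, your treatment of the $d(j)=m$ case has a genuine gap. You correctly reduce (via the frontier) to producing an existential formula for the open exterior $\RR^m\setminus\cl(A_j)$, but the step ``a further application of Lemma~\ref{lemma:compl} at $d=m-1$ assembles the fibered description of $\RR^m\setminus\cl(A_j)$ into an existential formula'' does not match what Lemma~\ref{lemma:compl} says. That lemma complements a set with \emph{uniformly finite fibers} over a coordinate projection; it does not assemble an existential formula for an open set from a parameterized description of interval endpoints. Your slicing plan would in effect require re-proving a version of that lemma for full-dimensional open sets, and would also require choosing and effectively \emph{verifying} a ``generic'' projection, which is not obviously available relative to the oracles. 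The paper's resolution is different and cleaner: since $F$ is a local homeomorphism on $B$, $F(B)^c$ is closed and $F(\cl B)^c$ is open, so $F(\cl B)^c = F(B)^c\cap F(\bd B)^c$ is both open and closed inside $F(\bd B)^c$; it is therefore a union of connected components of $F(\bd B)^c$, and one determines exactly which components by applying Proposition~\ref{prop:CCdim} to list them with existential formulas and then Corollary~\ref{cor:existSetMeet} to test which ones meet $F(\cl B)\cap F(\bd B)^c$ (itself existentially definable by the $d<m$ case applied to $F(\bd B)\setminus F(B)$). You would need this topological observation, or something equivalent, to close the $d(j)=m$ case.
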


\begin{proof}
We follow the proof of \cite[Theorem 2.7]{vdDS98}, except we use the notion of an ``$\S$-parameterization'' in place of the ``Gabrielov property'' of \cite{vdDS98}.  The proof is by induction on $m$.  The base case of $m = 0$ is trivial (the effective content of the base case follows from Corollary \ref{cor:existDec}).  So let $m > 0$, and assume that the Theorem holds for all existentially $0$-definable subsets of $\RR^l$ for each $l < m$.

Apply Proposition \ref{prop:SparamFC} to construct an immersive $\S$-parameterization $\{\Phi^{(j)}\}_{j\in J}$ of $A$, with associated projections $\Pi^{(j)}:\RR^m\to\RR^{d(j)}$; we use the notation of Definition \ref{def:Sparam} and Remark \ref{rmk:Sparam}.6.  Thus
\[
A = \bigcup_{j\in J} \psi^{(j)}\circ F^{(j)}(B^{(j)}),
\]
so by DeMorgan's Law, it suffices to fix $j\in J$ and construct an existential $\L_{\Delta(\S)}$-formula defining $\RR^m\setminus \psi^{(j)}\circ F^{(j)}(B^{(j)})$.  Since $j\in J$ is now fixed, we will drop the index $j$, and will simply write $F$, $\psi : \RR^I\times(\RR\setminus\{0\})^{I^c}\to\RR^m$ (for some $I\subseteq\{1,\ldots,m\}$), $\Pi:\RR^m\to\RR^d$, $B\subseteq \RR^d$, and $U\subseteq \RR^d$.  We assume that $A = \psi\circ F(B)$.  Write $W = \RR^I\times(\RR\setminus\{0\})^{I^c}$, which is both the domain and range of $\psi$. Note that $F(B)\subseteq W$ and that $\psi:W\to W$ is a bijection.   Therefore
\[
\RR^m\setminus A = (\RR^m\setminus W) \cup \psi(W\cap (\RR^m\setminus F(B))).
\]
The sets $\RR^m\setminus W$ and $W$, and the function $\psi$, are all quantifier-free definable (and one can easily write down quantifier-free $\L_{\Delta(\S)}$-formulas defining them), so it suffices to construct an existential $\L_{\Delta(\S)}$-formula defining the set $\RR^m\setminus F(B)$.  The proof now breaks into two cases. \vspace*{5pt}

\noindent\emph{Case 1}: $d < m$.

Put $G = \Pi\circ F:U\to\RR^d$.   Note that
\begin{equation}\label{eq:claimConseq}
|\Pi^{-1}(x)\cap F(B)| \leq |G^{-1}(x)\cap B|
\end{equation}
for all $x\in\RR^d$.  We claim that we can effectively find $N\in\NN$ such that
\begin{equation}\label{eq:complClaim}
|G^{-1}(x)\cap B| \leq N
\end{equation}
for all $x\in\RR^d$.  To prove the proposition in Case 1, it suffices to prove the claim, since the proposition follows immediately from the claim, \eqref{eq:claimConseq}, the induction hypothesis, and Lemma \ref{lemma:compl}.

Note that $G\Restr{B}:B\to G(B)$ is a local homeomorphism, with $G(B)$ open in $\RR^d$.  Put $D = G(\bd B)$.  The set $D$ is existentially $0$-definable in $\RR_{\Delta(\S)}$ (and hence has dimension) with $\dim D \leq \dim \bd(B) < d$.  Thus every neighborhood of every point of $D$ contains points in $G(B)\setminus D$.  Thus if $N\in\NN$ is such that $|G^{-1}(x)\cap B| \leq N$ for all $x\in G(B)\setminus D$, then \eqref{eq:complClaim} holds for all $x\in G(B)\cap D$ as well, and hence holds for all $x\in\RR^d$.  So it suffices to find such a constant $N$ for which \eqref{eq:complClaim} holds for all $x\in G(B)\setminus D$.

We now show that the map $G\Restr{B\cap G^{-1}(G(B)\setminus D)} : B\cap G^{-1}(G(B)\setminus D) \to G(B)\setminus D$ is proper.  To see this, fix a compact set $K\subseteq G(B)\setminus D$; we must prove that $B\cap G^{-1}(K)$ is compact.  The set $G^{-1}(K)$ is clearly closed, so $B\cap G^{-1}(K)$ is closed in $B$.  If  $B\cap G^{-1}(K)$ is not closed in $\RR^d$, then $G^{-1}(K) \cap \bd(B) \neq \emptyset$, and hence $K\cap D\neq\emptyset$, contradicting the fact that $K\subseteq G(B)\setminus D$.  Thus $B\cap G^{-1}(K)$ is closed in $\RR^d$.  The set $B$ is also bounded, so $B\cap G^{-1}(K)$ is compact, as desired.

We have shown that $G\Restr{B\cap G^{-1}(G(B)\setminus D)} : B\cap G^{-1}(G(B)\setminus D) \to G(B)\setminus D$ is proper, and this map is also a local homeomorphism.  Therefore $|G^{-1}(x)\cap B|$ takes on a constant finite value on each connected component of $G(B)\setminus D$.  The sets $G(B)$ and $D$ are existentially $0$-definable subsets of $\RR^d$, and $d < m$, so $G(B)\setminus D$ is existentially $0$-definable by the induction hypothesis (and this fact is effective).  By Proposition \ref{prop:CCdim}, we may find existentially $0$-definable points $c_1,\ldots,c_P$ in each of the connected components $C_1,\ldots,C_P$ of $G(B)\setminus D$, respectively.  For each $N\in\NN$ and each $p\in\{1,\ldots,P\}$, the statement ``$|G^{-1}(c_p)\cap B| > N$'' is expressible as an existential $\L_{\Delta(\S)}$-sentence,  so ``$|G^{-1}(c_p)\cap B| \leq N$'' is expressible as the negation of an existential $\L_{\Delta(\S)}$-sentence.  By choosing $N$ large enough and using Corollary \ref{cor:existDec}, we may effectively find $N\in\NN$ such that $|G^{-1}(c_p)\cap B| \leq N$ for all $p\in\{1,\ldots,P\}$, and hence \eqref{eq:complClaim} is true for all $x\in G(B)\setminus D$.  This proves the claim. \vspace*{5pt}

\noindent\emph{Case 2}: $d = m$.

We use the superscript $c$ to denote complementation in $\RR^m$.  The set $F(\bd B)$ is existentially $0$-definable and is of dimension less than $d$, so by case 1, $F(\bd B)^c$ is also existentially definable.  Note that
\[
F(B)^c = F(\cl B)^c \cup (F(\bd B) \setminus F(B)).
\]
The set $F(\bd B)\cap F(B)$ is existentially $0$-definable and of dimension less than $d$, and $F(\bd B)\setminus F(B) = F(\bd B)\setminus(F(\bd B)\cap F(B))$, so by case 1 again, $F(\bd B)\setminus F(B)$ is existentially $0$-definable.  It therefore suffices to show that $F(\cl B)^c$ is existentially $0$-definable.

Note that $F(\cl B) = F(B) \cup F(\bd B)$, so $F(\cl B)^c = F(B)^c \cap F(\bd B)^c \subseteq F(\bd B)^c$.  The set $F(\cl B)^c$ is open in $\RR^m$ and $F(B)^c$ is closed in $\RR^m$ (since $F$ is a local homeomorphism on $B$), so $F(\cl B)^c$ is both open and closed in $F(\bd B)^c$, and hence $F(\cl B)^c$ is the union of a collection of connected components of $F(\bd B)^c$.  By Proposition \ref{prop:CCdim} we may fix existential $\L_{\Delta(\S)}$-formulas for the connected components $C_1,\ldots,C_P$ of $F(\bd B)^c$. Thus for some $J\subseteq\{1,\ldots,P\}$ we have
\begin{eqnarray*}
F(\cl B)^c
    & = &
    \bigcup_{p\in J} C_p,
\\
F(\cl B)\cap F(\bd B)^c
    & = &
    \bigcup_{p\in\{1,\ldots,P\}\setminus J} C_p,
\end{eqnarray*}
since $F(\cl B)^c$ and $F(\cl B)\cap F(\bd B)^c$ are complements of each other relative to $F(\bd B)^c$.  The set $F(\cl B)\cap F(\bd B)^c$ is existentially $0$-definable in $\RR_{\Delta(\S)}$, so by using Corollary \ref{cor:existSetMeet}, we may determine which of the sets $C_p$ meet $F(\cl B)\cap F(\bd B)^c$ and which do not.  This computes $J$, and thereby constructs and existential $\L_{\Delta(\S)}$-formula defining $F(\cl B)^c$.
\end{proof}

\begin{theorem}[Effective Desingularization of Definable Sets]
\label{thm:SparamDefinable}
If we are given an $\L_{\S}$-formula defining a set $A\subseteq\RR^m$, and are given $M\subseteq\{1,\ldots,m\}$ and $R\in\QQ_{+}^{M}$ such that $\Pi_M(A) \subseteq [-R,R]$, then relative to the approximation and precision oracles for $\S$, we can effectively find a representation for an immersive, $M$-bounded $\S$-parameterization of $A$.
\end{theorem}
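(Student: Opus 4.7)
The plan is to reduce to the existentially definable case already handled by Proposition \ref{prop:SparamFC}, using the effective theorem of the complement (Theorem \ref{thm:compl}) to eliminate universal quantifiers and negations. Specifically, I will prove by induction on the syntactic complexity of an $\L_\S$-formula $\varphi(x_1,\ldots,x_m)$ the following claim: relative to the approximation and precision oracles for $\S$, one can effectively produce an existential $\L_{\Delta(\S)}$-formula defining the same subset of $\RR^m$ as $\varphi$. Once this claim is established, the theorem follows by a single application of Proposition \ref{prop:SparamFC} to the resulting existential $\L_{\Delta(\S)}$-formula, using the given $M$ and $R$ to produce the required $M$-bounded immersive $\S$-parameterization of $A$.

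For the base case, an atomic $\L_\S$-formula is a sign condition on a polynomial expression in the variables and the functions $\hat{g}$ for $g\in\S$. Since each $\hat{g}$ agrees with $g$ on its compact box domain and vanishes outside, finitely many case splits on whether each relevant subtuple of variables lies in the corresponding box rewrites the set as a finite union of sets defined by sign conditions on $\S$-polynomial maps extended by zero off their natural domains. Such sets are of precisely the form handled by Lemma \ref{lemma:SparamQF}, and a representation for an $\S$-parameterization thereof is easily translated back into an existential $\L_{\Delta(\S)}$-formula (Remark \ref{rmk:Sparam}.1); alternatively, one checks directly that such sign conditions already \emph{are} quantifier-free $\L_{\Delta(\S)}$-formulas, which are in particular existential.

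For the inductive steps, the cases $\wedge$, $\vee$, and $\exists$ are immediate because the class of existential $\L_{\Delta(\S)}$-formulas is trivially closed under these operations (conjunction and disjunction of two matrix formulas beneath a common existential block, and simple concatenation of quantifier blocks). For $\neg\psi$, apply the inductive hypothesis to $\psi$ to obtain an existential $\L_{\Delta(\S)}$-formula defining the set defined by $\psi$, and then invoke Theorem \ref{thm:compl} to obtain an existential $\L_{\Delta(\S)}$-formula defining the complement. A universal quantifier is handled via $\forall y\,\psi \equiv \neg\exists y\,\neg\psi$, combining the $\exists$ and $\neg$ cases. Effectivity throughout is inherited from the corresponding effectivity in Proposition \ref{prop:SparamFC} and Theorem \ref{thm:compl} together with the purely syntactic nature of the remaining manipulations.

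The main ``obstacle'' is really only bookkeeping: the two nontrivial engines, Proposition \ref{prop:SparamFC} and Theorem \ref{thm:compl}, already do all of the essential geometric and model-theoretic work, so the argument here is a clean syntactic induction layering them together. The only place that requires some care is the base case, where atomic $\L_\S$-formulas invoke the globally defined symbols $\hat{g}$, while the $\S$-parameterization framework traffics in $\S$-polynomial functions on their natural (compact) domains extended by zero; but this gap is bridged by the finite case split described above and is precisely the scenario for which Lemma \ref{lemma:SparamQF} was formulated.
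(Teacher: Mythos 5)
Your proposal is correct and follows essentially the same route as the paper: convert the $\L_\S$-formula to an existential $\L_{\Delta(\S)}$-formula using Theorem \ref{thm:compl} to handle negations (and thus universal quantifiers), then apply Proposition \ref{prop:SparamFC}. The paper phrases the reduction via prenex normal form followed by repeated application of Theorem \ref{thm:compl} to strip quantifiers from the inside out, whereas you phrase it as a direct structural induction on the formula, but these are the same argument.
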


\begin{proof}
Given an $\L_{\S}$-formula which defines $A\subseteq\RR^n$, we can effectively find an $\L_{\S}$-formula which defines $A$ and is in prenex normal form.  Now repeatedly apply Theorem \ref{thm:compl} to find an existential $\L_{\Delta(\S)}$-formula defining $A$, and then apply Proposition \ref{prop:SparamFC}.
\end{proof}

The following is a generalization of Proposition \ref{prop:denseZeros}.2.

\begin{corollary}\label{cor:denseDef}
If $A\subseteq\RR^n$ is $0$-definable in $\RR_{\C}$, then $A\cap\C_{0}^{n}$ is dense in $A$.
\end{corollary}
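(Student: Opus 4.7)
The plan is to reduce the density question to the parameterization theorem already proved. Specifically, I apply Theorem \ref{thm:SparamDefinable} with $\S=\C$ and $M=\emptyset$ to obtain an immersive $\C$-parameterization $\{\Phi^{(j)}\}_{j\in J}$ of $A$, writing $\Phi^{(j)} = (\F^{(j)},E^{(j)};K^{(j)}\,|\,F^{(j)},\psi^{(j)},\sigma^{(j)},\tau^{(j)})$ with $\F^{(j)}$ defined on $U^{(j)}\subseteq\RR^{d(j)}$, so that $A = \bigcup_{j\in J}\psi^{(j)}\circ F^{(j)}(\Loc(\Phi^{(j)}))$. Since an immersive parameterization is in particular resolved, Remark \ref{rmk:Sparam}.4 says that each $\Loc(\Phi^{(j)})$ is open in $\RR^{d(j)}$; in particular $\QQ^{d(j)}\cap\Loc(\Phi^{(j)})$ is dense in $\Loc(\Phi^{(j)})$.

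The central step will be to show that $\psi^{(j)}\circ F^{(j)}(\QQ^{d(j)}\cap\Loc(\Phi^{(j)}))\subseteq\C_0^n$. Given $q\in\QQ^{d(j)}\cap U^{(j)}$, Definition \ref{def:Canalytic} furnishes $b\in\QQ^{d(j)}$ and $r\in\QQ_+^{d(j)}$ with $q-b\in[-r,r]$, $[b-r,b+r]\subseteq U^{(j)}$, and $x\mapsto F^{(j)}(x+b)$ a member of $\C_r^n$. Note that $\C_0$ is a field by axiom 1 of an IF-system, hence contains $\QQ$, so the constant zero-ary map $g:\{0\}\to[-r,r]$ with $g(0)=q-b$ lies in $\C_0^{d(j)}$. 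Closure under composition (axiom 3(a)) then yields $F^{(j)}(q)\in\C_0^n$. Since $\psi^{(j)}$ acts coordinate-wise by $x_i$ or $x_i^{-1}$ and $\C_0$ is closed under inversion of nonzero elements, we conclude $\psi^{(j)}\circ F^{(j)}(q)\in\C_0^n$; note that $F^{(j)}(q)\in\dom\psi^{(j)}$ is automatic from the parameterization requirement $F^{(j)}(\Loc(\Phi^{(j)}))\subseteq\dom\psi^{(j)}$.

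Finally, continuity of $\psi^{(j)}\circ F^{(j)}$ gives the standard fact that the image of a dense subset is dense in the image: for any point $y\in\psi^{(j)}\circ F^{(j)}(\Loc(\Phi^{(j)}))$ and any relatively open neighborhood $V$ of $y$, the preimage $(\psi^{(j)}\circ F^{(j)})^{-1}(V)$ is a nonempty open subset of $\Loc(\Phi^{(j)})$, hence meets $\QQ^{d(j)}$. Taking unions over $j\in J$ shows that $\bigcup_{j\in J}\psi^{(j)}\circ F^{(j)}(\QQ^{d(j)}\cap\Loc(\Phi^{(j)}))\subseteq A\cap\C_0^n$ is dense in $A$. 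There is no genuine obstacle: the heavy lifting is Theorem \ref{thm:SparamDefinable}, and the only substantive observation beyond that is the elementary fact that a $\C$-analytic map evaluated at a rational point lies in $\C_0^n$, which is immediate from the closure axioms defining an IF-system.
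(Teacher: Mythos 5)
Your proof is correct and takes essentially the same route as the paper: both apply Theorem \ref{thm:SparamDefinable} and conclude by evaluating the parameterizing maps at rational parameter values. The paper streamlines the density argument by first intersecting $A$ with an open rational box and then invoking Remark \ref{rmk:Sparam}.5 (which produces a single $(\C,\emptyset)$-lifted point) instead of your direct continuity argument, and it abbreviates your explicit verification via the IF-system closure axioms as ``so $a\in\C_0^n$ since $\C$ is an IF-system''; these are cosmetic differences.
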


\begin{proof}
It suffices to fix an open rational box $U\subseteq\RR^n$ such that $A\cap U \neq\emptyset$ and prove that $U\cap A\cap\C_{0}^{n} \neq\emptyset$.  The set $A\cap U$ is $0$-definable.  Applying Proposition \ref{thm:SparamDefinable} to $A\cap U$, and then Remark \ref{rmk:Sparam}.5, gives a point $a\in A\cap U$ with a $(\C,\emptyset)$-lifting, so $a\in\C_{0}^{n}$ since $\C$ is an IF-system.
\end{proof}

\section{Expanding Implicit Function Systems by Constants}
\label{s:IFconstants}

Recall from Notation \ref{notation:variety} that for a family $\F$ of real-valued functions on a set $A$,
\[
\VV(\F;A) = \{x\in A : \text{$f(x) = 0$ for all $f\in\F$}\}.
\]

\begin{lemma}[Topological Noetherianity]\label{lemma:topNoeth}
Let $\F$ be a family of real-valued $\C$-analytic functions on an open set $U\subseteq\RR^n$, and let $K\subseteq U$ be compact.  Then there exists a finite set $\G\subseteq\F$ such that $\VV(\F;K) = \VV(\G;K)$.
\end{lemma}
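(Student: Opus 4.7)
My plan is to prove the lemma in two stages. First, by compactness, reduce to a \emph{local} finiteness statement: for every $x \in U$ there is a neighborhood $U_x \subseteq U$ of $x$ and a finite subset $\G_x \subseteq \F$ with $\VV(\F; U_x) = \VV(\G_x; U_x)$. Granted this, finitely many $U_{x_1}, \ldots, U_{x_k}$ cover $K$, and $\G := \bigcup_{i=1}^{k} \G_{x_i}$ works: any $y \in \VV(\G; K)$ lies in some $U_{x_i}$, hence $y \in \VV(\G_{x_i}; U_{x_i}) = \VV(\F; U_{x_i})$, so $f(y) = 0$ for all $f \in \F$. Second, I prove the local statement by induction on $n$; the base $n = 0$ is trivial.

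For the inductive step, fix $x_0 \in U$. If some $f \in \F$ has $f(x_0) \neq 0$, take $\G_{x_0} = \{f\}$ on the neighborhood $\{f \neq 0\} \cap U$. If every $f \in \F$ has $\ord(f; x_0) = \infty$, then quasianalyticity (Proposition~\ref{prop:order}.1) forces each such $f$ to vanish on the connected component of $U$ containing $x_0$, and $\G_{x_0} = \emptyset$ works. Otherwise, fix $f_0 \in \F$ with $0 < \ord(f_0; x_0) < \infty$ and apply local resolution of singularities (the existence content of Theorem~\ref{thm:presDesing}) to the basic presentation $(\{f_0\}, \emptyset; K_0)$ on a small compact rational-box neighborhood $K_0 \subseteq U$ of $x_0$. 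This produces a finite family $\{F_j : (U^{(j)}; K^{(j)}) \to (U; K_0)\}_{j \in J}$ of compositions of admissible transformations, with $U^{(j)} \subseteq \RR^{d_j}$ and $d_j \leq n$, covering $K_0 \subseteq \bigcup_{j \in J} F_j(K^{(j)})$, such that each pullback is resolved: $f_0 \circ F_j(y) = y_{E^{(j)}}^{d^{(j)}}\, u_j(y)$ on $U^{(j)}$, with $u_j$ a nowhere-vanishing $\C$-analytic unit.

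Hence $K^{(j)} \cap \VV(f_0 \circ F_j) = \bigcup_{i \in \supp(d^{(j)})} \bigl(K^{(j)} \cap \{y_i = 0\}\bigr)$ is a finite union of compact subsets of open sets in $\RR^{d_j - 1}$, and if $d_j < n$ then $K^{(j)}$ itself lives in ambient dimension less than $n$. Since $\VV(\F; K_0) \subseteq \VV(f_0; K_0)$, one has $\VV(F_j^* \F; K^{(j)}) = \VV(F_j^* \F;\, K^{(j)} \cap \VV(f_0 \circ F_j))$, where the pullback family $F_j^* \F := \{f \circ F_j\}_{f \in \F}$ is $\C$-analytic on $U^{(j)}$ by closure of $\C$ under composition. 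The induction hypothesis, applied on each codimension-one slice (or on $K^{(j)}$ directly when $d_j < n$), then supplies a finite $\G_j \subseteq \F$ with $\VV(F_j^* \F;\, K^{(j)} \cap \VV(f_0 \circ F_j)) = \VV(F_j^* \G_j;\, K^{(j)} \cap \VV(f_0 \circ F_j))$. Setting $\G_{x_0} := \{f_0\} \cup \bigcup_{j \in J} \G_j$, the covering property combined with the identity $\VV(\F; K_0) \cap F_j(K^{(j)}) = F_j(\VV(F_j^* \F; K^{(j)}))$ yields $\VV(\F; K_0) = \VV(\G_{x_0}; K_0)$, closing the induction.

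The main obstacle is the bookkeeping in the last case. One must verify that each slice $K^{(j)} \cap \{y_i = 0\}$ is genuinely a compact subset of an open set in $\RR^{d_j - 1}$, so that the induction hypothesis applies on a lower-dimensional ambient space, and one must glue the finite witnesses $\G_j$ correctly across the cover. The glueing works because the inclusion $\VV(\F; K_0) \subseteq \VV(f_0; K_0)$ forces every point of $\VV(\F; K_0)$ to pull back into $K^{(j)} \cap \VV(f_0 \circ F_j)$ under whichever chart $F_j$ it factors through, and by resolution this set decomposes as a finite union of sets of strictly smaller ambient dimension; it is precisely this strict drop in dimension — rather than any order-type invariant — that drives the induction.
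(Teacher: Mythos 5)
Your proof is correct and takes essentially the same route as the paper: induction on $n$, resolution of singularities via Theorem~\ref{thm:presDesing} applied to a single nonzero function, reduction to coordinate-hyperplane slices where the ambient dimension drops. The only cosmetic differences are that you run a local-at-$x_0$ argument glued by compactness and include a pointwise three-way case split (some $f(x_0)\neq 0$; all orders infinite; finite positive order), whereas the paper covers $K$ by rational boxes first and then simply fixes one nonzero $f\in\F$ on the box — quasianalyticity plus connectedness of the box makes the first two of your cases unnecessary, since a nonzero $f$ on a connected box automatically has finite order everywhere. Both are sound; the paper's version is just a hair leaner.
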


\begin{proof}
The proof is by induction on $n$.  The result is trivial when $n = 0$, so let $n > 0$ and assume the lemma holds with $n-1$ in place of $n$.  There is nothing to prove if $\F = \emptyset$ or if $\F$ contains only zero functions, so we may assume that we can fix a nonzero $f\in\F$.  By covering $K$ with finitely many compact rational boxes contained in $U$, we may assume that $K$ is a compact rational box and that $U$ is an open rational box containing $K$, and thus that $\P = (\{f\},\emptyset;K:\infty)$ is a $\C$-presentation.  Apply Theorem \ref{thm:presDesing} to $\P = (\{f\},\emptyset;K:\infty)$.  This gives a finite family $\{F^{(j)}:(U^{(j)};K^{(j)}) \to (U;K)\}_{j\in J}$ of compositions of admissible transformations such that $K \subseteq \bigcup_{j\in J} F^{(j)}(K^{(j)})$ and $\P^{(j)} = (\F^{(j)},E^{(j)};K^{(j)}: 0,\ldots)$ is resolved for each $j\in J$, where $\P^{(j)}$ is the pullback of $\P$ by $F^{(j)}:(U^{(j)};K^{(j)}) \to (U;K)$.  Put $L = \bigcup_{j\in J} F^{(j)}(K^{(j)})$; it suffices to prove the lemma with $L$ in place of $K$.  Note that
\[
\VV(\F;L) = \bigcup_{j\in J} F^{(j)}(\VV(\F^{(j)};K^{(j)})),
\]
so it suffices to fix $j\in J$ and prove the lemma for $\VV(\F^{(j)};K^{(j)})$.  Note that $\VV(\F^{(j)};K^{(j)}) \subseteq \bigcup_{i\in E^{(j)}}\VV(x_i; K^{(j)})$, so we are done by applying the induction hypothesis to the pullback of $\F^{(j)}$ by the maps $x_{\{i\}^c} \mapsto (x_{\{i\}^c},0)$ for each $i\in E^{(j)}$ (where $0\in\RR^{\{i\}}$).
\end{proof}

\begin{definition}\label{def:IFexpand}
For any subgroup $E$ of $(\RR,+)$ such that $\C_0\subseteq E$, define
\[
\C(E) = \bigcup_{m,n\in\NN \atop (r,s)\in\QQ_{+}^{m}\times\QQ_{+}^{n}}
\{f(\cdot,a) : f\in\C_{(r,s)}, a\in E^n\cap[-s,s]\},
\]
where $f(\cdot,a):[-r,r]\to\RR: x\mapsto f(x,a)$.  Write $\C_0(E)$ for the set of constants of $\C(E)$, and write $\C_r(E)$ for the functions in $\C(E)$ defined on $[-r,r]$, for each $r\in\QQ_{+}^{n}$ and integer $n>0$.
\end{definition}

Our running assumption throughout Part IV of the paper has been that $\C$ is a quasianalytic IF-system.  We drop the assumption of quasianalyticity in the following theorem.
\begin{theorem}[Extending IF-systems by constants]\label{thm:IFconstants}
Let $\C$ be a (not necessarily quasianalytic) IF-system, and let $E$ be a subgroup of $(\RR,+)$ such that $\C_0\subseteq E$.
\begin{enumerate}{\setlength{\itemsep}{3pt}
\item
The set $\C(E)$ is the smallest IF-system containing both $\C$ and $E$.

\item
If $\C$ is quasianalytic, then so is $\C(E)$.
}\end{enumerate}
\end{theorem}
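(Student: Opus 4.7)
The plan is to handle the two parts separately: part 1 is a routine verification of the IF-system axioms for $\C(E)$, while part 2 reduces, via topological Noetherianity and a density-and-limit argument, to the quasianalyticity of $\C$ itself.

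For part 1, the easy containments come first: $\C \subseteq \C(E)$ by taking the parameter tuple empty in the defining formula, and $E \subseteq \C(E)_0$ by feeding any $a \in E$ into the coordinate projection $y \mapsto y \in \C_s$ for a rational $s \geq |a|$. To show $\C(E)$ is itself an IF-system, I will verify each axiom of Definition \ref{def:IFsystem} by absorbing the $E$-parameters into auxiliary variables and invoking the corresponding closure property of $\C$. For closure under composition, given $f(x) = F(x, c)$ and $g_i(x) = G_i(x, b_i)$ in $\C(E)$, I would form the $\C$-function $\Phi(x, y_1, \ldots, y_n, z) = F(G_1(x, y_1), \ldots, G_n(x, y_n), z)$ after shrinking and extending domains so the inner image lands in the domain of $F$, and observe that $(f \circ g)(x) = \Phi(x, b_1, \ldots, b_n, c)$; closure under division by variables is handled in the same style, and for closure under implicit functions one additionally uses a rational translation $b_\QQ$ close to $b$ together with Lemma \ref{lemma:IF}.2 to preserve the $\IF$ predicate while absorbing $b - b_\QQ$ as a small $E$-parameter. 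The field property of $\C(E)_0$ uses reciprocals constructed via implicit functions as in Remark \ref{rmk:IFsystem}.2. Finally, to establish minimality, if $\mathcal{D}$ is any IF-system containing $\C$ and $E$, then for $F \in \C_{(r, s)} \subseteq \mathcal{D}_{(r, s)}$ and $b \in E^n \cap [-s, s]$, the map $x \mapsto (x, b)$ has components consisting of coordinate projections in $\mathcal{D}_r$ and constants in $E \subseteq \mathcal{D}_0$ (promoted to constant functions via the empty-arity case of composition), so closure under composition in $\mathcal{D}$ yields $F(\cdot, b) \in \mathcal{D}_r$, giving $\C(E) \subseteq \mathcal{D}$.

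For part 2, assume $\C$ is quasianalytic and let $f \in \C(E)_r$ have vanishing Taylor series at some point $a \in [-r, r]$. Write $f(x) = F(x, b)$ with $F \in \C_{(r, s)}$ and $b \in E^n \cap [-s, s]$, and use the extension property to put $(a, b)$ in the interior of the box. The hypothesis then says $\PDn{\alpha}{F}{x}(a, b) = 0$ for every $\alpha \in \NN^m$. My strategy is to exploit density of $\C_0$-rational points in $\C$-analytic varieties. Let $V$ be the common zero set of the family $\{\PDn{\alpha}{F}{x}\}_{\alpha \in \NN^m}$ on an open neighborhood $\Omega$ of the box. By Lemma \ref{lemma:topNoeth} (where the quasianalyticity of $\C$ is essential), $V = \VV(\G; \Omega)$ for some finite $\G$, so Proposition \ref{prop:denseZeros}.2 yields density of $V \cap \C_0^{m+n}$ in $V$. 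Since $(a, b) \in V$, I can pick a sequence $(a^{(k)}, b^{(k)}) \in V \cap \C_0^{m+n}$ converging to $(a, b)$. For each $k$, the point $b^{(k)} \in \C_0^n$ makes $F(\cdot, b^{(k)})$ a genuine member of $\C_r$ (by composition with the constant tuple $b^{(k)}$, which itself lives in $\C$ via the empty-arity case of composition), and all its $x$-partials at $a^{(k)}$ vanish, so quasianalyticity of $\C$ forces $F(\cdot, b^{(k)}) \equiv 0$ on $[-r, r]$. Thus each $b^{(k)}$ lies in the set $Z = \{y \in [-s, s] : F(\cdot, y) \equiv 0 \text{ on } [-r, r]\}$, which is closed as an intersection of preimages of $\{0\}$ under the continuous maps $y \mapsto F(x, y)$, so passing to the limit gives $b \in Z$ and hence $f \equiv 0$.

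The main obstacle, which the density argument is specifically designed to circumvent, is that for an irrational $b \in E^n$, the restriction $F(\cdot, b)$ is only \emph{a priori} a $C^\infty$ function and not a member of any quasianalytic class, so one cannot directly invoke the quasianalyticity of $\C$ on $F(\cdot, b)$. Approximating $b$ by $\C_0^n$-rational points $b^{(k)}$ turns the claim into a statement about the genuine $\C_r$-functions $F(\cdot, b^{(k)})$, to which quasianalyticity of $\C$ does apply, and closedness of the identically-vanishing fiber set $Z$ then propagates the vanishing back to the irrational parameter $b$.
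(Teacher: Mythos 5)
Your plan matches the paper's proof closely, and your argument for part~2 is essentially identical to the one in the paper: reduce the infinite family $\{\PDn{\alpha}{F}{x}\}$ to a finite one on a compact box via topological Noetherianity (Lemma~\ref{lemma:topNoeth}), use density of $\C_0$-points in the variety (Proposition~\ref{prop:denseZeros}.2) to produce a sequence $(a^{(k)},b^{(k)})\to(a,b)$ in $\C_0^{m+n}$, apply quasianalyticity of $\C$ to each genuine $\C_r$-function $F(\cdot,b^{(k)})$, and pass to the limit. That part is correct.

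There is, however, a genuine gap in your treatment of part~1. You present the rational-translation trick --- replacing $b$ by a nearby $b'\in\C_0^{n'}$ and absorbing $b-b'$ as the new small $E$-parameter --- as an \emph{additional} wrinkle needed only for the implicit-function axiom, in order to preserve the $\IF$ predicate. In fact this translation is essential for \emph{all three} closure axioms, and without it your composition argument does not go through. The obstruction is the box-centering constraint in Definition~\ref{def:IFsystem}: members of $\C_\rho$ live on boxes $[-\rho,\rho]$ centered at the origin. When you form $\Phi(x,y,z)=F(G(x,y),z)$ intending to substitute $y=b$, you must restrict $y$ to a small box around $b$ so that $G(x,y)$ stays in the domain of $F$ --- but a small box around an arbitrary $b\in E^{n'}$ is not centered at the origin, so the resulting restriction is not a member of any $\C_\rho$, and your phrase ``shrinking and extending domains'' cannot repair this by itself (shrinking a box centered at $0$ keeps it centered at $0$, which won't contain $b$ unless $b$ is near $0$; extending the target box of $F$ only helps by an arbitrarily small rational amount). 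The paper's proof fixes this by setting $\tld G(y,y')=G(y,y'+b')$ for $b'\in\C_0^{n'}$ near $b$, producing a legitimate member of $\C_{(r,p')}$, and then writing $h(y)=H(y,\,b-b',\,a)$ with $b-b'\in E^{n'}$ (using precisely $\C_0\subseteq E$ and that $E$ is a group). The same repair is required for division by variables. Since you clearly understand the device --- you use it correctly for implicit functions --- the fix is straightforward, but as written your composition (and division) cases would fail.
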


\begin{proof}
We first prove 1.  It is clear that $\C(E)$ is contained in every IF-system containing $\C\cup E$, so to prove 1 it suffices to show that $\C(E)$ is an IF-system.  It is easy to see that $\C(E)$ contains the coordinate projection functions and has the extension property.

We now show that $\C_0(E)$ is a field.  For any two numbers $a,b\in\C_0(E)$, there exist $n\in\NN$, $r\in\QQ_{+}^{n}$, $f,g\in\C_r$, and $a',b'\in E^n\cap[-r,r]$ such that $a = f(a')$ and $b = g(b')$.  (The point is that the same $r$ can be chosen for both $f$ and $g$ by adding dummy variables, which is permissible since $\C$ contains the coordinate projection functions and is closed under composition.)  Therefore $\C_0(E)$ is a ring, since each $\C_r$ is a ring.  The fact that $\C_0(E)$ is closed under taking reciprocals follows from Remark \ref{rmk:IFsystem}.2, so $\C_0(E)$ is a field.

We now show that $\C$ is closed under composition.  Let $f\in\C_s(E)$ and $g\in\C_{r}^{m}(E)$, where $s\in\QQ_{+}^{m}$ and $r\in\QQ_{+}^{n}$, and assume that $g([-r,r])\subseteq[-s,s]$.  Put $h = f\circ g$.  We must show that $h$ is in $\C_r(E)$.  Fix functions $F\in\C_{(s,s')}$ and $G\in\C_{(r,r')}^{m}$ and points $a\in E^{m'}\cap[-s',s']$ and $b\in E^{n'}\cap [-r',r']$ such that $f(x) = F(x,a)$ and $g(y) = G(y,b)$, where $s'\in\QQ_{+}^{m'}$ and $r'\in\QQ_{+}^{n'}$, and where we write $(x,x')$ and $(y,y')$ for coordinates on $\RR^m\times\RR^{m'}$ and $\RR^n\times\RR^{n'}$, respectively.  Since $\C$ has the extension property, we may enlarge $s$ to assume that $G([-r,r],b) \subseteq (-s,s)$, and we may enlarge $r'$ to assume that $b\in(-r',r)$.  Fix $b'\in\C_{0}^{n'}$ and $p'\in\QQ_{+}^{n'}$ such that $b\in(b'-p',b'+p')$, $[b'-p',b'+p']\subseteq [-r',r']$, and $G([-r,r]\times[b'-p',b'+p']) \subseteq (-s,s)$.  Define $\tld{G}:[-r,r]\times[-p',p']\to\RR$ by
\[
\tld{G}(y,y') = G(y,y'+b'),
\]
and define $H:[-r,r]\times[-p',p']\times[-s',s'] \to \RR$ by
\[
H(y,y',x') = F(\tld{G}(y,y'), x').
\]
Note that $\tld{G}\in\C_{(r,p')}$ and the image of $\tld{G}$ in contained in $(-s,s)$, so $H\in\C_{(r,p',s')}$.  Finally note that $h(y) = H(y,b-b',a)$ and that $(b-b',a) \in E^{n'+m'}\cap([-p',p']\times[-s's,'])$, since $b\in E^{n'}$, $b'\in\C_{0}^{n'}\subseteq E^{n'}$, and $E$ is an additive group.
So $h\in\C_r(E)$, as desired.

We now show that $\C(E)$ is closed under division by variables.  Let $f\in\C_r(E)$ be such that $f(\tld{x},0) = 0$ on $[-\tld{r},\tld{r}]$, where $r = (r_1,\ldots,r_m)\in\QQ_{+}^{m}$ and $\tld{r} = (r_1,\ldots,r_{m-1})$, and where we write $x = (\tld{x},x_m) = (x_1,\ldots,x_m)$ for coordinates on $\RR^m$.  Let $h:[-r,r]\to\RR$ be the continuous function defined by $f(x) = x_m h(x)$.  We must show that $h\in\C_r(E)$.  Fix $F\in\C_{(r,s)}$ and $a\in E^n\cap[-s,s]$ such that $f(x) = F(x,a)$, where $s\in\QQ_{+}^{n}$.  Write $y$ for coordinates on $\RR^n$.  Define $G:[-r,r]\times[-s,s]\to\RR$ by $G(x,y) = F(x,y) - F(\tld{x},0,y)$.  Note that $f(x) = G(x,a)$ on $[-r,r]$, that $G\in\C_{(r,s)}$, and that $G(\tld{x},0,y) = 0$ on $[-\tld{r},\tld{r}]\times[-s,s]$.  Therefore $G(x,y) = x_m H(x,y)$ for some $H\in\C_{(r,s)}$.  We have $h(x) = H(x,a)$, so $h\in\C_r(E)$, as desired.

We now show that $\C(E)$ is closed under implicit functions.  Let $f\in\C_{(r,s)}(E)$ be such that $\IF(f;r,s)$ holds, where $r\in\QQ_{+}^{m}$ and $s\in\QQ_{+}$; write $(x,y) = (x_1,\ldots,x_m,y)$ for coordinates on $\RR^{m+1}$.  This means that there exists $\sigma\in\{-1,1\}$ such that $\sigma f(x,s) > 0$ and $\sigma f(x,-s) < 0$ on $[-r,r]$, and that $\sigma \det\PD{}{f}{y} > 0$ on $[-r,r]\times[-s,s]$.  Fix $F\in\C_{(r,s,t)}$ and $a\in E^n\cap[-t,t]$ such that $f(x,y) = F(x,y,a)$, where $t\in\QQ_{+}^{n}$.  By performing similar manipulations to what was done in the proof of closure under compositions, we may modify $F$ and $a$ to assume that $\sigma F(x,s,z) > 0$ and $\sigma F(x,-s,z) < 0$ on $[-r,r]\times[-t,t]$, and that $\sigma \det \PD{}{F}{y} > 0$ on $[-r,r]\times[-s,s]\times[-t,t]$; namely, that $\IF(F;(r,t),s)$ holds.  So there exists a unique function $G:[-r,r]\times[-t,t]\to(-s,s)$ such that $F(x,G(x,z),z) = 0$ on $[-r,r]\times[-t,t]$, and we have $G\in\C_{(r,s)}$.  Note that $g(x) = G(x,a)$, so $g\in\C_r(E)$, as desired.  This completes the proof of 1.

We now prove 2.  Assume that $\C$ is quasianalytic.  Let $f\in\C_r(E)$ and $a\in[-r,r]$ be such that $\PDn{\alpha}{f}{x}(a) = 0$ for all $\alpha\in\NN^m$, where $r\in\QQ_{+}^{m}$.  We must show that $f = 0$.  Fix $F\in\C_{(r,s)}$ and $b\in E^n\cap[-s,s]$ such that $f(x) = F(x,a)$, where $s\in\QQ_{+}^{n}$.  Let $\F = \{\PDn{\alpha}{F}{x} : \alpha\in\NN^m\}$, and write $\VV(\F) := \VV(\F;[-r,r]\times[-s,s])$.  By Lemma \ref{lemma:topNoeth}, there exists a finite $\G\subseteq\F$ such that
$\VV(\F) = \VV(\G)$, and by Proposition \ref{prop:denseZeros}, $\VV(\G)\cap\C_{0}^{m+n}$ is dense in $\VV(\G)$.  Since $(a,b)\in\VV(\F)$, we may fix a sequence $\{(a_i,b_i)\}_{i\in\NN}\subseteq \VV(\F)\cap \C_{0}^{m+n}$ which converges to $(a,b)$.  Each function $x\mapsto F(x,b_i)$ is in $\C_r$ and has a zero Taylor series at $a_i$, so by the quasianalyticity of $\C$, $F(x,b_i) = 0$ for all $i\in\NN$ and all $x\in[-r,r]$.  Thus for all $x\in[-r,r]$, $f(x) = F(x,b) = \lim_{i\to\infty} F(x,b_i) = 0$.  This proves 2.
\end{proof}

\section{Model-Theoretic Consequences of Desingularization}\label{s:MT}

This section proves the five theorems from the Introduction.  The following is a restatement of Theorem \ref{introThm:main}.

\begin{theorem}[Characterization of Decidability]\label{thm:charDec}
The theory of $\RR_{\S}$ is decidable if and only if the approximation and precision oracles for $\S$ are decidable.
\end{theorem}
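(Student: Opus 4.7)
The plan is to prove each direction separately, with the backward direction being an almost immediate consequence of Theorem \ref{thm:SparamDefinable}.

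For the forward direction, assume $\Th(\RR_{\S})$ is decidable. I would show that each query posed to the approximation or precision oracle for $\S$ is expressible as an $\L_{\S}$-sentence, which can be effectively constructed from the input data; then decidability of the theory yields a decision procedure for the oracle. For the approximation oracle, the condition $\PDn{\alpha}{S_\sigma}{x}(B) \subseteq I$ is expressible as a universally quantified $\L_{\S}$-sentence once one notes that each partial derivative $\PDn{\alpha}{S_\sigma}{x}$ is first-order definable in $\RR_{\S}$ via the standard $\epsilon$-$\delta$ characterization of derivatives as iterated limits of difference quotients (taken from within $[-r_\sigma,r_\sigma]$, which suffices since the functions in $\S$ extend $C^\infty$ smoothly across the boundary). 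For the precision oracle, the condition that the component $\varphi_i$ of the implicitly defined section $\varphi$ vanishes identically on the rational box manifold $B$ is likewise a universal $\L_{\S}$-sentence, since the nonsingular system $P\circ\varphi(x_\lambda)=0$ uniquely characterizes $\varphi$ and the relevant Jacobian nondegeneracy is a polynomial condition on partial derivatives of $\S$-functions.

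For the backward direction, assume both oracles are decidable. Given an $\L_{\S}$-sentence $\varphi$, I would view $\varphi$ as defining the subset $A\subseteq\RR^{0}=\{0\}$, so that $A=\{0\}$ when $\varphi$ is true in $\RR_{\S}$ and $A=\emptyset$ when $\varphi$ is false. Applying Theorem \ref{thm:SparamDefinable} with $m=0$, $M=\emptyset$, and no bound vector $R$ needed, I can effectively compute, using both oracles, an immersive $\S$-parameterization $\{\Phi^{(j)}\}_{j\in J}$ of $A$. The sentence $\varphi$ holds in $\RR_{\S}$ if and only if $J$ is nonempty, which is trivially decidable since $J$ is a finite, explicitly constructed index set.

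The main obstacle of the characterization has already been absorbed by the preceding sections, which develop the effective resolution of singularities procedure (Theorem \ref{thm:presDesing}), effective fiber cutting (Proposition \ref{prop:SparamFC}), effective theorem of the complement (Theorem \ref{thm:compl}), and their synthesis in Theorem \ref{thm:SparamDefinable}. With that theorem in hand, the proof of Theorem \ref{thm:charDec} is genuinely a packaging exercise; the only residual care needed is in the forward direction, where one must verify that partial derivatives and $\S$-polynomial relations are first-order expressible in $\L_{\S}$, a routine matter handled by standard limit characterizations.
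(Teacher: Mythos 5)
Your proposal is correct and matches the paper's own proof: the paper establishes the forward direction as an immediate remark after Definition \ref{def:precOracle} (the oracle queries are effectively expressible as $\L_{\S}$-sentences), and the backward direction is proved exactly as you describe, by applying Theorem \ref{thm:SparamDefinable} to the set $A\subseteq\RR^0$ defined by the given sentence and checking whether the resulting index set $J$ is nonempty (as in Corollary \ref{cor:existDec}). Your elaboration of the forward direction via $\epsilon$-$\delta$ limits of difference quotients within $[-r_\sigma,r_\sigma]$ is the standard way to fill in the paper's brief remark, and your handling of the vacuous $M$-boundedness condition for $m=0$ is correct.
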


\begin{proof}
This follows from Theorem \ref{thm:SparamDefinable} in the same way that Corollary \ref{cor:existDec} follows from Proposition \ref{prop:SparamFC}.
\end{proof}

Various application of Theorem \ref{thm:charDec} to decidability are given in \cite{DJMconstrDec}. For the remaining of the paper, we set all effectivity issues aside and use the tools developed to prove Theorem \ref{thm:charDec} to deduce theorems about the model theory of the structures $\RR_{\S}$ and $\RR_{\C}$.

Due to the length of its statement, here we will only prove Theorem \ref{introThm:param} but do not restate it.

\begin{proof}[Proof of Theorem \ref{introThm:param}]
This follows immediately from Theorem \ref{thm:SparamDefinable} and Remark \ref{rmk:Sparam}.6.
\end{proof}

The following is Theorem \ref{introThm:MC}.

\begin{theorem}\label{thm:MC}
The structure $\RR_{\Delta(\S)}$ is model complete.
\end{theorem}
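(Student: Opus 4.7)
The plan is to deduce Theorem \ref{thm:MC} as an immediate corollary of the Effective Theorem of the Complement (Theorem \ref{thm:compl}), ignoring the effective content of that theorem. Recall that $\RR_{\Delta(\S)}$ is model complete if and only if every $\L_{\Delta(\S)}$-formula $\varphi(x_1,\ldots,x_n)$ is equivalent in $\RR_{\Delta(\S)}$ to an existential $\L_{\Delta(\S)}$-formula, which in turn is equivalent to the assertion that every $0$-definable subset of $\RR^n$ is existentially $0$-definable in $\RR_{\Delta(\S)}$.

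I would proceed by induction on the complexity of the defining formula, showing that the collection of existentially $0$-definable subsets of $\RR^n$ coincides with the collection of all $0$-definable subsets. The base case (atomic formulas) is immediate, since atomic formulas are already existential. For the inductive step, note that the class of existentially $0$-definable sets is syntactically closed under finite unions (disjunction), finite intersections (conjunction), and coordinate projections (existential quantification), so the only nontrivial closure property required is closure under complementation, since universal quantification can be handled via $\forall y\,\psi \equiv \neg\exists y\,\neg\psi$ and thus reduces to complements, projections, and complements again.

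Closure under complementation is precisely what Theorem \ref{thm:compl} supplies: given an existential $\L_{\Delta(\S)}$-formula defining $A\subseteq\RR^m$, one obtains an existential $\L_{\Delta(\S)}$-formula defining $\RR^m\setminus A$. Combined with the trivial closure properties, the induction on formula complexity goes through, and every $0$-definable subset of $\RR^n$ is thereby shown to be existentially $0$-definable. This establishes model completeness of $\RR_{\Delta(\S)}$.

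The genuinely hard work has already been carried out in the preceding sections of the paper; all of the resolution of singularities, fiber cutting, and theorem-of-the-complement machinery is what makes Theorem \ref{thm:compl} available. Once that theorem is granted, the proof of Theorem \ref{thm:MC} is essentially a one-line deduction combined with the standard Boolean-algebra-plus-induction argument sketched above. The only minor subtlety is to confirm that Theorem \ref{thm:compl} truly applies here: it is stated for $\L_{\Delta(\S)}$-formulas (with the free variables playing the role of the ambient coordinates), which is exactly the framework needed for the induction, so no additional reduction is required.
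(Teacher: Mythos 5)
Your proposal is correct and follows the same route as the paper, which deduces Theorem \ref{thm:MC} directly from Theorem \ref{thm:compl} (the theorem of the complement); you have merely spelled out the standard induction on formula complexity that the paper leaves implicit. The paper also notes a second derivation from the last sentence of Theorem \ref{introThm:param}, but your argument matches the primary one given.
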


\begin{proof}
This follows immediately from Theorem \ref{thm:compl}.  (It also follows immediately from the last sentence in the statement of Theorem \ref{introThm:param}.)
\end{proof}

\begin{lemma}[Curve Selection]\label{lemma:curveSelection}
Let $A\subseteq\RR^m$ be definable in $\RR_{\S}$, and let $a\in\cl(A)$.  Then there exists a definable $\C$-analytic function $g:[0,1]\to\RR^m$ such that $g(0) = a$ and $g((0,1])\subseteq A$.
\end{lemma}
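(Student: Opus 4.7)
The plan is to apply Theorem \ref{introThm:param} to $A$ and then extract the desired curve from the parameterization data. Since the stated theorem applies to $0$-definable sets, as noted in the paragraph immediately following it, I would first pass to $\RR_{\S\cup\RR}$ (equivalently, work with the IF-system $\C(\RR)$ constructed in Section \ref{s:IFconstants}) so that the definable set $A$ becomes $0$-definable there. This yields finite families of $\C(\RR)$-analytic immersions $\{F^{(j)}:U^{(j)}\to\RR^m\}_{j\in J}$, bounded open rational boxes $\{B^{(j)}\}_{j\in J}$ with $\cl(B^{(j)})\subseteq U^{(j)}$, and monomial-inversion maps $\psi^{(j)}$ with $F^{(j)}(B^{(j)})$ in the natural domain of $\psi^{(j)}$, so that $A=\bigcup_{j\in J}\psi^{(j)}\circ F^{(j)}(B^{(j)})$.

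Next I would locate $a$ inside this parameterization. Since $\cl(A)=\bigcup_{j\in J}\cl(\psi^{(j)}\circ F^{(j)}(B^{(j)}))$, I may fix $j$ with $a\in\cl(\psi^{(j)}\circ F^{(j)}(B^{(j)}))$ and a sequence $b_n\in B^{(j)}$ such that $\psi^{(j)}(F^{(j)}(b_n))\to a$. Because $\cl(B^{(j)})$ is compact and $F^{(j)}$ extends continuously to a neighborhood of it, after passing to a subsequence I may assume $b_n\to b$ for some $b\in\cl(B^{(j)})$, and then $F^{(j)}(b_n)\to F^{(j)}(b)$. I would then verify that $F^{(j)}(b)$ lies in the natural domain of $\psi^{(j)}$: if some coordinate $F^{(j)}(b)_i$ were zero for an index $i$ with $\sigma_i(j)=-1$, then the $i$-th component of $\psi^{(j)}(F^{(j)}(b_n))$ would diverge, contradicting convergence to the finite point $a$. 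Hence $\psi^{(j)}(F^{(j)}(b))=a$.

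To construct $g$, I would pick any rational point $c\in B^{(j)}$ sufficiently close to $b$ that the line segment $\gamma(t)=(1-t)b+tc$ satisfies $F^{(j)}(\gamma([0,1]))\subseteq\dom(\psi^{(j)})$; this is possible because $\dom(\psi^{(j)})$ is open, $F^{(j)}$ is continuous at $b$, and $b\in\dom(F^{(j)})$. Convexity of $B^{(j)}$ together with $b\in\cl(B^{(j)})$ gives $\gamma((0,1])\subseteq B^{(j)}$. Then $g:=\psi^{(j)}\circ F^{(j)}\circ\gamma$ satisfies $g(0)=a$ and $g((0,1])\subseteq\psi^{(j)}\circ F^{(j)}(B^{(j)})\subseteq A$. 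Since $\gamma$ is affine, $F^{(j)}$ is $\C(\RR)$-analytic, and $\psi^{(j)}$ is a Laurent monomial on its natural domain, $g$ is $\C(\RR)$-analytic, and it is definable in $\RR_{\S}$ because it is built from the parameterization data of the definable set $A$ (with $b$ and $c$ contributing only real parameters).

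There is no serious obstacle; the only mild subtlety is the justification that $F^{(j)}(b)$ avoids the boundary of $\dom(\psi^{(j)})$, which is handled by the divergence argument above, and the routine point that Theorem \ref{introThm:param} applies to $A$ only after transitioning to $\RR_{\S\cup\RR}$, which is legitimate thanks to Theorem \ref{thm:IFconstants} ensuring $\C(\RR)$ is itself a quasianalytic IF-system.
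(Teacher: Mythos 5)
Your proposal is correct and follows essentially the same route as the paper: pass to $\C(\RR)$ via Theorem~\ref{thm:IFconstants}, apply the parameterization theorem, locate $a$ as an image of a point $b$ in the closure of some $B^{(j)}$, and push forward a straight segment from $b$ to an interior point. The one difference is that the paper first intersects $A$ with a bounded box neighborhood of $a$, which by the ``Furthermore'' clause of Theorem~\ref{introThm:param} forces all $\sigma_i(j)=1$ and eliminates the $\psi^{(j)}$ maps entirely; you instead keep the $\psi^{(j)}$ and handle them directly with the divergence argument (which is sound and is, in fact, the same argument the paper uses later in the proof of \eqref{eq:SparamCL}) together with the requirement that $c$ be chosen close to $b$.
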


\begin{proof}
Fix a definable set $A$ and $a\in\cl(A)$.  By using Theorem \ref{thm:IFconstants} to replace $\C$ with $\C(\RR)$, we may assume that $\RR\subseteq\C$.  By intersecting $A$ with a box which is a neighborhood of $a$, we may assume that $A$ is bounded.  Applying Theorem \ref{introThm:param} to $\RR_{\S\cup\RR}$ gives a finite family of immersions $\{F^{(j)}:U^{(j)}\to\RR^m\}_{j\in J}$ and bounded open rational boxes $\{B^{(j)}\}_{j\in J}$ such that
\begin{equation}\label{eq:SparamSimple}
A = \bigcup_{j\in J} F^{(j)}(B^{(j)}),
\end{equation}
where for each $j\in J$, the set $U^{(j)}$ is open in $\RR^{d(j)}$ for some $d(j)\leq m$, the closure of $B^{(j)}$ is contained in $U^{(j)}$, the map $F^{(j)}$ is $0$-definable in $\RR_{\S}$, and $F^{(j)}\Restr{B^{(j)}}$ is an immersion.  Fix $j\in J$ such that $a\in\cl(F^{(j)}(B^{(j)}))$.  Since $B^{(j)}$ is bounded, $\cl(F^{(j)}(B^{(j)})) = F^{(j)}(\cl B^{(j)})$, so we may fix $b\in \cl(B^{(j)})$ such that $a = F^{(j)}(b)$.  Choose $c\in B^{(j)}$, and define $h:[0,1]\to\RR^m$ by $h^{(j)}(t) = (1-t)b + tc$.  Then $h(0) = b$ and $h((0,1])\subseteq B^{(j)}$.  The desired function $g:[0,1]\to A$ is given by $g = F^{(j)}\circ h$.
\end{proof}

\begin{definition}
If $\R$ is an expansion of the real field, the {\bf field of definable exponents} of $\R$ is the set of all $r\in\RR$ for which the power function $(0,\infty):t\mapsto t^r$ is definable in $\R$.  (It is easy to show that this set is indeed a field.)
\end{definition}

The following is a restatement of Theorem \ref{introThm:OminPolyUnif}.

\begin{theorem}\label{thm:OminPolyUnif}
The structure $\RR_{\S}$ is a polynomially bounded o-minimal expansion of the real field with $C^\infty$-uniformization, and the field of definable exponents of $\RR_{\S}$ is $\QQ$.
\end{theorem}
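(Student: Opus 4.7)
The plan is to deduce all four properties from the parameterization theorem (Theorem \ref{introThm:param}), applied via the quasianalytic IF-system $\C(\RR)$ of Theorem \ref{thm:IFconstants} so as to handle definability with parameters. Every set $A\subseteq\RR^m$ definable in $\RR_{\S}$ is $0$-definable in $\RR_{\S\cup\RR}$, and the introduction's remark after Theorem \ref{introThm:param} guarantees the parameterization theorem applies in that setting, producing a representation
\[
A=\bigcup_{j\in J}\psi^{(j)}\circ F^{(j)}(B^{(j)}),
\]
with each $F^{(j)}\colon U^{(j)}\to\RR^m$ a $\C(\RR)$-analytic immersion, each $B^{(j)}\subseteq\RR^{d(j)}$ a bounded open rational box with $\cl(B^{(j)})\subseteq U^{(j)}$, and each $\psi^{(j)}$ a monomial signed-reciprocal map. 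I will prove each of the four assertions in turn.

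For o-minimality, specialize to $m=1$. Each $B^{(j)}$ is either a point (if $d(j)=0$) or a bounded open interval, and for $d(j)=1$ the map $F^{(j)}$ is a univariate $\C(\RR)$-analytic immersion, hence has nonzero derivative and is monotone. So $F^{(j)}(B^{(j)})$ is a point or an open interval, and applying the homeomorphism $\psi^{(j)}$ preserves this. Thus $A$ is a finite union of points and intervals.

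For polynomial boundedness, let $f\colon\RR\to\RR$ be definable and apply the representation to $\Graph(f)\subseteq\RR^2$. On any piece whose image has unbounded first coordinate the factor $\psi^{(j)}$ must invert the first coordinate, so on that piece $x=F^{(j)}_1(t)^{-1}$ and $f(x)=F^{(j)}_2(t)^{\sigma_2^{(j)}}$ for $t$ in a bounded open interval $B^{(j)}\subseteq\RR$. As $x\to\infty$, $t$ tends to an endpoint $t_0\in\cl(B^{(j)})$ where $F^{(j)}_1$ vanishes; quasianalyticity of $\C(\RR)$ forces finite-order vanishing $F^{(j)}_1(t)\sim c_1(t-t_0)^{k_1}$ with $k_1\geq 1$, and either $F^{(j)}_2\equiv 0$ on the piece (so $f\equiv 0$ there) or $F^{(j)}_2(t)\sim c_2(t-t_0)^{k_2}$ with $k_2\geq 0$. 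Eliminating $t-t_0\sim|x|^{-1/k_1}$ gives $|f(x)|\leq C\,|x|^{k_2/k_1}$ for $x$ large, and taking the maximum over the finitely many unbounded pieces yields polynomial boundedness with rational exponent. Applying the same analysis with $f(t)=t^r$ converts the asymptotic equivalence into the identity $rk_1=-\sigma_2^{(j)} k_2$, forcing $r=-\sigma_2^{(j)} k_2/k_1\in\QQ$; the reverse inclusion is automatic since for $p/q\in\QQ$ the function $t\mapsto t^{p/q}$ on $(0,\infty)$ is already $0$-definable in the real field via $\exists y(y\geq 0\wedge y^q=t^p)$.

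For $C^\infty$-uniformization, let $K\subseteq\RR^m$ be a compact $0$-definable set. Since every coordinate of $K$ is bounded, Theorem \ref{introThm:param} provides a representation with every $\sigma_i^{(j)}=1$, so each $\psi^{(j)}$ is the identity; together with compactness of $K$ and continuity of $F^{(j)}$ on $\cl(B^{(j)})\subseteq U^{(j)}$ this gives $K=\bigcup_j F^{(j)}(\cl B^{(j)})$. For each $j$, pad the domain to dimension $d=\dim K$ by composing with the projection $\cl(B^{(j)})\times[-1,1]^{d-d(j)}\to\cl(B^{(j)})$, and then parameterize the resulting $d$-dimensional closed rational box by the $d$-torus $T^d=\{(x_i,y_i)_{i=1}^d\in\RR^{2d}:x_i^2+y_i^2=1\text{ for all }i\}$ via the $0$-definable $C^\infty$-surjection $(x_i,y_i)_i\mapsto(x_i)_i$ composed with an affine rescaling. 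Realizing the finitely many copies of $T^d$ (one per $j$) as disjoint compact submanifolds of a common $\RR^N$ gives a compact $C^\infty$-manifold $M$ of pure dimension $d$, and patching the corresponding compositions yields a single $0$-definable $C^\infty$-surjection $M\to K$. The main obstacle is the non-uniformity of the dimensions $d(j)$ of the pieces, which the padding construction resolves.
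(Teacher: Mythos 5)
Your proof is correct, and for two of the four assertions it departs from the paper in interesting ways.

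For o-minimality and for $C^\infty$-uniformization you are essentially on the paper's track: the paper likewise deduces o-minimality from the parameterization applied to $\RR_{\S\cup\RR}$ (via the finitely-many-connected-components criterion rather than your direct inspection of one-variable pieces, but this is the same idea), and it likewise builds the uniformizing manifold as a disjoint union of tori mapping onto the boxes $\cl(B^{(j)})$. One genuine refinement you add: the paper's disjoint union of tori $\coprod_j M^{(j)}$ may have components of different dimensions $d(j)$, and it simply declares $\dim M=\max_j d(j)$. Your padding $\cl(B^{(j)})\times[-1,1]^{d-d(j)}$ produces a manifold of pure dimension $d=\dim K$, which is cleaner if one reads ``compact $C^\infty$-manifold of the same dimension as $K$'' as requiring pure dimension.

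For polynomial boundedness and the identification of the field of definable exponents as $\QQ$, you take a genuinely different route. The paper first proves a curve selection lemma (itself via Theorem \ref{introThm:param}), selects a $\C$-analytic curve $g=(g_1,g_2)\colon[0,1]\to\Graph(f)$ with $g(0)=0$, factors $g_1(t)=(tu(t))^d$, $g_2(t)=(tv(t))^e$ using quasianalyticity and closure under $d$-th roots of positive $\C$-analytic units, substitutes and inverts to obtain an explicit nonzero limit $\lim_{x\to 0^+} f(x)/x^{e/d}$. You instead apply the parameterization to $\Graph(f)$ directly and read off the asymptotics as $x\to\infty$ from the finite-order vanishing of $F^{(j)}_1$ and $F^{(j)}_2$ at an endpoint $t_0\in\cl(B^{(j)})$, eliminating the parameter. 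Your route avoids the extraction of $d$-th roots and the substitution/inversion bookkeeping, at the cost of a bit more care with the sign/reciprocal maps $\psi^{(j)}$; the paper's route is sharper (it produces a nonzero limit, not merely a two-sided bound) and works at $0$ rather than $\infty$, which is what feeds its application to the exponent field. Both ultimately rest on the parameterization theorem, since curve selection is proven from it. One small imprecision in yours worth noting: when $\sigma_2^{(j)}=1$ the exponent you derive is $-k_2/k_1\le 0$, so the correct uniform conclusion is $|f(x)|\le C\max(|x|^{k_2/k_1},1)$ for $x$ large; the intended rational exponent bound is still obtained.
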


\begin{proof}
By using Theorem \ref{thm:IFconstants} to replace $\C$ with $\C(\RR)$, we may assume that $\RR\subseteq\C$.  Applying Theorem \ref{introThm:param} to the structure $\RR_{\S\cup\RR}$ shows that every definable set of $\RR_{\S}$ has finitely connected components, so $\RR_{\S}$ is o-minimal.\footnote{Alternatively, one could apply Theorem \ref{introThm:param} to $\RR_{\S}$ to see that the $0$-definable sets of $\RR_{\S}$ have finitely many connected components, and then invoke \cite[Proposition 1]{CMSpeissOpenCore} to see that $\RR_{\S}$ is o-minimal.}

We now show that $\RR_{\S}$ is polynomially bounded and that $\QQ$ is the field of definable exponents of $\RR_{\S}$.  The later implies the former, and to show the later it suffices to fix a map $f:(0,\epsilon)\to\RR$ definable in $\RR_{\S}$, for some $\epsilon > 0$, and to prove that $\lim_{t\to 0} f(t)/t^r = c$ for some $r\in\QQ$ and nonzero $c\in\RR$.  By o-minimality, $L = \lim_{t\to 0}f(t)$ exists in $\RR\cup\{-\infty,+\infty\}$.  If $L\in\RR\setminus\{0\}$, we may take $r = 0$, and if $L = \pm\infty$, we may replace $f$ with $1/f$.  So we may assume that $L = 0$.
By Lemma \ref{lemma:curveSelection} we may fix a $\C$-analytic map $g = (g_1,g_2):[0,1]\to\RR^2$ such that $g_1(0) = g_2(0) = 0$ and $g((0,1])\subseteq \Graph(f)$.  Thus $g_1(t) = (t u(t))^d$ and $g_2(t) = (t v(t))^e$ for some positive integers $d$ and $e$ and some $\C$-analytic functions $u,v:[0,1]\to\RR$ such that $u(0)\neq 0$ and $v(0)\neq 0$.  We have
\[
f((t u(t))^d) = (t v(t))^e
\]
for all $t\in(0,1]$.  Put $s = t\, u(t)$, and note that by the implicit function theorem, $t = s\, w(s)$ for all $s$ in some interval $[0,\delta)$, where $\delta > 0$ and $w$ is $\C$-analytic with $w(0)\neq 0$.  Thus
\[
f(s^d) = (s\, w(s) v(s\, w(s)))^e.
\]
So by letting $x = s^d$, we get
\[
f(x) = x^{e/d} \left(w(x^{1/d})  v(x^{1/d} w(x^{1/d}))\right)^e,
\]
so $\lim_{x\to 0} f(x)/x^{e/d} = \left(w(0) v(0)\right)^e \in\RR\setminus\{0\}$.

Finally, we show that $\RR_{\S}$ has $C^\infty$-uniformization.  Let $A\subseteq\RR^m$ be compact and $0$-definable in $\RR^{\S}$.  Consider the representation of $A$ given in \eqref{eq:SparamSimple} in the proof of Lemma \ref{lemma:curveSelection}.  Since $A$ is closed, we have
\[
A = \bigcup_{j\in J}F^{(j)}(\cl(B^{(j)})).
\]
For each $j\in J$, let $M^{(j)}$ be a torus which projects naturally onto $\cl(B^{(j)})$; for instance, if we write $\cl(B^{(j)}) = \prod_{i=1}^{d(j)}[a_i-r_i,a_i+r_i]$ for some $a\in\QQ^{d(j)}$ and $r\in\QQ_{+}^{d(j)}$, then we could let
\[
M^{(j)} = \left\{(x,y)\in\RR^{d(j)}\times\RR^{d(j)} : \bigwedge_{i=1}^{d(j)} (x_i-a_i)^2 + y_{i}^{2} = r_{i}^{2}\right\},
\]
which projects onto $B^{(j)}$ via $(x,y)\mapsto x$.  Let $M = \coprod_{j\in J} M^{(j)}$ (where $\coprod$ means disjoint union, with the understanding that we consider $M$ to be a subset of some Euclidean space).  Define $f:M\to A$ to be the composition of the projections $M^{(j)}\to \cl(B^{(j)})$ and the maps $F^{(j)}:\cl(B^{(j)})\to A$.  Note that $M$ is a compact $C^\infty$-manifold, that $\dim M = \max\{\dim(B^{(j)}) : j\in J\} = \dim A$, and that $f:M\to A$ is $0$-definable, surjective, and $C^\infty$.  Thus $\RR_{\S}$ has $C^\infty$-uniformization.
\end{proof}

The previous theorem with $\S = \C$ has the following converse, which is a more specific form of Theorem \ref{introThm:OminPolyUnifChar}.

\begin{theorem}\label{thm:strucChar}
If $\R$ is a polynomially bounded o-minimal expansion of the real field with $C^\infty$-uniformization, and if $\C$ is the IF-system given in Example \ref{ex:IFsystems}.6, then $\C$ is quasianalytic and $\R$ is definably equivalent to $\RR_{\C}$.
\end{theorem}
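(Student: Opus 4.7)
The plan is to verify two claims: $\C$ is quasianalytic, and $\R$ and $\RR_\C$ have the same $0$-definable sets. For the first, I invoke C. Miller's theorem cited in Example~\ref{ex:IFsystems}.6: in an o-minimal expansion of the real field, the IF-system of $0$-definable $C^\infty$ functions (restricted to compact boxes) is quasianalytic exactly when the expansion is polynomially bounded. Since $\R$ is polynomially bounded, $\C$ is quasianalytic.

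For the definable equivalence, one direction is immediate: every $f\in\C$ is by construction $0$-definable in $\R$, so each symbol $\widehat{f}$ of $\L_\C$ is $0$-definable in $\R$, being piecewise a $0$-definable $C^\infty$ function on its rational box and zero outside. For the converse I prove by induction on $\dim A$ that every $0$-definable set $A\subseteq\RR^m$ of $\R$ is $0$-definable in $\RR_\C$. The base case $\dim A=0$ is clear since $A$ is a finite set of $0$-definable points with coordinates in $\C_0$. For $\dim A>0$, I first reduce to the bounded case via the $0$-definable bijection $\RR^m\to(-1,1)^m$, $x\mapsto(x_i/(1+|x_i|))_i$, which together with its inverse is expressible in both $\R$ and $\RR_\C$ using only field and order operations. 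Then I reduce to the compact case by writing $A=\cl(A)\setminus(\cl(A)\setminus A)$: the frontier is $0$-definable in $\R$ of dimension strictly less than $\dim A$ (by o-minimality), hence $0$-definable in $\RR_\C$ by the inductive hypothesis.

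For $A$ compact $0$-definable of dimension $d$, I apply $C^\infty$-uniformization to obtain a surjective $0$-definable $C^\infty$-map $f:M\to A$ from a compact $C^\infty$-submanifold $M\subseteq\RR^n$ of dimension $d$. For each $I\subseteq\{1,\ldots,n\}$ with $|I|=d$, the locus $W_I\subseteq M$ on which the coordinate projection $\pi_I:M\to\RR^d$ is a local diffeomorphism is open and $0$-definable, and the $W_I$ cover $M$. On $\pi_I(W_I)$, I stratify by the finite-valued $0$-definable function $y\mapsto|\pi_I^{-1}(y)\cap W_I|$; on each top-dimensional open stratum $V$, the branches of $\pi_I^{-1}$ are $0$-definable $C^\infty$-sections $\sigma_V^{(j)}:V\to W_I$, obtained by lexicographically ordering preimages in the $I^c$-coordinates. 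The complement in $M$ of $\bigcup_{I,V,j}\sigma_V^{(j)}(V)$ is $0$-definable of dimension less than $d$, so its image under $f$ is handled by the inductive hypothesis. Since $M$ is compact each $V$ is bounded, so I cover $V$, up to a $0$-definable set of dimension less than $d$, by finitely many compact rational boxes $[b-r,b+r]\subseteq V$ with $b\in\QQ^d$ and $r\in\QQ_+^d$. Each translate $(f\circ\sigma_V^{(j)})(\cdot+b)\Restr{[-r,r]}$ is a $0$-definable $C^\infty$ function with a $C^\infty$ extension past $[-r,r]$, hence lies in $\C_r^m$, and $A$ is exhibited as a finite existential $\L_\C$-union of images of such functions together with the lower-dimensional residue.

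The hard part is the combinatorial step in which each bounded open $0$-definable stratum $V\subseteq\RR^d$ must be covered, up to a $0$-definable set of dimension less than $d$, by finitely many compact rational boxes inside $V$. Pointwise such boxes are easy to find; arranging a finite cover whose uncovered part is lower-dimensional requires either a secondary induction inside $V$ or an appeal to o-minimal cell decomposition to describe the collar of $V$ near its frontier as a finite union of lower-dimensional pieces. Once this combinatorial step is carried out, the remainder is a direct translation of the resulting formulas into the language of $\RR_\C$ via the definition of $\C$.
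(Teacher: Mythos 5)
Your ``hard part'' is not merely hard --- as stated it is impossible. If $V$ is a nonempty bounded open subset of $\RR^d$ with $d\geq 1$, and $B_1,\ldots,B_k$ are finitely many compact boxes with each $B_i\subseteq V$, then $\bigcup_i B_i$ is a compact proper subset of $V$ (proper because a nonempty open subset of $\RR^d$ is never compact), so $V\setminus\bigcup_i B_i$ is open and nonempty and therefore has dimension exactly $d$, not $<d$. Neither the ``secondary induction inside $V$'' nor the cell-decomposition collar you allude to can fix this; the obstruction is purely topological, and your outer induction on $\dim A$ never engages because the residue of $V$ retains full dimension. The problem was created when you replaced the compact set $M$ by the non-compact set $\pi_I(W_I)$ and then needed a finite compact cover of something that cannot have one.

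The paper's proof stays with $M$ and spends its compactness at precisely the step you are stuck on. Around each $p\in M$ there is a coordinate projection $\pi_I$ (with $|I|=\dim M$) and a $0$-definable neighborhood $U\ni p$ in $\RR^n$ on which $\pi_I\Restr{M\cap U}$ is a $C^\infty$ diffeomorphism onto an open subset of $\RR^I\cong\RR^d$; composing the inverse of that restriction with $f$ gives a $0$-definable $C^\infty$ map $F$ on an open box. Since $M$ is compact, finitely many such charts --- each shrunk so that $F$ is defined on an open rational box $(-s,s)$ strictly containing a compact rational box $[-r,r]$, with the images $F([-r,r])$ already covering $M$ --- suffice. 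Each $F\Restr{[-r,r]}$ then lies in $\C_r$ by the very definition of $\C$, and $A=f(M)=\bigcup_j F^{(j)}([-r^{(j)},r^{(j)}])$ is $0$-definable in $\RR_\C$. Your stratification by preimage count and lexicographic sections is a detour: it rederives the chart structure but passes it through $\pi_I(W_I)$, losing compactness. Incidentally, the outer induction on $\dim A$ is also unnecessary: once the claim is established for compact $0$-definable sets, the bounded case follows immediately from $A=\cl(A)\setminus\fr(A)$ because \emph{both} $\cl(A)$ and $\fr(A)$ are compact when $A$ is bounded.
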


\begin{proof}
As already explained in Example \ref{ex:IFsystems}.6, $\C$ is a quasianalytic IF-system since $\R$ is a polynomially bounded o-minimal expansion of the real field.  Every set which is $0$-definable in $\RR_{\C}$ is $0$-definable in $\R$, so we need to prove the converse.  To show this, since $\R$ expands the real field, it suffices to show that every bounded set which is $0$-definable in $\R$ is $0$-definable in $\RR_{\C}$.  If $A$ is bounded and $0$-definable in $\R$, then $\cl(A)$ and $\fr(A)$ are compact and $0$-definable in $\R$, and $A = \cl(A)\setminus \fr(A)$.  So in fact, it suffices to prove that every compact set which is $0$-definable in $\R$ is $0$-definable in $\RR_{\S}$.

So let $A$ be compact and $0$-definable in $\R$.  Fix a surjective $C^\infty$ map $f:M\to A$ which is $0$-definable in $\R$, where $M$ is a compact $C^\infty$-manifold.  By pulling back $f$ by the inverses of finitely many charts for $M$, we may fix a finite family $\{F^{(j)} : (-s^{(j)},s^{(j)})\to A\}_{j\in J}$ of $0$-definable $C^\infty$ maps and family of compact boxes $\{[-r^{(j)},r^{(j)}]\}_{j\in J}$ such that $A = \bigcup_{j\in J}F_j((-r^{(j)},r^{(j)}))$, where each $r^{(j)},s^{(j)}\in\QQ_{+}^{d(j)}$ with $r^{(j)} < s^{(j)}$.  Each map $F^{(j)}\Restr{[-r^{(j)},r^{(j)}]}$ is in $\C$, so $A$ is $0$-definable in $\RR_{\C}$.
\end{proof}

\begin{definition}\label{def:defConst}
If $\R$ is an expansion of a field with universe $R$, we call
\[
\{a\in R : \text{$a$ is $0$-definable in $\R$}\}
\]
the {\bf field of definable constants} of $\R$.  (It is easy to see that this set is indeed a field.)
\end{definition}

\newpage
\begin{theorem}[Definable Constants]\label{thm:defConst}
\hfill
\begin{enumerate}{\setlength{\itemsep}{3pt}
\item
The field of definable constants of $\RR_{\S}$ is
\begin{equation}\label{eq:SdefConst}
\{a\in\RR : \text{$a$ has an $(\S,\emptyset)$-lifting}\}.
\end{equation}

\item
The field of definable constants of $\R_{\C}$ is $\C_0$.
}\end{enumerate}
\end{theorem}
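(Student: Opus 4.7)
The plan is to prove each part by a pair of inclusions, with Part 2 reducing cleanly to Part 1 applied in the case $\S = \C$.

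For the $\supseteq$ direction of Part 1, unfolding Definitions \ref{def:basicLifting} and \ref{def:SElifting} shows that an $(\S,\emptyset)$-lifting of $a$ provides an $\S$-manifold $M'\subseteq\RR^{n'}$ defined nonsingularly by an $\S$-polynomial system $P(y) = 0$, together with a coordinate projection whose last component picks out $\{a\}$. Each partial derivative of a function in $\S$ is $0$-definable in $\RR_\S$ via the standard $\forall\epsilon\exists\delta$ limit formula (using one-sided limits at the boundary points of $[-r_\sigma, r_\sigma]$, which is legitimate because of the extension property of $\C$). Therefore every $\S$-polynomial function is $0$-definable, and hence so is $M'$ together with its coordinate projection $\{a\}$. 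For the reverse inclusion, if $a$ is $0$-definable in $\RR_\S$, I apply Theorem \ref{thm:SparamDefinable} to $A = \{a\}\subseteq\RR$ with $M = \emptyset$ (no boundedness hypothesis is needed) to obtain an immersive $\S$-parameterization $\{\Phi^{(j)}\}_{j\in J}$ of $\{a\}$. Because $A$ is nonempty, so is $J$, and (by the convention in Remark \ref{rmk:Sparam}.4) each $\Loc(\Phi^{(j)})$ is nonempty; Remark \ref{rmk:Sparam}.5 then produces an $(\S,\emptyset)$-lifting of some $a'\in A$, and the singleton structure of $A$ forces $a' = a$.

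For Part 2, the inclusion $\C_0\subseteq$ (field of definable constants of $\RR_\C$) is immediate: under the convention identifying functions with $\eta = 0$ with real numbers, each $c\in\C_0\subseteq\C$ corresponds to a $0$-ary function symbol $\widehat{c}$ in $\L_\C$ whose interpretation is $c$, so $c$ is defined by the atomic formula $y = \widehat{c}$. For the reverse inclusion, suppose $a\in\RR$ is $0$-definable in $\RR_\C$. By Part 1 applied with $\S = \C$, $a$ has a $(\C,\emptyset)$-lifting, which yields a $\C$-manifold $M'\subseteq\RR^{n'}$ defined nonsingularly by a $\C$-polynomial system $P(y) = 0$, with a coordinate projection onto $\{a\}$. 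Because $\dim\{a\} = 0$ and a good direction makes the projection of $M'$ open, $M'$ must be a single point $\{b\}$ with $a$ among its coordinates. By Remark \ref{rmk:IFsystem}.1, every partial derivative of a function in $\C$ lies in $\C$, so $P$ is $\C$-analytic in a neighborhood of $b$; the nonsingularity of $\PD{}{P}{y}(b)$ then invokes the implicit function closure property of $\C$ (Remark \ref{rmk:IFsystem}.4 with parameter space $\RR^m = \RR^0$) to produce a $\C$-analytic germ $g:\RR^0\to\RR^{n'}$ with $g(0) = b$. By Definition \ref{def:Canalytic}, this forces $b\in\C_0^{n'}$, whence $a\in\C_0$.

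The only real subtlety is the $\supseteq$ direction of Part 1, namely the need to verify that every $\S$-polynomial is $0$-definable in $\RR_\S$; this boils down to handling partial derivatives of $S_\sigma$ at boundary points of $[-r_\sigma, r_\sigma]$, which is routine once one appeals to one-sided limits (well-defined thanks to the extension property of $\C$). Every remaining step is a clean application of results already established earlier in the paper: Theorem \ref{thm:SparamDefinable} and Remark \ref{rmk:Sparam}.5 drive Part 1, while Remarks \ref{rmk:IFsystem}.1 and \ref{rmk:IFsystem}.4 drive Part 2.
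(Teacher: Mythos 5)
Your proposal is correct and takes essentially the same route as the paper: the converse of Part~1 is exactly the paper's appeal to Theorem~\ref{thm:SparamDefinable} (via Remark~\ref{rmk:Sparam}.5), and Part~2 is the paper's reduction to Part~1 with $\S=\C$ plus the observation that the closure of $\C$ under implicitly defined functions forces the coordinates of the lifted point into $\C_0$. What you add is a fuller spelling-out of the two facts the paper leaves implicit: that partial derivatives of functions in $\S$ (and hence $\S$-polynomials) are $0$-definable in $\RR_\S$, so that having an $(\S,\emptyset)$-lifting implies $0$-definability; and the precise mechanism by which Remark~\ref{rmk:IFsystem}.3/4 with no free parameters yields $b\in\C_0^{n'}$. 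One small detail to be careful with in Part~2: the $\C$-manifold $M'$ is cut out in a rational box manifold $D = A\times\{c\}$ with $A$ open in $\RR^E$ and $c\in\QQ^{E^c}$, so the implicit-function argument only produces $b_E\in\C_0^{E}$; the remaining coordinates $b_{E^c}=c$ lie in $\QQ\subseteq\C_0$ because $\C_0$ is a field containing $\QQ$. You implicitly use this but do not say it; stating it closes the argument cleanly.
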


Note that saying that $a\in\RR$ has an $(\S,\emptyset)$-lifting simply means that there exist an integer $n > 0$, an $\S$-polynomial function $f:D\to\RR^n$ with $D\subseteq\RR^n$, $b = (b_1,\ldots,b_n)\in D$, and $i\in\{1,\ldots,n\}$ such that $f(b) = 0$, $\det\PD{}{f}{x}(b)\neq 0$, and $a = b_i$.

\begin{proof}
To prove 1, note that every point of $\RR$ with an $(\S,\emptyset)$-lifting is $0$-definable in $\RR_{\S}$, and Theorem \ref{thm:SparamDefinable} implies the converse.  Statement 2 follows from 1 by taking $\S = \C$ and noting that if $a\in\RR$ has an $(\C,\emptyset)$-lifting, then $a\in\C_0$ since $\C$ is an IF-system.
\end{proof}

For a first-order structure $\R$, we say that a model $\P$ of $\Th(\R)$ is {\bf prime} if $\P$ embeds elementarily into every model of $\Th(\R)$.  If $\R$ is an o-minimal expansion of an ordered field, then $\R$ has a prime model $\P$ which is unique up to isomorphism (see Pillay and Steinhorn \cite[Theorem 5.1]{PillaySteinhorn-I}). Since o-minimal theories have definable Skolem functions, it follows that the submodel of $\R$ whose universe is the field of definable constants of $\R$ is a prime model of $\Th(\R)$.  Therefore Theorem \ref{thm:defConst} describes the universes of the prime models of $\RR_{\S}$ and $\RR_{\C}$.

The fact that $\C_0$ is the universe of the prime model of $\RR$ was first proven by the author in his thesis \cite{DJMthesis} using a much simpler variant of the RSW-construction, for this reason: he first showed that if $\C$ is a Weierstrass system (meaning that $\C$ is an IF-system of analytic functions closed under Weierstrass preparation) and if $\RR\subseteq\C$, then the Lion-Rolin preparation theorem \cite{LR97} holds for $\RR_{\C}$ (see also \cite{DJMprep}).  By using a version of Theorem \ref{thm:IFconstants} for Weierstrass systems and the fact $\RR_{\C}$ is elementarily equivalent to its submodel with universe $\C_0$, he showed that it is actually unnecessary to assume that $\RR\subseteq\C$.  Thus Theorem \ref{thm:defConst} is a good place to conclude the paper, for with it we have come full circle and arrived at the origins the paper.

\bibliographystyle{amsplain}
\bibliography{bibliotex}
\end{document}